\def\MR#1{\href{http://www.ams.org/mathscinet-getitem?mr=#1}{MR#1}}
\def\arXiv#1{arXiv:\href{http://arXiv.org/abs/#1}{#1}}
\numberwithin{equation}{section}
\numberwithin{figure}{section}
\newtheorem{theorem}{Theorem}[section]
\newtheorem{proposition}[theorem]{Proposition}
\newtheorem{lemma}[theorem]{Lemma}
\newtheorem{corollary}[theorem]{Corollary}
\newtheorem*{claim}{Claim}
\theoremstyle{definition}
\newtheorem{definition}[theorem]{Definition}
\newtheorem{example}[theorem]{Example}
\theoremstyle{remark}
\newtheorem{remark}[theorem]{Remark}
\newcommand{\id}{\mathop{\textup{id}}}
\newcommand{\wth}{\widetilde{h}}
\newcommand{\jbl}{{\boxtimes}}
\DeclareMathOperator{\dsupp}{dsupp}
\DeclareMathOperator{\inj}{inj}
\DeclareMathOperator{\Poisson}{Poisson}
\newcommand{\eps}{\varepsilon}
\newcommand{\bOmega}{{\mathbf{\Omega}}}
\newcommand{\wOmega}{{\widetilde{\Omega}}}
\newcommand{\wB}{\widetilde{B}}
\newcommand{\wcF}{\widetilde{\mathcal{F}}}
\newcommand{\wmu}{\widetilde{\mu}}
\newcommand{\EE}{\mathbb{E}}
\newcommand{\RR}{\mathbb{R}}
\newcommand{\PP}{\mathbb{P}}
\newcommand{\NN}{\mathbb{N}}
\newcommand{\cP}{\mathcal{P}}
\newcommand{\cQ}{\mathcal{Q}}
\newcommand{\cW}{\mathbb{W}}
\newcommand{\cS}{\mathcal{S}}
\newcommand{\cF}{\mathcal{F}}
\newcommand{\cG}{\mathcal{G}}
\newcommand{\cU}{\mathbb{U}}
\newcommand{\cC}{\mathcal{C}}
\newcommand{\wcW}{\widetilde{\cW}}
\newcommand{\wW}{\widetilde{W}}
\newcommand{\wg}{\widetilde{g}}
\newcommand{\sP}{\mathscr{P}}
\newcommand{\sQ}{\mathscr{Q}}
\newcommand{\hfG}{\hat{\mathfrak G}}
\newcommand{\fG}{\mathfrak G}
\newcommand{\deltt}{\delta_{2\to 2}}
\def\d22{d_{2\to 2}}
\def\delGP{\delta_\diamond}
\def\tdelGP{\widetilde\delta_\diamond}
\def\tdel22{{\widetilde{\delta}}_{2\to 2}}
\def\djbl{d_{\jbl}}
\def\deljbl{\delta_{\jbl}}
\def\tdeljbl{\widetilde\delta_{\jbl}}
\title[Identifiability for graphexes and the weak kernel metric ]
{Identifiability for graphexes\\ and the weak kernel metric }
\author[Borgs]{Christian Borgs}
\address{Microsoft Research\\
One Memorial Drive\\
Cambridge, MA 02142} \email{borgs@microsoft.com}
\author[Chayes]{Jennifer T.\ Chayes}
\address{Microsoft Research\\
One Memorial Drive\\
Cambridge, MA 02142} \email{jchayes@microsoft.com}
\author[Cohn]{Henry Cohn}
\address{Microsoft Research\\
One Memorial Drive\\
Cambridge, MA 02142} \email{cohn@microsoft.com}
\author[Lov\'asz]{L\'aszl\'o Mikl\'os Lov\'asz}
\address{Mathematics Department\\
UCLA\\
Los Angeles, CA 90095} \email{lmlovasz@math.ucla.edu}
\begin{document}

\begin{abstract}
In two recent papers by Veitch and Roy and by Borgs, Chayes, Cohn, and
Holden, a new class of sparse random graph processes based on the concept
of graphexes over $\sigma$-finite measure spaces has been introduced. In
this paper, we introduce a metric for graphexes that generalizes the cut
metric for the graphons of the dense theory of graph convergence. We show
that a sequence of graphexes converges in this metric if and only if the
sequence of graph processes generated by the graphexes converges in
distribution. In the course of the proof, we establish a regularity lemma
and determine which sets of graphexes are precompact under our metric.
Finally, we establish an identifiability theorem, characterizing when two
graphexes are equivalent in the sense that they lead to the same process of
random graphs.
\end{abstract}

\maketitle

\tableofcontents

\section{Introduction} \label{sec:intro}

The theory of graph limits has been extensively developed for dense graph
sequences \cite{BCLSV06,ls-graphlimits,ls-analyst,BCLSV08,BCL10,BCLSV12}, but
the sparse case is not as well understood. In this paper, we study a model
introduced and studied in a sequence of papers
\cite{caron-fox,VR15,BCCH16,JANSON16,VR16,JANSON17,BCCV17} based on the
notion of graphexes. In contrast to the graphons of the dense theory, which
are symmetric two-variable functions defined over a probability space,
graphexes are defined over $\sigma$-finite measure spaces, and, in addition
to a graphon part $W$, contain two other components: a function $S$ taking
values in $\RR_+$, and a parameter $I\in \RR_+$. Formally, the graphex is
then the quadruple $\cW=(W,S,I,\bOmega)$, where $\bOmega=(\Omega,\cF,\mu)$ is
the underlying measure space.

A graphex then leads to a process $(G_T(\cW))_{T\geq 0}$ of random graphs as
follows: starting from a Poisson process over $\Omega$ with intensity $T\mu$,
one attaches $\Poisson(TS(x_i))$ leaves to each Poisson point $x_i$, and in
addition, joins two Poisson points $x_i,x_j$ with probability $W(x_i,x_j)$.
Finally, one adds $\Poisson(T^2I)$ isolated edges not connected to any of the
other points. Removing isolated vertices as well as the labels of the
remaining vertices gives a \emph{graphex process} $(G_T(\cW))_{T\geq 0}$ of
unlabeled graphs \emph{sampled from $\cW$}.

Several notions of convergence for graphexes were introduced in \cite{BCCH16}
and \cite{VR16} and further studied in \cite{JANSON17}. Among these notions,
we will be particularly interested in graphex process convergence
(GP-convergence), which was introduced in \cite{VR16}. A sequence of
graphexes is GP-convergent if the random graph processes generated by the
graphexes in the sequence converge. It was pointed out in \cite{JANSON17}
that GP-convergence can be metricized using the abstract theory of
probability measures over Polish spaces, but this does not give a very
explicit metric on graphexes; in fact, it does not even allow us to determine
when two graphexes lead to the \emph{same} random graph process.

In this paper, we introduce a concrete notion of distance for graphexes that
is equivalent to GP-convergence, which can be thought of as corresponding to
the ``cut distance'' in the dense case. For reasons we explain in the next
section, we call it the ``weak kernel distance''. We show that convergence in
this distance is equivalent to GP-convergence.

In general, the set of all graphexes is not compact. Indeed, it is not
difficult to show that for a set to be compact under GP-convergence, certain
uniform boundedness assumptions are necessary on the set of graphexes, which
we call ``tightness''. As a part of our proof that our weak kernel distance
metricizes GP-convergence, we develop a (Frieze-Kannan-type) regularity lemma
for graphexes and show that the sets that are precompact under the weak
kernel metric are precisely those that are tight.

Finally, we prove an identifiability theorem, showing to what extent a
graphex can be identified from its graphex process. Formulated differently,
we give a characterization of the equivalence classes of graphexes, where two
graphexes are called equivalent if they give rise to the same graphex
process. Generalizing a construction that was developed by Janson for the
dense case \cite{JANSON13}, we assign to each graphex $\cW$ a ``canonical
version'' $\widehat{\cW}$ such that $\cW$ is a pullback of $\widehat{\cW}$
and show that if two graphexes are equivalent, then their canonical versions
are isomorphic up to measure zero changes. This in turn will imply that two
graphexes $\cW_1$ and $\cW_2$ are equivalent if and only if there is a third
graphex $\cW_3$ (which can be taken to be their canonical graphex) such that
after restricting the two graphexes to their ``support'' (strictly speaking,
we have to restrict them to their ``degree support'', a notion we will define
in the next section) both $\cW_1$ and $\cW_2$ are pullbacks of $\cW_3$. We
note that this proves a conjecture of Janson; see Remark~5.4 in
\cite{JANSON16}.

We note that in this paper we treat graphexes slightly differently from the
definition in \cite{VR15,JANSON16,VR16,JANSON17,BCCV17}. Namely, as in
\cite{BCCH16}, we follow the convention from the theory of dense graph
limits, and define the graphex process corresponding to a graphex as a
process of graphs without loops. Indeed, we believe that a theory with loops
is most naturally embedded into a more general theory of graphex processes
with multi-edges and loops, which is beyond the scope of this paper.

Nonetheless, it is worth pointing out that the reader interested in the theory with
loops (but not multi-edges) can derive many results for this theory from
those developed here, even though some of the theorems will need to be
modified to accommodate additional technical complications. For the
identifiability theorem, this is done in
Appendix~\ref{sec:samplingwithloops}.

Finally, we note that while signed graphexes (i.e., graphexes for which $W$,
$S$ and $I$ are not necessarily non-negative) do not make much sense if we
want to use them to generate a random graph process, they are quite natural
from an analytic point of view. Indeed, we will prove several of our results
for signed graphexes. Still, the goal of this paper is to study unsigned
graphexes, and our results on signed graphexes should be considered more of
an aside at this point.

\section{Definitions and statements of main results}
\label{sec:defs}

\begin{definition}\label{def:graphex}
A graphex $\cW=(W,S,I,\bOmega)$ consists of a $\sigma$-finite measure space
$\bOmega=(\Omega,\cF,\mu)$, a symmetric measurable function $W\colon \Omega
\times \Omega \rightarrow [0,1]$, a measurable function $S\colon \Omega
\rightarrow \RR^+$, and a nonnegative real number $I$ such that the following
\emph{local finiteness} conditions hold:
\begin{enumerate}
\item $W(\cdot, x)$ is integrable for almost all $x\in\Omega$, and
\item there exists a measurable subset $\Omega' \subseteq \Omega$ such that
    $\mu(\Omega \setminus \Omega')<\infty$ and $\cW|_{\Omega'}$ is
 integrable.
\end{enumerate}
The quadruple will be called a \emph{signed graphex} if instead of taking
values in $[0,1]$ and $\RR_+$, $W$, $S$ and $I$ take values in $\RR$. The
graphex $\cW=(W,S,I,\bOmega)$ is called \emph{integrable} if
\[\|\cW\|_1 :=\int_{\Omega \times \Omega} |W(x,y)|\,d\mu(x)\,d\mu(y) + 2 \int_\Omega |S(x)| \,d\mu(x)
+2|I|<\infty,\] and the \emph{restriction} $\cW|_{\Omega'}$ of $\cW$ to
$\Omega' \subseteq \Omega$ is defined as the quadruple
$\cW'=(W',S',I',\bOmega')$ with $\bOmega'=(\Omega',\cF',\mu')$, where
$\cF'=\{A\in \cF : A\subseteq\Omega'\}$, $\mu'$ is the restriction of $\mu$
to $\cF'$, $S'$ is the restriction of $S$ to $\Omega'$, and $W'$ is the
restriction of $W$ to $\Omega'\times\Omega'$.
\end{definition}

We often refer to $\cW$ as a signed graphex over $\bOmega$, and we will refer
to the function $W$ as a \emph{graphon}, or the \emph{graphon part} of $\cW$.
Similarly, $S$ will be called a \emph{star intensity}, or the \emph{star
part} of $\cW$, and $I$ will be called a \emph{dust density}, or the
\emph{dust part} of $\cW$. (The reason for this terminology will become clear
when we discuss the random graph process generated by an unsigned graphex
$\cW$; as we will see, the star part of $\cW$ will lead to stars, and the
dust part will lead to isolated edges, which we call dust following
\cite{JANSON17}.) If two signed graphexes $\cW_1,\cW_2$ are defined on the
same space $\bOmega$, then we say that $\cW_1=\cW_2$ almost everywhere if
$W_1=W_2$ almost everywhere, $S_1=S_2$ almost everywhere, and $I_1=I_2$.

We define the marginal of a graphex $\cW=(W,S,I,\bOmega)$ over
$\bOmega=(\Omega,\cF,\mu)$ as the a.e.\ finite function $D_\cW\colon
\Omega\to\RR_+$ defined by
\[
D_\cW(x)=D_W(x)+S(x)\quad\text{ where }\quad D_W(x)=\int_{\Omega} W(x,y)\,d\mu(y).
\]
We say that $\cW$ has $D$-bounded marginals if $\|D_{\cW}\|_\infty \le D$.
Finally, we define its degree support as the set
\[
\dsupp \cW=\left\{x\in\Omega : D_{\cW}(x)>0\right\}.
\]
Note that $\cW$ is integrable if and only if its marginals are integrable.

Given a graphex $\cW$, we will define a stochastic process
$(\cG_T(\cW))_{T\geq 0}$ indexed by $T\in\RR_+$ and taking values in the set
of graphs with labels in $\RR_+$. To make this precise, we need to define a
$\sigma$-algebra over the set of countable graphs with vertices in $\RR_+$.
To this end, we first define the adjacency measure $\xi_G$ of a countable
graph $G$ with vertices in $\RR_+$ as the measure $\xi_G$ on $\RR_+^2$ given
by
\[
\xi_G=\sum_{t,t'\in V(G):\{t,t'\}\in E(G)}\delta_{(t,t')}.
\]
We call $\xi$ an \emph{adjacency measure} if there exists a countable graph
$G$ such that $\xi=\xi_G$. We then equip the set of adjacency measures with
the smallest $\sigma$-algebra such that the maps $\xi\mapsto \xi(A)$ are
measurable for all bounded Borel sets $A\subseteq\RR_+^2$, and the set of
countable graphs with vertices in $\RR_+$ with the smallest $\sigma$-algebra
such that the maps $G\mapsto \xi_G$ are measurable.

A graphex $\cW=(W,S,I,\bOmega)$ then generates a family $(\cG_T(\cW))_{T\geq
0}$ of random graphs as follows: we start with a Poisson point process with
intensity $\lambda\times\mu$ on $\RR_+\times\Omega$, where $\lambda$ is the
Lebesgue measure on $\RR_+$, and then connect two points $(t,x)$ and
$(t',x')$ of the Poisson process with probability $W(x,x')$, independently
for all pairs of points. For each point of the Poisson process $(t,x)$, we
take another Poisson point process on $\RR_+$ with intensity ${S(x)}\lambda$,
and connect $(t,x)$ to a vertex with ``birth time'' $t_i$ for each point
$t_i$ in the process. We also take a Poisson process with intensity
${I(\lambda\times\lambda)}$ on $\RR_+^{2}$, and for each point $(t_x,t_y)$ we
take an isolated edge between vertices with birth time $t_x$ and $t_y$. If we
ignore the labels in the feature space $\Omega$ and delete the vertices with
degree zero, this leads to an infinite graph $\cG_\infty(\cW)$ with vertices
labeled by their birth time $t\in \RR_+$. We then define $\cG_T(\cW)$ by
first taking the induced subgraph on the set of vertices which lie in $[0,T]$
and then deleting vertices whose neighbors in $\cG_\infty(\cW)$ all lie
outside the interval $[0,T]$.

We will refer to the part of $\cG_\infty(\cW)$ generated with the help of the
dust intensity $I$ as the \emph{dust part} of $\cG_\infty(\cW)$, and as the
part generated with the help of the star intensity $S$ as the \emph{stars} in
$\cG_\infty(\cW)$. While it may not be \emph{a priori} clear whether these
parts can be inferred from just observing the infinite graph
$\cG_\infty(\cW)$, this is actually the case, a fact which was first noted in
Remark~5.4 in \cite{JANSON16}: almost surely, the dust part consists of all
edges in $\cG_\infty(\cW)$ that are isolated, the star part consists of all
edges with one vertex of degree one and a second vertex of infinite degree,
and the remaining edges are generated by the graphon part of $\cW$ and have
two endpoints with infinite degree.

\begin{definition}\label{def:graphex-process}
Let $\cW$ be a graphex, let $(\cG_T(\cW))_{T\geq 0}$ be the random family of
graphs defined above, and let $\xi[\cW]$ be the random adjacency measure
$\xi_{\cG_\infty(\cW)}$. We call the stochastic process $(\cG_T(\cW))_{T\geq
0}$ the \emph{graphex process generated by} $\cW$, and the adjacency measure
$\xi[\cW]$ the \emph{adjacency measure generated by} $\cW$. We say two
graphexes are \emph{equivalent}, if the graphex processes generated by these
graphons are equal in law.
\end{definition}

\begin{remark}\label{rem:graphex-process}
(1) Following \cite{BCCV17}, we defined a graphex process as a stochastic
process taking values in a space of graphs with labels in $\RR_+$.
Alternatively, one might want to define a graphex process as a process taking
values in the space of unlabeled graphs without isolated vertices. In our
current context, this would correspond to ignoring the time labels of the
graphs in $\cG_T(\cW)$, leading to a graph which we denote by $G_T(\cW)$.
When it is important to distinguish them, we will refer to the process
$(G_T(\cW))_{T\geq 0}$ as the unlabeled graphex process corresponding to
$\cW$, and to the process $(\cG_T(\cW))_{T\geq 0}$ as the labeled graphex
process corresponding to $\cW$. Note that it is easy to recover $\cG_T(\cW)$
from $G_T(\cW)$: just assign i.i.d.\ labels chosen uniformly at random in
$[0,T]$ to all vertices. A related observation is the fact that $G_T(\cW)$
can be generated by first choosing $(x_i)_{i\geq 1}$ according to a Poisson
process with intensity $T \mu$ in $\Omega$, then connecting $i$ and $j$ with
probability $W(x_i,x_j)$, then adding a star whose number of leaves are
chosen as a Poisson random variable with mean $TS(x_i)$ to each point of the
process $(x_i)_{i\geq 1}$, and finally adding independent edges with rate
$IT^2$. Forgetting the labels then gives us $G_T(\cW)$. Relabeling each
vertex in the resulting graph independently by a uniform $t\in [0,T]$, we
obtain $\cG_T(\cW)$.

(2) It is sometimes convenient to assign a feature value to the endpoints of
the isolated edges generated from the dust part $I$ in the graphex, as well
as to the leaves of the stars generated using the function $S$. For our
purpose, we will say that these vertices have the feature label $\infty$, and
we will extend the marginal $D_\cW$ to $\Omega\cup\{\infty\}$ by setting
\[D_\cW(\infty)=\int_\Omega S(x) \,d \mu(x) +2I.
\]
Note that with this notation, $ \|\cW\|_1=\int_\Omega D_\cW(x)\, d\mu(x)
+D_\cW(\infty)$.

(3) In view of (2), one might want to equip the extended feature space
$\widetilde\Omega=\Omega\cup\{\infty\}$ with a $\sigma$-finite measure by
keeping the original measure on $\Omega$, and assigning some finite measure
$Q=\widetilde\mu(\infty)$ to the feature value $\infty$, giving a new
$\sigma$-finite measure space $\widetilde\bOmega_Q$.  On
$\widetilde\bOmega_Q$, one can then define a graphex of the form
$\widetilde\cW_Q=(\widetilde W_Q,0,0,\widetilde\bOmega)$ by setting
$\widetilde W_Q$ equal to $W$ on $\Omega\times\Omega$ and to $2I/Q^2$ on
$\{\infty\}\times\{\infty\}$, and by setting $\widetilde W(x,\infty)=S(x)/Q$
and $\widetilde W(\infty,y)=S(y)/Q$ if only one of the two features $x,y$
lies in $\Omega$.  With this construction, $D_{\widetilde
W_Q}(\infty)=D_\cW(\infty)/Q$, $D_{\widetilde W_Q}(x)=D_\cW(x)$ if
$x\in\Omega$, and $\|\cW\|_1=\|\widetilde W_Q\|_1=\int d\mu(x) D_{\widetilde
W_1}+QD_{\widetilde W}(\infty)$, with the notation in (2) corresponding to
the case $Q=1$. It is clear that the graphon process generated from
$\widetilde\cW$ cannot have exactly the same distribution as the one
generated from $\cW$ unless $I$ and $S$ are zero (to see this, note that in
$G_\infty(\widetilde\cW)$, all vertices have infinite degrees, while
$G_\infty(\cW)$ has vertices of degree one).  But one might wonder whether
the process generated from the ``pure graphon'' $\widetilde\cW$ approximates
the one generated from $\cW$. As we will see in
Remark~\ref{rem:W-approx-tildeW}, this is indeed the case, in the sense that
for any fixed $T$, the distribution of $G_T(\widetilde \cW_Q)$ converges to
$G_T(\cW)$.
\end{remark}

It is relatively easy to see that the local finiteness conditions (1) and (2)
from Definition~\ref{def:graphex} imply that the adjacency measure $\xi[\cW]$
is a.s.\ locally finite (i.e., $\xi[\cW](A)<\infty$ for all bounded Borel
sets $A\subset \RR_+^2$), or equivalently, that for all $T<\infty$, the
graphs $\cG_T(\cW)$ are a.s.\ finite. It turns out that these conditions are
also necessary for the local finiteness of $\xi[\cW]$. This is the main
statement of the following proposition, which we will prove in
Appendix~\ref{app:local-finite}. For graphexes over $\RR_+$ equipped with the
Lebesgue measure, a similar condition was established in \cite{VR15},
building on the work of \cite{Kal05} (the condition considered by \cite{VR15}
and \cite{Kal05} is the same as our condition (E) below, specialized to the
case $D=1$, even though it is clear that both \cite{VR15} and \cite{Kal05}
knew that for graphexes over $\RR_+$, conditions (D) and (E) are equivalent.)
To state the proposition, we use the notation $\{D_\cW> D\}$ for the set
$\{x\in\Omega : D_\cW(x)> D\}$, while $\{D_W> D\}$, $\{D_\cW\leq D\}$, and
$\{D_W\leq D\}$ are defined analogously.

\begin{proposition}
\label{prop:local-finite} Let $\cW=(W,S,I,\bOmega)$ be a $4$-tuple consisting
of a $\sigma$-finite measure space $\bOmega=(\Omega,\cF,\mu)$, a symmetric
measurable function $W\colon \Omega \times \Omega \rightarrow [0,1]$, a
measurable function $S\colon \Omega \rightarrow \RR^+$, and a nonnegative
real number $I$. Then the local finite conditions (1) and (2) from
Definition~\ref{def:graphex} are equivalent to the local finiteness of the
adjacency measure generated by $\cW$. If we assume condition (1), then
following are equivalent:
\begin{enumerate}[(A)]
\item The graphex $\cW$ obeys the local finiteness condition (2).
\item For all $D>0$, $\mu(\{D_\cW > D\}) <\infty$ and $\cW|_{\{D_\cW \le
    D\}}$ is integrable.
\item There exists a $D>0$ such that $\mu(\{D_\cW > D\}) <\infty$ and
    $\cW|_{\{D_\cW \le D\}}$ is integrable.
\item For all $D>0$, $\mu(\{D_W > D\}) <\infty$, and both $W|_{\{D_W \le
    D\}}$ and $\min\{S,1\}$ are integrable.
\item There exists a $D>0$ such that $\mu(\{D_W > D\}) <\infty$, and both
    $W|_{\{D_W\le D\}}$ and $\min\{S,1\}$ are integrable.
\end{enumerate}
\end{proposition}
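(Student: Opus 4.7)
The plan is to first establish the equivalences among (A)--(E) assuming (1), and then to derive the equivalence of (1) together with (A)$\,=\,$(2) with local finiteness of $\xi[\cW]$. Among the implications between (A)--(E), several are essentially immediate: (B)$\Rightarrow$(C) and (D)$\Rightarrow$(E) are trivial; (C)$\Rightarrow$(A) takes $\Omega' = \{D_\cW \leq D\}$; and (E)$\Rightarrow$(A) takes $\Omega' = \{D_W \leq D\} \cap \{S \leq 1\}$, whose complement has measure at most $\mu(\{D_W > D\}) + \int \min(S,1)\,d\mu < \infty$. The substantive direction is (A)$\Rightarrow$(B) and (A)$\Rightarrow$(D). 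The basic estimate here is that $D_\cW(x) \leq D_{\cW|_{\Omega'}}(x) + M$ for $x \in \Omega'$, where $M = \mu(\Omega \setminus \Omega')$; combined with integrability of $\cW|_{\Omega'}$ and Markov's inequality applied to $D_{\cW|_{\Omega'}}$, this yields (B) and (D) at every $D > M$.

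To cover all $D > 0$, I would prove an approximation lemma: from any witness $\Omega'$ for (A) and any $\varepsilon > 0$, one can find a witness $\Omega'' \supseteq \Omega'$ with $\mu(\Omega \setminus \Omega'') < \varepsilon$. The construction absorbs into $\Omega'$ the sets $(\Omega \setminus \Omega') \cap \{D_W \leq n\} \cap \{S \leq n\}$ for large $n$; these exhaust $\Omega \setminus \Omega'$ modulo a null set, since (1) and finiteness of $S$ make $D_W + S$ a.e.\ finite on $\Omega \setminus \Omega'$, and continuity from above of the finite measure $\mu|_{\Omega \setminus \Omega'}$ sends the leftover measure to zero. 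Integrability of $\cW|_{\Omega''}$ follows because the absorbed set has finite measure with $D_W$ and $S$ bounded by $n$ on it. Applying the $D > M$ estimate to $\Omega''$ with $M < D'$ then delivers (B) and (D) at any prescribed $D' > 0$.

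For the equivalence with local finiteness of $\xi[\cW]$, I would verify (1)$\,+\,$(E) against local finiteness, using the equivalence of (A) and (E) from the first part. In the forward direction I decompose $\Omega = \{D_W > D\} \sqcup \{D_W \leq D\}$ and bound each contribution to $\cG_T$ separately. Graphon edges with both features in $\{D_W \leq D\}$ and birth times in $[0,T]$ form a Poisson process of total intensity $T^2 \int_{\{D_W \leq D\}^2} W\,d\mu^2 / 2$, finite by (E); the Poisson points in $[0,T] \times \{D_W > D\}$ are finitely many since $\mu(\{D_W > D\}) < \infty$, and each such point has $\Poisson(T(D_W(x) + S(x)))$ incident edges in $\cG_T$, a.s.\ finite by (1) and finiteness of $S$; the dust contribution is $\Poisson(T^2 I)$. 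The delicate piece is the stars: the total star-edge count in $\cG_T$ is Poisson with the \emph{random} parameter $\Lambda = T \sum_{(t_i, x_i) : t_i \in [0,T]} S(x_i)$, and the exponential formula for Poisson processes gives $\Lambda < \infty$ a.s.\ if and only if $\int (1 - e^{-c\,S})\,d\mu < \infty$ for some $c > 0$, equivalently $\min(S,1) \in L^1(\mu)$---precisely the condition on $S$ in (E).

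For the converse, suppose $\xi[\cW]$ is locally finite. If (1) fails, a Poisson point lands with positive probability in the positive-measure set where $D_W = \infty$ and has a.s.\ infinite degree in $\cG_T$, a contradiction. Assuming (1), I take the contrapositive of (E) and consider three cases. If $\min(S,1) \notin L^1(\mu)$, the exponential formula run backwards makes $\Lambda$ a.s.\ infinite and forces infinitely many star edges. If $\mu(\{D_W > D\}) = \infty$ for some $D > 0$, then $\int_{\Omega^2} W\,d\mu^2 = \int_\Omega D_W\,d\mu \geq D\,\mu(\{D_W > D\}) = \infty$, so the total number of graphon edges in $\cG_T$ is Poisson with infinite mean, hence a.s.\ infinite. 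Finally, if $W|_{\{D_W \leq D\}}$ is non-integrable for some $D$, the Poisson process of graphon edges with features in $\{D_W \leq D\}^2$ already has infinite intensity. The main obstacle is identifying the correct integrability criterion for $S$ in (E)---$\min(S,1) \in L^1$, rather than $S \in L^1$---and matching it to local finiteness via the exponential formula for Poisson integrals; once that subtlety is in place, the remaining case analysis and the (A)--(E) equivalences are essentially bookkeeping combined with the approximation lemma used to upgrade ``exists $D$'' to ``for all $D$''.
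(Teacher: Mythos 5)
Your treatment of the equivalences (A)--(E) under condition (1) is sound and follows the same route as the paper: the easy implications are handled identically, the bound $D_\cW(x) \le D_{\cW|_{\Omega'}}(x) + \mu(\Omega\setminus\Omega')$ for $x\in\Omega'$ gives (A)$\Rightarrow$(B) and (A)$\Rightarrow$(D) for $D$ large, and you upgrade to arbitrary $D>0$ by shrinking the bad set. Your absorption lemma (enlarge $\Omega'$ by $(\Omega\setminus\Omega')\cap\{D_W\le n\}\cap\{S\le n\}$) differs cosmetically from the paper, which instead shrinks $\{D_\cW\ge D'\}$ using $D_\cW<\infty$ a.e., but both are correct and serve the same purpose.

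The serious gap is in the converse direction of the equivalence between (1)+(2) and local finiteness of $\xi[\cW]$. You repeatedly treat the graphon edge count in $\cG_T(\cW)$ as a Poisson random variable and infer ``a.s.\ infinite'' from ``infinite expectation.'' Neither step is valid. Conditionally on the Poisson points $\{x_i\}$, the edge count $\sum_{i<j}Y_{ij}$ is a sum of independent Bernoulli variables, not Poisson; and more fundamentally, infinite expectation of the edge count does not entail a.s.\ infinitely many edges. Here is a concrete counterexample to your reasoning: let $\Omega=\NN\cup[0,\infty)$ with $\mu(\{n\})=2^{-n}$ and Lebesgue measure on $[0,\infty)$; set $W(n,x)=W(x,n)=1$ for $x\in[n,n+2^n]$, $W=0$ otherwise, and $S=I=0$. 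Then $\|W\|_1=\int D_W\,d\mu\ge\sum_n 2^{-n}\cdot 2^n=\infty$, so the expected number of edges is infinite, yet $\cG_T$ is a.s.\ finite (there are a.s.\ finitely many Poisson points on $\NU$, each with a.s.\ finite degree, and no edges within $[0,\infty)$); conditions (1) and (2) do hold, with $\Omega'=[0,\infty)$. So your case ``if $\mu(\{D_W>D\})=\infty$ then $\|W\|_1=\infty$ hence a.s.\ infinitely many edges'' is simply false as stated. The same objection applies to your other converse case (``infinite intensity $\Rightarrow$ a.s.\ infinite''). What is actually needed, and what the paper's Proposition~\ref{prop:as-finite} provides, is a second-moment argument: when $\mu(\{D_W>D\})=\infty$, the paper runs a red-blue coloring reduction that converts the double sum $\eta^2(W)$ into a single sum $\eta(S')$, to which the exponential criterion applies; and when $\|W|_{\{D_W\le D\}}\|_1=\infty$, the paper uses Paley--Zygmund with a variance bound that is available precisely because the marginals of the restriction are bounded by $D$. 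Without these two ingredients the converse direction does not go through, and this is exactly the technical heart of the proposition, not mere bookkeeping. (Your forward direction also contains the same ``Poisson process'' misstatement, but there the conclusion only requires that finite expectation implies a.s.\ finiteness, which is always true, so that half is merely imprecisely phrased rather than wrong.)

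Here I wrote $\NU$ above by accident; I meant $\NN$. To restate: in the counterexample $\mu(\NN)=\sum_n 2^{-n}<\infty$, so a.s.\ only finitely many Poisson points fall in $\NN$, and each has a.s.\ finite degree because $D_W(n)=2^n<\infty$ and the degree is a sum of independent Bernoullis with summable means when restricted to $[0,T]$.
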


Note that this proposition implies in particular that a graphex with bounded
marginals is integrable, since for graphexes with $\|D_\cW\|_\infty\leq D$
the graphex $\cW$ and the graphex $\cW|_{\{D_\cW\} \le D}$ are the same.

Having defined the graphex process associated with a graphex $\cW$, there are
several natural questions one might want to answer. In particular, one might
want to characterize when two graphexes lead to the same process, i.e., when
$\xi[\cW]$ and $\xi[\cW']$ have the same distribution. More generally, one
might want to define a metric on the set of graphexes such that the
distributions of $\xi[\cW]$ and $\xi[\cW']$ are close if $\cW$ and $\cW'$ are
close. Addressing these questions is one of the main goals of this paper.

Before discussing this further, it will be useful to embed the theory of
graphex processes into the general theory of locally finite point processes.
To this end, we first introduce the set $\mathcal N=\mathcal N(\RR_+^2)$ of
locally finite counting measures on $\RR^2_+$ (i.e., the set of measures
$\xi$ such that $\xi(A)$ is a finite, non-negative integer for all bounded
Borel sets $A\subset\RR_+^2$), and equip it with the Borel $\sigma$-algebra
inherited from the vague topology (defined as the coarsest topology for which
the maps $\xi\mapsto \int f \,d\xi$ are continuous for all continuous
functions $f\colon \RR_+^2\to \RR_+$ with compact support). As shown in,
e.g., \cite{DVJ03v1}, Appendix A2.6, the vague topology on $\mathcal N$ can
be metricized in such a way that $\mathcal N$ becomes a complete, separable
metric space, making $\mathcal N$ into a Polish space, and the Borel
$\sigma$-algebra inherited from this topology is the smallest
$\sigma$-algebra such that for all bounded Borel sets $A\subset\RR_+^2$ the
maps $\mu\mapsto \mu(A)$ are measurable.

As usual, a locally finite point process on $\RR_+^2$ is then defined as a
random measure on $\mathcal N(\RR^2_+)$ equipped with this Borel algebra, and
convergence in distribution is defined as weak convergence in the set of
probability measures on $\mathcal N$, so that convergence in distribution of
a sequence of locally finite point process $\xi_n$ on $\RR_+^2$ to a locally
finite point process $\xi$ is defined by the condition that $\EE[
F(\xi_n)]\to\EE [F(\xi)]$ for all continuous, bounded functions $F$, with
continuity defined with respect to the vague topology on $\mathcal N$. As
observed in \cite{JANSON17}, the fact that $\mathcal N$ is Polish makes the
set of probability distributions on $\mathcal N$ a Polish space as well (see,
e.g., \cite{Bil}, Appendix III for a proof), showing that convergence in
distribution for locally finite point processes on $\RR_+^2$ can be
metricized.

Next we consider the set $\hfG$ of simple graphs $G$ with vertices in $\RR_+$
such that (a) no vertex in $G$ is isolated, and (b) for all $T<\infty$, the
induced subgraph of $G$ on $V(G)\cap [0,T]$ is finite. We also consider the
subset $\hfG_0$ of finite graphs in $\hfG$. The map $G\mapsto \xi(G)$ then
gives a one-to-one map between graphs in $\hfG$ and adjacency measures. In
particular, $\hfG$ and its subset $\hfG_0$ inherit the vague topology and
corresponding Borel $\sigma$-algebra from $\mathcal N$. In this language, the
graphex process $(\cG_T(\cW))_{T\geq 0}$ then becomes a CADLAG stochastic
process with values in $\hfG_0$ indexed by a time $T\in \RR_+$.

Note that $\hfG$ should be distinguished from the set of unlabeled countable
graphs without isolated vertices, $\fG$. While we will not equip $\fG$ with
any topology, the set of finite unlabeled graphs without isolated vertices,
denoted by $\fG_0$, will be given the discrete topology. In this language,
the unlabeled graphex process $(G_T(\cW))_{T\geq 0}$ introduced in
Remark~\ref{rem:graphex-process} is then a CADLAG process with values in
$\fG_0$.

In \cite{JANSON17}, various notions of convergence for graphons and graphexes
(proposed originally in \cite{BCCH16} and \cite{VR16}) were studied. Here we
are most interested in what \cite{VR16} introduces as GP-convergence, where
GP stands for graphex process. This notion is closely related to the notion
of sampling convergence for graphs introduced in \cite{BCCV17}; see Lemma 5.4
in that paper, as well as the discussion at the end of this section.
 Janson showed that the following are equivalent if
$\cW,\cW_1,\cW_2,\dots$ are graphexes:
\begin{enumerate}
\item $\xi(G(\cW_n))\to \xi(G(\cW))$ in distribution.
\item For every $T<\infty$, $\xi(\cG_T(\cW_n))\to \xi(\cG_T(\cW_n))$ in
    distribution.
\item For every $T<\infty$, $G_T(\cW_n)\to G_T(W)$ in distribution.
\end{enumerate}
Following \cite{VR16} we call this notion of convergence GP-convergence, and
say that $\cW_n$ is GP-convergent to $\cW$ if one of these equivalent
conditions holds.

As already alluded to above, Janson also observed that by the abstract theory
of probability measures over Polish spaces, this notion of convergence can be
metricized, turning the set of graphexes into a complete, separable metric
space. But this abstract theory does not give a very explicit metric on the
space of locally finite graphexes; in fact, it does not even address the
question of when two graphexes are equivalent in the sense that the resulting
point processes are equal in law.

To discuss the second question, we define measure-preserving transformations,
pullbacks, and couplings. Given two $\sigma$-finite spaces
$\bOmega=(\Omega,\cF,\mu)$ and $\bOmega'=(\Omega',\cF',\mu')$, we say that a
map $\phi\colon \Omega'\to\Omega$ is a \emph{measure-preserving
transformation} if $\phi$ is measurable and $\mu'(\phi^{-1}(A))=\mu(A)$ for
all $A\in \cF$. If $\cW=(W,S,I,\bOmega)$ is a signed graphex over $\bOmega$,
we define its \emph{pullback} under $\phi$ to be the graphex
$\cW^\phi=(W^\phi,S^\phi,I,\bOmega')$ where
$W^\phi(x',y')=W(\phi(x'),\phi(x'))$ and $S^\phi(x')=S(\phi(x))$. It is clear
that for unsigned graphexes $\cW$ and $\cW^\phi$ give rise to the same
process of random graphs. Note that we can define the pullback even when
$\varphi$ is measurable but not measure-preserving, but in this case the two
graphexes do not necessarily give rise to the same random process.
Nevertheless, we will sometimes use pullbacks in this situation. If we do, we
will write $\cW^{\varphi,\mu'}$ to emphasize the dependence on the measure on
$\Omega'$. Given two $\sigma$-finite spaces
$\bOmega_1=(\Omega_1,\cF_1,\mu_1)$ and $\bOmega_2=(\Omega_2,\cF_2,\mu_2)$, we
say that $\mu$ is a coupling of $\mu_1$ and $\mu_2$ if $\mu$ is a measure on
$\cF_1\times\cF_2$ such that $\mu(\Omega_1\times S_2)=\mu_2(S_2)$ and
$\mu(S_1\times\Omega_2)=\mu_1(S_1)$ for all $S_1\in\cF_1$ and all
$S_2\in\cF_2$. Note that the existence of such a coupling implies that
$\mu_1(\Omega_1)=\mu(\Omega_1\times\Omega_2)=\mu_2(\Omega_2)$. It turns that
this condition is both necessary and sufficient for the existence of a
coupling; see \cite{BCCH16} for a proof.

Based on the known results for dense graphs, one might conjecture that two
graphexes are equivalent if and only if there exists a third graphex such
that both are pullbacks of this third graphex. It turns out that this is not
quite correct, but that it is correct once we remove the part of the
underlying space on which $D_\cW=0$. This is the statement of the following
theorem, which is one of the main results of this paper, and will be proved
in Section~\ref{sec:idnetify}.

\begin{theorem} \label{thm:identify}
Let $\cW_1=(W_1,S_1,I_1,\bOmega_1)$ and $\cW_2=(W_2,S_2,I_2,\bOmega_2)$ be
graphexes, where $\bOmega_i=(\Omega_i,\cF_i,\mu_i)$ are $\sigma$-finite
spaces. Then $G_t(\cW_1)$ and $G_t(\cW_2)$ have the same distribution for all
$t\in\RR_+$ if and only if there exists a third graphex $\cW=(W,S,I,\bOmega)$
over a $\sigma$-finite measure space $\bOmega=(\Omega,\cF,\mu)$ and measure
preserving maps $\phi_i\colon\dsupp \cW_i \rightarrow \Omega$ such that
$\cW_i|_{\dsupp \cW_i} =\cW^{\phi_i}$ almost everywhere.
\end{theorem}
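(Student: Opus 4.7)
The plan is to dispatch the ``if'' direction by direct pullback invariance and then devote the bulk of the effort to the ``only if'' direction, which I will attack by constructing a canonical representative of each equivalence class. For the ``if'' direction, assume $\cW_i|_{\dsupp\cW_i}=\cW^{\phi_i}$ a.e.\ with $\phi_i$ measure-preserving. The pullback invariance of Poisson sampling already noted after the definition of pullback gives $G_T(\cW^{\phi_i})\stackrel{d}{=}G_T(\cW)$, and passing between $\cW_i$ and $\cW_i|_{\dsupp\cW_i}$ does not change the distribution of $G_T$: at points $x$ with $D_{\cW_i}(x)=0$ we have $S_i(x)=0$ a.e.\ and $W_i(x,\cdot)=0$ a.e., so the corresponding Poisson points contribute no edges or stars and are deleted as isolated vertices. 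Hence $G_T(\cW_1)\stackrel{d}{=}G_T(\cW_2)$ for every $T$.

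For the ``only if'' direction, following the dense-graph strategy of Janson \cite{JANSON13}, I will assign to each graphex $\cW$ a \emph{canonical graphex} $\wh\cW$ together with a measure-preserving map $\wh\phi\colon\dsupp\cW\to\wh\Omega$ such that $\cW|_{\dsupp\cW}=\wh\cW^{\wh\phi}$ a.e., and then show that equivalent graphexes have isomorphic canonical versions; the common $\wh\cW_1\cong\wh\cW_2$ then serves as the third graphex in the theorem. To build $\wh\cW$, I take the smallest countably generated sub-$\sigma$-algebra $\cC$ of $\cF|_{\dsupp\cW}$ making $S$, $D_W$, and all maps $x\mapsto\int f(W(x,y))g(y)\,d\mu(y)$ measurable whenever $g$ is $\cC$-measurable and $f$ is bounded Borel. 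Because $\mu$ is $\sigma$-finite and $\cC$ is countably generated, quotienting by $\cC$ yields a standard $\sigma$-finite measure space $\wh\bOmega$ and a measure-preserving quotient map $\wh\phi$; the functions $W$ and $S$ descend to $\wh W$ and $\wh S$, giving $\wh\cW=(\wh W,\wh S,I,\wh\bOmega)$ with $\cW|_{\dsupp\cW}=\wh\cW^{\wh\phi}$ a.e. By construction, no further identification of points of $\wh\Omega$ is consistent with the data of $(\wh W,\wh S)$.

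The main obstacle is the identification step: showing that $\cW_1\sim\cW_2$ forces $\wh\cW_1\cong\wh\cW_2$ up to measure-zero modification. My plan is to reconstruct $\wh\cW$ intrinsically from the law of $\xi[\cW]$ in two moves. First, embed $\cW$ into a pure graphon $\wt\cW$ on $\wt\Omega=\Omega\cup\{\infty\}$ as in Remark~\ref{rem:graphex-process}(3), so the star and dust parts become cross-terms of a single symmetric function. Second, exploit the structural dichotomy from Remark~5.4 of \cite{JANSON16}: almost surely in $\cG_\infty(\cW)$, dust edges are isolated components, star edges join a degree-one vertex to an infinite-degree vertex, and graphon edges join two infinite-degree vertices. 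This partitions $\xi[\cW]$ into three sub-point-processes corresponding to the three components of $\cW$, each of which is exchangeable on $\RR_+^2$ in the appropriate sense; an Aldous--Hoover/Kallenberg-type representation theorem (of the kind underlying the sampling theory in \cite{VR15,BCCV17}) then recovers each piece uniquely up to measure-preserving change of coordinates, and the resulting ``intrinsic'' canonical data must coincide for $\cW_1$ and $\cW_2$ whenever $\xi[\cW_1]\stackrel{d}{=}\xi[\cW_2]$. Matching this intrinsic data to the $\cC$-based construction of $\wh\cW_i$ completes the argument. The technical difficulty to watch out for is that $\mu$ is typically infinite, so standard disintegration and regular-conditional-probability arguments need to be localized to the sets $\{D_{\cW}\le D\}$ of Proposition~\ref{prop:local-finite} before being pieced together; restricting to $\dsupp\cW$ is crucial, as degree-zero points contribute an inherent ambiguity that no sampling observation can resolve.
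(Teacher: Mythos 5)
The ``if'' direction is fine and matches what the paper does. The difficulty is entirely in the ``only if'' direction, and there your proposal has a genuine gap: the step in which you invoke ``an Aldous--Hoover/Kallenberg-type representation theorem\ldots\ \emph{recovers each piece uniquely up to measure-preserving change of coordinates}'' is not available off the shelf. The existence half of Kallenberg's representation (every exchangeable locally finite random measure on $\RR_+^2$ comes from some graphex) is what \cite{VR15,BCCH16} establish, but the corresponding \emph{uniqueness}-up-to-pullback statement for graphexes over $\sigma$-finite, typically infinite-measure spaces is precisely Janson's conjecture, i.e., the content of Theorem~\ref{thm:identify} itself. The paper even flags this explicitly in the introduction: the abstract theory ``does not even address the question of when two graphexes are equivalent.'' Appealing to a representation theorem with that uniqueness property is therefore circular.

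The paper's actual engine, which your proposal has no substitute for, is the coupling theorem (Theorem~\ref{coupling}): starting from $\delGP(\cW_1,\cW_2)=0$ one extracts a sequence of ``almost couplings'' $\nu_{\varepsilon_k}$, passes to Borel models, exhausts the degree supports by the finite-measure compact pieces $\Omega_{i,>1/n}$ of Proposition~\ref{prop:local-finite}, and uses weak-$*$ compactness of bounded measures on $\Omega_{1,>1/n}\times\Omega_{2,>1/n}$ to produce a genuine coupling $\nu$ under which $W_1^{\pi_1}=W_2^{\pi_2}$ and $D_{\cW_1^{\pi_1}}=D_{\cW_2^{\pi_2}}$ a.e.\ (hence $S_1^{\pi_1}=S_2^{\pi_2}$ and $I_1=I_2$). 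That weak-limit extraction, together with passing through $\delGP$ via the sampling/counting machinery, is the heart of the identifiability argument, and it is the piece your sketch replaces with a black box. Separately, your construction of $\widehat\cW$ via ``the smallest countably generated sub-$\sigma$-algebra $\cC$ making $x\mapsto\int f(W(x,y))g(y)\,d\mu(y)$ measurable for $\cC$-measurable $g$'' is a fixed-point style definition whose existence and canonicity are not obvious; the paper sidesteps this by pushing $\mu$ forward through the explicit map $\psi_\cW\colon x\mapsto(W(x,\cdot),S(x))\in L^1(\Omega,\cF,\mu)\times\RR$, taking the support of the pushforward, and verifying $\sigma$-finiteness only after restricting to $\dsupp\cW$ (Lemma~\ref{lem:sigma-finite}), which is exactly why the restriction to the degree support appears in the statement. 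Your Step~2 decomposition of $\xi[\cW]$ into dust/star/graphon pieces is correct and matches Remark~5.4 of \cite{JANSON16}, but by itself it only isolates the three components; it does not identify the graphon component up to pullback.
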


\begin{remark}\label{rem:identify}
If the two graphexes are defined over Borel spaces, we can prove an analogous
statement where the measure-preserving maps are turned around. Specifically,
for the case where $\bOmega_1$ and $\bOmega_2$ are $\sigma$-finite Borel
spaces, we can prove that $G_t(\cW_1)$ and $G_t(\cW_2)$ have the same
distribution for all $t\in\RR_+$ if and only if there exists a
$\sigma$-finite Borel space $\bOmega=(\Omega,\cF,\mu)$ and measure preserving
maps $\pi_i\colon \Omega \rightarrow\dsupp \cW_i$ such that $(\cW_1|_{\dsupp
\cW_1})^{\pi_1}=(\cW_2|_{\dsupp \cW_2})^{\pi_2}$ almost everywhere. In this
case, the space $\bOmega$ can be chosen to be a coupling of $\bOmega_1$ and
$\bOmega_2$, with $\pi_i$ being the coordinate projections from
$\Omega_1\times\Omega_2$ to $\Omega_i$. See Theorem~\ref{coupling} in
Section~\ref{sec:idnetify}. For graphexes without a dust and star part, this
was independently established in \cite{BCCH16} (using a different proof); see
also \cite{JANSON16}, which establishes a similar result (with yet another
proof), this time giving a coupling of the two graphexes (again without dust
and star part) after trivially extending them rather than restricting them to
the support of their marginals.
\end{remark}

To address the first question, concerning the relationship between graphexes
and the point processes generated by them, we would like to define an
analogue of the cut distance for graphons between graphexes, so that two
graphexes are close if and only if their graphex processes are close. To this
end, we first define some norms of a function $U$ over $\Omega_1 \times
\Omega_2$ for two $\sigma$-finite spaces $\bOmega_1=(\Omega_1,\cF_1,\mu_1)$
and $\bOmega_2=(\Omega_2,\cF_2,\mu_2)$. We denote by $\|U\|_p$ the $L^p$ norm
of $U$ as a function over $\Omega_1 \times \Omega_2$ (so we forget the
product structure). Given two measurable functions $f\colon \Omega_1\to\RR$
and $g\colon \Omega_2\to\RR$, let
\[f \circ U(y)= \int_{\Omega_1} f(x)U(x,y)\,dy,\]
\[U \circ g(x)= \int_{\Omega_2} U(x,y)g(y)\,dy,\]
and
\[f \circ U \circ g= \int_{\Omega_1 \times\Omega_2} f(x)U(x,y)g(y)\,dy.\]
We will also use the notation $U_x$ for the function $y\mapsto U(x,y)$.

\begin{definition}\label{def:kernel-norm}
Given a function $U$ defined on $\Omega_1 \times \Omega_2$ for two
$\sigma$-finite measure spaces $\bOmega_i=(\Omega_i,\cF_i,\Omega_i)$ for
$i=1,2$, we define
\[\|U\|_{2 \rightarrow 2}=\sup_{f,g:\|f\|_2=\|g\|_2=1} f \circ U \circ g
=\sup_{g:\|g\|_2=1} \|U \circ g\|_2.\]
\end{definition}

Note that the norm $\|U\|_{2 \rightarrow 2} $ is simply the operator norm
when we consider $U$ the kernel of an operator $\widehat U$ from
$L^2(\bOmega_2)$ to $L^2(\bOmega_1)$. We will therefore call it the
\emph{kernel norm} of $U$. Our next norm is a modification of the standard
cut norm; in the dense graph setting, it was first systematically used in
\cite{KLS14}, where it was defined as a norm for functions defined over a
probability space.

\begin{definition}\label{def:jumble-norm}
Given a measurable function $U$ defined on $\Omega \times \Omega$ for a
$\sigma$-finite measure space $\Omega$, we define the \emph{jumble norm}
\[\|U\|_{\jbl}=\sup_{S,T \subseteq \Omega} \left| \frac{1}{\sqrt{\mu(S)\mu(T)}}\int_{S \times T} U(x,y) \,d\mu(x)
\,d\mu(y)
\right|.\] Here the supremum is over subsets with finite and nonzero measure.
\end{definition}

It is easy to show that these are norms; in particular, they satisfy the
triangle inequality, and are equal to $0$ if and only if $U$ is zero almost
everywhere. If we want to stress the dependence of these norms on the measure
$\mu$ and the function $U$, we write $\|U\|_{*,\mu}$ instead of $\|U\|_*$,
where $*$ is replaced by the appropriate norm.

We will see later that for graphexes with uniformly bounded marginals and
uniformly bounded $\|\cdot\|_1$ norms, the $\|\cdot\|_{2 \rightarrow 2}$ norm
and the $\|\cdot\|_{\jbl}$ norm are equivalent
(Lemma~\ref{lemmasquare2to2equiv}), implying in particular that they are
equivalent in the theory of dense graph limits (where $\Omega$ has bounded
measure). In the dense setting, the above two norms are also equivalent to
the standard cut norm, defined as
\[
\|U\|_{\square}=\sup_{S,T \subseteq \Omega} \left|\int_{S \times T} U(x,y)\,d\mu(x) \,d\mu(y) \right|
=
\sup_{f,g\colon \Omega\to [0,1]}| f \circ U \circ g|.
\]
Indeed, $\|U\|_\square \le \|U\|_{\jbl} \mu(\Omega)$ and $\|U\|_{\jbl} \le
\sqrt{\|U\|_\square\|U\|_\infty}$, where the second bound follows from the
fact that
\[
\left|\int_{S \times T} U(x,y) \,d\mu(x) \,d\mu(y)
\right|\leq \inf\{\lambda(S)\lambda(T)\|U\|_\infty,\|U\|_\square\}.
\]
Therefore, in the theory of dense graph limits all three norms are
equivalent. However, although the cut norm is the simplest to state, we
believe that the kernel norm $\|\cdot\|_{2 \rightarrow 2}$ norm is the
correct extension to graphexes.

We now define some distances between graphexes. First, we define the $\deltt$
distance, which will define a notion of convergence that is equivalent to
GP-convergence for graphexes with uniformly bounded marginals. The definition
of $\deltt$ will make sense for signed graphexes, provided both the graphon
parts and the absolute marginals are in $L^1\cap L^2$. We will therefore
define the $\deltt$ metric in this more general\footnote{To see that this
setting is indeed more general than the assumption of bounded marginals for
(unsigned) graphexes we recall that by Proposition~\ref{prop:local-finite}, a
graphex with bounded marginals is integrable. Using this, and the fact that
by definition, the graphon part of a graphex is bounded, the claim is easy to
verify.} setting.
\begin{definition}
A signed graphex $\cW=(W,S,I,\bOmega)$ over $\bOmega=(\Omega,\cF,\mu)$ is
said to be in $L^1\cap L^2$ if both $W$ and $D|_{\cW|}$ are in $L^1\cap L^2$.
Here $|\cW|$ is the graphex $|\cW|=(|W|,|S|,|I|,\bOmega)$.
\end{definition}

Suppose $\cW_1=(W_1,S_1,I_1,\bOmega)$ and $\cW_2= (W_2,S_2,I_2,\bOmega)$ are
defined on the same underlying space $\bOmega$. We then define their
$\d22$-distance as
\begin{equation}\label{d22-def}
\d22(\cW_1,\cW_2)= \max\left( \|W_1-W_2\|_{2 \rightarrow 2},
\sqrt{\|D_{\cW_1}-D_{\cW_2}\|_2},\sqrt[3]{\left|
\rho(\cW_1)-\rho(\cW_2)
\right|}\right),
\end{equation}
where $\rho(\cW_i)$ is the ``edge density'' of the signed graphex $\cW_i$,
\begin{equation}
\label{rho-of-W-def} \rho(\cW_i)=\int W_i +2 \int S_i + 2I.
\end{equation}
The reason we take the roots will become clearer later when we define the
general distance $\delGP$. Since $\sqrt{c_1+c_2} \le \sqrt{c_1}+\sqrt{c_2}$
and $\sqrt[3]{c_1+c_2} \le \sqrt[3]{c_1}+\sqrt[3]{c_2}$, this is indeed a
metric.

Next, suppose two signed graphexes in $L^1\cap L^2$,
$\cW_1=(W_1,S_1,I_1,\bOmega_1)$ and $\cW_2= (W_2,S_2,I_2,\bOmega_2)$, are
defined over two $\sigma$-finite spaces $\bOmega_1=(\Omega_1,\cF_1,\mu_1)$
and $\bOmega_2=(\Omega_2,\cF_2,\mu_2)$ with
$\mu_1(\Omega_1)=\mu_2(\Omega_2)$. Let $\pi_1\colon \Omega_1 \times \Omega_2
\rightarrow \Omega_1$ and $\pi_2\colon \Omega_1 \times \Omega_2 \rightarrow
\Omega_2$ be the projections. Then we define $\tdel22(\cW_1,\cW_2)$ as the
infimum
\begin{equation}\label{tdel22-def}
\tdel22(\cW_1,\cW_2)
=\inf_\mu\d22(\cW_1^{\pi_1,\mu},\cW_2^{\pi_2,\mu}),
\end{equation}
where the infimum is over all couplings $\mu$ of $\mu_1$ and $\mu_2$.

To define the $\deltt$-distance we need one more notion, that of a trivial
extension of $\cW=(W,S,I,\bOmega)$, where $\bOmega=(\Omega,\cF,\mu)$ is a
$\sigma$-finite measure space. It is defined as a quadruple
$\cW'=(W',S',I',\bOmega')$ where $\bOmega'=(\Omega',\cF',\mu')$ is a
$\sigma$-finite measure space such that $\Omega\in \cF'$, $\cF=\{A\in \cF' :
A\subseteq \Omega\}$, and $\mu$ is the restriction of $\mu'$ to $\cF$, while
$W'$ is the extension of $W$ that is $0$ on the complement of
$\Omega\times\Omega$, $S'$ is the extension of $S$ that is $0$ on the
complement of $\Omega$, and $I'=I$. It is easy to see that taking a trivial
extension of a graphex has no effect on $\cG_T$ or $\cG_\infty$ (since
Poisson points sampled in the complement of $\Omega$ will be isolated for all
$T$).

\begin{definition}\label{def:del22}
Let $\cW_1$ and $\cW_2$ be signed graphexes in $L^1\cap L^2$. Then we define
\begin{equation}\label{del22-def}
\deltt(\cW_1,\cW_2)=
\tdel22(\cW_1',\cW_2'),
\end{equation}
where $\cW_1'$ and $\cW_2'$ are trivial extensions of $\cW_1$ and $\cW_2$ to
measure spaces of infinite total mass. We refer to $\deltt(\cW_1,\cW_2) $ as
the \emph{kernel distance} of $\cW_1$ and $\cW_2$ and call $\deltt$ the
\emph{kernel metric}.
\end{definition}

The existence of these extensions is trivial, since we can always append an
interval equipped with the Lebesgue measure. Nevertheless, it is not clear
that $\deltt(\cW_1,\cW_2)$ is well defined, since the right side of
\eqref{del22-def} could depend on the particular choice of the extensions
$\cW_1'$ and $\cW_2'$. In a similar way, while it is clear that $\deltt$ is
symmetric and that $\deltt(\cW,\cW)=0$, it is not clear that it is a metric
(even after factoring out the null space), since it is not clear that it
satisfies the triangle inequality. The following theorem addresses both
questions, and will be proved in Section~\ref{sec:prelim}.

\begin{theorem} \label{thm:deltt-metric}
Let $\cW_1$ and $\cW_2$ be signed graphexes in $L^1\cap L^2$. Then the right
side of \eqref{del22-def} does not depend on the choice of the trivial
extensions $\cW_1'$ and $\cW_2'$. Furthermore, given three signed graphexes
$\cW_1,\cW_2,\cW_3$ in $L^1\cap L^2$,
\[
\deltt(\cW_1,\cW_3) \le \deltt(\cW_1,\cW_2)+\deltt(\cW_2,\cW_3)
.
\]
Therefore, $\deltt$ is a well-defined pseudometric.
\end{theorem}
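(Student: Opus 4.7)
My plan is to establish both assertions by exploiting the fact that trivial extensions are invisible to $\d22$: for any trivial extension $\cW'$ of $\cW$ we have $\rho(\cW')=\rho(\cW)$, while $W'$, $S'$, and $D_{\cW'}$ vanish outside the original domain. Consequently, for any coupling $\mu$ of extended measures, the pullbacks $W_i'^{\pi_i}$ and $D_{\cW_i'}^{\pi_i}$ are supported only on $\pi_i^{-1}(\Omega_i)$, and the $\rho$-term in $\d22$ depends only on the underlying $\cW_1,\cW_2$. Separately, I would record a preliminary lemma: if $\mu_{123}$ is any $3$-way coupling with $(i,j)$-marginal $\mu_{ij}$, then the $\d22$-value between $\cW_i^{\pi_i}$ and $\cW_j^{\pi_j}$ is the same whether computed with $\mu_{123}$ or with $\mu_{ij}$. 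For the $\|\cdot\|_{2\to 2}$ term this follows from a conditional-expectation argument, since the $L^2$-directions orthogonal to the pullback are annihilated by the kernel operator; for the other two terms it is immediate.

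For well-definedness, I would compare any two pairs of extensions $(\cW_1^{(a)},\cW_2^{(a)})$ and $(\cW_1^{(b)},\cW_2^{(b)})$ through a common further extension $(\cW_1^{(c)},\cW_2^{(c)})$ obtained by adjoining the extension parts of $(b)$ on top of $(a)$; by symmetry it suffices to show $\tdel22(\cW_1^{(a)},\cW_2^{(a)})=\tdel22(\cW_1^{(c)},\cW_2^{(c)})$. For $\le$, lift a coupling $\mu$ of $(\mu_1^{(a)},\mu_2^{(a)})$ to one of $(\mu_1^{(c)},\mu_2^{(c)})$ by adjoining an independent coupling on the extra extensions; by the support observations above, $\d22$ is unchanged. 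For $\ge$, use a disintegration argument to pass from a coupling of $(\mu_1^{(c)},\mu_2^{(c)})$ to one of $(\mu_1^{(a)},\mu_2^{(a)})$ with $\d22$ no larger, absorbing the excess extension mass into the infinite mass of the underlying spaces.

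For the triangle inequality, I would use the standard gluing argument. Given $\varepsilon$-optimal couplings $\mu_{12}$ of $(\mu_1',\mu_2')$ and $\mu_{23}$ of $(\mu_2',\mu_3')$, disintegrate both along their common $\mu_2'$ factor and glue to form a $3$-way coupling $\mu_{123}$ on $\Omega_1'\times\Omega_2'\times\Omega_3'$ with the prescribed $2$-way marginals; the $(1,3)$-projection $\mu_{13}$ couples $(\mu_1',\mu_3')$. On the triple space, $W_1^{\pi_1}-W_3^{\pi_3}=(W_1^{\pi_1}-W_2^{\pi_2})+(W_2^{\pi_2}-W_3^{\pi_3})$ and similarly for $D$ and $\rho$. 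Combining the triangle inequalities of $\|\cdot\|_{2\to 2}$, $\|\cdot\|_2$, and $|\cdot|$ with $\sqrt{a+b}\le\sqrt{a}+\sqrt{b}$ and $\sqrt[3]{a+b}\le\sqrt[3]{a}+\sqrt[3]{b}$ (which explain the roots in the definition of $\d22$), and invoking the preliminary lemma to identify the $\d22$-values on the triple and pair spaces, yields $\d22(\cW_1^{\pi_1,\mu_{13}},\cW_3^{\pi_3,\mu_{13}})\le \tdel22(\cW_1',\cW_2')+\tdel22(\cW_2',\cW_3')+2\varepsilon$. Sending $\varepsilon\to 0$ gives the triangle inequality for $\tdel22$ on extensions, hence for $\deltt$.

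The main obstacle is the well-definedness argument, specifically the coupling-restriction step for the $\ge$ direction when the extra extension parts have mismatched (possibly finite) masses. A clean resolution is to first reduce to the case where both extension parts have infinite mass, by observing that adjoining a further infinite-mass trivial extension to both sides never changes $\tdel22$ (since the new factor can be coupled independently from everything else). In the infinite-mass regime, mass-matching for all coupling constructions is automatic, and the disintegration arguments proceed without obstruction.
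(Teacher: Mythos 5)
The gluing step in your triangle-inequality argument has a genuine gap. You write that you would ``disintegrate both along their common $\mu_2'$ factor and glue to form a $3$-way coupling,'' but disintegration of measures is not available for arbitrary $\sigma$-finite measure spaces: it requires regularity of the underlying space (e.g.\ standard Borel). The theorem, however, is stated for signed graphexes over completely general $\sigma$-finite spaces $(\Omega,\cF,\mu)$, and nothing in the hypotheses lets you disintegrate $\mu_{12}$ or $\mu_{23}$ over the $\Omega_2'$-fibers. This is precisely the obstruction the paper's proof is built to avoid. The paper first approximates each $\cW_i\in L^1\cap L^2$ by a countable step graphex $\cW_i'$ (so that $\Delta_{2\to 2}(\cW_i,\cW_i')$ is small), and for step graphexes the three-way coupling can be written down by an explicit formula
\[
\mu_{123}(E)=\sum_{p,q,r}\frac{\mu_{12}(A_p\times B_q)\,\mu_{23}(B_q\times C_r)}{\mu_1(A_p)\mu_2(B_q)^2\mu_3(C_r)}\,(\mu_1\times\mu_2\times\mu_3)(E\cap(A_p\times B_q\times C_r)),
\]
which is a discrete stand-in for disintegration that needs nothing but the atomic step structure. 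The approximation then passes to general graphexes via the pullback-invariance of $\Delta_{2\to 2}$ (the paper's Lemma~\ref{lemmapullbacksame}, which plays the role of your ``preliminary lemma''). Without this intermediate reduction your argument does not close.

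Two smaller points. First, your ``preliminary lemma'' claiming that $\d22$ is the same when computed on the triple space or on the pair space is fine, and the conditional-expectation argument you sketch is essentially the content of the paper's Proposition~\ref{proppullbacknormsame}. Second, your well-definedness argument leans on a ``disintegration argument'' in the $\ge$ direction as well; the same objection applies there. The paper instead establishes well-definedness (Lemma~\ref{lem:extension}) by a direct coupling construction for the $\le$ direction and a monotone-convergence/approximation argument for the $\ge$ direction, and then also feeds the already-proved triangle inequality back in, so it never needs to disintegrate a general measure. You correctly identify that reducing to the infinite-mass regime removes mass-matching headaches, but that reduction does not repair the disintegration issue, which is about fiberwise structure rather than total mass.
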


\begin{remark}
In \cite{BCCH16}, when defining the cut distance between two graphons, it was
only necessary to extend the smaller space to the larger one, and it was not
necessary to extend further. It is natural to ask whether a trivial extension
to a space of infinite metric is necessary, or, equivalently, whether for two
graphexes $\cW_1,\cW_2$ defined on spaces with the same (finite) measure,
$\tdel22(\cW_1,\cW_2)=\deltt(\cW_1,\cW_2)$. In contrast to the cut distance
discussed in \cite{BCCH16}, for the kernel metric it is sometimes necessary
to take trivial extensions of both spaces, not just an extension of the
smaller space to one of the same measure as the larger one. See
Example~\ref{ex1} in Section~\ref{sec:prelim}.
\end{remark}

Our next theorem states that on sets with uniformly bounded marginals, the
topology induced by the kernel metric $\deltt$ is equivalent to the topology
of GP-convergence. We will prove it in Sections~\ref{sec:subgraph-counts} and
\ref{sec:sampling}.

\begin{theorem}
\label{thm:deltt-GP} For any $D>0$, $\deltt$-convergence is equivalent to
GP-convergence on the space of graphexes with $D$-bounded marginals.
\end{theorem}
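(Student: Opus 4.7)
The plan is to prove the two directions separately, with the forward direction ($\deltt$-convergence $\Rightarrow$ GP-convergence) controlled via finite subgraph counts, and the backward direction via a compactness/identifiability argument that uses Theorem~\ref{thm:identify} and the regularity lemma mentioned in the introduction.

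For the forward direction, I would show that on the class of graphexes with $D$-bounded marginals, $\deltt$-closeness implies closeness of all finite subgraph densities in $\cG_T$. Given a finite graph $F$ on $v$ vertices, the expected number of $F$-copies in $G_T(\cW)$ decomposes as a polynomial-type expression in factors of $W$ (for edges between ``kernel'' vertices), $S$ (for star leaves) and $I$ (for dust edges), with $D_\cW$ controlling the marginal structure. A standard telescoping estimate, together with the equivalence of $\|\cdot\|_{\jbl}$ and $\|\cdot\|_{2\to 2}$ norms on $D$-bounded marginal graphexes (Lemma~\ref{lemmasquare2to2equiv}, valid here since $W\in[0,1]$ and $D_\cW\in L^1\cap L^\infty\subset L^2$ by Proposition~\ref{prop:local-finite}), then bounds the difference in expected $F$-counts by a polynomial in $\deltt$. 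Convergence of all finite subgraph counts, combined with uniform moment control from $D$-bounded marginals, determines the law of $G_T(\cW)$ in $\fG_0$ and hence gives GP-convergence.

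For the backward direction, the key tools are precompactness and Theorem~\ref{thm:identify}. By the Frieze--Kannan-type regularity lemma for graphexes developed in the paper, the class of graphexes with $D$-bounded marginals is precompact under $\deltt$. So if $\cW_n$ GP-converges to $\cW$ with all graphexes in this class, every subsequence of $(\cW_n)$ has a further subsequence $\deltt$-converging to some limit $\cW_\infty$ with $D$-bounded marginals. By the forward direction, $\cW_\infty$ is GP-equivalent to $\cW$; by Theorem~\ref{thm:identify}, $\cW_\infty|_{\dsupp\cW_\infty}$ and $\cW|_{\dsupp\cW}$ are both pullbacks of a common graphex $\cW^\ast$ under measure-preserving maps. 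A short calculation then shows $\deltt(\cW_\infty,\cW)=0$: kernel norms, $L^2$-distances of marginals, and edge densities are invariant under measure-preserving pullbacks, trivial extensions do not change $\deltt$ by Theorem~\ref{thm:deltt-metric}, and the set where $D_\cW=0$ contributes trivially to all three terms in the definition \eqref{d22-def}. A standard subsequence argument then gives $\deltt(\cW_n,\cW)\to 0$ along the whole sequence.

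The hard part is the precompactness step, i.e.\ the regularity lemma for graphexes with $D$-bounded marginals. Unlike the dense case, the underlying space is $\sigma$-finite, so one must simultaneously construct a finite partition of a ``heavy'' part (where $D_\cW$ is not too small) that captures the kernel structure of $W$ in the $\|\cdot\|_{2\to 2}$ sense, while truncating and handling the unbounded-measure tails where $D_\cW$ is small, and folding $S$ and $I$ into the approximation through the extended feature construction of Remark~\ref{rem:graphex-process}(3). The uniform bound $\|D_\cW\|_\infty\le D$ together with integrability will be essential, since it forces $D_\cW\in L^1\cap L^2$ and gives the uniform $L^2$-control needed to run the Frieze--Kannan energy-increment argument; a secondary difficulty is verifying that this approximation is uniform enough to yield a genuine $\eps$-net, which requires quantitative bounds rather than just existence.
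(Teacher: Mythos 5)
Your overall outline — forward direction via subgraph counts, backward direction via compactness plus a distance-zero argument from GP-equivalence — matches the paper's strategy. But there is one genuine gap, and the backward direction takes an unnecessarily heavy detour.

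The gap is the assertion that ``the class of graphexes with $D$-bounded marginals is precompact under $\deltt$.'' This is false. The bound $\|D_\cW\|_\infty\le D$ alone does not control $\|\cW\|_1$: take $S_n\equiv D$ on $[0,n]$, $W_n=0$, $I_n=0$; then $\|D_{\cW_n}\|_\infty=D$ but $\|\cW_n\|_1=2Dn\to\infty$, so $\rho(\cW_n)$ diverges and no $\deltt$-convergent subsequence exists. What Theorem~\ref{thm:complete} actually gives is that the set of \emph{$(C,D)$-bounded} graphexes is $\deltt$-compact, i.e.\ you need a uniform bound on $\|\cW_n\|_1$ as well. In your setting this bound is available, but it must be extracted from the hypothesis: GP-convergence of $(\cW_n)$ implies the sequence is tight (Theorem~\ref{theoremtightequiv}), and a tight set with $D$-bounded marginals is $(C,D)$-bounded for some finite $C$ by Corollary~\ref{cor:C-bounded-tight}(2). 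Without this step your subsequence extraction is unjustified as stated.

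The detour: to deduce $\deltt(\cW_\infty,\cW)=0$ from GP-equivalence, you invoke the full identifiability theorem (Theorem~\ref{thm:identify}), which relies on the canonical-graphex construction of Section~\ref{sec:canonical} and the coupling Theorem~\ref{coupling}. This is not circular (Theorem~\ref{thm:identify} does not depend on what you are proving), but it is considerably heavier than needed. The paper instead uses Theorem~\ref{theoremsamedistribution} — that $\delGP(\cW,\cW')=0$ iff $G_T(\cW)$ and $G_T(\cW')$ have the same distribution for every $T$ — which is a direct corollary of the sampling lemma (Theorem~\ref{theoremsamplesconverge}) and Theorem~\ref{theoremdimpliessampling}, and then passes from $\delGP=0$ to $\deltt=0$ via Proposition~\ref{propboundedequivmetrics}. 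In fact the paper proves the $\delGP$ version (Theorem~\ref{thm:delGP-GP}) first and then obtains the $\deltt$ version immediately from Proposition~\ref{propboundedequivmetrics}, rather than attacking $\deltt$ directly. Finally, a minor point on the forward direction: the counting lemma (Lemma~\ref{lemmatFclose}) is formulated directly in terms of $\|\cdot\|_{2\to2}$ and $\|D_{\cW_1}-D_{\cW_2}\|_2$, so the equivalence of the jumble and kernel norms (Lemma~\ref{lemmasquare2to2equiv}) you invoke is not actually needed there; it enters only in the proof of the regularity lemma.
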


In general, $\deltt$-convergence implies GP-convergence, but the reverse is
not true. This is because if we do not assume bounded marginals, it is
possible to have a very small measure set with very large degree. This will
have a non-negligible effect on $\deltt$ distance; however, for a fixed $T$,
the chances of obtaining a vertex in the small set is small, and thus has a
small effect on sampling. To give a more concrete example, let $W_n$ be equal
to $1$ on $[0,1/n] \times [1,1+n]$ and $[1,1+n] \times [0,1/n]$, and zero
everywhere else. Let $\cW_n=(W_n,0,0,\RR_+)$. Then for any fixed $T$, the
probability of seeing a single edge in $G_T(\cW_n)$ converges to $0$, and
therefore $\cW_n$ is GP-convergent to $0$. However, it is easy to see that
$\deltt(\cW_n,0)$ does not converge to $0$. To address this issue, we will
define a new distance such that two graphexes whose graphex processes can be
obtained from each other by removing a small set of vertices are close in the
new metric. Our construction is loosely motivated by the construction of the
usual metric of weak convergence. For that reason, we will refer to the new
metric as the weak kernel metric.

Before defining this distance, we introduce the notation $\mu-r \le \mu' \le
\mu$ whenever $\mu,\mu'$ are two measures over the same measurable space
$(\Omega,\cF)$ such that
\[\mu(B)-r \le \mu'(B) \le \mu(B)\]
for all measurable sets $B$. Note that this property is equivalent to the
existence of a function $h\colon \Omega \rightarrow [0,1]$ such that
$\mu'(B)=\int_{B}h \,d\mu$ and $\|1-h\|_{1,\mu} \le r$. An example of such a
function, which we will often use, is the indicator function of a set $\Omega'\subseteq\Omega$
such that $\mu(\Omega\setminus\Omega') \le r$.

We will define the weak kernel metric for arbitrary graphexes (removing the
condition that they are in $L^1\cap L^2$), and in fact will again allow for
signed graphexes. We will assume that the graphon parts of these signed
graphexes are bounded in the $L^\infty$ norm, a condition which is true for
unsigned graphexes, since for these, the graphon part takes values in
$[0,1]$.

\begin{definition}\label{def:delGP}
Let $\cW_1=(W_1,S_1,I_1,\bOmega_1)$ and $\cW_2=(W_2,S_2,I_2,\bOmega_2)$ be
signed graphexes, where $\bOmega_i=(\Omega_i,\cF_i,\mu_i)$ and
$\|W_i\|_\infty<\infty$ for $i=1,2$. We define $\delGP(\cW_1,\cW_2)$ as the
infimum of the set of real numbers $c$ such that there exist two measures
$\widetilde\mu_1$ and $\widetilde\mu_2$ over $(\Omega_1,\cF_1)$ and
$(\Omega_2,\cF_2)$ that satisfy the following: the signed graphexes
$\widetilde \cW_1$ and $\widetilde \cW_2$ obtained from $\cW_1$ and $\cW_2$
by replacing $\mu_1$ and $\mu_2$ by $\widetilde\mu_1$ and $\widetilde\mu_2$,
respectively, are in $L^1\cap L^2$, and
\begin{enumerate}
\item for $i=1,2$, we have $\mu_i-c^2 \le \widetilde\mu_i \le \mu_i$, and
\item $\deltt(\widetilde \cW_1,\widetilde \cW_2)\leq c$.
\end{enumerate}
We refer to $\delGP(\cW_1,\cW_2)$ as the \emph{weak kernel distance} between
$\cW_1$ and $\cW_2$ and call $\delGP$ the \emph{weak kernel metric}.
\end{definition}

Note that for unsigned graphexes, the weak kernel distance is well defined
and finite. Indeed, given $0<D<\infty$, choose $\widetilde\mu_i$ as the
restriction of $\mu_i$ to $\{D_{\cW_i}\leq D\}$.
Proposition~\ref{prop:local-finite} then implies that $\{D_{\cW_i}>D\}$ has
finite measure, and $\cW_i|_{\{D_{\cW_i}\leq D\}}$ is integrable and hence in
$L^1\cap L^2$. The fact that $\delGP(\cW_1,\cW_2)$ is well defined for signed
graphexes with bounded graphon part follows from
Proposition~\ref{prop:local-finite} and further arguments, and is deferred to
Section~\ref{sec:prelim}; see in particular
Lemma~\ref{lem:signed-delGP<infty} in that section.

We will show that $\delGP$ is a pseudometric. It is clear that it is
symmetric, and that $\delGP(\cW,\cW)=0$. It is not obvious that it satisfies
the triangle inequality. We will prove this fact in Section \ref{sec:prelim}.

\begin{theorem} \label{thm:dmetric}
Given three signed graphexes $\cW_1,\cW_2,\cW_3$ with bounded graphon part,
\[\delGP(\cW_1,\cW_3) \le \delGP(\cW_1,\cW_2)+\delGP(\cW_2,\cW_3) .\]
Therefore, $\delGP$ is a pseudometric.
\end{theorem}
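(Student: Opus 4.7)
Fix $\epsilon>0$ and set $a=\delGP(\cW_1,\cW_2)+\epsilon$ and $b=\delGP(\cW_2,\cW_3)+\epsilon$. The plan is to produce explicit witnesses for $\delGP(\cW_1,\cW_3)\le a+b$ by gluing the two input witnesses along a common sub-measure on $\Omega_2$. By Definition~\ref{def:delGP} (up to an arbitrarily small slack absorbed into $\epsilon$), extract sub-$1$ densities $h_1, h_2^{(12)}, h_2^{(23)}, h_3$ on the respective $\Omega_i$ such that $\|1-h_1\|_{1,\mu_1}, \|1-h_2^{(12)}\|_{1,\mu_2} \le a^2$ and $\|1-h_2^{(23)}\|_{1,\mu_2}, \|1-h_3\|_{1,\mu_3} \le b^2$, with $\deltt(\widetilde\cW_1, \widetilde\cW_2^{(12)}) \le a$ and $\deltt(\widetilde\cW_2^{(23)}, \widetilde\cW_3) \le b$ for the shrunk graphexes. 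Unpack these $\deltt$-bounds into couplings $\eta_{12}, \eta_{23}$ on the products of the trivially extended spaces, each obeying the three-fold $\d22$ bound with its respective constant.

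\textbf{Construction of the three-way coupling.} Set $h = \min(h_2^{(12)}, h_2^{(23)})$ and $\widehat\mu_2 = h\cdot\mu_2$, so $\|1-h\|_{1,\mu_2} \le a^2+b^2$. Restrict $\eta_{12}$ along its $\Omega_2$ coordinate by the sub-$1$ density $h/h_2^{(12)}$ (extended by $1$ on the trivial-extension part) to produce $\eta_{12}'$, whose $\Omega_2$-marginal is $\widehat\mu_2$ and whose $\Omega_1$-marginal $\widehat\mu_1$ satisfies $\widetilde\mu_1-\widehat\mu_1\le\int(h_2^{(12)}-h)\,d\mu_2\le b^2$ in total mass; hence $\mu_1-\widehat\mu_1\le a^2+b^2\le(a+b)^2$. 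Analogously produce $\eta_{23}'$ with $\Omega_2$-marginal $\widehat\mu_2$ and $\Omega_3$-marginal $\widehat\mu_3$ satisfying $\mu_3-\widehat\mu_3\le(a+b)^2$. Since $\eta_{12}'$ and $\eta_{23}'$ share a common $\Omega_2$-marginal, glue them via disintegration along $\widehat\mu_2$ into a three-way coupling $\eta$ on (the trivial extensions of) $\Omega_1\times\Omega_2\times\Omega_3$, and let $\eta_{13}$ be its projection to $\Omega_1\times\Omega_3$. These $\widehat\mu_1, \widehat\mu_3$ are the candidate witnesses for $\delGP(\cW_1,\cW_3)\le a+b$.

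\textbf{Triangle bound and main obstacle.} Lifting the three pullback graphexes to $\Omega_1\times\Omega_2\times\Omega_3$ with measure $\eta$, apply Minkowski to the operator norm $\|\cdot\|_{2\to 2}$ and to the $L^2$-norm of the marginal difference, together with the triangle inequality for $|\cdot|$ on the edge density $\rho$, and then the concavity inequalities $\sqrt{x+y}\le\sqrt x+\sqrt y$ and its cube-root analog, to conclude $\d22(\cW_1^{\pi_1,\eta_{13}}, \cW_3^{\pi_3,\eta_{13}})\le a+b$. This yields $\deltt(\widehat\cW_1,\widehat\cW_3)\le a+b$, hence $\delGP(\cW_1,\cW_3)\le a+b$; sending $\epsilon\to 0$ finishes the proof. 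The main obstacle is showing that the $\d22$ bounds on $\eta_{12}$ and $\eta_{23}$ transfer to the shrunk couplings $\eta_{12}'$ and $\eta_{23}'$: for the operator-norm part this is immediate, since multiplying the underlying measure by a density $\le 1$ contracts $\|\cdot\|_{2\to 2}$, but the $D$-marginal and $\rho$-edge-density portions of $\d22$ depend on the new $\Omega_i$-marginals. One must bound the pointwise perturbations $|D_{\widehat\cW_i}-D_{\widetilde\cW_i}|$ by the removed mass ($\le a^2+b^2$) together with the boundedness of the graphon parts, and then absorb them into $a+b$ using the quadratic and cubic scaling baked into $\d22$.
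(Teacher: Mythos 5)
Your construction of the three-way coupling mirrors the paper's: both reduce the $\Omega_2$-marginals via $h=\min(h',h'')$, shrink the two couplings accordingly, glue along the common $\widetilde\mu_2$-marginal, and project to $\Omega_1\times\Omega_3$. That part is sound (though the paper avoids "disintegration"---which requires a standard Borel assumption---by first approximating by step graphexes via Lemma~\ref{lemmacouplingsequence}, a cleaner route on arbitrary $\sigma$-finite spaces).

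The real problem is your proposed resolution of what you correctly identify as the main obstacle. You want to bound the \emph{pointwise} perturbations $|D_{\widehat\cW_i}-D_{\widetilde\cW_i}|$ using $\|W_i\|_\infty$ and the removed mass $\le a^2+b^2$. But a pointwise bound of the form $B(a^2+b^2)$ does not yield an $L^2$ bound on $D_{\widehat\cW_1}-D_{\widehat\cW_2}$ when the set $\{h<1\}$ has infinite measure (it need only have small total $(1-h)$-mass, not small measure). The same issue sinks the $\rho$-component. The correct move, which the paper isolates in Lemma~\ref{lemmaremovedsetdistancenotworse}, is to subtract \emph{before} integrating: writing
\[
D_{\cW_1'}(x)-D_{\cW_2'}(x)=\bigl(D_{\cW_1}(x)-D_{\cW_2}(x)\bigr)-\int (W_1-W_2)(x,y)\,(1-h(y))\,d\mu(y),
\]
the $L^2$ norm of the second term is controlled by $\|W_1-W_2\|_{2\to 2}\,\|1-h\|_2\le a\sqrt r$, where $r$ is the removed mass. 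This uses the operator norm of the \emph{difference}, not $\|W_i\|_\infty$, and in fact does not need bounded graphon parts at all; the boundedness hypothesis in the theorem only ensures $\delGP<\infty$ (via Lemma~\ref{lem:signed-delGP<infty}). Likewise for $\rho$, the perturbation is bounded by $2b\sqrt r+ar$, using $\|D_{\cW_1}-D_{\cW_2}\|_2$ and $\|W_1-W_2\|_{2\to 2}$. Only with these sharper, "difference-form" estimates do the contributions telescope to exactly $(c_1+c_2)^2$ and $(c_1+c_2)^3$---e.g. $c_1^2+c_1c_2$ from one side and $c_2^2+c_1c_2$ from the other for the $L^2$ marginal term. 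A crude $L^\infty$ perturbation would both fail to live in $L^2$ and fail to combine into the required power of $c_1+c_2$.
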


\begin{remark} \label{rem:pullbacksamedistance}
Given a signed graphex $\cW=(W,S,I,\bOmega)$ with $\bOmega=(\Omega,\cF,\mu)$
and a measure-preserving map $\varphi\colon \bOmega' \rightarrow \bOmega$,
let $\cW'=\cW^\varphi$ almost everywhere. We can take a coupling
$\widetilde{\mu}$ on $\bOmega' \times \bOmega$ defined by $\widetilde{\mu}(A
\times B)= \mu'(A \cap \varphi^{-1}(B))$. It is easy to see that then the
pullbacks of the two signed graphexes to $\bOmega' \times \bOmega$ will be
equal almost everywhere, which implies that
$\deltt(\cW,\cW')=\tdel22(\cW',\cW)=\delGP(\cW',\cW)=\tdelGP(\cW',\cW)=0$.
\end{remark}

With this new metric, we now have a definition of distance for any pair of
graphexes. Note that in general, the metrics $\deltt$ and $\delGP$ are not be
the same, even if both are finite. However, we will show that for graphexes
with uniformly bounded marginals, the two metrics provide the same topology.
This is the content of the next proposition, which will be proved in
Section~\ref{sec:prelim}.

\begin{proposition} \label{propboundedequivmetrics}
Fix $D<\infty$. Then $\delGP$ and $\deltt$ give an equivalent topology on the
space of graphexes with $D$-bounded marginals.
\end{proposition}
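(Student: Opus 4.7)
The plan is to establish two inequalities that together give topological equivalence on the class of $D$-bounded graphexes. One direction is essentially free: since a graphex with $D$-bounded marginals is integrable by Proposition~\ref{prop:local-finite} and satisfies $\|W\|_\infty \le 1$ and $\|D_\cW\|_\infty \le D$, it lies in $L^1 \cap L^2$, so the choice $\widetilde\mu_i = \mu_i$ is admissible in Definition~\ref{def:delGP}. This gives $\delGP(\cW_1, \cW_2) \le \deltt(\cW_1, \cW_2)$, and in particular $\deltt$-convergence implies $\delGP$-convergence.

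For the converse, I would use the triangle inequality from Theorem~\ref{thm:deltt-metric} to reduce to a single estimate. Suppose $\delGP(\cW_1, \cW_2) < c$, with witnessing measures $\widetilde\mu_i$ satisfying $\mu_i - c^2 \le \widetilde\mu_i \le \mu_i$ and $\deltt(\widetilde\cW_1, \widetilde\cW_2) \le c$. Because $\widetilde\mu_i \le \mu_i$, each $\widetilde\cW_i$ still has $D$-bounded marginals, and
\[
\deltt(\cW_1, \cW_2) \le \deltt(\cW_1, \widetilde\cW_1) + c + \deltt(\cW_2, \widetilde\cW_2).
\]
So it suffices to show that $\deltt(\cW_i, \widetilde\cW_i) \to 0$ as $c \to 0$, at a rate depending only on $D$.

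To bound this, write $\widetilde\mu_i = h_i \mu_i$ with $h_i \colon \Omega_i \to [0,1]$ and $\eta_i := (1-h_i)\mu_i$ of total mass $r_i \le c^2$. I would take trivial extensions to $\Omega_i \sqcup \RR_+$ with measures $\mu_i + \lambda$ and $\widetilde\mu_i + \lambda$ (Lebesgue on $\RR_+$), and construct an explicit coupling $\rho_i$ on the product by placing $\widetilde\mu_i$ on the diagonal of $\Omega_i \times \Omega_i$; coupling $\eta_i$ with an initial segment of the second-side Lebesgue of total mass $r_i$; and shifting the remaining Lebesgue-to-Lebesgue pairing. Under this coupling the two pullback graphons agree on the diagonal block, both vanish on the shifted-Lebesgue block, and differ only on an ``L-shaped'' region where at least one coordinate lies over the residual mass, which has $\rho_i$-mass $\le r_i$ in each coordinate. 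The three contributions to $\d22$ are then bounded separately: the kernel-norm term uses the Schur-type pointwise row bound $\int W_i(x,y)^2 \,d\mu_i(y) \le \|W_i\|_\infty D$, combined with Cauchy-Schwarz against the thin strip, to give $O(\sqrt{Dr_i})$; the marginal term uses $0 \le D_{\cW_i} - D_{\widetilde\cW_i} \le D$ and Fubini to yield $\|D_{\cW_i} - D_{\widetilde\cW_i}\|_{L^2(\widetilde\mu_i)}^2 \le D \int (D_{\cW_i} - D_{\widetilde\cW_i})\,d\widetilde\mu_i \le D^2 r_i$; and the edge-density difference is $O(D r_i)$ via $1 - h_i(x)h_i(y) \le (1-h_i(x)) + (1-h_i(y))$.

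The hard part will be the kernel-norm estimate. Because the coupling space has infinite total mass, naive Cauchy-Schwarz against normalized $f, g$ gives only $\|V_1\|_{2 \to 2} + \|V_2\|_{2 \to 2}$, which is independent of $c$. The decisive move is to localize the support of $V_1 - V_2$ to the region where at least one projection lands in the residual $\eta_i$-part (of $\rho_i$-mass $\le c^2$ in that coordinate) and to combine this localization with the \emph{pointwise} row $L^2$ bound supplied by the bounded marginal; this converts a support of mass $c^2$ into an operator-norm bound of order $\sqrt{D}\,c$. It is precisely this step that requires the $D$-bounded marginal hypothesis, and which turns the $\delGP$-smallness into $\deltt$-smallness.
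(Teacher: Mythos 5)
Your proposal is correct and takes a genuinely different route from the paper's. The paper proves this via Lemma~\ref{lem:delGP-deltt}, which works entirely inside a \emph{single} coupling of trivial extensions of $\cW_1$ and $\cW_2$: given the witnessing measures $\widetilde\mu_i$, the paper couples $\widetilde\mu_1'$ and $\widetilde\mu_2'$, then enlarges that coupling to a coupling $\mu'$ of the full measures $\mu_1'$ and $\mu_2'$, and finally applies Lemma~\ref{lemmaremovednotmuchsmaller} on the coupled space to compare $\d22$ under $\mu'$ vs.\ under $\widetilde\mu'$. You instead break the problem along the triangle inequality $\deltt(\cW_1,\cW_2)\le\deltt(\cW_1,\widetilde\cW_1)+\deltt(\widetilde\cW_1,\widetilde\cW_2)+\deltt(\widetilde\cW_2,\cW_2)$ and bound each end term via a bespoke ``diagonal-plus-shift'' coupling of $\cW_i$ with its own restriction $\widetilde\cW_i$. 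The core technical step in both arguments is the same in spirit---localize the support of the kernel difference to a strip of measure $O(c^2)$ in one coordinate and combine Cauchy--Schwarz with the pointwise row $L^2$ bound $\int W_i(x,y)^2\,d\mu_i(y)\le D_{W_i}(x)\le D$---and indeed this estimate is exactly what powers the upper bound in part~(1) of Lemma~\ref{lemmaremovednotmuchsmaller}.

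Where the routes differ in yield: the paper's Lemma~\ref{lemmaremovednotmuchsmaller} needs $(B,C,D)$-boundedness, so the proof of Proposition~\ref{propboundedequivmetrics} has to first argue that along a $\delGP$-convergent sequence the $\|\cdot\|_1$-norms are uniformly bounded (via convergence of $\rho(\cW_n)$ and Proposition~\ref{prop:local-finite}) before the lemma can be invoked. Your version dispenses with any $C$ bound: the kernel estimate gives $\|V_1-V_2\|_{2\to2}\le 2\sqrt{Dr_i}$, the marginal estimate gives $\|D_{\cW_i}-D_{\widetilde\cW_i}\|_2^2\le 2D^2 r_i$ (including the residual block), and the edge-density estimate $|\rho(\cW_i)-\rho(\widetilde\cW_i)|\le 4Dr_i$ all depend only on $D$ and $r_i\le c^2$. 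That is a genuine simplification for the unsigned case. The trade-off is that the paper's detour through Lemma~\ref{lemmaremovednotmuchsmaller} is also used elsewhere (notably in Lemma~\ref{lem:delGP-deltt} and the proof of Theorem~\ref{thm:dmetric}), so the paper's formulation amortizes better; and the paper simultaneously handles signed $(B,C,D)$-bounded graphexes (Proposition~\ref{prop:signed-met-equiv}), which your argument would also cover with the obvious replacement of $D_{W_i}$ by $B\,D_{|W_i|}$ in the row bound.

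One small caution worth recording when you write this up: when computing $\|D_{V_1}-D_{V_2}\|_2^2$ you must not forget the contribution from the residual block of the coupling (where the first projection lands in $\Omega_i$ but the second in $\RR_+$), where the marginal difference equals $D_{\cW_i}(\pi_1(\cdot))\le D$ on a set of $\rho_i$-mass $r_i$; this doubles the constant but does not change the order of magnitude. Similarly, the kernel-norm difference is supported not just on the two arms $R\times D$ and $D\times R$ but also on $R\times R$, so the localization argument should be stated for the union of the two ``legs'' $R\times(D\cup R)$ and $(D\cup R)\times R$; the bound $2\sqrt{Dr_i}$ survives.
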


We will also show that convergence in the weak kernel metric $\delGP$ is
indeed equivalent to GP-convergence. This is the statement of the next
theorem, and is one of the two main results of this paper. It will be proved
using three main ingredients: a compactness statement stemming from a
suitable analogue of the Frieze-Kannan regularity lemma, a counting lemma
showing that subgraph counts in the graphs $G_T(\cW)$ are close if the
corresponding graphexes are close in the metric $\deltt$ (and the graphexes
have uniformly bounded marginals), and a sampling lemma showing that as
$T\to\infty$, the suitably rescaled graphex process $G_T(\cW)$ converges to
$\cW$ in probability. These techniques are developed in
Sections~\ref{sec:reg}, \ref{sec:subgraph-counts}, and \ref{sec:sampling},
and are combined to prove the theorem at the end of
Section~\ref{sec:sampling}, where we will also prove
Theorem~\ref{thm:deltt-GP}.

\begin{theorem}\label{thm:delGP-GP}
Given a sequence of graphexes $\cW_n$ and a graphex $\cW$, $\cW_n$ is
GP-convergent to $\cW$ if and only if $\delGP(\cW_n,\cW) \rightarrow 0$.
\end{theorem}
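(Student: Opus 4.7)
\emph{Plan.} The proof combines the three ingredients promised in the introduction. Theorem~\ref{thm:deltt-GP} already packages the subgraph-counting lemma of Section~\ref{sec:subgraph-counts} into the equivalence of $\deltt$ and GP-convergence on the space of graphexes with $D$-bounded marginals, and Proposition~\ref{propboundedequivmetrics} tells us that $\deltt$ and $\delGP$ define the same topology there. So the main work is (i) a truncation argument reducing the forward implication to the bounded-marginal case, and (ii) a compactness-plus-identifiability argument for the reverse direction, powered by the Frieze--Kannan-type regularity lemma of Section~\ref{sec:reg} and the sampling lemma of Section~\ref{sec:sampling}.

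\emph{Forward direction.} Fix $T<\infty$ and $\varepsilon>0$. By Proposition~\ref{prop:local-finite}, $\mu(\{D_\cW>D\})<\infty$ for every $D$, and the sampling description of $G_T(\cW)$ shows that the probability of the underlying Poisson process hitting $\{D_\cW>D\}$ during $[0,T]$ can be made smaller than $\varepsilon$ by choosing $D$ large; so $G_T(\cW)$ and $G_T(\cW^{(D)})$, where $\cW^{(D)}:=\cW|_{\{D_\cW\leq D\}}$, are within total-variation distance $\varepsilon$. Now use the definition of $\delGP$ to pick witnessing measures $\widetilde\mu_n^{(1)},\widetilde\mu_n^{(2)}$ for $\delGP(\cW_n,\cW)$, and further restrict both sides to the sub-level sets $\{D_{\wcW_n}\leq D\}$ and $\{D_{\wcW}\leq D\}$; the resulting bounded-marginal graphexes $\wcW_n^{(D)},\wcW^{(D)}$ still satisfy $\deltt(\wcW_n^{(D)},\wcW^{(D)})\to 0$, so Theorem~\ref{thm:deltt-GP} gives $G_T(\wcW_n^{(D)})\to G_T(\wcW^{(D)})$ in distribution. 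Combining with the $\varepsilon$-truncation bounds on both sides and letting $\varepsilon\to 0$ (equivalently, $D\to\infty$) yields GP-convergence of $\cW_n$ to $\cW$.

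\emph{Reverse direction.} Suppose $\cW_n$ is GP-convergent to $\cW$. The first step is to show that $\{\cW_n\}$ is \emph{tight} in the sense required by the regularity-lemma compactness theorem of Section~\ref{sec:reg}: uniform bounds on the edge rate $\rho(\cW_n)=\int W_n+2\int S_n+2I_n$ follow from convergence of $\EE|E(G_T(\cW_n))|$, and uniform control on the upper tail of the marginals $D_{\cW_n}$ is extracted via the sampling lemma from the limiting behavior of the degree distributions of $G_T(\cW_n)$. Tightness and the compactness theorem imply that every subsequence of $(\cW_n)$ has a further subsequence converging in $\delGP$ to some graphex $\cW^*$; the forward direction just proved then shows that this subsequence also GP-converges to $\cW^*$, and since by hypothesis it GP-converges to $\cW$, the graphs $G_T(\cW^*)$ and $G_T(\cW)$ have the same distribution for every $T$. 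The identifiability theorem (Theorem~\ref{thm:identify}) then produces a common graphex of which $\cW|_{\dsupp\cW}$ and $\cW^*|_{\dsupp\cW^*}$ are measure-preserving pullbacks. Since $W\equiv 0$ and $S\equiv 0$ off the degree support, the trivial extension of $\cW|_{\dsupp\cW}$ agrees with $\cW$, so the restriction to $\dsupp\cW$ has no effect on $\delGP$; together with Remark~\ref{rem:pullbacksamedistance} this yields $\delGP(\cW,\cW^*)=0$, and a standard subsequence argument forces $\delGP(\cW_n,\cW)\to 0$.

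\emph{Main obstacle.} The sharpest step is verifying tightness for a GP-convergent sequence. Bounds on $\rho(\cW_n)$ drop out of convergence of edge counts at a single time $T$, but controlling the upper tail of $D_{\cW_n}$ uniformly in $n$ is more delicate, since a shrinking set of vertices with moderately large marginals might escape detection in $G_T(\cW_n)$ at any fixed $T$ while still obstructing $\delGP$-convergence. Ruling this out draws on the full strength of the regularity lemma of Section~\ref{sec:reg} and a careful application of the sampling lemma of Section~\ref{sec:sampling}. Once tightness is in hand, the remaining steps---compactness, the forward direction, identifiability, and the subsequence principle---proceed cleanly.
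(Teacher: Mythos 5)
Your high-level structure matches the paper's: the forward direction reduces to the bounded-marginal case via a truncation on $\{D_\cW\leq D\}$ (this is exactly Proposition~\ref{propgeneralconvergenceequiv} plus Theorem~\ref{thm:deltt-implies-GPconv}, packaged in the paper as Theorem~\ref{theoremdimpliessampling}); the reverse direction runs tightness $\Rightarrow$ compactness $\Rightarrow$ subsequential $\delGP$-limit $\cW^*$, establishes that $G_T(\cW)$ and $G_T(\cW^*)$ are equidistributed, concludes $\delGP(\cW,\cW^*)=0$, and closes with the subsequence principle. So the skeleton is the same, and the argument is correct modulo one detour and one misattribution worth flagging.

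The detour: to pass from ``$G_T(\cW)\overset{d}{=}G_T(\cW^*)$ for all $T$'' to ``$\delGP(\cW,\cW^*)=0$,'' you invoke the identifiability theorem (Theorem~\ref{thm:identify}) and then unwind pullbacks via Remark~\ref{rem:pullbacksamedistance}. This is logically valid (no circularity, since Theorem~\ref{thm:identify} rests on Theorem~\ref{theoremsamedistribution} and not on Theorem~\ref{thm:delGP-GP}), but it is heavier than necessary: the paper uses Theorem~\ref{theoremsamedistribution} directly, which is precisely the metric-level statement you need and is already established at the end of Section~\ref{sec:sampling} using only the sampling lemma and the forward direction. The identifiability theorem is a much later and much stronger structural result; appealing to it here imports the coupling machinery of Section~\ref{sec:idnetify} for no gain.

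The misattribution: in your ``main obstacle'' paragraph you say that proving tightness of a GP-convergent sequence ``draws on the full strength of the regularity lemma of Section~\ref{sec:reg} and a careful application of the sampling lemma.'' That is not quite right. Tightness is Theorem~\ref{theoremtightequiv}, equivalence $(5)\Rightarrow(3)$, and its proof is a self-contained Chebyshev/Poisson-tail argument (via Lemma~\ref{lem:edgebound} and red-blue coloring) living in Section~\ref{sec:tight}; it does not use the regularity lemma. The regularity lemma enters only through the compactness statement Theorem~\ref{thm:complete}, which is the \emph{next} step after tightness. The sampling lemma is used in Theorem~\ref{theoremsamedistribution} (your step after compactness), not in the tightness step. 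Your outline has the right ingredients; the dependency graph is just drawn slightly askew.
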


\begin{remark}
The reader might wonder whether instead of building our metric for
GP-convergence around the kernel norm $\|\cdot\|_{2\to 2}$, one could
equivalently build it around the cut norm, $\|\cdot\|_{\square}$. Concretely,
one might want to define $d_\square$ by replacing the kernel norm in
\eqref{d22-def} by the cut norm and the $L^2$ norm by the $L^1$ norm, then
proceed as in \eqref{tdel22-def} and \eqref{del22-def} to obtain a cut
distance $\delta_\square$ between graphexes with bounded marginals, and
finally proceed as in Definition~\ref{def:delGP} to obtain a ``weak cut
metric'' for arbitrary graphexes.

The following example shows that this approach does not work, in that it will
not metricize GP-convergence. Define $W_n$ to be the graphex that is constant
and equal to $n^{-2}$ over $[0,n]^2$ and $0$ everywhere else, and set
$\cW_n=(W_n,0,0,\RR_+)$, where $\RR_+$ is equipped with the Lebesgue measure.
The marginal $D_{\cW_n}$ of $\cW_n$ is then equal to $1/n$ times the
indicator function of the interval $[0,n]$, and its $L^1$ norm is equal to
$1$. It is then not hard to check that $\cW_n$ converges to the pure dust
graphex $(0,0,1,\RR_+)$ in the metric $\delGP$. Indeed, $\|W_n\|_{2\to2}\to
0$ and $\|D_{\cW_n}\|_2=n^{-1/2}\to 0$, while $\|\cW_n\|=1\to 1=\|\cW\|_1$,
which immediately implies convergence in the metric $\delGP$ and hence
GP-convergence (based on the proof of equivalence in this paper, though for
this specific case it is simple to check GP-convergence directly). By
contrast, $\|W_n\|_\square =\|W_n\|_1 =1$ stays bounded away from zero,
showing in particular that $\cW_n$ does not converge to $\cW$ in the cut
metric $\delta_\square$. Since changing the Lebesgue measure to a measure
$\mu_n$ such that $\lambda-\eps_n\leq\mu_n\leq \lambda$ with $\eps_n \to0$
will asymptotically not change the cut norm of $W_n$, the graphexes $\cW_n$
do not converge to $\cW$ in the weak cut metric either. Note that this can't
be cured by choosing a different norm for the marginal difference
$D_{\cW_1}-D_{\cW_2}$, e.g., by keeping the $L^2$ norm for that part, since
the above counter example works independently of the norm used for that part.
\end{remark}

In studying the general topology of graphexes, we define a notion of
tightness for sets of graphexes. Tight sets play an important role, in
particular, they are the precompact sets in our topology: any sequence that
is tight has a convergent subsequence, and any convergent sequence must be
tight.

\begin{definition}
\label{def:tight} A set $\cS$ of graphexes is \emph{tight} if for every
$\varepsilon>0$, there exist $C$ and $D$ such that for every $\cW \in \cS$,
$\cW=(W,S,I,\bOmega)$ with $\bOmega=(\Omega,\cF,\mu)$, there exists
$\Omega_\varepsilon \subseteq \Omega$ such that $\mu(\Omega_\varepsilon) \le
\varepsilon$ and the graphex $\cW'=\cW|_{\Omega \setminus
\Omega_\varepsilon}$ is $(C,D)$-bounded. Here a graphex $\cW'$ is called
$(C,D)$-\emph{bounded} if its marginals are $D$-bounded and $\|\cW'\|_1 \le
C$.
\end{definition}

Note that Proposition~\ref{prop:local-finite} implies that every finite set
of graphexes is tight. In Section~\ref{sec:tight}, we will prove that a set
$\cS$ of graphexes is tight if and only if for all fixed $T$, the
corresponding set $\{G_T(\cW)\}_{\cW\in \cS}$ of unlabeled graphex processes
at time $T$ is tight (which will also be equivalent to the existence of some
$T>0$ such that $\{G_T(\cW)\}_{\cW\in \cS}$ is tight; see
Theorem~\ref{theoremtightequiv} below). Here, as usual, a collection
$\mathscr S$ of distributions on finite graphs is called tight if for every
$\varepsilon>0$, there exists a finite set $T$ of graphs such that for each
of the random graphs in $\mathscr S$, the probability that the random graph
is not isomorphic to a graph in $T$ is at most $\varepsilon$. This is
equivalent to the set of random measures being tight under the discrete
topology on the set of isomorphism classes of finite graphs, or the set of
distributions of the number of edges being tight.

Our main theorem concerning tightness is the following theorem. It will be
proved in Section~\ref{sec:reg}, where we will establish a version of the
weak (or Frieze-Kannan) regularity lemma for graphexes. Note that while our
regularity lemma will hold for signed graphexes, the following is only stated
for unsigned graphexes. The reason is that our proof relies heavily on the
notion of tightness, which we only develop for unsigned graphexes; see also
Remark~\ref{rem:signed-tightness} in Section~\ref{sec:tight}.

\begin{theorem}\label{thm:complete}
The space of all graphexes is complete under the topology induced by the weak
kernel metric $\delGP$. A subset is relatively compact if and only if it is
tight. In particular, for any $C$ and $D$, the set of graphexes with
$\|\cW\|_1 \le C$ is compact under $\delGP$, and the set of $(C,D)$-bounded
graphexes is compact under both $\delGP$ and $\deltt$.
\end{theorem}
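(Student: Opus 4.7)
The plan is to prove the theorem in three movements: first, compactness of $(C,D)$-bounded sets under $\deltt$ (and equivalently under $\delGP$ by Proposition~\ref{propboundedequivmetrics}); then, bootstrapping this to show that every tight set is relatively compact under $\delGP$; and finally, handling the converse direction and completeness together by showing that any Cauchy sequence in $\delGP$ is tight.

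For the bounded case, my strategy is to apply the Frieze--Kannan-type regularity lemma from Section~\ref{sec:reg} as the main compactness engine. Given a sequence $(\cW_n)$ of $(C,D)$-bounded graphexes, and a resolution $\eps>0$, the regularity lemma should produce, for each $n$, a partition of the underlying space into at most $K(\eps)$ classes and a ``step graphex'' $\cW_n^\eps$ which approximates $\cW_n$ in the kernel norm up to $\eps$; moreover the step graphexes live in a finite-dimensional set whose parameters are constrained by $C$ and $D$. After compactifying this parameter space (bounded block sizes, bounded block values), a diagonal extraction over $\eps = 1/k$ yields a subsequence along which the step approximations converge, and an application of the triangle inequality for $\deltt$ (Theorem~\ref{thm:deltt-metric}) identifies a limit graphex $\cW$ with $\deltt(\cW_{n_j},\cW)\to 0$. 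Proposition~\ref{propboundedequivmetrics} then gives $\delGP$-convergence as well. For the set $\{\cW : \|\cW\|_1\le C\}$, one observes that Markov's inequality gives $\mu(\{D_\cW>D\})\le C/D$ and that $\cW|_{\{D_\cW\le D\}}$ is automatically $(C,D)$-bounded, so every such set is tight.

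The step from bounded marginals to general tight sets proceeds by a second diagonal argument. Given a tight sequence $(\cW_n)$, Definition~\ref{def:tight} supplies, for each $k$, a small exceptional set $\Omega_{n,k}\subseteq\Omega_n$ with $\mu_n(\Omega_{n,k})\le 1/k$ such that $\cW_n^{(k)}:=\cW_n|_{\Omega_n\setminus\Omega_{n,k}}$ is $(C_k,D_k)$-bounded. Applying the previous paragraph to each $k$ and extracting a diagonal subsequence, I obtain a single subsequence $(\cW_{n_j})$ along which, for every $k$, $\cW_{n_j}^{(k)}$ converges in $\deltt$ (hence $\delGP$) to some $\cW^{(k)}$. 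Because the restriction operation corresponds, in Definition~\ref{def:delGP}, precisely to replacing $\mu$ by a measure $\widetilde\mu\le \mu$ with $\mu-1/k\le\widetilde\mu\le\mu$, one has $\delGP(\cW_{n_j},\cW_{n_j}^{(k)})\le 1/\sqrt{k}$, and similarly the $\cW^{(k)}$ form a Cauchy system whose limit can be assembled into a single graphex $\cW_\infty$. The triangle inequality (Theorem~\ref{thm:dmetric}) then gives $\delGP(\cW_{n_j},\cW_\infty)\to 0$.

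For the converse and completeness, suppose first that $\cW_n\to\cW$ in $\delGP$. Since $\cW$ is itself a graphex, Proposition~\ref{prop:local-finite} says $\{\cW\}$ is tight; and the defining restriction in Definition~\ref{def:delGP}, together with the fact that $\delGP(\cW_n,\cW)\to 0$, allows us to transfer tightness from $\cW$ to the whole sequence by perturbing the exceptional sets by sets of measure at most $\delGP(\cW_n,\cW)^2$. Hence relatively compact sets are tight. For completeness, let $(\cW_n)$ be Cauchy in $\delGP$; the same perturbation argument shows Cauchy sequences are tight, so by the previous step a subsequence converges to some $\cW$, and the Cauchy property propagates this to the whole sequence. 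The main obstacle, and the place requiring the most care, is the second diagonal argument: one must verify that the restricted graphexes $\cW^{(k)}$ really fit together into a single graphex $\cW_\infty$ on a common $\sigma$-finite space, and that the errors introduced by the exceptional sets of measure $1/k$ can be absorbed into the admissible measures $\widetilde\mu_i$ in Definition~\ref{def:delGP} uniformly in $n$, so as to feed into the triangle inequality for $\delGP$ cleanly.
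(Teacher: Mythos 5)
Your high-level architecture is close to the paper's, but the central construction is missing and you have a genuine circularity at the key step. In both the $(C,D)$-bounded case and the tight case, you apply the regularity lemma independently at each resolution $\eps = 1/k$, extract a diagonal subsequence so that the step approximations converge (in a finite-dimensional sense) to step graphexes $\cW^{(k)}$, and then observe that the $\cW^{(k)}$ form a Cauchy sequence in $\deltt$ (resp.\ $\delGP$). At that point you assert that a limit $\cW_\infty$ ``can be assembled,'' and say the triangle inequality ``identifies'' it. But the existence of a limit of a Cauchy sequence is precisely the completeness you are trying to prove; nothing in the triangle inequality produces it. You flag the difficulty in your closing paragraph, but you treat it as a bookkeeping issue, when in fact it is the heart of the entire theorem and requires a construction you do not supply.

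The paper breaks this circle by the specific architecture of Lemma~\ref{lemmapartitionsequence} and Lemma~\ref{lem:tight-relcomp}. Rather than choosing the resolution-$k$ partition of each graphex independently, it constructs for each graphex a \emph{nested} chain of subspace partitions $\sP_0, \sP_1, \sP_2, \dots$ with each $\sP_{k+1}$ a refinement of $\sP_k$, with nested exceptional sets $P_{k+1,0} \subseteq P_{k,0}$, and crucially with all parts of each $\sP_k$ having a common dyadic size $2^{-a_k}$. Using Remark~\ref{rem:stepgraphexesequiv}, each step graphex is then replaced by an equivalent one supported on an explicit system of intervals in $\RR_+$; and because the partitions are refinements, the step graphons $W^{k,k_0}$ at successive resolutions satisfy $(\cW^{k+1,k_0})_{\sP_{k,k_0}} = \cW^{k,k_0}$, making them a martingale with respect to the interval filtration. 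The uniform bounds supplied by tightness then let the martingale convergence theorem produce a genuine a.e.\ pointwise limit graphon on $\RR_+$, and those limits across $k_0$ are consistent by construction and so assemble into the limit graphex $\cW_\infty$. It is this nested/refinement structure, together with its consequence that the step objects form a bona fide martingale, that is missing from your proposal and cannot be supplied just by invoking Cauchyness. If you want to pursue your route, you would need to replace the independent applications of Theorem~\ref{regularity-lemma} by iterated applications in which each new partition refines the old (using the ``given any finite subspace partition $\sQ$\dots we can require $\sP$ to be a refinement of $\sQ$'' clause), and then supply the martingale/convergence argument for the limit; at that point you have essentially reconstructed Lemma~\ref{lemmapartitionsequence}.

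The remaining parts of your proposal (that $\{\|\cW\|_1 \le C\}$ is tight via Markov; that $\delGP$ and $\deltt$ agree under bounded marginals; that compactness implies tightness by transferring tightness from a limit; and that Cauchy sequences are tight) are in the spirit of the paper's arguments, though stated schematically.
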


\begin{remark}\label{rem:signed-compactness}
As mentioned above, we only develop the theory of tightness for unsigned
graphexes. In particular, we don't characterize the set of precompact signed
graphexes. That notwithstanding, some of our compactness results do hold for
signed graphexes. Here we only mention that the analogue of the statement for
the set of graphexes with $\|\cW\|_1 \le C$ holds for signed graphexes as
well, provided we restrict the $L^\infty$ norm of the graphon part (which by
definition is bounded by $1$ for unsigned graphexes). To be explicit, any
sequence of signed graphexes $\cW_n=(W_n,S_n,I_n,\bOmega_n)$ with
$\|W_n\|_\infty\leq B$ and $\|\cW_n\|_1\leq C$ has a subsequence converging
to a signed graphex $\cW=(W,S,I,\bOmega)$ with $\|W\|_\infty\leq B$ and
$\|\cW\|_1\leq C$. See Remark~\ref{rem:signed-compactness-pf} in
Section~\ref{sec:reg} below.
\end{remark}

The advantage of $(C,D)$-bounded (unsigned) graphexes is that although there
is no \emph{a priori} bound on the size of $G_T(\cW)$ at any given time $T$,
for any finite graph $F$, the expected number of copies of $F$ in $G_T(\cW)$
is finite. Furthermore, it turns out that under the assumption of
$(C,D)$-boundedness, if two graphexes have the same subgraph densities, then
they are equivalent, i.e., have $\deltt$ distance $0$. In this way, we can
heuristically think of these subgraph densities as being analogous to moments
of random variables: it is well known that moments determine the distribution
of random variables, provided the moments do not grow too quickly.

To make these statements precise, we will define homomorphism densities for a
graphex $\cW$. To this end, we first consider a finite, labeled graph $F$ and
a graphon $W$, and define
\[t(F,W)=\int_{\Omega^{V(F)}} \prod_{(i,j) \in E(F)} W(x_i,x_j) \prod_{i \in V(F)} d\mu(x_i).
\]
Given a connected multigraph $F=(V,E)$ on $k \ge 2$ vertices with no loops,
and a graphex $\cW=(W,S,I,\bOmega)$, we define $t(F,\cW)$ as follows. First,
if $F$ consists of a single edge, we define
\[t(F,\cW)=\int_{\Omega^2} W(x,y)\,d\mu(x)\,d\mu(y) + 2 \int_{\Omega} S(x) \,d\mu(x) + 2I=
\rho(\cW).
\]
Otherwise, let $V_{\ge 2}$ be the set of vertices of $F$ with degree at least
$2$, and for each such vertex $v$, let $d_1(v)$ be the number of neighbors of
$v$ that have degree $1$. Then
\[t(F,\cW)=\int_{\Omega^{V_{\ge 2}}} \prod_{\{v,w\} \in E(F(V_{\geq 2}))}
W(z_v,z_w) \prod_{v \in V_{\ge 2}} D_{\cW}(z_v)^{d_1(v)}
d\mu(z_v).
\]
Finally, for any multigraph $F$ with no isolated vertices, let
$F_1,F_2,\dots,F_k$ be the components of $F$. Then we define the
\emph{homomorphism density of $F$ in $\cW$} as
\[t(F,\cW)=\prod_{i=1}^k t(F_i,\cW)
.\] As we will see in Proposition~\ref{prop:t(F,W)}, these homomorphism
densities are defined in such a way that for a simple graph $F$ and a graphex
$\cW$, they are equal to the expected number of injective homomorphisms from
$F$ into $G_T(\cW)$ times $T^{-|V(F)|}$.

Having defined the subgraph densities $t(F,\cW)$, we can summarize the main
relationship between convergence in the metric $\deltt$, convergence of
subgraph counts, and GP-convergence in the following theorem. Its proof will
also be given at the end of Section~\ref{sec:sampling}.

\begin{theorem} \label{thm:D-bounded-convergence}
Assume that $\cW$ and $\cW_n$ for $n\geq 1$ are graphexes whose marginals are
$D$-bounded for some finite $D$. Then the following are equivalent.
\begin{enumerate}
\item $\deltt(\cW_n,\cW)\to 0$.\label{graphexconvdeltt-d}
\item For every graph $F$ with no isolated vertices, $t(F,\cW_n)
    \rightarrow t(F,\cW)$. \label{graphexconvnoisol-d}
\item For every connected graph $F$, $t(F,\cW_n) \rightarrow t(F,\cW)$.
    \label{graphexconvconn-d}
\item $G_T(\cW_n)\rightarrow G_T(\cW)$ in distribution for every $T$.
    \label{graphexconvallT-d}
\item $G_T(\cW_n)\rightarrow G_T(\cW)$ in distribution for some $T$.
    \label{graphexconvoneT-d}
\end{enumerate}
\end{theorem}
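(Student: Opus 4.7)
The strategy is to establish the five conditions are equivalent via the cycle $(1)\Rightarrow(2)\Rightarrow(3)\Rightarrow(4)\Rightarrow(5)\Rightarrow(1)$, drawing on Theorem~\ref{thm:deltt-GP}, Theorem~\ref{thm:delGP-GP}, Theorem~\ref{thm:complete}, Proposition~\ref{propboundedequivmetrics}, and Theorem~\ref{thm:identify}. Of these, $(2)\Rightarrow(3)$ is trivial, since every connected graph on at least two vertices has no isolated vertices, and $(4)\Rightarrow(5)$ is likewise immediate. The substantive content then splits into three implications: $(1)\Rightarrow(2)$, $(3)\Rightarrow(4)$, and $(5)\Rightarrow(1)$.

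For $(1)\Rightarrow(2)$, I would invoke the counting lemma to be established in Section~\ref{sec:subgraph-counts}, which, under the assumption of $D$-bounded marginals, bounds $|t(F,\cW)-t(F,\cW')|$ in terms of $\deltt(\cW,\cW')$ and the combinatorics of $F$. Such a bound is obtained by writing $t(F,\cdot)$ as a multilinear integral over an embedding of $V(F)$ into the feature space and then telescoping edge-by-edge: each edge replacement is controlled either by the kernel operator norm $\|W-W'\|_{2\to 2}$ tested against $L^2$-bounded functions (themselves bounded by the $D$-bounded marginals), or by the $L^2$-difference of marginals for the leaf/star contributions, with the edge-density term $\rho$ capturing the dust component.

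For $(3)\Rightarrow(4)$, one first notes that by multiplicativity $t(F,\cW)=\prod_i t(F_i,\cW)$ over connected components, condition $(3)$ upgrades to convergence $t(F,\cW_n)\to t(F,\cW)$ for every $F$ without isolated vertices. By Proposition~\ref{prop:t(F,W)}, $T^{|V(F)|}t(F,\cW)$ equals the expected number of injective homomorphisms from $F$ into $G_T(\cW)$, so the $t(F,\cdot)$ govern the joint factorial moments of subgraph counts in $G_T$. Under $D$-bounded marginals these moments admit uniform-in-$n$ growth bounds of the form $C_F T^{|V(F)|}$, slow enough for the standard method of moments to conclude convergence in distribution $G_T(\cW_n)\to G_T(\cW)$ for every $T$.

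The main obstacle is $(5)\Rightarrow(1)$, since a single time $T$ does not \emph{a priori} identify the limit. From $(5)$, convergence in distribution of $G_T(\cW_n)$ implies tightness of that family of random graphs, which by Theorem~\ref{theoremtightequiv} (proved in Section~\ref{sec:tight}) upgrades to tightness of $\{\cW_n\}$ in the sense of Definition~\ref{def:tight}. Theorem~\ref{thm:complete} then delivers $\delGP$-convergent sub-subsequences, and by Proposition~\ref{propboundedequivmetrics} such convergence coincides with $\deltt$-convergence on the $D$-bounded subspace. For any sub-subsequential limit $\cW^*$, Theorem~\ref{thm:delGP-GP} yields GP-convergence along this sub-subsequence, so combined with $(5)$ we have $G_T(\cW^*)\stackrel{d}{=}G_T(\cW)$ at the given $T$. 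Reading off factorial moments from this common distribution gives $t(F,\cW^*)=t(F,\cW)$ for every $F$ without isolated vertices; applying the already-proved $(3)\Rightarrow(4)$ to the constant sequence $\cW_n\equiv\cW^*$ then propagates this equality to all times, showing that $\cW^*$ and $\cW$ are equivalent in the sense of Definition~\ref{def:graphex-process}. Theorem~\ref{thm:identify} together with Remark~\ref{rem:pullbacksamedistance} then gives $\deltt(\cW^*,\cW)=0$, and uniqueness of the subsequential limit yields $\deltt(\cW_n,\cW)\to 0$. The most delicate single ingredient is the method-of-moments step in $(3)\Rightarrow(4)$: one must control the growth of joint factorial moments uniformly in $n$ and verify that the limiting moment sequence determines the distribution of $G_T(\cW)$; both are where the $D$-bounded marginal hypothesis is indispensable.
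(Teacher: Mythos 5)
Your outline is correct and follows essentially the same path as the paper, which establishes $(2)\Leftrightarrow(3)\Leftrightarrow(4)\Leftrightarrow(5)$ as Theorem~\ref{theoremgraphexconveq}, gets $(1)\Rightarrow(2)$ from Corollary~\ref{corrolaryFclose}, and gets $(4)\Rightarrow(1)$ by citing Proposition~\ref{thm:deltt-GP}; your $(5)\Rightarrow(1)$ step re-derives the tightness/compactness/subsequential-uniqueness argument that already underlies Theorem~\ref{thm:delGP-GP}, so no new ideas are introduced.

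Two small remarks. In $(3)\Rightarrow(4)$, the ``method of moments'' is more delicate than the growth bound $C_F T^{|V(F)|}$ suggests: for random graphs one must pass from expected injective homomorphism counts to the distribution, which the paper does via exponential moment control on the vertex count (Lemma~\ref{lemmaboundedgraphexmomentgenfinite}) and the inclusion-exclusion machinery of Lemmas~\ref{lemmanoisolallequal} and~\ref{lemmainjequalsamedist}; note also that $\|\cW_n\|_1$ must be uniformly bounded, which does not follow from $D$-bounded marginals alone but does follow from $(3)$ with $F=K_2$. In $(5)\Rightarrow(1)$, invoking Theorem~\ref{thm:identify} to deduce $\deltt(\cW^*,\cW)=0$ from equality of the processes is heavier than necessary (and reaches into a later section, though there is no actual circularity): the lighter reference is Theorem~\ref{theoremsamedistribution}, proved just before this result, which gives $\delGP(\cW^*,\cW)=0$, and then Lemma~\ref{lem:delGP-deltt} or Proposition~\ref{propboundedequivmetrics} converts this to $\deltt(\cW^*,\cW)=0$ under the $D$-bounded hypothesis.
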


\begin{remark}
The above theorem implies in particular that in order to check whether a
sequence $\cW_n$ of graphexes with uniformly bounded marginals is
GP-convergent, it is enough to check convergence of $G_T(\cW_n)$ for a single
$T>0$. In a similar way, several other properties of sequences or sets of
graphexes can be equivalently stated for all $T>0$ or some $T>0$ (see, in
particular, the already mentioned Theorem~\ref{theoremtightequiv} about
tightness and Theorem~\ref{thmunifintegequiv} about uniform integrability).
But for general sequences of graphexes, we do not know whether GP-convergence
is equivalent to the convergence of $G_T(\cW_n)$ for just one $T>0$.
\end{remark}

It is instructive to compare our notions of convergence to the notions of
graph convergence introduced in \cite{BCCH16} and \cite{BCCV17}. Before
defining these notions, we first introduce the notion of a dilated empirical
graphon corresponding to a finite graph $G$. It involves a ``dilation
parameter'' $\rho\in\RR_+$ and is defined as the graphex $\cW(G,\rho)$
consisting of a zero dust part, a zero star part, a measure space consisting
of the vertex set $V(G)$ where each vertex has measure $\rho$, and a graphon
$W(G,\rho)$ which is simply the adjacency matrix of $G$. The usual way to
embed graphs into the space of graphons in the dense case corresponds to
$\rho=1/|V(G)|$.

By contrast, in \cite{BCCH16}, $\rho$ was chosen to be $1/\sqrt{2|E(G)|}$;
the resulting dilated empirical graphon was called the stretched empirical
graphon, and a sequence was said to \emph{converge in the stretched cut
metric} if the graphons $W(G,1/\sqrt{2|E(G)|})$ converge in the cut metric
$\delta_\square$. It was then shown that this leads to completeness (every
Cauchy sequence has a limit), that convergence implies a certain condition
called uniform tail regularity, and that any uniformly tail regular sequence
has a convergent subsequence.

The notion of convergence in \cite{BCCV17} is slightly different. It does not
start from a metric, and instead tries to emulate the notion of subgraph
convergence from dense graphs. Roughly speaking, it asks that certain random
subgraphs of the graphs in the sequence converge in distribution to some
well-defined distribution over finite graphs. More precisely, given a
parameter $p\in [0,1]$, define $\text{Smpl}(G,p)$ as the unlabelled graph
obtained by first taking each vertex i.i.d.\ with probability $p$, then
removing all isolated vertices in the resulting subgraph, and finally
discarding all the labels. A sequence $G_n$ is then said to be \emph{sampling
convergent} if for all $t>0$, the samples
\[\text{Smpl}(G_n,\min\{1,t/\sqrt{2|E(G_n)|}\})\]
converge in distribution. It was then shown that any sequence of finite
graphs has a convergent subsequence, and that the limiting distribution can
be expressed as $G_t(\cW)$ for some integrable graphex $\cW$ with
$\|\cW\|_1\leq 1$. It was also shown that this inequality holds with equality
if and only if the sequence has a property called \emph{uniform sampling
regularity}.

It is instructive to relate the results and notions from \cite{BCCV17} to
those developed in this paper. To this end, we first note that---as already
observed in \cite{BCCV17}---a sequence of graphs is sampling convergent if
and only if the stretched canonical graphexes $\cW(G_n,1/\sqrt{2|E(G_n)|})$
are GP-convergent. Since by definition, the stretched canonical graphex has
$L^1$ norm $1$, this sequence is tight. By our compactness theorem,
Theorem~\ref{thm:complete}, it therefore has a convergent subsequence.

To relate some of the other notions and results from \cite{BCCH16} and
\cite{BCCV17} to those of this paper, we introduce a couple of definitions.
The first notion is that of uniform integrability. Recall that a set $S$ of
random variables with values in $\RR$ is called uniformly integrable if for
every $\eps>0$, there exists $K \in \RR$ such that for every $X \in S$,
\[\EE[|X|1|_{X|>K}]<\eps
.\] Note that this implies that $\EE[|X|] \le \eps+K$, so the set of random
variables consists of integrable variables with uniformly bounded integrals.
This motivates the following definition.

\begin{definition}\label{def:UI}
A set of graphexes $\mathcal{S}$ is called \emph{uniformly integrable} if the
graphexes in $\cS$ have uniformly bounded $\|\cdot\|_1$-norms, and for every
$\varepsilon>0$, there exists a $D$ such that for all $\cW \in \mathcal{S}$,
$\|D_{\cW} 1_{D_{\cW}>D}\|_1<\varepsilon$.
\end{definition}

As we will see in Theorem~\ref{thmunifintegequiv} below, uniform
integrability of a set $\cS$ of graphexes is equivalent to uniform
integrability of the random variables $\{E(G_T(\cW)):\cW \in \cS\}$ for all
$T>0$ (which is also equivalent to uniform integrability of this set of
random variables for some $T>0$).

The notion of uniform sampling regularity from \cite{BCCV17} is then simply
uniform integrability of the stretched empirical graphexes, and the following
theorem is a more or less straightforward generalization of Corollary 3.10 in
\cite{BCCV17}, which states that the limiting graphex of a sampling
convergent sequence of graphs has norm $1$ if and only if it is uniformly
sampling regular. We will prove the theorem in Section~\ref{sec:UIandUTR}.

\begin{theorem}\label{thm:UI-norm-convergence}
Suppose $\cW_n$ is a sequence of integrable graphexes with uniformly bounded
$\|\cdot\|_1$-norms that converges to a graphex $\cW$ in the weak kernel
metric. Then
\[
\|\cW\|_1 \le \liminf_{n \to \infty} \|\cW_n\|_1.
\]
In particular, $\cW$ is integrable. We furthermore have that
\[
\lim_{n \to \infty} \|\cW_n\|_1 = \|\cW\|_1
\]
if and only if the sequence $\cW_n$ is uniformly integrable.
\end{theorem}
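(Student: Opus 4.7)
The plan is to relate $\|\cW\|_1$ to the expected number of edges of $G_T(\cW)$ and then invoke Theorem~\ref{thm:delGP-GP} so that everything reduces to a statement about convergence in distribution of nonnegative, integer-valued random variables. Unpacking the Poisson construction of $\cG_T(\cW)$ and summing the edge contributions from the graphon, star, and dust parts, one gets
\[
\EE\bigl[E(G_T(\cW))\bigr]=\frac{T^2}{2}\|\cW\|_1
\]
for every integrable graphex $\cW$ and every $T>0$: the three parts contribute $\frac{T^2}{2}\int W\,d\mu\,d\mu$, $T^2\int S\,d\mu$, and $T^2 I$ respectively, and vertices discarded by the final truncation step contribute nothing to the edge count because they are isolated in the induced subgraph on $[0,T]$.

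To prove the lower semicontinuity inequality, I would first use Theorem~\ref{thm:delGP-GP} to deduce from $\delGP(\cW_n,\cW)\to 0$ that $\cW_n$ is GP-convergent to $\cW$, so $G_T(\cW_n)\to G_T(\cW)$ in distribution for every fixed $T>0$, and in particular $E(G_T(\cW_n))\to E(G_T(\cW))$ in distribution. Since these random variables are nonnegative, Fatou's lemma---applied for instance via the truncations $\min\{E(G_T(\cW_n)),K\}$ combined with bounded convergence in $n$ and monotone convergence as $K\to\infty$, or through the Skorokhod representation---yields
\[
\EE\bigl[E(G_T(\cW))\bigr]\le\liminf_{n\to\infty}\EE\bigl[E(G_T(\cW_n))\bigr].
\]
Dividing by $T^2/2$ gives $\|\cW\|_1\le\liminf_n\|\cW_n\|_1$, and the uniform bound on the $\|\cW_n\|_1$ forces $\cW$ to be integrable.

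For the equivalence in the second part, I would appeal to the standard fact that if $X_n\to X$ in distribution with $X_n,X\ge 0$ and $\EE[X]<\infty$ (now guaranteed by the first part), then $\EE[X_n]\to\EE[X]$ if and only if $\{X_n\}$ is uniformly integrable. Applied with $X_n=E(G_T(\cW_n))$ at any single fixed $T>0$, this shows that $\|\cW_n\|_1\to\|\cW\|_1$ is equivalent to uniform integrability of the family $\{E(G_T(\cW_n))\}_{n\ge 1}$. Theorem~\ref{thmunifintegequiv} (as described in the text preceding it) then translates the latter into uniform integrability of the sequence of graphexes $\{\cW_n\}$, finishing the proof. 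The only real obstacles in this plan are the two black boxes, Theorem~\ref{thm:delGP-GP} and Theorem~\ref{thmunifintegequiv}; once these are in hand, the remaining steps are routine applications of Fatou and the Vitali-type characterization of uniform integrability.
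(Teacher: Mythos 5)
Your proof is correct, and the first part of your argument (lower semicontinuity of $\|\cdot\|_1$) is essentially the same as the paper's: both pass to distributional convergence of edge counts via Theorem~\ref{thm:delGP-GP} and then apply a truncation/monotone-convergence version of Fatou (the paper follows Corollary~3.10 of \cite{BCCV17} with the truncation $f_\lambda(x)=x1_{x\le\lambda}$; your $\min\{x,K\}$ variant is, if anything, slightly cleaner since it is bounded continuous).

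For the second part, your route is genuinely different from the paper's. You push everything to the level of the random variables $E(G_T(\cW_n))$: apply the Vitali/Scheff\'e equivalence for distributionally convergent nonnegative random variables (namely that given $X_n\to X$ in distribution with $X_n,X\ge0$ and $\EE[X]<\infty$, convergence of means is equivalent to uniform integrability of $\{X_n\}$), and then translate between uniform integrability of the edge-count random variables and of the graphexes using conditions (3)/(4) of Theorem~\ref{thmunifintegequiv}. The paper instead argues structurally: it invokes condition (2) of Theorem~\ref{thmunifintegequiv} (an $L^1$-tail bound on the graphexes themselves), chooses a cutoff level $D$ with $\mu(\{D_\cW=D\})=0$, and uses Proposition~\ref{propgeneralconvergenceequiv} to deduce $\deltt(\cW_{n,\le D},\cW_{\le D})\to0$ and hence convergence of the truncated $L^1$ norms, finishing with a triangle inequality over the three error terms. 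Your approach is shorter given the abstract probability fact and leans more heavily on the sampling-side equivalences already established in Theorem~\ref{thmunifintegequiv}; the paper's approach is more self-contained at the graphex level and doesn't require the Skorokhod/Scheff\'e machinery. Both are valid.
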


Our next set of theorems relates the notion of sampling convergence from
\cite{BCCV17} to the notion of convergence in the cut metric from
\cite{BCCH16}. We start by recalling the definition of uniform tail
regularity from \cite{BCCH16} (see also Lemma~\ref{lemuniftailregequiv} in
Section~\ref{sec:UIandUTR} for other, equivalent definitions).

\begin{definition}[\cite{BCCH16}]
Given a set of signed, integrable \emph{graphons} $\cS$, we say that they are
\emph{uniformly tail regular} if for any $\varepsilon>0$, there exists $M$
such that for each $W\in \cS$ with the usual notation, there exists $\Omega_0
\subseteq \Omega$ such that $\mu(\Omega_0) \le M$ and $\|W\|_1 -
\|W|_{\Omega_0}\|_1 \le \varepsilon$.
\end{definition}

Note that uniform tail regularity is more restrictive than uniform
integrability (for the set of graphexes obtained by setting the dust and star
part to zero). For sequences of graphs, the corresponding result was shown in
\cite{BCCV17}, but it holds in our more general setting as well, with
essentially the same proof; see Lemma~\ref{lem:tail-reg-implies-UI} in
Section~\ref{sec:UIandUTR} below. More interestingly, any sequence of
graphons that is convergent in cut metric has uniformly regular tails, and
any sequence of graphons with uniformly regular tails has a subsequence that
converges in cut metric. Motivated by this (which is one of the central
results of \cite{BCCH16}), here we prove the following.

\begin{theorem} \label{thmcutmetricvsweakkernel}
Given a sequence of integrable graphexes of the form
\[\cW_n=(W_n,0,0,\bOmega_n),
\]
the following are equivalent.
\begin{enumerate}
\item The sequence $W_n$ converges to a graphon $W$ in cut metric.
\item The sequence $W_n$ is uniformly tail regular, and in the weak kernel
    metric, the sequence $\cW_n$ converges to a graphex of the form
    $\cW=(W,0,0,\bOmega)$.
\item The sequence $W_n$ is uniformly tail regular, and in the weak kernel
    metric, the sequence $\cW_n$ converges to some graphex $\cW$.
\end{enumerate}
\end{theorem}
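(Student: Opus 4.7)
The plan is to prove the cyclic chain $(1)\Rightarrow(2)\Rightarrow(3)\Rightarrow(1)$. The implication $(2)\Rightarrow(3)$ is immediate, since (3) merely drops the identification of the limit.

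For $(1)\Rightarrow(2)$: Uniform tail regularity of a sequence that converges in cut metric is one of the central results of \cite{BCCH16}, so I quote it directly. To promote cut metric convergence to weak kernel metric convergence, I use UTR to pass, for any given $\varepsilon>0$, to restrictions $W_n|_{\Omega_0^n}$ and $W|_{\Omega_0}$ on measure spaces of total mass at most some $M=M(\varepsilon)$, losing at most $\varepsilon$ of $\|W_n\|_1$ and $\|W\|_1$ respectively. On these restrictions the graphons take values in $[0,1]$ and live over bounded-measure spaces, so the inequality $\|U\|_{\jbl}\le \sqrt{\|U\|_\square \|U\|_\infty}$ together with Lemma~\ref{lemmasquare2to2equiv} shows that $\|\cdot\|_\square$, $\|\cdot\|_{\jbl}$ and $\|\cdot\|_{2\to 2}$ are pairwise equivalent there. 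Hence cut metric convergence on the restrictions yields $\deltt$-convergence, and the freedom built into $\delGP$ to truncate each $\mu_i$ by up to $c^2$ absorbs the tail pieces removed by UTR, giving $\delGP(\cW_n,\cW)\to 0$.

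For $(3)\Rightarrow(1)$: The argument splits into two steps. First I show that the limit $\cW=(W,S,I,\bOmega)$ must in fact have $S=0$ and $I=0$. Since the $\cW_n$ are uniformly tail regular they are in particular uniformly integrable (Lemma~\ref{lem:tail-reg-implies-UI}) with uniformly bounded $\|\cdot\|_1$-norms, so Theorem~\ref{thm:UI-norm-convergence} gives $\|\cW\|_1=\lim_n\|\cW_n\|_1<\infty$. Now in the definition of $\delGP$ one is allowed to shrink each measure by at most $c^2$, so the truncated $\widetilde\cW_n$ remain purely graphon, and UTR forces the bulk of their $L^1$-mass to live on a set of measure at most $M$. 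The $\deltt$-comparison then requires $\rho(\widetilde\cW_n)\to \rho(\widetilde\cW)$, which, combined with the norm-preservation above, forces any putative mass of $S$ or $I$ in $\cW$ to be matched by graphon mass of $\cW_n$ that escapes every bounded-measure set — contradicting UTR. Hence $\cW=(W,0,0,\bOmega)$. The second step is to translate $\delGP$-convergence back to cut metric convergence of $W_n$ to $W$. On UTR-truncations to bounded-measure subspaces, Proposition~\ref{propboundedequivmetrics} and the dense-graphon norm equivalences (running the argument of $(1)\Rightarrow(2)$ in reverse) convert $\delGP$-convergence into $\|\cdot\|_\square$-convergence after suitable couplings, and the UTR control of the tails then yields convergence in the full cut metric.

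The main obstacle is ruling out the emergence of star or dust parts in the limit in step one of $(3)\Rightarrow(1)$: this requires a delicate interaction between UTR, which constrains where $L^1$-mass can live, and the measure-truncation allowance in $\delGP$, which otherwise provides a mechanism by which concentrated-but-spread-out graphon contributions could be exchanged for star or dust contributions in the limit. The second step is cleaner because once the limit is known to be purely a graphon, the reduction to bounded-measure, $L^\infty$-bounded restrictions puts us in a regime where all the relevant norms — $\|\cdot\|_\square$, $\|\cdot\|_{\jbl}$, and $\|\cdot\|_{2\to 2}$ — are equivalent.
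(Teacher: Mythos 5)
Your chain $(1)\Rightarrow(2)\Rightarrow(3)\Rightarrow(1)$ matches the paper's structure, and your treatment of $(1)\Rightarrow(2)$ is in the right spirit (the paper restricts to degree-bounded rather than measure-bounded subspaces, via Lemmas~\ref{lemmacutmetricrestrictedtoD} and~\ref{lemmaboundedcutmetricimplieskernelmetric}, and then invokes Proposition~\ref{propgeneralconvergenceequiv}, but the underlying idea is the same). The implication $(2)\Rightarrow(3)$ is indeed trivial.

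The genuine gap is in $(3)\Rightarrow(1)$, precisely at the point you flag as ``the main obstacle.'' You propose to rule out a nonzero $S$ or $I$ in the limit by comparing $\rho(\widetilde\cW_n)$ with $\rho(\widetilde\cW)$ and arguing that any star/dust mass in $\cW$ must be matched by escaping graphon mass of $\cW_n$. This does not close: after truncating to $\{D_\cW\le D\}$, the comparison $\deltt(\cW_{n,\le D},\cW_{\le D})\to 0$ gives $\|W_{n,\le D}\|_1\to\|W_{\le D}\|_1+2\|S_{\le D}\|_1+2I$, which is perfectly consistent with $I>0$; the marginal-comparison part of $\deltt$ controls only $\|D_{W_{n,\le D}}^{\pi_1}-(D_{W_{\le D}}+S_{\le D})^{\pi_2}\|_2$, and $D_W$ can in principle be smaller so that $D_W+S$ matches the incoming graphon degrees. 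Uniform tail regularity does not by itself force the graphon part of the limit to account for all of $\|\cW\|_1$, because the conserved quantity under $\delGP$-convergence is $\rho$, not the graphon $L^1$ norm. The paper circumvents exactly this difficulty with a completely different argument: it extracts a subsequence $W_{n_k}\to W'$ in cut metric (using the compactness for uniformly tail regular families from \cite{BCCH16}), applies the already-proved $(1)\Rightarrow(2)$ to conclude $\delGP(\cW_{n_k},(W',0,0))\to 0$, deduces $\delGP(\cW,(W',0,0))=0$, and then invokes Lemma~\ref{lemmadelgp0cut0}---which uses the \emph{structure of the graphex process} (the identifiability of dust and star edges, cf.\ Remark~5.4 of \cite{JANSON16}) to show that a graphex at $\delGP$-distance zero from a pure graphon is itself a pure graphon with cut-distance zero. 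Without this structural lemma or some substitute for it, the purely metric/measure-theoretic bookkeeping you propose does not yield the needed contradiction.

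There is a second, milder issue in your ``second step'' of $(3)\Rightarrow(1)$: it implicitly leans on the same circular reasoning, in that translating $\delGP$-convergence back to cut metric convergence is exactly what one is trying to prove, and citing Theorem~\ref{thmwhenconvergetographon} here would be circular, since in the paper that theorem is proved \emph{using} Theorem~\ref{thmcutmetricvsweakkernel}. Again the compactness/subsequence argument plus Lemma~\ref{lemmadelgp0cut0} is how the paper avoids having to run the norm equivalences ``in reverse'' on their own.
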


This theorem, as well as our next theorem, will also be proved in
Section~\ref{sec:UIandUTR}.

\begin{theorem} \label{thmwhenconvergetographon}
Given a sequence of uniformly integrable graphexes
\[
\cW_n=(W_n,S_n,I_n,\bOmega_n),
\]
which converge to a graphex $\cW=(W,S,I,\bOmega)$ in the weak kernel metric,
the following are equivalent.
\begin{enumerate}
\item The graphex $\cW$ is of the form $\cW=(W,0,0,\bOmega)$.
\item $\|S_n\|_1 \to 0$ and $I_n \to 0$, and the sequence of graphons $W_n$
    has uniformly regular tails.
\item $\|S_n\|_1 \to 0$ and $I_n \to 0$, and the sequence of graphons $W_n$
    converges in the cut metric.
\end{enumerate}
\end{theorem}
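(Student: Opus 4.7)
The plan is to link $\cW_n$ to its graphon-only companion $\cW'_n:=(W_n,0,0,\bOmega_n)$ and reduce everything to Theorem~\ref{thmcutmetricvsweakkernel}. The key bridge will be a \emph{closeness lemma}: if $\cW_n$ is uniformly integrable and $\|S_n\|_1+I_n\to 0$, then $\delGP(\cW'_n,\cW_n)\to 0$. To prove it, I will use uniform integrability to pick, for every $\varepsilon>0$, a threshold $D$ with $\mu_n(\{D_{\cW_n}>D\})<\varepsilon$ uniformly in $n$; restricting both graphexes to the complement, the graphon parts agree, the marginal difference satisfies $\|S_n|_{\{D_{\cW_n}\le D\}}\|_2^2\le D\|S_n\|_1\to 0$, and the edge-density difference is at most $2(\|S_n\|_1+I_n)\to 0$. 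Hence $\deltt$ on the restrictions vanishes, giving $\delGP(\cW'_n,\cW_n)\le\sqrt{\varepsilon}+o(1)$, and sending $\varepsilon\to 0$ completes the lemma.

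With the closeness lemma in hand, $(3)\Rightarrow(2)$ is immediate from Theorem~\ref{thmcutmetricvsweakkernel}, since cut metric convergence implies uniform tail regularity. For $(2)\Rightarrow(1)$, the closeness lemma yields $\cW'_n\to\cW$ in the weak kernel metric; together with uniform tail regularity of $W_n$, Theorem~\ref{thmcutmetricvsweakkernel} forces the limit of $\cW'_n$ to be of the form $(W',0,0,\bOmega')$, so $\delGP(\cW,(W',0,0,\bOmega'))=0$ by uniqueness of limits in the pseudometric. Theorem~\ref{thm:identify}, combined with the fact that the dust scalar $I$ and the star function $S$ on the degree support are determined by the graphex process (via the structural description of $\cG_\infty(\cW)$ in the introduction), then forces $\cW$ itself to be of the form $(W,0,0,\bOmega)$.

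The hard direction is $(1)\Rightarrow(3)$. Given $\cW=(W,0,0,\bOmega)$, my plan is to first show $\|S_n\|_1+I_n\to 0$ by analyzing the expected number of isolated edges in $G_T(\cW_n)$ at a fixed $T>0$. By Theorem~\ref{thm:delGP-GP} the sequence is GP-convergent, and uniform integrability upgrades convergence in distribution to convergence in expectation, so this count converges to the corresponding count in $G_T(\cW)$, which since $\cW$ has no dust or star part consists purely of graphon-generated isolated edges. I will decompose the count for $\cW_n$ into three non-negative pieces: the dust contribution $T^2I_n$, the star-with-a-single-leaf contribution $T^2\int S_ne^{-TD_{\cW_n}}d\mu_n$, and the graphon-only contribution $\tfrac{T^2}{2}\iint W_n(x,y)e^{-T(D_{\cW_n}(x)+D_{\cW_n}(y))}d\mu_n(x)d\mu_n(y)$. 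Using Theorem~\ref{thm:D-bounded-convergence} applied to the $D$-bounded truncations $\cW_n|_{\{D_{\cW_n}\le D\}}$ and letting $D\to\infty$ after $n\to\infty$, I expect the graphon-only piece to already account for the entire limit. The other two pieces must therefore vanish, yielding $I_n\to 0$ and $\int S_ne^{-TD_{\cW_n}}d\mu_n\to 0$; uniform integrability converts the latter into $\|S_n\|_1\to 0$ via the uniform lower bound $e^{-TD_{\cW_n}}\ge e^{-TD}$ on the UI truncation set.

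Once $\|S_n\|_1+I_n\to 0$ is established, the closeness lemma gives $\cW'_n\to\cW$ in the weak kernel metric; I then plan to extract uniform tail regularity of $W_n$ from uniform integrability of $\cW_n$ combined with the finite-measure truncations that weak kernel convergence to a $\sigma$-finite limit provides, and conclude cut metric convergence $W_n\to W$ via Theorem~\ref{thmcutmetricvsweakkernel}. The main obstacle is the isolated-edge disentanglement in $(1)\Rightarrow(3)$: the limiting argument through $D$-bounded truncations, as well as the implicit non-uniformity in $T$, will require careful handling of the regularity and sampling machinery developed in Sections~\ref{sec:reg} and~\ref{sec:sampling}.
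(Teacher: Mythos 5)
Your closeness lemma and the implications $(3)\Rightarrow(2)$ and $(2)\Rightarrow(1)$ line up well with the paper's argument (the paper's version of your last step appeals to Lemma~\ref{lemmadelgp0cut0} rather than Theorem~\ref{thm:identify}, but these are interchangeable here since both rest on the structural fact that dust, star, and graphon edges are distinguishable in $\cG_\infty$). The genuine difference is in the hard direction: the paper proves $(1)\Rightarrow(2)$ by a direct kernel-metric computation, whereas you propose $(1)\Rightarrow(3)$ via a decomposition of the expected isolated-edge count.

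That decomposition has a circularity problem that I don't think your outline resolves. You want to argue that $A_n+B_n+C_n\to C_\infty$ (the isolated-edge expectation converges, correctly justified by GP-convergence plus uniform integrability) and that $C_n\to C_\infty$, so the non-negative pieces $A_n$ and $B_n$ are forced to vanish. But your $C_n$ carries the factor $e^{-T(D_{\cW_n}(x)+D_{\cW_n}(y))}=e^{-T(D_{W_n}(x)+D_{W_n}(y))}\,e^{-T(S_n(x)+S_n(y))}$, so if $S_n$ is large somewhere, $C_n$ is strictly \emph{suppressed} relative to the pure-graphon quantity. You therefore cannot establish $\liminf C_n\ge C_\infty$ — which is precisely what the argument needs — without already knowing that $S_n$ is small in the relevant sense, which is what you are trying to prove. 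Restricting to $\{D_{\cW_n}\le D\}$ does not help, because the restriction does not strip $S_n$ out of $D_{\cW_n}$; and Theorem~\ref{thm:D-bounded-convergence} gives convergence of polynomial homomorphism densities, not of this exponential functional, so it does not apply directly either.

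The paper's route avoids this entirely. Assuming $(1)$ it proves $(2)$: it uses uniform integrability to get $\|\cW_n\|_1\to\|\cW\|_1=\|W\|_1$ (Theorem~\ref{thm:UI-norm-convergence}), and uses the kernel metric convergence on the finite-measure truncations $M^n_\delta$ (via Proposition~\ref{propgeneralconvergenceequiv} in the general UI case) to show $\int_{M^n_\delta\times M^n_\delta} W_n\to\int_{M_\delta\times M_\delta}W$. These two facts sandwich $\|W_n\|_1$ and force $\|W_n\|_1\to\|W\|_1$, hence $2(\|S_n\|_1+I_n)=\|\cW_n\|_1-\|W_n\|_1\to 0$, and the same estimates deliver uniform tail regularity. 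I would suggest replacing your isolated-edge argument with this kernel-norm computation; the rest of your proposal then goes through.
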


Recall that a sequence of graphs is sampling convergent if and only if the
stretched canonical graphexes are GP-convergent. This fact,
Theorem~\ref{thm:UI-norm-convergence}, and
Theorem~\ref{thmwhenconvergetographon} together imply that given a sequence
of graphs that is sampling convergent to a graphex of norm one, the sequence
converges to a pure graphon if and only if the sequence is uniformly tail
regular. We have therefore given a characterization of when the notion of
sampling convergence from \cite{BCCV17} reduces to the notion of convergence
in the stretched cut metric from \cite{BCCH16}.

\begin{remark}\label{rem:UI-is-needed}
In the above theorem, the assumption of uniform integrability is necessary.
To see this, let $W_n$ be the graphon defined by being $1$ on the set
$[0,1/n] \times [1,n+1]$ and its transpose, and $0$ otherwise, and let
$\cW_n=(W_n,0,0,\RR_+)$. Then the sequence $\cW_n$ converges to the zero
graphex in the weak kernel metric, which is a pure graphon. However, the
sequence is clearly not uniform tail regular (or even uniformly integrable).
We can also let $S_n(x)=1/n$ on the set $[0, n]$, and $0$ everywhere else. In
that case the sequence still converges to $0$ in the weak kernel metric, but
$\|S_n\|_1$ does not converge to $0$.
\end{remark}

We close this section by discussing possible extensions of our theory. First,
as already discussed in the introduction, it would be natural to extend the
theory of graphexes to a theory that naturally generates multi-graphs. Note
that \emph{a priori}, this falls plainly in the framework of exchangeable
random measures on $\RR_+^2$ as developed by Kallenberg; in fact, it falls
into the framework of exchangeable random counting measures. Generalizing the
approach of \cite{VR15,BCCH16}, this will give a natural notion of
``multi-graphexes'' characterizing all exchangeable multi-graphs with
vertices labelled by $\RR_+$. But extending the current work to
multi-graphexes is beyond the scope of this paper, in particular given that
it would require to generalize at least some of the results from
\cite{VR15,JANSON16,VR16,JANSON17,BCCV17} to this setting in a first step.
See \cite{BCDS18} for some very preliminary steps in this direction.

The next extension one might want to consider is the extension of our
analytical results (i.e., those of our results which do not refer to the
graphex process generated by a graphex) to signed graphexes. In contrast to
the theory of cut metric convergence for graphons over $\sigma$-finite
measure spaces developed in \cite{BCCH16}, which works as well for signed,
unbounded graphons as for graphons with values in $[0,1]$, here we focused
most of the theory of graphex convergence on unsigned graphexes (with graphon
parts taking values in $[0,1]$). While several of our technical proofs and
results hold for signed graphexes (in particular, all of
Section~\ref{sec:prelim}, as well as parts of Sections~\ref{sec:reg} and
\ref{sec:subgraph-counts} are formulated in this language), the core analytic
concepts and results such as tightness, precompactness, etc., have only been
formulated for unsigned graphexes. Indeed, we believe that the generalization
of Theorem~\ref{thm:complete} to signed graphexes requires modifications to
either our topology or our notions of tightness; see
Remark~\ref{rem:signed-tightness} below. In a similar way, while the
identification theorem of \cite{BCCH16} works for signed graphons, our
identification theorem requires non-negative graphexes, even though one might
conjecture that when stated as a characterization of the equivalence classes
under the weak kernel metric, it should hold for signed graphexes as well, at
least when suitably formulated.

Finally, one might want to consider graphexes where the graphon part $W$ is
unbounded, whether non-negative or signed. For non-negative graphexes, one
could, for example, follow the approach in \cite{BCDS18} and use such a
graphex to generate multi-graphs by adding $\text{Pois}(W(x_i,y_i))$ many
edges to a pair of Poisson points with features $x_i$ and $x_j$, or one could
try to generalize the approach of \cite{LP1} to the setting of graphexes, by
taking a decreasing ``dilution probability'' $p_t$, and then connect two
Poisson points with features $x_i$ and $x_j$ with probability $\min\{1,p_t
W(x_i,x_j)\}$ (see \cite{BCCH2} for a related approach). But is far from
obvious what the analogue of the weak kernel metric should be, and how to
generalize our other results to this setting. As the other open questions
discussed here, we leave these questions as open research problems.

\section{Preliminaries} \label{sec:prelim}

In this section, we study the metrics $\deltt$ and $\delGP$. In particular,
we will prove Theorems~\ref{thm:deltt-metric} and \ref{thm:dmetric}, as well
as Proposition~\ref{propboundedequivmetrics} relating the two for graphexes
with bounded marginals (in fact, we will prove its generalization to signed
graphexes, stated as Proposition~\ref{prop:signed-met-equiv} below).

In addition, we study the metric $\deljbl$ obtained from $\deltt$ by
replacing $\d22$ as defined in \eqref{d22-def} by
\begin{equation}\label{djbl-def}
\djbl(\cW_1,\cW_2)= \max\left\{ \|W_1-W_2\|_{\jbl},
\|D_{\cW_1}-D_{\cW_2}\|_\jbl,
\left|\rho(\cW_1)-\rho(\cW_2)\right|
\right\}.
\end{equation}
Here we use the norm $\|\cdot\|_\jbl$ both for functions from $\Omega^2\to
\RR$ (see Definition~\ref{def:jumble-norm}) and functions from
$\Omega\to\RR$, where it is defined as
\[
\|F\|_\jbl=\sup_{S \subseteq \Omega} \left| \frac{1}{\sqrt{\mu(S)}}\int_{S} F(x)d\mu(x)\right|.
\]
We will in particular show that the analogue of
Theorem~\ref{thm:deltt-metric} holds for this metric.

\begin{theorem} \label{thm:deltws-metric}
Let $\cW_1$ and $\cW_2$ be signed graphexes in $L^1\cap L^2$. Define
$\deljbl$ by replacing the right side of \eqref{del22-def} with $\tdeljbl$,
which in turn is obtained from $\tdel22$ by replacing $\d22$ with $\djbl$.
Then the value of $\deljbl(\cW_1,\cW_2) $ does not depend on the choice of
the trivial extensions $\cW_1'$ and $\cW_2'$, and $\deljbl$ obeys the
triangle inequality, making it a well-defined pseudometric.
\end{theorem}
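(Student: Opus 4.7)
My plan is to prove Theorem~\ref{thm:deltws-metric} in close parallel with Theorem~\ref{thm:deltt-metric}, simply substituting the jumble-norm ingredients of $\djbl$ for the operator-norm ingredients of $\d22$ throughout. Two facts about the jumble norm carry the argument. The first is \emph{invariance under trivial extension}: in the supremum
\[
\|U\|_\jbl = \sup_{S,T}\frac{1}{\sqrt{\mu(S)\mu(T)}}\Bigl|\int_{S\times T}U\,d\mu\otimes d\mu\Bigr|,
\]
replacing $(S,T)$ by $(S\cap\supp U,\, T\cap\supp U)$ preserves the numerator and weakly reduces the denominator, and the analogous statement holds for the one-variable $\|F\|_\jbl$. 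Hence $\djbl(\cW_1,\cW_2)$ is unaffected by enlarging the shared underlying space by a piece on which both graphexes vanish.

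I would use this first to handle well-definedness. Given two pairs of trivial extensions $(\cW_1',\cW_2')$ and $(\cW_1'',\cW_2'')$ to spaces of infinite total mass, pass to a common further trivial extension of each pair. Any coupling on the smaller pair lifts to a coupling on the larger pair by adjoining a trivial coupling of the complementary null parts, and conversely any coupling restricts; the invariance above shows that $\djbl$ of the corresponding pullbacks is the same on both sides. Therefore $\tdeljbl(\cW_1',\cW_2') = \tdeljbl(\cW_1'',\cW_2'')$, and $\deljbl$ is well defined.

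For the triangle inequality, given $\cW_1,\cW_2,\cW_3$ in $L^1\cap L^2$ and $\eps>0$, after extending them to a common infinite total mass I would take couplings $\mu^{12}$ of $(\mu_1,\mu_2)$ and $\mu^{23}$ of $(\mu_2,\mu_3)$ witnessing $\tdeljbl(\cW_1,\cW_2)+\eps$ and $\tdeljbl(\cW_2,\cW_3)+\eps$, glue them along $\bOmega_2$ via a regular disintegration to form a coupling $\mu^{123}$ on $\bOmega_1\times\bOmega_2\times\bOmega_3$, and project to the $(\bOmega_1,\bOmega_3)$-marginal $\mu^{13}$. Triangle inequality for $\|\cdot\|_\jbl$ and $|\cdot|$ applied to the telescope $W_1^{\pi_1}-W_3^{\pi_3}=(W_1^{\pi_1}-W_2^{\pi_2})+(W_2^{\pi_2}-W_3^{\pi_3})$, and analogously for the marginal functions and edge-density scalars, combined with the observation that $W_i^{\pi_i}-W_j^{\pi_j}$ depends on only two of the three coordinates so that its $\jbl$-norm on $\mu^{123}$ agrees with its $\jbl$-norm on the appropriate two-factor coupling, yields
\[
\djbl\bigl(\cW_1^{\pi_1},\cW_3^{\pi_3}\bigr) \le \tdeljbl(\cW_1,\cW_2) + \tdeljbl(\cW_2,\cW_3) + 2\eps,
\]
and letting $\eps\downarrow 0$ finishes the proof.

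The main obstacle is the last step in the gluing, namely the comparison of $\|W_i^{\pi_i}-W_j^{\pi_j}\|_{\jbl,\mu^{123}}$ with $\|W_i-W_j\|_{\jbl,\mu^{ij}}$, since in the $\sigma$-finite setting the disintegration fibers of $\mu^{123}$ over $\mu^{ij}$ are not automatically probability measures, so one cannot immediately quote the dense-graphon argument. This is precisely the measure-theoretic technicality that must be confronted in the proof of Theorem~\ref{thm:deltt-metric}; the same device — replacing $\bOmega_2$ by a standard Borel representative via a further harmless trivial extension, decomposing $\mu_2$ into countably many finite pieces on which regular conditional measures can be normalized, and invoking the support-restriction invariance of the first paragraph to absorb any resulting bookkeeping — applies verbatim to $\djbl$. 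Once that comparison is in hand, the chain of triangle inequalities above closes and the theorem follows.
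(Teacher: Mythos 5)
Your high-level plan matches the paper's, which proves Theorems~\ref{thm:deltt-metric} and \ref{thm:deltws-metric} simultaneously (Lemmas~\ref{lemmapullbacksame}--\ref{lem:extension}): establish invariance of the jumble norm under pullback, handle trivial extensions, then prove the triangle inequality by gluing couplings. But there is a genuine gap in your well-definedness step. The claim ``conversely any coupling restricts'' is false: if $\mu$ couples $\mu_1^*$ and $\mu_2^*$ on the further-extended spaces, its restriction to $\Omega_1'\times\Omega_2'$ has marginals that are strict sub-measures of $\mu_1'$ and $\mu_2'$ whenever $\mu$ sends mass of $\Omega_1'$ into $\Omega_2^*\setminus\Omega_2'$ (or vice versa), which a generic coupling will do. The support-intersection observation from your first paragraph locates where the $\jbl$-norm supremum is attained, but it does nothing to repair the marginals of the restricted measure. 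The paper's Lemma~\ref{lem:extension} avoids this: it shows directly that $\tdeljbl(\cW',\cW'')=0$ whenever $\cW'$ and $\cW''$ are two trivial extensions of the \emph{same} graphex (couple $\mu$ to itself along the diagonal and couple the two extension pieces arbitrarily), and then derives well-definedness from the triangle inequality. This means the triangle inequality must come first; the order of your two steps needs to be reversed, or the diagonal-coupling construction inserted.

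A second issue: your description of the gluing device (``replacing $\bOmega_2$ by a standard Borel representative via a further harmless trivial extension, decomposing $\mu_2$ into countably many finite pieces on which regular conditional measures can be normalized'') does not match the proof of Theorem~\ref{thm:deltt-metric}, and a trivial extension cannot turn a non-Borel $\sigma$-algebra into a Borel one. The paper (Lemmas~\ref{lemmacouplingsequencecountablesteps}--\ref{lemmacouplingsequence}) instead approximates all three graphexes by countable step graphexes in $L^1\cap L^2$ --- an approximation that costs only $\eps$ in each slot of $\Delta_\jbl$ because the jumble norm is dominated by the $L^2$ norm --- and then glues the two couplings at the level of the countable partitions via the explicit formula
\[
\mu_{123}(E) \;=\; \sum_{p,q,r}\frac{\mu_{12}(A_p\times B_q)\,\mu_{23}(B_q\times C_r)}{\mu_1(A_p)\,\mu_2(B_q)^2\,\mu_3(C_r)}\,\bigl(\mu_1\times\mu_2\times\mu_3\bigr)\bigl(E\cap(A_p\times B_q\times C_r)\bigr),
\]
which avoids regular conditional probabilities entirely; its $(\Omega_1,\Omega_3)$-marginal is then a genuine coupling, and the coordinate-wise triangle inequality for $\Delta_\jbl$ drops out. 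You are right that once this machinery is built for $\d22$ the $\djbl$ case follows by the same lemmas; the mischaracterization is of what the machinery is, and it matters because regular conditional measures are precisely the tool that is unavailable over a general $\sigma$-finite measure space.
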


This \emph{jumble metric} will be particularly useful when establishing the
regularity lemma for graphexes, which takes a nicer form when stated in terms
of the distance $\djbl$ instead of the distance $\d22$, both because of the
absence of the various roots, and because the proof of the regularity lemma
leads more naturally to bounds in term of $\|\cdot\|_{\jbl}$ rather than
$\|\cdot\|_{2\rightarrow2}$. To obtain our compactness results for the metric
$\delGP$ (which is derived from $\d22$), we will then need to compare the
two. We will do this in Proposition~\ref{prop:kernel-jumbl-equiv} and
Remark~\ref{rem:kernel-jumbl-equiv} below.

We will also establish a simple lemma relating the kernel norm to $4$-cycle
counts (Lemma~\ref{lemmasquare2to2equiv}). Finally, we will prove that the
homomorphism densities $t(F,\cW)$ indeed describe the expected number of
injective homomorphisms from $F$ into $G_T(\cW)$
(Proposition~\ref{prop:t(F,W)}).

Except for the last result, all results in this section are as easily derived
for signed graphexes as for unsigned graphexes. We therefore formulate
everything in this section in the language of signed graphexes. To do so, we
need very little extra notation, except for the following.

First, we define the \emph{absolute marginal} of a signed graphex $\cW$ as
$D_{|\cW|}$, and say that $\cW$ has $D$-bounded absolute marginals if
$\|D_{|\cW|}\|_\infty \le D$. We say that $\cW$ is $(C,D)$-bounded if in
addition $\|\cW\|_1 \le C$. Furthermore, we introduce the notion of
$(B,C,D)$-boundedness of a graphex $\cW=(W,S,I,\bOmega)$ by requiring that
\begin{equation}
\label{BCD-bounded}
\|W\|_\infty\leq B,
\qquad
\|\cW\|_1\leq C,
\qquad\text{and}\qquad
\|D_{|\cW|}\|_\infty\leq D,
\end{equation}
and finally, we say that $\cW$ has a bounded graphon part if
$\|W\|_\infty<\infty$.

We use the following standard facts about measure-preserving transformations,
which we prove for completeness.

\begin{lemma} \label{lemmapullback}
Suppose $\phi\colon \Omega \rightarrow \Omega'$ is a measure preserving map
between two $\sigma$-finite measure spaces $(\Omega, \cF,\mu)$ and $(\Omega',
\cF',\mu')$.
\begin{enumerate}
\item For any $\sigma$-algebra $\cG \subseteq \cF$, $L^1(\Omega,\cG,\mu)$
    is a closed subspace in $L^1(\Omega,\cF,\mu)$.
\item If $f\in L^1(\Omega,\phi^{-1}(\cF'),\mu)$, then there exists a
    function $f'\in L^1(\Omega',\cF',\mu')$ such that $f=f'^{\phi}=f' \circ
\phi$ almost everywhere.
\item The map $\phi^*\colon L^1(\Omega',\cF',\mu') \rightarrow
    L^1(\Omega,\phi^{-1}(\cF'),\mu)$ with $f'\mapsto {f'}^\phi$ is an
isometric isomorphism, implying that $\phi^*$ and its inverse are
continuous and hence Borel measurable.
\end{enumerate}
\end{lemma}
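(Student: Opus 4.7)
The proof plan is to establish the three items in order, since (2) relies on the closedness statement (1), and (3) is essentially a bookkeeping corollary of (1) and (2) combined with a change-of-variables computation.

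For part (1), I would use the standard argument that $L^1(\Omega,\cG,\mu)$ sits isometrically inside $L^1(\Omega,\cF,\mu)$, so it suffices to show it is complete, or equivalently that any $L^1(\Omega,\cF,\mu)$-limit of $\cG$-measurable integrable functions has a $\cG$-measurable representative. Given a sequence $(f_n)\subseteq L^1(\Omega,\cG,\mu)$ with $f_n\to f$ in $L^1(\Omega,\cF,\mu)$, extract a subsequence $f_{n_k}$ converging to $f$ $\mu$-a.e.; redefining $f$ on a $\mu$-null set (which is permissible in $L^1$), one obtains a pointwise a.e.\ limit of $\cG$-measurable functions, hence a $\cG$-measurable function, as desired.

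For part (2), I would first verify the isometry identity $\|f'\circ\phi\|_{1,\mu}=\|f'\|_{1,\mu'}$, which is the standard change-of-variables formula for measure-preserving maps; it holds for indicators $f'=\mathbf 1_{A'}$ with $A'\in\cF'$ directly from the definition $\mu(\phi^{-1}(A'))=\mu'(A')$, extends to nonnegative simple functions by linearity, and to general $f'\in L^1$ by monotone and dominated convergence. The image $\phi^*(L^1(\Omega',\cF',\mu'))$ is therefore a complete, hence closed, subspace of $L^1(\Omega,\phi^{-1}(\cF'),\mu)$. This image contains $\mathbf 1_{\phi^{-1}(A')}=\mathbf 1_{A'}\circ\phi$ for every $A'\in\cF'$ of finite measure, hence all $\phi^{-1}(\cF')$-measurable simple integrable functions, which are dense in $L^1(\Omega,\phi^{-1}(\cF'),\mu)$ by part (1) applied with $\cG=\phi^{-1}(\cF')$ (more precisely, by the standard density statement for simple functions in $L^1$ of any $\sigma$-finite space, which uses that $\phi^{-1}(\cF')$ is itself $\sigma$-finite since $\mu\circ\phi^{-1}=\mu'$). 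A closed subspace containing a dense subset is the whole space, giving (2).

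Part (3) then follows at once: parts (1)--(2) establish that $\phi^*$ is well-defined, linear, isometric, and surjective onto $L^1(\Omega,\phi^{-1}(\cF'),\mu)$; injectivity follows from being an isometry. Isometric isomorphisms between Banach spaces are homeomorphisms, so both $\phi^*$ and $(\phi^*)^{-1}$ are continuous, and continuous maps between metric spaces are Borel measurable. The only delicate step is the care needed with null sets and the use of $\sigma$-finiteness to guarantee density of simple functions in $L^1(\Omega,\phi^{-1}(\cF'),\mu)$; everything else is routine manipulation of the change-of-variables formula for measure-preserving maps.
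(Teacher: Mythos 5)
Your proof is correct, and part (2) takes a genuinely different route from the paper's. The paper proves (2) by invoking the Radon-Nikodym theorem: it considers the finite signed measure $A\mapsto\int_A f\,d\mu$ on $\phi^{-1}(\cF')$, pushes it forward to a signed measure on $(\Omega',\cF')$ that is absolutely continuous with respect to $\mu'$, and takes its Radon-Nikodym derivative $f'$; one then checks $f=f'\circ\phi$ a.e.\ because both functions are $\phi^{-1}(\cF')$-measurable with the same integrals over every set $\phi^{-1}(B)$, $B\in\cF'$. You instead first establish the isometry $\|f'\circ\phi\|_{1,\mu}=\|f'\|_{1,\mu'}$ (building up from indicators via monotone and dominated convergence), conclude that the image of $\phi^*$ is a complete and hence closed subspace of $L^1(\Omega,\phi^{-1}(\cF'),\mu)$, and observe that this image contains the dense family of $\phi^{-1}(\cF')$-measurable integrable simple functions, so it is everything. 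Your density argument is more elementary in that it avoids Radon-Nikodym entirely, and it has an organizational advantage: the isometry of $\phi^*$ is exactly what part (3) needs, and you have it in hand by construction, whereas the paper's route proves surjectivity via Radon-Nikodym and would need a separate (short) change-of-variables computation to get the isometry for (3). One small slip: you cite ``part (1) with $\cG=\phi^{-1}(\cF')$'' to justify density of simple functions, but part (1) gives closedness of the subspace, not density of simple functions inside it; your parenthetical correction to the standard density fact for $L^1$ of a $\sigma$-finite space (using that $\phi^{-1}(\cF')$ is $\sigma$-finite because $\mu\circ\phi^{-1}=\mu'$ is) is the right justification.
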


\begin{proof}
\begin{enumerate}
\item $L^1(\Omega,\cG,\mu)$ is clearly a subspace of $L^1(\Omega,\cF,\mu)$.
    Since they are both Banach spaces, they are both complete. Since
    $L^1(\Omega,\cG,\mu)\subseteq L^1(\Omega,\cF,\mu)$ is an isometric
    embedding, the only way we can have a {complete} subset of a complete
    space is if the subset itself is closed.
\item The map sending $A \subseteq \Omega$ to $\int_A f$ defines a finite,
    signed measure $\nu$ on $\Omega$, absolutely continuous with respect to
  $\mu$. This measure pushes forward to a signed measure $\nu'$ on
  $\Omega'$. If $B \subseteq \Omega'$ has $\mu'(B)=0$, then
  $\mu(\phi^{-1}(B))=0$, so $\nu'(B)=\nu(\phi^{-1}(B))=0$. Therefore $\nu'$
  is absolutely continuous with respect to $\mu'$, so it has a
  Radon-Nikodym derivative $f'$. It is straightforward to check that, since
  $f$ is $\phi^{-1}(\cF')$ measurable, we have $f=f'^{\phi}$ almost
  everywhere.
\item This follows from the previous parts.
\end{enumerate}
\end{proof}

We will also need the following lemma.

\begin{lemma}\label{lem:auxil1}
Suppose that $\bOmega=(\Omega,\cF,\mu)$ with $\mu(\Omega)<\infty$, and
suppose that $g \in L^2(\Omega)$. Let
\begin{equation} \label{eqnmaxsetjumble}
S=\sup_{f \in L^\infty(\Omega),0 \le f \le 1} \left| \frac{\int_\Omega fg \,d\mu}{\sqrt{\int_\Omega f \,d\mu}}\right|
.\end{equation}
Then $S\leq\|g\|_2<\infty$, and there exists $X \subseteq \Omega$ such that
\[S=\frac{\left|\int_X g \,d\mu\right|}{\sqrt{\mu(X)}}
.\]
Note that this expression is the same as taking $f=1_X$ in (\ref{eqnmaxsetjumble}).
\end{lemma}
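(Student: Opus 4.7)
The first claim $S\le\|g\|_2<\infty$ is immediate: since $0\le f\le 1$ gives $f^2\le f$ pointwise, Cauchy--Schwarz yields
\[
\Bigl(\int_\Omega fg\,d\mu\Bigr)^2 \le \int_\Omega f^2\,d\mu\cdot\int_\Omega g^2\,d\mu \le \|g\|_2^{\,2}\int_\Omega f\,d\mu,
\]
and $g\in L^2(\Omega)$ is assumed.

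For the second claim my plan is first to produce a maximizer $f^*$ of the functional $f\mapsto|\int fg\,d\mu|/\sqrt{\int f\,d\mu}$ on $\{0\le f\le 1\}$, and then to show by a layer-cake argument that $f^*$ must almost everywhere be the indicator of some measurable set $X$. I may assume $S>0$ (otherwise any set of positive measure works) and, after possibly replacing $g$ by $-g$, pick a maximizing sequence $(f_n)$ with $\int f_ng\,d\mu\ge 0$. If $\int f_n\,d\mu\to 0$ along a subsequence, then $(\int f_ng)^2\le \int f_n\cdot\int f_ng^2$ together with dominated convergence applied to the majorant $g^2$ forces $\int f_ng/\sqrt{\int f_n}\to 0$, contradicting $S>0$; hence $\int f_n\,d\mu$ is bounded below. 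The set $\{0\le f\le 1\}$ is convex and bounded in $L^2(\Omega,\mu)$ (using $\mu(\Omega)<\infty$), so by weak compactness and Mazur's theorem I can pass to a further subsequence along which $f_n\rightharpoonup f^*$ weakly in $L^2$, with $0\le f^*\le 1$ almost everywhere. Testing weak convergence against $g$ and against the constant $1$, both of which lie in $L^2$, then gives $\int f^*g/\sqrt{\int f^*}=S$, so that $f^*$ attains the supremum.

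To upgrade $f^*$ to an indicator, I plan to use the layer-cake representation $f^*=\int_0^1 1_{\{f^*>s\}}\,ds$ and to set $a(s)=\int_{\{f^*>s\}}g\,d\mu$ and $b(s)=\mu(\{f^*>s\})$. Each sublevel set $\{f^*>s\}$ is admissible in the supremum defining $S$, so $|a(s)|\le S\sqrt{b(s)}$ for every $s$. Fubini gives $\int f^*g\,d\mu=\int_0^1 a(s)\,ds$ and $\int f^*\,d\mu=\int_0^1 b(s)\,ds$, leading to the chain
\[
S = \frac{\int f^*g\,d\mu}{\sqrt{\int f^*\,d\mu}} = \frac{\int_0^1 a(s)\,ds}{\sqrt{\int_0^1 b(s)\,ds}} \le \frac{S\int_0^1 \sqrt{b(s)}\,ds}{\sqrt{\int_0^1 b(s)\,ds}} \le S,
\]
the final step being Cauchy--Schwarz on $[0,1]$ against the constant function $1$. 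Since the chain begins and ends with $S$, every inequality collapses to an equality; in particular tightness of the last Cauchy--Schwarz forces $b$ to be almost everywhere constant on $[0,1]$. Combined with the facts that $b$ is non-increasing and $b(1)=\mu(\{f^*>1\})=0$, this yields $\mu(\{f^*>s\})=\mu(\{f^*=1\})$ for all $s\in[0,1)$, so $f^*\in\{0,1\}$ almost everywhere, and $X=\{f^*=1\}$ is the required set.

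The main obstacle I anticipate is the existence of the maximizer $f^*$: the functional is not weakly continuous at $f=0$, so ruling out maximizing sequences whose $L^1$-mass escapes to zero is the only genuinely analytic step. Once $f^*$ is in hand, the three-term Cauchy--Schwarz chain automatically forces it to be $\{0,1\}$-valued, and the rest is bookkeeping.
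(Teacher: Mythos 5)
Your proof is correct, and the key step---upgrading the maximizer $f^*$ to an indicator---is done by a genuinely different argument than the paper's. Let me compare.

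For the existence of the maximizer, both you and the paper first rule out maximizing sequences whose $L^1$ mass escapes to zero (you via $(\int f_ng)^2\le\int f_n\int f_ng^2$ and the uniform integrability of $g^2$; the paper by an equivalent cutoff bound). You then use weak compactness of $\{0\le f\le 1\}$ in $L^2$ (valid because $\mu(\Omega)<\infty$), whereas the paper uses weak-$*$ compactness of the same set in $L^\infty=(L^1)^*$; both are routine. For the crucial step of showing $f^*$ is $\{0,1\}$-valued, however, the two proofs diverge. The paper uses a perturbation argument: write a non-indicator $f^*=(\wth+h)/2$ with $h\le\wth$ both supported on $\{g>0\}$, set $p(t)=\int h_tg/\sqrt{\int h_t}$ with $h_t=t\wth+(1-t)h$, and show the numerator of $p'$ is affine in $t$ with strictly positive slope, so $p$ is maximized at an endpoint, contradicting optimality at $t=1/2$. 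Your argument is a layer-cake decomposition $f^*=\int_0^1 1_{\{f^*>s\}}\,ds$, feeding the pointwise constraint $|a(s)|\le S\sqrt{b(s)}$ (each sublevel set is admissible) into Fubini and a Cauchy--Schwarz on $[0,1]$; tightness of that Cauchy--Schwarz forces $b(s)=\mu(\{f^*>s\})$ to be constant on $(0,1)$, which combined with monotonicity gives $\{f^*>0\}=\{f^*=1\}$ up to null sets. Your route is arguably cleaner---it avoids computing and sign-analyzing a derivative, and it makes the role of the defining supremum more transparent (each sublevel set of the maximizer is itself essentially a maximizer). The paper's route has the minor advantage of being a pure convexity argument that extends word-for-word to nearby optimization problems. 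One small stylistic point: the phrase ``dominated convergence applied to the majorant $g^2$'' in your ruling-out step is a bit loose---what you actually need is that $\int f_ng^2\to 0$ when $\int f_n\to 0$ with $0\le f_n\le 1$, which follows from uniform integrability of the single $L^1$ function $g^2$ (equivalently, pass to an a.e.\ convergent subsequence before invoking dominated convergence); the claim is correct, just state it that way.
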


\begin{proof}
First, note that if $0 \le f \le 1$, then
\[
 \left|\frac{\int_\Omega fgd \mu}{\sqrt{\int_\Omega f d \mu}} \right|
 \le \sqrt{\frac{\int_\Omega f^2 d \mu\int_\Omega g^2 d \mu}{\int_\Omega f \,d\mu}}
 = \|g\|_2 \sqrt{\frac{\int_\Omega f^2 \,d\mu}{\int_\Omega f \,d\mu}} \le \|g\|_2<\infty
.\] Next, we will show that there exists a $\delta>0$ such that in
(\ref{eqnmaxsetjumble}), it suffices to consider $f$ with $\|f\|_1 \ge
\delta$. Let $\wOmega=\Omega \times [0,1]$ and
$\widetilde\mu=\mu\times\lambda$, where $\lambda$ is the Lebesgue measure,
and let $\wg(x,t)=g(x)$. Then the expression in (\ref{eqnmaxsetjumble}) is
the same as taking the supremum of
\[\frac{\left|\int_X \wg \,d\widetilde\mu\right|}{\sqrt{\widetilde\mu(X)}}
\]
over $X \subseteq \wOmega$. Indeed, given $X \subseteq \wOmega$, we can plug
in the function $f(x)=\lambda(\{t:(x,t) \in X\})$ into
(\ref{eqnmaxsetjumble}), which is defined almost everywhere, and given $f$ as
in (\ref{eqnmaxsetjumble}), we can take $X=\{(x,t):t \le f(x)\}$. It is
straightforward to check that (\ref{eqnmaxsetjumble}) and the above
expression give the same value. Note that we have
\[
\frac{\left|\int_X \wg \,d\widetilde\mu\right|}{\sqrt{\widetilde\mu(X)}} \le \|\wg|_{X}\|_2
.\] Since $\|\wg\|_2<\infty$, for any $\eps>0$, there exists a $K$ such that
\[
\int_X \wg^2 \,d\mu
\leq \int \wg^21_{\wg>\sqrt K}+ K\widetilde\mu(X) \leq \frac{\eps^2}2 + K\widetilde\mu(X).
\]
Thus, given $\eps>0$, we can find a $\delta>0$ such that if $\mu(X)<\delta$,
then $\|\wg|_{X}\|_2\leq\eps$. Taking $\eps = S/2$ (note that unless $g=0$,
$S>0$), we obtain a $\delta$ such that if $\int_\Omega f \,d\mu<\delta$, then
the expression in (\ref{eqnmaxsetjumble}) is less than $S/2$. This means that
it suffices to take the supremum over $f$ with $\|f\|_1 \ge \delta$.

Recall that we assumed that $\Omega$ has finite measure. The set of $f \in
L^\infty(\Omega)$ with $0 \le f \le 1$ and $\|f\|_1 \ge \delta$ is weak-$*$
closed and therefore weak-$*$ compact. The expression in
(\ref{eqnmaxsetjumble}) is weak-$*$ continuous; therefore, there exists an
$f$ which maximizes the expression.

Clearly such an $f$ is supported either on the set $\{g>0\}$ or $\{g<0\}$.
Assume without loss of generality that it is supported on $\{g>0\}$. Suppose
that $f$ is not equal to $0$ or $1$ almost everywhere. Then we can find $0
\le h \le \wth \le 1$, supported on $\{g>0\}$, such that $f=(\wth+h)/2$, and
it is not the case that $\wth=h$ almost everywhere. This implies that
\[\int_\Omega (\wth - h) \,d\mu >0
\]
and
\[\int_\Omega (\wth - h)g \,d\mu > 0.
\]
Let $h_t=t\wth+(1-t)h$. Note that for $t \in [0,1]$, $0 \le h_t \le 1$
almost everywhere. Let
\[p(t)= \frac{\int_\Omega h_t g}{\sqrt{\int_\Omega h_t}}.
\]
We have
\[\frac{d}{dt}p(t)=\frac{\int_\Omega (\wth-h)g \,d\mu \int_\Omega(t\wth+(1-t)h) \,d\mu
- \frac{1}{2} \int_\Omega( \wth-h)\,d\mu \int_\Omega(t\wth+(1-t)h)g \,d\mu}{\left(\int_\Omega(t\wth+(1-t)h)\,d\mu\right)^{3/2}}.
\]
Notice that the denominator above is always positive, and the numerator above
is of the form $At+B$, where
\begin{align*}
A&=\int_\Omega (\wth-h)g \,d\mu \int_\Omega (\wth-h) \,d\mu - \frac{1}{2} \int_\Omega(\wth-h) \,d\mu \int_\Omega (\wth-h)g \,d\mu\\
&=\frac{1}{2} \int_\Omega(\wth-h) \,d\mu \int_\Omega (\wth-h)g \,d\mu >0
.
\end{align*}
This means that there are three possibilities for $\frac{d}{dt}p(t)$: it can
be positive for every $t \in (0,1)$, it can be negative for every $t \in
(0,1)$, or it can be negative and then positive. Either of these cases
implies that the maximum of $p$ on $[0,1]$ is attained at one or both of the
endpoints, and therefore either $p(0)$ or $p(1)$ is strictly greater than
$p(1/2)$. This contradicts the assumption that $f$ was maximal, completing
the proof of the lemma.
\end{proof}

Using the previous two lemmas, we prove the following proposition. We will
use it for functions which arise as the difference of two graphons with
bounded marginals.

\begin{proposition} \label{proppullbacknormsame}
Let $\bOmega=(\Omega, \cF,\mu)$ and $\bOmega'=(\Omega',\cF',\mu')$ be
$\sigma$-finite measure spaces, and let $\varphi\colon\Omega' \rightarrow
\Omega$ be measurable. If $U\colon\Omega \times \Omega\to\RR$ and
$F\colon\Omega\to \RR$ are square integrable, then
\[
\|U\|_{2 \rightarrow 2}=\|U^\varphi\|_{2 \rightarrow 2},
\qquad\|F\|_{{\jbl}}=\|F^\varphi\|_\jbl,
\qquad\text{and}\qquad
\|U\|_{{\jbl}}=\|U^\varphi\|_{{\jbl}}.
\]
If instead of square integrability, we assume that $U$ is integrable, then
\[
\|U\|_{\square}=\|U^\varphi\|_{\square}.
\]
\end{proposition}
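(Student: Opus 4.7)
The plan is to treat all four equalities uniformly: each norm is a supremum of a linear functional in $U$ (or $F$) against ``test objects'', so one direction is immediate by pulling back test sets via $\varphi^{-1}$ (using measure preservation to keep measures and integrals unchanged), and the other direction requires pushing forward test objects from $\Omega'$ to $\Omega$ via a Radon--Nikodym density, followed by Lemma~\ref{lem:auxil1} where needed to reduce back to indicators. I assume $\varphi$ is measure-preserving throughout, since otherwise the stated equalities fail; this is how the proposition will be applied.

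The key push-forward tool is the following: for any $S' \subseteq \Omega'$ of finite measure, the finite measure $A \mapsto \mu'(S' \cap \varphi^{-1}(A))$ on $\Omega$ is dominated by $\mu$ (because $\mu'(S' \cap \varphi^{-1}(A)) \leq \mu'(\varphi^{-1}(A)) = \mu(A)$), hence has a Radon--Nikodym derivative $\bar 1_{S'}$ with $0 \leq \bar 1_{S'} \leq 1$ almost everywhere and $\int \bar 1_{S'}\,d\mu = \mu'(S')$; moreover $\int G^\varphi 1_{S'}\,d\mu' = \int G\, \bar 1_{S'}\, d\mu$ for any $G \in L^1(\Omega)$. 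For $\|F^\varphi\|_\jbl \leq \|F\|_\jbl$, this yields
\[
\frac{\bigl|\int_{S'} F^\varphi\,d\mu'\bigr|}{\sqrt{\mu'(S')}} = \frac{\bigl|\int F \bar 1_{S'}\,d\mu\bigr|}{\sqrt{\int \bar 1_{S'}\,d\mu}} \leq \|F\|_\jbl,
\]
the last inequality being Lemma~\ref{lem:auxil1} after a $\sigma$-finite extension by exhausting $\Omega$ by finite-measure sets carrying essentially all the mass of $\bar 1_{S'}$. The two-variable bound $\|U^\varphi\|_\jbl \leq \|U\|_\jbl$ follows by iterating this argument one coordinate at a time, and the cut norm is even more direct: since $\|U\|_\square = \sup_{f,g\colon\Omega \to [0,1]} |f \circ U \circ g|$ already allows $[0,1]$-valued test functions, the push-forwards $\bar 1_{S'}, \bar 1_{T'}$ may be used with no reduction to indicators.

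For $\|U^\varphi\|_{2 \to 2} \leq \|U\|_{2 \to 2}$, I would use conditional expectations instead. Given $g \in L^2(\Omega')$, Lemma~\ref{lemmapullback}(2) expresses the $L^2$-projection of $g$ onto $L^2(\Omega', \varphi^{-1}(\cF'), \mu')$ in the form $\bar g \circ \varphi$ for a unique $\bar g \in L^2(\Omega)$ with $\|\bar g\|_2 \leq \|g\|_2$. Measure preservation then gives
\[
(U^\varphi \circ g)(x') = \int U(\varphi(x'), \varphi(y'))\, g(y')\, d\mu'(y') = (U \circ \bar g)(\varphi(x')),
\]
so $\|U^\varphi \circ g\|_2 = \|U \circ \bar g\|_2 \leq \|U\|_{2 \to 2}\|\bar g\|_2 \leq \|U\|_{2\to 2}\|g\|_2$, and supping over $g$ with $\|g\|_2 = 1$ yields the desired inequality. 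The main technical obstacle I anticipate is precisely the $\sigma$-finite extension of Lemma~\ref{lem:auxil1}, but square-integrability of $F$ (resp.\ integrability of $U$ in the cut-norm case) guarantees that the push-forward measures are concentrated on finite-mass sets up to arbitrarily small error, which should render an exhaustion argument routine.
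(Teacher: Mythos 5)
Your proposal is correct and follows essentially the same route as the paper's proof: your push-forward density is exactly the pullback representation $f'=\EE[1_{S'}\mid\varphi^{-1}(\cF)]=f^\varphi$ that the paper extracts from Lemma~\ref{lemmapullback}, and Lemma~\ref{lem:auxil1} is invoked in the same way to pass from $[0,1]$-valued test functions back to indicator sets (with the same $\sigma$-finite exhaustion). You are also right that measure-preservation of $\varphi$ is an implicit hypothesis omitted from the statement; the paper's proof (and every application, e.g.\ Lemma~\ref{lemmapullbacksame}) uses it, and your streamlining of the $2\to 2$ bound by projecting only the right-hand test function and using $\|U\|_{2\to2}=\sup_g\|U\circ g\|_2$ is a harmless simplification of the paper's symmetric treatment of $f$ and $g$.
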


\begin{proof}
For any $f,g \in L^2(\bOmega)$, it is easy to see that $f \circ U \circ g =
f^\varphi \circ U^\varphi \circ g^\varphi$, and we furthermore have that
$f^\phi,g^\phi\in L^2(\bOmega')$ with $\|f^\phi\|_2=\|f\|_2$ and
$\|g^\phi\|_2=\|g\|_2$. This implies that
\[\|U\|_{2 \rightarrow 2}\le \|U^\varphi\|_{2 \rightarrow 2}
.\]

To prove the opposite inequality, let $\widehat{f},\widehat{g} \in
L^2(\bOmega')$, and assume first that $\widehat{f},\widehat{g} \in
L^1(\bOmega')$ as well. Let $f'=\EE[\widehat{f}|\varphi^{-1}(\cF)]$ and
$g'=\EE[\widehat{g}|\varphi^{-1}(\cF)]$. That is, $f'$ is a
$\varphi^{-1}(\cF)$-measurable function such that for any
$\varphi^{-1}(\cF)$-measurable set $S'\subseteq \Omega'$, $\int_{S'}
f'=\int_{S'} \widehat{f}$, and same for $g'$. These functions exist by the
Radon-Nikodym theorem (since all measures are $\sigma$-finite). Then
$\|f'\|_2 \le \|\widehat{f}\|_2$, $\|g'\|_2 \le \|\widehat{g}\|_2$. We claim
that $f' \circ U^\varphi \circ g'=\widehat{f}\circ U^\varphi \circ
\widehat{g}$. Indeed, for any $x' \in \Omega'$,
\[
(U^\varphi)_{x'}(y')=U^\varphi(x',y')
=U(\varphi(x'),\varphi(y'))
=U_{\varphi(x')}(\varphi(y'))
=(U_{\varphi(x')})^\varphi (y'),
\]
showing that $(U^\varphi)_{x'}$ is the pullback of an $\cF$-measurable
function, and is thus $\varphi^{-1}(\cF)$-measurable. Therefore, for every
$x'\in\Omega'$,
\[
\int_{\Omega'} U^\varphi(x',y')\widehat{g}(y')\,d\mu'(y')=
\int_{\Omega'}U^\varphi(x',y')g'(y')\,d\mu'(y'),
\]
which shows that $\widehat{f} \circ U^\varphi \circ \widehat{g}=\widehat{f}
\circ U^\varphi \circ g'$. We can analogously show that $\widehat{f} \circ
U^\varphi \circ g'=f' \circ U^\varphi \circ g'$. Then, since $f'$ and $g'$
are $\varphi^{-1}(\cF)$-measurable, there exist by Lemma \ref{lemmapullback}
$f,g \in L^{{1}}(\Omega)$ with $f'=f^\varphi$ and $g' = g^\varphi$. This
implies that $\|f\|_2 \le \|\widehat{f}\|$, $\|g\|_2 \le \|\widehat{g}\|_2$,
and $f \circ U \circ g = \widehat{f} \circ U^\varphi \circ \widehat{g}$,
which shows that
\[
\widehat{f} \circ U^\varphi \circ \widehat{g}\leq \|U\|_{2 \rightarrow 2}
\]
whenever $\widehat{f},\widehat{g} \in L^2(\bOmega')\cap L^1(\bOmega')$ and
$\|\widehat{f}\|_2,\|\widehat{g}\|_2\leq 1$. Since $\bOmega'$ is
$\sigma$-finite, any function in $L^2(\bOmega')$ can be written as a limit of
functions in $L^2(\bOmega')\cap L^1(\bOmega')$. A dominated convergence
argument then shows that the above bound holds whenever
$\widehat{f},\widehat{g} \in L^2(\bOmega')$ and
$\|\widehat{f}\|_2,\|\widehat{g}\|_2\leq 1$, proving that
\[ \|U^\varphi\|_{2 \rightarrow 2}\le\|U\|_{2 \rightarrow 2}.\]

To prove the statement for the cut norm, we use the representation $
\|U\|_{\square}=\sup_{f,g\colon\Omega\to [0,1]}| f \circ U \circ g|$. Using
this representation, the proof for the cut norm proceeds along the same lines
as the proof for the $\|\cdot\|_{2 \rightarrow 2}$ norm.

Next, let us prove that $\|F\|_\jbl=\|F^\varphi\|_\jbl$. First, for any
measurable $X \subseteq \Omega$ with $\mu(X)<\infty$, since $\mu$ is the
pushforward of $\mu'$,
\[\frac{\int_{\varphi^{-1}(X)}F^\varphi \,d\mu'}{\sqrt{\mu'(\varphi^{-1}(X))}}=\frac{\int_X F \,d\mu}{\sqrt{\mu(X)}}
.\] This shows that $\|F\|_\jbl \le \|F^\varphi\|_\jbl$. Suppose now that $X'
\subseteq \Omega'$, and let $f'=\EE[1_X|\varphi^{-1}(\cF)]$. Then there
exists $f \in L^1(\Omega)$ so that $f'=f^\varphi$. Since $F^\varphi$ is
$\varphi^{-1}(\cF)$-measurable,
\[\frac{\int_{X'} F^\varphi \,d\mu'}{\sqrt{\mu'(X)}}= \frac{\int_{\Omega'} f' F^\varphi \,d\mu'}{\sqrt{\int_\Omega f' \,d\mu'}}= \frac{\int_\Omega f F \,d\mu}{\sqrt{\int_\Omega f \,d\mu}}
.\] Fix $\varepsilon>0$. Since $\Omega$ is $\sigma$-finite, there exists
$\Omega_0 \subseteq \Omega$ with $\mu(\Omega_0)<\infty$ and
\[
\frac{\int_{\Omega_0} f F \,d\mu}{\sqrt{\int_{\Omega_0} f \,d\mu}}
\ge (1-\varepsilon) \frac{\int_\Omega f F \,d\mu}{\sqrt{\int_\Omega f \,d\mu}}
.\]
By the previous lemma, there exists a measurable set $X \subseteq \Omega_0$ so that
\[\frac{\int_X F \,d\mu}{\sqrt{\mu(X) }} \ge \frac{\int_{\Omega_0} f F \,d\mu}{\sqrt{\int_{\Omega_0} f \,d\mu}}
\ge (1-\varepsilon) \frac{\int_\Omega f F \,d\mu}{\sqrt{\int_\Omega f
\,d\mu}} .\] Since this holds for any $\varepsilon>0$, this proves that
$\|F\|_\jbl \ge \|F^\varphi\|_\jbl$.

Finally, we show that $\|U\|_\jbl=\|U^\varphi\|_\jbl$. As before, for any
measurable $X,Y \subseteq \Omega$ with $\mu(X),\mu(Y)<\infty$, since $\mu$ is
the pushforward of $\mu'$,
\[\frac{\int_{\varphi^{-1}(X) \times \varphi^{-1}(Y)}U^\varphi \,(d\mu')^2}{\sqrt{\mu'(\varphi^{-1}(X))}\sqrt{\mu'(\varphi^{-1}(Y))}}=\frac{\int_{X \times Y} U \,(d\mu)^2}{\sqrt{\mu(X)} \sqrt{\mu(Y)}}
.\] This shows that $\|U\|_\jbl \le \|U^\varphi\|_\jbl$. For the other
direction, let $X',Y' \subseteq \Omega'$ with finite measure. By the previous
argument, there exist functions $f,g\colon\Omega \to [0,1]$ such that
\[\frac{\int_{\Omega^2}f(x)U(x,y)g(y) \,d\mu(x)\,d\mu(y)}{\sqrt{\int_\Omega f \,d\mu \int_\Omega g \,d\mu}} = \frac{\int_{X' \times Y'}U^\varphi \,(d\mu')^2}{\sqrt{\mu'(X')\mu'(Y')}}
.\] Fix $\varepsilon>0$. By the previous argument, there exists $X \subseteq
\Omega$ such that
\[\frac{\int_{X \times \Omega}U(x,y)g(y) \,d\mu(x)\,d\mu(y)}{\sqrt{\mu(X) \int_\Omega g \,d\mu}} \ge
(1-\varepsilon)\frac{\int_{\Omega^2}f(x)U(x,y)g(y) \,d\mu(x)\,d\mu(y)}{\sqrt{\int_\Omega f \,d\mu \int_\Omega g \,d\mu}}
.\]
Then, applying it again, there exists $Y \subseteq \Omega$ such that
\[\frac{\int_{X \times Y}U(x,y) \,d\mu(x)\,d\mu(y)}{\sqrt{\mu(X)\mu(Y)}} \ge (1-\varepsilon)\frac{\int_{X \times \Omega}U(x,y)g(y) \,d\mu(x)\,d\mu(y)}{\sqrt{\mu(X) \int_\Omega g}}
.\]
Combining these, we obtain
\[\frac{\int_{X \times Y}U(x,y) \,d\mu(x)\,d\mu(y)}{\sqrt{\mu(X)\mu(Y)}} \ge (1-\varepsilon)^2\frac{\int_{X' \times Y'}U^\varphi \,d\mu'}{\sqrt{\mu'(X')\mu'(Y')}}
.\] Since this holds for any $\varepsilon>0$, we obtain that $\|U\|_\jbl \ge
\|U^\varphi\|_\jbl$.
\end{proof}

Next we establish a sequence of lemmas leading to the proof of
Theorem~\ref{thm:deltt-metric}, which states that $\deltt$ is a pseudometric.
To state the first lemma, we define the vectors
\begin{align*}
\Delta_{2\to 2}(\cW_1,\cW_2)
&= \left( \|W_1-W_2\|_{2 \rightarrow 2}, \|D_{\cW_1}-D_{\cW_2}\|_2,
\left|
\rho(\cW_1)-\rho(\cW_2)\right|\right)
\quad\text{and}
\\
\Delta_{\jbl}(\cW_1,\cW_2)
&= \left( \|W_1-W_2\|_{\jbl}, \|D_{\cW_1}-D_{\cW_2}\|_\jbl,
\left|\rho(\cW_1)-\rho(\cW_2)\right|\right),
\end{align*}
where again we require the signed graphexes $\cW_1$ and $\cW_2$ to be in
$L^1\cap L^2$. Note that each coordinate satisfies the triangle inequality.
We will also use the following property.

\begin{lemma} \label{lemmapullbacksame}
Let $\bOmega=(\Omega,\cF,\mu)$ and $\bOmega'=(\Omega',\cF',\mu')$ be
$\sigma$-finite measure spaces, let $\varphi\colon \Omega \rightarrow
\Omega'$ be a measure-preserving map, and let $\cW_1,\cW_2$ be
signed graphexes in $L^1\cap L^2$.
Then
\[
\Delta_{2\to 2}(\cW_1,\cW_2)=\Delta_{2\to 2}(\cW_1^\varphi,\cW_2^\varphi)
\quad\text{and}\quad
\Delta_{\jbl}(\cW_1,\cW_2)=\Delta_{\jbl}(\cW_1^\varphi,\cW_2^\varphi).
\]
\end{lemma}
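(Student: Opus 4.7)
The plan is to reduce the lemma to Proposition~\ref{proppullbacknormsame} together with two elementary observations: that the marginal operation commutes with measure-preserving pullbacks, and that $\rho$ is pullback-invariant.

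First I would verify the commutation. For a signed graphex $\cW=(W,S,I,\bOmega')$ in $L^1\cap L^2$ and $x\in\Omega$, a direct change of variables gives
\[
D_{W^\varphi}(x)=\int_\Omega W(\varphi(x),\varphi(y))\,d\mu(y)=\int_{\Omega'}W(\varphi(x),z)\,d\mu'(z)=D_W(\varphi(x)),
\]
where the middle equality uses the measure-preserving property $\varphi_*\mu=\mu'$. Combined with $S^\varphi=S\circ\varphi$, this yields $D_{\cW^\varphi}=(D_\cW)^\varphi$ almost everywhere. Consequently $W_1^\varphi-W_2^\varphi=(W_1-W_2)^\varphi$ and $D_{\cW_1^\varphi}-D_{\cW_2^\varphi}=(D_{\cW_1}-D_{\cW_2})^\varphi$, so the components being compared on the two sides of each vector equality differ only by the pullback operation.

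Next I would verify each coordinate in turn. The graphon-part coordinates of both vectors follow by applying Proposition~\ref{proppullbacknormsame} to $U=W_1-W_2$, yielding $\|W_1-W_2\|_{2\to 2}=\|(W_1-W_2)^\varphi\|_{2\to 2}$ and likewise for $\|\cdot\|_\jbl$. For the marginal coordinate of $\Delta_\jbl$, I would invoke the one-variable case of Proposition~\ref{proppullbacknormsame} with $F=D_{\cW_1}-D_{\cW_2}$. For the marginal coordinate of $\Delta_{2\to 2}$, the $L^2$-invariance is a one-line change of variables:
\[
\|(D_{\cW_1}-D_{\cW_2})^\varphi\|_2^2=\int_\Omega \bigl|(D_{\cW_1}-D_{\cW_2})(\varphi(x))\bigr|^2\,d\mu(x)=\|D_{\cW_1}-D_{\cW_2}\|_2^2.
\]
Finally, applying the same change-of-variables argument separately to $\int W_i$ and $\int S_i$ (while $I_i$ is unchanged by definition) gives $\rho(\cW_i^\varphi)=\rho(\cW_i)$, so the third coordinate is preserved as well.

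There is no substantive obstacle once Proposition~\ref{proppullbacknormsame} is available: the lemma is essentially a bookkeeping check that every ingredient used to define $\d22$ and $\djbl$ transforms covariantly under measure-preserving pullbacks. The only point requiring any care is the identity $D_{\cW^\varphi}=(D_\cW)^\varphi$, and even that is just a single application of the definition of measure-preservation combined with Fubini to exchange the order of integration implicit in the marginal.
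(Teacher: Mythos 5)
Your proposal is correct and follows the same route as the paper: observe that $D_{\cW^\varphi}=(D_\cW)^\varphi$ and $\rho(\cW^\varphi)=\rho(\cW)$ by change of variables, note that differences of pullbacks are pullbacks of differences, and then reduce each coordinate of $\Delta_{2\to 2}$ and $\Delta_{\jbl}$ to Proposition~\ref{proppullbacknormsame} (or, for the $L^2$ marginal term, to the pushforward formula directly). The paper states the argument more tersely but the content is identical.
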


\begin{proof}
Clearly $\rho(\cW_1)=\rho(\cW_1^\varphi)$ and
$\rho(\cW_2)=\rho(\cW_2^\varphi)$, which means that their differences are
equal too. We also have for every $x \in \Omega$, $
D_{\cW_i}(\varphi(x))=D_{\cW_i^\varphi}(x), $ which implies that $
\|D_{\cW_1}-D_{\cW_2}\|_2=\|D_{\cW_1^\varphi}-D_{\cW_2^\varphi}\|_2. $
Therefore it remains to show that
\[
\|W_1-W_2\|_{2 \rightarrow 2}=\|W_1^\varphi-W_2^\varphi\|_{2 \rightarrow 2}
\]
as well as
\[\|D_{\cW_1}-D_{\cW_2}\|_\jbl=\|D_{\cW_1^\varphi}-D_{\cW_2^\varphi}\|_\jbl
\quad\text{and}\quad
\|W_1-W_2\|_{\jbl}=\|W_1^\varphi-W_2^\varphi\|_{\jbl}.
\]
Note that $D_{\cW_1^\varphi}-D_{\cW_2^\varphi}=\left(D_{\cW_1}- D_{\cW_2}\right)^\varphi$
and
$W_1^\varphi-W_2^\varphi=(W_1-W_2)^\varphi$, so by Proposition
\ref{proppullbacknormsame}, we are done.
\end{proof}

To prove the triangle inequality, we would like to take a coupling of
$\Omega_1$ and $\Omega_2$ and a coupling of $\Omega_2$ and $\Omega_3$, and
use them to obtain a coupling of $\Omega_1$ and $\Omega_3$. Unfortunately
this cannot be done for general signed graphexes. We can, however do it if
the signed graphexes involved are step graphexes. To define these, we first
define a \emph{subspace partition} of a measure space
$\bOmega=(\Omega,\cF,\mu)$ as a partition of a measurable subset
$\Omega'\subseteq\Omega$ into countably many measurable subsets. Such a
subspace partition is called finite if it is a partition into finitely many
sets of finite measure. A signed graphex $\cU$ is then called a \emph{step
graphex} over the subspace partition $\cP=(P_1, \dots, P_m)$ if $\cP$ is a
finite subspace partition, $\dsupp \cU\subseteq P_1 \cup P_2 \cup \dots \cup
P_m$, and for all $x,x' \in P_i$, $S(x)=S(x')$ and $W_x=W_{x'}$, where, as
before, $W_x$ is the function $y\mapsto W(x,y)$.

\begin{remark} \label{rem:stepgraphexesequiv}
Given a signed step graphex $\cW=(W,S,I,\bOmega)$ over a finite subspace
partition $\cP=(P_1,P_2,\dots,P_m)$, we can define another signed graphex
$\cW'=(W',S',I',\bOmega')$ with $\bOmega'=(\Omega',\cF',\mu')$, where
$\Omega'=[m]$, $\cF'$ consists of all subsets, and the measure is defined by
$\mu'(\{i\})=\mu(P_i)$. Setting $I'=I$, $S'(i)=S(x)$ for any $x \in P_i$
(they are all equal), and $W'(i,j)=W(x,y)$ for $x \in P_i,y \in P_j$ (again
the choice of $x$ and $y$ does not matter), we obtain that
$\cW=(\cW')^\varphi$ where $\varphi\colon\bOmega \rightarrow \bOmega'$ is the
map with $\varphi(x)=i$ for $x \in P_i$. In particular, by Remark
\ref{rem:pullbacksamedistance}, the distance between $\cW$ and $\cW'$ is zero
(for any of the distance notions). Suppose now that
$\cW''=(W'',S'',I'',\bOmega'')$ with $\bOmega''=(\Omega'',\cF'',\mu'')$ is
another signed step graphex over a finite subspace partition
$\cQ=\{Q_1,Q_2,\dots,Q_m\}$, with $I''=I$, $\mu''(Q_i)=\mu(P_i)$, and
$S''(x'')=S(x)$, $W''(x'',y'')=W(x,y)$ for $x \in P_i,x'' \in Q_i,y \in
P_j,y'' \in Q_j$. Then $\bOmega''$ can also be mapped to $\bOmega'$ so that
$\cW''$ is the pullback of $\cW'$ (by mapping $Q_i$ to $i$). This implies
that the distance of both $\cW$ and $\cW''$ from $\cW'$ is $0$, which (by the
still to be proven triangle inequality) implies that their distance from each
other is $0$ (again for any of the notions of distance).
\end{remark}

Returning to the proof of the triangle inequality, we will in fact consider
signed graphexes that are countable step graphexes, i.e., the number of
``steps'' is countable, and each step has finite measure. First, however, we
need the following technical lemma:

\begin{lemma}
Let $\cW_1=(W_1,S_1,I_1,\bOmega)$ and $\cW_2=(W_2,S_2,I_2,\bOmega)$ be signed
graphexes in $L^1\cap L^2$. Assume that both are countable step graphexes on
$\bOmega=(\Omega,\cF,\mu)$ with common refinement
$\cP=\{P_1,P_2,\dots,P_m,\dots\}$, suppose $\mu'$ is another measure on
$\Omega$ with $\mu(P_i)=\mu'(P_i)$, and let $\bOmega'=(\Omega,\cF,\mu')$ and
$\cW_i'=(W_i,S_i,I_i,\bOmega')$. Then
\[
\Delta_{2\to 2}(\cW_1,\cW_2)=\Delta_{2\to 2}(\cW_1',\cW_2')
\quad\text{and}\quad
\Delta_{\jbl}(\cW_1,\cW_2)=\Delta_{\jbl}(\cW_1',\cW_2')
.
\]
\end{lemma}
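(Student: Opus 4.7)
The plan is to factor both graphex pairs through a common discrete ``skeleton'' on $\mathbb{N}$ and then invoke Lemma~\ref{lemmapullbacksame} twice. First I would build a $\sigma$-finite measure space $\bOmega^{*}=(\mathbb{N},2^{\mathbb{N}},\mu^{*})$ by setting $\mu^{*}(\{i\})=\mu(P_i)=\mu'(P_i)$, which is finite by the countable-step hypothesis. Using the common values of $W_j$ and $S_j$ on the cells $P_i$ and products $P_i\times P_k$, I would define signed step graphexes $\cW_1^{*},\cW_2^{*}$ on $\bOmega^{*}$ by $W_j^{*}(i,k):=W_j(x,y)$ for any $x\in P_i$, $y\in P_k$, $S_j^{*}(i):=S_j(x)$ for any $x\in P_i$, and $I_j^{*}:=I_j$. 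Because $\cW_1,\cW_2\in L^{1}\cap L^{2}$ and pullbacks are isometric (Proposition~\ref{proppullbacknormsame}), the graphexes $\cW_1^{*},\cW_2^{*}$ again lie in $L^{1}\cap L^{2}$.

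Next I would introduce the single map $\varphi\colon\bigcup_i P_i\to\mathbb{N}$ given by $\varphi(x)=i$ for $x\in P_i$. The identity $\mu(P_i)=\mu'(P_i)=\mu^{*}(\{i\})$ makes $\varphi$ simultaneously measure-preserving as a map from $(\bigcup_i P_i,\mu|_{\bigcup_i P_i})$ and as a map from $(\bigcup_i P_i,\mu'|_{\bigcup_i P_i})$ onto $\bOmega^{*}$. From the step structure, $\cW_j\bigl|_{\bigcup_i P_i}=(\cW_j^{*})^{\varphi}$ almost everywhere under $\mu$ and $\cW_j'\bigl|_{\bigcup_i P_i}=(\cW_j^{*})^{\varphi}$ almost everywhere under $\mu'$, for $j=1,2$.

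I would then observe that restriction to $\bigcup_i P_i$ leaves each coordinate of $\Delta_{2\to 2}$ and $\Delta_{\jbl}$ unchanged: since $\dsupp\cW_j\subseteq\bigcup_i P_i$, the marginals $D_{\cW_j}$ (and hence $D_{\cW_1}-D_{\cW_2}$) vanish off $\bigcup_i P_i$, so their $L^{2}$ and jumble norms are computed on $\bigcup_i P_i$; the row-constancy of $W_j$ on each $P_i$ together with symmetry forces $W_j$ to be (equivalent to) zero off $\bigcup_i P_i\times\bigcup_i P_i$, so the operator and jumble norms of $W_1-W_2$ are unaffected; and $\rho(\cW_j)$ depends only on the portion of $\cW_j$ supported in $\bigcup_i P_i$. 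Exactly the same holds for $\cW_j'$ since $\mu$ and $\mu'$ agree on every $P_i$. Applying Lemma~\ref{lemmapullbacksame} to $\varphi$ twice then yields
\[
\Delta(\cW_1,\cW_2)=\Delta(\cW_1^{*},\cW_2^{*})=\Delta(\cW_1',\cW_2'),
\]
where $\Delta$ stands for either $\Delta_{2\to 2}$ or $\Delta_{\jbl}$.

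The main obstacle I anticipate is the bookkeeping around the complement $\Omega\setminus\bigcup_i P_i$, where $\mu$ and $\mu'$ may genuinely disagree and where $\varphi$ is not naturally defined. The argument handles this by restricting everything to $\bigcup_i P_i$ and showing the restriction is lossless for $\Delta$. That reduction in turn relies on checking that $W_j$ is, up to measure zero, supported in $\bigcup_i P_i\times\bigcup_i P_i$, which uses both the step-graphex row-constancy condition and symmetry of $W_j$ applied to the assumption $\dsupp\cW_j\subseteq\bigcup_i P_i$; this is the only delicate point for signed graphexes, but it is straightforward given those hypotheses.
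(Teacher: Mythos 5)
Your proposal follows the paper's argument in its essentials: build a discrete skeleton over $\NN$ with atoms of weight $\mu(P_i)=\mu'(P_i)$, note that the single assignment map $\varphi(x)=i$ for $x\in P_i$ is measure-preserving from $(\bigcup_i P_i,\mu)$ and from $(\bigcup_i P_i,\mu')$ onto that skeleton, and invoke Lemma~\ref{lemmapullbacksame} twice. Where you go beyond the paper is the explicit reduction to $\bigcup_i P_i$: the paper simply writes $\varphi\colon\Omega\to\Omega_\cP$ with $\varphi(x)=x_i$ for $x\in P_i$ and asserts $\cW_{i,\cP}^{\varphi}=\cW_i$, tacitly ignoring $\Omega\setminus\bigcup_i P_i$, whereas you isolate the fact that this set is invisible to all three coordinates of $\Delta$. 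That is a genuine gap-filling step and makes the argument cleaner. One small caution on its justification: to conclude that $D_{\cW_j}$ and $W_j$ actually vanish off the partition, the step-graphex condition $\dsupp\cW_j\subseteq\bigcup_i P_i$ needs to be read as \emph{absolute} degree support (so $D_{|\cW_j|}=0$ outside, whence $W_j(x,\cdot)=0$ a.e.\ and, by symmetry, $W_j=0$ off $(\bigcup_i P_i)^2$). The paper's stated definition $\dsupp\cW=\{D_\cW>0\}$ only gives $D_{\cW_j}\le 0$ off the partition for a signed graphex, which is not enough. That absolute-degree reading is clearly the intended one (cf.\ Remark~\ref{rem:coupling-signed-generalization}), so this is a definitional wrinkle in the paper rather than an error in your reasoning, but you should spell out that you are using $D_{|\cW_j|}$ rather than $D_{\cW_j}$ at that point.
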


\begin{proof}
Let $\bOmega_\cP=(\Omega_\cP,\cF_\cP,\mu_\cP)$ where
$\Omega_\cP=\{x_1,x_2,\dots,x_m,\dots\}$, $\cF_\cP$ is the set of all subsets
of $\Omega_\cP$, and $\mu_\cP(x_i)=\mu(P_i)$. Let
$\varphi,\,\varphi'\colon\Omega \rightarrow \Omega_\cP$ with
$\varphi(x)=\varphi'(x)=x_i$ for $x \in P_i$. Then $\varphi\colon\bOmega
\rightarrow \bOmega_\cP$ and $\varphi'\colon\bOmega' \rightarrow \bOmega_\cP$
are both measure preserving (these are the same function on $\Omega$ but as
maps between measure spaces are different). Define
$\cW_{i,\cP}=(W_{i,\cP},S_{i,\cP},I_i)$ with $S_{i,\cP}(x_j)=S_{i}(x)$ for
any $x \in P_j$ (they are all equal), and $W_{i,\cP}(x_j,x_k)=W(x,y)$ for $x
\in P_j,y \in P_k$ (again, they are all equal). Then
$\cW_{i,\cP}^{\varphi}=\cW_i$ and $\cW_{i,\cP}^{\varphi'}=\cW_i'$. Therefore,
by Lemma \ref{lemmapullbacksame}, we have
\[\Delta_{2\to 2}(\cW_1,\cW_2)=\Delta_{2\to 2}(\cW_{1,\cP},\cW_{2,\cP})=\Delta_{2\to
2}(\cW_1',\cW_2'),\]
and similarly for $\Delta_{\jbl}$.
\end{proof}

To state the next lemma, we use the symbol $\pi_{ij,k}$ to denote the
coordinate projection from a product space $\Omega_i\times\Omega_j$ to
$\Omega_k$, where $k=i$ or $k=j$.

\begin{lemma} \label{lemmacouplingsequencecountablesteps}
Let $\cW_i=(W_i,S_i,I_i,\bOmega_i)$, for $i=1,2,3$, be countable step
graphexes in $L^1\cap L^2$. Let $\mu_{12}$ be a coupling measure on $\Omega_1
\times \Omega_2$, and $\mu_{23}$ be a coupling measure on $\Omega_2 \times
\Omega_3$. Then there exists a coupling measure $\mu_{13}$ on $\Omega_1$ and
$\Omega_3$ such that
\begin{align*}
\Delta_{2\to 2}&(\cW_1^{\pi_{13,1},\mu_{13}},\cW_3^{\pi_{13,3},\mu_{13}})\\
&\le\Delta_{2\to 2}(\cW_1^{\pi_{12,1},\mu_{12}},\cW_3^{\pi_{12,2},\mu_{12}})
+\Delta_{2\to 2}(\cW_1^{\pi_{23,2},\mu_{23}},\cW_3^{\pi_{23,3},\mu_{23}})
\end{align*}
\and
\begin{align*}
\Delta_{\jbl}&(\cW_1^{\pi_{13,1},\mu_{13}},\cW_3^{\pi_{13,3},\mu_{13}})\\
&\le\Delta_{\jbl}(\cW_1^{\pi_{12,1},\mu_{12}},\cW_3^{\pi_{12,2},\mu_{12}})
+\Delta_{\jbl}(\cW_1^{\pi_{23,2},\mu_{23}},\cW_3^{\pi_{23,3},\mu_{23}}),
\end{align*}
where the inequalities hold coordinate-wise.
\end{lemma}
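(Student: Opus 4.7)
The plan is to produce a single three-way coupling $\mu_{123}$ on $\Omega_1 \times \Omega_2 \times \Omega_3$ whose $(1,2)$-projection equals $\mu_{12}$ and whose $(2,3)$-projection equals $\mu_{23}$, and to then take $\mu_{13}$ to be its $(1,3)$-projection. Granted such a $\mu_{123}$, let $\pi_{123,i}$ denote the $i$-th coordinate projection from $\Omega_1 \times \Omega_2 \times \Omega_3$. Each two-coordinate projection is measure-preserving between $(\Omega_1\times\Omega_2\times\Omega_3,\mu_{123})$ and the corresponding $(\Omega_i \times \Omega_j,\mu_{ij})$, so Proposition~\ref{proppullbacknormsame} implies that the kernel, jumble, and $L^2$ norms underlying $\Delta_{2\to 2}$ and $\Delta_{\jbl}$ are preserved under further measure-preserving pullback. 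Consequently
\[
\Delta_{2\to 2}(\cW_1^{\pi_{12,1},\mu_{12}},\cW_2^{\pi_{12,2},\mu_{12}}) = \Delta_{2\to 2}(\cW_1^{\pi_{123,1},\mu_{123}},\cW_2^{\pi_{123,2},\mu_{123}}),
\]
together with the analogous equalities for the $(2,3)$- and $(1,3)$-pairs and for $\Delta_{\jbl}$. All three pullbacks now live on the common space $(\Omega_1\times\Omega_2\times\Omega_3,\mu_{123})$, where each coordinate of $\Delta_{2\to 2}$ and $\Delta_{\jbl}$ is an actual seminorm; applying the coordinate-wise triangle inequality there yields the two inequalities asserted in the lemma.

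The construction of $\mu_{123}$ uses the countable step structure of $\cW_2$ in a crucial way. Its step cells give a countable collection $\{Q_j\}_{j\ge 1}$ of finite-measure subsets of $\Omega_2$ covering $\dsupp \cW_2$, which I extend (using $\sigma$-finiteness of $\mu_2$) to a countable partition of $\Omega_2$ by finite-measure sets. I likewise extend the step partitions of $\cW_1$ and $\cW_3$ to countable partitions of $\Omega_1$ and $\Omega_3$. Using the previous lemma together with the countable version of Remark~\ref{rem:stepgraphexesequiv}, I may then replace each $\bOmega_i$ by a countable discrete space whose atoms carry the measures of the corresponding step cells; this leaves $\Delta_{2\to 2}$ and $\Delta_{\jbl}$ unchanged. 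In the resulting discrete setting the couplings $\mu_{12}$ and $\mu_{23}$ become non-negative arrays $(m^{12}_{ij})$ and $(m^{23}_{jk})$ with the usual row- and column-sum constraints, and the classical composition
\[
\mu_{123}(\{(i,j,k)\}) = \frac{m^{12}_{ij}\,m^{23}_{jk}}{\mu_2(\{j\})}
\]
(with the convention $0/0 = 0$) manifestly has the correct two-way marginals. Its $(1,3)$-marginal is the desired $\mu_{13}$, and pulling everything back to the original spaces transports this into a coupling of $\mu_1$ and $\mu_3$.

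The main obstacle is the gluing step itself: on a generic $\sigma$-finite measure space, regular disintegration of a coupling along its middle marginal is not guaranteed without Polish-type assumptions, which are not part of the graphex framework. The countable step hypothesis is precisely what circumvents this difficulty, because on cells where $\cW_2$ is constant the pullbacks of $\cW_1$, $\cW_2$, $\cW_3$ are determined entirely by the combinatorial data $(m^{12}_{ij}, m^{23}_{jk})$, so the gluing reduces to a countable matrix composition that uses no topology. Together with the invariance of the relevant norms under measure-preserving pullbacks (Proposition~\ref{proppullbacknormsame}) and the triangle inequality for each seminorm, this produces the stated coordinate-wise inequalities simultaneously for $\Delta_{2 \to 2}$ and $\Delta_{\jbl}$.
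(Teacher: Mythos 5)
Your proposal is essentially the same as the paper's. The paper constructs the three-way coupling directly on $\Omega_1\times\Omega_2\times\Omega_3$ by spreading the mass
\[
\frac{\mu_{12}(A_p\times B_q)\,\mu_{23}(B_q\times C_r)}{\mu_2(B_q)}
\]
uniformly over each cell $A_p\times B_q\times C_r$, then verifies the marginals by direct summation and applies Lemma~\ref{lemmapullbacksame} together with the coordinate-wise triangle inequality — this is exactly your discrete composition $m^{12}_{pq}m^{23}_{qr}/\mu_2(\{q\})$ written out on the original spaces rather than on the quotient discrete spaces. Your reduction to the discrete setting first, via the step-graphex maps of Remark~\ref{rem:stepgraphexesequiv} and the unnamed lemma just before this one, is a legitimate alternative presentation of the same idea. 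One point worth making more explicit: after composing the discrete couplings you still need a coupling $\mu_{13}$ on the \emph{original} space $\Omega_1\times\Omega_3$, not merely on the discrete quotient; the "pulling back" sentence at the end of your second paragraph is doing real work (spread each atom uniformly over its cell and re-apply Lemma~\ref{lemmapullbacksame} to confirm distances are unchanged), and deserves a line of justification. A second point in your favor: you note explicitly that the step partitions must be extended to cover all of each $\Omega_i$ (not just $\dsupp\cW_i$), which is in fact necessary for the composed measure to have the correct full marginals; the paper leaves this implicit.
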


\begin{proof}
Let the steps of $\cW_1$ be $A_1,A_2,\dots$, the steps of $\cW_2$ be
$B_1,B_2,\dots$, and the steps of $\cW_3$ be $C_1,C_2,\dots$. Without loss of
generality, we may assume that each $\mu_1(A_p)>0$, each $\mu_2(B_q)>0$, and
each $\mu_3(C_r)>0$.
First, take the measure
$\mu_{123}$ on $\Omega_1 \times\Omega_2 \times \Omega_3$ where
\[
{\mu_{123}}(E) =\sum_{p,q,r}\frac{\mu_{12}(A_p \times B_q)\mu_{23}(B_q \times
C_r)}{\mu_1(A_p)\mu_2(B_q)^2 \mu_3(C_r)} \left(\mu_1 \times \mu_2 \times
\mu_3\right)(E \cap A_p \times B_q \times C_r) .\] Then
\begin{align*}
\mu_{123}&(A_{p_0} \times B_{q_0} \times \Omega_3)=
\sum_{p,q,r}\frac{\mu_{12}(A_p \times B_q)\mu_{23}(B_q \times C_r)}{\mu_1(A_p)\mu_2(B_q)^2 \mu_3(C_r)}
\\&\qquad \qquad \phantom{} \cdot
\left(\mu_1 \times \mu_2 \times \mu_3\right)(A_{p_0}\times B_{q_0} \times \Omega_3 \cap A_p \times B_q \times C_r)
\\
&= \mu_{12}(A_{p_0} \times B_{q_0}) \sum_r \frac{\mu_{23}(B_{q_0} \times C_r)}{\mu_1(A_{p_0})\mu_2(B_{q_0})^2\mu_3(C_r)}
\mu_1(A_{p_0})\mu_2(B_{q_0})\mu_3(C_r)\\
&= \mu_{12}(A_{p_0} \times B_{q_0}) \sum_r \frac{\mu_{23}(B_{q_0} \times C_r)}{\mu_2(B_{q_0})}
= \mu_{12}(A_{p_0} \times B_{q_0}) .
\end{align*}
In other words, if $\pi_{123,12}$ is the projection from
$\Omega_1\times\Omega_2\times\Omega_3$ to $\Omega_1\times\Omega_2$ and
$\mu_{12}'=\mu_{123}^{\pi_{123,12}}$, then $\mu_{12'}(A_p \times
B_q)=\mu_{12}(A_p\times B_q)$. Analogously, if
$\mu_{23}'=\mu_{123}^{\pi_{123,23}}$, then $\mu_{23'}(A_p \times
B_q)=\mu_{23}(A_p\times B_q)$. Furthermore, for any $F \subseteq \Omega_1$,
\begin{align*}
&\mu_{123}(F \times \Omega_2 \times \Omega_3)\\
&=\sum_{p,q,r}\frac{\mu_{12}(A_p \times B_q)\mu_{23}(B_q \times C_r)}{\mu_1(A_p)\mu_2(B_q)^2 \mu_3(C_r)} \left(\mu_1 \times \mu_2 \times \mu_3\right)(F\times \Omega_2 \times \Omega_3 \cap A_p \times B_q \times C_r)\\
&=\sum_{p,q,r}\frac{\mu_{12}(A_p \times B_q)\mu_{23}(B_q \times C_r)}{\mu_1(A_p)\mu_2(B_q)^2 \mu_3(C_r)} \mu_1(F \cap A_p) \mu_2(B_q) \mu_3(C_r)\\
&=\sum_{p,q,r}\frac{\mu_{12}(A_p \times B_q)\mu_{23}(B_q \times C_r)}{\mu_1(A_p)\mu_2(B_q)} \mu_1(F \cap A_p)
=\sum_{p,q}\frac{\mu_{12}(A_p \times B_q)}{\mu_1(A_p)} \mu_1(F \cap A_p)
\\&
=\sum_{p}\frac{\mu_{12}(A_p \times \Omega_2)}{\mu_1(A_p)} \mu_1(F \cap A_p)
=
\sum_{p} \mu_1(F \cap A_p)=\mu_1(F).
\end{align*}
Analogously, for any $G \subseteq \Omega_3$, $\mu_{123}(\Omega_1 \times
\Omega_2 \times G)=\mu_3(G)$. Therefore, $\mu_{13}=\mu_{123}^{\pi_{123,13}}$
is a coupling measure on $\Omega_1 \times \Omega_3$ of $\mu_1$ and $\mu_3$.
By Lemma~\ref{lemmapullbacksame} and the triangle inequality for the
coordinates of $\Delta_{2\to 2}$, we then have
\begin{align*}
\Delta_{2\to 2}&(\cW_1^{\pi_{13,1},\mu_{13}},\cW_3^{\pi_{13,3},\mu_{13}})\\
&=\Delta_{2\to 2}(\cW_1^{\pi_{123,1},\mu_{123}},\cW_3^{\pi_{123,3},\mu_{123}}) \\
&\le \Delta_{2\to 2}(\cW_1^{\pi_{123,1},\mu_{123}},\cW_2^{\pi_{123,2},\mu_{123}})
+\Delta_{2\to 2}(\cW_2^{\pi_{123,2},\mu_{123}},\cW_3^{\pi_{123,3},\mu_{123}})\\
&=
\Delta_{2\to 2}(\cW_1^{\pi_{12,1},\mu_{12}'},\cW_2^{\pi_{12,2},\mu_{12}'})
+\Delta_{2\to 2}(\cW_2^{\pi_{23,2},\mu_{23}'},\cW_3^{\pi_{23,3},\mu_{23}'})\\
&=\Delta_{2\to 2}(\cW_1^{\pi_{12,1},\mu_{12}},\cW_2^{\pi_{12,2},\mu_{12}})
+\Delta_{2\to 2}(\cW_2^{\pi_{23,2},\mu_{23}},\cW_3^{\pi_{23,3},\mu_{23}}).
\end{align*}
The proof for $\Delta_{\jbl}$ is the same.
\end{proof}

We are now ready to prove that $\tdel22$ and the distance
$\tdeljbl$ (obtained by replacing $\d22$ in \eqref{tdel22-def} with $\djbl$)
obey the triangle inequality.

\begin{lemma} \label{lemmacouplingsequence}
Suppose that $\cW_i=(W_i,S_i,I_i,\bOmega_i)$ with
$\bOmega_i=(\Omega_i,\cF_i,\mu_i)$, for $i=1,2,3$, are signed graphexes in
$L^1\cap L^2$, and assume that
 $\mu_1(\Omega_1)=\mu_2(\Omega_2)=\mu_3(\Omega_3)$. Then
\[
\tdel22(\cW_1,\cW_3)\leq\tdel22(\cW_1,\cW_2) + \tdel22(\cW_2,\cW_3)
\]
and
\[
\tdeljbl(\cW_1,\cW_3)\leq\tdeljbl(\cW_1,\cW_2) + \tdeljbl(\cW_2,\cW_3).
\]
\end{lemma}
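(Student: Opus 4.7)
The strategy is to reduce to the already-proven triangle inequality for countable step graphexes (Lemma \ref{lemmacouplingsequencecountablesteps}) via an $L^2$ approximation argument, exploiting the fact that a signed graphex in $L^1\cap L^2$ can be approximated in both $\d22$ and $\djbl$ by countable step graphexes on the same underlying space.

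First I would establish the approximation claim: for each $\cW_i$ in $L^1\cap L^2$ there exists a sequence of countable step graphexes $\cW_i^{(n)}$ on $\bOmega_i$ with $\d22(\cW_i,\cW_i^{(n)})\to 0$ and $\djbl(\cW_i,\cW_i^{(n)})\to 0$. To build such a sequence, exhaust $\Omega_i$ by finite-measure sets $\Omega_i^{(k)}$ (so that truncating $W_i, S_i, I_i$ approximates $\cW_i$ in $L^2$ in all three coordinates by dominated convergence), and then condition $W_i$ on a product of finite partitions $\cP_n\times\cP_n$ of $\Omega_i^{(k)}$ while conditioning $S_i$ on the same $\cP_n$. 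The key calculation is that when both are conditioned on the same filtration one has $D_{\cW_i^{(n)}}=\EE[D_{\cW_i}\mid \cP_n]$, so martingale convergence in $L^2$ controls the marginal coordinate simultaneously with the graphon coordinate. Since $\|\cdot\|_{2\to 2}\le\|\cdot\|_2$ and, by Cauchy--Schwarz, $\|\cdot\|_\jbl\le\|\cdot\|_2$, both $\d22$ and $\djbl$ distances are controlled by the $L^2$ approximation.

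Next, fix $\e>0$ and pick couplings $\mu_{12}$ and $\mu_{23}$ realizing the infima in $\tdel22(\cW_1,\cW_2)$ and $\tdel22(\cW_2,\cW_3)$ to within $\e$, then pick $n$ with $\d22(\cW_i,\cW_i^{(n)})<\e$ for $i=1,2,3$. Applying Lemma \ref{lemmacouplingsequencecountablesteps} to the step graphexes $\cW_i^{(n)}$ with these couplings yields a coupling $\mu_{13}$ for which the coordinate-wise vector $\Delta_{2\to 2}(\cW_1^{(n),\pi_1,\mu_{13}},\cW_3^{(n),\pi_3,\mu_{13}})$ is dominated by the sum of the corresponding vectors for $\mu_{12}$ and $\mu_{23}$. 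Because $\sqrt{\cdot}$ and $\sqrt[3]{\cdot}$ are subadditive on $\RR_{\ge 0}$ and $\d22$ is the maximum of the (appropriately rooted) coordinates of $\Delta_{2\to 2}$, this coordinate-wise inequality lifts to the triangle-style estimate
\[
\d22(\cW_1^{(n),\pi_1,\mu_{13}},\cW_3^{(n),\pi_3,\mu_{13}})
\le
\d22(\cW_1^{(n),\pi_1,\mu_{12}},\cW_2^{(n),\pi_2,\mu_{12}})
+
\d22(\cW_2^{(n),\pi_2,\mu_{23}},\cW_3^{(n),\pi_3,\mu_{23}}).
\]

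Finally I would transfer back to the originals. By Lemma \ref{lemmapullbacksame}, $\Delta_{2\to 2}$ is invariant under measure-preserving pullbacks, so $\d22(\cW_i^{\pi,\mu},\cW_i^{(n),\pi,\mu})=\d22(\cW_i,\cW_i^{(n)})<\e$ for any of the couplings. Combined with the trivial triangle inequality for $\d22$ on a common space, each term on the right-hand side above is at most $\tdel22(\cW_i,\cW_{i+1})+3\e$, while the left-hand side is at least $\d22(\cW_1^{\pi_1,\mu_{13}},\cW_3^{\pi_3,\mu_{13}})-2\e\ge \tdel22(\cW_1,\cW_3)-2\e$. Taking $\e\to 0$ gives $\tdel22(\cW_1,\cW_3)\le\tdel22(\cW_1,\cW_2)+\tdel22(\cW_2,\cW_3)$. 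The same argument, reading $\djbl$, $\Delta_\jbl$, and $\tdeljbl$ in place of their $2\to 2$ counterparts, yields the jumble triangle inequality. The main obstacle is the simultaneous $L^2$-approximation of the graphon part and the marginal, which would fail for generic step approximations of $W$ and $S$ separately; it is resolved by conditioning with respect to a common product filtration so that the martingale identity $D_{\cW^{(n)}}=\EE[D_\cW\mid\cP_n]$ holds.
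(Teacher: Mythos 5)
Your proposal is correct and follows essentially the same architecture as the paper's proof: reduce to the already-established coordinate-wise triangle inequality for step graphexes (Lemma~\ref{lemmacouplingsequencecountablesteps}), then use the pullback-invariance of $\Delta_{2\to 2}$ and $\Delta_{\jbl}$ (Lemma~\ref{lemmapullbacksame}) together with an approximation claim to pass from step graphexes back to the originals, and finally lift the coordinate-wise inequality to $\d22$ and $\djbl$ via subadditivity of the roots.

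Where you genuinely differ is in \emph{how} you prove the approximation claim. The paper's proof is a three-step affair: spatial truncation to $\Omega_n\times\Omega_n$ with explicit dominated-convergence estimates for $\|W-W_n\|_2$, $\|S-S_n\|_2$, and $\|D_W-D_{W_n}\|_2$; then a value truncation $W1_{|W|\le k}$, $S1_{|S|\le k}$; and then an unspelled-out assertion that bounded graphexes with finite-measure degree support can be approximated by finite step graphexes. You replace the last two steps by conditioning $W$ on $\cP\times\cP$ and $S$ on $\cP$ for a single finite partition $\cP$, which automatically gives a bounded step graphex and, via the identity $D_{\cW_\sP}=\EE[D_\cW\mid\cP]$ (built into the paper's definition of $\cW_\sP$ and the source of $\rho(\cW_\sP)=\rho(\cW)$), controls all three coordinates at once. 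This is cleaner and arguably more rigorous than the paper's final sentence, which leaves the last approximation to the reader. One small caution: the martingale language is a bit stronger than needed --- on a finite-measure space you should simply pick, for each $\eps$, a single finite partition $\cP$ with $\|W-\EE[W\mid\cP\times\cP]\|_2<\eps$ and $\|S-\EE[S\mid\cP]\|_2<\eps$ (possible by density of simple functions and taking a common refinement); then the marginal estimate $\|D_\cW-D_{\cW_\sP}\|_2$ follows from these via Cauchy--Schwarz on the finite-measure truncation, or directly by picking $\cP$ good for $D_\cW$ as well. You do not need an increasing sequence $\cP_n$ whose $\sigma$-algebras exhaust $\cF$, which is not automatic on an abstract measure space.
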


\begin{proof}
We first claim that it is enough to prove that for any coupling measure
$\mu_{12}$ on $\Omega_1 \times \Omega_2$, any coupling measure $\mu_{23}$ on
$\Omega_2 \times \Omega_3$ and any $\varepsilon>0$, there exists a coupling
measure $\mu_{13}$ on $\Omega_1 \times \Omega_3$, such that
\begin{multline}
\label{Delta22-triangle}
\Delta_{2\to 2}(\cW_1^{\pi_{13,1},\mu_{13}},\cW_3^{\pi_{13,3},\mu_{13}})\\
\le
\Delta_{2\to 2}(\cW_1^{\pi_{12,1},\mu_{12}},\cW_2^{\pi_{12,2},\mu_{12}})
+\Delta_{2\to 2}(\cW_2^{\pi_{23,2},\mu_{23}},\cW_3^{\pi_{23,3},\mu_{23}})
+(\varepsilon,\varepsilon,\varepsilon)
\end{multline}
and
\begin{multline}
\label{Delta-jbl-triangle}
\Delta_{\jbl}(\cW_1^{\pi_{13,1},\mu_{13}},\cW_3^{\pi_{13,3},\mu_{13}})\\
\le
\Delta_{\jbl}(\cW_1^{\pi_{12,1},\mu_{12}},\cW_2^{\pi_{12,2},\mu_{12}})
+\Delta_{\jbl}(\cW_2^{\pi_{23,2},\mu_{23}},\cW_3^{\pi_{23,3},\mu_{23}})
+(\varepsilon,\varepsilon,\varepsilon).
\end{multline}
Indeed, given that $\eps>0$ is arbitrary, \eqref{Delta-jbl-triangle} clearly
implies the triangle inequality for $\tdeljbl$. To see that
\eqref{Delta22-triangle} implies the triangle inequality for $\tdel22$,
observe that $(x+y)^{1/k} \le x^{1/k}+y^{1/k}$ whenever $k \ge 1$.

Next, we claim that for any $\varepsilon>0$, any $\cW=(W,S,I,\bOmega)$ in
$L^1\cap L^2$ can be approximated by a signed step graphex $\cW'$ such that
\[
\Delta_{\jbl}(\cW,\cW') \le \Delta_{2\to 2}(\cW,\cW') \le (\varepsilon,\varepsilon,\varepsilon).
\]
Indeed, let $\bOmega=(\Omega,\cF,\mu)$, let
$\Omega_1\subseteq\Omega_2\subseteq\dots\subseteq\Omega$ be such that
$\mu(\Omega_n)<\infty$ and $\Omega=\bigcup_n\Omega_n$, and let
$\cW_n=(W_n,S_n,I,\bOmega)$, where $W_n=W1_{\Omega_n\times\Omega_n}$ and
$S_n=S1_{\Omega_n}$. Using the dominated convergence theorem and the
assumption that $\cW$ is in $L^1\cap L^2$, we then have that
\[
\left|\rho(\cW)-\rho(\cW_n)\right|\leq \|W-W_n\|_1
=\|\,|W|(1-1_{\Omega_n\times\Omega_n})\|_1\to 0,
\]
\[
\|W-W_n\|_{2\to 2}\leq \|W-W_n\|_2=\|\,W(1-1_{\Omega_n\times\Omega_n})\|_2\to 0,
\]
and
\[
\|S-S_n\|_2=\|\,S(1-1_{\Omega_n})\|_2\to 0
\]
as $n\to\infty$. Next, defining $\chi_n$ by
$\chi_n(x,y,z)=(1-1_{\Omega_n\times\Omega_n}(x,y))
(1-1_{\Omega_n\times\Omega_n}(y,z))$, we bound
\begin{align*}
\|&D_W-D_{W_n}\|_2
&\leq\sqrt{\int|W(x,y)||W(y,z)| \chi_n(x,y,z) \,d\mu(x)\,d\mu(y) \,d\mu(z)}.
\end{align*}
Since $\chi_n$ goes to zero pointwise and $D_{|W|}$ is in $L^2$, the right
side again goes to zero by the dominated convergence theorem. Therefore,
\[\|D_{\cW}-D_{\cW_n}\|_2 \le \|D_W-D_{W_n}\|_2+\|S-S_n\|_2 \to 0
.\] This shows that for $n$ large enough $\Delta_{2\to 2}(\cW,\cW_{n})\leq
\eps /4$.

Fixing $n$ such that this holds, we now define $W^{(k)}=W_n1_{|W_n|\leq k}$
and $S^{(k)}=S_n 1_{|S_n|\leq k}$. Another application of the dominated
convergence theorem then shows that for $k$ large enough, $\Delta_{2\to
2}(\cW^{(k)},\cW_{n})\leq \eps /4$, giving us a graphon
$\cW''=(W'',S'',I,\bOmega)$ such that the degree support of $\cW''$ has
finite measure, both $W''$ and $D_{\cW''}$ are bounded, and $\Delta_{2\to
2}(\cW'',\cW)\leq \eps /2$. But such a graphex can be approximated to
arbitrary precision by a step graphex with finitely many steps, proving the
claim for $\Delta_{2\to 2}$. Since on two variable functions,
$\|\cdot\|_{\jbl}$ is bounded by $\|\cdot\|_{2\to 2}$, and on functions of
one variable it is bounded by $\|\cdot\|_2$, the claim for $\Delta_{\jbl}$
follows as well.

Fix $\varepsilon>0$, and let $\cW_1',\cW_2',\cW_3'$ be approximations of
$\cW_1,\cW_2,\cW_3$ by signed step graphexes such that for $k=1,2,3$,
\[
\Delta_{2\to 2}(\cW_k,\cW_k') \le (\varepsilon/6,\varepsilon/6,\varepsilon/6).
\]
If $\mu_{ij}$ is a coupling measure on $\Omega_i\times\Omega_j$ and
$\pi_{ij,k}$ is the projection onto $\Omega_k$, $k=i$ or $j$, then
$\pi_{ij,k}$ is measure preserving. Therefore, by
Lemma~\ref{lemmapullbacksame},
\[
\Delta_{2\to 2}\Bigl((\cW_k)^{\pi_{ij,k},\mu_{ij}},(\cW_k')^{\pi_{ij,k},\mu_{ij}}\Bigr) \le (\varepsilon/6,\varepsilon/6,\varepsilon/6).
\]
Combined with Lemma~\ref{lemmacouplingsequencecountablesteps}, we conclude
that there exists a coupling measure $\mu_{13}$ on $\Omega_1\times\Omega_3$
such that
\begin{align*}
\Delta_{2\to 2}&(\cW_1^{\pi_{13,1},\mu_{13}},\cW_3^{\pi_{13,3},\mu_{13}})
\le
\Delta_{2\to 2}\Bigl((\cW_1')^{\pi_{13,1},\mu_{13}},(\cW_3')^{\pi_{13,3},\mu_{13}}\Bigr)
+(\eps/3,\eps/3,\eps/3)
\\
&\le
\Delta_{2\to 2}\Bigl((\cW_1')^{\pi_{12,1},\mu_{12}},(\cW_2')^{\pi_{12,2},\mu_{12}}\Bigr)
+\Delta_{2\to 2}\Bigl((\cW_2')^{\pi_{23,2},\mu_{23}},(\cW_3')^{\pi_{23,3},\mu_{23}}\Bigr)
\\
&\qquad\phantom{}+(\eps/3,\eps/3,\eps/3)
\\
&\le
\Delta_{2\to 2}(\cW_1^{\pi_{12,1},\mu_{12}},\cW_2^{\pi_{12,2},\mu_{12}})
+\Delta_{2\to 2}(\cW_2^{\pi_{23,2},\mu_{23}},\cW_3^{\pi_{23,3},\mu_{23}})
+(\varepsilon,\varepsilon,\varepsilon)
\end{align*}
proving \eqref{Delta22-triangle} and hence the first statement of the lemma.
The proof of \eqref{Delta-jbl-triangle} and the second statement follows in
the same way.
\end{proof}

The proof of Theorems~\ref{thm:deltt-metric} and \ref{thm:deltws-metric}
 will be an easy corollary
of Lemma~\ref{lemmacouplingsequence} and the following extension lemma.

\begin{lemma}\label{lem:extension}
Let $\cW=(W,S,I,\bOmega)$ be a signed graphex
in $L^1\cap L^2$, with possibly unbounded graphon parts,
and let $\bOmega=(\Omega,\cF,\mu)$.
\begin{enumerate}
\item If $\cW'$ and $\cW''$ are trivial extensions of $\cW$ by
    $\sigma$-finite spaces of infinite measure, then
\[
\tdeljbl(\cW',\cW'')=\tdel22(\cW',\cW'')=0.
\]
\item If $\mu(\Omega)=\infty$ and $\widetilde\cW=(\widetilde W,\widetilde
S,I,\widetilde\bOmega)$ is obtained from $\cW$ by appending an arbitrary
$\sigma$-finite space of infinite measure, then
\[
\tdel22(\cW,\widetilde\cW)=\tdeljbl(\cW,\widetilde\cW) =0.
\]
\end{enumerate}
\end{lemma}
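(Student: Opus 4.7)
The plan is to prove part (1) directly by constructing an explicit coupling, then to deduce part (2) from part (1) via approximation by step graphexes.

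For part (1), I would write the two trivial extensions as $\bOmega'=\bOmega\sqcup\bOmega^{(1)}$ and $\bOmega''=\bOmega\sqcup\bOmega^{(2)}$, where $\bOmega^{(1)}$ and $\bOmega^{(2)}$ are $\sigma$-finite of infinite total mass. I would then build a coupling $\nu$ on $\Omega'\times\Omega''$ by combining the diagonal coupling on the common piece $\Omega\times\Omega$ (the pushforward of $\mu$ under $x\mapsto(x,x)$) with an arbitrary coupling $\nu_0$ of $\mu^{(1)}$ and $\mu^{(2)}$ on $\Omega^{(1)}\times\Omega^{(2)}$. Such a $\nu_0$ exists because both marginals are $\sigma$-finite with equal, infinite total mass, and the check that $\nu$ has the right marginals is routine. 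A case analysis on which ``slab'' a pair $(a,b)\in\supp(\nu)$ lies in then shows that $\cW'^{\pi_1}$ and $\cW''^{\pi_2}$ agree $\nu^2$-almost everywhere: on pairs of points both on the $\Omega$-diagonal, both pullbacks reduce to $\cW$, while any pair touching $\Omega^{(1)}$ or $\Omega^{(2)}$ forces both pullbacks to vanish by the very definition of a trivial extension. The same case check shows that $D_{\cW'^{\pi_1}}$ and $D_{\cW''^{\pi_2}}$ agree $\nu$-a.e., and the dust parts are trivially equal. Therefore $\d22(\cW'^{\pi_1,\nu},\cW''^{\pi_2,\nu})=0$, and the analogous statement holds for $\djbl$, giving $\tdel22(\cW',\cW'')=\tdeljbl(\cW',\cW'')=0$.

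For part (2), I would write $\widetilde\bOmega=\bOmega\sqcup\bOmega_A$ and approximate $\cW$ by step graphexes $\cW_n$ on the same space $\bOmega$ with finite-measure supports $P_n\subseteq\Omega$, using the approximation scheme from the proof of Lemma~\ref{lemmacouplingsequence} (which gives $\Delta_{2\to 2}(\cW,\cW_n)\to 0$ and $\Delta_{\jbl}(\cW,\cW_n)\to 0$). The key observation is that since $\mu(\Omega)=\infty$ and $\mu(P_n)<\infty$, the complement $\Omega\setminus P_n$ has infinite measure, so $\cW_n$ itself is already a trivial extension of $\cW_n|_{P_n}$ by an infinite-measure space, and similarly $\widetilde{\cW_n}$ is a trivial extension of $\cW_n|_{P_n}$ by the infinite-measure space $(\Omega\setminus P_n)\sqcup\Omega_A$. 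Part (1) then gives $\tdel22(\cW_n,\widetilde{\cW_n})=\tdeljbl(\cW_n,\widetilde{\cW_n})=0$. Combining with the diagonal-coupling bounds $\tdel22(\cW,\cW_n)\le\d22(\cW,\cW_n)$ and $\tdel22(\widetilde\cW,\widetilde{\cW_n})\le\d22(\widetilde\cW,\widetilde{\cW_n})=\d22(\cW,\cW_n)$, where the last equality follows because extending a graphex by zeros affects none of the three components of $\d22$, and applying the triangle inequality from Lemma~\ref{lemmacouplingsequence} (applicable since all four underlying spaces have infinite total mass), I would conclude that
\[
\tdel22(\cW,\widetilde\cW)\;\le\;\tdel22(\cW,\cW_n)+\tdel22(\cW_n,\widetilde{\cW_n})+\tdel22(\widetilde{\cW_n},\widetilde\cW)\;\le\;2\,\d22(\cW,\cW_n)\;\to\;0,
\]
and hence $\tdel22(\cW,\widetilde\cW)=0$; the argument for $\tdeljbl$ is identical.

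The main obstacle is the case-by-case verification in part (1) that the natural coupling really does make the pullbacks agree $\nu^2$-a.e.; this is a bookkeeping exercise over four cases (both coordinates on the $\Omega$-diagonal; first on the diagonal and second in $\Omega^{(1)}\times\Omega^{(2)}$; its symmetric counterpart; both off-diagonal), and in every case the agreement is forced either by the extensions being identical on $\Omega$ or by both extensions being zero outside $\Omega$. Once part (1) is in hand, part (2) is a standard triangle-inequality-plus-approximation argument whose only non-routine input is recognizing that a step graphex with finite-measure support inside an infinite-measure space is itself a trivial extension by an infinite-measure space, which is precisely what lets part (1) apply.
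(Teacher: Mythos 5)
Your proposal is correct and follows essentially the same approach as the paper: part (1) uses the identical diagonal-plus-arbitrary coupling construction, and part (2) uses the same pattern of approximating by graphexes with finite-measure degree support (so that both the original and extended versions become trivial extensions of a common restriction) and then invoking part (1) plus the triangle inequality. The only minor difference is that you approximate by step graphexes, whereas the paper uses the simpler truncations $W_n=W\,1_{\Omega_n\times\Omega_n}$, $S_n=S\,1_{\Omega_n}$ directly; both work, and the truncation route avoids the extra step-graphex machinery.
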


\begin{proof}
To prove the first statement, let $\bOmega'=(\Omega',\cF',\mu')$ and
$\bOmega''=(\Omega'',\cF'',\mu'')$ be the spaces $\bOmega$ has been extended
by. Let $\widehat\mu$ be the measure on $\Omega\times\Omega$ which couples
$\mu$ to itself along the diagonal, choose an arbitrary coupling
$\widetilde\mu$ of $\mu'$ and $\mu''$, and let $\widehat\mu'$ be the measure
on $(\Omega\cup\Omega')\times(\Omega\cup\Omega'')$ defined by
\[\widehat\mu'(A)=\widehat\mu(A\cap(\Omega\times\Omega))+\widetilde\mu(A\cap(\Omega'\times\Omega'')).
\]
Using the fact that $\widehat\mu'(\Omega\times\Omega'')=\widehat\mu'(\Omega'\times\Omega)=0$,
it is easy to see that
\[
\tdel22(\cW',\cW'')\leq \d22((\cW')^{\pi_1,\widehat\mu'},(\cW'')^{\pi_2,\widehat\mu'})
=\d22(\cW^{\pi_1,\widehat\mu},\cW^{\pi_2,\widehat\mu})=0.
\]
This proves the first statement for the metric $\tdel22$. The proof for the
metric $\tdeljbl$ is identical.

To prove, the second statement, let
$\widetilde\bOmega=(\widetilde\Omega,\widetilde\cF,\widetilde\mu)$. Since
$\bOmega$ is $\sigma$-finite, we can find a sequence of measurable subsets
$\Omega_n\subseteq\Omega$ such that $\Omega=\bigcup\Omega_n$ and each
$\Omega_n$ has finite measure. Replacing $\Omega_n$ by
$\Omega_1\cup\dots\cup\Omega_n$, we may further assume that $\Omega_n$ is an
increasing sequence of sets. Let $W_n$ be equal to $W$ on
$\Omega_n\times\Omega_n$ and $0$ everywhere else, and let $S_n=S$ on
$\Omega_n$ and $0$ outside of $\Omega_n$. Let $\cW_n$ be the corresponding
graphex on $\bOmega$ (with the same value $I$), and $\widetilde\cW_n$ be its
trivial extension to $\widetilde\bOmega$. By monotone convergence, $W_n\to W$
in both $L^1$ and $L^2$, and $S_n\to S$ in $L^1$ and $L^2$, implying that
$\tdel22(\cW_n,\cW)\leq \d22(\cW_n,\cW)\to 0$. For the same reason,
$\tdel22(\widetilde \cW_n,\widetilde\cW)\to 0$. But since $\widetilde\cW_n$
and $\cW_n$ can both be obtained from the restriction of $\cW_n$ to
$\Omega_n$ by appending a space of infinite total measure, we have
$\tdel22(\cW_n,\widetilde\cW_n)=\tdel22(\cW_n,\cW_n)=0$ by the first
statement of the lemma. Using the triangle inequality for $\tdel22$, this
proves the second statement for the distance $\tdel22$. The proof for the
metric $\tdeljbl$ follows from the fact that the jumble norm is bounded by
the kernel norm, which in turn implies that $\djbl(\cW_n,\cW)\to 0$ whenever
$\d22(\cW_n,\cW)\to 0$.
\end{proof}

We are now ready to prove Theorems~\ref{thm:deltt-metric} and
\ref{thm:deltws-metric}.

\begin{proof}[Proof of Theorems~\ref{thm:deltt-metric} and \ref{thm:deltws-metric}]
The first statement of Lemma~\ref{lem:extension} implies that if $\cW_1'$ and
$\cW_2'$ are trivial extensions of $\cW_1$ and $\cW_2$ obtained by
\emph{appending} two $\sigma$-finite spaces of infinite measure, then
$\tdel22(\cW_1',\cW_2')$ and $\tdeljbl(\cW_1',\cW_2')$ do not depend on the
choice of these extensions, and the second (combined with the triangle
inequality) allows us to conclude that this remains true for extensions
\emph{to} spaces of infinite measure, which completes the proof of the first
statements of the two theorems.

Since clearly $\deltt$ is symmetric and $\deltt(\cW,\cW)=0$ for all
integrable graphexes, all that remains to be proved is the triangle
inequality for $\deltt$, which follows from the (already established)
triangle inequality for $\tdel22$. The same holds for $\tdeljbl$.
\end{proof}

The following example shows that the extension to infinite spaces in the
definition of $\deltt$ is really needed.

\begin{example}\label{ex1}
Let $\cW_1=(W_1,0,0,\bOmega_1)$ where $\bOmega_1$ consists of just two atoms
$a$ and $b$, with weight $p$ and $1-p$, where $0<p<1/2$, and
$W_1(a,a)=W_1(b,b)=0$, $W_1(a,b)=W_1(b,a)=1$. Furthermore, let
$\cW_2=(W_2,0,0,\bOmega_2)$ where $\bOmega_2$ consists of just one atom $c$
with weight $1$, and $W_2$ is the constant $a=\sqrt{p(1-p)}$. Then we have
just one choice of coupling. For this coupling, $W_1^{\pi_1}-W_2^{\pi_2}$
will have two atoms, and it will be equal to $-\sqrt{p(1-p)}$ on the
diagonal, and $1-\sqrt{p(1-p)}$ off the diagonal. It is then not difficult to
see that $\|W_1-W_2\|_{2 \rightarrow 2}$ is equal to the largest eigenvalue
(in absolute value) of the matrix
\[\begin{pmatrix}
-pa & (1-p)(1-a)\\
p(1-a) & -(1-p)a
\end{pmatrix}
.
\]
The trace of this matrix is $-a$, and the determinant is
\[
p(1-p)a^2-p(1-p)(1-a)^2=p(1-p)(2a-1)<0
.
\]
Here we used that $a=\sqrt{p(1-p)}<1/2$. We then have that the two
eigenvalues of the matrix have opposite signs, and their sum is $-a$, which
implies that the negative one must be less than $-a$; i.e., it must have
larger absolute value than $a$.

On the other hand, clearly $\|W_1\|_{2 \rightarrow 2}=a$, and $\|W_2\|_{2\to
2}$ is equal to the largest eigenvalue (in absolute value) of the matrix
\[\begin{pmatrix}
0 & 1-p\\
p & 0
\end{pmatrix}
,
\]
which can easily be seen to be equal to $\sqrt{p(1-p)}=a$.

Therefore, if we extend $W_1$ and $W_2$ by spaces of total measure at least
$1$, and couple each $\Omega_i$ to the extension, then
\[
\|\wW_1-\wW_2\|_{2 \rightarrow 2}=\max\{\|\wW_1\|_{2\to2},\|\wW_2\|_{2\to2}\}=a
.
\]
Therefore, by extending, we can obtain a better coupling.

Finally, note that in $(\ref{d22-def})$, if we multiply the measure of the
underlying space by $c$, then the first term is multiplied by $c$, the second
by $c^{3/4}$, and the third by $c^{2/3}$. We can therefore take $c$ large
enough so that the maximum in the term is dominated by $\|W_1-W_2\|_{2\to2}$.
Therefore, we obtain that also for minimizing $d_{2 \to 2}$, we obtain a
better coupling if we extend by trivial extensions than if we do not.
\end{example}

Next, we would like to prove Theorem \ref{thm:dmetric}. Before doing so, we
note that the distance $\delGP$ is well defined and finite for signed
graphexes as well, provided the graphon part is bounded in the $L^\infty$
norm. As the reader may easily verify, this immediately follows from the
following lemma, which is an easy corollary to
Proposition~\ref{prop:local-finite}.

\begin{lemma}
\label{lem:signed-delGP<infty} Let $\cW$ be a signed graphex with bounded
graphon part, and let $0<D<\infty$. Then the set $\{D_{|\cW|}>D\}$ has finite
measure, and $\cW|_{\{D_{|\cW|}\leq D\}}$ is integrable and hence in $L^1\cap
L^2$.
\end{lemma}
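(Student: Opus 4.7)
The plan is to reduce this to Proposition~\ref{prop:local-finite} applied to the unsigned quadruple $|\cW|=(|W|,|S|,|I|,\bOmega)$, and then upgrade $L^1$ integrability of the restriction to $L^1\cap L^2$ integrability using the two available $L^\infty$ bounds (on $W$ and on the restricted marginal).

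First I would observe that the local finiteness conditions (1) and (2) of Definition~\ref{def:graphex} are statements about absolute integrability, so $\cW$ satisfies them if and only if $|\cW|$ does. Since $\cW$ is a signed graphex, it satisfies (1) and (2), and so does $|\cW|$. Applying the equivalence (A)$\Leftrightarrow$(B) of Proposition~\ref{prop:local-finite} to $|\cW|$ then yields, for every $D>0$,
\[
\mu(\{D_{|\cW|}>D\})<\infty
\quad\text{and}\quad
|\cW|\big|_{\{D_{|\cW|}\le D\}}\ \text{is integrable}.
\]
The one bookkeeping point is that Proposition~\ref{prop:local-finite} is stated with the graphon part in $[0,1]$, while $|W|$ only satisfies $|W|\le\|W\|_\infty$. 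This is harmless: the statements (A) and (B) concern only finiteness of measures and of $L^1$ norms, so the upper bound $1$ is never used in that equivalence. Alternatively, I could apply the proposition to the rescaled quadruple $(|W|/c,|S|/c,|I|/c,\bOmega)$ with $c=\max\{1,\|W\|_\infty\}$ (which does have graphon part in $[0,1]$), and observe that this rescaling only modifies the threshold $D$ by a factor of $c$.

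Since integrability of a signed graphex is defined via its absolute value, the previous step already gives $\cW|_{\{D_{|\cW|}\le D\}}\in L^1$. For $L^2$ integrability, set $\Omega_D=\{D_{|\cW|}\le D\}$. On $\Omega_D\times\Omega_D$ we have $|W|\le\|W\|_\infty<\infty$, so
\[
\int_{\Omega_D\times\Omega_D}W(x,y)^2\,d\mu(x)\,d\mu(y)\le\|W\|_\infty\int_{\Omega_D\times\Omega_D}|W(x,y)|\,d\mu(x)\,d\mu(y)<\infty,
\]
hence $W\in L^2$ on the restricted space. Likewise, on $\Omega_D$ the marginal $D_{|\cW|}$ is bounded by $D$ and (by the integrability established above, together with the fact that a graphex is integrable iff its marginal is) is in $L^1$, so $D_{|\cW|}^2\le D\,D_{|\cW|}$ is integrable on $\Omega_D$, giving $D_{|\cW|}\in L^2$ there. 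Combined, $\cW|_{\{D_{|\cW|}\le D\}}$ lies in $L^1\cap L^2$ in the sense defined just before the lemma.

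There is no substantive obstacle here; the entire argument is routine, and the only mild subtlety is the discrepancy between $|W|$ possibly exceeding $1$ and the normalization used in Proposition~\ref{prop:local-finite}, which is resolved either by inspecting the proof of (A)$\Leftrightarrow$(B) or by trivial rescaling as noted above.
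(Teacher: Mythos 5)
Your approach is essentially the paper's: reduce to Proposition~\ref{prop:local-finite} by passing to a nonnegative graphex with graphon part in $[0,1]$, and then upgrade $L^1$ to $L^1\cap L^2$ by exploiting $\|W\|_\infty<\infty$ and the $D$-bound on the restricted marginal. The paper's version rescales only the graphon part, setting $\cW'=(|W|/K,|S|,|I|,\bOmega)$ with $K=\|W\|_\infty$, and then uses the sandwich $\frac 1K D_{|\cW|}\leq D_{\cW'}\leq D_{|\cW|}$; your rescaling of all three parts by a common $c$ is arguably tidier since it makes $D_{\cW'}=D_{|\cW|}/c$ exactly. One caution about your first ``bookkeeping'' alternative: it is not true that the proof of $(A)\Leftrightarrow(B)$ never uses the bound $W\le 1$. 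In the step $(A)\Rightarrow(B)$ the inclusion $\{D_\cW>D\}\subseteq(\Omega\setminus\Omega')\cup\{x\in\Omega':D_{\cW'}(x)>D-D_0\}$ with $D_0=\mu(\Omega\setminus\Omega')$ relies on $\int_{\Omega\setminus\Omega'}W(x,y)\,d\mu(y)\le\mu(\Omega\setminus\Omega')$, which uses $W\le 1$. The argument can be patched with an extra factor of $\|W\|_\infty$, but as stated your claim is incorrect; the rescaling route, which you also propose, is the clean way and is what the paper does.
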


\begin{proof}
Let $\cW=(W,S,I,\bOmega)$ with $\bOmega=(\Omega,\cF,\mu)$, and assume that
$\|W\|_\infty\leq K$. If $K\leq 1$, the lemma follows from
Proposition~\ref{prop:local-finite} applied to $\cW'=|\cW|$. Otherwise, we
define $\cW'=(W',S',I',\bOmega)$ where $W'=|W|/K$, $S'=|S|$, and $I'=|I|$.
Applying Proposition~\ref{prop:local-finite} to this graphex, and noting that
$\frac 1K D_{|\cW|}\leq D_{\cW'}\leq D_{|\cW|}$, we see that
$\mu(\{D_{|\cW|}>D\})\leq \mu(\{D_{|\cW'|}>D/K\})<\infty$ and
$\|\cW_{|D_{|\cW|}\leq D}\|_1\leq \|\cW_{|D_{\cW'}\leq D}\|_1\leq
K\|\cW'_{|D_{\cW'}\leq D}\|_1<\infty$, as claimed.
\end{proof}

To prove Theorem~\ref{thm:dmetric}, we need to establish the triangle
inequality. The reason it is not obvious is because when we decrease the
measure on the underlying set, the $\deltt$ distance can increase. In the
following lemma, we show that although it can increase under restrictions to
subsets or decreasing of the underlying measure, it cannot increase too much.

\begin{lemma} \label{lemmaremovedsetdistancenotworse}
Let $\cW_1=(W_1,S_1,I_1,\bOmega)$ and $\cW_2=(W_2,S_2,I_2,\bOmega)$ be signed
graphexes in $L^1\cap L^2$, let $\bOmega=(\Omega,\cF,\mu)$, and let
\begin{enumerate}
\item $\|W_1-W_2\|_{2 \rightarrow 2,\mu} =a$,
\item $\|D_{\cW_1}-D_{\cW_2}\|_{2,\mu} =b$, and
\item $|\rho(\cW_1)-\rho(\cW_2)| =c$.
\end{enumerate}
If $\cW_1'=(W_1,S_1,I_1,\bOmega')$ and $\cW_2'=(W_2,S_2,I_2,\bOmega')$ where
$\bOmega'=(\Omega,\cF,\mu')$ for some measure $\mu'$ such that $\mu-r \le
\mu' \le \mu$ for some $r<\infty$, then
\begin{enumerate}
\item $\|W_1-W_2\|_{2 \rightarrow 2,\mu'} \le a$, \label{removed2to2}
\item $\|D_{\cW_1'}-D_{\cW_2'}\|_{2,\mu'} \le b+a\sqrt{r}$, and
    \label{removedD2}
\item $\Bigl| |\rho(\cW_1')-\rho(\cW_2')| - c\Bigr|\le 2b\sqrt{r}+a r$.
    \label{removedL1}
\end{enumerate}
\end{lemma}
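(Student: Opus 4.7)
The plan is to work throughout with the function $h\colon\Omega\to[0,1]$ guaranteed by the discussion before Definition~\ref{def:delGP}: since $\mu-r\le\mu'\le\mu$ there exists $h$ with $d\mu'=h\,d\mu$ and $\|1-h\|_{1,\mu}\le r$. The key quantitative consequence I will use repeatedly is that $0\le 1-h\le 1$ forces $\|1-h\|_{2,\mu}^2\le\|1-h\|_{1,\mu}\le r$, so $\|1-h\|_{2,\mu}\le\sqrt r$. Writing $U=W_1-W_2$ and $V=D_{\cW_1}-D_{\cW_2}$, I will bound each of the three quantities by rewriting it as an integral against $\mu$ (where all the hypotheses live) and then applying either Cauchy--Schwarz or the definition of the kernel norm.

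For (\ref{removed2to2}), for $f,g$ with $\|f\|_{2,\mu'}=\|g\|_{2,\mu'}=1$ I simply note that
\[
\int f(x)U(x,y)g(y)\,d\mu'(x)\,d\mu'(y)=\int (fh)(x)\,U(x,y)\,(gh)(y)\,d\mu(x)\,d\mu(y),
\]
while $\|fh\|_{2,\mu}^2=\int f^2h^2\,d\mu\le\int f^2h\,d\mu=\|f\|_{2,\mu'}^2=1$, and similarly for $gh$. This gives the bound $\|U\|_{2\to2,\mu'}\le\|U\|_{2\to2,\mu}=a$ directly from Definition~\ref{def:kernel-norm}.

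For (\ref{removedD2}), I will compute that the marginal difference after restriction satisfies $V'(x):=D_{\cW_1'}(x)-D_{\cW_2'}(x)=V(x)-(U\circ(1-h))(x)$, using that the star and dust contributions drop out of $V'-V$. The triangle inequality, combined with the obvious inequality $\|\,\cdot\,\|_{2,\mu'}\le\|\,\cdot\,\|_{2,\mu}$ (since $h\le1$), then yields $\|V'\|_{2,\mu'}\le\|V\|_{2,\mu}+\|U\circ(1-h)\|_{2,\mu}$. The second term is at most $\|U\|_{2\to2,\mu}\|1-h\|_{2,\mu}\le a\sqrt r$, giving the claimed bound $b+a\sqrt r$.

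For (\ref{removedL1}), I will set $e=1-h$ and expand $h(x)h(y)-1=-e(x)-e(y)+e(x)e(y)$. After using the symmetry of $U$ to combine the two linear-in-$e$ terms and grouping them with the $S$-contribution, the difference $(\rho(\cW_1')-\rho(\cW_2'))-(\rho(\cW_1)-\rho(\cW_2))$ becomes
\[
-2\int e(x)V(x)\,d\mu(x)+\int U(x,y)e(x)e(y)\,d\mu(x)\,d\mu(y).
\]
Cauchy--Schwarz bounds the first term by $2\|e\|_{2,\mu}\|V\|_{2,\mu}\le 2b\sqrt r$, and the second term is at most $\|U\|_{2\to2,\mu}\|e\|_{2,\mu}^2\le ar$. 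These three short arguments together prove the lemma; the only mildly delicate point is the algebraic expansion in (\ref{removedL1}), which must be arranged so that the main correction is expressed through $V$ rather than separately through $\int U$ and $\int (S_1-S_2)$, so that the $b\sqrt r$ term (and not a worse $\sqrt{\|U\|_1 r}$ term) emerges.
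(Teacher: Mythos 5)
Your proposal is correct and takes essentially the same route as the paper: for (1) the same $h(x)h(y)$ rewrite and $\|fh\|_{2,\mu}\le 1$ bound; for (2) the same identity $D_{\cW_1'}-D_{\cW_2'}=V-U\circ(1-h)$ followed by the kernel-norm bound (you apply $\|U\circ e\|_2\le\|U\|_{2\to2}\|e\|_2$ directly, the paper dualizes with a test function $g$, but these are the same estimate); for (3) the same expansion $h(x)h(y)-1=-e(x)-e(y)+e(x)e(y)$ collected so the linear-in-$e$ term is $-2\int eV$, with Cauchy--Schwarz and the kernel norm finishing exactly as in the paper.
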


We recall that $\mu-r \le \mu' \le \mu$ if and only if there exists a
measurable function $h\colon\Omega\to [0,1]$ such that $\mu'(B)=\int_{B}h
\,d\mu$ for all measurable sets $B$, and $\|1-h\|_{1,\mu} \le r<\infty$. An
interesting special case is the case where $h$ is the characteristic function
of $\Omega \setminus R$ for a set $R$ of measure $r$, in which case $\cW_i'$
is the restriction of $\cW$ to $\Omega \setminus R$, after neglecting points
outside the degree support.

\begin{proof}
To show property (\ref{removed2to2}), let $f,g$ be such that
$\|f\|_{2,\mu'}=\|g\|_{2,\mu'}=1$. In other words,
\[1=\int_\Omega f^2 \,d\mu' = \int_{\Omega} h f^2 \,d\mu,
\] and
\[1=\int_\Omega g^2 \,d\mu' = \int_{\Omega} h g^2 \,d\mu
.\] Let $U=W_1-W_2$. We then have
\begin{align*}
\left|\int_{\Omega \times \Omega} f(x) U(x,y) g(y) \,d(\mu' \times \mu') \right| &= \left|\int_{\Omega \times \Omega} f(x) U(x,y) g(y) h(x) h(y) \,d(\mu \times \mu) \right|\\
& \le \|U\|_{2 \rightarrow 2,\mu} \|f h \|_{2,\mu} \|g h \|_{2,\mu}
.
\end{align*}
We also have
\[\|fh\|_{2,\mu}^2=\int_\Omega f(x)^2h(x)^2 \,d\mu(x) \le \int_\Omega f(x)^2h(x) \,d\mu(x) = 1
.\] Similarly, $\|gh\|_{2,\mu} \le 1$. Therefore,
\[\left|\int_{\Omega \times \Omega} f(x) U(x,y) g(y) \,d(\mu' \times \mu') \right| \le a
.\] Since this holds for any $f,g$ such that
$\|f\|_{2,\mu'}=\|g\|_{2,\mu'}=1$, we have $\|U\|_{2 \rightarrow 2,\mu'} \le
\|U\|_{2 \rightarrow 2,\mu}=a$.

For (\ref{removedD2}), let
\[D_i(x)=D_{\cW_i}(x)-D_{\cW_i'}(x)=\int_{\Omega}W_i(x,y) (1-h(y))\,d\mu(y)
.\] Then
\begin{align*}
\|D_{\cW_1'}-D_{\cW_2'}\|_{2,\mu'} &\le \|D_{\cW_1'}-D_{\cW_2'}\|_{2,\mu}\\
& =\sup_{g:\|g\|_{2,\mu}=1} \int_{\Omega} (D_{\cW_1'}-D_{\cW_2'})(x)g(x)\,d\mu(x) \\
&= \sup_{g:\|g\|_{2,\mu}=1} \int_{\Omega}\left( (D_{\cW_1}-D_{\cW_2})(x)g(x)-(D_1-D_2)(x)g(x)\right)\,d\mu(x)\\
&=
\sup_{g:\|g\|_{2,\mu}=1} \Bigg(
\int_{\Omega} (D_{\cW_1}-D_{\cW_2})(x)g(x) \,d\mu(x)\\
& \qquad \qquad \phantom{} - \int_{\Omega \times \Omega} g(x)(W_1-W_2)(x,y)(1-h(y)) \,d(\mu \times \mu)
\Bigg)\\
&\le b+ a \|g\|_{2,\mu}\|1-h\|_{2,\mu}
\le b + a \sqrt{r}
.\end{align*}

To prove (\ref{removedL1}), we use that
\begin{align*}
\rho(\cW_i') &= \int_{\Omega \times \Omega} h(x) W_i(x,y) h(y) \,d\mu(x) \,d\mu(y) + 2 \int_{\Omega} S_i(x) h(x) \,d\mu(x) + I_i\\
&= \rho(\cW_i) -\int_{\Omega \times \Omega} (1-h(x)) W_i(x,y) \,d\mu(x) \,d\mu(y)\\
& \qquad \phantom{} - \int_{\Omega \times \Omega} W_i(x,y) (1-h(y)) \,d\mu(x) \,d\mu(y)\\
& \qquad \phantom{}+\int_{\Omega \times \Omega} (1-h(x)) W_i(x,y) (1-h(y)) \,d\mu(x) \,d\mu(y)\\
& \qquad \phantom{}- 2\int_{\Omega} (1-h(x))S(x) \,d\mu(x)\\
&=\rho(\cW_i) - 2 \int_\Omega (1-h(x))D_{\cW_i}(x) \,d\mu(x)\\
& \qquad \phantom{}+\int_{\Omega \times \Omega} (1-h(x)) W_i(x,y) (1-h(y)) \,d\mu(x) \,d\mu(y).
\end{align*}
Therefore,
\begin{align*}
\Bigl|\bigl|\rho(\cW'_1)-&\rho(\cW'_2)\bigr|-\bigl|\rho(\cW_1)-\rho(\cW_2)\bigr|\Bigr|\\
&\le \biggl|2\int_{\Omega} (1-h(x))(D_{\cW_1}-D_{\cW_2})(x) \,d\mu(x)\biggr.\\
&\qquad
-
\biggl.\int_{\Omega \times \Omega}(1-h(x))(W_1-W_2)(x,y)(1-h(y))\,d\mu(x)\,d\mu(y) \biggr|\\
&\le 2b\sqrt{r}+ ar.
\end{align*}
Here we used the fact that $\|1-h\|_2 \le \sqrt{\|1-h\|_1} \le \sqrt{r}$.
This implies the claim.
\end{proof}

We also use the following equivalent representation of the weak kernel
distance $\delGP$.

\begin{lemma}\label{lem:tilde-delGP}
For $i=1,2$, let $\cW_i$ be graphexes over
$\bOmega_i=(\Omega_i,\cF_i,\mu_i)$, and let $\cW_i'$ be trivial extensions of
$\cW_i$ to $\sigma$-finite measure spaces
$\bOmega_i'=(\Omega_i',\cF_i',\mu_i')$ with $\mu'_i(\Omega_i')=\infty$. Then
$\delGP(\cW_1,\cW_2)=\tdelGP(\cW_1',\cW_2')$, where $\tdelGP(\cW_1',\cW_2')$
is defined as the infimum over all $c$ such that there exists a measure
$\mu'$ over $\Omega_1'\times\Omega_2'$ obeying the conditions
\begin{enumerate}
\item[(1)] $\mu_i'-c^2\leq(\mu')^{\pi_i}\leq\mu_i'$ for $i=1,2$, and
\item[(2)] $\d22((\cW_1')^{\pi_1,\mu'},(\cW_2')^{\pi_2,\mu'})\leq c$.
\end{enumerate}
\end{lemma}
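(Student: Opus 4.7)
The plan is to establish both inequalities $\delGP(\cW_1,\cW_2)\leq\tdelGP(\cW_1',\cW_2')$ and $\tdelGP(\cW_1',\cW_2')\leq\delGP(\cW_1,\cW_2)$ by explicit construction, using Theorem~\ref{thm:deltt-metric} (which guarantees that the value of $\deltt$ does not depend on which trivial extension to a space of infinite measure is chosen) as the main structural input. Throughout, the key observation is that if a signed graphex $\cW^\sharp$ is obtained from $\cW_i'$ by replacing the measure on $\Omega_i'$ without altering the graphon, star, or dust parts, then pullbacks of $\cW^\sharp$ and of $\cW_i'$ under the same projection $\pi_i$ to a product space equipped with a common measure $\mu'$ agree as signed graphexes on $(\Omega_1'\times\Omega_2',\mu')$.

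For the direction $\tdelGP(\cW_1',\cW_2')\leq\delGP(\cW_1,\cW_2)$, I would take $c>\delGP(\cW_1,\cW_2)$, obtain measures $\widetilde\mu_i$ on $(\Omega_i,\cF_i)$ with $\mu_i-c^2\leq\widetilde\mu_i\leq\mu_i$ and $\deltt(\widetilde\cW_1,\widetilde\cW_2)\leq c$, and then extend $\widetilde\mu_i$ to a measure $\widetilde\mu_i^{\mathrm{ext}}$ on $(\Omega_i',\cF_i')$ by letting it equal $\widetilde\mu_i$ on $\cF_i$ and $\mu_i'$ on subsets of the complement $\Omega_i'\setminus\Omega_i$. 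By Theorem~\ref{thm:deltt-metric}, $\deltt(\widetilde\cW_1,\widetilde\cW_2)$ is realized by this trivial extension, so there exists a coupling $\mu'$ of $\widetilde\mu_1^{\mathrm{ext}}$ and $\widetilde\mu_2^{\mathrm{ext}}$ with $\d22$ of the corresponding pullbacks at most $c$. The observation above then shows that the pullbacks of $\cW_i'$ under $\pi_i$ and $\mu'$ give the same signed graphexes, while $(\mu')^{\pi_i}=\widetilde\mu_i^{\mathrm{ext}}$ satisfies $\mu_i'-c^2\leq\widetilde\mu_i^{\mathrm{ext}}\leq\mu_i'$ by construction, so $\mu'$ witnesses $\tdelGP(\cW_1',\cW_2')\leq c$.

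For the reverse inequality, given $c>\tdelGP(\cW_1',\cW_2')$ witnessed by a measure $\mu'$, I would restrict each marginal $(\mu')^{\pi_i}$ to $\cF_i$ to define $\widetilde\mu_i$; since $\mu_i'|_{\cF_i}=\mu_i$, the hypothesis $\mu_i'-c^2\leq(\mu')^{\pi_i}\leq\mu_i'$ immediately gives $\mu_i-c^2\leq\widetilde\mu_i\leq\mu_i$. Letting $\cW_i^\sharp$ denote the signed graphex obtained from $\cW_i'$ by replacing $\mu_i'$ with $(\mu')^{\pi_i}$, one checks that $\cW_i^\sharp$ is a trivial extension of $\widetilde\cW_i$ to a space of infinite total measure (since $(\mu')^{\pi_i}(\Omega_i')\geq\mu_i'(\Omega_i')-c^2=\infty$), and that $\mu'$ is itself a coupling of its own marginals whose pullbacks of $\cW_i^\sharp$ agree with those of $\cW_i'$; hence $\tdel22(\cW_1^\sharp,\cW_2^\sharp)\leq c$, and Theorem~\ref{thm:deltt-metric} identifies this quantity with $\deltt(\widetilde\cW_1,\widetilde\cW_2)$, yielding $\delGP(\cW_1,\cW_2)\leq c$. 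I expect the main obstacle to be purely bookkeeping: checking that each constructed measure is $\sigma$-finite (inherited from the fact that $\widetilde\mu_i\leq\mu_i$ and $(\mu')^{\pi_i}\leq\mu_i'$), and verifying that the graphex $\widetilde\cW_i$ produced from a coupling $\mu'$ actually lies in $L^1\cap L^2$, which follows because $L^p$-integrability of the pullback under $\mu'$ translates term by term (via marginalization) into $L^p$-integrability of $W_i$ and $D_{|\widetilde\cW_i|}$ under $\widetilde\mu_i$.
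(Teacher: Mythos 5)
Your proposal is correct and takes essentially the same approach as the paper: both directions hinge on Theorem~\ref{thm:deltt-metric} together with the observation that the pullback of a signed graphex depends only on its graphon/star/dust data and not on the underlying measure. The one small imprecision is in your first direction, where you assert the existence of a coupling $\mu'$ with $\d22$ of the pullbacks at most $c$; since $\deltt$ (and hence $\tdel22$) is an infimum over couplings and need not be attained, you should instead extract a coupling with $\d22\leq c+\varepsilon$ for arbitrary $\varepsilon>0$ and conclude $\tdelGP(\cW_1',\cW_2')\leq c+\varepsilon$, then let $\varepsilon\to 0$ (this is exactly how the paper handles it).
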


\begin{proof}
Let $\mu'$ and $c$ be such that they obey the conditions in the statement of
the lemma. For $i=1,2$, let $\widetilde\mu_i$ be the restriction of
$(\mu')^{\pi_i}$ to $\Omega_i$, and let $\widetilde\cW_i$ be obtained from
$\cW_i$ by replacing $\mu_i$ by $\widetilde\mu_i$. Then $\mu_i-c^2 \le
\widetilde\mu_i \le \mu_i$. Furthermore, $(\mu')^{\pi_i}$ extends
$\widetilde\mu_i$ to $\Omega_i'$, and
$(\mu')^{\pi_i}(\Omega_i'\setminus\Omega_i)\geq
\mu_i'(\Omega_i'\setminus\Omega_i)-c^2=\infty$, showing that this defines an
extension by a space of infinite measure. Finally, $\mu'$ is a coupling of
$(\mu')^{\pi_1}$ and $(\mu')^{\pi_2}$. Together, these facts imply that
$\deltt(\widetilde\cW_1,\widetilde\cW_2)\leq c$, proving that
$\delGP(\cW_1,\cW_2)\leq c$. This shows that $\delGP(\cW_1,\cW_2)\leq
\tdelGP(\cW_1',\cW_2')$.

To prove the reverse inequality, assume that $c$ is such that there are
measures $\widetilde\mu_1$ and $\widetilde\mu_2$ over $\Omega_1$ and
$\Omega_2$ such that $\deltt(\widetilde\cW_1,\widetilde\cW_2)\leq c$ and
$\mu_i-c^2\leq\widetilde\mu_i\leq\mu_i$ for $i=1,2$, where $\widetilde\cW_i$
is again obtained from $\cW_i$ by replacing $\mu_i$ by $\widetilde\mu_i$. If
we transform $\bOmega_i'$ into a space $\widetilde\bOmega_i'$ by setting
$\widetilde\mu_i'$ to $\widetilde\mu_i$ on $\Omega_i$, and to $\mu'_i$ on
$\Omega_i\setminus\Omega_i'$, and define $\widetilde\cW_i'$ as the trivial
extension of $\widetilde\cW_i$ to $\widetilde\bOmega_i'$, then Theorem
\ref{thm:deltt-metric} implies that
$\tdel22(\widetilde\cW_1',\widetilde\cW_2')=\deltt(\widetilde\cW_1,\widetilde\cW_2)\leq
c$. This means that for all $\eps>0$, there is a coupling $\mu'$ of
$\widetilde\mu_1'$ and $\widetilde\mu_2'$ such that
$\d22((\cW_1')^{\pi_1,\mu'},(\cW_2')^{\pi_2,\mu'})\leq c+\varepsilon$.
Observing that the bound $\mu_i-c^2\leq\widetilde\mu_i\leq\mu_i$ and our
construction of $\widetilde\mu_i'$ imply that
$\mu_i'-c^2\leq\widetilde\mu_i'=(\mu')^{\pi_i}\leq\mu_i'$, and that
$(\widetilde\cW_i')^{\pi_i,\widetilde\mu'}=(\cW_i')^{\pi_i,\widetilde\mu'}$,
this shows that $\tdelGP(\cW_1',\cW_2')\leq c+\varepsilon$. Since
$\varepsilon$ was arbitrary, this shows that $\delGP(\cW_1,\cW_2)\geq
\tdelGP(\cW_1',\cW_2')$.
\end{proof}

We are now ready to prove Theorem \ref{thm:dmetric}.

\begin{proof}[Proof of Theorem \ref{thm:dmetric}]
It is clear that $\delGP$ is symmetric, and that $\delGP(\cW,\cW)=0$. So we
have to prove the triangle inequality. By Lemma \ref{lem:tilde-delGP}, taking
trivial extensions of each graphex to a space of infinite measure, it
suffices to prove the triangle inequality for $\tdelGP$.

Let $\cW_1,\cW_2,\cW_3$ be three graphexes with the usual notation, defined
over measure spaces which all have infinite measure. Let $\mu_{12}$ be a
measure on $\Omega_1 \times \Omega_2$ that shows that
$\tdelGP(\cW_1,\cW_2)\leq c_1$, let $\mu_{23}$ be a measure on $\Omega_2
\times \Omega_3$ that shows that $\tdelGP(\cW_1,\cW_2)\leq c_2$, let $\mu'_1$
and $\mu'_2$ be the marginals of $\mu_{12}$, and let $\mu_2''$ and $\mu_3''$
be the marginals of $\mu_{23}$. We would like to use
Lemma~\ref{lemmacouplingsequence} to create a coupling of $\mu'_1$ and
$\mu_3''$, but unfortunately, the conditions of the lemma require that
$\mu_2'=\mu_2''$, which we cannot guarantee. To deal with this problem, we
will slightly decrease $\mu_{12}$ and $\mu_{23}$ so that after this
perturbation, the second marginal of the first is equal to the first marginal
of the second.

Let $\pi_{ij,i}$ be the projection map from $\Omega_i\times\Omega_j$ to
$\Omega_i$ for $i,j \in [3]$, and let
$\cW_{ij,i}=\cW_i^{\pi_{ij,i},\mu_{ij}}$. Let
$\mu_2'={\mu_{12}}^{\pi_{12,2}}$ and $\mu_2''={\mu_{23}}^{\pi_{23,2}}$. Let
$h'=\frac{d\mu_2'}{d\mu_2}$ and $h''=\frac{d\mu_2''}{d\mu_2}$. Then we can
assume that $0 \le h',h'' \le 1$, $\|1-h'\|_{1,\mu_2} \le c_1^2$, and
$\|1-h''\|_{1,\mu_2} \le c_2^2$. Let $\widetilde{h}(x)=\min(h'(x),h''(x))$,
and let $\widetilde{\mu}_2$ be the measure defined by
\[\widetilde{\mu}_2(A) =\int_A \widetilde{h} \,d\mu_2
.\] Then $\|h'-\widetilde{h}\|_{1,\mu_2} \le \|1-h''\|_{1,\mu_2} \le c_2^2$.
For $x \in \Omega_1 \times \Omega_2$, let
$h_{12}(x)=\frac{\widetilde{h}(\pi_{12,2}(x))}{h'(\pi_{12,2}(x))}\le 1$, and
let $\widetilde{\mu}_{12}$ be the measure defined by
\[\widetilde{\mu}_{12}(A)=\int_A h_{12}(x) \,d\mu_{12}
.\] Note that $\widetilde{\mu}_{12}^{\pi_{12,2}}=\widetilde{\mu}_2$.
Furthermore, since $\tilde h(x)\le h'(x)$,
\begin{align*}
\int_{\Omega_1
\times \Omega_2} \bigl(1-h_{12}(x)\bigr)
\,d\mu_{12}(x)
&=\int_{\Omega_2}\left(1-\frac{\widetilde{h}(x)}{h'(x)}\right)
\,d\mu_2'(x)
\\
&=\int_{\Omega_2}\left(1-\frac{\widetilde{h}(x)}{h'(x)}\right)
h'(x)\,d\mu_2(x)
= \|h'-\widetilde{h}\|_{1,\mu_2}
\le c_2^2 .
\end{align*}
This means that for any set $A \subseteq \Omega_1 \times \Omega_2$,
\[\mu_{12}(A)-c_2^2 \le \widetilde{\mu}_{12}(A) \le \mu_{12}(A).\]
This implies that for any $A \subseteq \Omega_1$,
\[\mu_1(A) -c_1^2-c_2^2 \le \mu_{12}^{\pi_{12,1}}(A)-c_2^2 \le \widetilde{\mu}_{12}^{\pi_{12,1}}(A) \le \mu_{12}^{\pi_{12,1}}(A) \le \mu_1(A)
.\] We similarly construct $\widetilde{\mu}_{23}$ and $\Omega_2 \times
\Omega_3$ so that $\widetilde{\mu}_{23}^{\pi_{23,2}}=\widetilde{\mu}_2$ and
for any set $A \subseteq \Omega_2 \times \Omega_3$,
\[\mu_{23}(A)-c_1^2 \le \widetilde{\mu}_{23}(A) \le \mu_{23}(A),\]
which implies that for any $A \subseteq \Omega_3$,
\[\mu_3(A) -c_1^2-c_2^2 \le \mu_{23}^{\pi_{23,3}}(A)-c_1^2 \le \widetilde{\mu}_{23}^{\pi_{23,3}}(A) \le \mu_{23}^{\pi_{23,3}}(A) \le \mu_3(A)
.\] Let $\widetilde{\mu}_1=\widetilde{\mu}_{12}^{\pi_{12,1}}$ and
$\widetilde{\mu}_3=\widetilde{\mu}_{23}^{\pi_{23,3}}$, and note that
$\widetilde{\mu}_{12}$ is a coupling of $\widetilde{\mu}_{1}$ and
$\widetilde{\mu}_{2}$, $\widetilde{\mu}_{23}$ is a coupling of
$\widetilde{\mu}_{2}$ and $\widetilde{\mu}_{3}$, and
$\mu_1-c_1^2-c_2^2\leq\widetilde\mu_1\leq\mu_1$ and
$\mu_3-c_1^2-c_2^2\leq\widetilde\mu_3\leq\mu_3$.

Let $\widetilde \cW_i$ be equal to $\cW_i$ but with the measure $\mu_i$
replaced by $\widetilde{\mu}_i$. Fix $\varepsilon>0$. By Lemma
\ref{lemmacouplingsequence}, there exists a measure $\widetilde{\mu}_{13}$ on
$\Omega_1 \times \Omega_3$ such that
$\widetilde{\mu}_{13}^{\pi_{13,1}}=\widetilde{\mu}_{1}$ and
$\widetilde{\mu}_{13}^{\pi_{13,3}}=\widetilde{\mu}_{3}$, and we have
\[\Delta_{2\to 2}(\widetilde{\cW}_1^{\pi_{13,1}},\widetilde{\cW}_3^{\pi_{13,3}}) \le \Delta_{2\to 2}(\widetilde{\cW}_1^{\pi_{12,1}},\widetilde{\cW}_2^{\pi_{12,2}})
+\Delta_{2\to
2}(\widetilde{\cW}_2^{\pi_{23,2}},\widetilde{\cW}_3^{\pi_{23,3}})+(\varepsilon,\varepsilon,\varepsilon)
.\] Note that by the above inequalities,
\[
\mu_1-(c_1+c_2)^2\leq\mu_1-c_1^2-c_2^2\leq\widetilde\mu_1=\widetilde{\mu}_{13}^{\pi_{13,1}}\leq\mu_1\]
and
\[
\mu_3-(c_1+c_2)^2\leq\mu_3-c_1^2-c_2^2\leq\widetilde\mu_3=\widetilde{\mu}_{13}^{\pi_{13,3}}\leq\mu_3.\]

By Lemma \ref{lemmaremovedsetdistancenotworse},
\[\|\widetilde W_1^{\pi_{12,1}}-\widetilde W_2^{\pi_{12,2}}\|_{2 \rightarrow 2,\widetilde{\mu}_{12}} \le \|W_{12,1}-W_{12,2}\|_{2 \rightarrow 2,\mu_{12}} \le c_1,\]
\begin{align*}
\|D_{\widetilde \cW_1^{\pi_{12,1}}}-D_{\widetilde \cW_2^{\pi_{12,2}}}\|_{2,\widetilde{\mu}_{12}} &\le \|D_{\cW_{12,1}}-D_{\cW_{12,2}}\|_{2,\mu_{12}}+\|W_{12,1}-W_{12,2}\|_{2 \rightarrow 2,\mu_{12}} c_2\\
& \le c_1^2 + c_1c_2,
\end{align*}
and finally
\begin{align*}
\left|\rho(\widetilde \cW_1^{\pi_{12,1}})- \rho(\widetilde \cW_2^{\pi_{12,2}}) \right| &\le \left|\rho(\cW_{12,1})-\rho(\cW_{12,2})\right| + 2 \|D_{\cW_{12,1}}-D_{\cW_{12,2}}\|_{2,\mu_{12}} c_2
\\
& \quad \phantom{} + \|W_{12,1}-W_{12,2}\|_{2 \rightarrow 2,\mu_{12}} c_2^2\\
& \le c_1^3 + 2c_1^2c_2 + c_1c_2^2.
\end{align*}
To summarize, this means that
\[
\Delta_{2\to 2}(\widetilde{\cW}_1^{\pi_{12,1}},\widetilde{\cW}_2^{\pi_{12,2}}) \le (c_1,c_1^2+c_1c_2,c_1^3+2c_1^2c_2+c_1c_2^2).
\]
Similarly,
\[
\Delta_{2\to 2}(\widetilde{\cW}_2^{\pi_{23,2}},\widetilde{\cW}_3^{\pi_{23,3}}) \le (c_2,c_2^2+c_1c_2,c_2^3+2c_1c_2^2+c_1^2c_2).
\]
Therefore,
\[\|\widetilde W_1^{\pi_{13,1}}-\widetilde W_3^{\pi_{13,3}}\|_{2 \rightarrow 2,\widetilde{\mu}_{13}} \le c_1+c_2+\varepsilon,\]
\[\|D_{\widetilde\cW_1}-D_{\widetilde\cW_3}\|_{2,\widetilde{\mu}_{13}} \le c_1^2+ c_1c_2+ c_2^2 + c_1c_2+\varepsilon=(c_1+c_2)^2+\varepsilon, \]
and finally
\[\left|
\rho(\widetilde\cW_1)- \rho(\widetilde\cW_3) \right| \le c_1^3 + 2c_1^2c_2 + c_1c_2^2+ c_2^3 + 2c_1c_2^2 + c_1^2c_2+\varepsilon=(c_1+c_2)^3+\varepsilon.\]
Since this can be done for any $\varepsilon>0$, this completes the proof that
$\tdelGP$ is a metric. With the help of Lemma~\ref{lem:tilde-delGP} the
triangle inequality for $\tdelGP$ implies that for $\delGP$.
\end{proof}

Next we prove Proposition~\ref{propboundedequivmetrics}, as well the
following version for signed graphexes.

\begin{proposition} \label{prop:signed-met-equiv}
Fix $B,C,D<\infty$. Then $\delGP$ and $\deltt$ define the same topology on
the space of $(B,C,D)$-bounded signed graphexes.
\end{proposition}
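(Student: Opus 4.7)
My plan is to prove the proposition via two opposite inequalities on the space of $(B,C,D)$-bounded signed graphexes. The easy direction $\delGP(\cW_1,\cW_2)\le\deltt(\cW_1,\cW_2)$ follows because every $(B,C,D)$-bounded graphex is in $L^1\cap L^2$ (for instance $\|W\|_2^2\le\|W\|_\infty\|W\|_1\le BC$, and similarly $D_{|\cW|}\in L^2$ since it is bounded by $D$ and has $L^1$ norm at most $\|\cW\|_1\le C$), so in Definition~\ref{def:delGP} one may take $\widetilde\mu_i=\mu_i$, which immediately shows that the infimum defining $\delGP$ is at most $\deltt(\cW_1,\cW_2)$. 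In particular, $\deltt$-convergence already implies $\delGP$-convergence.

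For the reverse implication I will prove a quantitative bound $\deltt(\cW_1,\cW_2)\le F_{B,D}(\delGP(\cW_1,\cW_2))$ with $F_{B,D}(c)\to 0$ as $c\to 0$. Unpacking Definition~\ref{def:delGP}, if $\delGP(\cW_1,\cW_2)<c$ then there exist measures $\widetilde\mu_i$ with $\mu_i-c^2\le\widetilde\mu_i\le\mu_i$ and $\deltt(\widetilde\cW_1,\widetilde\cW_2)\le c$. The triangle inequality for $\deltt$ from Theorem~\ref{thm:deltt-metric} then gives
\[
\deltt(\cW_1,\cW_2)\le \deltt(\cW_1,\widetilde\cW_1)+c+\deltt(\widetilde\cW_2,\cW_2),
\]
so the problem reduces to bounding $\deltt(\cW,\widetilde\cW)$, where $\cW$ and $\widetilde\cW$ share the same $W$, $S$, $I$ but have measures $\mu$ and $\widetilde\mu$ with $\mu-r\le\widetilde\mu\le\mu$ for $r:=c^2$.

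To bound $\deltt(\cW,\widetilde\cW)$ I extend both underlying spaces by appending $\RR_+$ with Lebesgue measure, and construct an explicit coupling $\sigma$ in three pieces: the diagonal of $\widetilde\mu$ with itself on $\Omega$; a product coupling of the excess $\mu-\widetilde\mu$ (of total mass at most $r$) on the $\cW$-side with the Lebesgue slab $[0,r]$ on the $\widetilde\cW$-side; and a translation-by-$r$ coupling of the remaining Lebesgue tails. A case check shows that on the diagonal part the two pulled-back graphons coincide, while elsewhere their difference $U$ is pointwise bounded by $B$. Combining the pointwise inequality $|W|^2\le B|W|$ with the marginal bound $D_{|W|}\le D$ (which also forces $|S|\le D$ pointwise), Cauchy--Schwarz yields
\[
\|U\|_{2\to 2}\le\|U\|_2\lesssim_{B,D}\sqrt r,\qquad \|D_1-D_2\|_{2,\sigma}\lesssim_{B,D}\sqrt r,\qquad |\rho(\cW)-\rho(\widetilde\cW)|\lesssim_D r,
\]
where $D_1,D_2$ denote the marginals of the two pulled-back graphexes. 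Substituting into \eqref{d22-def}, the dominant term for small $r$ is the fourth root of the marginal $L^2$ bound, giving $\deltt(\cW,\widetilde\cW)\lesssim_{B,D} r^{1/4}=\sqrt c$, and hence $\deltt(\cW_1,\cW_2)\to 0$ whenever $\delGP(\cW_1,\cW_2)\to 0$.

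The main technical obstacle is that $\mu(\Omega)$ can be infinite even for $(B,C,D)$-bounded signed graphexes, so the naive estimate $\|U\|_2^2\le B^2\sigma^{\otimes 2}(\supp U)$ diverges, and one must trade a factor of $|W|$ for the marginal bound $D_{|W|}\le D$ throughout. An analogous Cauchy--Schwarz step is needed to control $\int\bigl|\int W(z,w)\,d(\mu-\widetilde\mu)(w)\bigr|^2\,d\widetilde\mu(z)$ by $BDr^2$ rather than the divergent $B^2r^2\cdot\widetilde\mu(\Omega)$. The same argument proves Proposition~\ref{propboundedequivmetrics} by specializing to $B=1$ and invoking Proposition~\ref{prop:local-finite} to upgrade $D$-boundedness to individual integrability in the unsigned setting.
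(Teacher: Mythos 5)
Your proof is correct and gives the right quantitative conclusion, but it follows a genuinely different route from the paper's. The paper (Lemma~\ref{lem:delGP-deltt}) takes the near-optimal coupling witnessing $\deltt(\widetilde\cW_1,\widetilde\cW_2)<c$, extends it to a coupling of the \emph{original} measures $\mu_1',\mu_2'$ by coupling the excess $\mu_i'-\widetilde\mu_i'$ arbitrarily, and then invokes Lemma~\ref{lemmaremovednotmuchsmaller} (a ``measure-reduction cannot shrink $\d22$ too much'' lemma) on this one fixed coupled space. You instead apply the $\deltt$ triangle inequality to reduce to estimating $\deltt(\cW_i,\widetilde\cW_i)$ for a \emph{single} graphex whose measure is shrunk from $\mu_i$ to $\widetilde\mu_i$, and you bound that by an explicit near-diagonal coupling. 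This identity-versus-identity comparison produces extra cancellation: your three estimates are $\|U\|_{2\to2}\lesssim\sqrt{BDr}$, $\|D_1-D_2\|_2\lesssim\sqrt{D^2r}$, and $|\rho-\widetilde\rho|\lesssim Dr$, none of which involve $C$, whereas the paper's analogue of the middle estimate picks up an $8BCr$ term because $D_{\cW_i}$ and $D_{\cW_i'}$ are compared for two \emph{different} graphexes. As you observe, this means your argument gives Proposition~\ref{propboundedequivmetrics} immediately, without the paper's extra step of first extracting a uniform $C$ bound from $\|\cW_n\|_1\to\|\cW\|_1$. Both approaches land on $\deltt\lesssim_{B,D}\sqrt\eps$ for $\delGP\leq\eps$, driven by the fourth root of the marginal $L^2$ term in \eqref{d22-def}.

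One small imprecision worth fixing: you describe the second piece of your coupling as a ``product coupling of the excess $\mu-\widetilde\mu$ (of total mass at most $r$) with the Lebesgue slab $[0,r]$,'' but these two measures generally have different total mass $\delta := (\mu-\widetilde\mu)(\Omega) \le r$ and $r$, so no coupling exists as stated. Use $[0,\delta]$ in place of $[0,r]$, and make the final piece a translation-by-$\delta$ coupling of the remaining Lebesgue tails; the marginal check and all subsequent estimates then go through verbatim (the region $E$ where $U\neq 0$ still has $\sigma$-mass $\delta\le r$).
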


To prove these propositions, we need a lemma complementing the bounds from
Lemma~\ref{lemmaremovedsetdistancenotworse}. Recall that in
Lemma~\ref{lemmaremovedsetdistancenotworse}, we showed that the distance
between two graphexes defined on the same measure space cannot increase too
much when we decrease of the underlying measure. Our next lemma shows that if
the graphexes involved are signed graphexes that are $(B,C,D)$-bounded, we
can also go in the other direction.

\begin{lemma} \label{lemmaremovednotmuchsmaller}
Let $\cW_i=(W_i,S_i,I_i,\bOmega)$, for $i=1,2$, be $(B,C,D)$-bounded signed
graphexes on the same measure space $\bOmega$, and let
 $\mu'$, $r$, $\cW_1'$, and $\cW_2'$ be as in Lemma~\ref{lemmaremovedsetdistancenotworse}.
Then
\begin{enumerate}
\item $\|W_1-W_2\|_{2 \rightarrow 2,\mu'}\le \|W_1-W_2\|_{2 \rightarrow
    2,\mu} \le \|W_1-W_2\|_{2 \rightarrow 2,\mu'} + {4}\sqrt{{B}D r}$,
\item $\Bigl|\|D_{\cW_1}-D_{\cW_2}\|_{2,\mu}^2 -
    \|D_{\cW_1'}-D_{\cW_2'}\|_{2,\mu'}^2 \Bigr|\leq (4D^2+8BC)r $, and
\item
    $\Bigl|\bigl|{\rho(\cW_1')-\rho(\cW_2')}\bigr|-\bigl|{\rho(\cW_1)-\rho(\cW_2)}\bigr|\Bigr|\leq
 4 Dr.$
\end{enumerate}
\end{lemma}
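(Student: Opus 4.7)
The plan is that each of the three estimates compares a quantity defined with respect to $\mu$ to its analogue with respect to the smaller measure $\mu'$ given by $d\mu'=h\,d\mu$ (where $0\leq h\leq 1$ and $\|1-h\|_{1,\mu}\leq r$), and that the gap can be controlled by decomposing $d\mu=h\,d\mu+(1-h)\,d\mu$ and invoking the $(B,C,D)$-boundedness hypothesis. The tools that will do the work are $\int(1-h)\,d\mu\leq r$, $(1-h)^2\leq 1-h$, $|W_1-W_2|\leq 2B$, $D_{|W_1-W_2|}\leq 2D$, and $\int|D_{\cW_1}-D_{\cW_2}|\,d\mu\leq\|D_{|\cW_1|}\|_1+\|D_{|\cW_2|}\|_1\leq 2C$.

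For part~(1), the first inequality is Lemma~\ref{lemmaremovedsetdistancenotworse}(1). For the reverse, set $U=W_1-W_2$ and fix $f,g$ with $\|f\|_{2,\mu}=\|g\|_{2,\mu}=1$. The key step is to split $\int Ufg\,d\mu(x)\,d\mu(y)$ into three (not four) pieces by first writing $d\mu(y)=h(y)\,d\mu(y)+(1-h(y))\,d\mu(y)$ and then splitting only the $h(y)$-part further in $x$:
\[
\int Ufg\,d\mu'(x)\,d\mu'(y)+\int U(x,y)f(y)g(x)(1-h(x))h(y)\,d\mu(x)\,d\mu(y)+\int U(x,y)f(y)g(x)(1-h(y))\,d\mu(x)\,d\mu(y).
\]
The main term is at most $\|U\|_{2\to 2,\mu'}$. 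Each of the two error pieces has the form $\langle\xi,\psi\rangle_{L^2(\mu)}$ with $\|\xi\|_{2,\mu}\leq 1$ and $\psi(x)=\int U(x,y)f(y)(1-h(y))\,d\mu(y)$ (or its $x\leftrightarrow y$ analogue). A Cauchy--Schwarz estimate inside the definition of $\psi$, using $U^2\leq 2B|U|$, $(1-h)^2\leq 1-h$, and $D_{|U|}\leq 2D$, gives $\|\psi\|_{2,\mu}^2\leq 4BDr$, so each error contributes at most $2\sqrt{BDr}$ and the total is $4\sqrt{BDr}$.

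For part~(2), let $\Delta=D_{\cW_1}-D_{\cW_2}$ and $\Delta'=D_{\cW_1'}-D_{\cW_2'}$, and observe that $\Delta-\Delta'=\delta$ where $\delta(x)=\int U(x,y)(1-h(y))\,d\mu(y)$. A direct expansion yields
\[
\|\Delta\|_{2,\mu}^2-\|\Delta'\|_{2,\mu'}^2=I+II-III,\qquad I=\int\Delta^2(1-h)\,d\mu,\quad II=2\int\Delta\delta h\,d\mu,\quad III=\int\delta^2 h\,d\mu,
\]
with $I,III\geq 0$. I would bound $I\leq 4D^2r$ via $|\Delta|\leq 2D$; $|II|\leq 8BCr$ via the pointwise bound $|\delta|\leq 2Br$ (from $|U|\leq 2B$) together with $\int|\Delta|\,d\mu\leq 2C$; and $III\leq 4D^2r$ via Cauchy--Schwarz inside $\delta$ and $D_{|U|}\leq 2D$. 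Using $I\geq 0$ in one signed direction and $III\geq 0$ in the other then yields $|I+II-III|\leq(4D^2+8BC)r$.

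For part~(3), expanding $1-h(x)h(y)=(1-h(y))+h(y)(1-h(x))$ and using symmetry of $W_i$ gives the clean identity $\rho(\cW_i)-\rho(\cW_i')=\int(1-h)(D_{\cW_i}+D_{\cW_i'})\,d\mu$, so that $[\rho(\cW_1)-\rho(\cW_2)]-[\rho(\cW_1')-\rho(\cW_2')]=\int(1-h)(\Delta+\Delta')\,d\mu$. Its absolute value is at most $(\|\Delta\|_\infty+\|\Delta'\|_\infty)\|1-h\|_{1,\mu}\leq 4Dr$, and the reverse triangle inequality $\bigl||a|-|b|\bigr|\leq|a-b|$ closes the estimate. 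The delicate step is the three-piece grouping in~(1): a naive four-piece decomposition would give only $6\sqrt{BDr}$, since grouping the two pieces sharing the factor $(1-h(y))$ absorbs the combination $h(x)+(1-h(x))=1$ and replaces two separate Cauchy--Schwarz estimates of size $2\sqrt{BDr}$ with a single one.
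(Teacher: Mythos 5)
Your proof is correct and follows essentially the same route as the paper: the three-piece splitting $1 = h(x)h(y) + (1-h(x))h(y) + (1-h(y))$ for part~(1), the decomposition of $\|\Delta\|_{2,\mu}^2 - \|\Delta'\|_{2,\mu'}^2$ into the nonnegative tail term plus a difference controlled by $|\Delta - \Delta'| \le 2Br$ for part~(2), and the identity $\rho(\cW_i) - \rho(\cW_i') = \int(1-h)(D_{\cW_i}+D_{\cW_i'})\,d\mu$ (which is an algebraic rewriting of the expression the paper carries over from the previous lemma) for part~(3). The minor differences are purely presentational: you absorb the $h(y)(1-h(x))$ piece via Cauchy--Schwarz inside $\psi$ rather than via the factored $\|U_y\|_2$ bound, and you split $B$ into $II - III$ with separate sign arguments rather than bounding $|B|$ via $(\Delta-\Delta')(\Delta+\Delta')$ directly, but all of these give exactly the paper's constants.
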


\begin{proof}
Let $U=W_1-W_2$. Then for any $f,g$ with $\|f\|_{2,\mu}=\|g\|_{2,\mu}=1$,
\begin{align*}\int_{\Omega \times \Omega}&f(x)U(x,y)g(y) \,d\mu(x)\,d\mu(y)\\
&=\int_{\Omega \times \Omega}f(x)h(x)U(x,y)h(y)g(y)\,d\mu(x)\,d\mu(y)\\
&\quad\phantom{}+\int_{\Omega \times \Omega}f(x)((1-h(x))U(x,y)h(y))g(y)\,d\mu(x)\,d\mu(y)\\
&\quad\phantom{}+\int_{\Omega \times \Omega} f(x)U(x,y)(1-h(y))g(y)\,d\mu(x)\,d\mu(y)
.\end{align*}
We have
\begin{align*}
\int_{\Omega \times \Omega}&f(x)h(x)U(x,y)h(y)g(y)\,d\mu(x)\,d\mu(y)\\
&=\int_{\Omega \times \Omega}f(x)U(x,y)g(y)\,d\mu'(x)\,d\mu'(y)
\le \|f\|_{2,\mu'} \|U\|_{2 \rightarrow 2,\mu'} \|g\|_{2,\mu'}
\\ &\le
\|f\|_{2,\mu} \|U\|_{2 \rightarrow 2,\mu'} \|g\|_{2,\mu}
\le \|U\|_{2 \rightarrow 2,\mu'}.
\end{align*}
Furthermore,
\begin{align*}
\int_{\Omega \times \Omega} f(x)U(x,y)(1-h(y))&g(y)\,d\mu(x)\,d\mu(y)\\
& \le \|f\|_{2,\mu} \int_{\Omega}\|U_y\|_{2,\mu} (1-h(y))|g(y)| \,d\mu(y)\\
&\le \|f\|_{2,\mu} {2 \sqrt{BD}} \int_\Omega (1-h(y))|g(y)| \,d\mu(y)\\
& \le{2 \sqrt{BD}} \|f\|_{2,\mu}\|1-h\|_{2,\mu} \|g\|_{2,\mu}
\le {2\sqrt{BDr}}.
\end{align*}
Here we used the fact that $\|U\|_\infty \le
{2B}$ and $\|D_{|U|,\mu}\|_\infty \le 2D$, which implies that $\|U_y\|_{2,\mu}
\le {2 \sqrt{BD}}$. Analogously, we have
\begin{align*}
\int_{\Omega \times\Omega}
f(x)((1-h(x))U(x,y)&h(y))g(y) \,d\mu(x)\,d\mu(y)\\
 &\le {2 \sqrt{BD}}
\|f\|_{2,\mu} \|1-h\|_{2,\mu} \|hg\|_{2,\mu}
\le {2 \sqrt{BDr}}.\
\end{align*}
Adding this all up, we have
\[\int_{\Omega \times \Omega}f(x)U(x,y)g(y) \,d\mu(x) \,d\mu(y) \le \|U\|_{2 \rightarrow 2,\mu'}
+4 \sqrt{BDr}
.\]
This proves the upper bound in the first claim. The lower bound follows from Lemma~\ref{lemmaremovedsetdistancenotworse}.

To prove the second claim, we observe that
\begin{align*}
0&\le
\int_{\Omega} (D_{\cW_1}(x)-D_{\cW_2}(x))^2 \,d\mu(x) - \int_{\Omega} (D_{\cW_1}(x)-D_{\cW_2}(x))^2 \,d\mu'(x)
\\
&=\int_{\Omega} (D_{\cW_1}(x)-D_{\cW_2}(x))^2 (1-h(x)) \,d\mu(x)
\leq
\int_{\Omega}{4}D^2(1-h(x))\,d\mu(x)
\le {4}D^2r.
\end{align*}
Furthermore, since $\|U\|_\infty \le 2B$, we have that for any $x\in \Omega$,
\begin{align*}
&\biggl|\left(D_{\cW_1}(x)-D_{\cW_1'}(x)\right)-\left(D_{\cW_2}(x)-D_{\cW_2'}(x)\right)\biggr|
\\
&\qquad\qquad\qquad\qquad= \biggl|\int_{\Omega} (1-h(y)) U(x,y) \,d\mu(y)\biggr|\leq 2Br.
\end{align*}
Therefore,
\begin{align*}
&\left|\int_{\Omega}\left(D_{\cW_1}(x)-D_{\cW_2}(x)\right)^2 \,d\mu'(x)
 -\int_{\Omega}\left(D_{\cW_1'}(x)-D_{\cW_2'}(x)\right)^2 \,d\mu'(x) \right|
\\
&\quad
=\bigg|\int_{\Omega} \Bigl(\left(D_{\cW_1}(x)-D_{\cW_2}(x)\right)-\left(D_{\cW_1'}(x)-D_{\cW_2'}(x)\right)\Bigr)
\\
 &\qquad\qquad\Bigl(\left(D_{\cW_1}(x)-D_{\cW_2}(x)\right)+\left(D_{\cW_1'}(x)-D_{\cW_2'}(x)\right)\Bigr) \,d\mu'(x) \bigg|
\\
&\quad
\le 2Br\int_{\Omega}
\Bigl(\left|D_{\cW_1}(x)\right|+\left|D_{\cW_2}(x)\right|
   +\left|D_{\cW_1'}(x)\right|+\left|D_{\cW_2'}(x)\right|\Bigr) \,d\mu'(x)
\le
8BCr
.
\end{align*}
Combining these two inequalities proves the second claim.

To prove the third claim, we use the following bound, where the first
inequality was already established when proving the last claim of
Lemma~\ref{lemmaremovedsetdistancenotworse}:
\begin{align*}
\Bigl|\bigl|{\rho(\cW'_1)}-&{\rho(\cW'_2)}\bigr|-\bigl|{\rho(\cW_1)-\rho(\cW_2)}\bigr|\Bigr|\\
&\le \biggl|2\int_{\Omega} (1-h(x))(D_{\cW_1}-D_{\cW_2})(x) \,d\mu(x)\biggr.\\
&\qquad
-
\biggl.\int_{\Omega \times \Omega}(1-h(x))(W_1-W_2)(x,y)(1-h(y))\,d\mu(x)\,d\mu(y) \biggr|
\\
&=\biggl|2\int_{\Omega} (1-h(x))(S_1-S_2)(x) \,d\mu(x)
\biggr.
\\
&\qquad
+
\biggl.\int_{\Omega \times \Omega}(1-h(x))(W_1-W_2)(x,y)(2-1+h(y))\,d\mu(x)\,d\mu(y) \biggr|\\
&\le 2\int_\Omega (1-h(x))(D_{|\cW_1|}+D_{|\cW_2|})(x)\leq 4Dr. \qedhere
\end{align*}
\end{proof}

Our next lemma is an easy corollary to
Lemma~\ref{lemmaremovednotmuchsmaller}, and in turn immediately implies
Proposition~\ref{propboundedequivmetrics} and \ref{prop:signed-met-equiv}.

\begin{lemma}\label{lem:delGP-deltt}
Suppose $\cW_i=(W_i,S_i,I_i,\bOmega_i)$, for $i=1,2$, are signed graphexes
with $\|W_i\|_\infty\leq B$ and $\|D_{|\cW|}\|_\infty\leq D$, and let
$\delGP(\cW_1,\cW_2)\leq \eps$. Then $|\rho(W_1)-\rho(W_2)|\leq
\eps^3+4\eps^2D$. If, in addition, $\|W\|_i\leq C$, then
$\deltt(\cW_1,\cW_2)\leq f(\eps)$, where
\[
f(\eps)=\max
\left\{
\eps+4\eps\sqrt{BD},\left(\eps^2+2\eps\sqrt{D^2+2BC}\right)^{1/2},\left(\eps^3+4\eps^2D\right)^{1/3}
\right\}
\]
\end{lemma}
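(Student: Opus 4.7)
The plan is to unfold $\delGP$ via the equivalent formulation in Lemma~\ref{lem:tilde-delGP}: for any $c>\eps$, pick trivial extensions $\cW_i'$ of $\cW_i$ to $\sigma$-finite spaces $\bOmega_i'=(\Omega_i',\cF_i',\mu_i')$ of infinite total measure, together with a measure $\mu'$ on $\Omega_1'\times\Omega_2'$ satisfying $\mu_i'-c^2\leq(\mu')^{\pi_i}\leq\mu_i'$ for $i=1,2$ and $\d22(\wcW_1,\wcW_2)\leq c$, where $\wcW_i=(\cW_i')^{\pi_i,\mu'}$. Set $h_i^*=d(\mu')^{\pi_i}/d\mu_i'\in[0,1]$, so that $\|1-h_i^*\|_{1,\mu_i'}\leq c^2$, and note that the three coordinates of $\d22(\wcW_1,\wcW_2)$ are at most $c$, $c^2$, $c^3$ respectively.

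The first claim, the bound on $|\rho(\cW_1)-\rho(\cW_2)|$, requires no coupling modification. Comparing $\rho(\cW_i)=\rho(\cW_i')$ with $\rho(\wcW_i)$ by change of variables, the difference equals
\[
\int W_i\bigl(1-h_i^*(x)h_i^*(y)\bigr)\,d\mu_i'(x)\,d\mu_i'(y)+2\int S_i\bigl(1-h_i^*\bigr)\,d\mu_i',
\]
which, after splitting $1-h^*(x)h^*(y)=(1-h^*(x))+h^*(x)(1-h^*(y))$ and using $D_{|W_i|}+|S_i|=D_{|\cW_i|}\leq D$, is bounded in absolute value by $2Dc^2$. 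Combined with $|\rho(\wcW_1)-\rho(\wcW_2)|\leq c^3$ via the triangle inequality, this gives $|\rho(\cW_1)-\rho(\cW_2)|\leq c^3+4Dc^2$, and letting $c\downarrow\eps$ proves the first claim.

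The second claim requires constructing a genuine coupling $\mu''$ of $\mu_1'$ and $\mu_2'$ so that we can pass from the three $\d22$ ingredients under $\mu'$ to those under $\mu''$. I would take $\mu''=\mu'+\sigma$, where $\sigma$ couples the deficit measures $\mu_i'-(\mu')^{\pi_i}$. The matching issue (the two deficits may have different total masses) is resolved by enlarging the trivial extensions with a buffer and assigning artificial deficit on the buffer so that both sides' totals agree and remain bounded by $c^2$; since $W_i',S_i'$ vanish on the extension parts, this adjustment preserves $\d22(\wcW_1,\wcW_2)\leq c$. The pullbacks $\cW_i^{\pi_i,\mu'}$ and $\cW_i^{\pi_i,\mu''}$ are $(B,C,D)$-bounded because $W_i,S_i$ are supported on $\Omega_i$, where the relevant integrals reduce to integration against $\mu_i$. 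With $\mu''\geq\mu'$ and $\|\mu''-\mu'\|_1\leq c^2$, Lemma~\ref{lemmaremovednotmuchsmaller} applied with $r=c^2$ yields
\[\|W_1^{\pi_1}-W_2^{\pi_2}\|_{2\to 2,\mu''}\leq c+4c\sqrt{BD},\]
\[\|D_{\cW_1^{\pi_1,\mu''}}-D_{\cW_2^{\pi_2,\mu''}}\|_{2,\mu''}^2\leq c^4+(4D^2+8BC)c^2,\]
\[|\rho(\cW_1^{\pi_1,\mu''})-\rho(\cW_2^{\pi_2,\mu''})|\leq c^3+4Dc^2.\]
A short algebraic manipulation (squaring twice reduces the inequality to $0\leq 4c^3\sqrt{D^2+2BC}$) gives $(c^4+(4D^2+8BC)c^2)^{1/4}\leq(c^2+2c\sqrt{D^2+2BC})^{1/2}$; combining the three estimates via the definition of $\d22$ yields $\d22(\cW_1^{\pi_1,\mu''},\cW_2^{\pi_2,\mu''})\leq f(c)$, hence $\deltt(\cW_1,\cW_2)\leq f(c)$, and $c\downarrow\eps$ finishes the proof.

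The main obstacle I expect is the coupling construction in the second claim: arranging for the two deficit measures to have equal total masses so that $\sigma$ can be taken as a genuine coupling. The resolution via buffers in the trivial extensions is conceptually simple but must be executed carefully so that the total modified deficit on each side stays bounded by $c^2$; otherwise one would have to apply Lemma~\ref{lemmaremovednotmuchsmaller} with $r=2c^2$, which would produce worse constants than the claimed $f(\eps)$.
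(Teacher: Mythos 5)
Your proposal follows the paper's proof essentially step for step: unfold $\delGP$ via the near-coupling reformulation, pad the two sides so that the deficit measures have the same total mass $c^2$ (the paper does this by appending auxiliary intervals $J_i$ to the trivial extensions and giving them Lebesgue measure in one of the two paired measures but zero in the other), extend to a genuine coupling by coupling the deficits arbitrarily, and then apply Lemma~\ref{lemmaremovednotmuchsmaller} with $r=c^2$, finishing by the same algebraic comparison $(c^4+(4D^2+8BC)c^2)^{1/4}\leq(c^2+2c\sqrt{D^2+2BC})^{1/2}$. The one small stylistic deviation is that you establish the $\rho$-bound by a direct change-of-variables computation against $h_i^*$, whereas the paper simply reads it off as part (3) of Lemma~\ref{lemmaremovednotmuchsmaller}; both give $c^3+4Dc^2$, and your version has the small advantage of being manifestly independent of the $C$ bound (the paper instead invokes Lemma~\ref{lem:signed-delGP<infty} to supply a finite $C$). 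The obstacle you flag at the end is real but is resolved exactly as you suspect; you can be confident the constants survive.
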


\begin{proof}
We first note that by Lemma~\ref{lem:signed-delGP<infty},
$\|\cW_i\|_1<\infty$, so even without the assumption that $\|W_i\|_1\leq C$,
we always have that $\|W_i\|_1\leq C$ for some $C<\infty$.

Next, let $\bOmega_i=(\Omega_i,\cF_i,\mu_i)$, for $i=1,2$, and let
$c>\varepsilon$. By the definition of $\delGP$, there exist measures
$\widetilde\mu_i$ such that $\widetilde\mu_i \le \mu_i$ and
$\delta_i=\mu_i(\Omega_i)-\widetilde \mu_i(\Omega_i)\leq c^2$ and such that
$\deltt(\widetilde \cW_1,\widetilde \cW_2)< c$ for the signed graphexes
$\widetilde \cW_i$ obtained from $\cW_i$ by replacing $\mu_i$ by
$\widetilde\mu_i$.

Consider an arbitrary $\sigma$-finite space $\bOmega_i''$ of infinite
measure, and two intervals $J_i$ of length $c-\delta_i$. Define $\bOmega_i'$
by appending $\bOmega_i''$ and the interval $J_i$ equipped with the Lebesgue
measure to $\bOmega_i$, and define $\widetilde\bOmega_i'$ by appending the
same spaces, except that we equip $J_i$ with the zero measure. By
Lemma~\ref{lem:extension} and the definition of $\deltt$, we then have that
$\tdel22(\widetilde\cW_1',\widetilde\cW_2')=\deltt(\widetilde
\cW_1,\widetilde \cW_2)< c$, where $\widetilde\cW_i'$ are the trivial
extensions to $\widetilde\bOmega_i'$. Furthermore, by our construction,
$\bOmega_i'=(\Omega_i,\cF_i',\mu_i')$ and
$\widetilde\bOmega_i'=(\widetilde\Omega_i,\widetilde\cF_i',\widetilde\mu_i')$
are such that $(\Omega_i',\cF_i')=(\widetilde\Omega_i,\widetilde\cF_i')$,
$\widetilde\mu_i' \le \mu_i'$, and $\mu_i'(\Omega_i')-\widetilde
\mu_i'(\Omega_i')= c^2$.

Given a coupling $\widetilde\mu'$ of $\widetilde\mu_1'$ and
$\widetilde\mu_2'$, let $\widetilde\cU_i'$ be the pullback of
$\widetilde\cW_i'$ under the coordinate projections onto $\Omega_i'$. By the
definition of the distance $\tdel22$, we can find a coupling $\widetilde\mu'$
such that $\d22(\widetilde\cU_1',\widetilde\cU_2')\leq c^2$. Choose a
coupling $\mu'$ of $\mu_1'$ and $\mu_2'$ by coupling $\mu_i'-\widetilde
\mu_i'$ arbitrarily. Then $\mu'-c\leq\widetilde\mu'\leq\mu'$. Defining
$\cU_i'$ to be the pullbacks of $\cW_i'$ under the coordinate projections
onto $\Omega_i'$, we may then apply Lemma~\ref{lemmaremovednotmuchsmaller}
with $r=c^2$ to conclude that
$|\rho(\cW_1)-\rho(\cW_2)|=|\rho(\cU_1')-\rho(\cU_2')|\leq c^3+4c^2D$ and
\begin{align*}
\deltt(\cW_1,\cW_2)&\leq \d22(\cU_1',\cU_2')
\\
&\leq\max
\left\{
c+4c\sqrt{BD},\left(c^2+2c\sqrt{D^2+2BC}\right)^{1/2},\left(c^3+4c^2D\right)^{1/3}
\right\}
\end{align*}
Since $c>\eps$ was arbitrary, this concludes the proof.
\end{proof}

\begin{proof}[Proof of Propositions~\ref{propboundedequivmetrics} and \ref{prop:signed-met-equiv}]
By the definition of $\delGP$, we clearly have that $\delGP\leq\deltt$. For
signed graphexes that are $(B,C,D)$-bounded, a bound in the opposite
direction follows immediately from Lemma~\ref{lem:delGP-deltt}, proving
Proposition~\ref{prop:signed-met-equiv}. To prove
Proposition~\ref{propboundedequivmetrics} we note that if
$\delGP(\cW_n,\cW)\to 0$ and both $\cW_n$ and $\cW$ are (unsigned) graphexes
with $D$-bounded marginals, then $\|\cW\|_1\leq C$ for some $C<\infty$ by
Proposition~\ref{prop:local-finite}. Since $\|\cW_n\|_1=\rho(\cW_n)$
converges to $\rho(\cW)=\|W\|_1\leq C$ by the first statement of the lemma,
we must have that $\|\cW_n\|_1\leq \widetilde C$ for some $\widetilde
C<\infty$, at which point the proof proceeds as the proof for the signed
case.
\end{proof}

Our next lemma relates the kernel norm $\|\cdot\|_{2\rightarrow2}$ of a two
variable function $U$ to the $4$-cycle counts of $U$.

\begin{lemma} \label{lem:2t2C4equiv}
Let $U\colon\Omega\times\Omega\to\RR$ be a measurable function. Then
\[\|U\|_{2 \rightarrow 2}^4 \le t(C_4,U) \le \|U\|_{2 \rightarrow 2}^2 \|U\|_2^2
.\]
\end{lemma}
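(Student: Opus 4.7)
The plan is to rewrite $t(C_4,U)$ as the squared $L^2$ norm of the kernel composition $U\circ U$, where $(U\circ U)(x,z):=\int U(x,y)U(y,z)\,d\mu(y)$, and then exploit two elementary facts. Grouping the four factors of $t(C_4,U)$ as two paths of length two (integrating out the shared vertices $x_2$ and $x_4$) and using the symmetry of $U$ (which is implicit in the lemma's context because $U$ will arise as a difference of graphons) gives
\[
t(C_4,U)=\int (U\circ U)(x,z)(U\circ U)(z,x)\,d\mu(x)\,d\mu(z)=\|U\circ U\|_2^2.
\]
The two elementary facts I will use are: (a) for any measurable kernel $K$ on $\Omega^2$, $\|K\|_{2\to 2}\le\|K\|_2$, which follows from Cauchy--Schwarz applied to the inner integral in $\|K\circ g\|_2^2$; and (b) kernel composition is associative, so $(U\circ U)\circ f=U\circ(U\circ f)$ for every $f\in L^2(\Omega)$.

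For the upper bound, set $U_z(y):=U(y,z)$, so that $(U\circ U)(\cdot,z)=U\circ U_z$. By Definition~\ref{def:kernel-norm}, $\|U\circ U_z\|_2^2\le\|U\|_{2\to 2}^2\|U_z\|_2^2$; integrating in $z$ and using Fubini to identify $\int\|U_z\|_2^2\,d\mu(z)=\|U\|_2^2$ then gives $t(C_4,U)\le\|U\|_{2\to 2}^2\|U\|_2^2$.

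For the lower bound, by (b) and the symmetry of $U$, every unit vector $f\in L^2(\Omega)$ satisfies
\[
\|(U\circ U)\circ f\|_2\ge\int f(x)(U\circ U)(x,y)f(y)\,d\mu(x)\,d\mu(y)=\|U\circ f\|_2^2,
\]
where the inequality is Cauchy--Schwarz (using $\|f\|_2=1$) and the identity rewrites the triple integral after applying $U(z,x)=U(x,z)$. Taking the supremum over $f$ gives $\|U\circ U\|_{2\to 2}\ge\|U\|_{2\to 2}^2$, and combining with (a) applied to $K=U\circ U$ yields $\|U\|_{2\to 2}^4\le\|U\circ U\|_{2\to 2}^2\le\|U\circ U\|_2^2=t(C_4,U)$.

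The whole argument is essentially the self-adjoint operator identity $\|T^2\|_{\mathrm{op}}=\|T\|_{\mathrm{op}}^2$ combined with the Hilbert--Schmidt bound $\|T\|_{\mathrm{op}}\le\|T\|_{\mathrm{HS}}$, translated into kernel language; no real obstacle arises. Symmetry of $U$ is needed only in the lower bound, where it is used to establish $\|U\circ U\|_{2\to 2}\ge\|U\|_{2\to 2}^2$.
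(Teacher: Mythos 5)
Your proof is correct and takes essentially the same route as the paper. The upper bound is word-for-word the paper's: write $t(C_4,U)=\int_\Omega\|U\circ U_z\|_2^2\,d\mu(z)$, bound each $\|U\circ U_z\|_2$ by $\|U\|_{2\to2}\|U_z\|_2$, and integrate. For the lower bound the paper argues concretely with test vectors: $(f\circ U\circ g)^2\le\|U\circ g\|_2^2=g\circ(U\circ U)\circ g\le\|U\circ U\|_2$ by two applications of Cauchy--Schwarz; you package the very same two Cauchy--Schwarz steps as the operator-theory facts $\|T\|_{\mathrm{op}}^2=\|T^2\|_{\mathrm{op}}$ (for self-adjoint $T$) and $\|T^2\|_{\mathrm{op}}\le\|T^2\|_{\mathrm{HS}}$, which is just a cleaner way of saying the same thing. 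One small correction to your closing remark: the identity $t(C_4,U)=\|U\circ U\|_2^2$ (equivalently, the paper's rewrite $t(C_4,U)=\int\|U\circ U_z\|_2^2\,d\mu(z)$) already requires $U$ to be symmetric, since $\|U\circ U\|_2^2=\int U(x,y)U(y,z)U(x,w)U(w,z)$ while $t(C_4,U)=\int U(x,y)U(y,z)U(z,w)U(w,x)$; so symmetry is used in both bounds, not only the lower one. This does not affect correctness — the lemma is only applied to symmetric kernels and the paper's own proof makes the same implicit use of symmetry — but the sentence singling out the lower bound should be dropped.
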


\begin{proof}
For any $f,g$ with $\|f\|_2=\|g\|_2=1$, we have (using Cauchy's inequality)
\begin{align*}
\|f \circ U \circ g\|_2^4 &\le \|U \circ g\|_2^4
=(g \circ U \circ U \circ g)^2\\
&=\left(\int_{\Omega^2}g(x)U \circ U(x,y)g(y)\,d\mu(x)\,d\mu(y)\right)^2\\
&\le \left(\int_{\Omega^2}g(x)^2g(y)^2\,d\mu(x)\,d\mu(y)\right) \left( \int_{\Omega^2} (U \circ U(x,y))^2 \,d\mu(x)\,d\mu(y)\right)\\
&=t(C_4,U).
\end{align*}
This proves the first inequality. For the second, we have
\begin{align*}
t(C_4,U)&= \int_{\Omega^4} U(x,y)U(y,z)U(z,w)U(w,x)\,d\mu(x)\,d\mu(y)\,d\mu(z)\,d\mu(w)\\
& =
\int_{\Omega} \int_{\Omega}(U\circ U_z)(x)^2 \,d\mu(x)\,d\mu(z)
= \int_{\Omega} d\mu(z)\|U \circ U_z\|_2^2
\\
&\le \int_\Omega \|U\|_{2 \rightarrow 2}^2 \|U_z\|_2^2=\|U\|_{2 \rightarrow 2}^2\|U\|_2^2. \qedhere
\end{align*}
\end{proof}

Our next goal is to relate the jumble and the kernel distances. The next
proposition shows that for $(B,C,D)$-bounded graphexes, they are equivalent.
As we will see, similarly to the proof of
Propositions~\ref{propboundedequivmetrics} and \ref{prop:signed-met-equiv},
the proof also gives that for (unsigned) graphexes, the metrics $\deltt$ and
$\deljbl$ are equivalent on the space of graphexes with $D$-bounded
marginals; see Remark~\ref{rem:kernel-jumbl-equiv} below.

\begin{proposition}\label{prop:kernel-jumbl-equiv}
Given $B,C,D<\infty$, there exists a constant $c<\infty$ such that if $\cW_1$
and $\cW_2$ are $(B,C,D)$-bounded signed graphexes, then the following hold.
\begin{enumerate}
\item If $\d22(\cW_1,\cW_2)\leq \widetilde\eps$, then $\djbl(\cW_1,\cW_2)\leq \max\{\widetilde\eps,{\widetilde\eps}^3\}$.
\item If $\djbl(\cW_1,\cW_2)\leq \varepsilon$, then
    $\d22(\cW_1,\cW_2)\leq\max\{\sqrt[3]\eps,c\sqrt[4]{\eps}\}$.
\end{enumerate}
If the graphexes are such that $\rho(\cW_1)=\rho(\cW_2)$, then these bounds
can be replaced by $\djbl(\cW_1,\cW_2)\leq \widetilde\eps$ and
$\d22(\cW_1,\cW_2)\leq c\sqrt[4]{\eps}$.
\end{proposition}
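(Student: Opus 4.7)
The plan is to prove the two directions separately, using Lemma~\ref{lem:auxil1} as the main workhorse.

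For the first claim, I would rely on the two elementary inequalities $\|U\|_\jbl\le \|U\|_{2\to 2}$ for two-variable functions (obtained by plugging in $f=1_S$, $g=1_T$ in the definition of the kernel norm) and $\|F\|_\jbl\le\|F\|_2$ for one-variable functions (immediate by Cauchy--Schwarz applied to $|\int_S F|\le\|F\|_2\sqrt{\mu(S)}$). Given $\d22(\cW_1,\cW_2)\le\widetilde\eps$, unpacking the three components of \eqref{d22-def} yields $\|W_1-W_2\|_\jbl\le\widetilde\eps$, $\|D_{\cW_1}-D_{\cW_2}\|_\jbl\le\widetilde\eps^2$, and $|\rho(\cW_1)-\rho(\cW_2)|\le\widetilde\eps^3$. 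Since $\widetilde\eps^2\le\max\{\widetilde\eps,\widetilde\eps^3\}$ for every $\widetilde\eps\ge 0$, the bound $\djbl\le\max\{\widetilde\eps,\widetilde\eps^3\}$ follows.

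For the second claim, the edge-density term is immediate: $|\rho(\cW_1)-\rho(\cW_2)|\le\eps$ gives the contribution $\sqrt[3]\eps$ to $\d22$. For the marginal $F=D_{\cW_1}-D_{\cW_2}$, which satisfies $\|F\|_\infty\le 2D$ and $\|F\|_1\le 2C$, I would split $F=F_+-F_-$ and observe that $\|F_\pm\|_\jbl\le\|F\|_\jbl\le\eps$ by optimizing the supremum over $S\subseteq\{F>0\}$ or $\{F<0\}$. Applying Lemma~\ref{lem:auxil1} to $F_+$ with the test function $h=F_+/(2D)\in[0,1]$ gives
\[
\frac{\|F_+\|_2^2}{\sqrt{2D\,\|F_+\|_1}}\le \|F_+\|_\jbl\le\eps,
\]
so $\|F_+\|_2^2\le 2\sqrt{CD}\,\eps$. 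Combined with the same bound for $F_-$, this yields $\|F\|_2^2\le 4\sqrt{CD}\,\eps$, hence $\sqrt{\|F\|_2}\le c\,\sqrt[4]\eps$ for a suitable $c=c(C,D)$.

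For the graphon term $U=W_1-W_2$, I would use Lemma~\ref{lem:2t2C4equiv} to reduce to showing $t(C_4,U)\le c(B,C,D)\,\eps$, since $\|U\|_{2\to 2}^4\le t(C_4,U)=\|F_U\|_2^2$ where $F_U(a,c)=\int U(a,b)U(b,c)\,d\mu(b)$. The first step is to establish $\|F_U\|_\jbl\le 4D\eps$ by writing
\[
\int_{S\times T} F_U=\int \Bigl(\int_S U(a,b)\,d\mu(a)\Bigr)\Bigl(\int_T U(b,c)\,d\mu(c)\Bigr)d\mu(b)=\int f'_S(b)\,g'_T(b)\,d\mu(b),
\]
noting that $\|f'_S\|_\jbl\le\eps\sqrt{\mu(S)}$ (since $\int_A f'_S=\int_{A\times S}U$ is controlled by $\|U\|_\jbl$), and that $\|g'_T\|_\infty\le 2D$ while $\|g'_T\|_1\le 2D\mu(T)$; applying the 1D $h$-trick from Lemma~\ref{lem:auxil1} to $\int f'_S\,(g'_T)_\pm$ with $h=(g'_T)_\pm/(2D)$ delivers the claim. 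Given this and the bounds $\|F_U\|_\infty\le 4BD$, $\|F_U\|_1\le 4CD$ (the latter from $\|D_{|U|}\|_2^2\le\|D_{|U|}\|_\infty\|D_{|U|}\|_1$), I would iterate the $h$-trick on $F_U$ itself to obtain $\|F_U\|_2^2\le c\,\eps$, giving $\|U\|_{2\to 2}\le c^{1/4}\sqrt[4]\eps$. The sharper final statement when $\rho(\cW_1)=\rho(\cW_2)$ is then automatic, since the cube-root term vanishes.

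The main obstacle is the last iteration, namely bounding $\|F_U\|_2^2$ in terms of the rectangle-based jumble norm $\|F_U\|_\jbl$. A naive 1D $h$-trick applied to $F_U$ viewed as a function on $\Omega\times\Omega$ would require a stronger jumble norm taken over arbitrary measurable subsets of $\Omega^2$, not just rectangles. Overcoming this requires exploiting additional structure of $F_U$: it is the kernel of the positive self-adjoint operator $\widehat U^2$, so its spectral decomposition yields only rank-one terms $\phi_i\otimes\phi_i$, and the top eigenfunction is bounded in $L^\infty$ by $2\sqrt{BD}/\|U\|_{2\to 2}$ via $\|\widehat U\|_{L^2\to L^\infty}\le 2\sqrt{BD}$. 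I expect the cleanest route is a level-set truncation of this eigenfunction, balancing the jumble bound on the high-level part (for which $L^1$ control follows from Chebyshev) against the operator bound on the low-level part, with parameters optimized to absorb the logarithmic factors that would otherwise appear.
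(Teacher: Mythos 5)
Your Part 1 is correct and matches the paper's reasoning: the two trivial inequalities $\|U\|_\jbl\le\|U\|_{2\to 2}$ and $\|F\|_\jbl\le\|F\|_2$ combined with the roots in \eqref{d22-def} give exactly the claimed bound. Your treatment of the marginal term in Part 2 is also essentially the paper's: the $h$-trick you describe, applied to $F_\pm$, reproduces Lemma~\ref{lem:sets2equiv} (the paper proves that inequality via a level-set decomposition rather than via Lemma~\ref{lem:auxil1}, which sidesteps the finite-measure hypothesis of the latter, but the constants and the idea are the same).

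The gap is exactly where you flag it: the graphon term. Your route passes through $t(C_4,U)=\|U\circ U\|_2^2=\|F_U\|_2^2$ and you establish $\|F_U\|_\jbl\le 4D\eps$ cleanly, but you then need to control $\|F_U\|_2^2$ by the \emph{rectangle} jumble norm of $F_U$, and as you note, the one-variable inequality $\|F\|_2^2\lesssim\|F\|_\jbl\sqrt{\|F\|_1\|F\|_\infty}$ does not have a two-variable analogue with the rectangle norm — it needs a supremum over arbitrary measurable subsets of $\Omega^2$. The spectral/truncation workaround you sketch is genuinely more complicated than the problem requires, and your own worry about logarithmic losses is well-founded: to get the clean power $\eps^{1/4}$ one must not lose anything.

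The paper avoids this obstacle entirely by never passing through $t(C_4,U)$. Lemma~\ref{lemmasquare2to2equiv} bounds $\|U\|_{2\to 2}$ directly in terms of $\|U\|_\jbl$ by iterating the \emph{one-dimensional} inequality twice, always on one-variable functions. Fix $f,g$ with $\|f\|_2=\|g\|_2=1$. The function $U\circ g$ lives on $\Omega$, so Lemma~\ref{lem:sets2equiv} applies verbatim and gives
\[
\sup_{S\subseteq\Omega}\frac{1}{\sqrt{\mu(S)}}\left|\int_S (U\circ g)\,d\mu\right|
\;\ge\; \frac{\|U\circ g\|_2^2}{\sqrt{2\|U\circ g\|_\infty\|U\circ g\|_1}}
\;\ge\; \frac{(f\circ U\circ g)^2}{2\|U\|_\infty^{1/4}\|D_{|U|}\|_\infty^{1/4}\|D_{|U|}\|_2^{1/2}},
\]
using the bounds $\|U\circ g\|_\infty\le\sqrt{\|U\|_\infty\|D_{|U|}\|_\infty}$ and $\|U\circ g\|_1\le 2\|D_{|U|}\|_2$. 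The left-hand side equals $\sup_S\left|g_S\circ U\circ g\right|$ for the normalized indicator $g_S=1_S/\sqrt{\mu(S)}$, so one can now replace $g$ by $g_S$ and apply the same 1D inequality to $U\circ g_S$; both normalized test functions become indicators, yielding
\[
\|U\|_\jbl \;\ge\; \frac{\|U\|_{2\to 2}^4}{8\,\|U\|_\infty^{3/4}\|D_{|U|}\|_\infty^{3/4}\|D_{|U|}\|_2^{3/2}}.
\]
Because the iteration happens on one-variable functions $U\circ g$ and $U\circ g_S$, the rectangle-versus-arbitrary-set mismatch never arises, and the $(B,C,D)$-boundedness feeds in through $\|U\|_\infty\le 2B$, $\|D_{|U|}\|_\infty\le 2D$, $\|U\|_1\le 2C$ to produce the constant $c=\max\{\sqrt[8]{8CD},\sqrt[4]{64B^{3/4}D^{3/2}C^{3/4}}\}$ with no loss. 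If you want to salvage your $C_4$ route, you would have to either prove a two-variable analogue of Lemma~\ref{lem:sets2equiv} with the rectangle norm (false in general) or exploit the tensor structure of $F_U$ much more heavily than a crude truncation allows; the double application of the 1D inequality is the efficient substitute.
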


To prove the proposition, we establish three preliminary lemmas.

\begin{lemma}\label{lem:sets2equiv-nonneg}
Given a bounded, nonnegative, measurable function $f$ on some measure space
$(\Omega,\cF,\mu)$, we have
\[\frac{\|f\|_2^2}{\sqrt{\|f\|_1\|f\|_\infty}}\le \sup_{S \subseteq \Omega} \frac{1}{\sqrt{\mu(S)}} \int_{S}f \,d\mu \le \|f\|_2.
\]
In particular, the second and third term define equivalent norms, for any
$C,D$, on the space of nonnegative functions with $\|f\|_1 \le C,\|f\|_\infty
\le D$.
\end{lemma}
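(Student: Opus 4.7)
The upper bound is immediate from Cauchy--Schwarz: for any $S$ with $0<\mu(S)<\infty$, $\int_S f\,d\mu \le \sqrt{\mu(S)}\,\|f\cdot 1_S\|_2 \le \sqrt{\mu(S)}\,\|f\|_2$, so the ratio is bounded by $\|f\|_2$. The substance of the lemma is the lower bound.

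For the lower bound, the plan is to show that the supremum over indicator test functions $1_S$ is no smaller than the same ratio computed for an \emph{arbitrary} measurable $h\colon \Omega \to [0,1]$, and then to choose a specific $h$ proportional to $f$ itself. Concretely, I want to show that for every such $h$ with $0<\int h\,d\mu<\infty$,
\[
\frac{\int hf\,d\mu}{\sqrt{\int h\,d\mu}} \;\le\; \sup_{S \subseteq \Omega} \frac{\int_S f\,d\mu}{\sqrt{\mu(S)}} \;=:\; K.
\]
Granted this, I plug in $h = f/\|f\|_\infty$, which lies in $[0,1]$ since $f$ is bounded and nonnegative. Then $\int hf\,d\mu = \|f\|_2^2/\|f\|_\infty$ and $\int h\,d\mu = \|f\|_1/\|f\|_\infty$, so the left-hand side becomes $\|f\|_2^2/\sqrt{\|f\|_1\,\|f\|_\infty}$, which is exactly the desired lower bound.

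To prove the reduction from $[0,1]$-valued $h$ to indicators, I will use the layer-cake decomposition $h = \int_0^1 1_{\{h>t\}}\,dt$. Set $\phi(t) = \int_{\{h>t\}} f\,d\mu$ and $\psi(t) = \mu(\{h>t\})$; by the definition of $K$ we have $\phi(t) \le K\sqrt{\psi(t)}$ for every $t\in(0,1)$ such that $\psi(t)<\infty$ (finiteness for a.e.\ $t$ follows from $\{f\ge c\}$ having finite measure for each $c>0$, since $f\in L^1$). Then by Cauchy--Schwarz in the $t$ variable on $[0,1]$,
\[
\int hf\,d\mu = \int_0^1 \phi(t)\,dt \;\le\; K\int_0^1 \sqrt{\psi(t)}\,dt \;\le\; K\sqrt{\int_0^1 \psi(t)\,dt} \;=\; K\sqrt{\int h\,d\mu},
\]
which is precisely what is needed. (Alternatively one could invoke Lemma~\ref{lem:auxil1}, which provides the same conclusion directly.) There is no substantive obstacle in the argument; the only care needed is the routine reduction to $\sigma$-finite considerations on $\supp f$ that legitimizes writing the layer-cake representation with finite integrals.
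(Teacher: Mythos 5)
Your proof is correct and rests on the same core idea as the paper's: a layer-cake decomposition together with Cauchy--Schwarz in the level variable. The organization differs slightly. The paper applies layer-cake directly to $f$, writing $\|f\|_2^2 = \int_0^{\|f\|_\infty}\int_{\{f\ge c\}} f\,d\mu\,dc$, then applies Cauchy--Schwarz to the pair $\bigl(\sqrt{\mu(\{f\ge c\})},\ \int_{\{f\ge c\}}f/\sqrt{\mu(\{f\ge c\})}\bigr)$ and extracts a good level $c$ by averaging. You instead prove the general reduction from $[0,1]$-valued test functions $h$ to indicators (bounding $\phi(t)\le K\sqrt{\psi(t)}$ and then using Cauchy--Schwarz on $\sqrt{\psi}\cdot 1$ over $[0,1]$), and then substitute $h=f/\|f\|_\infty$. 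This is arguably cleaner because the choice of $h$ proportional to $f$ is made explicit; the paper's averaging step produces the same set $\{f\ge c\}$ implicitly.

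One small caveat: your parenthetical remark that Lemma~\ref{lem:auxil1} could be invoked directly is not quite right, since that lemma assumes $\mu(\Omega)<\infty$, which is not a hypothesis here (and the support of $f$ need not have finite measure, only $\sigma$-finite). A limiting argument over finite-measure exhaustions would be needed; your layer-cake argument avoids this entirely, so the remark should simply be dropped or qualified.
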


\begin{proof}
The second inequality follows from Cauchy's inequality. For the first one,
let $\|f\|_\infty=K$. First, note that
\[\int_0^K d c\int_{\{f \ge c\}}f \,d \mu = \int_{\Omega} d\mu(x)f(x)\int_{0}^{ f(x)} dc =\int_\Omega f^2 \,d \mu.
\]
We then have
\begin{align*}
\|f\|^4_2&=
\left( \int_\Omega f^2 \,d\mu \right)^2
=\left( \int_0^K \int_{\{f \ge c\}}f\,d \mu \,dc \right)^2
\\ &
=\left( \int_0^K dc \sqrt{\mu(\{f\ge c\})}
\frac{\int_{\{f \ge c\}}f\,d \mu }{\sqrt{\mu(\{f\ge c\})}} \right)^2
\\
&\leq
\int_0^K \frac{\left(\int_{\{f \ge c\}}f \,d \mu \right)^2}{\mu(\{f \ge c\})} dc \int_0^K\mu(\{f \ge c\})\,dc
.
\end{align*}
Using the fact that
\[ \int_0^K\mu(\{f \ge c\})\,dc =\int_\Omega f \,d \mu,
\]
we have that
\[\int_0^K \frac{\left(\int_{\{f \ge c\}}f \,d \mu \right)^2}{\mu(\{f \ge c\})} \,dc \ge
\frac{\|f\|_2^4}{\|f\|_1},
\]
which means that there exists some $c$ such that
\[\frac{\left(\int_{\{f \ge c\}}f \,d \mu \right)^2}{\mu(\{f \ge c\})} \ge \frac{\|f\|_2^4}{\|f\|_1K}=\frac{\|f\|_2^4}{\|f\|_1\|f\|_\infty}.
\]
Taking $S$ to be $\{f \ge c\}$, the lemma is proved.
\end{proof}

The following lemma is an easy corollary of Lemma~\ref{lem:sets2equiv-nonneg}.

\begin{lemma} \label{lem:sets2equiv}
Given a bounded, measurable, not necessarily nonnegative function $f$ on some
measure space $(\Omega,\mu)$, we have
\[\frac{\|f\|_2^2}{\sqrt{2\|f\|_1\|f\|_\infty}}\le \sup_{S \subseteq \Omega} \frac{1}{\sqrt{\mu(S)}} \left|\int_{S}f \,d\mu \right| \le \|f\|_2.
\]
\end{lemma}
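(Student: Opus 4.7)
The upper bound is immediate from Cauchy--Schwarz: for any measurable $S \subseteq \Omega$,
\[
\left|\int_S f\,d\mu\right| \le \sqrt{\mu(S)}\,\|f\cdot 1_S\|_2 \le \sqrt{\mu(S)}\,\|f\|_2.
\]
For the lower bound, the plan is to decompose $f = f_+ - f_-$ into its positive and negative parts, apply Lemma~\ref{lem:sets2equiv-nonneg} to each, and then combine the two resulting bounds.

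Concretely, apply Lemma~\ref{lem:sets2equiv-nonneg} to the nonnegative function $f_+$ on $\Omega$ to obtain a set $S_+$ with
\[
\frac{1}{\sqrt{\mu(S_+)}}\int_{S_+} f_+\,d\mu \;\ge\; \frac{\|f_+\|_2^2}{\sqrt{\|f_+\|_1\|f_+\|_\infty}}.
\]
Intersecting $S_+$ with $\{f>0\}$ can only increase the left-hand side (since we remove points where $f_+=0$ from $S_+$, which decreases $\mu(S_+)$ but keeps the integral the same), so we may assume $S_+\subseteq \{f>0\}$, in which case $\int_{S_+}f\,d\mu = \int_{S_+} f_+\,d\mu$. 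The analogous argument applied to $f_-$ produces $S_-\subseteq\{f<0\}$ with $|\int_{S_-}f\,d\mu| = \int_{S_-}f_-\,d\mu \ge \sqrt{\mu(S_-)}\cdot\|f_-\|_2^2/\sqrt{\|f_-\|_1\|f_-\|_\infty}$. Hence the supremum in question is at least
\[
M := \max\!\left\{\frac{\|f_+\|_2^2}{\sqrt{\|f_+\|_1\|f_+\|_\infty}},\;\frac{\|f_-\|_2^2}{\sqrt{\|f_-\|_1\|f_-\|_\infty}}\right\}.
\]

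It remains to show $M \ge \|f\|_2^2/\sqrt{2\|f\|_1\|f\|_\infty}$. Using $\|f_\pm\|_\infty \le \|f\|_\infty$, it suffices to establish
\[
\max\!\left\{\frac{\|f_+\|_2^4}{\|f_+\|_1},\;\frac{\|f_-\|_2^4}{\|f_-\|_1}\right\} \;\ge\; \frac{\|f\|_2^4}{2\|f\|_1}.
\]
The elementary bound $\max\{a/c,b/d\}\ge (a+b)/(c+d)$ for $a,b\ge 0$ and $c,d>0$, together with $\|f_+\|_1+\|f_-\|_1=\|f\|_1$, reduces this to $\|f_+\|_2^4+\|f_-\|_2^4 \ge \tfrac12\|f\|_2^4$, which follows from $u^2+v^2 \ge \tfrac12(u+v)^2$ applied to $u=\|f_+\|_2^2$, $v=\|f_-\|_2^2$, since $\|f\|_2^2 = u+v$. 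Combining these inequalities yields the claimed lower bound. The only subtle point is the passage from the nonnegative-function version of the lemma to the signed one, which is handled entirely by the $\max$/sum of squares manipulation above; once that is in place, no further nontrivial estimate is needed.
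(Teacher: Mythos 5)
Your proof is correct but takes a genuinely different route from the paper's. The paper works with $|f|$: since the left-hand bound $\|f\|_2^2/\sqrt{2\|f\|_1\|f\|_\infty}$ is unchanged under $f\mapsto|f|$, they first show that $\sup_S \mu(S)^{-1/2}\int_S |f|\,d\mu \le \sqrt{2}\,\sup_S \mu(S)^{-1/2}\bigl|\int_S f\,d\mu\bigr|$ by splitting an arbitrary $S$ into $S\cap\{f\ge 0\}$ and $S\cap\{f<0\}$ and using $\sqrt{\mu(S^+)}+\sqrt{\mu(S^-)}\le\sqrt{2\mu(S)}$, and then apply Lemma~\ref{lem:sets2equiv-nonneg} once, to $|f|$. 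You instead decompose $f=f_+-f_-$, apply Lemma~\ref{lem:sets2equiv-nonneg} twice (to $f_+$ and $f_-$), and then recover the factor $\sqrt{2}$ through the mediant inequality $\max\{a/c,b/d\}\ge(a+b)/(c+d)$ combined with $u^2+v^2\ge\tfrac12(u+v)^2$. The factor $\sqrt 2$ is the same in both proofs but arises in different places: for the paper it comes from a geometric estimate on a split set, for you it comes from an algebraic estimate on the two partial norms. The paper's version is a bit leaner (one invocation of the auxiliary lemma instead of two), while yours isolates the "signed vs.\ nonnegative" penalty into a self-contained elementary inequality; both are valid and of comparable length. One minor point you leave implicit is the degenerate case where one of $\|f_\pm\|_1=0$ (so that the corresponding ratio is undefined); this is harmless since then $f$ has a single sign a.e.\ and Lemma~\ref{lem:sets2equiv-nonneg} applies directly with an even better constant, but it is worth a word.
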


\begin{proof}
The second inequality again follows from Cauchy's inequality, so we just have
to prove the first one. Note that the left term is not affected by replacing
$f$ with $|f|$. Let $S$ be any subset of $\Omega$, let $S^+$ consist of the
points in $S$ where $f$ is nonnegative, and let $S^-$ be the rest. Then
\begin{align*}
\frac{1}{\sqrt{\mu(S)}} &\int_{S}|f| \,d\mu
=\frac{\left|\int_{S^+} f \,d\mu \right|}{\sqrt{\mu(S^+)}} \frac{\sqrt{\mu(S^+)}}{\sqrt{\mu(S)}}
 + \frac{\left|\int_{S^-} f \,d\mu \right|}{\sqrt{\mu(S^-)}} \frac{\sqrt{\mu(S^-)}}{\sqrt{\mu(S)}} \\
&\le \max\left(\frac{\left|\int_{S^+} f \,d\mu \right|}{\sqrt{\mu(S^+)}} , \frac{\left|\int_{S^-} f \,d\mu \right|}{\sqrt{\mu(S^-)}}\right)\left(\frac{\sqrt{\mu(S^+)}}{\sqrt{\mu(S)}}+ \frac{\sqrt{\mu(S^-)}}{\sqrt{\mu(S)}}\right)\\
&\le \max\left(\frac{\left|\int_{S^+} f \,d\mu \right|}{\sqrt{\mu(S^+)}} , \frac{\left|\int_{S^-} f \,d\mu \right|}{\sqrt{\mu(S^-)}}\right) \sqrt{2}
\leq\sqrt 2 \sup_{S \subseteq \Omega}
\frac{1}{\sqrt{\mu(S)}} \left|\int_{S}f \,d\mu \right|
.
\end{align*}
Therefore, we have
\[ \sup_{S \subseteq \Omega} \frac{1}{\sqrt{\mu(S)}} \left|\int_{S}f \,d\mu \right|
\ge \sup_{S \subseteq \Omega} \frac{1}{\sqrt{2}} \frac{1}{\sqrt{\mu(S)}} \int_{S}|f| \,d\mu \ge \frac{\|f\|_2^2}{\sqrt{2\|f\|_1\|f\|_\infty}}.
\]
This completes the proof.
\end{proof}

\begin{lemma} \label{lemmasquare2to2equiv}
For any measurable $U\colon\Omega \times \Omega \rightarrow \RR$,
\begin{align*}
\|U\|_{2 \rightarrow 2}
&\ge \|U\|_{\jbl}
\ge \frac{\|U\|_{2 \rightarrow 2}^4}{8\|U\|_\infty^{3/4}\|D_{|U|}\|_\infty^{3/4}\|D_{|U|}\|_2^{3/2}}
\ge \frac{\|U\|_{2 \rightarrow2}^4}{8\|U\|_\infty^{3/4}\|D_{|U|}\|_\infty^{3/2}\|U\|_1^{3/4}}.
\end{align*}
\end{lemma}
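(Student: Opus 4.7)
The outer inequalities are easy. For the leftmost inequality $\|U\|_{2 \rightarrow 2}\ge\|U\|_{\jbl}$, I substitute the $L^2$-normalized indicators $f=1_S/\sqrt{\mu(S)}$ and $g=1_T/\sqrt{\mu(T)}$ into the variational definition of $\|U\|_{2 \rightarrow 2}$: this gives $f\circ U\circ g=\tfrac{1}{\sqrt{\mu(S)\mu(T)}}\int_{S\times T}U$, and taking the supremum over $S$ and $T$ proves the inequality. The rightmost inequality comes from Cauchy--Schwarz on $D_{|U|}$: since $\|D_{|U|}\|_2^2\le\|D_{|U|}\|_\infty\|D_{|U|}\|_1=\|D_{|U|}\|_\infty\|U\|_1$, we have $\|D_{|U|}\|_2^{3/2}\le\|D_{|U|}\|_\infty^{3/4}\|U\|_1^{3/4}$; substituting into the denominator of the middle quantity yields the claim.

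For the central inequality, set $M=\|U\|_{2 \rightarrow 2}$ and $Y=\|D_{|U|}\|_2\sqrt{\|U\|_\infty\|D_{|U|}\|_\infty}$, so that $Y^{3/2}=\|U\|_\infty^{3/4}\|D_{|U|}\|_\infty^{3/4}\|D_{|U|}\|_2^{3/2}$. The strategy is to apply Lemma~\ref{lem:sets2equiv} twice in order to extract an $S$ and a $T$ that together form a rectangle of the type maximized in $\|U\|_{\jbl}$. Fix $\varepsilon>0$, pick $g\in L^2(\Omega)$ with $\|g\|_2=1$ and $\|U\circ g\|_2\ge M-\varepsilon$, and set $f=U\circ g$. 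Straightforward applications of Cauchy--Schwarz give the a priori bounds $\|f\|_\infty\le\sqrt{\|U\|_\infty\|D_{|U|}\|_\infty}$ and $\|f\|_1\le\|D_{|U|}\|_2$, whence $\sqrt{2\|f\|_1\|f\|_\infty}\le\sqrt{2Y}$. Applying Lemma~\ref{lem:sets2equiv} to $f$ now produces a set $S$ with $\bigl|\int_S f\,d\mu\bigr|\ge (M-\varepsilon)^2\sqrt{\mu(S)}/\sqrt{2Y}$.

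Next I rewrite $\int_S f=\int h_S(y)\,g(y)\,d\mu(y)$ where $h_S(y)=\int_S U(x,y)\,d\mu(x)$, and use Cauchy--Schwarz in $L^2$ to bound this integral by $\|h_S\|_2\|g\|_2=\|h_S\|_2$. This transfers the previous lower bound to $\|h_S\|_2$. For the matching upper bound I apply Lemma~\ref{lem:sets2equiv} a second time, now to $h_S$: it produces a set $T$ with $\|h_S\|_2^2/\sqrt{2\|h_S\|_1\|h_S\|_\infty}\le|\int_T h_S|/\sqrt{\mu(T)}$. Since $\int_T h_S=\int_{S\times T}U$ is bounded by $\|U\|_{\jbl}\sqrt{\mu(S)\mu(T)}$, and Cauchy--Schwarz gives $\|h_S\|_1\le\sqrt{\mu(S)}\|D_{|U|}\|_2$ and $\|h_S\|_\infty\le\sqrt{\mu(S)\|U\|_\infty\|D_{|U|}\|_\infty}$, we obtain $\|h_S\|_2^2\le\|U\|_{\jbl}\,\mu(S)\sqrt{2Y}$. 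Combining the two bounds on $\|h_S\|_2$, the $\sqrt{\mu(S)}$ factors cancel, and letting $\varepsilon\to 0$ yields $M^4\le 2\sqrt{2}\,Y^{3/2}\|U\|_{\jbl}$, which is stronger than the claimed bound with constant $8$ (and hence implies it).

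The main thing to watch is bookkeeping: two Cauchy--Schwarz steps and two applications of Lemma~\ref{lem:sets2equiv} introduce four different norms, and one must verify that the $\mu(S)$-dependences cancel so that the final inequality does not depend on the chosen set. Conceptually, the two applications of Lemma~\ref{lem:sets2equiv} are what convert the spectral quantity $\|U\|_{2 \rightarrow 2}$ into a rectangle integral: the first replaces $f=U\circ g$ by an indicator $1_S$, producing one side of the eventual rectangle, and the second, applied to the ``partial marginal'' $h_S=1_S\circ U$, produces the other side $T$.
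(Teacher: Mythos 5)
Your proof is correct and follows essentially the same route as the paper's: both apply the one-variable Lemma~\ref{lem:sets2equiv} twice, first to $U\circ g$ (producing a set $S$, one side of the eventual rectangle) and then to the partial marginal $1_S\circ U$ (which the paper calls $U\circ g_S$, producing the other side $T$), after establishing the \emph{a priori} bounds $\|U\circ g\|_\infty\le\sqrt{\|U\|_\infty\|D_{|U|}\|_\infty}$ and $\|U\circ g\|_1\lesssim\|D_{|U|}\|_2$. The one genuine (if small) difference: you bound $\|U\circ g\|_1\le\int|g|D_{|U|}\le\|D_{|U|}\|_2$ directly via Fubini and Cauchy--Schwarz, whereas the paper first passes through $\|h\|_1\le2\sup_S\bigl|\int_S h\bigr|$, picking up an extra factor of $2$ there; since this happens in both of the two applications, the paper ends with the constant $8$ while your version yields $2\sqrt2$, which is sharper and of course implies the stated bound. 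Your treatment of the outer two inequalities (normalized indicators for $\|U\|_{2\to2}\ge\|U\|_\jbl$, and $\|D_{|U|}\|_2^2\le\|D_{|U|}\|_\infty\|U\|_1$ for the last one) matches the paper's. The bookkeeping you flag as the main worry is carried out correctly; the $\mu(S)$ factors cancel as claimed.
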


\begin{proof}
Fix $f$ and $g$ with $\|f\|_2=\|g\|_2=1$ and recall that we use $U_x$ to
denote the function $y\mapsto U(x,y)$. First, we have
\[\|U \circ g\|_\infty \le \sup_x \|U_x\|_2\le \sqrt{\|U\|_\infty
\|D_{|U|}\|_\infty}.
\]

We also have
\begin{align*}
\|U \circ g\|_1 &\le 2 \sup_{S \subseteq \Omega} \left| \int_{S}dx \int_\Omega U(x,y) g(y)\,dy \right|
=2 \sup_{S \subseteq \Omega} \left| \int_\Omega dy \,g(y) \int_S U(x,y)\,dx \right|
\\
&\le 2 \int_\Omega |g(y)| D_{|U|}(y) \le 2\|D_{|U|}\|_2.
\end{align*}

Combined with the first bound from Lemma~\ref{lem:sets2equiv} and the fact
that $ |f\circ U\circ g|\leq \|U\circ g\|_2$, this shows that
\begin{align*}
\sup_{S\subseteq\Omega}\frac{1}{\sqrt{\mu(S)}}
\left|\int_{S} U \circ g(x) \,d\mu(x)\right|
&\ge \frac{\|U \circ g\|^2_2}{\sqrt{2\|U \circ g\|_\infty\|U \circ g\|_1}}\\
&\ge \frac{(f \circ U \circ g)^2}{2\|U\|_\infty^{1/4}\|D_{|U|}\|_\infty^{1/4}\|D_{|U|}\|_2^{1/2} }.
\end{align*}

Analogously, defining $g_S$ as the function $x\mapsto\frac
1{\sqrt{\mu(S)}}1_{x\in S}$, and observing that $\frac
1{\sqrt{\mu(S)}}\left|\int_S (U\circ g)\,d\mu\right|=\left|g\circ U\circ g_S
\right|\leq \|U\circ g_S\|_2$, we have
\begin{align*}
&\sup_{T \subseteq \Omega}
\left|\frac{1}{\sqrt{\mu(T)\mu(S)}}\int_{S\times T}\,d\mu(x) d\mu(y)U(x,y)\right|\\
&\qquad=\sup_{T \subseteq \Omega}
\frac{1}{\sqrt{\mu(T)}}\left|\int_T(U\circ g_S)(y) \,d\mu(y)\right|
\\
&\qquad\ge \frac{\| U \circ g_S\|_2^2}{2 \|U\|_\infty^{1/4} \|D_{|U|}\|_\infty^{1/4}\|D_{|U|}\|_2^{1/2} }\\
&\qquad\ge \frac{(f \circ U \circ g)^4}{8\|U\|_\infty^{3/4}\|D_{|U|}\|_\infty^{3/4}\|D_{|U|}\|_2^{3/2}}.
\end{align*}
Therefore,
\[\|U\|_{\jbl} \ge \frac{\|U\|_{2 \rightarrow2}^4}{8\|U\|_\infty^{3/4}\|D_{|U|}\|_\infty^{3/4}\|D_{|U|}\|_2^{3/2}}
\geq \frac{\|U\|_{2 \rightarrow2}^4}{8\|U\|_\infty^{3/4}\|D_{|U|}\|_\infty^{3/2}\|U\|_1^{3/4}},
\]
where in the last step we used that $\|D_{|U|}\|_2^2 \leq \|D_{|U|}\|_1\|D_{|U|}\|_\infty=\|U\|_1\|D_{|U|}\|_\infty$.
\end{proof}

\begin{proof}[Proof of Proposition~\ref{prop:kernel-jumbl-equiv}]
The proposition follows immediately from Lemmas~\ref{lem:sets2equiv} and
\ref{lemmasquare2to2equiv} and the definition of the distances $\d22$ and
$\djbl$, with $c$ being the constant
$c=\max\{\sqrt[8]{8CD},\sqrt[4]{64B^{3/4}D^{3/2}C^{3/4}}\}$.
\end{proof}

\begin{remark}\label{rem:kernel-jumbl-equiv}
The above proof can easily be modified to see that for any $0<D<\infty$ the
metrics $\deltt$ and $\deljbl$ are equivalent on the space of (unsigned)
graphexes with $D$-bounded marginals. Indeed, if $\cW$ has bounded marginals,
it is integrable, and if either $\deltt(\cW_n,\cW)\to 0$ or
$\deljbl(\cW_n,\cW)\to 0$, then
$\|\cW_n\|_1=\rho(\cW_n)\to\rho(\cW)=\|\cW\|_1$. This shows that we can
assume that the sequences are $(C,D)$-bounded for some $C$, which means they
are $(B,C,D)$-bounded for $B=1$.
\end{remark}

We close this section with the (straightforward) proof that the homomorphism
densities $t(F,\cW)$ indeed describe the expected number of injective
homomorphisms from $F$ into $G_T(\cW)$.

\begin{proposition}\label{prop:t(F,W)}
For any simple graph $F$ with no isolated vertices and graphex $\cW$,
\[
\EE\left[\inj(F,G_T(\cW))\right]=T^{|V(F)|} t(F,\cW).
\]
If the marginals of $\cW$ are bounded, then the right side is finite, with
\[
t(F,\cW)\leq \prod_i\|\cW\|_1\|D_{\cW}\|_\infty^{v(F_i)-2},
\]
where the product runs over the components of $F$ and $v_i$ is the number
of vertices in $F_i$.
\end{proposition}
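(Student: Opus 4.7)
The plan is to compute $\EE[\inj(F,G_T(\cW))]$ component-by-component, using the Mecke formula for Poisson point processes, and then derive the size bound by an iterative integration along a spanning tree. I split the computation according to the structure of the connected components.

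First, for $F$ connected with $|V(F)|=2$ (a single edge), I will use $\inj(F,G_T(\cW))=2|E(G_T(\cW))|$ and decompose the expected edge count by type: Mecke applied to ordered pairs of distinct Poisson points yields $\frac{T^2}{2}\int W$, Campbell's formula applied to the leaf Poisson process yields $T^2\int S$, and $\EE[\Poisson(T^2I)]=T^2I$ handles the dust. Doubling and summing gives $T^2\rho(\cW)=T^2t(F,\cW)$.

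Second, for $F$ connected with $|V(F)|\geq 3$, the key observation is that every vertex in $V_{\geq 2}$ must map to a Poisson point, since star leaves and dust endpoints almost surely have degree one in $G_T(\cW)$; in particular no edge of $F$ can map to a dust edge. I will partition $V_1=V_1^P\sqcup V_1^S$ according to whether each degree-one vertex maps to a Poisson point or to a star leaf of its unique neighbor. For a fixed partition, Mecke's formula applied to the Poisson point process gives an integral over $\Omega^{V_{\geq 2}\cup V_1^P}$ weighted by $\prod_{\{v,w\}\in E(F)}W(z_v,z_w)$ for edges between vertices of $V_{\geq 2}\cup V_1^P$, multiplied by $\prod_{v\in V_{\geq 2}}(TS(z_v))^{d_1^S(v)}$ from the Poisson falling-factorial moment counting leaf-assignments at each Poisson center $\phi(v)$. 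Integrating out each $V_1^P$-variable against its unique incident $W$-factor replaces it by $D_W(z_u)$ at the $V_{\geq 2}$-neighbor $u$, and summing over partitions via the binomial identity
\[
\sum_{k=0}^{d_1(v)}\binom{d_1(v)}{k}D_W(z_v)^k S(z_v)^{d_1(v)-k}=D_\cW(z_v)^{d_1(v)}
\]
collects everything into $T^{|V(F)|}t(F,\cW)$.

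Third, for disconnected $F=F_1\sqcup\cdots\sqcup F_k$, I run the same Mecke computation across all components at once, so the Poisson points chosen for different components are automatically distinct; since $F$ has no inter-component edges the integrand factors over components. For single-edge components there is an additional option of mapping to a dust edge, and summing over the subset $J$ of single-edge components mapped to dust contributes $(2T^2I)^{|J|}$ via the Poisson falling moment for the dust count and a factor $2^{|J|}$ for orientations. The binomial identity $\sum_{|J|}\binom{k_1}{|J|}(2I)^{|J|}(\int W+2\int S)^{k_1-|J|}=\rho(\cW)^{k_1}$, where $k_1$ is the number of single-edge components, combines the dust and non-dust contributions so that every single-edge component contributes the same factor $\rho(\cW)=t(K_2,\cW)$, yielding $T^{|V(F)|}\prod_i t(F_i,\cW)=T^{|V(F)|}t(F,\cW)$. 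The main delicacy in the argument is precisely this interlocking of dust and non-dust contributions for single-edge components across different components of $F$; everything else is routine bookkeeping in Mecke.

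For the upper bound on $t(F,\cW)$, I first note that $F_i(V_{\geq 2})$ is connected, since any path in $F_i$ between two vertices of $V_{\geq 2}$ cannot pass through a degree-one interior vertex. Fixing a spanning tree, bounding $W\leq 1$ on non-tree edges, and iteratively integrating out leaves of the tree via
\[
\int W(z_v,z_u)D_\cW(z_v)^{d_1(v)}\,d\mu(z_v)\leq \|D_\cW\|_\infty^{d_1(v)}D_W(z_u)
\]
transfers one unit of $D_\cW$-exponent from $v$ to its tree-neighbor $u$ while extracting $\|D_\cW\|_\infty^{d_1(v)}$. After reducing to a single root variable $z_r$, the remaining integral is bounded by $\|D_\cW\|_\infty^{\ast}\|D_\cW\|_1\leq\|D_\cW\|_\infty^{\ast}\|\cW\|_1$, and summing the accumulated exponents of $\|D_\cW\|_\infty$ across all integrated-out vertices gives exactly $v(F_i)-2$, yielding the stated bound.
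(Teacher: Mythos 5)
Your proposal is correct and follows essentially the same route as the paper: classify each degree-one or isolated-edge vertex of $F$ according to whether it maps to a Poisson point, a star leaf, or a dust endpoint, collapse the resulting sum via the binomial identity into the $D_\cW(z_v)^{d_1(v)}$ and $\rho(\cW)$ factors, and obtain the bound by iterated leaf-peeling on a spanning tree. The only cosmetic differences are that you invoke Mecke and Campbell formulas where the paper does the equivalent Poisson summation by hand over the number of sampled points, and you track the $\|D_\cW\|_\infty$ exponents through the tree more explicitly than the paper's terse sketch.
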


\begin{proof}
Recall that we extended the feature space $\Omega$ to include an additional
point $\infty$, and that we labeled the vertices corresponding to the leaves
of a star generated by $S$, as well as the two endpoints of the isolated
edges coming from $I$, by $\infty$. Let $k=|V(F)|$. First, suppose that
$\Omega$ has finite measure. Then the probability that there are $n$ points
sampled from $\Omega$ is $e^{-\mu(\Omega)T}\frac{(T\mu(\Omega))^n}{n!}$. Let
$V_2$ be the set of vertices of $F$ of degree at least $2$, let $V_1$ be the
set of vertices of degree $1$ whose neighbor is in $V_2$, and let $V_0$ be
the set of vertices that belong to an isolated edge. Each vertex in $V_2$
must be mapped to a vertex with feature label in $\Omega$. Each vertex in
$V_1$ must be mapped either to a vertex with feature label in $\Omega$ or a
vertex coming from the leaves of a the star attached to such a vertex (in
which case its feature label is $\infty$). For an isolated edge, there are
three possibilities: either it is mapped to two vertices with feature label
in $\Omega$, one endpoint is mapped to such a vertex and the other to a leaf
of a star whose center is the first vertex, or it is mapped to an isolated
edge generated by $I$ (in which case both feature labels are $\infty$). Let
us fix for each vertex in $V_1$ and $V_0$ whether its feature label is
$\infty$ or lies in $\Omega$, noting that this uniquely determines all the
choices we just discussed. Let $V_0' \subseteq V_0$ and $V_1' \subseteq V_1$
be the sets of vertices mapped to a vertex coming from $\Omega$, and let
$V'=V_0'\cup V_1' \cup V_2$. Let $U$ be the set of remaining vertices. Let
$J$ be the set of isolated edges, and $J''$ the set of isolated edges where
we have fixed that they are mapped to an edge generated by $I$; i.e., they
have both endpoints in $U$. For a vertex $i \in V'$, let $d_U(i)$ be its
degree to $U$. Conditioned on $V'$, we have
\begin{align*}
\EE[\inj(F,G_T)|V']
&=
\sum_{n=0}^\infty e^{-\mu(\Omega)T}\frac{(T\mu(\Omega))^n}{n!} \frac{(n)_{|V'|}}{\mu(\Omega)^{|V'|}} \\
& \qquad \qquad \int_{\Omega^{V'}}\prod_{\{i,j\} \in E(F|_{V'})} W(x_i,x_j) \prod_{i \in V'} (TS(x_i))^{d_U(i)} (2T^2I)^{|J''|}\\
&=
\sum_{n=|V'|}^\infty T^{|V'|} e^{-\mu(\Omega)T}\frac{(T\mu(\Omega))^{n-|V'|}}{(n-|V'|)!} \\
& \qquad \qquad \int_{\Omega^{V'}}T^{|U|}\prod_{\{i,j\} \in E(F|_{V'})} W(x_i,x_j) \prod_{i \in V'} (S(x_i))^{d_U(i)} (2I)^{|J''|}\\
&=
T^{|V|} \left(\sum_{n=|V'|}^\infty e^{-\mu(\Omega)T} \frac{(T\mu(\Omega))^{n-|V'|}}{(n-|V'|)!} \right)\\
& \qquad \qquad \left(\int_{\Omega^{V'}}\prod_{\{i,j\} \in E(F|_{V'})} W(x_i,x_j) \prod_{i \in V'} (S(x_i))^{d_U(i)} (2I)^{|J''|}\right)\\
&=
T^{|V|}\int_{\Omega^{V'}}\prod_{\{i,j\} \in E(F|_{V'})} W(x_i,x_j) \prod_{i \in V'} (S(x_i))^{d_U(i)} (2I)^{|J''|}
.
\end{align*}
Therefore,
\[
\EE[\inj(F,G_T)]=T^{|V|} \sum_{\substack{V_0' \subseteq V_0\\V_1' \subseteq
V_1}}\int_{\Omega^{V'}}\prod_{\{i,j\} \in E(F|_{V'})} W(x_i,x_j) \prod_{i \in
V'} (S(x_i))^{d_U(i)} (2I)^{|J''|} .\] Now, it is not difficult to check that
this is multiplicative over connected components of $F$. Indeed, each term
with fixed $V_0',V_1'$ is multiplicative, and the choice of which vertices to
put in $V_0',V_1'$ from each of the components is independent. Therefore, we
may assume that $F$ is connected.

If $F$ consists of a single edge $\{i,j\}$, then the above expression gives
\[\EE[\inj(F,G_T)]=T^2\left(\int_{\Omega^2}W(x,y)\,dx\,dy + 2 \int_\Omega S(x) \,dx +
2I\right)=T^2t(F,\cW),
\]
as required. Otherwise, $F$ has no isolated edges, so $V_0$ is empty (and so
is $J''$). We then have
\begin{align*}
\EE[\inj(F,G_T)]&=T^{|V|}\sum_{V_1' \subseteq V_1} \int_{\Omega^{V'}} \prod_{\{i,j\} \in E(F|_{V'})}W(x_i,x_j) \prod_{i \in V_2} (S(x_i))^{d_U(i)}\\
&=T^{|V|}\int_{\Omega^{V_2}} \prod_{\{i,j\} \in E(F_{V_2})} W(x_i,x_j) \cdot \phantom{}\\
& \qquad \qquad\qquad \prod_{i \in V_2} \left( \sum_{T_i \subseteq N_{V_1}(i)} \int_{\Omega^{T_i}}\prod_{j \in T_i} W(x_i,x_j) S(x_i)^{|N_{V_1}(i)|-|T_i|} \right)\\
&= T^{|V|}\int_{\Omega^{V_2}} \prod_{\{i,j\} \in E(F_{V_2})} W(x_i,x_j) \cdot \phantom{}\\
& \qquad \qquad \qquad \prod_{i \in V_2} \left( \int_\Omega W(x_i,x_j) dx_j + S(x_i) \right)^{|N_{V_1}(i)|}\\
&=T^{|V|}t(F,\cW).
\end{align*}
This completes the proof of the first statement if $\Omega$ has finite
measure. The general $\sigma$-finite case follows by monotone convergence,
with both sides being possibly infinite in the limit.

To prove the second statement we consider the components of $F$ separately.
Furthermore, given a component $F'$ of $F$, we use the fact that
$\|W\|_\infty\leq 1$ to delete edges from $F'$ until $F'$ becomes a tree. At
this point, we can remove the leaves of the tree at the cost of a factor $D$
for each leaf, getting a new tree with less edges. We continue until we are
left with a single edge, at which point we bound the remaining integral by
$\|\cW\|_1$.
\end{proof}

\section{Tightness}
\label{sec:tight}

The goal of this section is to establish various equivalent notions of
tightness, and to then use tightness to relate convergence in the kernel and
the weak kernel metric. In particular, we will relate the convergence of a
sequence of graphexes in the weak kernel metric $\delGP$ to convergence of a
``regularized'' sequence in the kernel metric $\deltt$, where the regularized
sequence is obtained from the original one by discarding the part of the
space which has large marginals; see
Proposition~\ref{propgeneralconvergenceequiv} below. In contrast to the last
section, in this section we restrict ourselves to unsigned graphexes since we
believe that the obvious generalization of the notion of tightness to signed
graphexes will not be the right notion of tightness for the metric $\delGP$;
see Remark~\ref{rem:signed-tightness} at the end of this section.

We start by establishing the equivalence of various formulations of tightness.

\begin{theorem} \label{theoremtightequiv}
Given a set of graphexes $\cS$, the following are equivalent:
\begin{enumerate}
\item $\cS$ is tight. In other words, for every $\varepsilon>0$, there
    exist $C$ and $D$ such that for every graphex $\cW \in \cS$,
 $\cW=(W,S,I,\bOmega)$ with $\bOmega=(\Omega,\cF,\mu)$, there exists
$\Omega_\varepsilon \subseteq \Omega$ such that $\mu(\Omega_\varepsilon)
\le \varepsilon$ and $\cW'=\cW|_{\Omega \setminus \Omega_\varepsilon}$ has
$\|\cW'\|_1 \le C$, $\|D_{\cW'}\|_\infty \le D$. \label{tightCD}
\item For every $\varepsilon>0$, there exist $C$ such that for every
    graphex $\cW \in \cS,\cW=(W,S,I,\bOmega)$, there exists
 $\Omega_\varepsilon \subseteq \Omega$ such that $\mu(\Omega_\varepsilon)
 \le \varepsilon$ and $\cW'=\cW|_{\Omega \setminus \Omega_\varepsilon}$ has
 $\|\cW'\|_1 \le C$. \label{tightC}
\item For every $\varepsilon$, there is a $D$ and $C$ such that for any
    $\cW\in \cS$, taking $\Omega_{\le D}$ to be the set of points with
 $D_\cW(x)\le D$, $\mu(\Omega\setminus\Omega_{\leq D}) \le \varepsilon$,
 and $\|\cW|_{\Omega_{\leq D}}\|_1 \le C$. \label{tightD}
\item For every $T>0$, the set of random unlabeled finite graphs $G_T(\cW)$
    with $\cW \in \cS$ is tight. \label{tightallsamples}
\item There exists $T>0$ such that the set of random unlabeled finite
    graphs $G_T(\cW)$ with $\cW \in \cS$ is tight. \label{tightsomesamples}
\end{enumerate}
\end{theorem}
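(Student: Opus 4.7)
The plan is to establish the cycle $(1)\Rightarrow(2)\Rightarrow(3)\Rightarrow(4)\Rightarrow(5)\Rightarrow(3)\Rightarrow(1)$, separating the \emph{analytic} block $(1)\Leftrightarrow(2)\Leftrightarrow(3)$ from the \emph{probabilistic} block connecting these to $(4)$ and $(5)$.

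For the analytic block, $(1)\Rightarrow(2)$ is immediate and $(3)\Rightarrow(1)$ follows by taking $\Omega_\eps:=\{D_\cW>D\}$, since restrictions only decrease marginals. The nontrivial step is $(2)\Rightarrow(3)$. Apply $(2)$ with $\eps/2$ to obtain $C$ and, for each $\cW\in\cS$, a set $\Omega'$ with $\mu(\Omega')\le\eps/2$ and $\|\cW|_{\Omega\setminus\Omega'}\|_1\le C$. Markov yields $\mu(\{D_{\cW|_{\Omega\setminus\Omega'}}>D\})\le C/D$; since $W\le 1$, we have $D_\cW(x)\le D_{\cW|_{\Omega\setminus\Omega'}}(x)+\mu(\Omega')$ on $\Omega\setminus\Omega'$, hence $\mu(\{D_\cW>D+\eps/2\})\le\eps/2+C/D$, which is at most $\eps$ for $D$ large. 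For the $L^1$ bound on $A:=\{D_\cW\le D\}$, decompose $\|\cW|_A\|_1=\int_{A^2}W+2\int_A S+2I$ into integrals over $A\cap\Omega'$ and $A\cap(\Omega\setminus\Omega')$: pure $\Omega'$ integrals are controlled by $\mu(\Omega')\le\eps/2$ combined with $W\le 1$ and $S(x)\le D_\cW(x)\le D$ on $A$; cross integrals $\int_{A\cap\Omega'}\int_{A\cap(\Omega\setminus\Omega')}W\,d\mu\,d\mu$ are at most $D\mu(\Omega')$ using $\int_\Omega W(x,\cdot)\,d\mu\le D_\cW(x)\le D$ for $x\in A$; and the purely $\Omega\setminus\Omega'$ integral is bounded by $\|\cW|_{\Omega\setminus\Omega'}\|_1\le C$. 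Together with $I\le C/2$, these combine to a uniform bound on $\|\cW|_A\|_1$.

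For $(3)\Rightarrow(4)\Rightarrow(5)$: given $T,\eps>0$, apply $(3)$ with parameter $\eps/T$ to obtain $C,D$. By thinning, the feature Poisson process on $B:=\{D_\cW>D\}$ is $\Poisson(T\mu(B))$ with mean $\le\eps$, so with probability $\ge 1-\eps$ no such points appear and $G_T(\cW)=G_T(\cW|_A)$ for $A:=\Omega\setminus B$. Proposition~\ref{prop:t(F,W)} applied to $F=K_2$ gives $\EE[|E(G_T(\cW|_A))|]=\tfrac{T^2}{2}\|\cW|_A\|_1\le\tfrac{T^2 C}{2}$, and Markov yields a uniform tail bound on $|E(G_T(\cW))|$. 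Since graphs with bounded edge count form a finite set in $\fG_0$, tightness of the unlabelled graphs follows; $(4)\Rightarrow(5)$ is trivial.

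The main obstacle is $(5)\Rightarrow(3)$. Fix $T>0$ for which $\{G_T(\cW):\cW\in\cS\}$ is tight. First, the dust contribution to $|E(G_T(\cW))|$ is $\Poisson(T^2 I)$ and, by Campbell's theorem for compound Poisson sums, the total star contribution is $\Poisson(T^2\|S\|_1)$; tightness therefore forces $I$ and $\|S\|_1$ to be uniformly bounded. Next, the degree of a feature Poisson point at $x$ is $\Poisson(TD_\cW(x))$, and if $x$ lands in $\{D_\cW>D\}$ (which happens with probability $1-e^{-T\mu(\{D_\cW>D\})}$), this degree exceeds $TD/2$ with probability tending to $1$ as $TD\to\infty$; hence $\PP[|E(G_T(\cW))|\ge TD/2]$ stays bounded below unless $\mu(\{D_\cW>D\})\to 0$ as $D\to\infty$, and uniform tightness delivers the measure bound for $D$ large. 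Finally, couple $G_T(\cW)$ and $G_T(\cW|_A)$ (with $A=\{D_\cW\le D\}$) by using the same feature Poisson process, Bernoulli flips, leaf Poissons, and dust; the edges of $G_T(\cW|_A)$ then form a subset of those of $G_T(\cW)$, so tightness transfers. Now $\cW|_A$ has $D$-bounded marginals, and decomposing $\EE[|E(G_T(\cW|_A))|^2]$ by the overlap pattern of edge pairs (as $\inj(F,G_T)$ for $F\in\{K_2,P_3,2K_2\}$) and applying Proposition~\ref{prop:t(F,W)} yields $\EE[|E(G_T(\cW|_A))|]=\tfrac{T^2}{2}\|\cW|_A\|_1$ and $\operatorname{Var}(|E(G_T(\cW|_A))|)=O((T^2+T^3 D)\|\cW|_A\|_1)$, where the variance uses $t(P_3,\cW|_A)\le D\|\cW|_A\|_1$ via $\int D_W^2\,d\mu\le D\int D_W\,d\mu\le D\|\cW|_A\|_1$. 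Chebyshev then gives $|E(G_T(\cW|_A))|\ge\tfrac14 T^2\|\cW|_A\|_1$ with probability tending to $1$ as $\|\cW|_A\|_1\to\infty$, contradicting tightness. Hence $\|\cW|_A\|_1$ is uniformly bounded, yielding $(3)$ and closing the cycle.
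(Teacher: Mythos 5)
Your overall plan and the key technical tools (Markov/Chebyshev on edge counts via Proposition~\ref{prop:t(F,W)}, Campbell-type Poisson computations) match the paper's proof. The analytic block is routed slightly differently: the paper proves $(2)\Rightarrow(1)$ directly (remove the $\varepsilon/2$-set from $(2)$, then additionally remove the high-marginal points of the restriction, using Markov), whereas you prove $(2)\Rightarrow(3)$ by a decomposition of $\|\cW|_A\|_1$ into pure/cross integrals with respect to $\Omega'$; both are correct, and since $(3)\Rightarrow(1)$ is trivial, the two routes are interchangeable.

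There are two issues in your $(5)\Rightarrow(3)$ step, one fatal in its stated form.

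The opening claim that ``the total star contribution is $\Poisson(T^2\|S\|_1)$, \ldots tightness therefore forces $I$ and $\|S\|_1$ to be uniformly bounded'' is incorrect. The total number of star edges is a compound Poisson (a mixed Poisson over the random center configuration), with variance $T^2\|S\|_1 + T^3\|S\|_2^2$, not $T^2\|S\|_1$; and more to the point, tightness of the star-edge counts does \emph{not} force $\|S\|_1$ bounded: take $S_n = n^2\,1_{[0,1/n]}$ on $\RR_+$, so $\|S_n\|_1 = n\to\infty$ while the star count is $0$ with probability $e^{-T/n}\to1$. Fortunately you never use this observation later, so it can simply be deleted.

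The genuine gap is in the argument that $\mu(\{D_\cW>D\})$ is uniformly small for large $D$. You assert that ``the degree of a feature Poisson point at $x$ is $\Poisson(TD_\cW(x))$'' and then condition on $x$ landing in $\{D_\cW>D\}$. The first statement is a Palm/Mecke fact, but it is the \emph{marginal} distribution of a point's degree, not its distribution \emph{conditional} on the conditioning event you need: the degree of a point $x^*\in\{D_\cW>D\}$ includes its $W$-edges to the other Poisson points inside $\{D_\cW>D\}$, and those edges are correlated with the event $\eta(\{D_\cW>D\})\ge1$ itself. If $W(x^*,\cdot)$ places much of its mass on $\{D_\cW>D\}$, the lower bound $\Poisson\bigl(T\int_{\{D_\cW\le D\}}W(x^*,\cdot)\,d\mu + TS(x^*)\bigr)$ on the degree to the \emph{independent} part of the process can be far smaller than $\Poisson(TD/2)$, and you have no clean handle on the rest. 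The paper circumvents this by randomly coloring vertices red/blue and considering the \emph{red} degree of a \emph{blue} witness point in $\{D_\cW>D\}$: conditioned on the blue process (and hence on the existence and location $x$ of a blue witness), the red process is an independent Poisson process, so the red degree is genuinely $\Poisson(TD_\cW(x)/2)$, and the lower bound on $\PP[|E(G_T(\cW))|\ge M]$ goes through unconditionally. Your proof needs this decoupling device (or an equivalent one, e.g.\ a second-moment/Paley--Zygmund argument) to be complete; as written the conditional-degree step does not hold.
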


\begin{corollary}\label{cor:C-bounded-tight}
Let $\cS$ be a set of graphexes.
\begin{enumerate}
\item If there exists a $C<\infty$ such that $\|\cW\|_1\leq C$ for all
    $\cW\in\cS$, then $\cS$ is tight.

\item If $\cS$ is tight and has uniformly bounded marginals, then there
    exist $C,D<\infty$ such that $\cS$ is $(C,D)$-bounded.
\end{enumerate}
\end{corollary}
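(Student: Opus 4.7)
The plan is to derive both parts of Corollary~\ref{cor:C-bounded-tight} directly from the equivalent characterizations of tightness collected in Theorem~\ref{theoremtightequiv}, reducing each to a short elementary observation.

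For part~(1), I would invoke the reformulation of tightness given in condition~(\ref{tightC}) of Theorem~\ref{theoremtightequiv}, which requires only a uniform $L^1$-bound after deleting a set of measure at most $\varepsilon$. Under the assumption $\|\cW\|_1\le C$ uniformly over $\cS$, the trivial choice $\Omega_\varepsilon=\emptyset$ already works: $\mu(\Omega_\varepsilon)=0\le\varepsilon$ and $\|\cW|_{\Omega\setminus\Omega_\varepsilon}\|_1=\|\cW\|_1\le C$ for every $\cW\in\cS$. Thus condition~(\ref{tightC}) holds with a single constant $C$ independent of $\varepsilon$, and tightness of $\cS$ follows.

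For part~(2), set $D:=\sup_{\cW\in\cS}\|D_\cW\|_\infty$, which is finite by the uniform marginal hypothesis; this $D$ will serve as the required marginal bound, and the remaining task is to produce a uniform bound $C$ on $\|\cW\|_1$. I would apply Definition~\ref{def:tight} with $\varepsilon=1$ to obtain constants $C'$ and $D'$ such that for each $\cW=(W,S,I,\bOmega)\in\cS$ there exists a set $\Omega_1\subseteq\Omega$ with $\mu(\Omega_1)\le 1$ and $\|\cW|_{\Omega\setminus\Omega_1}\|_1\le C'$. To control the excess $\|\cW\|_1-\|\cW|_{\Omega\setminus\Omega_1}\|_1$, decompose $\Omega^2\setminus(\Omega\setminus\Omega_1)^2=(\Omega_1\times\Omega)\cup((\Omega\setminus\Omega_1)\times\Omega_1)$: by symmetry of $W$ together with $D_W\le D_\cW\le D$,
\[
\int_{\Omega^2}W-\int_{(\Omega\setminus\Omega_1)^2}W\le 2\int_{\Omega_1}D_W\,d\mu\le 2D\mu(\Omega_1)\le 2D,
\]
and similarly $\int_\Omega S-\int_{\Omega\setminus\Omega_1}S=\int_{\Omega_1}S\le\int_{\Omega_1}D_\cW\,d\mu\le D$, while the dust term $I$ contributes identically on both sides. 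Hence $\|\cW\|_1\le C'+4D$, and $C:=C'+4D$ witnesses $(C,D)$-boundedness of $\cS$.

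There is no real obstacle beyond identifying the correct reformulations of tightness to invoke: part~(1) is immediate once condition~(\ref{tightC}) is available, and part~(2) reduces to a symmetry-based split of the $\|\cdot\|_1$-norm along the $\mu(\Omega_1)\le 1$ set supplied by tightness. The only care needed is the bookkeeping in the $W$-integral in part~(2), which is entirely routine.
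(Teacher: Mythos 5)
Your proof is correct and follows essentially the same route as the paper: part (1) via the trivial choice $\Omega_\varepsilon=\emptyset$ applied to condition (\ref{tightC}), and part (2) via a tightness set of measure at most $1$ plus the uniform marginal bound. The paper's version of (2) uses condition (\ref{tightD}) (the set $\Omega_{>D'}$) and combines $D_W$ and $S$ into a single $D_\cW$ before integrating, obtaining the slightly sharper constant $C'+2D$ rather than your $C'+4D$; the difference is cosmetic.
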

\begin{proof}
(1) Taking $\Omega_\varepsilon=\emptyset$ for any $\varepsilon$, the set
$\cS$ clearly satisfies condition (\ref{tightC}) from the theorem.

(2) Choose $\varepsilon$ arbitrarily, say $\varepsilon=1$, and let $C'$, $D'$
be such that (3) from Theorem~\ref{theoremtightequiv} holds. Furthermore, let
$D$ be such that the marginals of all graphexes in $\cS$ are bounded by $D$.
Then
\begin{align*}
\|\cW\|_1&\leq\|\cW|_{\Omega_{\leq D'}}\|_1+2\int W(x,y)1_{D_\cW(x) >D'}\,d\mu(x) d\mu(y)+{2}\int_{D_\cW >D'}S(x)\,d\mu(x)
\\
&\leq C'+2\int_{D_{\cW}>D'} D_{\cW(x)}\,d\mu(x)
\leq C'+2 D\varepsilon=C'+2D=:C,
\end{align*}
proving the claim.
\end{proof}

In order to prove Theorem~\ref{theoremtightequiv}, we will use the following
lemma.

\begin{lemma} \label{lem:edgebound}
The probability that $G_T(\cW)$ has more than $KT^2\|\cW\|_1$ edges is at
most
\[\frac{T^2\|\cW\|_1/2+T^3\|D_\cW\|_2^2}{(K-1/2)^2T^4\|\cW\|_1^2},
\]
and the probability that it has less than $T^2\|\cW\|_1/4$ edges is at most
\[\frac{16(T^2\|\cW\|_1/2+T^3\|D_\cW\|_2^2)}{T^4\|\cW\|_1^2}
\]
\end{lemma}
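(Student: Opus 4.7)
The plan is to apply Chebyshev's inequality to the number of edges $E := |E(G_T(\cW))|$. From the description in Remark~\ref{rem:graphex-process}(1), I decompose $E = E_1 + E_2 + E_3$, where $E_1$ counts the graphon edges between distinct Poisson points (each present with probability $W(x_i,x_j)$), $E_2$ counts the total number of star leaves, and $E_3 \sim \Poisson(T^2 I)$ counts the dust edges; conditional on the Poisson point process $\{x_i\}$, these three pieces are independent. A direct application of Campbell's formula for the first Poisson moment gives $\EE[E_1] = \tfrac{T^2}{2}\|W\|_1$, $\EE[E_2] = T^2\|S\|_1$, and $\EE[E_3] = T^2 I$, which sum to $\EE[E] = \tfrac{T^2}{2}\|\cW\|_1$.

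For the variance I use the law of total variance piece by piece, noting that $E_3$ is independent of $(E_1,E_2)$, so only $\operatorname{Cov}(E_1,E_2)$ can contribute a cross term. The easier pieces are $\operatorname{Var}(E_3) = T^2 I$ and $\operatorname{Var}(E_2) = T^2\|S\|_1 + T^3\|S\|_2^2$, the latter from a standard mixed-Poisson calculation. The cross covariance $\operatorname{Cov}(E_1,E_2)$ reduces by conditional independence to the covariance of the conditional means, and expanding the double sum while splitting according to whether the star-center index $k$ coincides with $i$ or $j$ evaluates it to $T^3\int S\,D_W\,d\mu$. The most delicate piece is $\operatorname{Var}(E_1)$: writing $Y_1 = \EE[E_1\mid\{x_i\}] = \sum_{i<j}W(x_i,x_j)$, I combine $\EE[\operatorname{Var}(E_1\mid\{x_i\})] = \tfrac{T^2}{2}\|W\|_1 - \tfrac{T^2}{2}\|W\|_2^2$ (from the $W(1-W)$ Bernoulli variance) with $\operatorname{Var}(Y_1)$, which I compute by expanding $Y_1^2$ and grouping by the overlap size $|\{i,j\}\cap\{k,l\}|\in\{0,1,2\}$ and applying the Poisson factorial moment formula. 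The overlap-$0$ term reconstructs $\EE[Y_1]^2$, the overlap-$1$ term gives $T^3\|D_W\|_2^2$, and the overlap-$2$ term gives $\tfrac{T^2}{2}\|W\|_2^2$, which cancels exactly with the $-\tfrac{T^2}{2}\|W\|_2^2$ coming from the Bernoulli variance to leave $\operatorname{Var}(E_1) = \tfrac{T^2}{2}\|W\|_1 + T^3\|D_W\|_2^2$. Assembling all contributions and using $D_\cW = D_W + S$ gives $\operatorname{Var}(E) = \tfrac{T^2}{2}\|\cW\|_1 + T^3\|D_\cW\|_2^2$.

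Finally, Chebyshev's inequality delivers both bounds: the upper tail from $\PP(E > KT^2\|\cW\|_1) = \PP(E - \EE[E] > (K-\tfrac12)T^2\|\cW\|_1) \le \operatorname{Var}(E)/((K-\tfrac12)^2 T^4\|\cW\|_1^2)$, and the lower tail from $\PP(E < T^2\|\cW\|_1/4) \le \operatorname{Var}(E)/(T^2\|\cW\|_1/4)^2 = 16\operatorname{Var}(E)/(T^4\|\cW\|_1^2)$. The main obstacle I expect is the variance of $E_1$: the clean form of the final answer relies on the cancellation between the Bernoulli $(1-W)$ factor and the overlap-$2$ contribution to $\operatorname{Var}(Y_1)$. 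Without this cancellation one would retain a spurious $\tfrac{T^2}{2}\|W\|_2^2$ term that does not absorb cleanly into the claimed bound for general graphexes.
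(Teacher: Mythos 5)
Your proof is correct, but it takes a genuinely different route from the paper's. The paper leverages Proposition~\ref{prop:t(F,W)}, which gives $\EE[\inj(F,G_T(\cW))]=T^{|V(F)|}t(F,\cW)$ uniformly for all $F$, and writes the deterministic identity
\[
X_T^2=\tfrac12\inj(F_1,G_T)+\tfrac14\inj(F_2,G_T)+\inj(F_3,G_T),
\]
where $F_1$ is an edge, $F_2$ two disjoint edges, and $F_3$ a cherry; the variance then falls out in two lines because the bookkeeping over graphon/star/dust contributions has already been packaged into the definition of $t(F,\cW)$ and the proof of Proposition~\ref{prop:t(F,W)}. You instead compute the variance from scratch by decomposing the edge count by edge type, conditioning on the Poisson configuration, and tracking each variance and covariance term separately. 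Both are valid and yield the same $\mathrm{Var}(E)=\tfrac{T^2}{2}\|\cW\|_1+T^3\|D_\cW\|_2^2$. What the paper's route buys is brevity and reuse: the homomorphism-density machinery does the combinatorics once. What your route buys is self-containment and transparency --- in particular you correctly flag the non-obvious cancellation of the $\tfrac{T^2}{2}\|W\|_2^2$ terms between the Bernoulli variance and the overlap-two piece of $\mathrm{Var}(Y_1)$, a cancellation the paper's abstraction hides entirely (it is equivalent to the fact that $G_T$ is simple, so each pair of Poisson points contributes a Bernoulli rather than Poisson count, which is exactly why $F_3$ suffices and no multi-edge term appears). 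Your cross-covariance computation $\mathrm{Cov}(E_1,E_2)=T^3\int SD_W\,d\mu$ is also correct and, combined with $\|D_W\|_2^2+2\int SD_W+\|S\|_2^2=\|D_\cW\|_2^2$, reassembles the paper's answer. The final Chebyshev step matches; note that the upper-tail bound implicitly requires $K>1/2$, which is fine since the statement is vacuous otherwise.
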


\begin{proof}
Let $X_T$ be the number of edges of $G_T(\cW)$. By
Proposition~\ref{prop:t(F,W)}, $X_T$ has expectation $T^2 \|\cW\|_1/2$. To
calculate the variance, note that we have
\[X_T^2=\frac{\inj(F_1,G_T)}{2}+\frac{\inj(F_2,G_T)}{4}+\inj(F_3,G_T),\]
where $F_1$ consists of a single edge, $F_2$ consists of a pair of disjoint
edges, and $F_3$ consists of two edges joined at one vertex. Therefore, we
can again use Proposition~\ref{prop:t(F,W)} to conclude that
\begin{align*}
\text{Var}(X_T)&=\EE[X_T^2]-\EE[X_T]^2\\
&=\frac{T^2\|\cW\|_1}{2}+\frac{T^4
\|\cW_1\|^2}{4}+T^3\|D_\cW\|_2^2-\left(\frac{T^2\|\cW\|_1}{2}\right)^2\\
&=\frac{T^2\|\cW\|_1}{2}+T^3\|D_\cW\|_2^2 .
\end{align*}
The bounds on the probabilities of having too many or too few edges
follow from Chebyshev's inequality.
\end{proof}

\begin{proof}[Proof of Theorem \ref{theoremtightequiv}]
$(\ref{tightCD}) \Rightarrow (\ref{tightC})$ is obvious.

$(\ref{tightC}) \Rightarrow (\ref{tightCD})$: Suppose $\cS$ satisfies $(2)$,
and let $\varepsilon>0$. Take $C$ from property $(2)$ for $\varepsilon/2$,
and take $D=2C/\varepsilon$. For each $\cW \in \cS$ with underlying space
$\Omega$, there is a set $\Omega' \subseteq \Omega$ with $\mu(\Omega
\setminus \Omega') \le \varepsilon/2$ so that the restriction
$\cW'=\cW|_{\Omega}$ has $\|\cW'\|_1 \le C$. Suppose $\mu(x \in
\Omega':D_{\cW'}(x)>D)
>\varepsilon/2$. Then we would have $\|W'\|_1 >D \varepsilon/2 =C$, a
contradiction. Therefore, removing the set of points with $D_{\cW'}(x)>D$, we
have removed points with total measure at most $\varepsilon$, and the
restricted graphex is $(C,D)$-bounded.

$(\ref{tightD}) \Rightarrow (\ref{tightCD})$ is obvious.

$(\ref{tightCD}) \Rightarrow (\ref{tightallsamples})$: Fix $T>0$ and
$\varepsilon>0$. Take $\varepsilon'$ such that
$e^{-T\varepsilon'}>1-\varepsilon/2$, and take $C,D$ for $\cS$ from the
definition of tightness. Given $\cW \in \cS$, there exists
$\Omega_{\varepsilon'} \subseteq \Omega$ such that
$\mu(\Omega_{\varepsilon'}) \le \varepsilon'$ and $\cW'=\cW|_{\Omega
\setminus \Omega_{\varepsilon'}}$ has $\|\cW'\|_1 \le C$,
$\|D_{\cW'}\|_\infty \le D$. The probability that $G_T(\cW)$ samples a point
in $\Omega \setminus {\Omega_{\varepsilon'}}$ during the Poisson process is
at most $1-e^{-T\varepsilon'}<\varepsilon/2$. Conditioned on this not
happening, the sample is equivalent to a sample from $G_T(\cW')$. For this,
we have that $\|D_{\cW'}\|_2^2 \le CD$; therefore we can take $K=K(C,D)$
large enough so that the probability that there are more than $KT^2C$ edges
in $G_T(\cW')$ is at most $\varepsilon/2$ (independently of $\cW'$).
Therefore, the probability that there are more than $KT^2C$ edges in
$G_T(\cW)$ is at most $\varepsilon$.

$(\ref{tightallsamples}) \Rightarrow (\ref{tightsomesamples})$ is obvious.

$(\ref{tightsomesamples}) \Rightarrow (\ref{tightD})$: Let $\varepsilon>0$.
First, we show that there exists a $D$ so that for every $\cW \in \cS$, the
measure of the set $\{D_\cW>D\}$ is at most $\eps$. Suppose not. We will show
that this implies that for each $M$ we can find a $\cW\in \cS$ such with
probability at least $\frac 12(1-e^{-\varepsilon T/2})$, the number of edges
in $G_T(\cW)$ is at least $M$. This contradicts the assumption that the set
of random graphs $G_T(\cW)$ is tight.

Assume thus that for every $D$, there exists a $\cW=\cW(D)\in \cS$ such that
the set $\{D_\cW> D\}$ has measure larger than $\eps$. Take $G_T(\cW)$ and
randomly color the vertices red and blue. With probability at least
$1-e^{-\varepsilon T/2}$, there exists at least one blue point whose feature
label falls into the set $\{D_\cW > D\}$. Conditioned on this, taking a blue
point with feature label $x\in \{D_\cW > D\}$, the number of red neighbors it
has is a Poisson random variable with mean $TD_\cW(x)/2$. Given $M<\infty$,
choose $D=D(M,T)$ in such a way that a Poisson random variable with mean at
least $TD/2$ has probability at least $1/2$ of being greater than $M$. As a
consequence, given $T$ and an arbitrary large $M$ and we can find a $D$ and
$\cW=\cW(D)\in \cS$ such that with probability at least $\frac 12
(1-e^{-\varepsilon T/2})$, the number of edges in $G_T(\cW)$ is at least $M$,
contradicting tightness.

We claim that $\|\cW|_{\Omega_{\leq D}}\|_1$ can't be arbitrarily large. Set
$\cW'=\cW|_{\Omega_{\leq D}}$ and assume that $\|\cW'\|_1=C$. Then the
probability that $G_T(\cW')$ has less than $T^2C/4$ edges is at most
\[\frac{8+16TD}{T^2C}
.\] If $C$ is large enough, this is less than $1/2$. But then for large
enough $C$ with probability at least $1/2$, the number of edges is at least
$T^2C/4$, contradicting the assumption of tightness. This means that $C$
can't be arbitrarily large.

This completes the proof of the theorem.
\end{proof}

\begin{remark}\label{rem:DC-mon}
It will sometimes be useful to transform a graphex $\cW$ over an arbitrary
$\sigma$-finite space $\bOmega=(\Omega,\cF,\mu)$ into a graphex over an
atomless space by mapping $\bOmega$ to the product space $\Omega\times [0,1]$
equipped with the measure $\mu\times\lambda$, with $\lambda$ denoting the
Lebesgue measure, and mapping $\cW$ to $\Phi(\cW)=\cW^\phi$, with
$\phi\colon\Omega\times[0,1]\to\Omega$ denoting the coordinate projection
onto $\Omega$. It is easy to see that $\delGP(\cW,\Phi(\cW))=0$, which
together with the triangle inequality implies that the map $\Phi$ does not
change distances between graphons. It is also easy to check that if $\cS$ is
a tight set of graphexes, then the set of graphexes obtained by mapping each
graphex $\cW\in\cS$ to the corresponding atomless graphex $\Phi(\cW)$ is
tight as well.
\end{remark}

Let us analyze when graphexes converge under $\delGP$. To this end, we first
prove a few lemmas.

\begin{lemma}
Given $C,D,M\in(0,\infty)$, there exists a function $f\colon[0,\infty)^2\to
[0,\infty)$ such that $f(x)\to 0$ as $x\to 0$ and such that the following
holds
\begin{enumerate}
\item Let $\cW$ be a graphex over $(\Omega,\cF,\mu)$, and let
    $\widetilde\mu$ be a second measure over $(\Omega,\cF)$ such that
 $\mu-r\leq\widetilde\mu\leq\mu$. If $\widetilde\cW$ is obtained from $\cW$
 by replacing $\mu$ with $\widetilde\mu$ then
\[
\mu(D_\cW>D+r)-r\leq \widetilde\mu(D_{\widetilde\cW}>D)\leq \mu(D_\cW>D)
\]
for all $D>0$.

\item Let $\cW_1$ and $\cW_2$ be graphexes with bounded marginals, defined
    over the same space $(\Omega,\cF,\mu)$. Suppose that
 $\d22(\cW_1,\cW_2)<\varepsilon$. Then
\[\mu(\{|D_{\cW_1}-D_{\cW_2}|\ge \varepsilon\})<\varepsilon^{2}
.\]

\item Let $\widetilde\cW_1$ and $\widetilde\cW_2$ be graphexes with bounded
    marginals, defined over $(\Omega_1,\cF_1,\widetilde\mu_1)$ and
 $(\Omega_2,\cF_2,\widetilde\mu_2)$. If
 $\deltt(\widetilde\cW_1,\widetilde\cW_2)< \varepsilon$ and
 $D>\varepsilon$, then
\[
\widetilde\mu_1(\{D_{\widetilde\cW_1}>D+\varepsilon\})-\varepsilon^2
\leq\widetilde\mu_2(\{D_{\widetilde\cW_2}>D\})
\leq\widetilde\mu_1(\{D_{\widetilde\cW_1}>D-\varepsilon\})+\varepsilon^2.
\]

\item For $i=1,2$, let $\cW_i$ be graphexes defined over
    $(\Omega_i,\cF_i,\mu_i)$, and let $\cW_{i,\leq D}$ be the restriction
  of $\cW_i$ to the subset $\{D_{\cW_i}\leq D\}$ of $\Omega_i$. Assume that
  $\eps+\eps^2<D$, $\mu_1(\{D_{\cW_1}>D\})+\mu_2(\{D_{\cW_2}>D\})\leq M$,
  $\|\cW_{2,\leq D}\|_1\leq C$, $\delGP(\cW_1,\cW_2)\leq\varepsilon$, and
  $\mu_2(\{D-\eps-\eps^2<D_{\cW_2}\leq D+\varepsilon+\eps^2\})\leq \delta$.
  Then
\[
\deltt(\cW_{1,\leq D},\cW_{2,\leq D})
\leq f(\varepsilon,\delta).
\]
\end{enumerate}
\end{lemma}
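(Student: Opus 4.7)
My plan is to transfer the coupling witnessing $\delGP(\cW_1,\cW_2)\leq \eps$ to a coupling between $\cW_{1,\leq D}$ and $\cW_{2,\leq D}$, restricting to the set where both pullback marginals lie below $D$ and controlling the loss from the restriction by the hypothesis on $\cW_2$. By Lemma~\ref{lem:tilde-delGP}, for any $\eps'>0$ I obtain trivial infinite-measure extensions $\cW_i'$ of $\cW_i$ together with modified measures $\widetilde\mu_i'$ with $\mu_i'-\eps^2\leq \widetilde\mu_i'\leq \mu_i'$, and a coupling $\mu^*$ on $\widehat\Omega:=\Omega_1'\times\Omega_2'$ such that the pullbacks $\cU_i:=(\cW_i')^{\pi_i,\mu^*}$ satisfy $\d22(\cU_1,\cU_2)\leq \eps+\eps'$. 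I set $A_i=\{D_{\cW_i}\leq D\}$, $\widehat A_i=\pi_i^{-1}(A_i)$, and $B=\widehat A_1\cap \widehat A_2$; the identity $D_{\cU_i}=D_{\widetilde\cW_i}\circ \pi_i$ on $\pi_i^{-1}(\Omega_i)$, combined with the pointwise sandwich $D_{\cW_i}-\eps^2\leq D_{\widetilde\cW_i}\leq D_{\cW_i}$ of part~(1), lets me translate between $\widehat A_i$ and sublevel sets of $D_{\cU_i}$.

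The key boundary estimate is that for each $E\subseteq A_i$ the marginal $\nu_i := (\mu^*|_B)^{\pi_i}$ satisfies $\mu_i(E)-\nu_i(E)\leq 2\eps^2+\delta+o(1)$. This follows from the decomposition $\mu_i(E)-\nu_i(E)=(\mu_i-\widetilde\mu_i)(E)+\mu^*(\pi_i^{-1}(E)\cap \widehat A_j^c)$ combined with a case analysis on the second summand: by part~(2) applied to $\cU_1,\cU_2$, the subset where $|D_{\cU_1}-D_{\cU_2}|>\eps+\eps'$ has $\mu^*$-measure $\leq (\eps+\eps')^2$, and outside this subset one verifies that $D_{\cU_j}(z)$ must lie in a narrow window around $D$, so $\pi_j(z)$ lies in a subset of $\{D-\eps-\eps^2<D_{\cW_2}\leq D+\eps+\eps^2\}$ (directly from the hypothesis when $j=2$, and after a shift-of-size-$\eps$ transfer through part~(3) when $j=1$). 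Hence $\mu_i|_{A_i}-(2\eps^2+\delta+o(1))\leq \nu_i\leq \mu_i|_{A_i}$.

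Writing $\widetilde\cW_{i,\leq D}$ for $\cW_{i,\leq D}$ endowed with measure $\nu_i$, the triangle inequality gives
\[
\deltt(\cW_{1,\leq D},\cW_{2,\leq D})\leq \sum_{i=1}^2 \deltt(\cW_{i,\leq D},\widetilde\cW_{i,\leq D})+\deltt(\widetilde\cW_{1,\leq D},\widetilde\cW_{2,\leq D}).
\]
The outer two terms compare graphexes sharing the same graphon and $S,I$ data on $A_i$ but with measures $\mu_i|_{A_i}$ versus $\nu_i$; a diagonal-plus-trivial-gap coupling (using an auxiliary space of measure $\leq 2\eps^2+\delta$) together with the estimates in Lemma~\ref{lemmaremovednotmuchsmaller} and Lemma~\ref{lem:delGP-deltt} yields bounds of order $((\eps^2+\delta)D)^{1/4}$ with constants depending on $C$ and $D$. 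For the middle term I view $\mu^*|_B$ (equivalently, on $A_1\times A_2$) as a coupling of $\nu_1$ and $\nu_2$, so $\deltt(\widetilde\cW_{1,\leq D},\widetilde\cW_{2,\leq D})\leq \d22(\cU_1|_B,\cU_2|_B)$. The kernel norm part is $\leq \eps+\eps'$ by restriction, and the $L^2$-marginal difference decomposes as $(D_{\cU_1}-D_{\cU_2})-(\Delta_1-\Delta_2)$ on $B$, where $\Delta_i(z)=\int_{\widehat\Omega\setminus B}W_i^{\pi_i}(z,\cdot)\,d\mu^*$ is pointwise $\leq D$; I split $\widehat\Omega\setminus B$ into $\widehat A_i\setminus \widehat A_j$ (contributing at most $D(\eps^2+\delta)$ by the boundary estimate) and $\pi_i^{-1}(\Omega_i\setminus A_i)$ (of $\widetilde\mu_i$-measure at most $M$), and the $\rho$-difference is handled analogously.

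The main technical obstacle is bounding the cross-integral $\int_{B}\int_{\pi_i^{-1}(\Omega_i\setminus A_i)}W_i^{\pi_i}\,d\mu^{*2}$ on which $D_{\cW_i}$ is not uniformly bounded on $\Omega_i\setminus A_i$. The control comes from recognising that for $z\in B$ the integrand reduces to $\int_{A_i}W_i(y,x)\,d\nu_i(y)$ as a function of $x=\pi_i(z')$, which by the $D$-boundedness of the marginals of $\cW_{i,\leq D}$ on $A_i$ is pointwise $\leq D$, while the integration domain has $\widetilde\mu_i$-measure $\leq M$; the resulting contribution to $\|\Delta_i\|_2^2$ is then absorbed using the jumble-norm/operator-norm comparison from Lemma~\ref{lemmasquare2to2equiv} together with the global $\d22$-bound $(\eps+\eps')^2$ on $\|D_{\cU_1}-D_{\cU_2}\|_2$, which forces the cross-contributions to concentrate where they are already controlled by the boundary estimate. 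Combining all pieces and letting $\eps'\to 0$ yields $\deltt(\cW_{1,\leq D},\cW_{2,\leq D})\leq f(\eps,\delta)$ with $f(\eps,\delta)\to 0$ as $(\eps,\delta)\to 0$.
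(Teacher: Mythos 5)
Your overall strategy (transfer the $\delGP$ coupling, restrict to the sub-level set of the marginal, control the boundary by the hypothesis on $\cW_2$) is the same as the paper's. But there is a concrete gap in the execution, rooted in the choice of restriction set.

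You define $A_i=\{D_{\cW_i}\leq D\}$ as a subset of the original space $\Omega_i$, and restrict the coupling to $B=\pi_1^{-1}(A_1)\cap\pi_2^{-1}(A_2)$. The problem is that the trivial infinite-measure extensions $\Omega_i'\setminus\Omega_i$ are \emph{not} contained in $\widehat A_i$, so they sit inside $\widehat A_j^c$. Your boundary estimate $\mu^*(\pi_i^{-1}(E)\cap\widehat A_j^c)\lesssim\eps^2+\delta$ rests on a case analysis showing that, off a set of $\mu^*$-measure $\leq(\eps+\eps')^2$, any $z$ with $\pi_i(z)\in A_i$ and $\pi_j(z)\notin A_j$ must have $D_{\cU_j}(z)$ in a narrow window around $D$. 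That is true when $\pi_j(z)\in\Omega_j\setminus A_j$, but when $\pi_j(z)\in\Omega_j'\setminus\Omega_j$ one has $D_{\cU_j}(z)=0$, not near $D$. Such $z$ are not excluded by the $\|D_{\cU_1}-D_{\cU_2}\|_2\leq(\eps+\eps')^2$ bound, since a coupling is perfectly allowed to pair an \emph{infinite}-measure portion of $\{x\in\Omega_i:D_{\cW_i}(x)\text{ tiny}\}$ with the trivial extension of $\Omega_j$. Hence the claimed sandwich $\mu_i|_{A_i}-(2\eps^2+\delta+o(1))\leq\nu_i\leq\mu_i|_{A_i}$ is false, and correspondingly the removed set $\widehat\Omega\setminus B$ can have infinite $\mu^*$-measure on the support of $W_i^{\pi_i}$, so the Cauchy--Schwarz argument behind the marginal bound in Lemma~\ref{lemmaremovedsetdistancenotworse} does not apply to your $\Delta_1-\Delta_2$. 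The appeal to Lemma~\ref{lemmasquare2to2equiv} in your ``main technical obstacle'' paragraph does not repair this: that lemma compares $\|\cdot\|_\jbl$ with $\|\cdot\|_{2\to2}$ for a two-variable kernel and gives no control over $\|\Delta_1-\Delta_2\|_2$ when the region being integrated out has infinite measure.

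The paper avoids all of this by restricting to $\Omega'_{i,\leq D}=\{x\in\Omega_i':D_{\cW_i'}(x)\leq D\}$, which \emph{includes} the trivial extensions (there $D_{\cW_i'}=0\leq D$). Then the removed set is $\{D_{\cW_1'}>D\}\times\Omega_2'\cup\Omega_1'\times\{D_{\cW_2'}>D\}$, which lies in the original spaces and has $\mu'$-measure $\leq M$, so Lemma~\ref{lemmaremovedsetdistancenotworse} applies with $r\leq M$; the boundary window argument sees only $\{D_{\cW_j'}>D\}\subseteq\Omega_j$, so it is genuinely narrow; and $(\cW_i')|_{\Omega'_{i,\leq D}}$ is automatically a trivial extension of $\cW_{i,\leq D}$ by a space of infinite measure, so the restricted pullback directly witnesses $\deltt(\cW_{1,\leq D},\cW_{2,\leq D})$ after one more measure-increase step (Lemma~\ref{lemmaremovednotmuchsmaller}). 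Modifying your restriction set to include the trivial extensions would repair the boundary estimate and allow the rest of your decomposition to go through, though at that point it collapses into the paper's two-step restrict-then-reinflate argument rather than a genuine three-term triangle inequality.
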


\begin{proof}
(1) The assumption $\mu-r\leq\widetilde\mu\leq\mu$ clearly implies that for
all $x\in\Omega$,
\[
D_\cW(x)-r\leq D_{\widetilde\cW}(x)\leq D_\cW(x).
\]
As a consequence,
\[\mu(D_\cW>D+r)-r\leq\widetilde\mu(D_\cW>D+r)\leq\widetilde\mu(D_{\widetilde\cW}>D)\]
and
\[\widetilde\mu(D_{\widetilde\cW}>D)\leq \widetilde\mu(D_\cW>D)\leq\mu(D_\cW>D)
.\]

(2) By the definition of $\d22$, $ \|D_{\cW_1}-D_{\cW_2}\|_2^2<
\varepsilon^4$, which clearly implies that
\[
\mu(\{|D_{\cW_1}-D_{\cW_2}|\geq \varepsilon\})<\varepsilon^2.
\]

(3) For $i=1,2$, let
$(\widetilde\Omega'_i,\widetilde\cF'_i,\widetilde\mu'_i)$ be a measure space
obtained from $(\Omega_i,\cF_i,\widetilde\mu_i)$ by appending some space of
infinite total measure, and let $\widetilde\cW_i'$ be the trivial extension
of $\widetilde\cW_i$ onto
$(\widetilde\Omega'_i,\widetilde\cF'_i,\widetilde\mu'_i)$. Furthermore, let
$\mu'$ be a coupling of $\widetilde\mu_1'$ and $\widetilde\mu_2'$ such that
$\d22(\cW'_1,\cW_2')\leq \varepsilon$, where
$\cW_i'=(\widetilde\cW_i')^{\pi_i,\mu'}$ for $i=1,2$. Then by (2),
\begin{align*}
\widetilde\mu_1(\{D_{\widetilde\cW_1}>D+\varepsilon\})
&=\widetilde\mu_1'(\{D_{\widetilde\cW_1'}>D+\varepsilon\})\\
&=\mu'(\{D_{\cW'_1}>D+\varepsilon\})\\
&\le \mu'(\{D_{\cW_2'}>D\})+\mu'(\{|D_{\cW_1'}-D_{\cW_2'}| \ge \varepsilon\})\\
&\leq \mu'(\{D_{\cW'_2}>D\})+\varepsilon^2\\
&=\widetilde\mu_2(\{D_{\cW_2}>D\})+\varepsilon^2,
\end{align*}
proving the first bound in (2). The second is proved analogously.

(4) For $i=1,2$, let $\cW_i'$ be the trivial extension of $\cW_i$ to a space
$(\Omega_i',\cF_i',\mu_i')$ obtained from $(\Omega_i,\cF_i,\mu_i)$ by
appending some $\sigma$-finite space of infinite total mass. Recalling
Lemma~\ref{lem:tilde-delGP}, we can use the assumption
$\delGP(\cW_1,\cW_2)<\varepsilon$ to infer the existence of a measure $\mu'$
over $\Omega_1'\times\Omega_2'$ such that
$\d22((\cW_1')^{\pi_1,\mu'},(\cW_2')^{\pi_2,\mu'})<\varepsilon$ and
$\mu_i'-\varepsilon^2\leq(\mu')^{\pi_i}\leq\mu_i'$, $i=1,2$. For $i=1,2$,
define $\widetilde\mu_i=(\mu')^{\pi_i}$, $\cU_i'= (\cW_i')^{\pi_{i},\mu'}$,
and $\Omega'_{i,\leq D}=\{x\in\Omega_i': D_{\cW_i'}(x)\leq D\}$. Then
$\cW'_{{1,\leq D}}:=(\cW_1')|_{\Omega'_{1,\leq D}}$ and $\cW'_{{2,\leq
D}}:=(\cW_2')|_{\Omega'_{2,\leq D}}$ are extensions of $\cW_{1,\leq D}$ and
$\cW_{2,\leq D}$ by spaces of infinite measure. Let $\mu'_{1,D}$ and
$\mu'_{2,D}$ be the marginals of the measure $\mu'|_{\Omega'_{1,\le D} \times
\Omega'_{2,\le D}}$. We then have that $\mu_{1,D}' \le \mu_1'$. Observing
that $ D_{\cW_i'}(\pi_i(x))-\varepsilon^2\leq D_{\cU_i'}(x)\leq
D_{\cW_i'}(\pi_i(x))$, we furthermore have that
\begin{align*}
0 &\le (\mu_1'-\mu_{1,D}')(\Omega'_{1,\le D})
\leq\varepsilon^2+(\widetilde\mu_1'-\mu_{1,D}')(\Omega'_{1,\le D})
\\
&=\varepsilon^2+\mu'(\pi_1^{-1}(\{D_{\cW_1'}\leq D\})\times\pi_2^{-1}(\{D_{\cW_2'}>D\}))
\\
&\leq\varepsilon^2+\mu'(\{ D_{\cU'_1}\le D  \}\cap\{D_{\cU'_2}>D-\varepsilon^2\})
\\
&\le 2\varepsilon^2 + \mu'(\{ D-\varepsilon^2<D_{\cU'_2}\leq D+\varepsilon \})
\\
&\leq 2\varepsilon^2 + \mu_2(\{ D-\varepsilon^2<D_{\cW_2} \le D+\varepsilon+\varepsilon^2\})
\leq 2\varepsilon^2+\delta=:\widetilde\delta.
\end{align*}
Here we used the fact that $\d22(\cU_1',\cU_2')<\varepsilon$, which meant
that we could apply (2). Similarly, $\mu_{2,D}' \le \mu'_2$ and
\begin{align*}
0 &\le (\mu'_2-\mu_{2,D}')(\wOmega'_{2,\le D})\\
&\leq\varepsilon^2+\mu'(\{ D_{\cU'_1}>D-\varepsilon^2  \}\cap\{D_{\cU'_2}\le D\})
\\
&\le 2\varepsilon^2 + \mu'(\{ D-\varepsilon-\varepsilon^2<D_{\cU'_2}\leq D \})\\
&\leq 2\varepsilon^2 + \mu_2(\{ D-\varepsilon-\varepsilon^2<D_{\cW_2} \le D+\varepsilon^2\})
\leq \widetilde\delta.
\end{align*}
Next we claim that we may assume without loss of generality that
\[
(\mu_1' - \mu'_{1,D})(\Omega_{1,\leq D}) =
(\mu'_2-\mu_{2,D}')(\Omega'_{2,\leq D})\leq \widetilde\delta.
\]
Indeed, we can trivially extend either $ \cW_1'$ or $\cW_2'$ by appending a
space of total measure $\delta'\leq \widetilde\delta$ (e.g., the interval
$[0,\delta')$), setting $\mu'$ to zero on the additional set. This
corresponds to trivially extending both ${\cU_{1}'}$ and ${\cU_{2}'}$ by
either $[0,\delta']\times\Omega_2'$ or $\Omega_1'\times [0,\delta']$. Since
$\mu'=0$ on the extension, this does not change $\d22({\cU_{1}'},{\cU_{2}'})
$.

Note also that
\begin{align*}
\mu'(\Omega_1'\times\Omega_2'\setminus\Omega'_{1,\leq D}\times \Omega'_{2,\leq D})
&\leq
\mu'(\{D_{\cW_1'}>D\}\times\Omega_2')+\mu'(\Omega_1'\times\{D_{\cW_2'}>D\})
\\
&\leq\mu_1(\{D_{\cW_1}>D\})+\mu_2(\{D_{\cW_2}>D\})
\leq M.
\end{align*}
If $\cU_1''$ and $\cU_2''$ are the restrictions of $\cU_1'$ and $\cU_2'$ to
$\Omega'_{1,\leq D}\times \Omega'_{2,\leq D}$, and $\mu''$ is the restriction
of $\mu'$, then by Lemma \ref{lemmaremovedsetdistancenotworse},
\[
\|\cU_1''-\cU_2''\|_{2 \rightarrow 2,\mu''} \le \varepsilon,
\]
\[
\|D_{\cU_1''}-D_{\cU_2''}\|_{2,\mu''} \le \varepsilon^2 + \sqrt{M}\varepsilon,
\]
and
\[
|\|\cU_1''\|_1 - \|\cU_2''\|_1| \le \varepsilon^3 +2\sqrt{M} \varepsilon^2 + M\varepsilon.
\]

Next, we increase $\mu''$ to a measure $\mu$ on $\Omega'_{1,\leq D}\times
\Omega'_{2,\leq D}$ by coupling $\mu_1' - \mu_{1,D}'$ and $\mu_2'-\mu_{2,D}'$
arbitrarily. Then $\mu$ has marginals $\mu_1'|_{\Omega'_{1,\leq D}}$ and
$\mu_2'|_{\Omega'_{2,\leq D}}$ and $\mu-\widetilde\delta\leq \mu''\leq \mu$.
If we apply Lemma \ref{lemmaremovednotmuchsmaller}, we then obtain a coupling
of $\cW'_{1,\le D}$ and $\cW'_{2,\le D}$ such that the pullbacks $\cU_1'''$
and $\cU_2'''$ obey the bounds
\[
\|\cU_1'''-\cU_2'''\|_{2 \rightarrow 2,\mu} \le \varepsilon + 2\sqrt{ 2 D\widetilde\delta},
\]
and
\[
|\|\cU_1'''\|_1 - \|\cU_2'''\|_1| \le \varepsilon^3 +2\sqrt{M} \varepsilon^2 + M\varepsilon + 2 D\widetilde\delta.
\]
Setting
\[
\widetilde C=C+\varepsilon^3 +2\sqrt{M} \varepsilon^2 + M\varepsilon + 2D\widetilde\delta,
\]
we then have $\max\{\|\cU_1'''\|_1 ,\|\cU_2'''\|_1\}\leq\widetilde C$ and hence
\[
\|D_{\cU_1'''}-D_{\cU_2'''}\|_{2,\mu}^2 \le \Bigl(\varepsilon^2 +
\sqrt{M}\varepsilon\Bigr)^{2} + 4 \widetilde C\widetilde\delta +
D^2\widetilde\delta.\] This completes the proof of (4).
\end{proof}

Suppose $(\cW_n)_{n=1}^\infty$ and $\cW$ are graphexes over the
$\sigma$-finite measure spaces $\bOmega_n=(\Omega_n,\cF_n,\mu_n)$ and
$\bOmega=(\Omega,\cF,\mu)$. Define, for any $D>0$,
\[
\Omega_{n,\leq D}=\{x \in \Omega_n: D_{\cW_n}(x) \le D\}
\qquad\text{and}\qquad
\Omega_{n,>D}=\{x \in \Omega_n: D_{\cW_n}(x) > D\}.
\]
Recall that we have $\mu_n(\Omega_{n, >D})<\infty$. Let $\cW_{n,\leq D}$
consist of $\cW_n$ restricted to $\Omega_{n,{\leq D}}$. Define $\Omega_{\leq
D}$, $\Omega_{>D}$, and $\cW_{\leq D}$ similarly. We then have the following.
\begin{proposition} \label{propgeneralconvergenceequiv}
Given a sequence of graphexes $\cW_n$ and a graphex $\cW$ over the
$\sigma$-finite measure spaces $\bOmega_n=(\Omega_n,\cF_n,\mu_n)$ and
$\bOmega=(\Omega,\cF,\mu)$, respectively, define $\cW_{n,{\leq D}}$ and
$\cW_{\leq D}$ as above. Then the following are equivalent.
\begin{enumerate}
\item\label{convequivatmostDboth} For all $D>0$ such that
    $\mu(\{D_{\cW}=D\})=0$, we have $\deltt(\cW_{n,{\leq D}},\cW_{\leq D})
\rightarrow 0$ and $\mu_n(\Omega_{n,>D}) \rightarrow \mu(\Omega_{>D})$.
\item\label{convequivatmostDtight} The sequence is tight, and for all $D>0$
    such that $\mu(\{D_{\cW}=D\})=0$, we have $\deltt(\cW_{n,{\leq
D}},\cW_{\leq D}) \rightarrow 0$.
\item \label{convequivremoveeps} For every $\varepsilon>0$ and $n\in\NN$
    there exist subsets $\Omega_{n,\varepsilon} \subseteq \Omega_n$ and a
subset $\Omega_{\varepsilon} \subseteq\Omega$ with $\mu_{n}(\Omega_n
\setminus \Omega_{n,\varepsilon}) \le\varepsilon$ and $\mu(\Omega \setminus
\Omega_{\varepsilon}) \le \varepsilon$, such that $\deltt(\cW_n',\cW')
\rightarrow 0$, where $\cW_n'=(\cW_n)|_{\Omega_n,\eps}$ and
$\cW'=\cW|_{\Omega_\eps}$.
\item\label{convequivepsilondistance} $\delGP(\cW_n,\cW) \rightarrow 0$.
\end{enumerate}
\end{proposition}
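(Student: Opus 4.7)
The plan is to establish the cyclic chain of implications $(1) \Rightarrow (2) \Rightarrow (3) \Rightarrow (4) \Rightarrow (1)$, with the last step being the technical heart of the argument, where the full strength of the preceding four-part lemma is invoked.

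For $(1) \Rightarrow (2)$, I would verify tightness using condition~(3) of Theorem~\ref{theoremtightequiv}. Given $\eps > 0$, choose $D$ with $\mu(\{D_\cW = D\}) = 0$ (possible since that set is countable) large enough that $\mu(\Omega_{>D}) < \eps/2$. By (1), $\mu_n(\Omega_{n,>D}) \to \mu(\Omega_{>D})$, so eventually $\mu_n(\Omega_{n,>D}) < \eps$; for the finitely many remaining indices one enlarges $D$. The bound $\|\cW_{n,\le D}\|_1 \le C$ follows from $\deltt(\cW_{n,\le D}, \cW_{\le D}) \to 0$, which forces $\rho(\cW_{n,\le D}) \to \rho(\cW_{\le D}) < \infty$. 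For $(2) \Rightarrow (3)$, combine tightness of $\{\cW_n\}$ with the analogous property of the single graphex $\cW$ to choose $D$ with $\mu(\{D_\cW=D\})=0$ so that $\mu_n(\Omega_{n,>D}), \mu(\Omega_{>D}) \le \eps$ uniformly; set $\Omega_{n,\eps} = \Omega_{n,\le D}$ and $\Omega_\eps = \Omega_{\le D}$. The restrictions are $(C,D)$-bounded hence in $L^1 \cap L^2$, and the $\deltt$-convergence is a direct hypothesis of (2).

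For $(3) \Rightarrow (4)$, given $\eps > 0$ apply (3) with parameter $\eps^2$. The restricted graphexes $\widetilde{\cW}_n$ and $\widetilde{\cW}$ correspond to measures $\widetilde{\mu}_n \le \mu_n$ and $\widetilde{\mu} \le \mu$ whose total mass defects are at most $\eps^2$, i.e.\ $\mu_n - \eps^2 \le \widetilde{\mu}_n \le \mu_n$ (and likewise for $\mu$). Since $\deltt(\widetilde{\cW}_n, \widetilde{\cW}) \to 0$, the quantity is $\le \eps$ for $n$ large, which by Definition~\ref{def:delGP} gives $\delGP(\cW_n, \cW) \le \eps$.

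The main step is $(4) \Rightarrow (1)$, which is where the preceding lemma does the real work. Let $\eps_n = \delGP(\cW_n, \cW) + 1/n \to 0$ and fix $D > 0$ with $\mu(\{D_\cW = D\}) = 0$. Unpack the definition of $\delGP$ to obtain approximating measures $\widetilde{\mu}_n, \widetilde{\mu}$ with defect $\le \eps_n^2$ and a coupling giving $\deltt(\widetilde{\cW}_n, \widetilde{\cW}) \le \eps_n$. Chaining parts (1) and (3) of the lemma — first passing from $\mu_n, \mu$ to $\widetilde{\mu}_n, \widetilde{\mu}$ via part (1), then comparing $\widetilde{\mu}_n$ and $\widetilde{\mu}$ via part (3), then returning to $\mu_n$ via part (1) again — yields the sandwich
\[
\mu(D_\cW > D + \eps_n + 2\eps_n^2) - 2\eps_n^2 \le \mu_n(D_{\cW_n} > D) \le \mu(D_\cW > D - \eps_n - \eps_n^2) + 2\eps_n^2.
\]
Continuity of $D' \mapsto \mu(\{D_\cW > D'\})$ at $D$ (which is the content of $\mu(\{D_\cW = D\}) = 0$, together with finiteness from Proposition~\ref{prop:local-finite}) then yields $\mu_n(\Omega_{n,>D}) \to \mu(\Omega_{>D})$. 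For the $\deltt$-convergence, I would apply part~(4) of the lemma with $\cW_1 = \cW_n$ and $\cW_2 = \cW$. The just-established marginal convergence provides a uniform bound $M$ on $\mu_n(\{D_{\cW_n} > D\}) + \mu(\{D_\cW > D\})$; the constant $C \ge \|\cW_{\le D}\|_1$ comes from Proposition~\ref{prop:local-finite}; and the boundary mass $\mu(\{D-\eps_n-\eps_n^2 < D_\cW \le D+\eps_n+\eps_n^2\})$ that must be controlled tends to $0$ by the same continuity argument. Part~(4) then gives $\deltt(\cW_{n,\le D}, \cW_{\le D}) \le f(\eps_n, \delta_n) \to 0$.

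The main obstacle is the careful bookkeeping in $(4) \Rightarrow (1)$: the $\delGP$-bound only controls the coupled, restricted graphexes, and one must translate this into simultaneous statements about the unrestricted original measures $\mu_n, \mu$ and the graphexes $\cW_{n, \le D}, \cW_{\le D}$, which requires chaining the lemma's parts with the correct error accounting and leveraging the continuity of the distribution of $D_\cW$ at $D$ to absorb the boundary terms.
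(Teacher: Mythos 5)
Your proposal is correct and follows essentially the same route as the paper's proof: $(1)\Rightarrow(2)$ via condition~(3) of Theorem~\ref{theoremtightequiv}, $(2)\Rightarrow(3)$ by using tightness to pick $D$ and taking $\Omega_{n,\eps}=\Omega_{n,\le D}$, $(3)\Rightarrow(4)$ directly from the definition of $\delGP$, and $(4)\Rightarrow(1)$ by chaining parts~(1) and~(3) of the preceding lemma to sandwich $\mu_n(D_{\cW_n}>D)$, using continuity at $D$ (since $\mu(\{D_\cW=D\})=0$), and then invoking part~(4) of the lemma with the uniform bound $M$ and the vanishing boundary mass $\delta(\eps)$. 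The minor differences (e.g.\ your $D+\eps_n+2\eps_n^2$ versus the paper's $D+\eps+\eps^2$ in the sandwich) are immaterial.
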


\begin{proof}
$(\ref{convequivatmostDboth}) \Rightarrow (\ref{convequivatmostDtight})$: We
have to prove tightness. For any $\varepsilon$, there exists a $D$ such that
$\mu(\Omega_{>D}) \le \varepsilon/2$. Assuming without loss of generality
that $D$ is chosen in such a way that $\mu(\{D_{\cW}=D\})=0$, we further have
$\mu_{n}(\Omega_{n,>D}) \rightarrow \mu(\Omega_{> D})$. This means that for
all but a finite set of $n$, $\mu_{n}(\Omega_{n,>D}) \le \varepsilon$. By
increasing $D$, we can guarantee this for all $n$. Since $\deltt$ convergence
implies in particular that $\|\cW_{n,{\leq D}}\|_1 \rightarrow \|\cW_{\leq
D}\|_1$, we have that $\|\cW_{n,{\leq D}}\|_1$ is bounded. This proves
property \ref{tightD} from Theorem \ref{theoremtightequiv}.

(\ref{convequivatmostDtight}) clearly implies (\ref{convequivremoveeps}),
because tightness implies that for any $\varepsilon$, there exists a $D$ such
that the measure of points with degree greater than $D$ is at most
$\varepsilon$, and we can increase $D$ to make sure that
$\mu(\{D_\cW=D\})=0$.

To show that (\ref{convequivremoveeps}) implies
(\ref{convequivepsilondistance}), we note that the conditions in
(\ref{convequivremoveeps}) imply that
$\limsup_{n\to\infty}\delGP(\cW_n,\cW)\leq \sqrt\varepsilon$. Since
$\varepsilon$ is arbitrary, this gives (\ref{convequivepsilondistance}).

It remains to show that (\ref{convequivepsilondistance}) implies
(\ref{convequivatmostDboth}). The assumption $\delGP(\cW_n,\cW) \rightarrow
0$ implies that for all $\varepsilon>0$ there exists an $n_0$ such that for
$n\geq n_0$, $\delGP(\cW_n,\cW)< \varepsilon$. Recalling
Definition~\ref{def:delGP} and combining statements (1) and (3) of the
previous lemma, this implies that for $D>\varepsilon+\varepsilon^2>0$ and
$n\geq n_0$,
\begin{align*}
\mu(D_{\cW}&>D+\varepsilon+\varepsilon^2)-2\varepsilon^2
\leq
\mu_n(D_{\cW_n}>D)
\leq \mu(D_\cW>D-\varepsilon^2-\varepsilon)+2\varepsilon^2.
\end{align*}
By our assumption that $\mu(\{D_{\cW}=D\})=0$, we have that $D$ is a
continuity point of the function $x\mapsto\mu(\{D_\cW>x\})$, showing that the
upper and lower bound converge to $\mu(\{D_\cW>D\})$ as $\varepsilon\to 0$.
This shows that $\mu_n(D_{\cW_n}>D)\to \mu(\{D_\cW>D\})$ as $n\to\infty$.

Next we define
\[M=\sup_n\mu_n\left(\Omega_{n,>D}\right)+\mu(\Omega_{>D})
\quad\text{ and }\quad \delta(\eps)=\mu(\{D-\varepsilon-\eps^2<D_\cW\leq
D+\eps+\eps^2\}).\] Note that $M$ is finite by the fact that
$\mu_n(\Omega_{n,>D}) \rightarrow \mu(\Omega_{>D})$, and that
$\delta(\eps)\to 0$ as $\varepsilon\to 0$ by the fact that
$\mu(\{D_\cW=D\})=0$. We now apply the previous lemma (with $W_1$ replaced by
$\cW_n$ and $\cW_2$ replaced by $\cW$) to conclude the proof.
\end{proof}

The following proposition is an easy corollary of
Proposition~\ref{propgeneralconvergenceequiv}.

\begin{proposition} \label{propequivgraphexes}
Given two graphexes $\cW_1,\cW_2$, let $\cW_{i,{\leq D}}$ be the graphex
$\cW_i$ restricted to $\Omega_{i,\leq D}=\{x \in \Omega_i: D_{\cW_i}(x) \le
D\}$. Then the following are equivalent.
\begin{enumerate}
\item \label{graphexequiveachDsameboth} For any $D>0$,
    \[\mu_{{1}}(\Omega_1\setminus\Omega_{1,{\leq
    D}})=\mu_{{2}}(\Omega_2\setminus\Omega_{2,{\leq D}})\text{ and }
 \deltt(\cW_{1,{\leq D}},\cW_{2,{\leq D}})=0.\]
\item For any $D>0$, $\deltt(\cW_{1,{\leq D}},\cW_{2,{\leq D}})=0$.
    \label{graphexequiveachDsame}
\item For any $\varepsilon>0$, there exist subsets $\Omega_{1,\varepsilon}
    \subseteq \Omega_1$, $\Omega_{2,\varepsilon} \subseteq \Omega_2$ with
$\mu_{{i}}(\Omega_i \setminus \Omega_{i,\varepsilon}) \le \varepsilon$,
such that if $\cW_i'$ is the restriction to $\Omega_i \setminus
\Omega_{i,\varepsilon}$, then $\deltt(\cW_1',\cW_2') \le \varepsilon$.
\label{graphexequivepsilonD}
\item $\delGP(\cW_1,\cW_2)=0$. \label{graphexequivonlyepsilon}
\end{enumerate}
\end{proposition}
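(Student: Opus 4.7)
I will prove the equivalence by the cycle $(\ref{graphexequiveachDsameboth}) \Rightarrow (\ref{graphexequiveachDsame}) \Rightarrow (\ref{graphexequivepsilonD}) \Rightarrow (\ref{graphexequivonlyepsilon}) \Rightarrow (\ref{graphexequiveachDsameboth})$. The first three implications are essentially direct from the definitions; the main work goes into $(\ref{graphexequivonlyepsilon}) \Rightarrow (\ref{graphexequiveachDsameboth})$, which I will deduce from Proposition~\ref{propgeneralconvergenceequiv} applied to constant sequences.

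The first three implications go as follows. $(\ref{graphexequiveachDsameboth}) \Rightarrow (\ref{graphexequiveachDsame})$ just drops the measure identity. For $(\ref{graphexequiveachDsame}) \Rightarrow (\ref{graphexequivepsilonD})$, given $\varepsilon > 0$ I use Proposition~\ref{prop:local-finite} to pick $D$ with $\mu_i(\Omega_i \setminus \Omega_{i,\le D}) \le \varepsilon$ for both $i$ and set $\Omega_{i,\varepsilon} := \Omega_{i,\le D}$; the hypothesis then gives $\deltt(\cW_1', \cW_2') = 0 \le \varepsilon$. For $(\ref{graphexequivepsilonD}) \Rightarrow (\ref{graphexequivonlyepsilon})$, I take $\widetilde\mu_i = \mu_i 1_{\Omega_{i,\varepsilon}}$ in the definition of $\delGP$: the assumption $\mu_i(\Omega_i \setminus \Omega_{i,\varepsilon}) \le \varepsilon$ yields $\mu_i - \varepsilon \le \widetilde\mu_i \le \mu_i$, and the corresponding modified graphex coincides with $\cW_i'$ up to a trivial extension (and so lies in $L^1 \cap L^2$), so $\deltt(\widetilde\cW_1, \widetilde\cW_2) \le \varepsilon$; this witnesses $\delGP(\cW_1, \cW_2) \le \max\{\sqrt{\varepsilon}, \varepsilon\}$, and sending $\varepsilon \to 0$ gives $(\ref{graphexequivonlyepsilon})$.

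The substantive step is $(\ref{graphexequivonlyepsilon}) \Rightarrow (\ref{graphexequiveachDsameboth})$. I will apply Proposition~\ref{propgeneralconvergenceequiv} to the constant sequence $\cW_n \equiv \cW_1$ with limit $\cW_2$: since $\delGP(\cW_1, \cW_2) = 0$, condition~(\ref{convequivepsilondistance}) of that proposition holds trivially, so condition~(\ref{convequivatmostDboth}) gives $\deltt(\cW_{1,\le D}, \cW_{2,\le D}) = 0$ and $\mu_1(\Omega_{1,>D}) = \mu_2(\Omega_{2,>D})$ for every $D$ with $\mu_2(\{D_{\cW_2} = D\}) = 0$. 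A symmetric application with $\cW_n \equiv \cW_2$ and limit $\cW_1$ yields the same identities whenever $\mu_1(\{D_{\cW_1} = D\}) = 0$. Letting $A$ be the countable set of atoms of $D_{\cW_1}$ and $D_{\cW_2}$, both identities thus hold for every $D \in (0, \infty) \setminus A$.

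The hard part will be upgrading to $D \in A$. For the measure identity, the functions $D \mapsto \mu_i(\Omega_{i,>D})$ are non-increasing, finite-valued, and right-continuous (by continuity of measure from below applied to $\{D_{\cW_i} > D\} = \bigcup_n \{D_{\cW_i} > D + 1/n\}$), so agreement on the dense set $A^c$ forces agreement everywhere. For the $\deltt$ identity at $D \in A$, I pick $D_n \in A^c$ with $D_n \downarrow D$ strictly. The sets $\{D < D_{\cW_i} \le D_n\}$ lie inside the finite-measure set $\{D_{\cW_i} > D\}$ and intersect in $\emptyset$ by strictness, so $a_{n,i} := \mu_i(\{D < D_{\cW_i} \le D_n\}) \to 0$. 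Using the sub-measure $\mu_i 1_{\Omega_{i,\le D}}$ on $\Omega_{i,\le D_n}$ witnesses $\delGP(\cW_{i,\le D_n}, \cW_{i,\le D}) \le \sqrt{a_{n,i}}$, because the resulting modified graphex equals $\cW_{i,\le D}$ up to a trivial extension, making the $\deltt$ term in the definition of $\delGP$ equal to zero. Since both graphexes have marginals bounded by $D_1$, Proposition~\ref{propboundedequivmetrics} upgrades this to $\deltt(\cW_{i,\le D_n}, \cW_{i,\le D}) \to 0$. Combining this with the triangle inequality for $\deltt$ and the fact that $\deltt(\cW_{1,\le D_n}, \cW_{2,\le D_n}) = 0$ (since $D_n \in A^c$) yields $\deltt(\cW_{1,\le D}, \cW_{2,\le D}) = 0$, completing the proof.
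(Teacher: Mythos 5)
Your proof is correct, and it follows the same overall route as the paper: the first three implications are straightforward unpacking of definitions, and the substantive step $(\ref{graphexequivonlyepsilon})\Rightarrow(\ref{graphexequiveachDsameboth})$ is obtained by applying Proposition~\ref{propgeneralconvergenceequiv} to constant sequences. The one place where you do genuinely more work than the paper is the upgrade from $D$ ranging over continuity points of $D\mapsto\mu_i(\Omega_{i,>D})$ to \emph{all} $D>0$. The paper's proof only remarks that two non-increasing right-continuous functions agreeing at continuity points must agree everywhere, which disposes of the measure identity $\mu_1(\Omega_{1,>D})=\mu_2(\Omega_{2,>D})$, but is silent on why $\deltt(\cW_{1,\le D},\cW_{2,\le D})=0$ also holds at the (countably many) jump points. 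Your argument --- approximating $D$ from above by continuity points $D_n$, showing $\delGP(\cW_{i,\le D_n},\cW_{i,\le D})\le\sqrt{a_{n,i}}\to 0$ by using the submeasure $\mu_i 1_{\Omega_{i,\le D}}$ as a witness, upgrading this to $\deltt$-convergence via Proposition~\ref{propboundedequivmetrics} (legitimate since all these graphexes have $D_1$-bounded marginals), and then closing by the triangle inequality --- cleanly fills that gap. One small remark: the statement of condition~$(\ref{graphexequivepsilonD})$ as printed in the paper contains a slip (the same set $\Omega_i\setminus\Omega_{i,\eps}$ is asserted both to have measure $\le\eps$ and to be the set on which one restricts), and you have implicitly and correctly fixed this by treating $\Omega_{i,\eps}$ as the kept set and $\cW_i'$ as its restriction, matching the analogous condition in Proposition~\ref{propgeneralconvergenceequiv}.
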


It is easy to see that this is an equivalence relation.

\begin{proof}
Observing that the functions $f_i\colon\RR_+\to\RR+$ defined by
$D\mapsto\mu_i(\Omega_i\setminus\Omega_{i,\leq D})$ for $i=1,2$ are equal if
and only if $f_1(D)=f_2(D)$ for all continuity points of $f_1$, this follows
by applying Proposition \ref{propgeneralconvergenceequiv} with $\cW=\cW_1$
and each $\cW_n=\cW_2$.
\end{proof}

\begin{remark}\label{rem:signed-tightness}
It is not \emph{a priori} clear how to generalize the notion of tightness to
signed graphexes, even if we restrict ourselves to the case where the graphon
parts are uniformly bounded, for example by taking graphons that take values
in $[-1,1]$. Indeed, recalling Lemma~\ref{lem:signed-delGP<infty} and the
role it played in showing that $\delGP$ is finite for signed graphexes with
bounded graphon part, one might want to modify Definition~\ref{def:tight}
 for such graphexes by replacing the notion of
$D$-bounded marginals by that of $D$-bounded absolute marginals, since this
would, in particular, guarantee that a finite set of signed graphexes with
graphon parts in $[-1,1]$ is tight. It would also make the generalization of
several of our results straightforward, since this definition just reduces
the notion of tightness of a set of graphexes $\cS$ to the set of graphexes
$\cS'=\{|\cW| : \cW\in \cS\}$.

The following example shows that this straightforward generalization of
Definition~\ref{def:tight} to signed graphexes does not give a
characterization of precompact sets with respect to the metric $\delGP$, as
it did for unsigned graphexes; see Theorem~\ref{thm:complete}.

Let $W_n$ be equal to $n^{-3/4}$ on $[0,1)\times[1,n+1)\cup [1,n+1)\times
[0,1)$, equal to $-n^{-3/4}$ on $[0,1)\times[n+1,2n+2)\cup [n+1,2n+1)\times
[0,1)$, and zero everywhere else on $\RR_+^2$. Define $\cW_n$ to be the
graphex with graphon part $W_n$ and zero star and dust part. Then $D_{\cW_n}$
is equal to $n^{-3/4}$ on $[1,n+1)$, equal to $-n^{-3/4}$ on $[n+1,2n+1)$,
and $0$ everywhere else. Finally, $\rho(\cW_n)= 0$ for all $n$. Since
$\|W_n\|_2=2n^{-1/2}$ and $\|D_{\cW_n}\|_2=\sqrt 2 n^{-1/2}$, $\cW_n$ tends
to the zero graphex in the metric $\deltt$ and hence also in $\delGP$. But
$\|\cW_n\|_1=\|W\|_1=4 n^{1/4}\to\infty$, a fact which can't be changed by
removing just a part of measure $\varepsilon$ from the underlying space,
$\RR_+$. This shows that with the obvious generalization of
Definition~\ref{def:tight} to signed graphexes not all sequences of signed
graphexes that are convergent in $\deltt$ or $\delGP$ are tight.

We therefore believe that a complete theory of signed graphexes, even in the
simplified case where all graphons take values in $[-1,1]$, requires either a
modification of the metric, or modification of the notion of tightness. We
leave this problem as an open research problem.
\end{remark}

\section{Regularity lemma and compactness} \label{sec:reg}

In this section, we will prove a regularity lemma
(Theorem~\ref{regularity-lemma} below), and use it to prove
Theorem~\ref{thm:complete}, which in turn is an important ingredient in our
proof that GP-convergence and $\delGP$-convergence are equivalent. To state
the regularity lemma, we recall that a finite subspace partition of a measure
space $\bOmega=(\Omega,\cF,\mu)$ is a partition of a measurable subset of
$\Omega$ into finitely many measurable subsets of finite measure. Throughout
this section, we will use the notation $\sP=(\Omega_{\sP},\cP)$ for a finite
subspace partition, with $\Omega_{\sP}$ denoting the subset of $\Omega$, and
$\cP=(P_1,\dots,P_m)$ denoting the partition of $\Omega_{\sP}$. We will also
need the notion of refinement.
\begin{definition}
Given two subspace partitions $\mathscr{P} = (\Omega_{\sP},\cP)$ and
$\sQ=(\Omega_{\cQ},\cQ)$, we say that $\sP$ \emph{refines} $\sQ$ if
$\Omega_{\cQ} \subseteq \Omega_{\sP}$ and $\cP$ is a refinement of $\cQ \cup
\{\Omega_{\sP} \setminus \Omega_{\cQ}\}$.
\end{definition}

Given an integrable, signed graphex $\cW$, a subspace partition $\sP$
naturally generates a step function $\cW_{\sP}$ by ``averaging''. The precise
definition is as follows.

\begin{definition}
Given a signed graphex $\cW=(W,S,I,\bOmega)$ and a finite subspace partition
$\sP=(\Omega_\sP,\cP)$, take $\cW_\sP$ to be the signed graphex
$\cW_\sP=(W_\sP,S_\sP,I_\sP,\bOmega)$ defined by
\[
I_\sP= \frac{1}{2}\int_{(\Omega \setminus \Omega_{\sP})\times (\Omega \setminus \Omega_{\sP})} W(x,y)\,d\mu(x) \,d\mu(y) + \int_{\Omega \setminus \Omega_{\sP}}S(x)\,d\mu(x)+I,
\]
\[
S_{\sP}(x)=\frac{1}{\mu(P_i)}\int_{P_i} \left(S(x)+\int_{\Omega \setminus \Omega_{\sP}}W(x,y)\,d\mu(y) \right)\,d\mu(x)
\quad\text{if}\quad x \in P_i
\]
for some $i\in\{1,\dots,k\}$, and $0$ everywhere else,
and
\[
W_{\sP}(x,y)=\frac1{\mu(P_i)\mu(P_j)}\int_{P_i \times P_j}W(x',y')\,d\mu(x')\,d\mu(y')\quad\text{if}\quad(x,y)\in P_i\times P_j
\]
for some $i,j\in \{1,\dots,k\}$ and $W_{\sP}(x,y)=0$ everywhere else.
\end{definition}
Note that with this definition, for $x \in P_i$,
\[D_{\cW_\sP}(x)=\frac{1}{\mu(P_i)} \int_{P_i} D_\cW(x) \,d\mu(x)
.\] We also have $\rho(\cW_\sP)=\rho(\cW)$, as well as
$\|\cW_\cP\|_1\leq\|\cW\|_1$, $\|D_{\cW_\sP}\|_\infty\leq \|D_\cW\|_\infty$,
and $\|W_\sP\|_\infty\leq \|W\|_\infty$.

\begin{theorem} \label{regularity-lemma}
For any $B,C,D<\infty$, and $\varepsilon>0$, there exists an $M(\varepsilon)$
and $N(\varepsilon)$ such that for any signed graphex $\cW$ that is
$(B,C,D)$-bounded there exists a partition $(\Omega_{\sP},\cP)$ with
$\cP=\{P_1,\dots,P_m\}$, $m \le M(\varepsilon)$, and $\mu(\Omega_{\sP}) \le
N(\varepsilon)$ such that
\[
d_\boxtimes(\cW,\cW_{\sP})\leq \eps.
\]
We can take
\[
 M(\eps)=2^{(2BC+CD)/\eps^2}\qquad\text{and}\qquad
 N(\varepsilon)=
(4C^3D+8BC^2D)/{\eps^4}.
 \]
Given any finite subspace partition $\sQ=(\Omega_{\sQ},\cQ)$, we can require
the subspace partition $\sP=(\Omega_{\sP},\cP)$ to be a refinement of $\sQ$.
In this case, the bound on the number of parts is $|\cQ|M(\varepsilon)$ and
the bound on $\mu(\Omega_{\sP})$ is $\mu(\Omega_{\cQ})+N(\varepsilon)$.
\end{theorem}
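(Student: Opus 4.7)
The plan is to prove this via a two-phase Frieze--Kannan argument adapted to the $\sigma$-finite setting. In the first phase I choose a subset $\Omega_{\sP}$ large enough that the ``outside'' contribution to both $\|W-W_\sP\|_\jbl$ and $\|D_\cW-D_{\cW_\sP}\|_\jbl$ is at most $\varepsilon/2$; in the second phase I refine the partition inside $\Omega_\sP$ by an energy increment argument until the remaining ``inside'' contribution is also at most $\varepsilon/2$. Since $\rho(\cW_\sP)=\rho(\cW)$ by construction, the third coordinate of $\djbl$ automatically vanishes. If a $\sQ=(\Omega_\cQ,\cQ)$ is prescribed, I initialize the partition to contain $\cQ$.

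For the preprocessing, I pick a threshold $\delta$ (to be tuned in terms of $\varepsilon,B,C,D$) and take $\Omega_{\sP_0}\supseteq \Omega_\cQ\cup\{D_{|\cW|}>\delta\}$. By Markov's inequality and $\|D_{|\cW|}\|_1\le\|\cW\|_1\le C$, the latter set has measure at most $C/\delta$, so $\mu(\Omega_{\sP_0})\le \mu(\Omega_\cQ)+C/\delta$. Outside $\Omega_{\sP_0}$ we have $D_{|\cW|}\le\delta$. Decomposing $W-W_\sP=(W-W_\sP)1_{\Omega_\sP^2}+W\cdot 1_{\mathrm{out}}$ with $\mathrm{out}=(\Omega\times\Omega)\setminus\Omega_\sP^2$, I use $\|\cdot\|_\jbl\le\|\cdot\|_{2\to 2}$ from Lemma~\ref{lemmasquare2to2equiv} together with the Cauchy--Schwarz estimate $((W\cdot 1_\mathrm{out})\circ g)(x)^2\le D_{|W|}(x)\int|W(x,y)|g(y)^2\,d\mu(y)$ to derive $\|W\cdot 1_\mathrm{out}\|_\jbl\le\sqrt{2\delta D}$ (the $\delta$ coming from the cap outside, the $D$ from $D_{|W|}\le D$ inside). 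For the marginal, $\|D_\cW\cdot 1_{\Omega\setminus\Omega_\sP}\|_\jbl\le\|D_\cW\cdot 1_{\Omega\setminus\Omega_\sP}\|_2\le\sqrt{\delta C}$ by Lemma~\ref{lem:sets2equiv} and $\|D_\cW\|_1\le C$. Choosing $\delta$ of order $\varepsilon^2/(C+D)$ makes both outside contributions at most $\varepsilon/2$ and yields a bound of the claimed $N(\varepsilon)$ shape.

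In the refinement phase I keep $\Omega_\sP$ fixed and work with the energy $e(\sP)=\|W_\sP\|_2^2+\|D_{\cW_\sP}\cdot 1_{\Omega_\sP}\|_2^2$. Since $W_\sP$ is the conditional expectation of $W\cdot 1_{\Omega_\sP^2}$ with respect to $\cP\otimes\cP$, and $D_{\cW_\sP}|_{\Omega_\sP}=\EE(D_\cW\mid\cP)$, we have $e(\sP)\le\|W\|_\infty\|W\|_1+\|D_\cW\|_\infty\|D_\cW\|_1\le BC+DC$, and $e$ is monotone under refinement. While either inside jumble norm exceeds $\varepsilon/2$, the witness $(S,T)$ (for the $W$-term) or $X$ (for the $D$-term) can be taken inside $\Omega_\sP$ since restricting to $\Omega_\sP$ only shrinks the denominator $\sqrt{\mu(S)\mu(T)}$ without changing the numerator. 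Refining $\cP$ by the common refinement with such a witness set gives, via the standard Cauchy--Schwarz identity that in each cell of $\cP$ the split-energy gain equals $\frac{\mu(P_i\cap X)\mu(P_i\setminus X)}{\mu(P_i)}(a_i-b_i)^2$ and summing with Cauchy--Schwarz against $\int_X(D_\cW-D_{\cW_\sP})$, an energy gain of at least $(\varepsilon/2)^2=\varepsilon^2/4$ (analogously in two variables for the $W$-case). Hence the iteration terminates in $O((BC+DC)/\varepsilon^2)$ steps, each at most quadrupling the number of parts, giving the claimed $M(\varepsilon)=2^{O((BC+CD)/\varepsilon^2)}$.

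The main obstacle is the interplay between the two phases caused by the $\sigma$-finite base: unlike the dense case, the ``outside'' contribution to the jumble norm is not automatic, and one must argue that a \emph{single} truncation of $D_{|\cW|}$ at level $\delta$ simultaneously controls $W\cdot 1_\mathrm{out}$ (via the kernel-operator bound) and $D_\cW\cdot 1_\mathrm{out}$ (via a direct $L^2$ bound). A secondary subtlety is that in the signed case one must threshold using $D_{|\cW|}$ rather than $D_\cW$; but since $|D_\cW|\le D_{|\cW|}$ and $|W|\le B$, the outside estimates above go through verbatim, and the energy-increment bounds hold unchanged since $\|W_\sP\|_2$ and $\|D_{\cW_\sP}1_{\Omega_\sP}\|_2$ are controlled by $\|W\|_\infty,\|W\|_1,\|D_\cW\|_\infty,\|D_{|\cW|}\|_1$, all bounded by $B,C,D$.
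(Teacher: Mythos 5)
Your proposal is correct, and it takes a genuinely different route from the paper. The paper runs a single Frieze--Kannan energy increment starting from the empty subspace partition: each time $\|D_\cW-D_{\cW_\sP}\|_{\jbl}>\eps$ or $\|W-W_\sP\|_{\jbl}>\eps$, the witness set(s) are adjoined to $\Omega_\sP$ and used to refine $\cP$, and the measure bound $N(\eps)$ is obtained by bounding each witness set's measure (at most $4C^2/\eps^2$ or $4CD/\eps^2$) and multiplying by the number of steps (at most $(2BC+CD)/\eps^2$), giving the quoted $\eps^{-4}$-shaped bound. You instead commit to $\Omega_\sP=\Omega_\cQ\cup\{D_{|\cW|}>\delta\}$ \emph{up front}, with $\delta\sim\eps^2/(C+D)$, and separately control the off-$\Omega_\sP^2$ contribution by the operator-norm/Cauchy--Schwarz estimate $\|W1_{\mathrm{out}}\|_{2\to2}\le\sqrt{2\delta D}$ and the $L^2$ estimate $\|D_\cW 1_{\Omega\setminus\Omega_\sP}\|_2\le\sqrt{\delta C}$, then refine only \emph{inside} $\Omega_\sP$. (One small caveat: your displayed Cauchy--Schwarz inequality as written uses $D_{|W|}$ and $|W|$ on the right-hand side and by itself only yields the trivial bound $D$; the $\delta$ enters precisely when one retains $D_{|W1_{\mathrm{out}}|}$ and $|W1_{\mathrm{out}}|$ and splits the resulting double integral according to whether $x$ or $y$ lands outside $\Omega_\sP$. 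Your parenthetical indicates you know this, but the formula should be tightened.) The two approaches trade off differently: yours gives a genuinely sharper $N(\eps)$, of order $C(C+D)/\eps^2$ rather than the paper's $C^2 D(C+B)/\eps^4$, since you avoid summing witness-set measures over all steps; in exchange, the two-phase $\eps/2$-split worsens the exponent in $M(\eps)$ by a constant factor (the paper gets gain $\eps^2$ per step, you get $\eps^2/4$). Both are within the latitude of the theorem statement (``there exists $M(\eps),N(\eps)$''), and both handle the $\sQ$-refinement clause by initializing the partition at $\cQ\cup\{\Omega_\sP\setminus\Omega_\cQ\}$.
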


\begin{proof}
Motivated by the original proof of the weak regularity lemma (see in
particular the proof of Theorem 12 in \cite{FK99}, which to our knowledge is
the first place where a weak regularity lemma for functions
$W\colon[0,1]^2\to \RR$ was established), we construct a sequence of
partitions $\sP_0,\sP_1,\dots,\sP_\ell$ such that eventually, we must have
that $d_\boxtimes(\cW,\cW_{\sP_\ell})\leq \eps$. We start with the trivial
partition $\sP_0=(\emptyset,\emptyset)$ (so that $\cW_{\sP_0}$ is the graphex
with zero graphon and star part, and dust part $\rho(\cW)$), and then
construct a sequence of refinements $\sP_0,\sP_1,\dots,\sP_\ell$.

In a preliminary step, we claim that for any partition $\sP$,
\[\langle W-W_\sP,W_\sP\rangle= \int_{\Omega \times \Omega} (W(x,y)-W_\sP(x,y))W_\sP(x,y) \,dx \,dy =0
\] and
\[\langle D_{\cW}-D_{\cW_\sP},D_{\cW_\sP}\rangle=\int_\Omega (D_\cW(x)-D_{\cW_{\sP}}(x))D_{\cW_{\sP}}(x) \,dx =0
\]
This follows from the fact that for any pair of finite parts $P_i,P_j$,
$W_\sP$ and $D_{\cW_\sP}$ are constant, and the integral of $W-W_\sP$ and
$D_{\cW}-D_{\cW_\sP}$ is zero. Since $W_\sP$ is zero between pairs of parts
where at least one is non-finite, and $D_{\cW_\sP}$ is zero on non-finite
parts, this implies the claim. Therefore, we have
\[\|W\|_2^2=\|W_\sP\|_2^2+\|W-W_\sP\|_2^2\]
and
\[\|D_\cW\|_2^2=\|D_{\cW_\sP}\|_2^2+\|D_{\cW}-D_{\cW_\sP}\|_2^2.\]
If we have a finite subspace partition $\sP=(\Omega_{\sP},\cP)$ and a
refinement $\sP'$, it is then easy to check that $(\cW_{\sP'})_\sP=\cW_\sP$;
therefore, the same properties hold for $\cW_\sP$ and $\cW_{\sP'}$.

Suppose now that we have constructed a sequence of refinements
$\sP_0,\sP_1,\dots,\sP_i$ such that $d_\boxtimes(\cW,\cW_{\sP_j}) >\eps$ for
all $j\leq i$. Then we in particular have that $d_\boxtimes(\cW,\cW_{\sP_i})
>\eps$, which implies that $\|D_\cW-D_{\cW_{\sP_i}}\|_\boxtimes >
\varepsilon$ or $\|W-W_{\sP_i}\|_{\boxtimes} > \varepsilon$. If the former
holds, then there exists a set $S \subseteq \Omega$ (of finite measure) such
that
\begin{equation}
\label{D-jbl>eps}
\frac{1}{\sqrt{\mu(S)}}\left|\int_{S} D_\cW(x)\,d\mu(x)-\int_{S} D_{\cW_{\sP_{{i}}}}(x)\,d\mu(x)\right|=A>\eps.
\end{equation}
Let $\sP_{i+1}=(\Omega_{{\sP_{i+1}}},\cP_{i+1})$ with
$\Omega_{{\sP_{i+1}}}=\Omega_{\sP_{i}} \cup S$ and let $\cP_{i+1}$ be the
partition that refines each part of $\cP_{i}$ by the intersection with $S$;
in particular, this divides each part into at most $2$ parts, and
$\mu(\Omega_{{\sP_{i+1}}}) \le \mu(\Omega_{\sP_{i}})+\mu(S)$. We also have
\[\frac{1}{\sqrt{\mu(S)}}\left|\int_{S} D_{\cW_{{\sP_{i+1}}}}(x)\,d\mu(x)-\int_{S}
D_{\cW_{\sP_{i}}}(x)\,d\mu(x)\right|=A
.\]
Therefore,
\[\|D_{\cW_{{\sP_{i+1}}}}-D_{\cW_{\sP_{i}}}\|_2^2 \ge \left\langle D_{\cW_{{\sP_{i+1}}}}-D_{\cW_{\sP_{i}}},\frac{\chi_{S}}{\sqrt{\mu(S)}}
\right\rangle^2=A^2
.\]
Overall, this implies that we have
\[\|D_{\cW_{\sP_{i}}}\|_2^2+{\eps^2} \le \|D_{\cW_{{\sP_{i+1}}}}\|_2^2 \le \|D_{\cW}\|_2^2
.\]
We also have
\[
\|W_{\sP_i}\|_2^2 \le \|W_{\sP_{i+1}}\|_2^2 \le \|W\|_2^2 .\] If
$\|W-W_{\sP_i}\|_{\boxtimes} > \varepsilon$, then there exist sets $S,T
\subseteq \Omega$ (of finite measure) such that
\begin{equation}
\label{W-jbl>eps}
\frac{1}{\sqrt{\mu(S)\mu(T)}}\left|\int_{S \times T} W(x,y)\,d\mu(x)\,d\mu(y)-\int_{S \times T}
W_{\sP_{i}}(x,y)\,d\mu(x)\,d\mu(y)\right|>\eps.
\end{equation}
Let $\sP_{{i+1}}=(\Omega_{\sP_{i+1}},\cP_{i+1})$ with
$\Omega_{\sP_{i+1}}=\Omega_\sP \cup S \cup T$ and let $\cP_{i+1}$ be the
partition that refines each part of $\cP_{i}$ by the intersection with $S$
and $T$, in particular, this refines each part into at most $4$ parts, and
$\mu(\Omega_{\sP_{i+1}}) \le \mu(\Omega_{\sP_{i}})+\mu(S)+\mu(T)$. Proceeding
as before, we have
\[\|W_{\sP_{i}}\|_2^2+\varepsilon^2 \le \|W_{\sP_{i+1}}\|_2^2 \le \|W\|_2^2
,\] and furthermore,
\[\|D_{\cW_{\sP_i}}\|_2 \le \|D_{\cW_{\sP_{i+1}}}\|_2 \le
\|D_{\cW}\|_2 .\] The first step can occur at most
${\|D_\cW\|_2}/{\varepsilon^2}$ times, and the second at most ${\|W\|_2^2
}/{\varepsilon^2}$ times. Since in the first step, the number of partition
classes at most doubles, and in the second it goes up by at most a factor of
four, this proves that there exists a partition $\sP$ with at most
\[
2^{(2\|W\|_2^2+\|D_\cW\|_2^2)/\eps^2}\leq2^{(2BC+CD)/\eps^2}=M(\eps)
\]
classes such that $d_\boxtimes(\cW,\cW_{\sP})\leq \eps$.

To prove that $\mu(\Omega_{\sP}) \le N(\varepsilon)$, we claim that
in each step, \eqref{D-jbl>eps} implies that
\[\mu(S) \le \frac{4C^2}{\eps^2},
\]
and \eqref{W-jbl>eps} implies that
\[\mu(S),\mu(T) \le \frac{4CD}{\eps^2}
.\] Indeed, for any $S \subseteq \Omega$,
\[\left|\int_S D_\cW(x)-D_{\cW_{\cP_i}}(x) \right| \le 2C .\]
This implies that if
\[
\frac{1}{\sqrt{\mu(S)}} \left|\int_S D_\cW(x) \,d\mu(x)-\int_S D_{\cW_{\sP_i}}(x) \,d\mu(x) \right|\ge
\eps
\]
then
\[\mu(S) \le \frac{4C^2}{\eps^2}.
\]
On the other hand,
\[\left|\int_{S \times T}W-W_\sP \right| \le 2C
\qquad\text{and}\qquad
\left|\int_{S \times T}W-W_\sP \right| \le 2 D\mu(T).
\]
Therefore,
\[\left|\int_{S \times T}W-W_\sP \right| \le 2\sqrt{C D\mu(T)}.
\]
This implies that if
\[\frac{1}{\sqrt{\mu(S)\mu(T)}}\left|\int_{S \times T} W- W_{\sP_i}\right| \ge
\eps
\] then
\[\mu(S) \le \frac{4CD}{\eps^2}
.\] The bound for $\mu(T)$ follows similarly.

Since first step can occur at most ${\|D_\cW\|_2}/{\varepsilon^2}$ times, and
the second at most ${\|W\|_2^2 }/{\varepsilon^2}$ times, this shows that
\[\mu(\Omega_{\sP}) \le
\frac{4C^2}{\eps^2}\frac{\|D_\cW\|_2^2}{\varepsilon^2}+ 2\frac{4CD}{\eps^2}\frac{\|W\|_2^2 }{\varepsilon^2}
\leq\frac 4{\eps^4}\Bigl(C^3D+2BC^2D\Bigr)=N(\varepsilon).
\]
The second statement follows by choosing $\sP_0=\sQ$.
\end{proof}

\begin{remark}
With the help of Proposition~\ref{prop:kernel-jumbl-equiv},
Theorem~\ref{regularity-lemma} can immediately be transformed into a similar
statement for the kernel distance $\d22(\cW,\cW_\sP)$, provided $N(\eps)$ and
$M(\eps)$ are replaced by bounds of the form $M(\eps)=2^{c/\eps^8}$ and
$N(\eps)=d/\eps^{16}$ where $c$ and $d$ are constants depending on $B$, $C$
and $D$.
\end{remark}

We would like to prove a version of this ``regularity lemma'' for $\d22$
where the parts have equal size. We first show some preliminary lemmas.

\begin{lemma} \label{lem:partitionsmallerdistance}
Let $\cW_1,\cW_2$ be two graphexes on the same space $\bOmega$, and let $\sP$
be a finite subspace partition of $\bOmega$. Then
\[\|W_{1,\sP}-W_{2,\sP}\|_{2 \rightarrow 2} \le \|W_1-W_2\|_{2 \rightarrow 2}
\] and
\[
\|D_{\cW_{1,\sP}}-D_{\cW_{2,\sP}}\|_2
\le \|D_{\cW_1}-D_{\cW_2}\|_2
.\]
\end{lemma}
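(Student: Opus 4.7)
The plan is to recognize both inequalities as instances of the general fact that orthogonal projections on $L^2$ are contractions (with respect to the $L^2$-norm and, by sandwiching, with respect to the $L^2 \to L^2$ operator norm).

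First, I would introduce the operator $E_\sP \colon L^2(\bOmega) \to L^2(\bOmega)$ defined by $E_\sP f(x) = \frac{1}{\mu(P_i)} \int_{P_i} f\,d\mu$ whenever $x \in P_i$, and $E_\sP f(x) = 0$ whenever $x \notin \Omega_\sP$. A direct check shows that $E_\sP$ is the orthogonal projection onto the finite-dimensional subspace of $L^2(\bOmega)$ consisting of functions that are constant on each $P_i$ and vanish off $\Omega_\sP$. In particular, $E_\sP$ is self-adjoint with operator norm $1$.

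Next I would verify the two key identities. For the graphon parts, the definition of $W_\sP$ unpacks to $W_\sP = (E_\sP \otimes E_\sP)W$, i.e., the integral operator with kernel $W_\sP$ equals $E_\sP T_W E_\sP$, where $T_W$ denotes the integral operator with kernel $W$. For the marginals, I would compute $D_{W_\sP}(x)$ on $P_i$ by decomposing the $y$-integral over the partition $\cP$, and then combine with the defining formula for $S_\sP$; the $\int_{\Omega \setminus \Omega_\sP} W(x,y)\,d\mu(y)$ term that is folded into $S_\sP$ is exactly what is needed so that the $D_{W_\sP}$ piece (which only sees $\Omega_\sP$ in $y$) plus $S_\sP$ recombines to give $D_{\cW_\sP}(x) = \frac{1}{\mu(P_i)}\int_{P_i} D_\cW\,d\mu$ for $x \in P_i$, and $0$ otherwise. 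That is exactly $E_\sP D_\cW$.

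With these identities in hand, the proof finishes in one line each. Since $T_{W_{1,\sP}} - T_{W_{2,\sP}} = E_\sP(T_{W_1} - T_{W_2})E_\sP$ and $\|E_\sP\|_{2\to 2} \le 1$, the submultiplicativity of the operator norm yields $\|W_{1,\sP} - W_{2,\sP}\|_{2 \to 2} \le \|W_1 - W_2\|_{2 \to 2}$. Similarly, $D_{\cW_{1,\sP}} - D_{\cW_{2,\sP}} = E_\sP(D_{\cW_1} - D_{\cW_2})$, and since $E_\sP$ is an orthogonal projection on $L^2$, $\|D_{\cW_{1,\sP}} - D_{\cW_{2,\sP}}\|_2 \le \|D_{\cW_1} - D_{\cW_2}\|_2$.

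The main (modest) obstacle is the bookkeeping in verifying $D_{\cW_\sP} = E_\sP D_\cW$: one has to see that the extra $\int_{\Omega \setminus \Omega_\sP} W(x,y)\,d\mu(y)$ contribution to $S_\sP$ is precisely what repairs the loss incurred by $W_\sP$ being $0$ off $\Omega_\sP \times \Omega_\sP$. Once this identity is noted, everything else is automatic.
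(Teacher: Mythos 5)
Your proof is correct and is essentially the paper's argument recast in operator language: the paper replaces $f,g$ by their part-averages $f_\sP,g_\sP$ (your $E_\sP f, E_\sP g$), uses $f\circ(W_{1,\sP}-W_{2,\sP})\circ g = f_\sP\circ(W_1-W_2)\circ g_\sP$ (your $T_{W_\sP}=E_\sP T_W E_\sP$), and invokes $\|f_\sP\|_2\le\|f\|_2$ (contractivity of $E_\sP$). The identity $D_{\cW_\sP}=E_\sP D_\cW$, which you correctly verify, is already noted in the paper immediately after the definition of $\cW_\sP$.
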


\begin{proof}
Note that
\begin{align*}
\|W_{1,\sP}-W_{2,\sP}\|_{2 \rightarrow 2}&=\sup_{\substack{f,g \in L^2(\Omega)\\ \|f\|_2=\|g\|_2=1}}
f \circ (W_{1,\sP}-W_{2,\sP}) \circ g\\
&=\sup_{\substack{f,g \in L^2(\Omega_\sP)\\ \|f\|_2=\|g\|_2=1}} f \circ (W_{1,\sP}-W_{2,\sP}) \circ
g.
\end{align*}
If we let $f_\sP$ and $g_\sP$ consist of the average values of $f$ and $g$ on
each part of $\cP$ and zero outside $\Omega_\sP$, then
\[f \circ (W_{1,\sP}-W_{2,\sP}) \circ g=f_\sP \circ (W_{1,\sP}-W_{2,\sP}) \circ g_\sP=f_\sP \circ (W_{1}-W_{2}) \circ g_\sP,
\]
which implies the first claim. The second claim follows similarly.
\end{proof}

\begin{corollary}\label{corrolarystepvsaverage}
Suppose that $\cW$ is a graphex, and that $\cU$ is a step graphex over the
same space as $\cW$. If $\sP=(\Omega_\sP,\cP)$ is a finite subspace partition
such that $\cU$ is constant on each part of $\cP$ and zero outside
$\Omega_\sP$, then
\[\|W_\sP-W\|_{2 \rightarrow 2} \le 2\|U-W\|_{2 \rightarrow 2}
\]
and
\[\|D_{\cW_\sP}-D_\cW\|_2 \le 2\|D_\cU-D_\cW\|_2.
\]
\end{corollary}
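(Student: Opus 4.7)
The plan is to reduce the corollary directly to Lemma~\ref{lem:partitionsmallerdistance} together with the triangle inequality, using the fact that $\cU$ is a fixed point of the partition-averaging operation associated to $\sP$.

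First I would verify the claim that $\cU_\sP = \cU$. Since $U$ is constant on each $P_i \times P_j$ and vanishes outside $\Omega_\sP \times \Omega_\sP$, the averaging over $P_i \times P_j$ leaves $U$ unchanged, so $U_\sP = U$. Since the star part $S_\cU$ is constant on each $P_i$ and zero outside $\Omega_\sP$, and $U$ vanishes outside $\Omega_\sP \times \Omega_\sP$, the formula for $S_{\cU,\sP}$ collapses to $S_{\cU,\sP}(x) = \frac{1}{\mu(P_i)}\int_{P_i} S_\cU(x')\,d\mu(x') = S_\cU(x)$ for $x \in P_i$, and both sides vanish outside $\Omega_\sP$. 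The same vanishing shows $I_{\cU,\sP} = I_\cU$. Hence $\cU_\sP = \cU$, and in particular $D_{\cU_\sP} = D_\cU$.

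Next I would apply the triangle inequality in the form
\[
\|W_\sP - W\|_{2\to 2} \le \|W_\sP - U_\sP\|_{2\to 2} + \|U_\sP - U\|_{2\to 2} + \|U - W\|_{2\to 2}.
\]
The middle term vanishes by the previous step, and Lemma~\ref{lem:partitionsmallerdistance} applied to $\cW_1 = \cW$ and $\cW_2 = \cU$ gives $\|W_\sP - U_\sP\|_{2\to 2} \le \|W - U\|_{2\to 2}$, whence
\[
\|W_\sP - W\|_{2\to 2} \le 2\|U - W\|_{2\to 2}.
\]
The identical argument for the marginals, using $D_{\cU_\sP} = D_\cU$ together with the $L^2$ contraction statement of Lemma~\ref{lem:partitionsmallerdistance}, yields
\[
\|D_{\cW_\sP} - D_\cW\|_2 \le \|D_{\cW_\sP} - D_{\cU_\sP}\|_2 + \|D_\cU - D_\cW\|_2 \le 2\|D_\cU - D_\cW\|_2.
\]

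There is no real obstacle here; the only point requiring care is the bookkeeping in checking $\cU_\sP = \cU$, in particular making sure that the ``leakage'' terms in the definitions of $S_\sP$ and $I_\sP$ (the integrals over $\Omega \setminus \Omega_\sP$) vanish because $U$ and $S_\cU$ are supported on $\Omega_\sP \times \Omega_\sP$ and $\Omega_\sP$, respectively.
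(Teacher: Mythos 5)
Your proof is correct and follows essentially the same route as the paper: observe that $\cU_\sP = \cU$, apply the triangle inequality, and invoke Lemma~\ref{lem:partitionsmallerdistance} to contract $\|W_\sP - U_\sP\|_{2\to 2}$ and $\|D_{\cW_\sP} - D_{\cU_\sP}\|_2$. The paper states the triangle inequality with two terms ($\|W_\sP - U\| + \|U - W\|$) and then rewrites $U = U_\sP$, which is equivalent to your three-term version since the middle term vanishes; the extra bookkeeping you do on $\cU_\sP = \cU$ is a reasonable inclusion that the paper leaves implicit.
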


\begin{proof}
Note that $\cU_{\sP}=\cU$. We have
\begin{align*}
\|W_\sP-W\|_{2 \rightarrow 2} &\le \|W_\sP-U\|_{2 \rightarrow 2} +\|U-W\|_{2 \rightarrow 2}\\
&= \|W_\sP-U_\sP\|_{2 \rightarrow 2} +\|U-W\|_{2 \rightarrow 2}
\le 2\|U-W\|_{2 \rightarrow
2}.
\end{align*}
Similarly,
\begin{align*}
\|D_{\cW_\sP}-D_\cW\|_2 &\le\|D_{\cW_\sP}-D_\cU\|_2+\|D_{\cU}-D_\cW\|_2\\
&=\|D_{\cW_\sP}-D_{\cU_\sP}\|_2+\|D_{\cU}-D_\cW\|_2
\le
2\|D_\cU-D_\cW\|_2
. \qedhere
\end{align*}
\end{proof}

\begin{theorem} \label{regularityequalparts}
Given $B$, $C$, $D$, and $\varepsilon>0$, there exists
$\rho_0=\rho_0(\varepsilon,B,C,D)>0$ and $N_0=N_0(\varepsilon,B,C,D)$ such
that for any $\rho<\rho_0$, any $m \ge N_0/\rho$, and any $(B,C,D)$-bounded
signed graphex $\cW$ on an atomless space with infinite measure, there exists
a subspace partition $\sP=(\Omega_\sP,\cP)$ with exactly $m$ parts of size
$\rho$ such that $\d22(\cW,\cW_\sP)\leq \eps$. If
$\sP_0=(\Omega_{\sP_0},\cP_0)$ is an arbitrary finite subspace partition, we
can require $\sP$ to refine $\sP_0$, as long as each part of $\cP_0$ is
divisible by $\rho$ (and increasing the bound on $N_0(\varepsilon)$ and
decreasing $\rho_0$ appropriately depending on $|\cP_0|$ and
$\mu(\Omega_{\sP_0})$).
\end{theorem}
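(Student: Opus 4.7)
The plan is to combine Theorem~\ref{regularity-lemma} with a subdivision of the atomless space into equal-sized pieces. First I apply Theorem~\ref{regularity-lemma} with an auxiliary small error $\eps_1 = \eps_1(\eps, B, C, D)$ (to be fixed below), producing a finite subspace partition $\sP_1 = (\Omega_{\sP_1}, \cP_1)$ with $k := |\cP_1| \leq M(\eps_1)$ and $N_1 := \mu(\Omega_{\sP_1}) \leq N(\eps_1)$ such that $\djbl(\cW, \cW_{\sP_1}) \leq \eps_1$. Since both $\cW$ and $\cW_{\sP_1}$ are $(B,C,D)$-bounded, Proposition~\ref{prop:kernel-jumbl-equiv} converts this into $\d22(\cW, \cW_{\sP_1}) \leq h(\eps_1)$ for some function $h$ (depending only on $B, C, D$) with $h(\eps_1) \to 0$ as $\eps_1 \to 0$. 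I fix $\eps_1$ so that $h(\eps_1) \leq \eps/4$.

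Next, using that $\Omega$ is atomless of infinite measure, I construct the target partition $\sP = (\Omega_\sP, \cP)$ with exactly $m$ parts of measure $\rho$. Inside each $P_i \in \cP_1$, I extract $k_i := \lfloor \mu(P_i)/\rho \rfloor$ disjoint pieces $P_i^1, \dots, P_i^{k_i}$ of measure exactly $\rho$, leaving a leftover $R_i \subset P_i$ with $\mu(R_i) < \rho$; write $R := \bigcup_i R_i$, so $\mu(R) < k\rho$. I then select $m - \sum_i k_i$ additional disjoint pieces $Q_1, \dots, Q_{m - \sum_i k_i}$ of measure $\rho$ inside $R \cup (\Omega \setminus \Omega_{\sP_1})$, arranged so that $R \subseteq \bigcup_\ell Q_\ell$. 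This is feasible provided $m\rho \geq N_1 + k\rho$, which holds whenever $m \geq N_0/\rho$ and $\rho \leq \rho_0$, upon taking $N_0 = 2N(\eps_1)$ and $\rho_0$ small enough in terms of $M(\eps_1)$.

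The key step is to introduce an auxiliary step signed graphex $\cU$ that agrees with $\cW_{\sP_1}$ on the ``inherited'' pieces $P_i^j$ but vanishes on the ``mixed'' pieces $Q_\ell$. Set $U := W_{\sP_1} \cdot 1_{(\Omega_{\sP_1} \setminus R)^2}$, $S_\cU := S_{\sP_1} \cdot 1_{\Omega_{\sP_1} \setminus R}$, and choose $I_\cU \in \RR$ so that $\rho(\cU) = \rho(\cW)$; then $\cU$ is constant on every part of $\cP$ and vanishes outside $\Omega_\sP$, i.e., $\cU$ is a step signed graphex on $\sP$. The differences $W_{\sP_1} - U$ and $S_{\sP_1} - S_\cU$ are supported on $(R \times \Omega_{\sP_1}) \cup (\Omega_{\sP_1} \times R)$ and on $R$ respectively, and pointwise bounded by $B$ and by $\|S_{\sP_1}\|_\infty \leq \|D_{|\cW_{\sP_1}|}\|_\infty \leq D$. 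Using $\|\cdot\|_{2 \to 2} \leq \|\cdot\|_2$ together with $\mu(R) < k\rho$ and $\mu(\Omega_{\sP_1}) \leq N_1$, I obtain $\|W_{\sP_1} - U\|_{2 \to 2} \leq B\sqrt{2 k \rho N_1}$, and after a short pointwise estimate (the degree difference is at most $Bk\rho$ on $\Omega_{\sP_1} \setminus R$ and at most $2D$ on $R$) $\|D_{\cW_{\sP_1}} - D_\cU\|_2 \leq Bk\rho\sqrt{N_1} + 2D\sqrt{k\rho}$. Both bounds vanish with $\rho$, so taking $\rho_0 = \rho_0(\eps, B, C, D)$ small enough forces $\d22(\cW_{\sP_1}, \cU) \leq \eps/4$, and hence $\d22(\cW, \cU) \leq \eps/2$.

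To conclude, since $\cU$ is a step signed graphex on $\sP$ vanishing outside $\Omega_\sP$, Corollary~\ref{corrolarystepvsaverage} yields $\|W - W_\sP\|_{2 \to 2} \leq 2\|W - U\|_{2 \to 2}$ and $\|D_\cW - D_{\cW_\sP}\|_2 \leq 2\|D_\cW - D_\cU\|_2$, while $\rho(\cW_\sP) = \rho(\cW)$ is automatic; hence $\d22(\cW, \cW_\sP) \leq 2\d22(\cW, \cU) \leq \eps$. The refinement version follows by the same scheme, using the last sentence of Theorem~\ref{regularity-lemma} to require $\sP_1$ to refine the given $\sP_0$, performing the subdivision-plus-leftover step inside each $P \in \cP_0$ separately (the divisibility hypothesis guarantees that the total leftover within each $P$ has measure a multiple of $\rho$ and so can be reassembled into $\rho$-pieces inside $P$), and treating $\Omega \setminus \Omega_{\sP_0}$ as in the non-refinement case. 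The main obstacle is the handling of the leftovers $R_i$: directly averaging $\cW$ over an equal-parts partition would blend the interior and exterior of $\Omega_{\sP_1}$ on each $Q_\ell$ and might destroy the regularity approximation, but the auxiliary $\cU$ (zero on the $Q_\ell$'s) localizes the entire error to a set of measure $O(k\rho N_1)$, after which Corollary~\ref{corrolarystepvsaverage} cleanly absorbs the standard factor of $2$ from averaging.
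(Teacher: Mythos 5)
Your proof is correct and follows essentially the same strategy as the paper: apply Theorem~\ref{regularity-lemma} together with Proposition~\ref{prop:kernel-jumbl-equiv} to get an initial partition with small $\d22$ distance, use atomlessness to subdivide into $\rho$-sized pieces with a small leftover, control the leftover contribution by an $L^2$ estimate, and invoke Corollary~\ref{corrolarystepvsaverage} to absorb the factor of $2$ from averaging. The one structural difference is the order of the last two steps: the paper applies Corollary~\ref{corrolarystepvsaverage} to the intermediate partition $\sQ$ (whose parts are genuine subsets of the regularity parts, so $\cW_{\sP'}$ is constant on $\sQ$), obtaining a bound on $\d22(\cW, \cW_\sQ)$, and then separately bounds $\d22(\cW_\sQ, \cW_\sP)$ by a small-set estimate; you instead build the truncated step graphex $\cU$ (which is $\cW_{\sP_1}$ restricted to the full-sized pieces, zero on the mixed pieces, with the dust term adjusted so $\rho(\cU)=\rho(\cW)$), bound $\d22(\cW_{\sP_1},\cU)$ by the small-set estimate first, and then apply Corollary~\ref{corrolarystepvsaverage} once, directly with the target partition $\sP$. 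Both routes work and give the same kind of quantitative dependence on $\rho$, $M(\eps_1)$, $N(\eps_1)$; yours is arguably a bit cleaner since it avoids introducing the second nested partition $\sQ$. Your handling of the refinement version also mirrors the paper's: invoke the refinement clause of Theorem~\ref{regularity-lemma} and use divisibility to reassemble leftovers within each part of $\cP_0$.
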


\begin{proof}
Apply Theorem~\ref{regularity-lemma} and
Proposition~\ref{prop:kernel-jumbl-equiv} to obtain a subspace partition
$\sP'$ with at most $M(\varepsilon/3)$ parts and size at most
$N(\varepsilon/3)$ such that $\d22(\cW,\cW_{\sP'})\leq \eps/3$.
 We first construct a refinement $\sQ=(\Omega_\sQ,\cQ)$ as
follows. Add a part from $\Omega \setminus \Omega_{\sP'}$ so that the total
measure of $\Omega_\sQ$ is equal to $m\rho$ (this will require $m\geq
N(\varepsilon/3)/\rho$), and divide each part of
$\cP'\cup\{\Omega_\sQ\setminus\Omega_{\sP'}\}$ into parts of size $\rho$,
with perhaps one part remaining of smaller size. We then define $\sP$ by
combining the parts of $\cQ$ that have size smaller than $\rho$, including
the part added from $\Omega \setminus \Omega_{\sP'}$, into a single set
$\Omega'$, and then dividing $\Omega'$ into parts of size $\rho$ (and keeping
the remaining parts of $\sQ$). Then $\cW_\cQ$ and $\cW_\cP$ differ only on
$\Omega'$, which has size at most $(M(\varepsilon/3)+1)\rho$, implying that
\begin{align*}
 \|W_\sQ-W_\sP\|_{2 \rightarrow 2}^2
 &\le \|W_\sQ-W_\sP\|_{2}^2\\
 &\le 2\int_{\Omega \times \Omega'}(W_\sQ-W_\sP)^2
 \le
 2\int_{\Omega\times\Omega'}B(|W_\sQ|+|W_\sP|)\\
 &\leq 4B\int_{\Omega'} D_{|W|} \le 4(M(\varepsilon/3)+1)\rho B D
 .
\end{align*}
We also have
\[
 \|D_{\cW_\sQ}-D_{\cW_\sP}\|_2 \le 2\sqrt{\int_{\Omega'} D_{|\cW|}(x)^2\,d\mu(x)} \le 2\sqrt{(M(\varepsilon/3)+1)\rho}D
.\]
Furthermore, we know that $\cW_{\sP'}$ is constant on each part of $\sQ$.
Therefore, applying Corollary \ref{corrolarystepvsaverage}, we have
\[\|W-W_\sQ\|_{2 \rightarrow 2} \le 2\|W-\cW_{\sP'}\|_{2 \rightarrow 2}\le 2\varepsilon/3\]
and
\[\|D_\cW-D_{\cW_\sQ}\|_2 \le 2 \|D_\cW-D_{\cW_{\sP'}}\|_2 \le 2\varepsilon^2/9
.\] Therefore, if $\rho$ is small enough, then
\[\|W-W_\sP\|_{2 \rightarrow 2} \le \varepsilon\]
and
\[\|D_\cW-D_{\cW_\sP}\|_2 \le \varepsilon^2
.\] We can add parts of zero measure and the above argument still works. If
we want $\sP$ to be a refinement of a starting partition $\sP_0$, we apply
Theorem~\ref{regularity-lemma} and Proposition~\ref{prop:kernel-jumbl-equiv}
and make sure to combine the leftover parts so that they are within the same
part of $\cP_0$, this can be done since each part of $\cP_0$ is divisible by
$\rho$.
\end{proof}

We close this section by proving Theorem~\ref{thm:complete}. To this end, we
will first establish two lemmas.

\begin{lemma} \label{lemmapartitionsequence}
Let $\cS$ be a set of signed graphexes over atomless spaces of infinite
measure such that the following condition holds:
\begin{enumerate}[(a)]
\item For every $\varepsilon>0$, there exists $B,C,D$ such that for any
    $\cW=(W,S,I,(\Omega,\cF,\mu))\in \cS$, taking $\Omega_{\le D}$ to be
 the set of points with $D_{|\cW|}(x)\le D$, we have that $
 \mu(\Omega\setminus\Omega_{\leq D}) \le \varepsilon$, $\|W|_{\Omega_{\leq
 D}}\|_\infty \le B$, and $\|\cW|_{\Omega_{\leq D}}\|_1 \le C$.
\label{sign-tightD}
\end{enumerate}
Then there exist strictly increasing sequences of integers $a_k \ge 2k$ and
$b_k\geq k$ and sequences of positive real numbers $B_k$, $C_k$, and $D_k$
such that the following holds. For any graphex $\cW \in \cS$,
$\cW=(W,S,I,\bOmega)$ with $\bOmega=(\Omega,\cF,\mu)$, there exists a
sequence of subsets $P_{k,0}\subseteq\Omega$ and subspace partitions
$\sP_k=(\Omega_{\sP_k},\cP_k)$, $\cP_k=\{P_{k,1},\dots,P_{k,m_k}\}$ with
$m_k=2^{a_k+b_k}$, such that for all $k$, $\Omega_{\sP_k}$ is disjoint from
$P_{k,0}$ and
\begin{enumerate}
\item $P_{k+1,0} \subseteq P_{k,0}$ and $\mu(P_{k,0})=2^{-2k}$,
    \label{partitionsequenceremoved}
\item$\cW_k=\cW|_{\Omega\setminus P_{k,0}}$ is $(B_k,C_k,D_k)$-bounded,
    \label{partitionsequencebounded}
\item $\sP_{k+1}$ refines $\sP_k$, \label{partitionsequencerefinement}
\item $P_{k,i}$ for $i \ge 1$ has measure $2^{-a_k}$, and
    \label{partitionsequencesize}
\item $\d22(\cW_k,(\cW_k)_{\sP_k})\leq 2^{-k}$.
    \label{partitionsequenceregular}
\end{enumerate}
\end{lemma}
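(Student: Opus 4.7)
The construction proceeds inductively on $k$, combining the uniform tightness provided by condition (a) with the refinement version of Theorem~\ref{regularityequalparts} to produce the nested partitions.

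First, I would apply condition (a) with $\varepsilon = 2^{-2k-1}$ to obtain, for each $k \ge 1$, constants $B_k$, $C_k$, $D_k$ depending only on $\cS$, such that for every $\cW \in \cS$ the set $\{D_{|\cW|} > D_k\}$ has measure at most $2^{-2k-1}$ and $\cW|_{\{D_{|\cW|} \le D_k\}}$ is $(B_k, C_k, D_k)$-bounded. By replacing $D_k$ with $\max(D_1,\dots,D_k)$ (and keeping $B_k, C_k$, noting that shrinking the domain can only decrease the $L^1$-norm and leaves $\|W\|_\infty$ unchanged), I may assume $D_k$ is nondecreasing, so the bad sets $\{D_{|\cW|} > D_k\}$ are nested decreasingly in $k$. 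Next, for each fixed $\cW \in \cS$ I construct the sets $P_{k,0}$ inductively using that $\Omega$ is atomless of infinite measure: take $P_{1,0}$ to be any measurable set of measure $2^{-2}$ containing $\{D_{|\cW|}>D_1\}$, and for $k \ge 2$ take $P_{k,0} \subseteq P_{k-1,0}$ of measure $2^{-2k}$ containing $\{D_{|\cW|} > D_k\} \subseteq P_{k-1,0}$, which is possible because this bad set has measure at most $2^{-2k-1} < 2^{-2k}$. The graphex $\cW_k := \cW|_{\Omega \setminus P_{k,0}}$ is then automatically $(B_k, C_k, D_k)$-bounded, establishing \eqref{partitionsequenceremoved} and \eqref{partitionsequencebounded}.

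I then build the partitions $\sP_k$ inductively. Choose strictly increasing integer sequences $a_k \ge 2k$ and $b_k \ge k$ with $a_k$ large enough that $2^{-a_k} < \rho_0$ and $b_k$ large enough that $2^{b_k} \ge N_0$, where $\rho_0, N_0$ are the constants from Theorem~\ref{regularityequalparts} applied with error $2^{-k}$ and bounds $B_k, C_k, D_k$, suitably adjusted (as permitted by that theorem) to accommodate the prior partition $\sP_{k-1}$, which has $2^{a_{k-1}+b_{k-1}}$ parts and total measure $2^{b_{k-1}}$. For $k=1$, apply Theorem~\ref{regularityequalparts} directly to $\cW_1$ with part size $\rho = 2^{-a_1}$ and $m_1 = 2^{a_1+b_1}$ parts to obtain $\sP_1$. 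For $k \ge 2$, observe that $\Omega_{\sP_{k-1}} \subseteq \Omega \setminus P_{k-1,0} \subseteq \Omega \setminus P_{k,0}$, so $\sP_{k-1}$ is a finite subspace partition of the ambient space of $\cW_k$, and its parts, of equal measure $2^{-a_{k-1}}$, are divisible by $2^{-a_k}$ since $a_k > a_{k-1}$. The refinement clause of Theorem~\ref{regularityequalparts} applied to $\cW_k$ with initial partition $\sP_{k-1}$ then yields $\sP_k$ refining $\sP_{k-1}$, with $m_k = 2^{a_k+b_k}$ parts of equal measure $2^{-a_k}$ and $\d22(\cW_k, (\cW_k)_{\sP_k}) \le 2^{-k}$, establishing \eqref{partitionsequencerefinement}--\eqref{partitionsequenceregular}.

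The main obstacle will be coordinating three constraints simultaneously: the equal-size parts at each level, the refinement relation $\sP_{k+1} \succ \sP_k$ across levels despite the underlying restricted space growing, and the geometric decay of the approximation error. The refinement clause of Theorem~\ref{regularityequalparts} (with its divisibility requirement) precisely reconciles the first two, and the atomlessness of $\Omega$ is what permits both the nested bad sets $P_{k,0}$ of prescribed measure $2^{-2k}$ and the equal-sized parts demanded by the regularity theorem. Uniformity of $a_k, b_k, B_k, C_k, D_k$ over $\cS$ is automatic because they depend only on condition (a) and on each other, not on the individual graphex $\cW$.
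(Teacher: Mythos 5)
Your approach is essentially the same as the paper's: extract $(B_k,C_k,D_k)$ from condition~(a), build nested bad sets $P_{k,0}$ of measure $2^{-2k}$, and iterate Theorem~\ref{regularityequalparts} via its refinement clause with equal-size parts, arranging the divisibility $a_k > a_{k-1}$. However, there are two points where you diverge from the paper in ways worth flagging.

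First, a small slip in the monotonification of the constants: replacing $D_k$ by $\max(D_1,\dots,D_k)$ \emph{enlarges} the domain $\{D_{|\cW|}\le D_k\}$, not shrinks it, so you cannot simply ``keep $B_k,C_k$''; a bigger domain can have larger $\|W\|_\infty$ and larger $\|\cdot\|_1$. The fix is trivial --- take $B_k := \max(B_1,\dots,B_k)$ and $C_k := \max(C_1,\dots,C_k)$ as well, or equivalently carry the constants from the index attaining the maximum --- but the justification as written is incorrect. The paper sidesteps this by defining $D_k$ directly as an infimum over all admissible thresholds, making the monotonicity of $D_k$, $B_k$, $C_k$ automatic.

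Second, and more substantively, the paper does not feed $\sP_{k-1}$ alone into Theorem~\ref{regularityequalparts}; it feeds the \emph{augmented} partition $\bigl(\Omega_{\sP_{k-1}} \cup (P_{k-1,0}\setminus P_{k,0}),\ \cP_{k-1}\cup\{P_{k-1,0}\setminus P_{k,0}\}\bigr)$, making the recovered set $P_{k-1,0}\setminus P_{k,0}$ an explicit class of the initial partition (this is legal since $\mu(P_{k-1,0}\setminus P_{k,0}) = 3\cdot 2^{-2k}$ is divisible by $2^{-a_k}$ when $a_k\ge 2k$). This ensures inductively that for every $k_0 < k$, the set $P_{k_0,0}\setminus P_{k,0}$ is a union of classes of $\cP_k$ and lies inside $\Omega_{\sP_k}$. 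Your construction, which uses $\sP_{k-1}$ alone, gives no control over how classes in $\Omega_{\sP_k}\setminus\Omega_{\sP_{k-1}}$ sit relative to $P_{k-1,0}\setminus P_{k,0}$: such classes can straddle the boundary. This ``alignment'' property is not among the five stated conclusions of the lemma, so your argument does establish the lemma as literally written, but the property is used crucially in the subsequent proof of Lemma~\ref{lem:tight-relcomp}, where the claim about restricting a step graphex to a sub-partition requires the removed set $P_{k_0,0}\setminus P_{k,0}$ to be a union of partition classes. So either the lemma statement should be strengthened to record this, or your proof should augment the initial partition as the paper does; as it stands your construction would not serve the intended downstream use.
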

Note that by property (\ref{tightD}) from Theorem \ref{theoremtightequiv},
every tight set of unsigned graphexes obeys the condition \eqref{sign-tightD}
(with $B=1$), showing that the conclusions of the theorem hold for any tight
set $\cS$ of graphexes over atomless spaces of infinite measure.

\begin{proof}
Define
\[
D_k=\inf\{D : \text{for all $\cW\in\cS$, $\mu(\Omega\setminus\Omega_{\leq D})\leq 2^{-2k}$}\},
\]
and let $B_k=\sup_{\cW\in\cS}\|W|_{\Omega_{\leq D_k}}\|_\infty$ and
$C_k=\sup_{\cW\in\cS}\|\cW|_{\Omega_{\leq D_k}}\|_1$. By the condition
\eqref{sign-tightD} these are finite, and by construction they are monotone
non-decreasing functions of $k$. Given a graphex, we then first set each
$P_{k,0}'$ to be the set of points with degree greater than $D_{k}$. In this
way we have each $P_{k+1,0}' \subseteq P_{k,0}'$; however, they may be
strictly smaller than the required size. We therefore extend them one by one,
starting with $P_{0,0}$, and make sure that we still have each $P_{k+1,0}
\subseteq P_{k,0}$. Taking $\cW_k$ to be the restrictions, properties
(\ref{partitionsequenceremoved}) and (\ref{partitionsequencebounded}) are
satisfied.

Next, apply Theorem \ref{regularityequalparts} with $\varepsilon=1$ to obtain
$N_{0}$ and $\rho_0$. Increasing $N_0$ (if needed) so that it is of the form
$2^{b_0}$ for some nonnegative integer $b_0$, we then choose $a_0$ such that
$2^{-a_0}<\rho_0$. For any graphex, take $\sP_0$ according to the theorem
with $\rho=2^{-a_0}$. Keep iterating the theorem, in each step applying
Theorem \ref{regularityequalparts} with $B=B_k$, $C=C_{k}$, $D=D_{k}$,
$\varepsilon=2^{-k}$, and $\sP_0=(\Omega_{\sP_{k-1}} \cup (P_{k-1,0}
\setminus P_{k,0}),\cP_k \cup \{P_{k-1,0} \setminus P_{k,0}\}$), ensuring in
each step that $a_k \ge \max\{2k,a_{k-1}+1\}$, $2^{-a_k}<\rho_k$, and $b_k >
b_{k-1}$.
\end{proof}

\begin{lemma}
\label{lem:tight-relcomp} Every tight sequence of graphexes has a subsequence
that is $\delGP$-convergent. More generally, every sequence of signed
graphexes obeying the condition \eqref{sign-tightD} from
Lemma~\ref{lemmapartitionsequence}
 has a $\delGP$-convergent subsequence. If condition \eqref{sign-tightD} holds
with one or more of the constants $B,C,D$ not depending on $\eps$, then the subsequential
limit inherits the corresponding bound.
\end{lemma}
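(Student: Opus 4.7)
My plan is to combine the hierarchical regularity structure from Lemma~\ref{lemmapartitionsequence} with finite-dimensional compactness at each level and a martingale construction of the limit. First, using Remark~\ref{rem:DC-mon}, I reduce to the case where every $\cW_n$ lives on an atomless $\sigma$-finite space of infinite measure. Applying Lemma~\ref{lemmapartitionsequence} then gives, for each $n$, nested removed sets $P_{n,k,0}$ with $\mu(P_{n,k,0})=2^{-2k}$, refining partitions $\sP_{n,k}$ of $\Omega_n\setminus P_{n,k,0}$ with $m_k=2^{a_k+b_k}$ parts of size $2^{-a_k}$, and $(B_k,C_k,D_k)$-bounded step graphexes $\mathcal{V}_{n,k}=(\cW_{n,k})_{\sP_{n,k}}$ with $\d22(\cW_{n,k},\mathcal{V}_{n,k})\le 2^{-k}$. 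Taking $\widetilde\mu_1=\widetilde\mu_2=\mu_n|_{\Omega_n\setminus P_{n,k,0}}$ in Definition~\ref{def:delGP} yields $\delGP(\cW_n,\cW_{n,k})\le 2^{-k}$, so $\delGP(\cW_n,\mathcal{V}_{n,k})\le 2^{1-k}$.

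Second, I transport every $\mathcal{V}_{n,k}$ to a common model space $\bOmega^*=(\RR_+,\lambda)$ by fixing nested subspace partitions $\sQ_k$ of $[0,2^{b_k})$ into $m_k$ intervals of length $2^{-a_k}$ and, for each $n$, choosing measure-preserving bijections $\phi_{n,k}\colon\Omega_{\sQ_k}\to\Omega_{\sP_{n,k}}$ inductively in $k$ so that $\phi_{n,k+1}$ extends $\phi_{n,k}$ and maps parts to parts (possible because both sequences refine at compatible cell sizes). The pullback $\mathcal{V}^*_{n,k}$, extended by $0$ outside $\Omega_{\sQ_k}$, satisfies $\deltt(\mathcal{V}_{n,k},\mathcal{V}^*_{n,k})=0$ by Remark~\ref{rem:pullbacksamedistance}. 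For each fixed $k$, the graphex $\mathcal{V}^*_{n,k}$ is encoded by a bounded tuple of step values in $[-B_k,B_k]^{\binom{m_k+1}{2}}\times[-D_k,D_k]^{m_k}\times[-C_k,C_k]$, a compact subset of Euclidean space on which convergence implies $\d22$-convergence. A Cantor diagonal argument then produces a single subsequence $(n_i)$ along which $\mathcal{V}^*_{n_i,k}\to\mathcal{V}^*_k$ in $\d22$ for every $k$.

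The crux is to build a single graphex $\cW^*$ on $\bOmega^*$ and verify $\delGP(\mathcal{V}^*_k,\cW^*)\to 0$. The inductive compatibility of the $\phi_{n,k}$ together with the averaging formula for $\cW\mapsto\cW_\sP$ implies that for $k'>k$ the conditional expectation of the graphon of $\mathcal{V}^*_{n,k'}$ with respect to $\sQ_k\times\sQ_k$, restricted to $\Omega_{\sQ_k}\times\Omega_{\sQ_k}$, equals the graphon of $\mathcal{V}^*_{n,k}$. This identity passes to the limit in $n$, so for every $k_0$ the graphons $(W^*_k)_{k\ge k_0}$ form a martingale on $\Omega_{\sQ_{k_0}}\times\Omega_{\sQ_{k_0}}$ with respect to the filtration generated by $\{\sQ_k\times\sQ_k\}$. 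In the unsigned case it is uniformly bounded by $1$, so the martingale convergence theorem yields a.e.\ and $L^p$ convergence to a symmetric $W^*\colon\RR_+^2\to[0,1]$. A parallel but more delicate argument produces $S^*$ and $I^*$: because $\cW\mapsto\cW_\sP$ folds graphon mass on the complement of $\Omega_\sP$ into the star and dust parts, one defines $S^*(x)$ and $I^*$ as the a.e.\ limits of $S^*_k(x)$ and $I^*_k$ after subtracting the graphon mass that migrates back from $S_\sP$ and $I_\sP$ to $W^*$ as $k\to\infty$. Local finiteness and the bounds $\mu^*(\{D_{\cW^*}>D\})\le\varepsilon$ and $\|\cW^*|_{\{D_{\cW^*}\le D\}}\|_1\le C_\varepsilon$ descend from the corresponding uniform bounds on the $\cW_{n_i}$ via Fatou.

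Finally, the triangle inequality $\delGP(\cW_{n_i},\cW^*)\le\delGP(\cW_{n_i},\mathcal{V}^*_{n_i,k})+\delGP(\mathcal{V}^*_{n_i,k},\mathcal{V}^*_k)+\delGP(\mathcal{V}^*_k,\cW^*)$, with $k$ sent to infinity first (to kill the outer terms via the bound $2^{1-k}$ and the $L^p$ convergence of the martingale) and then $i$ sent to infinity, gives $\delGP(\cW_{n_i},\cW^*)\to 0$. The main obstacle I anticipate is the martingale/limit construction of the star and dust parts: the graphon piece is a clean bounded-martingale argument, but the boundary bookkeeping between $W_\sP$, $S_\sP$, and $I_\sP$ across growing supports $\Omega_{\sQ_k}$ is delicate. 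In the signed case the martingale fails to be uniformly $L^\infty$-bounded unless $B$ is independent of $\varepsilon$, which is precisely when the bound $\|W^*\|_\infty\le B$ is inherited; the analogous inheritance for $C$ and $D$ follows from Fatou applied to the $\|\cdot\|_1$-norm and to the marginal.
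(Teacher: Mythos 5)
Your proposal follows essentially the same route as the paper's proof of this lemma: pass to atomless spaces of infinite measure, apply Lemma~\ref{lemmapartitionsequence}, transport the step graphexes to a canonical model over $\RR_+$ via nested measure-preserving bijections respecting the partitions, diagonalize to extract convergent levelwise limits $\mathcal{V}^*_k$, exploit the nested-partition compatibility to obtain a martingale structure, and close by the same triangle-inequality estimate. The two places where your write-up is imprecise are exactly the two places you flag as ``delicate,'' so let me indicate how the paper handles them.

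First, you treat the martingale as a single sequence on $\RR_+^2$ and worry about a uniform $L^\infty$ bound across \emph{all} levels, concluding that the signed case requires $B$ independent of $\eps$. This is not necessary: the correct object is the \emph{family} of martingales indexed by $k_0$, namely $(W^*_k)_{k\ge k_0}$ restricted to $\Omega_{\sQ_{k_0}}^2$, each of which is bounded by $B_{k_0}$ (finite for every $k_0$, regardless of how $B$ depends on $\eps$). The $L^1$ convergence comes from uniform integrability (via the monotone increase of $\|W^{k,k_0}\|_1\le C_{k_0}$), which upgrades to $L^2$ using the fixed bound $B_{k_0}$. The limits $\widetilde W^{k_0}$ for different $k_0$ are consistent and glue to a single $W^*$, and the signed case works without further hypotheses. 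The inheritance of $\|W^*\|_\infty\le B$, as you correctly note, is a separate matter that holds only when $B$ can be chosen uniform in $\eps$, and analogously for $C$ and $D$.

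Second, your ``subtract the migrating graphon mass'' description of $S^*$ and $I^*$ captures the right intuition but is not the cleanest way to organize the bookkeeping. The paper instead runs the same per-$k_0$ martingale argument for the marginals $D_{\cW^{k,k_0}}$ (again via uniform integrability, not $L^\infty$-boundedness, and then $L^2$-convergence using the bound $D_{k_0}$), and separately for $D_{W^{k,k_0}}$ via $L^1$ convergence of the graphons. Then it defines $\widetilde S^{k_0}=D_{\wcW^{k_0}}-D_{\wW^{k_0}}$, deduces nonnegativity in the unsigned case from a.e.\ pointwise subsequential convergence, and recovers $\widetilde I^{k_0}$ from the conserved quantity $\rho(\cW^{k,k_0})$. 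This sidesteps any explicit tracking of which mass migrates from $S_\sP$ and $I_\sP$ into $W_\sP$ as the support $\Omega_{\sQ_k}$ grows.
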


During the proof, we will use the following.

\begin{claim}
Suppose that $\cW=(W,S,I,\bOmega)$ is a signed graphex, $\sP=(\Omega_{\sP},
\cP)$ is a finite subspace partition, and $\cP_0 \subseteq \cP$. Let
$\Omega_0=\bigcup_{P \in \cP_0} P$, $\cP'=\cP \setminus \cP_0$, and
$\sP'=(\Omega'_{\sP'},\cP')$, where $\Omega'_{\sP'}=\Omega_{\sP} \setminus
\Omega_0$. Then $(\cW_{\sP})|_{\Omega'}=(\cW|_{\Omega'})_{\sP'}$.
\end{claim}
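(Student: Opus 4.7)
The plan is to verify the claimed equality componentwise by unpacking the two definitions of step graphex and restriction. The only substantive observation needed is that since $\cP_0 \subseteq \cP$ implies $\Omega_0 \subseteq \Omega_\sP$, we have the identification
\[
\Omega' \setminus \Omega'_{\sP'} = (\Omega \setminus \Omega_0) \setminus (\Omega_\sP \setminus \Omega_0) = \Omega \setminus \Omega_\sP,
\]
where I am writing $\Omega' = \Omega \setminus \Omega_0$. That is, the ``outside-the-subspace'' region relative to $\sP'$ inside $\Omega'$ is exactly the same set that plays this role for $\sP$ inside $\Omega$. Every other ingredient of the proof is purely notational bookkeeping.

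Given this identification, I would check the graphon, star, and dust parts in turn. For the graphon part, on $(x,y) \in P_i \times P_j$ with $P_i,P_j \in \cP'$ both sides average $W$ over $P_i \times P_j$ (and this average is unchanged if we replace $W$ by $W|_{\Omega'\times\Omega'}$, since $P_i,P_j\subseteq\Omega'$), while off $\Omega'_{\sP'} \times \Omega'_{\sP'}$ both sides are $0$ by construction. For the star part, on $x \in P_i \in \cP'$ the averaging integral defining the restriction of $S_\sP$ and the averaging integral defining the star part of $(\cW|_{\Omega'})_{\sP'}$ coincide: the ``free'' integrals over $\Omega\setminus\Omega_\sP$ and over $\Omega'\setminus\Omega'_{\sP'}$ are integrals of the same function over the same set by the identification above, and $S|_{\Omega'}$ agrees with $S$ on $P_i \subseteq \Omega'$. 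For the dust part, both dust constants reduce to
\[
\tfrac{1}{2}\int_{(\Omega\setminus\Omega_\sP)^2} W \,d\mu \,d\mu + \int_{\Omega\setminus\Omega_\sP}S\,d\mu + I
\]
by the same identification, so they are equal.

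There is no genuine obstacle: this is a routine bookkeeping exercise whose sole content is the set-theoretic identification $\Omega' \setminus \Omega'_{\sP'} = \Omega \setminus \Omega_\sP$. The write-up would be a short direct computation in each of the three components, invoking this identification each time to match the integration domains.
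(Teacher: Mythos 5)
Your proposal is correct and follows the same route as the paper: identify $\Omega'\setminus\Omega'_{\sP'}=\Omega\setminus\Omega_\sP$ and then match the dust, star, and graphon parts by inspection of the averaging formulas. Nothing further is needed.
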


\begin{proof}
Note that $\Omega \setminus \Omega_\sP=\Omega' \setminus \Omega'_{\sP'}$
(since $\Omega_0$ is disjoint from both). First, we have
\begin{align*}
I_\sP&= \frac{1}{2}\int_{(\Omega \setminus \Omega_{\sP})\times (\Omega \setminus \Omega_{\sP})} W(x,y)\,d\mu(x) \,d\mu(y) + \int_{(\Omega \setminus \Omega_{\sP})}S(x)\,d\mu(x)+I\\
&=\frac{1}{2}\int_{(\Omega' \setminus \Omega'_{\sP'})\times (\Omega' \setminus \Omega'_{\sP'})} W(x,y)\,d\mu(x) \,d\mu(y) + \int_{(\Omega' \setminus \Omega'_{\sP'})}S(x)\,d\mu(x)+I=I_{\sP'}.
\end{align*}
We also have for $x \in \Omega'$, if $x \in P_i$, then
\begin{align*}
S_{\sP}(x)&=\frac{1}{\mu(P_i)}\int_{P_i} \left(S(x)+\int_{\Omega \setminus \Omega_{\sP}}W(x,y)\,d\mu(y) \right)\,d\mu(x)\\
&=\frac{1}{\mu(P_i)}\int_{P_i} \left(S(x)+\int_{\Omega' \setminus \Omega'_{\sP}}W(x,y)\,d\mu(y) \right)\,d\mu(x)=S_{\sP'}(x),
\end{align*}
and $S_{\sP}(x)=0=S_{\sP'}(x)$ if $x \in \Omega' \setminus \Omega'_{\sP'}$.
Finally, if $x,y \in \Omega'$ and $x \in P_i,y \in P_j$ with $P_i,P_j \in
\cP'$, then
\[
W_{\sP}(x,y)=\frac1{\mu(P_i)\mu(P_j)}\int_{P_i \times P_j}W(x',y')\,d\mu(x')\,d\mu(y')=(W|_{\Omega'})_{sP'},
\] and
$0$ otherwise.
\end{proof}

\begin{proof}[Proof of Lemma \ref{lem:tight-relcomp}]
Let $\cW_1,\cW_2,\dots,\cW_n,\dots$ be a sequence of signed graphexes obeying
the condition \eqref{sign-tightD}, and let $\widetilde \cW_i$ be obtained
from $\cW_i$ by appending an arbitrary $\sigma$-finite space of infinite
measure. Then $\widetilde\cW_1,\widetilde\cW_2,\dots,\widetilde\cW_n,\dots$
obeys the condition \eqref{sign-tightD} as well, and arguing as in
Remark~\ref{rem:DC-mon}, we can assume without loss of generality that
$\widetilde\cW_1,\widetilde\cW_2,\dots,\widetilde\cW_n,\dots$ are all defined
over atomless spaces. We want to show that they have a subsequence that
converges to a graphex.

We can take for each $n$ and $k$ sets $\widetilde P_{n,k,0}$ and subspace
partitions $\widetilde\sP_{n,k}$ as in Lemma \ref{lemmapartitionsequence},
defining in particular $\widetilde\cW_{n,k}$ as the restriction of
$\widetilde\cW_n$ to ${\widetilde\Omega_n\setminus \widetilde P_{n,k,0}}$.
For $k\geq k_0$, we will also define $\widetilde\cW_{n,k,k_0}$ as the
restriction of $(\widetilde\cW_{n,k})_{\widetilde\sP_{n,k}}$ to
${\widetilde\Omega_n\setminus \widetilde P_{n,k_0,0}}$. By the above claim,
$\widetilde\cW_{n,k,k_0}=(\widetilde\cW_{n,k_0})_{\widetilde\sP_{n,k,k_0}}$,
where $\widetilde\sP_{n,k,k_0}$ consists of the classes in
$\widetilde\sP_{n,k}$ which are subsets of ${\widetilde\Omega_n\setminus
\widetilde P_{n,k_0,0}}$. This implies in particular that
$\widetilde\cW_{n,k,k_0}$ is $(B_{k_0},C_{k_0},D_{k_0})$-bounded.

Furthermore, in view of Remark~\ref{rem:stepgraphexesequiv}, we can replace
each $(\widetilde\cW_{n,k})_{\sP_{n,k}}$ by an equivalent step function
$\cW_{n,k}$ over $\RR_+$, where the first part is $P_{n,k,0}:=[0,2^{-2k})$ (which is disjoint from $\dsupp \cW_{n,k}$),
the remaining parts $P_{n,k,i}$ for $i \ge 1$ are of the form
$[\ell/2^{a_k},(\ell+1)/2^{a_k})$ for some nonnegative integer $\ell$, and we
extend $\cW_{n,k}$ to zero above $N_{k}+2^{-2k}$, where $N_k=2^{b_k}$.

Let $\sP_k'=\left([2^{-2k},N_{k}+2^{-2k}),\cP_k'\right)$, where $\cP_k'$
partitions $[2^{-2k},N_{k}+2^{-2k})$ into intervals of length $2^{-a_k}$.
Note that after this change, the bound \eqref{partitionsequenceregular} from
Lemma \ref{lemmapartitionsequence} becomes the bound
$\deltt(\widetilde\cW_{n,k},\cW_{n,k})\leq 2^{-k}$. In this way, we have
mapped the ``steps'' of each step graphex to $\cP_k'$. For each $k$, the
graphex $\cW_{{n,k}}$ then just depends on a finite number of parameters,
each bounded. We can then use a diagonalization argument to take a
subsequence so that for every $k$, $\cW_{{n,k}}$ converges to some $\cW^k$ as
$n \rightarrow \infty$, in the sense that $\cW^k$ is a step graphex with the
same parts and each value of the function converges, which also implies that
$\|\cW_{{n,k}}\|_1\to\|\cW^k\|_1$ and $\cW_{{n,k}}\to\cW^k$ in the metric
$\deltt$ (since there are a finite number of steps).

Given $k \ge k_0$, let $\cW^{k,k_0}$ be equal to $\cW^k$ restricted to
$[2^{-2k_0},\infty)$, let $\cP_{k,k_0}$ consist of those intervals in
$\cP_k'$ which are above $2^{-2k_0}$, and finally, let
$\sP_{k,k_0}=(\Omega_{k,k_0},\cP_{k,k_0})$ where
$\Omega_{k,k_0}=[2^{-2k_0},N_k+2^{-2k})$. We claim that
$\cW^{k,k_0}=\cW^{k+1,k_0}_{\sP_{k,k_0}}$. This follows from the fact that
for any $n$, if $\cW_{n,k,k_0}$ is $\cW_{n,k}$ restricted to
$[2^{-2k_0},\infty)$, then
$\left(\cW_{n,k+1,k_0}\right)_{\sP_{k,k_0}}=\cW_{n,k,k_0}$, by the above
claim.

Next, recalling that $\widetilde\cW_{n,k,k_0}$ is
$(B_{k_0},C_{k_0},D_{k_0})$-bounded for each $n$ and $k$, we have that
$\cW^{k,k_0}$ is $(B_{k_0},C_{k_0},D_{k_0})$-bounded as well, implying in
particular that $\|\cW^{k,k_0}\|_1 \le C_{k_0}$. Also note that if $P_i,P_j
\in \cP_{k,k_0}$,  then
\[\int_{P_i \times P_j} |W^{k,k_0}(x,y)| \,d\mu(x) \,d\mu(y)
\le \int_{P_i \times P_j} |W^{k+1,k_0}(x,y)| \,d\mu(x) \,d\mu(y).\] Since $W^{k,k_0}$ is supported on the union of $P_i \times P_j$ for all choices of $P_i$ and $P_j$, this
implies that $\|W^{k,k_0}\|_1$ cannot decrease as $k$ increases. Together,
these observations imply that the limit $\lim_{k' \to \infty}
\|W^{k',k_0}\|_1$ exists and is at most $C_{k_0}$. Given $\eps>0$, we can
therefore find $k(\eps,k_0)<\infty$ such that $\lim_{k' \to \infty}
\|W^{k',k_0}\|_1 - \|W^{k,k_0}\|_1 < \eps$ for all $k\geq k(\eps,k_0)$.
Furthermore, because $\Omega_{k,k_0}^2$ is the support of $W^{k,k_0}$,
\[\int_{\Omega_{k,k_0}^2} |W^{k,k_0}(x,y)| \,d\mu(x)\,d\mu(y)
 \le \int_{\Omega_{k,k_0}^2} |W^{k+1,k_0}(x,y)| \,d\mu(x)\,d\mu(y)
.\] As a consequence, for all $k' \ge k(\eps,k_0)$,
\[\int_{\RR_+^2 \setminus \Omega_{k,k_0}^2} |W^{k',k_0}(x,y)|\,d\mu(x) \,d\mu(y) < \eps
.\] Therefore, the random variables $W^{k,k_0}$ are uniformly integrable. By
the martingale convergence theorem (applied to each
$[2^{-2k_0},2^{-2k'}+N_{k'}] \times [2^{-2k_0},2^{-2k'}+N_{k'}]$ with
$k'\geq k_0$), the graphon part $W^{k,k_0}$ of $\cW^{k,k_0}$ is pointwise
convergent almost everywhere to a function $\wW^{k_0}$ defined on
$[2^{-k_0},\infty)^2$, and it also converges to $\wW^{k_0}$ in $L^1$. Since
$\|W^{k,k_0}\|_\infty\leq B_{k_0}$, this implies convergence in $L^2$, and
hence in the kernel metric $\|\cdot\|_{2\to 2}$. Furthermore, the graphon
marginals converge in $L^1$, because
\[
\|D_{W^{k,k_0}}-D_{\wW^{k_0}}\|_1\leq
\|W^{k,k_0}-\wW^{k_0}\|_1 \xrightarrow[k \to \infty]{} 0.
\]

We also have
\[\int_{P_i} |D_{\cW^{k,k_0}}(x)| \,d\mu(x)
 \le \int_{P_i} |D_{\cW^{k+1,k_0}}(x)| \,d\mu(x)
.\] This implies that $\|D_{\cW^{k,k_0}}\|_1$ cannot decrease as $k$
increases. Since $\Omega_{k,k_0}$ is the support of $D_{\cW^{k,k_0}}$,
\[\int_{\Omega_{k,k_0}} |D_{\cW^{k,k_0}}(x)| \,d\mu(x)
 \le \int_{\Omega_{k,k_0}} |D_{\cW^{k+1,k_0}}(x)| \,d\mu(x)
.\] As before, this implies that the functions $D_{\cW^{k,k_0}}$ are
uniformly integrable and uniformly bounded (by $D_{k_0}$). We can again use the
martingale convergence theorem (applied to each $[2^{-2k_0},2^{-2k'}+N_{k'}]$)
to show that $D_{\cW^{k,k_0}}$ converges pointwise and in $L^1$, and
therefore in $L^2$, to a function $D_{\cW^{k_0}}$ taking values in
$[-D_{k_0},D_{k_0}]$. Define
\[\widetilde{S}^{k_0}(x)=D_{\wcW^{k_0}}(x) - D_{\wW^{k_0}}(x).\]
Since we also have
\[S^{k,k_0}(x)=D_{\cW^{k,k_0}}(x)-D_{W^{k,k_0}}(x),\] and since both
terms in the difference converge in $L^1$, $S^{k,k_0}$ converges in $L^1$ to
$\widetilde{S}^{k_0}$. By Theorem~3.12 in \cite{Rudin} this implies that some
subsequence converges pointwise almost everywhere, showing in particular that
in the unsigned case, $\widetilde{S}^{k_0} \geq 0$ almost everywhere.

We also have that $\rho :=\rho(\cW_{k,k_0})$ is constant in $k$. Define
\[I^{k_0}=\rho - 2\int \widetilde{S}^{k_0} - \int \wW^{k_0}
.\] Since for each $k$,
\[ I^{k,k_0}=\rho -2\int S^{k,k_0} - \int W^{k,k_0}
,\] and since $S^{k,k_0}$ and $W^{k,k_0}$ converge in $L^1$ to
$\widetilde{S}^{k_0}$ and $\wW^{k_0}$, respectively, this implies that
$I^{k,k_0}$ converges to $\widetilde{I}^{k_0}$. Because $W^{k,k_0}$,
$S^{k,k_0}$, and $D_{\cW}^{k,k_0}$ converge in $L^1$ (and hence pointwise
almost everywhere on some subsequence), the limit inherits $(B_{k_0},
C_{k_0}, D_{k_0})$-boundedness from $\cW^{k,k_0}$.

Since for each $k$, $\cW^{k,k_0+1}$, when restricted to $[2^{-2k_0},\infty)$,
is equal to $\cW^{k,k_0}$, and since $\wcW^{k_0+1}$ and $\wcW^{k_0}$ are
pointwise limits along some subsequence, $\wcW^{k_0}$ is also the restriction
of $\wcW^{k_0+1}$ to $[2^{-2k_0},\infty)$. Therefore, we can define a signed
graphex $\cW$ on $\RR^+$ as the ``union'' of the signed graphexes
$\wcW^{k_0}$. Note that the limit $\cW$ is locally finite by the fact that
$\widetilde \cW^{k_0}$ is $(B_{k_{0}},C_{k_0},D_{k_0})$-bounded. This also
implies that $\cW$ inherits any of these bounds from $\widetilde \cW^{k_0}$
that do not depend on $k_0$.

We claim that on the subsequence where $\cW_{{n,k}}$ converges to $\cW^k$ in
$\deltt$ for each $k$, $\cW_n\to \cW$ in the weak kernel metric $\delGP$. To
see this, we first note that $\wcW^{k_0}$ is obtained from $\cW$ by removing
a set of measure $2^{-2k_0}$, and $\cW^{k,k_0}$ is obtained from $\cW^k$ by
removing a set of the same measure, showing that for any $k_0$,
\[
\delGP(\cW, \cW^k)\leq \max\{2^{-{k_0}},\deltt(\wcW^{k_0},\cW^{k,k_0})\}.
\]
Given $\eps>0$, choose $k_0$ such that $2^{-{k_0}}\leq \eps/2$. Since
$\deltt(\cW^{k,k_0},\widetilde\cW^{k_0}) \to 0$ for each $k_0$ as $k \to
\infty$, this shows that for $k$ large enough, $\delGP(\cW, \cW^k)\leq
\eps/2$. In a similar way,
\[
\delGP(\cW_n,\cW^k)=\delGP(\widetilde\cW_n,\cW^k)
\leq \max\{2^{-k},\deltt(\widetilde\cW_{n,k},\cW^k)\}.
\]
Since $\deltt(\widetilde\cW_{n,k},\cW_{n,k})\leq 2^{-k}$ and
$\deltt(\cW_{n,k},\cW^k) \to 0$ for each $k$ as $n \to \infty$, we can first
choose $k$ and then $n$ large enough to guarantee that the right side is
smaller than $\eps/2$. Combined with the triangle inequality for $\delGP$,
this shows that for all $\eps>0$ we can find an $n_0$ such that for $n\geq
n_0$, we have $\delGP(\cW_n,\cW)\leq\eps$, as claimed.
\end{proof}

\begin{remark}\label{rem:signed-compactness-pf}
It is not hard to see that a sequence of signed graphexes
$\cW_n=(W_n,S_n,I_n,\bOmega_n)$ with $\|W_n\|_\infty\leq B$ and
$\|\cW_n\|_1\leq C$ obeys the condition \eqref{sign-tightD} from
Lemma~\ref{lemmapartitionsequence}. If $B\leq 1$, this follows by applying
Corollary~\ref{cor:C-bounded-tight} (1) to the sequence $|\cW_n|$, and for
$B>1$ it follows by applying Corollary~\ref{cor:C-bounded-tight} (1) to the
sequence $(|W_n|/B,|S_n|/B,|I_n|/B,\bOmega_n)$. Choosing a convergent
subsequence, it is clear from the last proof that the limiting graphex
$\cW=(W,S,I,\bOmega)$ must obey the bound $\|W\|_\infty\leq B$. The statement
from Remark~\ref{rem:signed-compactness} therefore is a direct consequence of
Lemma~\ref{lem:tight-relcomp}.
\end{remark}

\begin{proof}[Proof of Theorem~\ref{thm:complete}]
By Corollary~\ref{cor:C-bounded-tight} (1), any set of graphexes whose $L^1$
norms are bounded by $C$ is tight, so by Lemma~\ref{lem:tight-relcomp} any
such sequence of graphexes has a subsequence with a limit $\cW$ in the metric
$\delGP$. Lemma~\ref{lem:tight-relcomp} also implies that the limit inherits
the bound on the $L^1$ norm, and therefore the set of graphexes whose $L^1$
norms are bounded by $C$ is compact. The same proof gives the statement of
the theorem for $(C,D)$-bounded graphexes.

Suppose now that $\cW_1,\cW_2,\dots,\cW_n,\dots$ is a Cauchy sequence in
$\delGP$. We first claim that it must be tight. Indeed, for any
$\varepsilon>0$, there exists $n$ such that for any $m>n$,
$\delGP(\cW_n,\cW_m)<\varepsilon$. Fix such an $n$ and an $m>n$. By
Lemma~\ref{lem:tilde-delGP}, we can then decrease the measures $\mu_n,\mu_m$
by at most $\varepsilon^2$ such that the $\deltt$ distance of the resulting
graphexes $\widehat W_n$ and $\widehat W_m$ is less than $\varepsilon$. Let
$\cW_n'$, $\cW_m'$ be trivial extensions of the modified graphexes by spaces
of infinite measure, and let $\widetilde\cW_n'$, $\widetilde\cW_m'$ be
pullbacks according to a coupling so that
$\d22(\widetilde{\cW}_n',\widetilde{\cW}_m')<\varepsilon$, which exists by
the definition of $\deltt$. Furthermore, since every finite set of graphexes
is tight, we can find $(C,D)$ such that we can remove a set of measure
$\varepsilon^2$ from $\widehat\cW_n$ to make it $(C,D)$-bounded (independent
of $m$). If we remove the pullback of this set from the underlying space of
$\widetilde\cW_n'$ and $\widetilde\cW_m'$ and replace them by the
restrictions, then the two graphexes will still have $\deltt$ distance at
most $2\varepsilon$ by Lemma \ref{lemmaremovedsetdistancenotworse}. In
particular, this means that $\|\widetilde\cW_m'\|_1 \le C+8\varepsilon^3$. We
also have
\[\|D_{\widetilde\cW_m'}\|_2 \le \|D_{\widetilde\cW_n'}\|_2+\|D_{\widetilde\cW_m'}-D_{\widetilde\cW_n'}\|_2 \le \sqrt{CD}+4\varepsilon^2 \le
2\sqrt{CD} .\] Therefore, the measure of the points $x$ for which
$D_{\widetilde\cW_m'}(x)>\sqrt{CD/\varepsilon}$ is at most $4\varepsilon$.
Taking $C'=C+\varepsilon$ and $D'=\sqrt{CD/\varepsilon}$, we have obtained
that for any $m>n$ we can remove a set of measure at most $6\varepsilon$ so
that the remainder is $(C',D')$-bounded. Since any finite set is tight, and
the union of two tight sets is tight, this means that the entire sequence is
tight. Therefore, it must have a convergent subsequence that converges to a
graphex $\cW$. But then because the original sequence was a Cauchy sequence,
the entire sequence must converge to $\cW$. This proves that the space of
graphexes is complete.

The above lemma implies that every tight set is relatively compact, and the
fact that any Cauchy sequence must be tight implies that every relatively
compact set is tight.
\end{proof}

\section{Subgraph counts}
\label{sec:subgraph-counts}

In this section we will prove that convergence in the weak kernel metric
implies GP-convergence. The main technical tool for this proof will be the
following counting lemma, which says that given any $C,D<\infty$, two
$(C,D)$-bounded graphexes that are close in kernel metric $\deltt$ must have
close subgraph counts.

While this lemma and its corollary are formulated only for unsigned
graphexes, we note that both have natural generalizations to signed
graphexes.  See Remark~\ref{rem:Fclose} at the end of
Section~\ref{sec:counting} below.

\begin{lemma} \label{lemmatFclose}
Let $F$ be a simple, connected graph with $m$ edges and $n\geq 3$ vertices,
and let $C,D <\infty$. Suppose that $\cW_1$ and $\cW_2$ are graphexes on the
same underlying space $\bOmega$, with $\|\cW_i\|_1 \le C$ and
$\|D_{{\cW}_i}\|_\infty \le D$ for $i=1,2$, and let
$\varepsilon=\max\{\|W_1-W_2\|_{2 \rightarrow 2},\|D_{\cW_1-\cW_2}\|_2\}$.
Then
\[\
|t(F,\cW_1)-t(F,\cW_2)| \le m\varepsilon \widetilde CD^{n-3},
\]
where $\widetilde C=\max\{C,\sqrt{CD}\}$.
\end{lemma}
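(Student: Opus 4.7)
The plan is to telescope the difference into $m$ pieces and bound each by $\varepsilon\widetilde{C}D^{n-3}$. Recall from Section~\ref{sec:defs} that for connected $F$ with $n\ge 3$ vertices, letting $V_{\ge 2}$ denote the vertices of degree $\ge 2$ in $F$ and $G=F|_{V_{\ge 2}}$, the density takes the form
\[
t(F,\cW)=\int_{\Omega^{V_{\ge 2}}}\prod_{\{v,w\}\in E(G)}W(z_v,z_w)\prod_{u\in V_{\ge 2}}D_\cW(z_u)^{d_1(u)}\prod_{u\in V_{\ge 2}}d\mu(z_u).
\]
Writing $D_\cW(z_u)^{d_1(u)}$ as a product of $d_1(u)$ copies of $D_\cW(z_u)$, the integrand is a product of exactly $|E(G)|+\sum_u d_1(u)=m$ atomic factors. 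Replacing $\cW_2$ by $\cW_1$ one factor at a time decomposes $t(F,\cW_1)-t(F,\cW_2)$ into a sum of $m$ integrals, each containing a single \emph{difference factor} of the form $(W_1-W_2)(z_v,z_w)$ (an \emph{edge term} at some $\{v,w\}\in E(G)$) or $(D_{\cW_1}-D_{\cW_2})(z_v)$ (a \emph{degree term} at some $v\in V_{\ge 2}$ with $d_1(v)\ge 1$); all remaining factors are $W_i\le 1$ and $D_{\cW_i}\le D$.

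For each such term, the strategy is to perform partial integrations against the non-difference factors, regarding each $W$-factor as an operator on $L^2$ with norm $\|W\|_{2\to 2}\le\|D_W\|_\infty\le D$ (by Schur's test), and each $D_\cW$-factor as a vector with $\|D_\cW\|_\infty\le D$, $\|D_\cW\|_1\le C$, and hence $\|D_\cW\|_2\le\sqrt{CD}$. Contracting the tensor network of factors in an order adapted to the graph $G$, one reduces each term to a pairing with the difference factor: either a bilinear pairing
\[
\Bigl|\int(W_1-W_2)(z_v,z_w)A(z_v)B(z_w)\,d\mu(z_v)d\mu(z_w)\Bigr|\le\varepsilon\|A\|_2\|B\|_2
\]
for an edge term, or a linear pairing $\bigl|\int(D_{\cW_1}-D_{\cW_2})(z_v)h(z_v)\,d\mu(z_v)\bigr|\le\varepsilon\|h\|_2$ for a degree term. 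The product of norms accumulated along the contraction is at most $\widetilde{C}D^{n-3}$: of the $n$ vertices of $F$, the endpoints of the difference factor are absorbed by the $\varepsilon$, each of the remaining $n-2$ vertices contributes a factor of $D$ (via the operator norm of a $W$, the $L^\infty$ bound on $D_\cW$, or a pointwise bound on a degree function), and a single ``closing'' $L^1$- or $L^2$-bound contributes $C$ or $\sqrt{CD}$, both subsumed by $\widetilde{C}$.

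Summing gives $|t(F,\cW_1)-t(F,\cW_2)|\le m\varepsilon\widetilde{C}D^{n-3}$. The main obstacle is the combinatorial bookkeeping for the contractions: depending on whether $G$ with the difference-factor edge removed separates $v$ and $w$, one applies either an ``integrate-first, then pair'' strategy (building $A$ and $B$ by contracting the components containing $v$ and $w$ separately, as one would when $F$ is path-like) or a ``fix-the-interior, then Cauchy--Schwarz'' strategy (freezing the non-critical variables, applying the bilinear bound pointwise, and integrating back over the frozen variables, as is needed when $F$ contains many cycles through $\{v,w\}$). The asymmetry between the closing bounds $\|D_\cW\|_1\le C$ and $\|D_\cW\|_2\le\sqrt{CD}$ is precisely why $\widetilde{C}=\max(C,\sqrt{CD})$ appears in the final bound.
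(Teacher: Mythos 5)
Your plan correctly identifies the telescoping decomposition into $m$ difference terms, one per edge of $F$, and the split into an ``edge term'' (difference factor $(W_1-W_2)(z_u,z_v)$ at an edge of $F|_{V_{\ge 2}}$) versus a ``degree term'' (difference factor $(D_{\cW_1}-D_{\cW_2})(z_v)$ at a vertex with a pendant leaf); this is exactly the paper's decomposition, and the degree-term case is indeed a single scalar pairing against a function $h(z_v)$ bounded in $L^2$. The genuine gap is in the edge-term case when the difference edge $\{u,v\}$ does not separate the graph. You propose to ``freeze the non-critical variables, apply the bilinear bound pointwise, and integrate back over the frozen variables.'' That leaves you with $\varepsilon\int[\text{factors inside }V_{uv}]\,\|A(z_{V'_{uv}})\|_2\|B(z_{V'_{uv}})\|_2\,d\mu(z_{V'_{uv}})$, and the plan says nothing about how to control this integral. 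If you try to close it by bounding $\|A\|_2\|B\|_2$ pointwise with $L^\infty$-type estimates, you can lose integrability over an infinite-measure space: take $F$ to consist of the edge $\{u,v\}$ together with $k\ge 2$ internally disjoint paths of length two, $u-x_i-v$. Then $A(z_u)=\prod_i W(z_u,z_{x_i})$, the best pointwise bound on $\|A\|_2^2$ is $\min_i D_W(z_{x_i})$, and $\int_{\Omega^k}\min_i D_W(z_{x_i})\,d\mu^k$ need not be finite. What actually closes the argument --- and what the paper does --- is a \emph{second} Cauchy--Schwarz \emph{in the frozen variables}, giving $\sqrt{\int[\cdots]\|A\|_2^2}\cdot\sqrt{\int[\cdots]\|B\|_2^2}$; each factor under the root is $t(F'_u,\cdot)$ or $t(F'_v,\cdot)$ for auxiliary \emph{multigraphs} $F'_u,F'_v$ obtained by doubling the edges from $u$ (resp.\ $v$), and these are controlled by a separate spanning-tree estimate (Lemmas~\ref{lem:tx-bound} and~\ref{lem:tmultigraph}). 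This doubling step, and the verification that the exponents from the two square roots and the one-sided components recombine to exactly $D^{n-3}$, is the real content of the hard case and is missing from your outline.

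Two smaller points. First, your bookkeeping paragraph literally reads as $\varepsilon\cdot D^{n-2}\cdot\widetilde C$ (two vertices to $\varepsilon$, each of the remaining $n-2$ to a $D$, plus a closing $\widetilde C$), which is off by one factor of $D$ from the target $m\varepsilon\widetilde CD^{n-3}$; the closing bound has to \emph{replace} one of the $D$'s, not be added on top. Second, the estimate $\|W\|_{2\to 2}\le\|D_W\|_\infty$ by Schur's test is correct but is not how the paper organizes the bound --- the paper works through spanning trees rather than composing operator norms vertex by vertex --- and in particular the tree-contraction picture has its own subtlety: when a vertex has several children, $\|A\|_2$ is the $L^2$ norm of a \emph{product} of integral operators applied to different functions, and controlling it requires designating one branch as the $L^2$ spine and putting the others in $L^\infty$, which is again exactly the spanning-tree structure of the paper's lemmas.
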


\begin{corollary} \label{corrolaryFclose}
Suppose $\cW_n$, $\cW$ have uniformly bounded marginals.
If
\[
\deltt(\cW_n,\cW) \rightarrow 0,
\]
then for any finite graph $F$ with no isolated vertices, $t(F,\cW_n)$
converges to $t(F,\cW)$.
\end{corollary}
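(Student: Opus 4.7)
Since $t(F,\cW)$ is multiplicative over connected components (by definition), and a product of convergent bounded sequences converges, it suffices to prove the result for connected $F$. I therefore first dispose of the degenerate case when $F$ is a single edge, which must be handled separately because Lemma~\ref{lemmatFclose} requires $n\geq 3$. In that case $t(F,\cW)=\rho(\cW)$, and directly from the definition \eqref{d22-def} of $\d22$ one has $|\rho(\cW_1)-\rho(\cW_2)|\leq \d22(\cW_1,\cW_2)^3$ whenever $\cW_1,\cW_2$ live on the same space. Since pullbacks under measure-preserving maps and trivial extensions both preserve $\rho$, this bound is inherited by $\tdel22$ and by $\deltt$, so $\deltt(\cW_n,\cW)\to 0$ gives $\rho(\cW_n)\to\rho(\cW)$.

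For a connected simple $F$ on $n\geq 3$ vertices with $m$ edges, I will bring both graphexes onto a common space and apply Lemma~\ref{lemmatFclose}. Let $D$ be a uniform bound for the marginals. By Proposition~\ref{prop:local-finite}, each $\cW_n$ and $\cW$ is integrable, and by the previous paragraph $\|\cW_n\|_1=\rho(\cW_n)\to\rho(\cW)=\|\cW\|_1<\infty$, so there exists $C<\infty$ with $\|\cW_n\|_1,\|\cW\|_1\leq C$ uniformly. Fix $\varepsilon>0$, choose $n$ large enough that $\deltt(\cW_n,\cW)<\varepsilon$, and use Definition~\ref{def:del22} together with \eqref{tdel22-def} to produce trivial extensions $\cW_n'$ and $\cW'$ of $\cW_n$ and $\cW$ to spaces of infinite measure, and a coupling $\mu$ of the underlying measures, such that the pullbacks $\cU_n:=(\cW_n')^{\pi_1,\mu}$ and $\cU:=(\cW')^{\pi_2,\mu}$ on the common space satisfy $\d22(\cU_n,\cU)<\varepsilon$.

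The pullbacks and trivial extensions preserve both the $L^1$ norm and the $L^\infty$ bound on the marginals (trivial extensions set the new marginal to $0$; measure-preserving pullbacks leave the essential supremum of the marginal and the $L^1$ norm unchanged), so $\cU_n$ and $\cU$ are again $(C,D)$-bounded. From the definition of $\d22$ I get $\|W_{\cU_n}-W_{\cU}\|_{2\to 2}<\varepsilon$ and $\|D_{\cU_n}-D_{\cU}\|_2<\varepsilon^2$, and hence the quantity $\varepsilon_{\text{Lem}}$ in the hypothesis of Lemma~\ref{lemmatFclose} is at most $\max\{\varepsilon,\varepsilon^2\}$. Lemma~\ref{lemmatFclose} then gives
\[
|t(F,\cU_n)-t(F,\cU)| \leq m\max\{\varepsilon,\varepsilon^2\}\,\widetilde C\,D^{n-3},
\]
with $\widetilde C=\max\{C,\sqrt{CD}\}$. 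Finally, $t(F,\cdot)$ is invariant under the measure-preserving pullbacks and trivial extensions used above (this is immediate from the integral definition of $t(F,\cW)$ and was implicitly used in Proposition~\ref{prop:t(F,W)}), so $t(F,\cU_n)=t(F,\cW_n)$ and $t(F,\cU)=t(F,\cW)$, proving $t(F,\cW_n)\to t(F,\cW)$. The main obstacle is the transition between the metric $\deltt$, which is defined via couplings, and the on-the-same-space Lemma~\ref{lemmatFclose}; once one verifies that the $(C,D)$-boundedness is preserved under the trivial extensions and coupling-based pullbacks, the counting lemma delivers the result.
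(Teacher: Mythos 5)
Your proof is correct and follows essentially the same path as the paper's: handle the single-edge case directly from the $\rho$-part of $\d22$ (giving uniform $\|\cdot\|_1$-bounds), apply Lemma~\ref{lemmatFclose} to connected $F$ with $n\geq 3$, and conclude by multiplicativity over components. The only difference is that you make explicit the mechanism the paper glosses over in one sentence, namely that one passes to trivial extensions and a coupling to bring $\cW_n$ and $\cW$ onto a common space, that $(C,D)$-boundedness survives these operations, and that $t(F,\cdot)$ is invariant under measure-preserving pullbacks and trivial extensions — all of which you check correctly.
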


\begin{proof}
By the definition of $\deltt$, if $F$ is an edge, then $t(F,\cW_n)
\rightarrow t(F,\cW)$. This implies in particular that $\|\cW\|_1$ and
$\|\cW_n\|_1$ are uniformly bounded. By Lemma \ref{lemmatFclose}, it follows
that $t(F,\cW_n) \rightarrow t(F,\cW)$ for any connected graph $F$. Since
homomorphism densities factor over connected components of the graph $F$,
this means that if $F$ is a finite graph without isolated vertices, then
$t(F,\cW_n) \rightarrow t(F,\cW)$.
\end{proof}

In a addition to the above counting lemma, we will need to show that
convergence of subgraph counts implies GP-convergence. Thinking of the
subgraph counts as the moments of a graphex, this result is similar to
standard moment theorems for random variables that show that under suitable
growth conditions, the distribution of a random variable is determined by its
moments.

\begin{theorem} \label{theoremgraphexconveq}
Assume that the marginals of $\cW_n$ and $\cW$ are bounded by some finite
constant $D$. Then the following are equivalent:
\begin{enumerate}
\item $G_T(\cW_n)\rightarrow G_T(\cW)$ in distribution for every $T$. \label{graphexconvallT}
\item $G_T(\cW_n)\rightarrow G_T(\cW)$ in distribution for some $T$. \label{graphexconvoneT}
\item For every graph $F$ with no isolated vertices, $t(F,\cW_n) \rightarrow t(F,\cW)$. \label{graphexconvnoisol}
\item For every connected graph $F$, $t(F,\cW_n) \rightarrow t(F,\cW)$. \label{graphexconvconn}
\end{enumerate}
\end{theorem}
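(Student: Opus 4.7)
The implications $(1)\Rightarrow(2)$ and $(3)\Rightarrow(4)$ are immediate, and $(4)\Rightarrow(3)$ follows from multiplicativity of $t(F,\cW)$ over connected components, built into the definition of $t(F,\cW)$. The substantive implications are thus $(2)\Rightarrow(3)$ and $(3)\Rightarrow(1)$.

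For $(2)\Rightarrow(3)$, my plan is to combine convergence in distribution with uniform integrability. I would first invoke the tightness theory of Section~\ref{sec:tight}: convergence in distribution of $G_T(\cW_n)$ gives tightness of $\{G_T(\cW_n)\}$, which by Theorem~\ref{theoremtightequiv} forces $\{\cW_n\}$ to be tight, and combined with the uniform $D$-bounded marginals this yields via Corollary~\ref{cor:C-bounded-tight}\,(2) a uniform constant $C$ with $\|\cW_n\|_1 \le C$. For any connected $F$ without isolated vertices the function $G\mapsto \inj(F,G)$ is continuous on the discrete space of finite unlabeled graphs, so $\inj(F,G_T(\cW_n))\to\inj(F,G_T(\cW))$ in distribution, while Proposition~\ref{prop:t(F,W)} identifies $\EE[\inj(F,G_T(\cW))]=T^{|V(F)|}t(F,\cW)$. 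To promote distributional convergence to convergence in mean I would use a uniform $L^2$ bound: $\inj(F,G)^2$ decomposes as a finite sum $\sum_{F'}c_{F'}\inj(F',G)$ over simple graphs $F'$ obtained from two disjoint copies of $F$ by identifying vertex subsets, each $F'$ having no isolated vertices and at most $2|V(F)|$ vertices, and a further application of Proposition~\ref{prop:t(F,W)} with uniform $(C,D)$-boundedness gives $\EE[\inj(F,G_T(\cW_n))^2]\le K$ uniformly in $n$, hence uniform integrability and $t(F,\cW_n)\to t(F,\cW)$.

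For $(3)\Rightarrow(1)$, the plan is a tightness-plus-moment-determinacy argument. From $t(K_2,\cW_n)\to t(K_2,\cW)$ we have $\|\cW_n\|_1$ uniformly bounded, so $\{\cW_n\}$ is tight by Corollary~\ref{cor:C-bounded-tight}\,(1), and hence $\{G_T(\cW_n)\}$ is tight for every $T$ by Theorem~\ref{theoremtightequiv}. Every subsequence therefore admits a sub-subsequence along which $G_T(\cW_{n_k})\to\nu$ in distribution for some probability measure $\nu$ on the countable discrete space of finite unlabeled graphs, and running the uniform $L^2$-integrability argument from $(2)\Rightarrow(3)$ along this sub-subsequence yields $\EE_\nu[\inj(F,G)]=T^{|V(F)|}t(F,\cW)=\EE[\inj(F,G_T(\cW))]$ for every finite $F$.

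The main obstacle is the remaining moment-determinacy step: that the law of $G_T(\cW)$ on the countable discrete space of finite unlabeled graphs is uniquely determined by its factorial moments $\{\EE[\inj(F,G_T(\cW))]\}_F$. The required control comes from the Poisson-point construction of $G_T(\cW)$: under $(C,D)$-boundedness, $|E(G_T(\cW))|$ splits into a graphon contribution (bounded by $\binom{N}{2}$ for $N$ the random number of feature-labeled Poisson points contributing to $G_T$), a star contribution, and a Poisson dust contribution, with the marginal bound $D$ controlling expected degrees conditional on vertex features. This yields polynomial-in-$k$ growth of $\EE[|E(G_T(\cW))|^k]$, hence Carleman's condition, so standard moment-problem arguments recover the full law of $G_T(\cW)$ from its factorial moments. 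Consequently $\nu$ coincides with the law of $G_T(\cW)$ on every subsequential limit, which gives $(1)$.
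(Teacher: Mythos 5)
Your reduction to the $(C,D)$-bounded case via tightness (Theorem~\ref{theoremtightequiv} and Corollary~\ref{cor:C-bounded-tight}) and your subsequential-limit scaffold both match the paper's strategy, and the $(2)\Rightarrow(3)$ argument via the uniform $L^2$ bound on $\inj(F,G_T(\cW_n))$ is essentially the paper's Theorem~\ref{theoremconveq}, part $(1)\Rightarrow(2)$. The gap is in your moment-determinacy step for $(3)\Rightarrow(1)$, and it is a real one.

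First, the asserted ``polynomial-in-$k$ growth of $\EE[|E(G_T(\cW))|^k]$'' is false: a nonnegative random variable whose moments grow polynomially in $k$ is bounded almost surely (since $\sup_k \EE[Y^k]^{1/k}=\|Y\|_\infty$), and $|E(G_T(\cW))|$ is certainly unbounded for a nontrivial graphex (already for the pure-dust graphex it is $\Poisson(T^2I)$). What the paper actually establishes, in Lemma~\ref{lemmaboundedgraphexmomentgenfinite}, is that $(C,D)$-boundedness gives $\EE[e^{tX}]<\infty$ for all $t>0$, where $X$ is the \emph{vertex} count of $G_T(\cW)$; this is an exponential-moment bound on $X$, not a polynomial-moment bound on $|E|$.

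Second, and more fundamentally, even with the correct exponential-moment control, the conclusion you want is not a ``standard moment-problem argument.'' You are determining a probability measure on the discrete space of finite unlabeled graphs from the collection $\{\EE[\inj(F,\mathbb{G})]\}_F$, not a distribution on $\RR$ from scalar moments, so Carleman's condition does not directly apply. The paper proves determinacy in two combinatorial steps (Lemmas~\ref{lemmanoisolallequal} and \ref{lemmainjequalsamedist}): first extending equality of $\EE[\inj(F,\cdot)]$ from isolated-vertex-free $F$ to all $F$, and then recovering $\PP[\mathbb{G}\cong F]$ via the alternating series $\sum_{i\ge 0}\frac{(-1)^i}{i!}\EE[\inj(F_i,\mathbb{G})]$, where $F_i$ is $F$ with $i$ isolated vertices appended. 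The exponential moment bound enters precisely to justify absolute convergence of these alternating series (the computation needs $\EE[(2+\eps)^X]<\infty$, where the base $2$ arises from summing over subsets of the extra vertices). Your proof, as written, does not supply a substitute for this inclusion-exclusion argument; replacing it with ``Carleman's condition'' leaves the core of $(3)\Rightarrow(1)$ unproved.
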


We will prove the counting lemma in Subsection~\ref{sec:counting} below, and
Theorem~\ref{theoremgraphexconveq} in
Subsection~\ref{sec:samplingandsubgraphs}. In the final subsection,
Subsection~\ref{sec:metric-implies-GP}, we use these results to first show
that under the assumption of uniformly bounded marginals,
$\deltt$-convergence implies GP-convergence
(Theorem~\ref{thm:deltt-implies-GPconv} below). With the help of the results
about tightness established in Section~\ref{sec:tight}, this in turn allows
us to show that without any assumption on the marginals, $\delGP$-convergence
implies GP-convergence (Theorem~\ref{theoremdimpliessampling} below).

\subsection{Proof of the counting lemma} \label{sec:counting}

In order to prove the counting lemma, it will be convenient to consider
several variants of the homomorphism densities. For these variants, it will
be natural to consider signed graphexes, since we will need to consider
differences of graphexes for the proof of the counting lemma anyway. Note
that our proof of the counting lemma can easily be generalized to signed
graphons; see Remark~\ref{rem:Fclose} below.

\begin{definition}
Suppose we have a connected multigraph $F$, and signed graphexes $\cW_e$
assigned to each edge $e \in E(F)$ (refer to this vector of graphexes as
$\cW_F$), each with the same feature space $\Omega$. Let $V_{\ge 2}$ be the
set of vertices with degree at least $2$. If $V_{\ge 2}$ is nonempty (i.e.,
$F$ does not consist of a single edge), then we define
\begin{multline*}
t(F,\cW_F) = \int_{\Omega^{V_{\ge 2}}}dz_{V_{\ge 2}} \prod_{\{v,w\} \in E(F(V_{\ge 2}))}
W_{\{v,w\}}(z_v,z_w)\\
\phantom{} \cdot \prod_{v \in V_{\ge 2}} \prod_{\substack{w \in V \setminus V_{\geq 2}:\\\{v,w\} \in E(F)}} D_{\cW_{\{v,w\}}}(z_v).
\qquad\end{multline*}
If $F$ consists of just a single edge $f$, then
\[t(F,\cW_F)=
\rho(\cW_f),
\]
with $\rho(\cW_f)$ as in \eqref{rho-of-W-def}. Note that for signed graphexes
$t(F,\cW_F)$ is in general only well defined if the integrals are absolutely
convergent, a condition which can, e.g., be guaranteed by requiring that
$t(F,\cW_F^{\text{abs}})<\infty$, where $\cW_F^{\text{abs}}$ is obtained from
$\cW_F$ by replacing the graphexes $\cW_f$ by $|\cW_f|$.
\end{definition}

Note that $t(F,\cW_F)$ is a multilinear function of the signed graphexes
$\cW_e$ in $\cW_F$. If each $\cW_e$ is equal to some fixed $\cW$, then this
is just the previous definition.

We also define a conditional density where we fix the image of a single
vertex.

\begin{definition}
Suppose we have a connected multigraph $F$ with a labeled vertex $v_0$, more
than one edge, and signed graphexes $\cW_e$ assigned to each edge as before.
Let $x \in \Omega$, and define $z_{v_0}=x$. Take, furthermore, $V_{\geq
2}'=V_{\geq 2} \setminus \{v_0\}$ and $\widetilde{V}_{\geq 2}=V_{\geq 2} \cup
\{v_0\}$. Then we define $t_x(F,\cW_F)$ to be
\[
\int_{\Omega^{V_{\geq 2}'}} dz_{V_{\geq 2}'} \prod_{\{v,w\} \in
E(F(\widetilde V_{\geq2}))} W_{\{v,w\}}(z_v,z_w)
\prod_{v \in
{V_{\ge 2}}}
\prod_{\substack{w \in V \setminus (\widetilde V_{\geq 2}):\\ \{v,w\} \in E(F)}} D_{\cW_{\{v,w\}}}(z_v) .
\]
If $F$ consists of just a single edge $f$ adjacent to $v_0$, then we set
$t_x(F,\cW)=D_{\cW}(x)$. Again, for signed graphexes, this is in general only
well defined if the integrals are absolutely convergent.
\end{definition}

Note that if $v_0\in V_{\geq2}$, then $ t_x(F,\cW_F)$ is obtained from $
t(F,\cW_F)$ by simply fixing the feature corresponding to $v_0$ to be $x$,
implying in particular that $\int t_x(F,\cW_F)\, dx=t(F,\cW_F)$ (assuming the
integrals defining these are absolutely convergent). If $v_0\notin
V_{\geq2}$, i.e., if $v_0$ has degree $1$, and if $F$ has more than one edge,
then the situation is slightly more complicated, since the ``feature'' of the
image of $v_0$ could be either an element of $\Omega$, or the special value
$\infty$, interpreted earlier as the feature of the leaves of the star part
of a graphon process. With this reinterpretation, $ t_x(F,\cW_F)$ is still
obtained from $ t(F,\cW_F)$ by fixing the feature corresponding to $v_0$ to
be $x$, but the integral $\int t_x(F,\cW_F)\, dx$ now misses the contribution
of $x=\infty$, and hence is in general only bounded above by $t(F,\cW_F)$. It
is, however, equal to $t(F,\cW'_F)$, where $\cW'_F$ is obtained from $\cW_F$
by setting the star part of the graphex corresponding to the edge containing
$v_0$ to $0$.

\begin{lemma}
\label{lem:tx-bound} Suppose $F$ is a connected multigraph with no loops and
a labeled vertex $v_0$, $T$ is a spanning tree, and $\cW_e$ is a signed
graphex corresponding to each edge $e$, each with the same feature space
$\Omega$. Let $f \in T$ be an edge adjacent to $v_0$, and $x \in \Omega$.
Then
\[|t_x(F,\cW_{F})| \le D_{|\cW_f|}(x) \prod_{e \in T \setminus f} \|D_{|\cW_e|}\|_\infty \prod_{e \in E(F) \setminus T} \|W_e\|_\infty.
\]
\end{lemma}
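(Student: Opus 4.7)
My plan is to prove the bound by induction on $|E(F)|$. The base case $|E(F)|=1$ is immediate: $F$ consists of just the edge $f$ adjacent to $v_0$, so $t_x(F,\cW_f)=D_{\cW_f}(x)$ by definition, and the triangle inequality $|S_f(x)+\int W_f(x,y)\,d\mu(y)|\leq D_{|\cW_f|}(x)$ gives the claim with the two products on the right equal to~$1$.

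For $|E(F)|\geq 2$, I first bring absolute values inside the integral and into each factor, using $|D_{\cW_e}(z)|\leq D_{|\cW_e|}(z)$ to make the integrand pointwise nonnegative. Next I bound every non-tree edge factor by $|W_e(z_v,z_w)|\leq\|W_e\|_\infty$; this produces the factor $\prod_{e\in E(F)\setminus T}\|W_e\|_\infty$ of the target bound and leaves an integral whose only surviving factors are $|W_g(z_v,z_w)|$ for tree edges $g$ between two vertices of $\widetilde V_{\geq 2}$, and $D_{|\cW_g|}(z_v)$ for tree edges $g$ from $v\in V_{\geq 2}$ to a degree-one neighbor in $F$ (noting that each degree-one vertex's unique edge must lie in $T$). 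Then I peel a leaf $\ell\neq v_0$ of $T$, which exists because a tree with $\geq 1$ edge has $\geq 2$ leaves; let $g=\{\ell,u\}$ be its unique tree edge, so $g\neq f$.

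If $\ell$ has $F$-degree~$1$, the factor at $g$ is $D_{|\cW_g|}(z_u)$ and is bounded pointwise by $\|D_{|\cW_g|}\|_\infty$; if $\ell$ has $F$-degree~$\geq 2$, then $\ell\in V_{\geq 2}$, so $z_\ell$ is integrated, and the only $z_\ell$-dependent factor remaining is $|W_g(z_\ell,z_u)|$, whose integral is $D_{|W_g|}(z_u)\leq\|D_{|\cW_g|}\|_\infty$. Either way, peeling $\ell$ contributes the factor $\|D_{|\cW_g|}\|_\infty$ and leaves an integral of the same shape for $F'=F\setminus\ell$ with spanning tree $T'=T\setminus\{g\}$, to which the inductive hypothesis applies (using $E(F')\setminus T'=E(F)\setminus T$ and $T'\cup\{g\}=T$). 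The main obstacle is that removing $\ell$ may drop $u$'s degree from $2$ to $1$, which changes the corresponding factor at $u$'s remaining tree edge $h=\{u,u'\}$ from a $W$-factor (in $F$) to a $D$-factor (in $F'$); this is handled by performing the $z_u$ integration against $|W_h(z_u,z_{u'})|$ to obtain $D_{|W_h|}(z_{u'})\leq D_{|\cW_h|}(z_{u'})$, which is precisely the factor that the integrand for $F'$ carries at the edge $h$, so the induction closes without loss.
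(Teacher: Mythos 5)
Your approach is close in spirit to the paper's proof, which first treats the tree case $F=T$ by induction on tree edges and then handles general $F$ by bounding non-tree edges one at a time; you instead strip all non-tree edges at the outset and then induct on $|E(F)|$ by peeling a leaf of $T$. The structure is workable, but there is a gap at a key step: you write ``let $g=\{\ell,u\}$ be its unique tree edge, so $g\neq f$,'' and that inference is false. Knowing $\ell\neq v_0$ does not force $g\neq f$: if the other endpoint of $f$ is itself a leaf of $T$ and you happen to peel it, then $u=v_0$ and $g=f$. In that situation your step replaces $D_{|\cW_f|}(x)$ by $\|D_{|\cW_f|}\|_\infty$---not the factor in the target bound---and the inductive hypothesis cannot be applied to $(F',T')$ because $f\notin T'=T\setminus\{g\}$.

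You can repair this by \emph{choosing} the leaf so that $g\neq f$; such a leaf exists whenever $T$ has at least two edges (if the only non-$v_0$ leaves all had tree edge $f$, $T$ would be the single edge $f$). But when $T=\{f\}$, i.e.\ $|V(F)|=2$ and $F$ is a bundle of parallel edges, no such leaf exists and the induction cannot proceed; this case needs a direct computation (bound the parallel edges by $\|W_e\|_\infty$ and integrate the remaining $|W_f(x,\cdot)|$ to get $D_{|W_f|}(x)\leq D_{|\cW_f|}(x)$). The paper sidesteps the issue because in its first stage ($F$ a tree) the degenerate case is exactly the single-edge base case, and in its second stage the spanning tree never changes. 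A minor secondary slip: when the peeled leaf $\ell$ has $F$-degree $\geq 2$, removing $\ell$ deletes $\ell$'s non-tree edges as well, so the identity $E(F')\setminus T'=E(F)\setminus T$ you invoke is not correct in general; this is harmless here since you absorbed those factors at the start, but it should not be cited as an equality.
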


\begin{proof}
Replacing all signed graphexes $\cW_e$ by the non-negative versions
$|\cW_e|$, and noting that $|t_x(F,\cW_F)| \le t_x(F,\cW_F^{\text{abs}})$, we
may without loss of generality assume that $W_e$ and $S_e$ are non-negative.

Next, assume that $F$ is a tree, i.e., $F=T$. We then prove the claim by
induction on the number of edges. If $F$ consists of a single edge, then by
definition $t_x(F,\cW)=D_{\cW}(x)$, which is exactly the bound in the lemma.
Otherwise, we can find an edge $\{v,w\}$ not equal to $f$, such that $v$ has
degree at least two, and $w$ has degree $1$ and is different from $v_0$. The
edge $\{v,w\}$ then contributes $D_{\cW_{\{v,w\}}}(z_v)$ to the second
product in the integral representing $t_x(F,\cW_F)$. For each $z_v$, this is
at most $\|D_{\cW_{\{v,w\}}}\|_\infty$. Taking a factor
$\|D_{\cW_{\{v,w\}}}\|_\infty$ out of the integral, and defining $F'$ to be
the restriction of $F$ to $V(F)\setminus\{w\}$, we therefore have that
\[t_x(F,\cW_F) \le \|D_{\cW_{\{v,w\}}}\|_\infty t_x(F',\cW_{F'})
.\] Note that this bound is actually weaker than necessary if the vertex $v$
becomes a vertex of degree one in $F'$, in which case we could have obtained
a contribution of $D_{W_{\{v,v'\}}}(z_{v'})$ for its neighbor $v'$ instead of
the contribution $D_{\cW_{\{v,v'\}}}(z_{v'})$ implicit in the above bound.

Suppose now that $F$ has edges outside $T$. Let $\{v,w\}$ be such an edge.
Note that both $v$ and $w$ must be in $V_{\ge 2}$. Therefore, this edge
contributes $W_{\{v,w\}}(z_v,z_w)$ to the product, which is at most
$\|W_{\{v,w\}}\|_\infty$. We can therefore conclude the lemma by induction on
the number of edges of $F$ outside $T$.
\end{proof}

\begin{lemma}\label{lem:tmultigraph}
Suppose $F$ is a connected multigraph with no loops, and we have a signed
graphex $\cW_e$ corresponding to each edge $e \in F$. Let $T$ be any spanning
tree in $F$, and $f \in T$. Then
\[
|t(F,\cW_{F})| \le \|\cW_f\|_1 \prod_{e \in T \setminus f} \|D_{|\cW_e|}\|_\infty \prod_{e \in E(F) \setminus T} \|W_e\|_\infty.
\]
\end{lemma}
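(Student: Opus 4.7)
The statement looks like an integrated version of Lemma~\ref{lem:tx-bound}, so the natural strategy is to deduce it by labeling an endpoint of $f$ as $v_0$ in that lemma and integrating the pointwise $x$-bound over $\Omega$. Following the convention of the previous lemmas, I would first replace every $\cW_e$ by $|\cW_e|$ via the triangle inequality, so that I may assume all the data is non-negative.

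The proof then splits into two cases. In the base case, $F$ consists of the single edge $f$, in which case $t(F,\cW_f)=\rho(\cW_f)$ and the triangle inequality yields
\[
|\rho(\cW_f)| \le \int|W_f| + 2\int|S_f| + 2|I_f| = \|\cW_f\|_1,
\]
which matches the claim with both products empty. In the main case $|E(F)| \ge 2$, connectedness of $F$ forces at least one endpoint of $f=\{u_0,v_0\}$ to have degree $\ge 2$ in $F$; otherwise $f$ would be an isolated edge, forcing $|E(F)|=1$. Without loss of generality $u_0 \in V_{\ge 2}$. I then apply Lemma~\ref{lem:tx-bound} with $u_0$ playing the role of the labeled vertex (this is permitted since $f$ is adjacent to $u_0$ by construction, and $|E(F)|\ge 2$), obtaining
\[
|t_x(F,\cW_F)| \le D_{|\cW_f|}(x) \prod_{e \in T\setminus f}\|D_{|\cW_e|}\|_\infty \prod_{e \in E(F)\setminus T}\|W_e\|_\infty.
\]

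The key bookkeeping step is the identity $t(F,\cW_F)=\int_\Omega t_x(F,\cW_F)\,dx$. Because $u_0 \in V_{\ge 2}$, one has $\widetilde V_{\ge 2}=V_{\ge 2}$ and $V'_{\ge 2}=V_{\ge 2}\setminus\{u_0\}$ in the definition of $t_x$, so labeling $u_0$ merely extracts the variable $z_{u_0}=x$ from the $t$-integral, and reintegrating $x$ recovers $t(F,\cW_F)$. Integrating the pointwise bound above over $x$ and using
\[
\int_\Omega D_{|\cW_f|}(x)\,dx = \int|W_f| + \int|S_f| \le \|\cW_f\|_1,
\]
I obtain the desired inequality. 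The only conceptually delicate point is verifying this last identity and noting that it would fail if $u_0$ had degree one; the degree-count argument above rules this out precisely when $|E(F)|\ge 2$, which is why the base case needed to be handled separately.
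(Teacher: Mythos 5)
Your proof is correct and follows essentially the same route as the paper's: handle the single-edge case directly, then integrate the pointwise bound from Lemma~\ref{lem:tx-bound} over $\Omega$. The one thing you make explicit that the paper leaves implicit is that when $|E(F)|\ge 2$ one can always choose the labeled endpoint of $f$ to have degree at least two, which is precisely what guarantees $\int_\Omega t_x\,d\mu = t(F,\cW_F)$ rather than a strict inequality.
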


\begin{proof}
If $F$ consists of a single edge $f$, then $|t(F,\cW_F)|=|\rho(\cW_f)|\leq
\|\cW_f\|_1$ by definition, and if $F$ has more than one edge, then
\begin{align*}
|t(F,\cW_F)|&\leq
\int_\Omega| t_x(F,\cW_{F})| \,d\mu(x)\\
 &\le \int_\Omega D_{|\cW_f|}(x) \prod_{e \in T \setminus f} \|D_{|\cW_e|}\|_\infty \prod_{e \in E(F) \setminus T} \|W_e\|_\infty \,d\mu(x)\\
&\leq
\|\cW_f\|_1 \prod_{e \in T \setminus f} \|D_{|\cW_e|}\|_\infty \prod_{e \in E(F) \setminus T} \|W_e\|_\infty. \qedhere
\end{align*}
\end{proof}

\begin{proof}[Proof of Lemma~\ref{lemmatFclose}]
Let $f=\{u,v\}$ be an edge in $F$, and let $\cW_{F,f}$ be a vector of
graphexes where we assign one of $\cW_1$ or $\cW_2$ to each edge $e \ne f$,
and $(W_1-W_2,S_1-S_2,0,\bOmega)$ to $f$ (since $F$ is a connected graph with
at least $2$ edges, the dust parts of $\cW_1$ and $\cW_2$ don't contribute to
$t(F,\cW_1)-t(F,\cW_2)$ and can be set to $0$). We would like to bound
$|t(F,\cW_{F,f})|$.

First, assume both endpoints of $f$ have degree at least $2$. Let the
components of $F$ restricted to $V(F) \setminus \{u,v\}$ be
$C_1,C_2,\dots,C_k$, with corresponding vertex sets $V_1,\dots,V_k$. There
can be three types of components: those with at least one edge to $u$ but
none to $v$, those with at least one edge to $v$ but none to $u$, and those
with at least one edge to both. Let $\cC_u$ be the set of components
connected to $u$, $\cC_v$ the set of those connected to $v$, and $\cC_{uv}$
the set connected to both. For each $i \in \cC_u$, let $F_i$ be the labeled
graph where we add $u$ back to $C_i$ as the labeled vertex, and for each $i
\in \cC_y$, let $F_i$ be the labeled graph where we add $v$ back to $C_i$ as
the labeled vertex. Furthermore, for each $i\in \cC_u\cup \cC_v$, choose an
additional vertex $v_i\in V_i$ such that $v_i$ is incident to an edge in
$F_i$. Let $V_{uv}$ consist of vertices that belong to a component in
$\cC_{uv}$.

Given a set of vertices $U$, let $U'$ be the set of vertices in $U$ that have
degree at least $2$ in $F$, and let
\[
W_{F,U,u}(z_u,z_{U'})
=\prod_{\substack{w \in {U'}\\ \{u,w\} \in E(F)} }
W_{\{u,w\}}(z_u,z_w)
\]
and
\[
W_{F,U,v}(z_v,z_{U'})=\prod_{\substack{w \in {U'}\\ \{v,w\} \in E(F)\\}} W_{\{v,w\}}(z_v,z_w)
.
\]
Let us also use the notation
\[
\cW_{{F,U}}(z_{U'})=
\prod_{\{w,w'\} \in E(F(U'))}W_{\{w,w'\}}(z_{w},z_{w'})
{\prod_{w\in U'}}
\prod_{\substack{w' \in U \setminus U'\\\{w,w'\} \in E(F)}} D_{\cW_{\{w,w'\}}}(z_w).
\]
Observe that if a vertex in $V_{uv}$ is adjacent to $u$ or $v$, it must be in
$V_{uv}'$. We therefore express $t(F,\cW_{F,f})$ as
\begin{align*}
\int_{\Omega^{{V_{uv}'}}} &\cW_{{F,V_{uv}}}(z_{V_{uv}'})
\int_{\Omega^2} \Biggl(W_{F,V_{uv},v}(z_v,z_{{V_{uv}'}}) W_{F,V_{uv},u}(z_u,z_{{V_{uv}'}})W_f(z_u,z_v)
\Biggr.
\\
&\Biggl.\prod_{i \in \cC_u} t_{z_u}(F_i,\cW_{F,f})
\prod_{i \in \cC_v} t_{z_v}(F_i,\cW_{F,f})\Biggr),
\end{align*}
and bound the inner integral by
\begin{align*}
&
\left\|W_{F,V_{uv},u}({\cdot},z_{V_{uv}'})
\prod_{i \in \cC_u} t_{\cdot}(F_i,\cW_{F,f})\right\|_2\\
& \qquad \qquad
\left\|W_{F,V_{uv},v}({\cdot},z_{V_{uv}'})
\prod_{i \in \cC_v} t_{\cdot}(F_i,\cW_{F,f})\right\|_2\|W_f\|_{2 \rightarrow 2}
\\
&\leq
\left\|W_{F,V_{uv},u}({\cdot},z_{V_{uv}'})
\prod_{i \in \cC_u} D_{\cW_{uv_i}}\right\|_2\\
& \qquad \qquad
\left\|W_{F,V_{uv},v}({\cdot},z_{V_{uv}'})
\prod_{i \in \cC_v} D_{\cW_{vv_i}}\right\|_2
\|W_f\|_{2 \rightarrow 2}\prod_{i\in \cC_v\cup\cC_u} D^{|V_i|-1},
\end{align*}
where in the last step we used Lemma~\ref{lem:tx-bound} and the fact that the
number of edges in a spanning tree for $F_i$ is $|V(F_i)|-1=|V_i|$. Inserting
this bound into the outer integral, an application of the Cauchy-Schwartz
inequality then gives the bound
\begin{align*}
t(F,\cW_{F,f})&\le
\|W_f\|_{2 \rightarrow 2}
\sqrt{\int_{\Omega^{V_{uv}'}} \cW_{{F,V_{uv}}}(z_{V_{uv}'})
\left\|W_{F,V_{uv},u}({\cdot},z_{V'_{uv}})
\prod_{i \in \cC_u} D_{\cW_{uv_i}}\right\|_2^2}
\\
&\cdot
\sqrt{\int_{\Omega^{{V_{uv}'}}}\cW_{{F,V_{uv}}}(z_{V'_{uv}})
\left\|W_{F,V_{uv},v}({\cdot},z_{V'_{uv}})
\prod_{i \in \cC_v} D_{\cW_{vv_i}}\right\|_2^2}
\prod_{i\in \cC_v\cup\cC_u} D^{|V_i|-1}
\end{align*}
We claim that the expressions under the square roots can be written as
$t(F'_u,\cW_{F,f})$ and $t(F'_v,\cW_{F,f})$ for some suitable multigraphs
$F'_u$ and $F'_v$. Indeed, starting from $F(V_{uv}\cup\{u\})$, we first
duplicate every edge in this graph that joins $u$ to some vertex in $V_{uv}$,
keeping the edges between vertices in $V_{uv}$ as simple edges. The graph
$F'_u$ is obtained from this graph by adding two more edges for each
component $i\in\cC_u$: the edge $uv_i$, and a second edge $uv_i'$, with
$v_i'$ being a new vertex we should think of as a twin of $v_i$ (in $F'_u$,
they both have degree one and are connected to $u$). This gives a connected
multigraph on $V_u'=V(F_u')$ with $|V_{uv}|+1+2k_u$ many vertices and
$|E(F(V_{uv}))|+2d_u+2k_u$ many edges where $d_u$ is the number of vertices
$v'\in V_{uv}$ such that $uv'$ is an edge in $F$, and $k_u=|\cC_u|$. Define
$F'_v$ (as well as $d_v$ and $k_v$) analogously. We then can reexpress the
above bound as
\begin{align*}
t(F,W_{F,f})&\le
\|W_f\|_{2 \rightarrow 2}
\sqrt{t(F'_u,\cW_{F,f})t(F'_v,\cW_{F,f})}
\prod_{i\in \cC_v\cup\cC_u} D^{|V_i|-1}
\\
&\le
\|W_f\|_{2 \rightarrow 2}C D^{|V_{uv}|+k_u+k_v-1}
\prod_{i\in \cC_v\cup\cC_u} D^{|V_i|-1}
\\
&=\|W_f\|_{2 \rightarrow 2}C
D^{n-3}.
\end{align*}

If $f$ has one endpoint with degree $1$, and the other endpoint with degree
at least $2$, then let $v$ be the endpoint with degree at least $2$, and let
$u'\neq u$ be a neighbor of $v$. Let $\cW_{F-f}$ be the graphex assignment
restricted to the edges in $F-f$. Then
\begin{align*}
|t(F,\cW_{F,f})|&=\bigg|\int_{\Omega} t_{x_v}(F-f,\cW_{F-f})D_{{\cW_1-\cW_2}}(x_v)\,d\mu(x_v)\bigg|\\
& \le \|D_{\cW_1-\cW_2}\|_2 \|t_{\cdot}(F-f,\cW_{F-f})\|_2
\\
&\leq \|D_{\cW_1-\cW_2}\|_2\|D_{\cW_{vu'}}\|_2D^{n-3}
\leq \sqrt {CD}D^{n-3}
\|D_{\cW_1-\cW_2}\|_2.
\end{align*}
Now, let $e_1,e_2,\dots,e_m$ be the edges of $F$. Let $\cW_{F,i}$ be the
vector of graphexes where we assign $\cW_1-\cW_2$ to $e_i$, $\cW_1$ to $e_j$
with $j<i$, and $\cW_2$ to $e_j$ with $j>i$. Then,
\[
\Bigl|t(F,\cW_1)-t(F,\cW_2)\Bigr|\leq
\sum_{i=1}^m \Bigl|t(F,\cW_{F,i})\Bigr|
\le m\eps\widetilde CD^{n-3}
. \qedhere\]
\end{proof}

\begin{remark}
It is instructive to note that the bound in Lemma~\ref{lemmatFclose} can be
tightened to give the constant
\[
\widetilde C=\max_i\max\{\|W_i\|_2,\|D_{\cW_i}\|_2\}
\]
instead of the constant $\widetilde C=\max\{C,\sqrt{CD}\}$. To see this, we
first note that near the end of the proof, we bounded $\|D_{\cW_{vu'}}\|_2$
by $\sqrt {CD}$, even though it is possible that the first term is finite
while the second is infinite. In a similar way, bounding the integral
representing $t(F'_u,\cW_{F,f})$ and $t(F'_v,\cW_{F,f})$ with the help of
Lemma~\ref{lem:tmultigraph} is suboptimal. Indeed, the multi-graph $F'_v$
always contains at least one double edge, or contain at least one edge $uv_i$
and its twin $uv_i'$, with both $v_i$ and $v_i'$ having degree one. In the
first case, Lemma~\ref{lem:tmultigraph} can be improved to extract a factor
$\|W_f\|_2$ instead of a factor $\|\cW_f\|_1$, in the second it can be
improved to extract a factor $\|D_{\cW_{f}}\|_2$. Inserted into the proof of
Lemma~\ref{lemmatFclose}, this gives the claimed improvement.
\end{remark}

\begin{remark}\label{rem:Fclose}
As the reader can easily verify, the above proof immediately generalizes to
signed graphexes, showing that Lemma~\ref{lemmatFclose} holds for $(B,C,D)$
bounded graphexes, provided we include a factor of $B^{m-(n-1)}$ on the right
side. As a consequence, Corollary~\ref{corrolaryFclose} holds for sequences
of $(B,C,D)$-bounded graphexes that converge in the kernel distance $\deltt$.
\end{remark}

\subsection{GP-convergence and subgraph counts} \label{sec:samplingandsubgraphs}

In this subsection, we prove Theorem~\ref{theoremgraphexconveq}. We start by
establishing the following theorem.

\begin{theorem} \label{theoremconveq}
Let $\mathbb{G}$ and $\mathbb{G}_n$, for $n\geq 1$, be random finite graphs
with no isolated vertices, and let $X$ and $X_n$, for $n\geq 1$, be the
random variables that correspond to the number of vertices in $\mathbb{G}$
and $\mathbb{G}_n$, respectively. If, for every $t>0$, $\EE[e^{t X}]$ and
$\EE[e^{t X_n}]$ are finite and uniformly bounded, then the following are
equivalent:
\begin{enumerate}
\item For any graph $G$, the probability that $\mathbb{G}_n$ is isomorphic
    to $G$ converges to the probability that $\mathbb{G}$ is isomorphic to
$G$. \label{convdist}
\item For every graph $F$, $\EE[\inj(F,\mathbb{G}_n)] \rightarrow
    \EE[\inj(F,\mathbb{G})]$. \label{convexpinjany}
\item For every graph $F$ with no isolated vertices,
    $\EE[\inj(F,\mathbb{G}_n)] \rightarrow \EE[\inj(F,\mathbb{G})]$.
    \label{convexpinjnoisol}
\end{enumerate}
\end{theorem}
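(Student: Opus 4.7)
My plan is to prove (1) $\Rightarrow$ (2) $\Rightarrow$ (3) $\Rightarrow$ (1). For (1) $\Rightarrow$ (2), the space of isomorphism classes of finite graphs is discrete, so $G \mapsto \inj(F,G)$ is trivially continuous, and the pointwise bound $\inj(F,G) \le X^{v(F)}$ together with the hypothesis $\sup_n \EE[e^{tX_n}] < \infty$ for every $t$ gives uniform $L^p$ boundedness and hence uniform integrability of $\inj(F,\mathbb{G}_n)$; convergence in distribution then lifts to convergence of expectations. The implication (2) $\Rightarrow$ (3) is immediate.

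For (3) $\Rightarrow$ (1), I would first deduce tightness of $\{\mathbb{G}_n\}$ from the bound $\PP[X_n > K] \le e^{-tK}\sup_n\EE[e^{tX_n}]$ and the finiteness of the set of isomorphism classes on at most $K$ vertices. By Prokhorov, every subsequence has a sub-subsequence converging in distribution to some $\mathbb{G}^\ast$, which inherits the uniform exponential moment bound by Fatou; applying the already-established (1) $\Rightarrow$ (2) along this sub-subsequence together with hypothesis (3) yields $\EE[\inj(F,\mathbb{G}^\ast)] = \EE[\inj(F,\mathbb{G})]$ for every $F \in \fG_0$. It thus suffices to establish the following determinacy statement: two random graphs in $\fG_0$ with uniform exponential moments on vertex count and equal $\EE[\inj(F,\cdot)]$ for every $F \in \fG_0$ have the same distribution.

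For the determinacy step, the crucial tool is that $X$ itself is recoverable from $\inj$-counts in $\fG_0$ via the Bonferroni identity
\[ X = \sum_{k\ge 1} \frac{(-1)^{k-1}}{k!}\,\inj(K_{1,k},G),\qquad G \in \fG_0, \]
obtained by applying inclusion-exclusion to $1_{\{\deg(v)\ge 1\}}$ at each vertex $v$; the series is pointwise finite, and the dominating variable $\sum_v 2^{\deg(v)}\le X\cdot 2^X$ has finite expectation by the exponential moment hypothesis, so Fubini justifies taking expectations term by term. Combined with the standard expansion of a product $\inj(F_1,G)\inj(F_2,G)$ into a finite sum of $\inj(\widetilde F,G)$ over vertex-identifications of $F_1 \sqcup F_2$ (each of which lies in $\fG_0$ whenever $F_1,F_2$ do), every polynomial in $X$ and finitely many $\inj(F_i,\mathbb{G})$ becomes an absolutely convergent series in $\inj(\widetilde F,\mathbb{G})$ with $\widetilde F \in \fG_0$. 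Hence all joint moments $\EE\bigl[X^j\prod_i\inj(F_i,\mathbb{G})^{l_i}\bigr]$ are determined by $\{\EE[\inj(F,\mathbb{G})]:F\in\fG_0\}$, and the exponential tails on $X$ make the associated multivariate moment problem determinate.

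Finally, Möbius inversion on the lattice of supergraphs of $H$ on the vertex set $[v(H)]$ gives
\[ |\mathrm{Aut}(H)|\,1_{\{\mathbb{G}\cong H\}} = 1_{\{X=v(H)\}}\sum_{H'\supseteq H,\,V(H')=[v(H)]}(-1)^{|E(H')|-|E(H)|}\inj(H',\mathbb{G}), \]
with every $H'$ lying in $\fG_0$, so $\PP[\mathbb{G}\cong H]$ is a bounded functional of the joint distribution just shown to be determined, and is therefore itself determined. The main obstacle throughout is precisely this recovery of $X$ (and hence of joint moments involving $X$) from $\inj$-counts restricted to $\fG_0$, since the natural identity $\inj(\overline{K}_k,\cdot)=(X)_k$ is off limits ($\overline{K}_k$ having isolated vertices); the Bonferroni expansion together with the full ``every $t>0$'' form of the exponential moment hypothesis is what circumvents this obstruction.
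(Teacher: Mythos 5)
Your overall architecture --- $(1)\Rightarrow(2)\Rightarrow(3)\Rightarrow(1)$, with $(3)\Rightarrow(1)$ via tightness, Prokhorov, and a determinacy lemma for inj-counts on $\fG_0$ --- matches the paper. Your $(1)\Rightarrow(2)$ uses uniform $L^p$-bounds on $\inj(F,\mathbb{G}_n)$ from the pointwise bound $\inj(F,G)\le X^{v(F)}$, which is a slightly cleaner route than the paper's observation that $\inj(F,G)^2$ is a finite $\ZZ$-linear combination of $\inj(F',G)$'s; both work. Your Bonferroni identity $X=\sum_{k\ge1}\frac{(-1)^{k-1}}{k!}\inj(K_{1,k},G)$ is in fact exactly the base case of the paper's Lemma~\ref{lemmanoisolallequal}: there, for an $F$ with isolated vertex $w_0$ and $F'=F-w_0$, they establish $\inj(F,G)=\sum_{k\ge1}\frac{(-1)^{k-1}}{k!}\inj^*(F',k,G)$ and then run an induction on the number of isolated vertices to extend inj-count equality from $\fG_0$ to \emph{all} $F$; when $F'=\emptyset$ this specializes to your identity.

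Where you genuinely diverge is the last step. The paper does \emph{not} invoke moment determinacy: after extending to all $F$, Lemma~\ref{lemmainjequalsamedist} uses the identity $\inj(F,G)\cdot 1_{\{X=v(F)\}}=\sum_{i\ge0}\frac{(-1)^i}{i!}\inj(F_i,G)$ (where $F_i$ is $F$ plus $i$ isolated vertices) together with the bound $\sum_i\frac{\EE[\inj(F_i,\mathbb{G})]}{i!}\le\EE[(X)_{(k)}\,2^{X-k}]<\infty$ to compute $\EE[\inj(F,\mathbb{G})\,1_{\{X=v(F)\}}]$ \emph{directly} as an absolutely convergent series of known expectations, and then applies your M\"obius inversion. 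You instead claim that the joint moments $\EE[X^j\prod_i\inj(F_i,\mathbb{G})^{l_i}]$ are determined and that ``the exponential tails on $X$ make the associated multivariate moment problem determinate.'' As stated this is a gap: the one-dimensional marginal of $\inj(H,\mathbb{G})$ has $k$-th moment growing like $(v(H)\,k)!$, which for $v(H)\ge3$ violates Carleman, so you cannot simply cite a marginal-determinacy criterion such as Petersen's. The claim can be rescued, but it requires the specific structure of the problem: for each fixed $\vec{l}$, the non-negative sequence $q_n^{(\vec{l})}:=\EE\bigl[\prod_i\inj(F_i,\mathbb{G})^{l_i}\,1_{\{X=n\}}\bigr]$ is a measure on $\ZZ_{\ge0}$ whose $j$-th moment is $\EE[X^j\prod\inj^{l_i}]\le\EE[X^{\,j+\sum l_i v(F_i)}]$, which grows slowly enough (in $j$) that Carleman applies and recovers $(q_n^{(\vec{l})})_n$; then, for each $n$, the conditional law given $X=n$ is compactly supported (since $\inj(H,\mathbb{G})\le n^{v(H)}$) and hence determined by the $q_n^{(\vec{l})}$. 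In other words, you should apply Carleman to the $X$-marginal only, not to the full vector. With that repair your route works, but note that it buys nothing over the paper's approach, which reaches the same quantity $\EE[\inj(H',\mathbb{G})\,1_{\{X=k\}}]$ through a single, explicitly absolutely convergent series and therefore never needs any determinacy theorem at all.
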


To prove the theorem, we will first establish a couple of lemmas. As a
preparation, note that if $\EE[e^{tX} ]\le C$, then for any graph $F$ on $k$
vertices,
\[\EE[\inj(F,\mathbb{G})] \le \sum_{n=0}^{\infty} P(X=n) n^k
\le
\sum_{n=0}^{\infty} P(X=n)
{\frac{k!}{t^k}} e^{tn}
=
{\frac{k!}{t^k}}\EE[e^{tX}]\leq C'.
\]
Here we use the fact that $e^{tn} \ge \frac{(tn)^k}{k!} $. The same bound
holds for $\mathbb{G}_n$. In other words, for any graph $F$ the values
$\EE[\inj(F,\mathbb{G}_n)]$ and $\EE[\inj(F,\mathbb{G})]$ are bounded
uniformly in $n$.

Our first lemma roughly says that if a random graph model does not have
isolated vertices, and the number of vertices is not too large with high
probability, then the expected number of counts of finite graphs without
isolated vertices determines the expected number of counts of all graphs.

\begin{lemma} \label{lemmanoisolallequal}
Suppose we have two random finite graphs with no isolated vertices,
$\mathbb{G}$ and $\mathbb{G}'$. Suppose that for every finite graph $F$ with
no isolated vertices, $\EE[\inj(F,\mathbb{G})]=\EE[\inj(F,\mathbb{G}')]$. Let
$X$ be the random variable that gives the number of vertices in $\mathbb{G}$,
and suppose that for every $t$, $\EE[e^{tX}]$ is finite. Then
$\EE[\inj(F,\mathbb{G})]$ and $\EE[\inj(F,\mathbb{G}')]$ are equal for
\emph{every} finite graph $F$.
\end{lemma}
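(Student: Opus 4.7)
The plan is to reduce the statement to showing that $\EE[\inj(F_0,\mathbb{G})\,X^j]=\EE[\inj(F_0,\mathbb{G}')\,X'^j]$ for every graph $F_0$ without isolated vertices and every integer $j\geq 0$, and then to establish this joint moment equality by expressing $X^j$ as a series of $\inj$-counts of graphs without isolated vertices. First I write any finite graph uniquely as $F=F_0\sqcup I_k$, where $F_0$ has no isolated vertices and $I_k$ consists of $k$ isolated vertices; with $n_0=|V(F_0)|$ this gives $\inj(F,G)=\inj(F_0,G)\cdot(|V(G)|-n_0)_k$. Expanding the falling factorial $(X-n_0)_k$ as a polynomial in $X$ of degree $k$ reduces the lemma to the moment equality above for all such $F_0$ and all $j$; the hypothesis $\EE[e^{tX}]<\infty$ for every $t$ will also guarantee, via the same expansion, that the corresponding moments of $\mathbb{G}'$ are finite.

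The key tool is the finite identity $1=\sum_{m=1}^{d}\frac{(-1)^{m+1}}{m!}(d)_m$ for every integer $d\geq 1$, which follows from the binomial expansion of $1=1-(1-1)^d$. Since $\mathbb{G}$ has no isolated vertices, $\deg_{\mathbb{G}}(v)\geq 1$ for every $v\in V(\mathbb{G})$, so writing $X^j=\sum_{(v_1,\dots,v_j)}1$ and applying the identity at each $v_i$, I obtain
\[
\inj(F_0,\mathbb{G})\,X^j=\sum_{m_1,\dots,m_j\geq 1}\Bigl(\prod_{i=1}^{j}\frac{(-1)^{m_i+1}}{m_i!}\Bigr)\sum_{\phi}\sum_{(v_1,\dots,v_j)}\prod_{i=1}^{j}(\deg_\mathbb{G}(v_i))_{m_i},
\]
with $\phi$ running over injective homomorphisms $F_0\to\mathbb{G}$. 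For each fixed $(m_i)$, the inner double sum counts configurations $(\phi,(v_i),(w^i_a))$, where $(w^i_a)$ is an ordered $m_i$-tuple of distinct neighbors of $v_i$; grouping these by their coincidence pattern $\pi$ on the $n_0+j+\sum_i m_i$ labels, the sum equals $\sum_\pi\inj(F_\pi,\mathbb{G})$, where $F_\pi$ is obtained from $F_0\sqcup K_{1,m_1}\sqcup\dots\sqcup K_{1,m_j}$ by identifying labels according to $\pi$. Crucially, each $F_\pi$ has no isolated vertices (since $F_0$ and each star does, and identification preserves this property), and patterns creating loops or multi-edges contribute zero because $\mathbb{G}$ is simple; the hypothesis therefore gives $\EE[\inj(F_\pi,\mathbb{G})]=\EE[\inj(F_\pi,\mathbb{G}')]$ for every $\pi$.

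The main obstacle is justifying the interchange of the infinite sum over $(m_1,\dots,m_j)$ with the expectation. Using $\sum_{m\geq 0}\binom{d}{m}=2^d$, the absolute value of the integrand is bounded pointwise by $\inj(F_0,\mathbb{G})\bigl(\sum_v 2^{\deg_\mathbb{G}(v)}\bigr)^j\leq X^{n_0+j}\,2^{jX}$, whose expectation is finite precisely because $\EE[e^{tX}]$ is finite for \emph{every} $t>0$; without this strong assumption, the factor $2^{jX}$ could not be absorbed. Fubini then permits the swap, and each term in the resulting alternating series agrees with its counterpart for $\mathbb{G}'$, yielding the desired moment equality $\EE[\inj(F_0,\mathbb{G})X^j]=\EE[\inj(F_0,\mathbb{G}')X'^j]$ and hence the lemma.
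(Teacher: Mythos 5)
Your argument is correct, and it takes a genuinely different route from the paper's. The paper proceeds by induction on the number of isolated vertices in $F$: it writes $F$ as $F'$ plus one additional isolated vertex $w_0$, introduces the auxiliary count $\inj^*(F',k,G)$ (an injective image of $F'$, a new vertex $v_0$ not in that image, and a $k$-tuple of distinct neighbors of $v_0$), expresses $\inj(F,G)=\sum_{k\geq 1}\frac{(-1)^{k-1}}{k!}\inj^*(F',k,G)$, and then invokes the inductive hypothesis because each graph appearing in the expansion of $\inj^*(F',k,G)$ has strictly fewer isolated vertices than $F$. You instead remove all $k$ isolated vertices at once by noting $\inj(F,G)=\inj(F_0,G)\,(|V(G)|-n_0)_k$, reduce to the joint moments $\EE[\inj(F_0,\mathbb{G})X^j]$, and apply the inclusion--exclusion identity simultaneously at $j$ (not necessarily distinct) marked vertices $v_1,\dots,v_j$, then group by coincidence patterns. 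Both proofs rest on exactly the same elementary identity $1=\sum_{m\geq 1}\frac{(-1)^{m+1}}{m!}(d)_m$, applied to vertex degrees, and on essentially the same exponential bound to justify Fubini; what you buy by avoiding the induction is a more self-contained statement (reduction directly to graphs with \emph{no} isolated vertices rather than \emph{fewer}), at the cost of somewhat heavier bookkeeping with the coincidence patterns $\pi$. One small point worth making explicit: when you say ``identification preserves'' the no-isolated-vertices property, the identifications that would merge a vertex with all of its neighbors produce a loop, not an isolated vertex; those patterns contribute zero against a simple $\mathbb{G}$, so the statement is correct, but it relies on both observations together rather than on the identification being innocuous in isolation. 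Also, your remark that the moment bound ``also guarantees the corresponding moments of $\mathbb{G}'$ are finite'' is slightly misleading as phrased---the hypothesis concerns $X$, not $X'$, and finiteness for $\mathbb{G}'$ instead follows because the nonnegative majorant series for $\mathbb{G}'$ has, term by term, the same expectations as for $\mathbb{G}$ (by the hypothesis on $\inj$-counts of graphs without isolated vertices), hence the same finite total; you clearly have the right idea, but it would be worth stating it that way.
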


\begin{proof}
We prove this by induction on the number of isolated vertices in $F$. If $F$
has zero isolated vertices, the claim is true by the assumptions of the
lemma. Otherwise, let $F$ consist of $F'$ plus an isolated vertex $w_0$. For
every $k$, and graph $G$, let $\inj^*(F',k,G)$ be equal to the number of ways
we can take an injective image of $F'$ in $G$, take a vertex $v_0$ not in the
image of $F'$, and take a $k$-term sequence of distinct neighbors of $v_0$
(which may or may not be in the image of $F'$). If $G$ has no isolated
vertices, then
\[
\inj(F,G)=\sum_{k=1}^\infty \frac{(-1)^{k-1}}{k!} \inj^*(F',k,G).
\]
This follows from the fact that for each injective copy of $F$, if $v_0$ is
the image of $w_0$, then this contributes $(d(v_0))_k$ to $\inj^*(F',k,G)$.
Since $G$ is finite and has no isolated vertices, we must have
$0<d(v_0)<\infty$; therefore,
\[\sum_{k=1}^\infty \frac{(-1)^{k-1}}{k!}(d(v_0))_k = \sum_{k=1}^{d(v_0)}
(-1)^{k-1}\binom{d(v_0)}{k}=1
.\]
Next, we claim that
\[
\EE[\inj(F,\mathbb{G})]=\sum_{k=1}^\infty \frac{(-1)^{k-1}}{k!}\EE[\inj^*(F',k,\mathbb{G})].
\]
To show this, it suffices to show that
\[
\sum_{k=1}^\infty \frac{\EE[\inj^*(F',k,\mathbb{G})]}{k!}<\infty.
\]
Fix $k$. Then $\inj^*(F',k,G)$ is the sum of terms of the form $\inj(F'',G)$.
The terms are obtained as follows. Let $L=V(F') \cup z_0$, where $z_0$ is
disjoint from $V(F')$. Let $\ell=(\ell_1,\ell_2,\dots,\ell_k)$ be a sequence
of $k$ elements of $L$. Suppose further that any vertex in $V(F')$ appears at
most once in $\ell$. Then we define $F''(\ell)$ by taking a copy of $F'$, a
disjoint vertex $w_0$, for each $\ell_i \in V(F')$, we add an edge from $w_0$
to $\ell_i$, for each other $\ell_i$ we add an edge going to a new vertex. For example, if $\ell=(z_0,z_0,z_0,\dots,z_0)$, then $F''(\ell)$ is the
disjoint union of $F'$ and a star with $k$ edges. It is then not difficult to
see that
\[
\inj^*(F',k,G)=\sum_\ell \inj(F''(\ell),G).
\]

Let $a=|L|$. For each $k$, the number of such sequences is at most $a^k$.
Furthermore, for each such $F''(\ell)$, the number of vertices is at most
$a+k$. Thus,
\[\inj^*(F',k,G) \le a^k (|V(G)|)_{(a+k)}.
\]

Therefore, recalling that $X=|\mathbb{G}|$, we have that
\begin{align*}
\sum_{k=1}^\infty &\frac{\EE[\inj^*(F',k,\mathbb{G})]}{k!}
\le \sum_{k=1}^\infty \frac{a^k \EE[(X)_{(a+k)}]}{k!}\\
&\qquad=\sum_{k=1}^\infty \EE \left[ a^k X_{(a)} \binom{X-a}{k}\right]
=\EE \left[X_{(a)} (a+1)^{X-a}\right]
<\infty.
\end{align*}

Finally, we claim that
$\EE[\inj^*(F',k,\mathbb{G})]=\EE[\inj^*(F',k,\mathbb{G}')]$ for every $k$.
This follows from the fact that the graphs $F''(\ell)$ above each have fewer
isolated vertices than $F$, so
$\EE[\inj(F''(\ell),\mathbb{G})]=\EE[\inj(F''(\ell),\mathbb{G}')]$ for each
$F''(\ell)$, and $\inj^*(F',k, \mathbb{G})$ and $\inj^*(F',k, \mathbb{G}')$
are each a finite sum of such terms. Therefore, for $\mathbb{G}'$,
\[\sum_{k=1}^\infty \frac{\EE[\inj^*(F',k,\mathbb{G}')]}{k!}<\infty.
\]
We therefore have
\begin{align*}
\EE[\inj(F,\mathbb{G})]&=\sum_{k=1}^\infty \frac{(-1)^{k-1}}{k!}\EE[\inj^*(F',k,\mathbb{G})]\\
&=\sum_{k=1}^\infty \frac{(-1)^{k-1}}{k!}\EE[\inj^*(F',k,\mathbb{G}')]
=\EE[\inj(F,\mathbb{G}')]
. \qedhere
\end{align*}
\end{proof}

\begin{lemma} \label{lemmainjequalsamedist}
Suppose we have two random finite graphs, $\mathbb{G}$ and $\mathbb{G}'$.
Suppose that for every finite graph $F$,
$\EE[\inj(F,\mathbb{G})]=\EE[\inj(F,\mathbb{G}')]$. Let $X$ be the random
variable that gives the number of vertices in $\mathbb{G}$, and suppose that
for some $\varepsilon>0$, $\EE[(2+\varepsilon)^X]$ is finite. Then
$\mathbb{G}$ and $\mathbb{G}'$ give rise to the same distribution on graphs
(up to isomorphism).
\end{lemma}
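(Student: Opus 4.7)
\emph{Plan.} The plan is to express, for each finite graph $G_0$, the probability $P(\mathbb{G}\cong G_0)$ as an absolutely convergent series whose terms are $\EE[\inj(F,\mathbb{G})]$ for suitably chosen $F$, and then conclude from the hypothesis. Since $P(\mathbb{G}\cong G_0)=P(\mathbb{G}'\cong G_0)$ for every finite graph $G_0$ is exactly the desired distributional equality, this will complete the proof.

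First I would verify that $X':=|V(\mathbb{G}')|$ inherits the exponential moment bound from $X$. Taking $F=\overline{K_j}$, the empty graph on $j$ vertices, gives $\inj(\overline{K_j},G)=(|V(G)|)_j$ (falling factorial), so the hypothesis yields $\EE[(X)_j]=\EE[(X')_j]$ for every $j\geq 0$. Since all terms are nonnegative, the binomial expansion $(2+\varepsilon)^Y=\sum_{j\geq 0}\binom{Y}{j}(1+\varepsilon)^j$ together with Tonelli's theorem gives $\EE[(2+\varepsilon)^{X'}]=\sum_{j\geq 0}\EE[(X')_j](1+\varepsilon)^j/j!=\EE[(2+\varepsilon)^X]<\infty$.

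Next, fix a finite graph $G_0$ on $k$ vertices and write $G_0\sqcup\overline{K_j}$ for $G_0$ together with $j$ additional isolated vertices. The product rule for injections gives the identity
\[
\inj(G_0\sqcup\overline{K_j},G)=\inj(G_0,G)\cdot(|V(G)|-k)_j
\]
for every finite graph $G$, since an injection from $G_0\sqcup\overline{K_j}$ to $G$ consists of an injective homomorphism $\phi$ from $G_0$ to $G$ together with an ordered choice of $j$ distinct vertices of $V(G)\setminus\phi(V(G_0))$. Summing against $(-1)^j/j!$ and applying $\sum_{j\geq 0}(-1)^j\binom{m}{j}=\mathbf{1}_{m=0}$ with $m=|V(G)|-k\geq 0$ (and noting that $\inj(G_0,G)=0$ when $|V(G)|<k$) yields the pointwise identity
\[
\sum_{j=0}^{\infty}\frac{(-1)^j}{j!}\inj(G_0\sqcup\overline{K_j},\mathbb{G})=\inj(G_0,\mathbb{G})\cdot\mathbf{1}_{X=k}=|\text{Aut}(G_0)|\cdot\mathbf{1}_{\mathbb{G}\cong G_0}.
\]
Granted that sum and expectation may be interchanged, taking expectations expresses $|\text{Aut}(G_0)|\cdot P(\mathbb{G}\cong G_0)$ as a series in the moments $\EE[\inj(G_0\sqcup\overline{K_j},\mathbb{G})]$, and the analogous series for $\mathbb{G}'$ is term-by-term equal to it by hypothesis, giving $P(\mathbb{G}\cong G_0)=P(\mathbb{G}'\cong G_0)$.

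The main technical point is justifying the termwise expectation on both sides. Since the injection counts are nonnegative, Tonelli's theorem reduces this to showing $\EE[\inj(G_0,\mathbb{G})\cdot 2^{X-k}]<\infty$ (and the analogous statement for $\mathbb{G}'$), where I used $\sum_{j\geq 0}(|V(G)|-k)_j/j!=2^{|V(G)|-k}$. Bounding $\inj(G_0,\mathbb{G})\leq X^k$ and noting that $n^k\cdot 2^n/(2+\varepsilon)^n\to 0$, so that $X^k\cdot 2^{X-k}\leq C_\varepsilon(2+\varepsilon)^X$ for a finite constant $C_\varepsilon$, the hypothesis $\EE[(2+\varepsilon)^X]<\infty$ gives the finiteness for $\mathbb{G}$, and the first step transfers it to $\mathbb{G}'$.
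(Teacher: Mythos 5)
Your argument tracks the paper's closely and most of it is correct: the transfer of the exponential moment bound to $\mathbb{G}'$ via empty graphs is sound; the identity $\inj(G_0\sqcup\overline{K_j},G)=\inj(G_0,G)\cdot(\,|V(G)|-k\,)_j$ and the resulting alternating-sum telescoping to $\inj(G_0,\mathbb{G})\cdot\mathbf{1}_{X=k}$ are correct; and the Tonelli/dominated-convergence justification using $\inj(G_0,\mathbb{G})\cdot 2^{X-k}\leq C_\varepsilon(2+\varepsilon)^X$ is a clean way to interchange sum and expectation.

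However, there is a genuine gap in the very last identification: you assert the pointwise identity
\[
\inj(G_0,\mathbb{G})\cdot\mathbf{1}_{X=k}=|\mathrm{Aut}(G_0)|\cdot\mathbf{1}_{\mathbb{G}\cong G_0},
\]
but this is false. In this paper $\inj(F,G)$ counts \emph{injective homomorphisms} (maps sending edges to edges, with no constraint on non-edges), not isomorphisms. When $\mathbb{G}$ has exactly $k$ vertices, an injective homomorphism $G_0\to\mathbb{G}$ is a bijection preserving edges, but it may send non-edges to edges, so it need not be an isomorphism. Concretely, for $G_0=\overline{K_k}$ (the empty graph on $k$ vertices) and any $k$-vertex $\mathbb{G}$, one has $\inj(G_0,\mathbb{G})=k!$, while the right-hand side is $0$ unless $\mathbb{G}$ is itself edgeless. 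What your interchange argument actually establishes is
\[
\EE\bigl[\inj(G_0,\mathbb{G})\,\mathbf{1}_{X=k}\bigr]=\EE\bigl[\inj(G_0,\mathbb{G}')\,\mathbf{1}_{X'=k}\bigr]
\]
for all finite $G_0$; this is exactly the quantity $\EE[X(G_0,\mathbb{G})]$ in the paper's notation. To conclude that $\PP(\mathbb{G}\cong G_0)=\PP(\mathbb{G}'\cong G_0)$ one still needs a further, purely finite step: on the event $X=k$, the probability of isomorphism to $G_0$ is obtained from the family $\{\EE[\inj(H,\mathbb{G})\mathbf{1}_{X=k}]\}_{|V(H)|=k}$ by M\"obius inversion over the lattice of edge sets on $k$ labeled vertices (equivalently, inclusion-exclusion to pass from ``edge-preserving bijection'' to ``edge- and non-edge-preserving bijection''). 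This is exactly the step the paper summarizes with the phrase ``by an inclusion-exclusion formula,'' and it is what your write-up is missing.
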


We would like to emphasize that in these two lemmas, it suffices to assume
the finiteness condition for $\mathbb{G}$, not $\mathbb{G}'$ (for which it
follows).

\begin{proof}
For graphs $F$ and $G$, let $X(F,G)$ be equal to the random variable which is
equal to $\inj(F,G)$ if $G$ has the same number of vertices as $F$, and gives
$0$ otherwise. Fix a graph $F$ with $k$ vertices, and let $F_i$ be the graph
obtained by adding $i$ isolated vertices to $F$. We can then write
\[X(F,G)=\sum_{i=0}^\infty \frac{(-1)^i}{i!} \inj(F_i,\mathbb{G}).
\]
Indeed, if $G$ has the same number of vertices as $F$, then each term with
$i>0$ is zero, and the $i=0$ term gives $\inj(F_0,G)=\inj(F,G)$. If $G$ has
fewer vertices, then the entire expression is zero. If $G$ has $k+\ell$
vertices where $\ell>0$, then each injective copy of $G$ contributes (since
$\binom{\ell}{i}=0$ if $i>\ell$)
\[\sum_{i=0}^{\ell} (-1)^i \binom{\ell}{i}=0
;\]
therefore the entire expression is $0$.
Next, we claim that
\[\sum_{i=0}^\infty \frac{\EE[\inj(F_i,\mathbb{G})]}{i!}<\infty.
\]
To see this, note that for every $i$,
\[\EE[\inj(F_i,\mathbb{G})]\le \sum_n \PP(X=n) (n)_{k+i}.
\]
By the condition on $X$, there exists an $\varepsilon>0$ and $c$ such that
$\PP(X=n) \le c (2+\varepsilon)^{-n}$. Therefore,
\begin{align*}
\sum_{i=0}^\infty \frac{\EE[\inj(F_i,\mathbb{G})]}{i!} &\le \sum_i \sum_n c(2+\varepsilon)^{-n}(n)_{k}\binom{n-k}{i}\\
&=\sum_n c(2+\varepsilon)^{-n}2^{n-k}(n)_k<\infty.
\end{align*}

We therefore have that
\[\EE[X(F,\mathbb{G})]=\sum_{i=0}^\infty \frac{(-1)^i}{i!} \EE[\inj(F_i,\mathbb{G})]=\sum_{i=0}^\infty \frac{(-1)^i}{i!} \EE[\inj(F_i,\mathbb{G'})]=\EE[X(F,\mathbb{G'})]
.\] Thus, by an inclusion-exclusion formula, we can express the probability that
$\mathbb{G}$ and $\mathbb{G}'$ is isomorphic to a graph $F$ for any $F$, and
the two probabilities must also be equal. This completes the proof.
\end{proof}

\begin{proof}[Proof of Theorem \ref{theoremconveq}]
We first show that (\ref{convdist}) implies (\ref{convexpinjany}). Recall
that for any graph $F$, $\EE[\inj(F,\mathbb{G}_n)]$ and
$\EE[\inj(F,\mathbb{G})]$ are uniformly bounded. Fix a graph $F$ on $k$
vertices. We claim that $\EE[\inj(F,\mathbb{G}_n)^2]$ and
$\EE[\inj(F,\mathbb{G})^2]$ are uniformly bounded. This follows from the fact
that for a graph $G$, $\inj(F,G)^2$ is a linear combination of the form
$\sum_{F'} c_{F'} \inj(F',G)$, where $c_{F'}$ is independent of $G$ and zero
for all but a finite number of graphs $F'$. We also know that
$\inj(F,\mathbb{G}_n)$ converges to $\inj(F,\mathbb{G})$ in distribution.
Since their second moments are uniformly bounded, their expectations must
converge as well.

It is clear that (\ref{convexpinjany}) implies (\ref{convexpinjnoisol}). Let
us show that (\ref{convexpinjnoisol}) implies (\ref{convdist}). Assume
$\mathbb{G}_n$ satisfies (\ref{convexpinjnoisol}). We first claim that the
sequence $\mathbb{G}_n$ is tight. That is, we claim that for every
$\varepsilon$, there exists a finite set of graphs such that for each $n$,
with probability at least $1-\varepsilon$, $\mathbb{G}_n$ is in this set.
Indeed, the expected number of edges $\EE[\inj(K_2,\mathbb{G}_n)]$ is
uniformly bounded, which means that for any $\varepsilon$, there exists an
$M$ such that the probability of having more than $M$ edges is at most
$\varepsilon$. But the number of graphs with $M$ edges and no isolated
vertices is finite; therefore $\mathbb{G}_n$ is tight.

This means that there is a subsequence that converges to a random graph
$\mathbb{H}$ in the sense of (\ref{convdist}), which also has no isolated
vertices. First, we claim that for any $F$ with no isolated vertices,
$\EE[\inj(F,\mathbb{H})]$ is finite, and $\EE[\inj(F,\mathbb{G}_n)]$
converges to it. This again follows from the fact that
$\EE[\inj(F,\mathbb{G}_n)^2]$ is uniformly bounded. But then this implies
that $\EE[\inj(F,\mathbb{H})]=\EE[\inj(F,\mathbb{G})]$ for every graph $F$
with no isolated vertices. Therefore, the two distributions are equal by
Lemmas \ref{lemmanoisolallequal} and \ref{lemmainjequalsamedist}.
\end{proof}

\begin{lemma}
Given $t>0$, let $t'=2t+\log 4$. Then the following holds for any positive
integer $n$ and any graph $G$ on $n$ vertices with no isolated vertices.
Suppose we randomly color the vertices red and blue, and let $X$ be the
number of vertices that are colored red, and have at least one blue neighbor.
Then
\[\EE[e^{t' X}] \ge e^{tn}.
\]
\end{lemma}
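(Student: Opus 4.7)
The plan is to reduce the problem to a calculation on a single star by monotonization plus independence. For any fixed coloring $c$ of $V(G)$, the count $X(G,c)$ is monotone non-decreasing in the edge set of $G$: inserting an edge can only create a new ``red vertex with a blue neighbor'' witness, never destroy one. Hence $e^{t'X(G,c)}\geq e^{t'X(H,c)}$ pointwise for any spanning subgraph $H\subseteq G$, and taking expectations gives $\EE[e^{t'X(G)}]\geq\EE[e^{t'X(H)}]$. The plan is to choose $H$ to be a spanning subgraph of $G$ that is a vertex-disjoint union of stars (each with at least one edge).

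Such an $H$ exists because any minimum edge cover of $G$ (which exists since $G$ has no isolated vertices) is automatically a vertex-disjoint union of stars: it cannot contain a cycle, nor a path on four vertices, as in either case some edge could be removed while still covering every vertex. Decomposing $H$ into stars $S_1,\dots,S_k$ with $|V(S_i)|=n_i\geq 2$ and $\sum_i n_i=n$, and writing $X(H)=\sum_i X_i$ where $X_i$ depends only on the restriction of the coloring to $V(S_i)$, the variables $X_i$ are independent. Therefore $\EE[e^{t'X(H)}]=\prod_i\EE[e^{t'X_i}]$, and the lemma reduces to the single-star bound $\EE[e^{t'X_i}]\geq e^{tn_i}$, since then multiplying across stars yields $\prod_i e^{tn_i}=e^{tn}$.

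For a star $K_{1,d}$ with $d\geq 1$ (so $n_{\mathrm{star}}=d+1$), condition on the event $B$ that the center is blue, which has probability $1/2$. On $B$ a leaf contributes to $X$ precisely when it is red, independently of the other leaves, so $X$ given $B$ is a binomial random variable with parameters $(d,1/2)$, whence
\[
\EE[e^{t'X}\mid B]=\left(\frac{1+e^{t'}}{2}\right)^d=\left(\frac{1+4e^{2t}}{2}\right)^d\geq (2e^{2t})^d,
\]
using $e^{t'}=4e^{2t}$ and $1+4e^{2t}\geq 4e^{2t}$. Discarding the (non-negative) contribution from $B^c$ then yields
\[
\EE[e^{t'X}]\geq \tfrac{1}{2}(2e^{2t})^d=2^{d-1}e^{2td},
\]
and the target $2^{d-1}e^{2td}\geq e^{t(d+1)}$ is equivalent to $2^{d-1}\geq e^{t(1-d)}$, which holds because the right side is at most $1$ (since $d\geq 1$ and $t>0$) while the left side is at least $1$.

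No serious obstacle is expected: the argument is a direct computation after the structural reduction. The one non-trivial ingredient is the fact that a minimum edge cover of a graph without isolated vertices is a union of vertex-disjoint stars, but this is classical and follows from the simple reducibility observation above. Note that the single-star bound is essentially tight when $d=1$ and $t\downarrow 0$, which explains why the constant $\log 4$ in $t'=2t+\log 4$ cannot be improved within this approach.
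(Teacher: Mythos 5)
Your proof is correct and follows essentially the same route as the paper: reduce to a vertex-disjoint union of stars (the paper does this by repeatedly deleting edges while preserving the no-isolated-vertex property; you observe the result is a minimum edge cover and explicitly invoke monotonicity of $X$ in the edge set, which the paper leaves implicit), then exploit independence across stars, then condition on the color of each star's center and discard the contribution from a red center. The only cosmetic difference is that you prove the clean per-star inequality $\EE[e^{t'X_i}] \ge e^{t n_i}$ and multiply, whereas the paper keeps a product over stars and inserts the bound $k \le n/2$ on the number of stars; both routes use exactly the same one-star calculation and the same role of $e^{t'} = 4e^{2t}$.
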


\begin{proof}
Without loss of generality, we may assume $G$ is the disjoint union of stars,
since otherwise we can delete an edge and $G$ will still have no isolated
vertices. Suppose that $G$ is the disjoint union of stars with edge count
$s_1,s_2,\dots,s_k$, where each $s_i \ge 1$ and $\sum_i s_i=n-k$. Note that
$k \le n/2$. Then
\begin{align*}
\EE\left[ e^{t'X} \right]&=\prod_{i=1}^k \left( \frac{1}{2}\left(\frac{e^{t'}+1}{2}\right)^{s_k}+
\frac{1}{2}\left(\frac{e^{t'}+2^{s_k}-1}{2^{s_k}}\right) \right)\\
& \ge \left(\frac{e^{t'}+1}{2}\right)^{n-k}2^{-k}
\ge (e^{t'}+1)^{n/2}2^{-n}
 \ge e^{tn},
\end{align*}
given that our choice of $t'$ implies that $(e^{t'}+1)/4\geq e^{2t}$.
\end{proof}

\begin{lemma} \label{lemmaboundedgraphexmomentgenfinite}
For any $T,t,C,D \in \RR_+$, there exists a finite $B$ such that the
following holds. Suppose we have a graphex $\cW$, with $\|D_\cW\|_\infty \le
D$ and $\|\cW\|_1 \le C$. Let $X$ be the number of vertices (that are not
isolated) of $G_T(\cW)$. Then $\EE[e^{tX}] \le B$.
\end{lemma}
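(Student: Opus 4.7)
The plan is to reduce the problem via the previous lemma to bounding the MGF of a count that can be handled by Poisson thinning. Setting $t' = 2t + \log 4$ and independently coloring each vertex of $G_T(\cW)$ red or blue with probability $1/2$, the previous lemma gives $\EE[e^{tX}] \leq \EE[e^{t'Y}]$, where $Y$ is the number of red vertices of $G_T(\cW)$ with at least one blue neighbor. Since the coloring is independent of the graphex generation, Poisson thinning makes the red (and blue) Poisson centers in $\Omega$ into an independent Poisson process of intensity $(T/2)\mu$, with analogous splittings of the star-leaf and dust-endpoint processes. I would then decompose $Y = Y_\Omega + Y_S + Y_I$ according to whether the red vertex is a Poisson center, a star leaf, or a dust endpoint.

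For the star and dust parts the argument is routine. A red leaf has a blue neighbor precisely when its center is blue, so conditional on the blue Poisson process $B$ the variable $Y_S$ is Poisson with rate $(T/2)\sum_{x \in B} S(x)$; Campbell's formula together with $\|S\|_\infty \leq D$ and $\|S\|_1 \leq C/2$ gives $\EE[e^{cY_S}] < \infty$ for every $c>0$. Similarly $Y_I$ is Poisson with rate $T^2 I/2 \leq T^2 C/4$, so $\EE[e^{cY_I}]$ is finite as well.

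The main obstacle is bounding $\EE[e^{cY_\Omega}]$; the key maneuver is to condition first on the blue Poisson process $B$. Given $B$, the events ``a red point at $x$ has some blue neighbor'' for different $x$ become conditionally independent, since they depend only on independent edge Bernoullis $\{B_{xy}\}_{y \in B}$ and on independent blue-leaf Poisson processes attached to each $x$. Writing $q(x, B) = 1 - e^{-TS(x)/2}\prod_{y \in B}(1-W(x,y))$ and using $q(x, B) \leq TS(x)/2 + \sum_{y \in B} W(x,y)$, a first application of Campbell's formula to the red Poisson process gives
\[
\EE\left[e^{cY_\Omega} \,\middle|\, B\right] \leq \exp\left((e^c-1)(T/2)\Bigl(TC/4 + \sum_{y \in B} D_W(y)\Bigr)\right),
\]
and then a second application of Campbell to $B$, using $\|D_W\|_\infty \leq D$ and $\|D_W\|_1 \leq C$, produces a finite upper bound on $\EE[e^{cY_\Omega}]$. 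Combining the three MGF bounds (e.g.\ via $e^{cY} \leq e^{3cY_\Omega} + e^{3cY_S} + e^{3cY_I}$) at $c = t'$ finally yields $\EE[e^{tX}] \leq \EE[e^{t'Y}] \leq B(t, T, C, D)$, as required.
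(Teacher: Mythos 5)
Your proof is correct and follows essentially the same route as the paper: you use the same red/blue coloring lemma to reduce to bounding $\EE[e^{t'Y}]$, and you then condition on the blue Poisson centers, recognize the conditional distribution of the relevant count as Poisson, and integrate out the blue centers via Campbell's formula. The paper treats $Y$ as a \emph{single} conditionally Poisson variable with rate $f(x_1,\dots,x_k)$, whereas you split it as $Y_\Omega + Y_S + Y_I$ and recombine via AM--GM (with constant $\tfrac13$, not $1$, but that is inessential) --- a harmless reorganization of the same computation. One point in your favor: your $q(x,B)=1-e^{-TS(x)/2}\prod_{y\in B}(1-W(x,y))$ correctly includes the factor $e^{-TS(x)/2}$, accounting for the event that a red Poisson center acquires a blue neighbor through a blue leaf in \emph{its own} star; the paper's expression for $f$ only uses $1-\prod_i(1-W(x_i,x))$ for the Poisson-center term and appears to omit this contribution. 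The omission is harmless for the stated conclusion, since the extra term is bounded by $T^2\|S\|_1/4\le T^2C/8$, but your accounting is the more careful one.
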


\begin{proof}
Note that since $X$ is nonnegative, we only need to worry about $t>0$. First,
let $X'$ be obtained by randomly coloring the vertices of $G_{T}$ red and
blue, and taking the red vertices with at least one blue neighbor. By the
above lemma, for $t'=2t+\log 4>0$, we have
\[\EE[e^{t'X'}] \ge \EE[e^{tX}].
\]
We claim that
\begin{equation} \label{eqnmomgenfuncbound}
\EE[e^{t'X'}] \le e^{\frac{T^2I}2 (e^{t'}-1)}
\exp\left( \frac T2\int_\Omega \left(e^{\frac T2 D_\cW(x)(e^{t'}-1)}-1\right) \,d\mu(x) \right).
\end{equation}
Let us show how this implies uniform boundedness. We know that $D_\cW$ is
bounded by $D$. Since the function $z \rightarrow e^z-1$ is $0$ at $z=0$ and
convex, there exists a constant $K$ depending only on $t'$, $T$, and $D$ such
that
\[e^{\frac T2D_\cW(x)(e^{t'}-1)}-1 \leq \frac{D_\cW(x)}{D}e^{\frac T2D(e^{t'}-1)}
= KD_\cW(x).
\]
Therefore,
\begin{align*}
\frac{T^2I}2 (e^{t'}-1)+&\frac T2\int_\Omega \left(e^{\frac T2 D_\cW(x)(e^{t'}-1)}-1\right) \,d\mu(x)\\
&\le\frac{T^2I}2 (e^{t'}-1)+\frac T2 \int_\Omega KD_\cW(x) \,d\mu(x)\\
& \le \frac{T^2C}4 (e^{t'}-1)+\frac{KTC}2
.
\end{align*}

In order to prove (\ref{eqnmomgenfuncbound}), we first show that it is enough
to consider the case where $\mu(\Omega)<\infty$. To see this, we write a
general $\sigma$-finite measure space as the union of finite spaces,
$\Omega=\bigcup_n\Omega_n$ where $\Omega_n$ is an increasing sequence with
$\mu(\Omega_n)<\infty$ for all $n$. For a fixed $T$, let $G_{T,n}$ consist of
the induced graph on those vertices which were born in $\Omega_n$, come from
a star of a vertex born in $\Omega_n$, or come from a dust edge, and define
$X_n'$ to be the number of red vertices in $G_{T,n}$ with at least one blue
neighbor in $G_{T,n}$. It is easy to see that $0 \le X_1' \le X_2' \le \dots
\le X_n' \le \dots$. By monotone convergence, the expectation of $e^{t'X_n'}$
converges to $\EE[e^{t'X'}]$. The bound we obtain also converges, giving the
required bound on $\EE[e^{t'X'}]$.

Assume thus that $\mu(\Omega)<\infty$. In this case, almost surely, a finite
number of blue points will be created. Suppose that these are
$x_1,x_2,\dots,x_k$. Conditioned on this, red vertices that have blue
neighbors can be created as follows. They can be created by the Poisson
process on $\Omega$, and then be connected to at least one of the $x_i$. If a
point is created at $x$, the probability that it is connected to at least one
of the $x_i$ is $1-\prod(1-W(x_i,x))$. It can also be created as a leaf of a
star created at one of the $x_i$. Finally, it can be created by a dust edge
being colored red and blue. Since the number of red vertices coming from each
of these cases is independent, the number of red vertices with at least one
blue neighbor is a Poisson distribution with expectation
\begin{align*}
f(x_1,\dots,x_k)&:=\frac T2 \int_{\Omega}\left(1-\prod_{i=1}^k (1-W(x_i,x))\right)\,d\mu(x)
+\frac T2\sum_{i=1}^k S(x_i)
+\frac {T^2 I}2
\\
&\le \frac T2\int_{\Omega} \left( \sum_{i=1}^k W(x_i,x)\right)\,d\mu(x)
+\frac T2\sum_{i=1}^kS(x_i)
+\frac {T^2 I}2\\
&=\frac T2 \sum_{i=1}^k D_\cW(x_i)
+\frac {T^2 I}2.
\end{align*}
In particular, this means that (for $t'>0$) we have
\[\EE[e^{t'X'}|x_1,x_2,\dots,x_k] = e^{f(x_1,\dots,x_k) (e^{t'}-1)}
\le e^{\frac T2\left(TI+\sum_{i=1}^k D_\cW(x_i)\right) (e^{t'}-1)}.
\]
Therefore
\begin{align*}\EE[e^{t'X'}]&=\sum_{k=0}^{\infty} e^{-\frac T2\mu(\Omega)} \frac{(T/2)^k\mu(\Omega)^k}{k!} \frac{1}{\mu(\Omega)^k} \int_{\Omega^k} e^{f(x_1,x_2,\dots,x_k)(e^{t'}-1)}\,d\mu(x_1)\dots\,d\mu(x_k) \\
&\le \sum_{k=0}^\infty e^{-\frac T2\mu(\Omega)}\frac{(T/2)^k}{k!} \int_{\Omega^k}e^{\frac T2\left(TI+\sum_{i=1}^k D_\cW(x_i) \right) (e^{t'}-1)} \,d\mu(x_1)\dots\,d\mu(x_k)\\
&= e^{\frac{T^2I}2 (e^{t'}-1)}\sum_{k=0}^\infty e^{-\frac T2\mu(\Omega)} \frac{(T/2)^k}{k!} \left( \int_\Omega e^{\frac T2D_\cW(x) (e^{t'}-1)}\,d\mu(x)\right)^k .
\end{align*}
Here we think of $\Omega^0$ as consisting of a single point on which $f$ is
$0$. We then have
\begin{align*}
\EE[e^{t'X'}] &\le e^{\frac{T^2I}2 (e^{t'}-1)}e^{\frac T2\int_\Omega e^{\frac T2 D_\cW(x)(e^{t'}-1)}\,d\mu(x)-\frac T2\mu(\Omega)}\\
&=e^{\frac{T^2I}2 (e^{t'}-1)}e^{\frac T2\int_\Omega \left(e^{\frac T2D_{\cW}(x)(e^{t'}-1) }-1\right)\, d\mu(x)}.
\end{align*}
So we know that (\ref{eqnmomgenfuncbound}) is true for any $\Omega$ with
finite measure. This completes the proof of the lemma.
\end{proof}

After these preparations, the proof of Theorem~\ref{theoremgraphexconveq} is
straightforward.

\begin{proof}[Proof of Theorem~\ref{theoremgraphexconveq}]
We first note that it is enough to prove the lemma for the case that $\cW_n$
and $\cW$ are $(C,D)$-bounded for some finite $C,D<\infty$. Indeed, both (3)
and (4) clearly imply a bound on the $\|\cdot\|_1$-norm, but also (2) (and
therefore (1)) does, since (2) implies that the random graphs $G_T(\cW_n)$
are tight, which implies that the set of graphexes is tight, which by
Corollary~\ref{cor:C-bounded-tight} (2) implies uniform boundedness of the
$\|\cdot\|_1$-norms.

Assume thus that $\cW_n$ and $\cW$ are $(C,D)$-bounded for some finite
$C,D<\infty$. The equivalence of (\ref{graphexconvnoisol}) and
(\ref{graphexconvconn}) follows from the fact that $t$ is multiplicative over
components of $F$. $(\ref{graphexconvallT}) \Rightarrow
(\ref{graphexconvoneT})$ is obvious. Using Lemma
\ref{lemmaboundedgraphexmomentgenfinite}, we can apply the equivalence in
Theorem \ref{theoremconveq} and Proposition~\ref{prop:t(F,W)} to show that
$(\ref{graphexconvoneT}) \Rightarrow (\ref{graphexconvnoisol})$ and
$(\ref{graphexconvnoisol}) \Rightarrow (\ref{graphexconvallT})$.
\end{proof}

A slight modification of the above proof gives the following theorem.

\begin{theorem} \label{theoremgraphexexpsameequiv}
Given two graphexes $\cW,\cW'$ with bounded marginals, the following are
equivalent:
\begin{enumerate}
\item $G_T(\cW)$ and $G_T(\cW')$ have the same distribution for every $T$. \label{graphexallTequal}
\item $G_T(\cW)$ and $G_T(\cW')$ have the same distribution for some $T$. \label{graphexoneTequal}
\item For every graph $F$ with no isolated vertices, $t(F,\cW)= t(F,\cW')$. \label{graphexsamecountnoisol}
\item For every connected graph $F$, $t(F,\cW) =t(F,\cW')$. \label{graphexsamecountconn}
\end{enumerate}
\end{theorem}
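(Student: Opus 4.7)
The plan is to mimic the proof of Theorem~\ref{theoremgraphexconveq}, replacing convergence statements with equality statements at each step. First I note that by Proposition~\ref{prop:local-finite}, the assumption of bounded marginals forces both $\cW$ and $\cW'$ to be integrable, so there exist finite $C,D$ such that both graphexes are $(C,D)$-bounded. This in particular allows us to invoke Lemma~\ref{lemmaboundedgraphexmomentgenfinite} to conclude that for every $T>0$ and every $t>0$, the moment generating functions $\EE[e^{tX_T}]$ and $\EE[e^{tX_T'}]$ of the vertex counts $X_T$ of $G_T(\cW)$ and $X_T'$ of $G_T(\cW')$ are finite (in fact uniformly bounded in $T$ once $t$ is fixed, but we only need finiteness here).

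Next I would dispatch the easy implications. The equivalence $(\ref{graphexsamecountnoisol})\Leftrightarrow(\ref{graphexsamecountconn})$ follows immediately from the factorization $t(F,\cW)=\prod_i t(F_i,\cW)$ over connected components together with the fact that every connected graph is, in particular, a graph with no isolated vertices. The implication $(\ref{graphexallTequal})\Rightarrow(\ref{graphexoneTequal})$ is trivial. For $(\ref{graphexoneTequal})\Rightarrow(\ref{graphexsamecountnoisol})$, suppose $G_T(\cW)$ and $G_T(\cW')$ have the same distribution for some fixed $T$. Because the vertex count has finite moment generating function, every injective count $\inj(F,G_T(\cW))$ has finite expectation (it is dominated by $(X_T)_{|V(F)|}$), and by Proposition~\ref{prop:t(F,W)} we have $T^{|V(F)|}t(F,\cW)=\EE[\inj(F,G_T(\cW))]=\EE[\inj(F,G_T(\cW'))]=T^{|V(F)|}t(F,\cW')$ for every finite graph $F$ without isolated vertices, giving $(\ref{graphexsamecountnoisol})$.

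The main step is $(\ref{graphexsamecountnoisol})\Rightarrow(\ref{graphexallTequal})$. Fix an arbitrary $T>0$. By Proposition~\ref{prop:t(F,W)}, condition $(\ref{graphexsamecountnoisol})$ gives $\EE[\inj(F,G_T(\cW))]=\EE[\inj(F,G_T(\cW'))]$ for every finite graph $F$ with no isolated vertices. Since $G_T(\cW)$ is (by definition, with isolated vertices removed) a random finite graph without isolated vertices whose number of vertices has moment generating function finite at every $t>0$ (in particular at any $t$ with $e^t>2$), the hypotheses of Lemma~\ref{lemmanoisolallequal} are met, so in fact $\EE[\inj(F,G_T(\cW))]=\EE[\inj(F,G_T(\cW'))]$ for \emph{every} finite graph $F$. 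Lemma~\ref{lemmainjequalsamedist} then yields that $G_T(\cW)$ and $G_T(\cW')$ have the same distribution. Since $T$ was arbitrary, we obtain $(\ref{graphexallTequal})$.

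The only subtle point — and the one worth checking carefully when writing out the full proof — is that Lemmas~\ref{lemmanoisolallequal} and \ref{lemmainjequalsamedist} require the moment generating function of the vertex count of $G_T(\cW)$ to be finite at suitable values of $t$. This is supplied by Lemma~\ref{lemmaboundedgraphexmomentgenfinite}, which in turn requires both $\|D_\cW\|_\infty\leq D$ and $\|\cW\|_1\leq C$; the former is our standing hypothesis, and the latter follows from Proposition~\ref{prop:local-finite} applied to graphexes with bounded marginals. No further ingredients beyond those used in the proof of Theorem~\ref{theoremgraphexconveq} are required.
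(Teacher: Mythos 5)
Your proposal is correct and follows essentially the same route as the paper's proof: use the factorization over components for $(3)\Leftrightarrow(4)$; use Proposition~\ref{prop:t(F,W)} to pass between subgraph densities and expected injective counts for $(2)\Rightarrow(3)$; and deduce $(3)\Rightarrow(1)$ by chaining Proposition~\ref{prop:t(F,W)} with Lemmas~\ref{lemmanoisolallequal} and \ref{lemmainjequalsamedist}, with Lemma~\ref{lemmaboundedgraphexmomentgenfinite} (together with integrability from Proposition~\ref{prop:local-finite}) supplying the needed moment bounds. Your writeup is somewhat more explicit than the paper's about verifying the MGF hypotheses, but the argument is the same.
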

\begin{proof}
As before, the equivalence of (\ref{graphexsamecountconn}) and
(\ref{graphexsamecountnoisol}) follows from the product property of $t$. The
implication $(\ref{graphexallTequal}) \Rightarrow (\ref{graphexoneTequal})$
is obvious. To prove $(\ref{graphexoneTequal}) \Rightarrow
(\ref{graphexsamecountnoisol})$, we use the fact that
$t(F,\cW)=T^{-|V(F)|}\EE[\inj(F,G_T(\cW))]$ and the same holds for $\cW'$.
Since $\inj(F,G_T(\cW))$ and $\inj(F,G_T(\cW'))$ have the same distribution,
their expectations must be equal. With the help of
Proposition~\ref{prop:t(F,W)}, this implies (\ref{graphexsamecountnoisol}).
$(\ref{graphexsamecountnoisol}) \Rightarrow (\ref{graphexallTequal})$ follows
from Proposition~\ref{prop:t(F,W)}, the observation that graphexes with
bounded marginals are integrable, and Lemmas
\ref{lemmaboundedgraphexmomentgenfinite}, \ref{lemmanoisolallequal}, and
\ref{lemmainjequalsamedist}.
\end{proof}

\subsection{Metric convergence implies GP-convergence} \label{sec:metric-implies-GP}

We close this section by proving that under the assumption of uniformly
bounded marginals, $\deltt$-convergence implies GP-convergence. We then use
this result to show that without any assumptions on the marginals,
$\delGP$-convergence implies GP-convergence.

\begin{theorem}\label{thm:deltt-implies-GPconv}
Suppose $\cW_n$ and $\cW$ have uniformly bounded marginals, and
$\deltt(\cW_n,\cW) \rightarrow 0$. Then $\cW_n$ is GP-convergent to $\cW$.
\end{theorem}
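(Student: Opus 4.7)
The proof will be essentially a short combination of two results already established in this section. The plan is to note that the hypotheses of Theorem~\ref{thm:deltt-implies-GPconv} are precisely the hypotheses of Corollary~\ref{corrolaryFclose}, so that corollary gives us convergence of every subgraph density: for every finite graph $F$ with no isolated vertices, $t(F,\cW_n)\to t(F,\cW)$. This converts the analytic hypothesis (closeness in the kernel metric) into a combinatorial one (convergence of moments).

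Next, since $\cW_n$ and $\cW$ have uniformly bounded marginals by hypothesis, Theorem~\ref{theoremgraphexconveq} applies. That theorem asserts the equivalence of GP-convergence with convergence of $t(F,\cW_n)\to t(F,\cW)$ for all finite graphs $F$ with no isolated vertices (condition~(\ref{graphexconvnoisol})). The output of Corollary~\ref{corrolaryFclose} is exactly this condition, so the implication (\ref{graphexconvnoisol})$\Rightarrow$(\ref{graphexconvallT}) of Theorem~\ref{theoremgraphexconveq} yields $G_T(\cW_n)\to G_T(\cW)$ in distribution for every $T$, which is GP-convergence.

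There is no real obstacle: both ingredients have already been proved, and the routine observation that ``uniformly bounded marginals'' for the sequence $\{\cW_n\}\cup\{\cW\}$ is the joint hypothesis needed for both is immediate. The only mild point to mention is that in applying Theorem~\ref{theoremgraphexconveq} one must verify that a common uniform bound on all marginals exists (which holds by hypothesis), so that the moment-generating-function estimate of Lemma~\ref{lemmaboundedgraphexmomentgenfinite} applies uniformly in $n$, which in turn feeds into the uniform-integrability argument used in the proof of Theorem~\ref{theoremconveq}. Thus the proof reduces to invoking Corollary~\ref{corrolaryFclose} followed by Theorem~\ref{theoremgraphexconveq}.
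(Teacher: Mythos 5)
Your proposal is correct and takes essentially the same route as the paper: invoke Corollary~\ref{corrolaryFclose} to transfer kernel-metric convergence to convergence of subgraph densities, then apply the moment-determinacy equivalence of Theorem~\ref{theoremgraphexconveq}. The paper's proof additionally notes upfront that $\deltt$-convergence forces $\|\cW_n\|_1\to\|\cW\|_1$, hence a uniform $(C,D)$-bound, whereas you let the two cited results handle this internally (which they do); your last paragraph slightly misattributes the source of the $C$-bound to the marginal hypothesis alone, but this does not affect correctness since both cited results derive it themselves.
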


\begin{proof}
Note that $\deltt$ convergence implies that $\|\cW_n\|_1 \rightarrow
\|\cW\|$; therefore the sequence is $(C,D)$-bounded for some $C,D$. If $F$ is
a graph without isolated vertices, then $t(F,\cW_n) \rightarrow t(F,\cW)$ by
Corollary \ref{corrolaryFclose}. Therefore, by Theorem
\ref{theoremgraphexconveq}, for any $T$, $G_T(\cW_n)$ converges to $G_T(\cW)$
in distribution.
\end{proof}

\begin{theorem} \label{theoremdimpliessampling}
Suppose that graphexes $\cW$ and $(\cW_n)_{n=1}^\infty$ have the property
that $\delGP(\cW_n,\cW) \rightarrow 0$. Then $\cW_n$ is GP-convergent to
$\cW$.
\end{theorem}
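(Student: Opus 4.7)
The plan is to reduce to the uniformly bounded marginals case already handled by Theorem~\ref{thm:deltt-implies-GPconv}, using the decomposition of $\delGP$-convergence provided by Proposition~\ref{propgeneralconvergenceequiv}. By the equivalence of (\ref{convequivepsilondistance}) and (\ref{convequivatmostDboth}) in that proposition, the hypothesis $\delGP(\cW_n,\cW) \to 0$ gives us, for every $D>0$ with $\mu(\{D_\cW=D\})=0$, both $\deltt(\cW_{n,\le D},\cW_{\le D})\to 0$ and $\mu_n(\{D_{\cW_n}>D\})\to\mu(\{D_\cW>D\})$. The set of $D$ with $\mu(\{D_\cW=D\})>0$ is at most countable, so we can let $D$ be arbitrarily large while avoiding this set. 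Since each $\cW_{n,\le D}$ and $\cW_{\le D}$ has $D$-bounded marginals and converges in $\deltt$, Theorem~\ref{thm:deltt-implies-GPconv} immediately gives $G_T(\cW_{n,\le D})\to G_T(\cW_{\le D})$ in distribution for every $T>0$.

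The next step is to relate $G_T(\cU)$ to $G_T(\cU|_{\{D_\cU\le D\}})$ for $\cU\in\{\cW,\cW_n\}$ via a Poisson coupling. Let $A_{\cU,D}$ denote the event that the Poisson process generating $G_T(\cU)$ places no point in $\{D_\cU>D\}\times[0,T]$. By the restriction property of Poisson point processes, together with the fact that the graphon, star, and dust parts of $\cU|_{\{D_\cU\le D\}}$ agree with those of $\cU$ on the relevant sets (so the stars attached to, and the edges between, points of the process inside $\{D_\cU\le D\}$ have identical conditional distributions in both constructions), the conditional law of $G_T(\cU)$ given $A_{\cU,D}$ coincides with the law of $G_T(\cU|_{\{D_\cU\le D\}})$. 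We also have $\PP(A_{\cU,D}^c)=1-\exp(-T\mu_\cU(\{D_\cU>D\}))\le T\mu_\cU(\{D_\cU>D\})$.

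Combining, fix $T>0$ and $\varepsilon>0$, and pick $D$ large, avoiding the countable set of discontinuities of $D\mapsto\mu(\{D_\cW>D\})$, so that $T\mu(\{D_\cW>D\})<\varepsilon/4$. The convergence $\mu_n(\{D_{\cW_n}>D\})\to\mu(\{D_\cW>D\})$ then yields $\PP(A_{n,D}^c)<\varepsilon/2$ for all large $n$. For each finite unlabeled graph $G$, the decomposition
\[
\PP(G_T(\cU)\cong G)=\PP(A_{\cU,D})\,\PP(G_T(\cU|_{\{D_\cU\le D\}})\cong G)+r_{\cU,D,G},\qquad |r_{\cU,D,G}|\le\PP(A_{\cU,D}^c),
\]
together with $\PP(G_T(\cW_{n,\le D})\cong G)\to\PP(G_T(\cW_{\le D})\cong G)$ from the previous paragraph and $\PP(A_{n,D})\to\PP(A_{\cW,D})$, gives $|\PP(G_T(\cW_n)\cong G)-\PP(G_T(\cW)\cong G)|<\varepsilon$ for $n$ large. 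Since $\fG_0$ carries the discrete topology and the sequence $\{G_T(\cW_n)\}$ is tight (by Theorem~\ref{theoremtightequiv} applied to the tight family $\{\cW_n\}$, whose tightness is part of the decomposition in Proposition~\ref{propgeneralconvergenceequiv}), this pointwise convergence of probabilities upgrades to convergence in distribution, establishing GP-convergence.

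The main obstacle is the coupling identification in the second paragraph: one must verify carefully that restricting the underlying Poisson process to $\{D_\cU\le D\}$ and running the graphex construction there produces exactly the same law as running the full graphex construction conditioned on $A_{\cU,D}$. This is essentially bookkeeping once one unpacks the independence structure of the stars, inter-Poisson-point edges, and dust edges, but it is the one place where the concrete definition of the graphex process, rather than its metric properties, is used.
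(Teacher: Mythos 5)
Your proof is correct and follows essentially the same route as the paper's: apply Proposition~\ref{propgeneralconvergenceequiv} to get $\deltt$-convergence of the truncated graphexes plus convergence of $\mu_n(\Omega_{n,>D})$, use Theorem~\ref{thm:deltt-implies-GPconv} on the truncated (hence uniformly bounded) sequence, and then compare $G_T(\cU)$ to $G_T(\cU|_{\{D_\cU\le D\}})$ via the Poisson restriction coupling. The only cosmetic difference is that the paper phrases the truncation step directly as a total-variation bound, whereas you argue graph-by-graph and then invoke discreteness of $\fG_0$ (where tightness is not even strictly needed, since pointwise convergence of PMFs to a proper PMF on a countable discrete space already gives convergence in distribution); both formulations are fine.
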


\begin{proof}
By Proposition \ref{propgeneralconvergenceequiv}, the sequence is tight, and
for all $D$ such that $\mu(\{D_\cW=D\})=0$, we have that
$\mu_n(\Omega_{n,>D}) \rightarrow \mu(\Omega_{>D})$ and $\deltt(\cW_{n,\leq
D},\cW_{\leq D}) \rightarrow 0$.

Fix $T$ and $\varepsilon>0$, and take $\delta$ small enough so that for all
sets $\Omega_\delta$ of measure at most $\delta$ the probability that any of
the vertices in $G_T$ has a feature in $\Omega_\delta$ is at most
$\varepsilon/3$. Take $D$ large enough so that for all $n$,
$\mu_n(\Omega_{n,>D})$ and $ \mu(\Omega_{>D})$ are at most $\delta$. Then the
total variation distance between $G_T(\cW_{n,\leq D})$ and $G_T(\cW_n)$ is at
most $\varepsilon/3$, and the same is true for $G_T(\cW_{\leq D})$ and
$G_T(\cW)$. We also know that $\deltt(\cW_{n,\leq D},\cW_{\leq D})
\rightarrow 0$, which in particular implies that the sequence is uniformly
$(C,D)$-bounded for some $C$. Therefore, it is GP-convergent. In particular,
for $n$ large enough, the total variation distance between $G_T(\cW_{n,\leq
D})$ and $G_T(\cW_{\leq D})$ is at most $\varepsilon/3$. This implies that
for $n$ large enough, the total variation distance between $G_T(\cW_n)$ and
$G_T(\cW)$ is at most $\varepsilon$, which shows that the sequence is
GP-convergent.
\end{proof}

\section{Sampling} \label{sec:sampling}

In this section, we prove that GP-convergence implies convergence in the weak
kernel metric, completing the proof of the equivalence of convergence in the
metric $\delGP$ and GP-convergence (Theorem~\ref{thm:delGP-GP}). The main
technical tool to establish this will be a ``sampling lemma'', showing that
as $T\to\infty$, the graphs $G_T(\cW)$ sampled from a graphex $\cW$ converge
to the generating graphex according to $\delGP$.

To make this precise, we need a way to compare graphs to graphexes. As in
\cite{BCCH16} and \cite{BCCV17}, we do this by transforming the graph into a
suitable ``empirical graphon'' and corresponding ``empirical graphex''.
Differing slightly from both \cite{BCCH16} and \cite{BCCV17}, where the
empirical graphon was a graphon over $\RR_+$, here we define it to be a
graphon over the vertex set of the graph. Explicitly, given a finite graph
$G$ and $\rho>0$, we define the graphon $W(G,\rho)$ as follows. Let
$\bOmega=(\Omega,\cF,\mu)$, where $\Omega$ is the set of vertices, $\cF$ is
the $\sigma$-algebra consisting of all subsets, and $\mu$ is the measure
where each vertex has weight $\rho$. Set $W(x,y)$ to be $1$ if there is an
edge between the corresponding vertices, and $0$ otherwise. This gives us the
graphon $W(G,\rho)$. We then set $\cW(G,\rho)=(W,0,0,\bOmega)$. Similarly, if
$H$ is a weighted graph with countably many vertices, we define $\Omega$ to
be the set of vertices, $\cF$ to be the $\sigma$-algebra consisting of all
subsets of $\Omega$, and $\mu$ to be the $\sigma$-finite measure which gives
weight $\rho$ to each vertex; $W(H,\rho)$ and $\cW(H,\rho)$ are then the
graphon and graphex obtained by taking $W$ according to edge weights.

With these definitions, we are ready to state the sampling lemma.

\begin{theorem} \label{theoremsamplesconverge}
For every graphex $\cW$ and $\varepsilon>0$,
\[\lim\limits_{T \rightarrow \infty}\PP[\delGP(\cW(G_{T}(\cW),1/T),\cW)>\varepsilon]= 0.
\]
For a set of graphexes that is tight, the convergence is uniform.
\end{theorem}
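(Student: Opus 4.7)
The plan is to prove the statement in probability by combining the characterization of $\delGP$-convergence from Proposition~\ref{propgeneralconvergenceequiv} with the equivalence (on sets of bounded marginals) of $\deltt$-convergence and convergence of all subgraph densities from Theorem~\ref{thm:D-bounded-convergence}. Write $\wcW_T := \cW(G_T(\cW), 1/T)$ for the random empirical graphex. For every $D>0$ with $\mu(\{D_\cW=D\})=0$, I will establish (a) tightness: the random measure of the high-marginal part of $\wcW_T$ concentrates around $\mu(\{D_\cW>D\})$, and (b) $\deltt$-convergence in probability of the bounded part $(\wcW_T)_{\le D}$ to $\cW_{\le D}$. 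Condition (a) follows from Poisson concentration: the set $\{D_\cW > D\}$ has finite measure by Proposition~\ref{prop:local-finite}, so the number of Poisson points with feature there, together with the associated star leaves and dust edges, is a sum of independent Poissons whose normalized fluctuations vanish as $T\to\infty$. Combined with (b), invoking Proposition~\ref{propgeneralconvergenceequiv} pathwise along any almost-surely convergent subsequence yields $\delGP(\wcW_T,\cW) \to 0$ in probability.

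For (b), I first reduce to $(C,D)$-bounded $\cW$ by restricting to $\{D_\cW \le D\}$ and comparing the two sampling processes; the difference is controlled by the Poisson contribution from $\{D_\cW > D\}$. For such $\cW$, by Theorem~\ref{thm:D-bounded-convergence} it suffices to prove that for every connected graph $F$ without isolated vertices,
\[
t(F, \wcW_T) \longrightarrow t(F, \cW) \quad \text{in probability.}
\]
I will verify this by a first- and second-moment computation. Since $\wcW_T$ is a pure graphon on a discrete feature space of $|V(G_T(\cW))|$ atoms each of mass $1/T$, the quantity $t(F, \wcW_T)$ is a polynomial of degree $|E(F)|$ in the adjacency entries (with the $d_1$-contributions folded into empirical degrees). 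Expanding and using Proposition~\ref{prop:t(F,W)}, I will show that $\EE[t(F, \wcW_T)] = t(F, \cW) + O(1/T)$, where the correction collects non-injective overlaps. The second moment $\EE[t(F, \wcW_T)^2]$ splits, via the combinatorics of glueings of two copies of $F$, into a sum of terms $T^{-k} t(F^{\flat}, \cW)$ for multigraphs $F^\flat$ obtained by identifying subsets of vertices; each such $t(F^\flat, \cW)$ is finite and uniformly bounded under $(B,C,D)$-boundedness by Lemma~\ref{lem:tmultigraph}, so all but the ``fully disjoint'' term are $O(1/T)$, yielding $\mathrm{Var}(t(F, \wcW_T)) = O(1/T)$ and Chebyshev concludes.

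The uniformity over tight sets follows because, by Corollary~\ref{cor:C-bounded-tight}, after removing a set of arbitrarily small $\mu$-measure (uniformly across the family) the graphexes are $(C,D)$-bounded with $C,D$ depending only on the family and the tolerance; then every constant appearing in the moment computations and in the Poisson tightness bounds is controlled purely by $C, D$ and $|V(F)|$. The main obstacle is the combinatorial mismatch between the decomposed structure of $\cW = (W,S,I,\bOmega)$ and the pure-graphon structure of $\wcW_T$: in $\wcW_T$ the star leaves and dust endpoints of $G_T(\cW)$ appear as ordinary vertices of measure $1/T$, so the contribution of $S$ and $I$ to $t(F, \cW)$ must be recovered from the vertex-smeared marginals $d(v)/T$ of these leaves and dust vertices. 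The key identity to verify, for $F$ with degree-one vertices, is that expanding $D_{\wcW_T}(x)^{d_1(x)}$ and taking expectations reproduces exactly the $(D_W + S)^{d_1}$ factors in the definition of $t(F, \cW)$, with the dust part $I$ arising from injective homomorphisms whose degree-one vertices are mapped to dust endpoints. This bookkeeping, together with the vanishing of non-injective overlaps at rate $1/T$, drives the whole argument.
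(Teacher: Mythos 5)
Your proposal has a genuine circularity. In step (b) you invoke Theorem~\ref{thm:D-bounded-convergence} to pass from ``subgraph densities converge'' to ``$\deltt$-convergence''. But in the paper that implication is obtained as $(3)\Rightarrow(4)$ (Theorem~\ref{theoremgraphexconveq}) followed by $(4)\Rightarrow(1)$, and the latter is exactly Proposition~\ref{thm:deltt-GP}, which is deduced from Theorem~\ref{thm:delGP-GP}, which is deduced from Theorem~\ref{theoremsamedistribution}, which is deduced from Theorem~\ref{theoremsamplesconverge}---the very theorem you are trying to prove. Nowhere in the paper is the statement ``$(C,D)$-bounded graphexes with the same subgraph densities have $\deltt$-distance zero'' established independently of the sampling lemma; what is proved without it (Theorem~\ref{theoremgraphexexpsameequiv}) only gives equality of the laws of $G_T$, and going from equality of laws to $\deltt$-distance zero again requires the sampling lemma. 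So the key bridge in your argument is missing, not just relocated.

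The paper's proof therefore takes a fundamentally different route that sidesteps this. It reduces (via tightness and Lemma~\ref{lem:graphon-approx-of-graphex}, which packages the equal-parts regularity lemma Theorem~\ref{regularityequalparts}) to a step graphon $\cW_\delta$ with no star or dust part, shows via the coupling lemma Theorem~\ref{thm:sampleddistanceclose} that samples from $\deltt$-close $(C,D)$-bounded graphexes are $\deltt$-close with high probability, and then proves the sampling statement directly for step graphons by elementary Poisson concentration on each step together with Lemma~\ref{lem:H-G-distance} (comparing the weighted ``vertex sample'' $H_T(W)$ with the 0-1 sample $G(H)$). Your moment computation for $t(F,\wcW_T)$ is a sensible idea and closely parallels the computations behind Theorem~\ref{theoremconveq}, but even if carried out carefully it only yields convergence of subgraph densities (and hence of the sample distributions), not the metric statement you need. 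A secondary, smaller issue: in your item (a), the ``high-marginal part of $\wcW_T$'' is defined by the empirical degrees $d_{G_T}(v)/T$, not by $D_\cW$ at the underlying Poisson features, so the two thresholded sets do not coincide and you would still need to control the discrepancy---this is exactly the kind of bookkeeping that the paper's coupling approach handles.
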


\begin{remark}
The above theorem only claims convergence in probability. However, once we
establish equivalence of GP-convergence and convergence in the weak kernel
norm, the results of \cite{JANSON17} imply convergence with probability one
(since there convergence with probability one is proved for GP-convergence).
Nevertheless, to \emph{establish} the equivalence, all we need is convergence
in probability, so this is all we will prove here.
\end{remark}

\subsection{Closeness of graphexes implies closeness of samples}
\label{sec:close-GR-close-Smpl}

In order to prove the sampling lemma, we will first prove that two graphexes
with bounded marginals that are close in the kernel metric lead to samples
that are close. This is formalized in the following theorem.

\begin{theorem} \label{thm:sampleddistanceclose}
Suppose $\cW_1,\cW_2$ are two $(C,D)$-bounded graphexes on the same space
$\bOmega$, and suppose that $\d22(\cW_1,\cW_2)\leq c$ for some $0<c<1$. Then
there exists a $T_0$ (depending only on $c$, $C$, and $D$) such that for any
$T>T_0$, there exists a coupling of the random graphs $G_T(\cW_1)$ and
$G_T(\cW_2)$ so that
\[\PP\left[\deltt(\cW(G_T(\cW_1),1/T),\cW(G_T(\cW_2),1/T))>
\min\left((31cC)^{1/4}, 2c^{3/4},\sqrt[3]{3}c\right)
\right] < c
.\]
\end{theorem}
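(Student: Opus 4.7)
The plan is to couple $G_T(\cW_1)$ and $G_T(\cW_2)$ by sharing randomness so that each of the three ingredients of $\d22$ between the empirical graphexes concentrates near its graphex-level counterpart. Specifically, I would share the Poisson process $\{(t_i,x_i)\}\subset [0,T]\times\Omega$ generating the main vertex features, use a single uniform $U_{ij}\in[0,1]$ to decide whether $\{i,j\}$ is an edge in $G_T(\cW_k)$ (included iff $U_{ij}\le W_k(x_i,x_j)$), and couple the star and dust Poisson processes by thinning common processes of intensities $T\max(S_1,S_2)$ and $T^2\max(I_1,I_2)$ respectively. This yields natural identifications of matched vertices, so it is enough to bound $\d22$ on the union vertex space, treating mismatched star/dust leaves as isolated in the other graph.

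Writing $\Delta_{ij}=W_1^{\mathrm{emp}}(i,j)-W_2^{\mathrm{emp}}(i,j)$, the key conditional structure is that, given the features, the $\Delta_{ij}$ over distinct unordered pairs are independent with $\EE[\Delta_{ij}\mid x_i,x_j]=(W_1-W_2)(x_i,x_j)$ and $\EE[\Delta_{ij}^2\mid x_i,x_j]=|W_1-W_2|(x_i,x_j)$. For the edge density, the standard first/second moment computation (as in Lemma~\ref{lem:edgebound}) gives $\EE[\rho(\cW_k^{\mathrm{emp}})]=\|\cW_k\|_1$ with variance $O(1/T)$, so Chebyshev applied to the coupled difference (together with $|\rho(\cW_1)-\rho(\cW_2)|\le c^3$) yields $|\rho(\cW_1^{\mathrm{emp}})-\rho(\cW_2^{\mathrm{emp}})|\le 3c^3$ off a bad event of probability $\le c/3$, provided $T_0$ is chosen large enough in terms of $c$. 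Similarly, using the conditional independence of the $\Delta_{ij}$ one obtains $\EE\|D_{\cW_1^{\mathrm{emp}}}-D_{\cW_2^{\mathrm{emp}}}\|_2^2\le \|D_{\cW_1}-D_{\cW_2}\|_2^2 + O(1/T)\le c^4+O(1/T)$, where the star contributions are handled analogously via the coupled thinning; Markov then gives $\|D_{\cW_1^{\mathrm{emp}}}-D_{\cW_2^{\mathrm{emp}}}\|_2\le 4c^{3/2}$ off a bad event of probability $\le c/8$, so the square-root bound $2c^{3/4}$ holds.

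The main obstacle is the kernel norm, which I would attack via Lemma~\ref{lem:2t2C4equiv}: it suffices to control the four-cycle count $t(C_4,\Delta)$. For four distinct main indices, conditional independence of the four edges gives the expectation $t(C_4,W_1-W_2)$; applying Lemma~\ref{lem:2t2C4equiv} to $W_1-W_2$ itself together with $\|W_1-W_2\|_{2\to 2}\le c$, $\|W_1-W_2\|_\infty\le 1$, and $\|W_1-W_2\|_1\le 2C$ bounds this by $2c^2C$. The degenerate tuples (with some main indices coinciding) and contributions coming from star or dust vertices can be enumerated by a short case analysis; each such configuration contributes at most $O(CD/T+C/T^2)$, which is absorbed into the choice of $T_0$. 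Thus $\EE[t(C_4,\Delta)]\le 3c^2C$ for $T\ge T_0$, and Markov gives $\PP[t(C_4,\Delta)>31cC]\le 3c/31<c/3$, whence $\|\Delta\|_{2\to 2}\le (31cC)^{1/4}$ off a bad event of probability at most $c/3$.

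A union bound over the three bad events produces the probability $<c$ in the statement. The most delicate point is the bookkeeping for the four-cycle count: classifying $(i,j,k,l)$ by which indices coincide and whether they arise from main Poisson points, star leaves, or dust endpoints, and verifying that every degenerate configuration contributes at most $O(c^2C/T)$ so that the bound $\EE[t(C_4,\Delta)]\le 3c^2C$ really holds once $T_0$ is chosen sufficiently large in terms of $c$, $C$, and $D$. The other two pieces are more routine second-moment estimates along the lines of Lemma~\ref{lem:edgebound}.
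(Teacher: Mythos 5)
Your proposal is correct, but it follows a genuinely different route from the paper's. The paper couples $G_T(\cW_1)$ and $G_T(\cW_2)$ only through a shared vertex Poisson process, generating the edges independently given the features; it then detours through the weighted intermediate graph $H_T$, writing $W_1'-W_2'$ as $[W(G_T(W_1))-W(H_T(W_1))]+W(H_T(W_1-W_2))+[W(H_T(W_2))-W(G_T(W_2))]$ plus the star/dust contribution, and controls each piece separately via dedicated 4-cycle lemmas (Lemma~\ref{lem:2to2closewithwithoutstars} for the star/dust part and Lemma~\ref{lemmaweightunweightedclose} for the randomization-of-weighted-edges part). You instead use a monotone coupling of the edge indicators and a coupled thinning of the star/dust processes, which makes the difference $\Delta_{ij}$ a conditionally centered $\{-1,0,1\}$-valued variable with $\EE[\Delta_{ij}^2\mid x]=|W_1-W_2|(x_i,x_j)$, and you attack $t(C_4,\Delta)$ in one go. Both routes hinge on the same mechanism (4-cycle counts via Lemma~\ref{lem:2t2C4equiv} plus second-moment concentration for the marginals and edge density, in the spirit of Lemma~\ref{lem:edgebound}), and incidentally the independent coupling would also permit your direct moment computation, since $\EE[\Delta_{ij}^2\mid x]\le (W_1+W_2)(x_i,x_j)$ already makes the degenerate 4-cycle configurations $O(CD/T+C/T^2)$; what your coupling buys is a cleaner conditional structure and a uniform treatment of stars/dust, at the cost of having to carry out the degenerate-tuple bookkeeping yourself rather than inheriting it from the paper's two standalone lemmas. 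You correctly identify that bookkeeping as the delicate point, and you correctly arrive at the same final bound $\max\bigl((31cC)^{1/4},2c^{3/4},3^{1/3}c\bigr)$ — note the theorem's ``$\min$'' is evidently a typographical slip for ``$\max$'', as the paper's own proof and its subsequent application in the proof of Theorem~\ref{theoremsamplesconverge} (which uses only the $(31cC)^{1/4}$ bound) make clear, and your proof handles it the same way the paper's does.
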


For \emph{graphons}, or graphexes with only a graphon part, we can think of
obtaining $G_T$ as having two phases: first we sample the set of vertices,
and then we sample the edges according to the edge probability. If we do not
do the second phase, we obtain a weighted graph. We will work with this
intermediate graph in this section. To make this precise, given a graphon
$(W,\bOmega)$, define $H_T(W)$ as the random weighted graph where we take a
Poisson process on $\bOmega \times [0,T]$, set these to be the vertices of
$H_T(W)$, and for each pair of vertices $(x_i,t_i)$ and $(x_j,t_j)$, put a
weighted edge with weight $W(x_i,x_j)$ (with $0$ weights on the diagonals).

In order to prove the theorem, we need to find a coupling of the random
processes that provide $G_T(\cW_1)$ and $G_T(\cW_2)$. Since $\cW_1$ and
$\cW_2$ have the same underlying space, it is natural to couple the Poisson
processes that generate the vertices into a single Poisson process.
Conditioned on this, we generate the two random graphs independently (this is
not optimal but it is satisfactory for our purposes). Let
$\cW_i'=\cW_i(G_T(\cW_i),1/T)=(W_i',S_i',I_i',\bOmega_i')$. The underlying
space of $\cW_i'$ consists of the vertex set of $G_T(\cW_i)$, everything with
weight $1/T$. We couple the two underlying spaces by matching vertices that
correspond to the same point in $\Omega$, and couple the other vertices
arbitrarily (adding points with degree $0$ if necessary). We will show that
in this way, all three components of our distance will be close

Let us first show that $\|W_1'-W_2'\|_{2 \rightarrow 2}$ is small, with high
probability. Note that $G_T(\cW)$ consists of the edges in $G_T(W)$, and the
edges generated by the stars and the independent edges. In the following
lemma, we show that the extra edges generated have a small effect on this
distance

\begin{lemma} \label{lem:2to2closewithwithoutstars}
Let $\cW=(W,S,I,\bOmega)$ be a $(C,D)$-bounded graphex, and $T>1/D$. Let
$G_T(\cW)$ be the usual sample at time $T$, and let $\widetilde{G}_T(\cW)$
consist of only those edges which come from $I$ or $S$. Then
\[
\PP\left[\|W(\widetilde{G}_T(\cW),1/T)\|_{2 \rightarrow 2}>\left(\frac{2CD}{\sqrt{T}}\right)^{1/4}\right] < \frac{1}{\sqrt{T}}.
\]
\end{lemma}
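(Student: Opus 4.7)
The plan is to express $\|W(\widetilde G_T(\cW),1/T)\|_{2\to 2}$ in terms of the spectrum of the adjacency matrix of $\widetilde G_T(\cW)$, exploit the fact that $\widetilde G_T(\cW)$ is a disjoint union of stars and isolated edges, and then bound $\tr(A^4)$ via Markov's inequality.

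By construction $\widetilde G_T(\cW)$ is the disjoint union of one star for each Poisson centre $(t_i,x_i)\in[0,T]\times\Omega$ (with $X_i\sim\Poisson(TS(x_i))$ many fresh leaves, conditional on $(t_i,x_i)$) and a matching of $m_{\mathrm{dust}}\sim\Poisson(T^2 I)$ isolated edges produced by $I$. Since $\cW(\widetilde G_T(\cW),1/T)$ assigns mass $1/T$ to every vertex, a direct rescaling of $L^2$ norms gives
\[
\|W(\widetilde G_T(\cW),1/T)\|_{2\to 2}=\sigma_{\max}(A)/T,
\]
where $A$ is the adjacency matrix and $\sigma_{\max}(A)$ is its largest singular value. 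Because $\tr(A^k)$ is additive over components of a disjoint union, and an elementary computation gives $\tr(A^4)=2k^2$ for a star with $k\ge 1$ leaves and $\tr(A^4)=2$ for an isolated edge, the Schatten bound $\sigma_{\max}(A)^4\le\tr(A^4)$ yields
\[
\sigma_{\max}(A)^4\le 2\sum_i X_i^2+2m_{\mathrm{dust}}.
\]

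By Campbell's formula for Poisson processes,
\[
\EE\Bigl[\sum_i X_i^2\Bigr]=T\int_\Omega\bigl((TS(x))^2+TS(x)\bigr)\,d\mu(x)\le T^3\|S\|_\infty\|S\|_1+T^2\|S\|_1,
\]
and $\|S\|_\infty\le\|D_\cW\|_\infty\le D$ together with $T\ge 1/D$ (so that $T^2\le T^3 D$) bound the right side by $2T^3 D\|S\|_1$. Likewise $\EE[m_{\mathrm{dust}}]=T^2 I\le T^3 DI$, and the $(C,D)$-bound $2\|S\|_1+2I\le\|\cW\|_1\le C$ gives
\[
\EE[\tr(A^4)]\le 4T^3 D\|S\|_1+2T^3 DI\le 2T^3 D(2\|S\|_1+I)\le 2T^3 DC.
\]
Markov's inequality applied with threshold $2T^{7/2}CD$ then yields
\[
\PP\bigl(\tr(A^4)>2T^{7/2}CD\bigr)\le\frac{2T^3 DC}{2T^{7/2}CD}=\frac1{\sqrt T},
\]
and on the complementary event $\sigma_{\max}(A)/T\le (2CD)^{1/4}T^{-1/8}=(2CD/\sqrt T)^{1/4}$, as required.

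The main subtlety is the choice to bound $\sigma_{\max}(A)$ via $\tr(A^4)$ rather than by a Frobenius-type (edge-count) bound or by the maximum star size. A Frobenius bound scales like $\sqrt{|E|}/T\sim T^{1/4}$, which is far too large for large $T$, while bounding $\sigma_{\max}(A)/T$ by the single largest $X_i/T$ squanders the dust contribution and produces the wrong probability scaling. The fourth-moment bound aggregates stars and dust in one inequality, and the condition $T\ge 1/D$ is precisely what is needed to absorb the lower-order $T^2$ terms (from both $\EE[X_i]$ and $\EE[m_{\mathrm{dust}}]$) into the dominant $T^3 D$ terms so that everything closes up cleanly with the correct exponents.
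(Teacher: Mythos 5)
Your proof is correct and takes essentially the same route as the paper: the paper computes $t(C_4,U)=\tfrac{1}{T^4}(2\sum_i s_i^2 + 2m)$, bounds its expectation by $2CD/T$, applies Markov, and invokes Lemma~\ref{lem:2t2C4equiv} ($\|U\|_{2\to 2}\le t(C_4,U)^{1/4}$), which is exactly your $\sigma_{\max}(A)^4\le\tr(A^4)$ after the $1/T$ rescaling. The only cosmetic difference is that you phrase the fourth-moment bound in matrix/spectral language while the paper phrases it as a $C_4$ homomorphism density.
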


\begin{proof}
Suppose we have sampled stars with $s_1,s_2,\dots,s_\ell$ leaves, and we have
sampled $m$ isolated edges. Let $U=W(\widetilde{G}_T(\cW),1/T)$. Then
\[t(C_4,U)=\frac{1}{T^4} \left(2\sum_i s_i^2 + 2m \right)
.\]
Therefore,
\[\EE[t(C_4,U)]=\frac{2T\int_\Omega (T^2S(x)^2 + T S(x)) \,d\mu(x)}{T^4} + \frac{2T^2I}{T^4} \le \frac{C(D+1/T)}{T} \le \frac{2CD}{T},
\]
and hence
\[\PP\left[t(C_4,U)>\frac{2CD}{\sqrt{T}}\right] < \frac{1}{\sqrt{T}}
.\] Using the fact that $\|U\|_{2 \rightarrow 2} \le t(C_4,U)^{1/4}$ (Lemma
\ref{lem:2t2C4equiv}), the lemma follows.
\end{proof}

This lemma implies that for the $2 \rightarrow 2$ component of the distance,
we can compare $G_T(W_1)$ and $G_T(W_2)$ instead of $G_T(\cW_1)$ and
$G_T(\cW_2)$. The following lemma will imply that it in fact suffices to
compare $H_T(W_1)$ and $H_T(W_2)$, because $G_T$ is close to $H_T$, as long
as $H_T(W_i)$ satisfies certain boundedness conditions (which, by the
boundedness of the $W_i$, will be satisfied with high probability).

\begin{lemma} \label{lemmaweightunweightedclose}
Suppose $H$ is a weighted graph on $\NN$ with weights $H_{i,j}\in[0,1]$, and
$H_{i,i}=0$. Suppose that $G$ is generated by taking an edge between $i$ and
$j$ with probability $H_{i,j}$, independently for every pair of vertices.
Suppose that $\sum_{i,j} H_{i,j}\leq E$ and $\sum_{i,j,k}H_{i,j}H_{j,k}\leq
F$ where the sum goes over pairwise distinct vertices. Let $0<\rho$. Then
\[\PP[\|\cW(G,\rho)-\cW(H,\rho)\|_{2 \rightarrow 2}>\rho^{7/8}(E+2F)^{1/4}] < \sqrt{\rho}
.\]
\end{lemma}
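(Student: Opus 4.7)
My plan is to bound $\|U\|_{2\to 2}$ where $U=\cW(G,\rho)-\cW(H,\rho)$ by passing through the $4$-cycle count via Lemma~\ref{lem:2t2C4equiv}, which says $\|U\|_{2\to 2}^4 \le t(C_4,U)$, and then applying Markov's inequality. Writing $U_{ij}=e_{ij}-H_{ij}$, where $e_{ij}$ is the indicator of the edge $\{i,j\}$ in $G$, the random variables $U_{ij}$ are independent across distinct unordered pairs, mean-zero, bounded in absolute value by $1$, and satisfy $\EE[U_{ij}^2]\le H_{ij}$ (with $U_{ii}=0$ by the assumption $H_{ii}=0$).

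The main step is to compute
\[
\EE[t(C_4,U)] = \rho^4 \sum_{i,j,k,\ell} \EE\bigl[U_{ij}U_{jk}U_{k\ell}U_{\ell i}\bigr].
\]
Since the $U$-factors corresponding to distinct unordered pairs are independent and mean-zero, the expectation vanishes unless every unordered edge $\{i,j\},\{j,k\},\{k,\ell\},\{\ell,i\}$ in the closed walk appears with multiplicity at least two. Enumerating the configurations gives essentially three contributions: either all four pairs coincide (forcing $i=k,\,j=\ell$, contributing at most $\rho^4\sum_{i\neq j}\EE[U_{ij}^4]\le\rho^4 E$), or the walk uses two distinct edges each appearing twice in one of the two non-degenerate pairings ($\{i,j\}=\{j,k\}$ and $\{k,\ell\}=\{\ell,i\}$, giving a ``cherry'' through $i=k$, or the analogous cherry through $j=\ell$). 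Each cherry pattern contributes at most $\rho^4 \sum_{i,j,k\text{ distinct}} H_{ij}H_{jk}\le \rho^4 F$. Summing yields $\EE[t(C_4,U)] \le \rho^4(E+2F)$.

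By Markov's inequality,
\[
\PP\bigl[\,t(C_4,U) > \rho^{7/2}(E+2F)\,\bigr] \le \sqrt{\rho},
\]
and since $\|U\|_{2\to 2}\le t(C_4,U)^{1/4}$, this gives the desired bound $\|\cW(G,\rho)-\cW(H,\rho)\|_{2\to 2}\le \rho^{7/8}(E+2F)^{1/4}$ outside an event of probability $\sqrt{\rho}$.

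The only real obstacle is the careful bookkeeping in the enumeration of closed 4-walks: one must check that the ``alternating'' pairing $\{i,j\}=\{k,\ell\}$, $\{j,k\}=\{\ell,i\}$ with $e_1\ne e_2$ is actually degenerate (it forces either $e_1=e_2$ or a loop, both contradictions), so that only the ``same-edge'' case and the two cherry cases remain. Once this is observed, the rest is the straightforward Markov estimate outlined above.
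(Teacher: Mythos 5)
Your proof is correct and follows essentially the same route as the paper's: decompose $\EE[t(C_4,U)]$ by the coincidence pattern of the four edges in the closed $4$-walk (the constant term, the two ``cherry'' terms, and the vanishing all-distinct term), bound the expectation by $\rho^4(E+2F)$ using $\EE[U_{ij}^2]\le H_{ij}$ and independence, then apply Markov's inequality together with Lemma~\ref{lem:2t2C4equiv}. The paper writes the decomposition $\sum X_{ij}^4 + 2\sum X_{ij}^2 X_{jk}^2 + \sum X_{ij}X_{jk}X_{k\ell}X_{\ell i}$ (over distinct indices) directly rather than enumerating pairings of the walk, but this is the same accounting you carry out; your explicit check that the ``alternating'' pairing is degenerate is a correct and welcome detail that the paper leaves implicit.
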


\begin{proof}
Let $X_{i,j}=G_{i,j}-H_{i,j}$. Notice that $\EE X_{i,j}=0$ and $X_{i,j}$ over
different pairs are independent. Also, each $X_{i,i}=0$. Therefore,
\begin{align*}
\EE[t(C_4,\cW(G,\rho)-\cW(H,\rho))] =\rho^4\EE \bigg[&\sum_{i,j}
X_{i,j}^4+2\sum_{i,j,k} X_{i,j}^2X_{j,k}^2\\
& \phantom{} +
\sum_{i,j,k,l}X_{i,j}X_{j,k}X_{k,l}X_{l,i} \bigg],
\end{align*}
where in each of the
sums, all indices are pairwise distinct. Here
\[\sum_{i,j} \EE[X_{i,j}^4] =
\sum_{i,j}\Bigl( H_{i,j}(1-H_{i,j})^4+(1-H_{i,j})H_{i,j}^4 \Bigr) \le \sum_{i,j} H_{i,j}\leq E.
\]
Also,
\begin{align*}
\sum_{i,j,k} \EE \left[X_{i,j}^2X_{j,k}^2\right]&=
\sum_{i,j,k} \left(H_{i,j}-2H_{i,j}^2+H_{i,j}^2 \right)\left(H_{j,k}-2H_{j,k}^2+H_{j,k}^2 \right)\\
&=\sum_{i,j,k} H_{i,j}(1-H_{i,j})H_{j,k}(1-H_{j,k})
\le \sum_{i,j,k}H_{i,j}H_{j,k}\\
&\leq F.
\end{align*}
Finally, for any pairwise distinct $i,j,k,\ell$,
\[\EE[X_{i,j}X_{j,k}X_{k,\ell}X_{\ell,i}]=0.
\]
Therefore,
\begin{equation}
\label{EtC4-bd}
0\leq\EE[t(C_4,\cW(G,\rho)-\cW(H,\rho))]\le \rho^4 (E +2F).
\end{equation}
This implies that
\[\PP\left[t(C_4,\cW(G,\rho)-\cW(H,\rho))> \rho^{7/2}(E+2F)\right] < \sqrt{\rho}.
\]
Using the fact that $\|U\|_{2 \rightarrow 2} \le \left(t(C_4,U)\right)^{1/4}$
(Lemma \ref{lem:2t2C4equiv}), the lemma follows.
\end{proof}

\begin{proof}[Proof of Theorem~\ref{thm:sampleddistanceclose}]
We are now ready to show that with high probability, $\|W_1'-W_2'\|_{2
\rightarrow 2}$ is small. Recall that we have a coupling of $H_T(W_1)$ and
$H_T(W_2)$ such that $H_T(W_1)-H_T(W_2)=H_T(W_1-W_2)$ with probability one.
Let $U=W_1-W_2$, so that $H_T(W_1)-H_T(W_2)=H_T(U)$.

Let us first show that $\|W(H_T(U),1/T)\|_{2 \rightarrow 2}$ is small. First,
suppose that $\mu=\mu(\Omega)$ is finite. Let
\[X=\int_{\Omega^2} U(x_1,x_2)^4 \,d\mu(x_1) \,d\mu(x_2) \le 2C,\]
\[Y= \int_{\Omega^3}U(x_1,x_2)^2U(x_2,x_3)^2\,d\mu(x_1)\,d\mu(x_2)\,d\mu(x_3) \le 4CD,\]
and
\begin{align*}
Z&=\int_{\Omega^4}U(x_1,x_2)U(x_2,x_3)U(x_3,x_4)U(x_4,x_1)=t(C_4,U)
\\
&\le \|W_1-W_2\|_{2 \rightarrow 2}^2 \|W_1-W_2\|_{2}^2\\
& \le \|W_1-W_2\|_{2
\rightarrow 2}^2 \|W_1-W_2\|_1\\
&\le 2c^2 C, \end{align*} where we used Lemma \ref{lem:2t2C4equiv}, the fact
that both graphexes are $(C,D)$-bounded, and the fact that
$\d22(\cW_1,\cW_2)\leq c$. If $T>\max\{16\frac{D}{c^2},2/c\}$, then
\begin{align*}
\EE[t(C_4,H_T(U))]&=
\sum_{n=0}^{\infty} e^{-T\mu} \frac{(T\mu)^n}{n!} \bigg(\frac{n (n-1)}{T^4\mu^2} X+2\frac{n(n-1)(n-2)}{T^4\mu^3}Y\\
& \qquad \qquad \qquad \qquad \quad \phantom{} +\frac{n(n-1)(n-2)(n-3)}{T^4\mu^4}Z\bigg)\\
&\le \frac{2C}{T^2}
+\frac{8CD}{T}
+2c^2C
\le 3c^2C.
\end{align*}
In general, we can take a sequence of finite measure subsets $\Omega_1
\subseteq \dots \subseteq \Omega_n \subseteq \dots$ with $\bigcup_n
\Omega_n=\Omega$ to show that the above bound on the expectation holds for
general $\Omega$. Therefore,
\[\PP[\|H_T(U)\|_{2 \rightarrow 2}> (30cC)^{1/4} ] \le\PP[t(C_4,H_T(U))>30 c C]<\frac{c}{10}
.\] Next, let $P_2$ be the star with two leaves. If $T$ is large enough, then
\[\EE[t(P_2,H_T(W_1))]= \left(t(P_2,W_1)T^3+ T^2 \|W_1\|_1\right) \le CDT^3 + CT^2 \le 2CDT^3
.\] Therefore,
\[\PP\left[t(P_2,H_T(W_1))>\frac{20CDT^3}{c}\right] \le \frac{c}{10}
.\]
Also, since
\[\EE[\|H_T(W_1)\|_1] \le CT^2
,\]
we also have
\[\PP\left[\|H_T(W_1)\|_1>\frac{10CT^2}{c}\right] \le \frac{c}{10}
.\]
Conditioned on neither of these happening, we can apply Lemma \ref{lemmaweightunweightedclose} with
\[E+2F \le \frac{10CT^2}{c}+ 2 \frac{20CDT^3}{c}
\le \frac{{50}CDT^3}{c}
.\]
This means that
\[\PP\left[\|\cW(G_T(W_1),1/T)-\cW(H_T(W_1),1/T)\|_{2 \rightarrow 2} > \left(\frac{{50}CD}{c\sqrt{T}}\right)^{1/4}\right] \le \frac{1}{\sqrt{T}}
.\] Clearly the analogous statements hold for $H_T(W_2)$. Let
$\widetilde{W}_i=W(\widetilde{G}_T(\cW_i),1/T)$ (i.e., the part consisting of
edges generated by the stars and independent edges). Also, let
$\widehat{W}_i=W(G_T(W_i),1/T)-W(H_T(W_i),1/T)$. Assuming none of the bad
events happen, if $T$ is large enough, then
\begin{align*}
\|W_1'-W_2'\|_{2 \rightarrow 2} &\le
\|\widetilde{W}_1\|_{2 \rightarrow 2}+\|\widetilde{W}_2\|_{2 \rightarrow 2}\\
& \qquad \qquad \phantom{} + \|\widehat{W}_1\|_{2 \rightarrow 2} + \|\widehat{W}_2\|_{2 \rightarrow 2}+\|W(H_T(U),1/T)\|_{2 \rightarrow 2}
\\
&\le 2\left(\frac{2CD}{\sqrt{T}}\right)^{1/4}+2 \left(\frac{({50}CD)^{1/4}}{c\sqrt{T}}\right)^{1/4}
+ (30cC)^{1/4}
\le (31cC)^{1/4} .
\end{align*}
The probability of one of
the bad events happening is at most
\[\frac{2}{\sqrt{T}}+4 \frac{c}{10}+\frac{2}{\sqrt{T}}+\frac{c}{10} \le \frac{6c}{10}
.\]
Let us now bound the probability that $\|D_{\cW_1'}-D_{\cW_2'}\|_2$ is large.
For $x \in \Omega$, let
\[D_{\cW_1\cW_2}(x)=\int_{\Omega}W_1(x,y)W_2(x,y) \,d\mu(y) .\]
With our coupling,
\begin{align*}
\EE\left[\sum_{v \in V_T} d_{G_T(\cW_1)}(v)^2\right]&=\int_\Omega T
\left((TD_{\cW_1}(x))^2 + TD_{\cW_1}(x) \right) \,d\mu(x),\\
\EE\left[\sum_{v \in V_T} d_{G_T(\cW_1)}(v)d_{G_T(\cW_2)}(v)\right]&=\int_\Omega T \left((TD_{\cW_1}(x))(TD_{\cW_2}(x))+TD_{\cW_1\cW_2}(x) \right) \,d\mu(x),\\
\EE\left[\sum_{v \in V_T} d_{G_T(\cW_2)}(v)^2\right]&=\int_\Omega T \left((TD_{\cW_2}(x))^2 + TD_{\cW_2}(x) \right) \,d\mu(x).
\end{align*}
Therefore,
\begin{align*}
\EE\Bigg[\sum_{v \in V_T} &\left(d_{G_T(\cW_1)}(x)-d_{G_T(\cW_2)}(x)\right)^2\Bigg]\\
& = T^3 \int_\Omega \left(D_{\cW_1}(x)-D_{\cW_2}(x)\right)^2 \,d\mu(x)
\\
& \qquad \phantom{} + T^2 \int_\Omega \left(D_{\cW_1}(x) + D_{\cW_2}(x) - 2D_{\cW_1\cW_2}(x) \right) \,d\mu(x)\\
&\leq T^3\|D_{\cW_1}-D_{\cW_2}\|_2^2+T^2\|\cW_1\|_1+T^2\|\cW_2\|_1,
\end{align*}
This means that if $T$ is large enough,
\[\EE\left[\|D_{\cW_1'}-D_{\cW_2'}\|_2^2\right]\le
c^4+2C/T\leq 2c^4
.\]
Therefore,
\[\PP[\|D_{\cW_1'}-D_{\cW_2'}\|_2>4c^{3/2}] \le \frac{c}{8}
.\] Finally, recall that by Lemma \ref{lem:edgebound}, the
number of edges of $G_T(\cW_i)$ has expectation $T^2 \|\cW_i\|_1/2$ and
variance $T^2\|\cW_i\|_1/2+T^3\|D_{\cW_i}\|_2^2$. Therefore, the probability
that $G_T(\cW_i)$ has more than $T^2(\|\cW_i\|_1+c^3)/2$ or less than
$T^2(\|\cW_i\|_1-c^3)/2$ edges is less than
\[\frac{T^2\|\cW_i\|_1/2+T^3\|D_{\cW_i}\|_2^2}{c^6T^4/4} \le \frac{2C + 4TCD}{c^6T^2} \le \frac{c}{8}.
\]
Here we used the fact that $\|D_{\cW_i}\|_2^2 \le
\|D_{\cW_i}\|_1\|D_{\cW_i}\|_\infty= \|\cW_i\|_1\|D_{\cW_i}\|_\infty$, and we
are assuming that $T$ is large. Assuming neither of these events happens,
$\|\cW_i'\|_1$ is between $\|\cW_i\|_1-c^3$ and $\|\cW_i\|_1+c^3$. Since
$|\|\cW_1\|_1-\|\cW_2\|_1| \le c^3$, we have that
$|\|\cW_1'\|_1-\|\cW_2'\|_1| \le 3c^3$.

To summarize, we have that with high probability,
\[\d22(\cW_1',\cW_2') \le \min(31cC)^{1/4}, 2c^{3/4},\sqrt[3]{3}c)
.\]
The probability that this does not happen is at most
\[\frac{6c}{10}+\frac{c}{8}+2\frac{c}{8} \le c
.\]
This completes the proof.
\end{proof}

\subsection{Samples converge to graphex} \label{sec:samplesconvtographex}

In this subsection, we prove the sampling lemma,
Theorem~\ref{theoremsamplesconverge}. To this end, we will first establish
two lemmas. The first one states that each $(C,D)$-bounded graphex can be
approximated by a step graphon, i.e., a graphex where the star and dust part
is zero, and the graphon part is a step graphon.

\begin{lemma}\label{lem:graphon-approx-of-graphex}
For every $\varepsilon$, $C$, and $D$, there exist $M$, $N$, and $\rho$ such
that the following holds. For every $(C,D)$-bounded graphex $\cW$, there
exists a $(C,D)$-bounded graphex
$\cW_\varepsilon=(W_\varepsilon,0,0,\bOmega_\varepsilon)$, where
$\bOmega_\varepsilon=(\Omega_\varepsilon,\cF_\varepsilon,\mu_\varepsilon)$
and $\mu_\varepsilon(\Omega_\varepsilon) \le N$, and furthermore the graphon
$W_\varepsilon$ is a step function with at most $M$ steps, with each part
having size equal to $\rho$, and $\deltt(\cW,\cW_\varepsilon) \le
\varepsilon$.
\end{lemma}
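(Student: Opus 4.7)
The plan is to approximate $\cW$ by a pure graphon in two stages: first apply the equal-parts regularity lemma (Theorem~\ref{regularityequalparts}) to reduce to a step graphex, then follow the construction from Remark~\ref{rem:graphex-process}(3) to ``absorb'' the star and dust parts into graphon mass by appending an interval of large but finite measure $Q$. The point is that as $Q\to\infty$, replacing $S(x)$ with graphon values $S(x)/Q$ on a new interval of length $Q$ is invisible to $\d22$, even though the appended space has bounded marginals.

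First, using Remark~\ref{rem:DC-mon} together with Lemma~\ref{lem:extension}, I reduce to the case where $\bOmega$ is atomless of infinite total mass (neither operation changes $\deltt$ or the $(C,D)$-bound). Then I apply Theorem~\ref{regularityequalparts} with parameter $\eps/2$ and some $\rho<\rho_0(\eps/2,1,C,D)$ to obtain a finite subspace partition $\sP=(\Omega_\sP,\cP)$ with every part of size exactly $\rho$ and $|\cP|=m\leq M_0$, such that $\d22(\cW,\cW_\sP)\leq\eps/2$, hence $\deltt(\cW,\cW_\sP)\leq\eps/2$. The step graphex $\cW_\sP=(W_\sP,S_\sP,I_\sP,\bOmega)$ is again $(C,D)$-bounded, with $\|S_\sP\|_\infty\leq D$ and $2\|S_\sP\|_1+2I_\sP\leq \|\cW\|_1\leq C$.

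Next, I append a new interval $J$ of length $Q$, where $Q$ is a positive multiple of $\rho$ chosen at the end, and define a pure graphon $\widetilde\cW=(\widetilde W,0,0,\widetilde\bOmega)$ on $\widetilde\bOmega=\bOmega\cup J$ by
\[
\widetilde W(x,y)=W_\sP(x,y),\quad
\widetilde W(x,t)=S_\sP(x)/Q,\quad
\widetilde W(t,t')=2I_\sP/Q^2,
\]
for $x,y\in\Omega_\sP$, $t,t'\in J$ (and zero outside $(\Omega_\sP\cup J)^2$). A direct calculation shows that $\widetilde W$ takes values in $[0,1]$ once $Q\geq\max\{D,\sqrt{2I_\sP}\}$, that $D_{\widetilde\cW}(x)=D_{\cW_\sP}(x)\leq D$ for $x\in\Omega_\sP$ and $D_{\widetilde\cW}(t)=(\|S_\sP\|_1+2I_\sP)/Q\leq C/Q\leq D$ for $Q\geq C/D$, and that $\|\widetilde\cW\|_1=\rho(\widetilde\cW)=\rho(\cW_\sP)=\rho(\cW)\leq C$. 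Subdividing $J$ into $Q/\rho$ pieces of length $\rho$ shows that $\widetilde W$ is a step function with at most $m+Q/\rho$ equal parts of size $\rho$, and $\widetilde\mu(\Omega_\sP\cup J)=m\rho+Q$.

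It remains to estimate $\deltt(\cW_\sP,\widetilde\cW)$. Trivially extending $\cW_\sP$ to $\widetilde\bOmega$ and coupling the two copies of $\widetilde\bOmega$ by the identity, a short computation yields
\[
\|W_\sP-\widetilde W\|_2^2\leq 2DC/Q+C^2/Q^2,
\qquad
\|D_{\cW_\sP}-D_{\widetilde\cW}\|_2^2\leq C^2/Q,
\]
while $\rho(\cW_\sP)=\rho(\widetilde\cW)$ exactly (so the third coordinate of $\d22$ vanishes); here the first bound uses $\|S_\sP\|_\infty\leq D$ and the crucial fact that $D_{\widetilde\cW}$ and $D_{\cW_\sP}$ agree on $\Omega_\sP$ because the integral over $J$ of $S_\sP(x)/Q$ is exactly $S_\sP(x)$. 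Choosing $Q$ to be a multiple of $\rho$ larger than $C/D$, larger than $D$, and large enough that the above bounds give $\d22(\cW_\sP,\widetilde\cW)\leq\eps/2$, the triangle inequality for $\deltt$ yields $\deltt(\cW,\widetilde\cW)\leq\eps$, and setting $\cW_\eps=\widetilde\cW$ restricted to $\Omega_\sP\cup J$ finishes the proof with $M=m+Q/\rho$ and $N=m\rho+Q$.

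The main obstacle is bookkeeping rather than any single hard step: one must simultaneously arrange that (i) the equal-parts regularity lemma runs on an atomless space of infinite measure, (ii) the appended interval $J$ has length $Q$ compatible with $\rho$ so that the final graphon is a step function with \emph{equal} parts, and (iii) $Q$ is large enough (in terms of $\eps,C,D$) that the $S_\sP/Q$ and $2I_\sP/Q^2$ perturbations are small in $\d22$, but not so large that the marginal on $J$ exceeds $D$; the bound $\|S_\sP\|_1+2I_\sP\leq C$ is what keeps the marginal on $J$ of order $C/Q$ and makes this compatible.
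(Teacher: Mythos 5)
Your proof is correct and follows essentially the same route as the paper's: apply the equal-parts regularity lemma to get a step graphex $\cW_\sP$, then append a large finite-measure block to absorb $S_\sP$ and $I_\sP$ into graphon mass via the Remark~\ref{rem:graphex-process}(3) construction, with all perturbations vanishing as $Q\to\infty$. You are somewhat more careful than the paper in spelling out a few details it leaves implicit (that $\widetilde W\in[0,1]$ once $Q$ is large, that the marginal on $J$ stays below $D$, and that $J$ must be subdivided into pieces of length $\rho$ to make all steps equal), but the idea and the estimates are the same.
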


\begin{proof}
By Remark~\ref{rem:DC-mon}, we may assume that $\cW$ is a graphex over an
atomless measure space $\bOmega=(\Omega,\cF,\mu)$. By Theorem
\ref{regularityequalparts}, there exists $M(\varepsilon)$ and $\rho$ such
that there is a partition $\cP=\{P_1,P_2,\dots,P_m\}$ of $\Omega_\sP
\subseteq \Omega$ with $m \le M(\varepsilon)$ such that $\deltt(\cW_\sP,\cW)
\le \varepsilon/2$ and each part has size $\rho$. Let
$\Omega_\varepsilon=\Omega _\sP \cup Q$ where $Q$ is any set disjoint from
$\Omega_\sP$, and obtain $\mu_\varepsilon$ by extending $\mu$ to $Q$ (with
measure to be determined later). Let
$\cW_\varepsilon=(W_\varepsilon,0,0,\bOmega_\varepsilon)$ with
\[
W_\varepsilon(x,y) = \begin{cases}
W_\sP(x,y) &\text{ if $x \in P_i, y \in P_j$,}\\
\frac{S_\sP(x)}{\mu(Q)} &\text{ if $x \in P_i, y \in Q$,}\\
\frac{S_\sP(y)}{\mu(Q)} &\text{ if $x \in Q, y \in P_i$, and}\\
\frac{2I_\sP}{\mu(Q)^2} &\text{ if $x,y \in Q$.}
\end{cases}
\] Extend $\cW_\sP$ by $0$ to $Q$. Since $\cW_\sP$ is $(C,D)$-bounded, there
exists $K$ depending only on $\varepsilon,C,D$ such that if $\mu(Q) \ge K$,
then $W_\varepsilon-W_{\sP}$ is at most $\varepsilon^2/(4C)$ everywhere,
which implies that
\begin{align*}
\|W_\varepsilon-W_\sP\|_{2 \rightarrow 2} &\le \|W_\varepsilon-W_\sP\|_{2}\\
& \le \sqrt{\|W_\varepsilon-W_\sP\|_{1} \|W_\varepsilon-W_\sP\|_{\infty}}
 \le \sqrt{C\varepsilon^2/(4C)}
 = \varepsilon/2
.
\end{align*}
For $x \in \Omega_{\cP}$,
\[D_{\cW_\varepsilon}(x)=D_{W_{\cP}}(x)+ \mu(Q) \frac{S_{\cP}(x)}{\mu(Q)}=D_{\cW_{\cP}}(x)
.\]
We also have that for $x \in Q$,
\[D_{\cW_\varepsilon}(x)=\mu(Q) \frac{2I_\sP}{\mu(Q)^2}+ \sum_{i} \int_{P_i} \frac{S_\sP(y)}{\mu(Q)}\,d\mu(y)=\frac{D_{\cW_\sP}(\infty)}{\mu(Q)}.
\]
Therefore, there exists a $K'$ depending only on $\varepsilon$, $C$, and $D$
such that if $\mu(Q) \ge K'$, then
\begin{align*}
\|D_{\cW_\varepsilon}-D_{\cW_\sP}\|_2^2 &=\int_{\Omega_\sP \cup Q} \left(D_{\cW_\varepsilon}(x)-D_{\cW_\sP}(x)\right)^2 \,d\mu(x)\\
&=\int_Q \left(\frac{D_{\cW_\sP}(\infty)}{\mu(Q)}\right)^2
= \frac{D_{\cW_\sP}(\infty)^2}{\mu(Q)} \le \varepsilon^4/16
.
\end{align*}
Also, by construction, $\|\cW_\varepsilon\|_1=\|\cW_\sP\|_1$. Therefore
$\deltt(\cW_\varepsilon,\cW_\sP) \le \varepsilon/2$, and hence
$\deltt(\cW_\varepsilon,\cW) \le \varepsilon$.
\end{proof}

\begin{remark}\label{rem:W-approx-tildeW}
Using the ideas of the previous proof, it is not hard to see that in
distribution, the  process generated from the graphex
$\widetilde\cW_Q=(\widetilde W_Q,0,0,\widetilde\bOmega_D)$ constructed in
Remark~\ref{rem:graphex-process} (3) converges to the one generated from
$\cW$. Indeed, we claim that
\[
\delGP(\widetilde\cW_Q,\cW)\to 0\qquad\text{as}\qquad Q\to\infty.
\]
 To see this, fix $\eps>0$ and choose $D$ in such a way that the set
 $\Omega_{>D}=\{D_\cW>D\}$ has measure at most $\eps^2$. Let
 $\Omega_{\leq D}=\Omega\setminus\Omega_{>D}$
 and
 $\widetilde\Omega_{\leq D}
    =\Omega_{\leq D}\cup\{\infty\}
    =\widetilde\Omega\setminus\Omega_{>D}$.
Setting
  $\widetilde \cW_{Q,\leq D}=(\widetilde\cW_Q)_{|\widetilde\Omega\setminus\Omega_{>D}}$
  and
$\cW_{\leq D}=\cW_{|\Omega\setminus\Omega_{>D}}$ and defining
$\widetilde\cW_{\leq D}$ as the trivial extension of $\cW_{\leq D}$ to
$\widetilde\Omega_{\leq D}$, we will want to show that for $Q$ large enough,
 $\d22(\widetilde\cW_{\leq D},\widetilde \cW_{Q,\leq D})\leq\eps$,
since this implies that $\deltt(\cW_{\leq D},\widetilde \cW_{Q,\leq
D})\leq\eps$ and hence $\delGP(\widetilde\cW_Q,\cW)\leq \eps$. But this
follows by essentially the same argument as the one in the previous proof;
all that is needed is that by Proposition~\ref{prop:local-finite},
$\widetilde\cW_{\leq D}$ is $(C,D)$-bounded for some $C<\infty$.
\end{remark}

Our second lemma estimates the distance between the empirical graphex
corresponding to a weighted graph $H$ with weights in $[0,1]$ and the one
corresponding to the graph $G$ obtained from $H$ by choosing the edge in $G$
randomly according to $H$. More precisely, given a finite weighted graph $H$
with weights $H_{i,j}\in[0,1]$ and $H_{i,i}=0$, define $G(H)$ as the graph
generated by taking an edge between $i$ and $j$ with probability $H_{i,j}$,
independently for every pair of vertices. Our next lemma estimates the
distance between the empirical graphon of $H$ and the empirical graphon of
$G(H)$.

\begin{lemma}\label{lem:H-G-distance}
For every $N_0$, $\varepsilon$, and $\delta$, there exists $n_0$ such that
the following holds. For any weighted graph $H$ on $n \ge n_0$ vertices with
weights in $[0,1]$, and any $N \le N_0$, the probability that
$\deltt(\cW(H,N/n),\cW(G(H),N/n))>\varepsilon$ is at most $\delta$.
\end{lemma}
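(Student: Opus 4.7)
The plan is as follows. Since $\cW(H,N/n)$ and $\cW(G(H),N/n)$ are defined on the same underlying measure space (namely the vertex set $V(H)$ equipped with the counting measure scaled by $\rho=N/n$), I can couple them via the identity map and bound $\deltt$ by the ``on-space'' distance $\d22$. So the task reduces to showing that with probability at least $1-\delta$, each of the three quantities defining $\d22$ in \eqref{d22-def} is $\leq \varepsilon$ uniformly in $H$ and $N\leq N_0$, once $n$ is large enough.

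Introduce the mean-zero random variables $X_{i,j}=G(H)_{i,j}-H_{i,j}$. Because $G(H)$ is obtained by flipping independent coins on the unordered pairs $\{i,j\}$, we have $X_{i,j}=X_{j,i}$, the variables $\{X_{i,j}: i<j\}$ are independent, and $|X_{i,j}|\leq 1$ with $\mathrm{Var}(X_{i,j})=H_{i,j}(1-H_{i,j})\leq 1/4$. I will treat the three components of $\d22$ separately and then apply a union bound with Markov's inequality.

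\textbf{Graphon part:} By Lemma~\ref{lem:2t2C4equiv}, $\|W(H,\rho)-W(G(H),\rho)\|_{2\to2}^4\leq t(C_4,W(H,\rho)-W(G(H),\rho))$. The calculation already carried out inside the proof of Lemma~\ref{lemmaweightunweightedclose} (the fourth-moment computation preceding \eqref{EtC4-bd}) gives $\EE[t(C_4,\cdot)]\leq \rho^4(E+2F)$, where here, crudely, $E=\sum_{i,j}H_{i,j}\leq n^2$ and $F=\sum_{i,j,k}H_{i,j}H_{j,k}\leq n^3$. Thus the expectation is $O(N^4/n)$, which tends to $0$ uniformly in $N\leq N_0$.

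\textbf{Marginal part:} Writing $D_{\cW(H,\rho)}(i)-D_{\cW(G(H),\rho)}(i)=-\rho\sum_j X_{i,j}$, the independence of $\{X_{i,j}: j\neq i\}$ (as indicators of distinct edges) gives $\EE[(\sum_j X_{i,j})^2]\leq n/4$. Hence $\EE\|D_{\cW(H,\rho)}-D_{\cW(G(H),\rho)}\|_2^2\leq \rho\cdot n\cdot \rho^2\cdot n/4=N^3/(4n)$, which is again $O(1/n)$ uniformly. Since $\d22$ takes the square root of this quantity, the relevant bound is $O(n^{-1/4})$.

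\textbf{Edge-density part:} Similarly, $\rho(\cW(H,\rho))-\rho(\cW(G(H),\rho))=-\rho^2\sum_{i,j}X_{i,j}=-2\rho^2\sum_{i<j}X_{i,j}$, and independence of the $\{X_{i,j}: i<j\}$ gives variance $\leq 4\cdot \binom{n}{2}\cdot 1/4\leq n^2/2$. Therefore $\EE[(\rho(\cW(H,\rho))-\rho(\cW(G(H),\rho)))^2]\leq N^4/(2n^2)$, and $\d22$ only takes the cube root of this quantity's square root.

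Applying Markov's inequality to each of the three nonnegative random variables above, each exceeds $(\varepsilon/1)^{\text{appropriate power}}$ with probability tending to $0$ as $n\to\infty$, uniformly in $H$ and in $N\leq N_0$. Choosing $n_0$ large enough that each failure probability is at most $\delta/3$ and taking a union bound completes the proof. No serious obstacle arises, since all three estimates are elementary second- and fourth-moment computations, and the uniformity in $N\leq N_0$ and in $H$ is automatic from the fact that the bounds depend on $H$ only through the crude inequalities $H_{i,j}\in[0,1]$.
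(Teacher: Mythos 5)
Your proof is correct, and the graphon-part argument (bounding $\|W(H,\rho)-W(G(H),\rho)\|_{2\to2}$ via Lemma~\ref{lem:2t2C4equiv} and the fourth-moment computation preceding \eqref{EtC4-bd}) is exactly what the paper does. Where you diverge is in the treatment of the marginal and edge-density parts. The paper applies Hoeffding's inequality to each vertex degree and a union bound over the $n$ vertices, yielding a high-probability event on which \emph{every} degree difference $|d_{G(H)}(v)-d_H(v)|$ is at most $\varepsilon'n$; on that event, the $L^2$ and $L^1$ bounds on the marginal difference and edge-density difference follow deterministically. You instead compute second moments of $\|D_{\cW(H,\rho)}-D_{\cW(G(H),\rho)}\|_2^2$ and of $\rho(\cW(H,\rho))-\rho(\cW(G(H),\rho))$ directly, using independence of the $X_{i,j}$ over unordered pairs, and then apply Markov. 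Both routes work; yours is a bit more elementary (no exponential tail bound, no union bound) and goes directly for the $L^2$-type quantities appearing in $\d22$, whereas the paper's $L^\infty$-control via Hoeffding is overkill for the conclusion but perhaps conceptually cleaner (one good event controls everything deterministically). Your variance computations are correct: $\EE\|D_{\cW(H,\rho)}-D_{\cW(G(H),\rho)}\|_2^2 \le \rho^3 n \cdot n/4 = N^3/(4n)$ and $\EE[(\rho(\cW(H,\rho))-\rho(\cW(G(H),\rho)))^2]\le \rho^4 n^2/2 = N^4/(2n^2)$, both $O(1/n)$ uniformly in $H$ and in $N\le N_0$, which after Markov and the union bound over three events gives the claim.
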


\begin{proof}
We first extend both $H$ and $G$ trivially to $\NN$, and then define $U$ as
the graphon $U=W(G(H),N/n)-W(H,N/n)$. Then $\|U\|_{2 \rightarrow 2} \le
\left(t(C_4,U)\right)^{1/4}$ by Lemma \ref{lem:2t2C4equiv}. As a consequence,
the probability that $\|U\|_{2 \to 2}>\varepsilon$ is bounded by
$\eps^{-4}\EE[t(C_4,U)]$. Using the bound \eqref{EtC4-bd} from the proof of
Lemma~\ref{lemmaweightunweightedclose} with $E=n^2$, $F=n^3$, and $\rho=N/n$,
we get that the probability that $\|U\|_{2 \to 2}>\varepsilon$ is bounded by
\[\eps^{-4}\EE[t(C_4,U)] \le \eps^{-4} \frac{N^4}{n^4}\left( n^2 + 2 n^3\right)
\leq 3 \eps^{-4}n^3\frac{N^4}{n^4}\leq 3\eps^{-4}\frac{N_0^4}{n_0}\leq\delta/2,
\]
provided $n_0\geq6\eps^{-4}\delta^{-1} N_0^4$. Let us now bound the other two
components of $\Delta_{2\to 2}$. For a fixed vertex $v$, by Hoeffding's
inequality \cite{H63},
\[\PP[\lvert d_{G(H)}(v)-d_H(v)\rvert > \varepsilon' n] \le 2e^{-{2} {\varepsilon'}^2 n}.
\]
Therefore, by a union bound,
\[\PP[\text{there exists a vertex $v$ such that }\lvert d_{G(H)}(v)-d_H(v)\rvert > \varepsilon' n] \le 2ne^{- {2}{\varepsilon'}^2 n}.
\]
For any fixed $\varepsilon'$, if $n$ is large enough, this probability is
less than $\delta/2$. If this does not happen, then for every vertex $v$,
\[
\left| D_{W(G(H),\frac{N}{n})}(v)-D_{W(H,\frac{N}{n})}(v)\right| \le
N\varepsilon' .\] Therefore, for $\varepsilon'$ small enough,
\[\|D_{W(G(H),N/n)}-D_{W(H,N/n)}\|_2 \le {N^{3/2}{\eps'}\leq}\varepsilon^2
,\] and
\begin{align*}\left|\|W(G(H),N/n)\|_1-\|W(H,N/n)\|_1\right|
&=
\left|\|D_{W(G(H),N/n)}\|_1-\|D_{W(H,N/n)}\|_1\right|\\
&\le N^2\eps'\le \varepsilon^3.
\end{align*}
This completes the proof of the lemma.
\end{proof}

With these preparations, we are ready to prove the sampling lemma.

\begin{proof}[Proof of Theorem~\ref{theoremsamplesconverge}]
Fix $\varepsilon>0$. We know from the definition of tightness that there
exist $C$ and $D$ so that we can remove a set $\Omega_\varepsilon$ of measure
at most $\varepsilon^2/2$ to obtain a $(C,D)$-bounded graphex. Then the
expected number of points in $G_T$ whose feature lies inside
$\Omega_\varepsilon$ is $\varepsilon^2T/2$. Therefore, since it is a Poisson
distribution, the probability that $G_T(\cW)$ has more than $\varepsilon^2 T$
points in $\Omega_\varepsilon$ is at most
\[ e^{\frac{\varepsilon^2T}{2}(1-2\log 2)}
.\] This converges to $0$ as $T \rightarrow \infty$. If $G_T(\cW)$ does not
have more than $\varepsilon^2 T$ points, then we can remove those points from
$G_T(\cW)$ and the sample is equivalent to a sample from the graphex
restricted to $\Omega \setminus \Omega_\varepsilon$. (It may have isolated
vertices but this does not affect our distance.) Since a set of
$\varepsilon^2 T$ points in $G_T(\cW)$ corresponds to a set of measure
$\varepsilon^2$ in $\cW(G_T(\cW,1/T))$, this shows that we may assume without
loss of generality that the original set is $(C,D)$-bounded, and prove
Theorem \ref{theoremsamplesconverge} for $\deltt$ instead of $\delGP$.

Choose $\cW_\delta$ as in Lemma~\ref{lem:graphon-approx-of-graphex} (with
$\delta$ taking the role of $\varepsilon$) so that in particular
$\deltt(\cW,\cW_\delta) \le \delta$. For sufficiently small $\delta$, Theorem
\ref{thm:sampleddistanceclose} then implies that there exists a $T_0$ such
that if $T>T_0$, then the samples from $\cW$ and $\cW_\delta$ can be coupled
so that
\[\PP[\deltt(\cW(G_T(\cW),1/T),\cW(G_T(\cW_\delta),1/T))> (31C\delta)^{1/4}] < \delta
.\] This means that it suffices to prove Theorem \ref{theoremsamplesconverge}
for step function graphons with equal size parts, uniformly over any set of
graphons with a bounded number of parts with the same size. Indeed, for any
$\varepsilon>0$, let $\delta>0$ be such that
\[2\delta+(31\delta C)^{1/4}<\varepsilon
.\] If we then take $\cW_\delta$ as above, then $\deltt(\cW_\delta,\cW) \le
\delta$, so by Theorem \ref{thm:sampleddistanceclose}, for large enough $T$,
we can couple $\cW(G_T(\cW_\delta,1/T))$ and $\cW(G_T(\cW,1/T))$ so that the
probability that they have $\deltt$ distance more than $(31\delta C)^{1/4}$
is at most $\delta$. Furthermore, we can take $T$ large enough so that the
probability that $\deltt(\cW(G_T(\cW_\delta),1/T),\cW_\delta)>\delta$ is at
most $\delta$ (detailed below). Overall, by the triangle inequality, this
implies that the probability that $\deltt(\cW(G_T(\cW),1/T),\cW) \ge
\varepsilon$ is at most $2\delta$. Since this works for arbitrarily small
$\delta$, the theorem follows.

Suppose therefore that $\cW=(W,0,0,\bOmega)$, where $W$ is a step graphon
with step size $\rho$ and $m$ steps total. Fix $\varepsilon>0$ and
$\delta>0$. For a fixed part $P_i$ and $T$, let $X_{T,i}$ be the number of
points in $P_i$ in the Poisson process. The expectation of each $X_{T,i}$ is
$\rho T$. For $\varepsilon'>0$, we have
\[\PP\left[X_{T,i}>(1+\varepsilon ')\rho T \right] <
e^{\rho T (\varepsilon '-(1+\varepsilon ')\log(1+\varepsilon '))}
=e^{-\rho T c(\varepsilon ')}\] for a nonnegative number $c(\varepsilon')$.
We also have
\[\PP\left[X_{T,i}<(1-\varepsilon')\rho T \right]<e^{\rho T (-\varepsilon'+(1-\varepsilon')\log(\frac{1}{1-\varepsilon'})}=e^{-\rho T c'(\varepsilon')}
\]
for a nonnegative number $c'(\varepsilon')$. Therefore, if $T$ is large
enough, then the probability that any part $P_i$ has more than
$(1+\varepsilon')\rho T$ or less than $(1-\varepsilon')\rho T$ points is less
than $\delta/2$. Note that in particular this means that the total measure of
nonzero points is at most $(1+\varepsilon')\rho m$. Therefore, with
probability at least $1-\delta/2$, we can add or delete points with total
measure at most $\varepsilon'\rho m$ to obtain $W$ from $W(H_T(W),\rho)$.
This means that we can couple $\cW(H_T(W),\rho)$ and $\cW$ so that they
differ on points with total measure at most $\varepsilon'\rho m$, and hence
\[\|W(H_T(W),\rho)-W\|_1 \le 2\varepsilon'(1+\varepsilon')\rho^2m^2
.\] Therefore, we have the same bound for $|\|W(H_T(W),\rho)\|_1-\|W\|_1|$.
Since both graphons are between $0$ and $1$, we also have
\begin{align*}
\|W(H_T(W),\rho)-W\|_{2 \rightarrow 2}
&\le \|W(H_T(W),\rho)-W\|_2\\
&\le \sqrt{\|W(H_T(W),\rho)-W\|_1}
\le \sqrt{2\varepsilon'(1+\varepsilon')}\rho m.
\end{align*}
Finally, we have
\[\|D_{\cW(H_T(W),\rho)}-D_{\cW}\|_2^2 \le \rho m (\varepsilon' \rho m)^2+\varepsilon' \rho m ((1+\varepsilon')\rho m)^2=({\varepsilon'}^3+3{\varepsilon'}^2+\varepsilon')\rho^3m^3
.\] We can therefore take $\varepsilon'$ small enough that
$\deltt(\cW(H_T(W),\rho),\cW)<\varepsilon/2$ with probability at least
$1-\delta/2$.

Using Lemma~\ref{lem:H-G-distance} for $\varepsilon/2$ and $\delta/2$, we
have that with probability at least $1-\delta/2$,
\[\deltt(\cW,H_T(W,1/T))\le \frac{\varepsilon}{2}
,\] and with probability at least $1-\delta/2$,
\[\deltt(\cW(H_T(W),1/T),\cW(G_T(\cW),1/T)) \le \frac{\varepsilon}{2} .\]
Therefore, with probability at least $1-\delta$,
\begin{align*}
\deltt(\cW,&\cW(G_T(\cW),1/T))\\
 &\le \deltt(\cW,H_T(W,1/T))+\deltt(\cW(H_T(W),1/T),\cW(G_T(\cW),1/T))\\
& \le \frac{\varepsilon}{2}+\frac{\varepsilon}{2}=\varepsilon
.
\end{align*}
This completes the proof of Theorem \ref{theoremsamplesconverge}.
\end{proof}

\subsection{Proofs of Theorem~\ref{thm:delGP-GP}, Proposition~\ref{thm:deltt-GP}, and Theorem~\ref{thm:D-bounded-convergence}}

Having completed the proof of Theorem~\ref{theoremsamplesconverge}, we are
finally ready to establish that $\delGP$ convergence is equivalent to
GP-convergence, together with several of the other equivalences stated in
Section~\ref{sec:defs}. To this end, we first prove the following theorem.

\begin{theorem} \label{theoremsamedistribution}
Given a pair of graphexes $\cW,\cW'$, we have $\delGP(\cW,\cW')=0$ if and
only if for every $T>0$, $G_T(\cW)$ and $G_T(\cW')$ have the same
distribution.
\end{theorem}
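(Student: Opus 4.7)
The plan is to prove both directions using the two main tools already established: Theorem~\ref{theoremdimpliessampling} (convergence in $\delGP$ implies GP-convergence) and Theorem~\ref{theoremsamplesconverge} (the sampling lemma, saying $\cW(G_T(\cW),1/T)\to\cW$ in probability with respect to $\delGP$).

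For the forward direction, suppose $\delGP(\cW,\cW')=0$. I would apply Theorem~\ref{theoremdimpliessampling} to the constant sequence $\cW_n=\cW$, which trivially satisfies $\delGP(\cW_n,\cW')\to 0$. This yields $G_T(\cW_n)\to G_T(\cW')$ in distribution for every $T>0$, but the left side is the constant distribution of $G_T(\cW)$, so the two distributions must coincide.

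For the reverse direction, assume $G_T(\cW)$ and $G_T(\cW')$ have the same distribution for every $T$. Define random graphexes $\mathbb{W}_T=\cW(G_T(\cW),1/T)$ and $\mathbb{W}_T'=\cW(G_T(\cW'),1/T)$; since they are deterministic measurable functions of $G_T(\cW)$ and $G_T(\cW')$ respectively, they are equal in distribution. By Theorem~\ref{theoremsamplesconverge}, $\delGP(\mathbb{W}_T,\cW)\to 0$ and $\delGP(\mathbb{W}_T',\cW')\to 0$ in probability as $T\to\infty$. Given any $\eps>0$, choose $T$ large enough that each of these distances exceeds $\eps/2$ with probability less than $1/4$. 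Couple $\mathbb{W}_T$ and $\mathbb{W}_T'$ so that $\mathbb{W}_T=\mathbb{W}_T'$ almost surely (possible since they share the same law). Then with probability at least $1/2$, both $\delGP(\mathbb{W}_T,\cW)\leq\eps/2$ and $\delGP(\mathbb{W}_T',\cW')\leq\eps/2$ hold simultaneously, and the triangle inequality (Theorem~\ref{thm:dmetric}) gives $\delGP(\cW,\cW')\leq\eps$ on this event. Since $\delGP(\cW,\cW')$ is deterministic, the bound holds unconditionally, and since $\eps$ was arbitrary, $\delGP(\cW,\cW')=0$.

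I do not expect a major obstacle here, since both directions are fairly direct consequences of the two cited theorems together with the triangle inequality for $\delGP$; the only subtle point is the coupling argument in the reverse direction, where one must remember that $\delGP(\cW,\cW')$ is a deterministic number and hence any bound that holds on a set of positive probability holds outright.
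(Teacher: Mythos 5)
Your proof is correct and takes essentially the same route as the paper: the forward direction applies Theorem~\ref{theoremdimpliessampling} to the constant sequence, and the reverse direction uses Theorem~\ref{theoremsamplesconverge} together with the fact that $\cW(G_T(\cW),1/T)$ and $\cW(G_T(\cW'),1/T)$ are equal in law, plus the triangle inequality and the observation that $\delGP(\cW,\cW')$ is a deterministic number. Your version is slightly more explicit about the coupling argument, but it is the same proof.
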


\begin{proof}
If $\delGP(\cW',\cW)=0$, then taking $\cW_n=W'$ for each $n$, Theorem
\ref{theoremdimpliessampling} implies that $G_T(\cW)$ and $G_T(\cW')$ must
have the same distribution for every $T$. Suppose now that $G_T(\cW)$ and
$G_T(\cW')$ have the same distribution for every $T$. By Theorem
\ref{theoremsamplesconverge}, we can choose $T$ such that with probability at
least $0.99$, $\delGP(G_T(\cW),\cW)<\varepsilon/2$ and
$\delGP(G_T(\cW'),\cW')<\varepsilon/2$. Since $G_T(\cW)$ and $G_T(\cW')$ have
the same distribution, the two graphexes have distance at most $\varepsilon$.
Since this holds for every $\varepsilon$, the lemma follows.
\end{proof}

\begin{proof}[Proof of Theorems~\ref{thm:delGP-GP}, Proposition~\ref{thm:deltt-GP}, and Theorem~\ref{thm:D-bounded-convergence}]
We start with the proof of Theorem~\ref{thm:delGP-GP}. One direction follows
from Theorem~\ref{theoremdimpliessampling}. Suppose now that $\cW_n $ is
GP-convergent to $\cW$. We know by Theorem \ref{theoremtightequiv} that then
the set $\cW_n$ is tight. By Theorem~\ref{thm:complete}, $\cW_n$ therefore
has a subsequence that converges according to $\delGP$ to a graphex $\cW'$,
which in turn implies the subsequence is GP-convergent to $\cW'$. This
implies that for any $T>0$, $G_T(\cW)$ and $G_T(\cW')$ have the same
distribution. By Theorem \ref{theoremsamedistribution}, $\delGP(\cW,\cW')=0$,
so $\delGP(\cW_n,\cW) \rightarrow 0$. Next recall that by Proposition
\ref{propboundedequivmetrics}, the distances $\delGP$ and $\deltt$ give
equivalent topologies on sets with uniformly bounded marginals, showing that
Theorem~\ref{thm:delGP-GP} implies Proposition~\ref{thm:deltt-GP}. We
conclude by noting that Theorem~\ref{thm:D-bounded-convergence} follows from
Corollary~\ref{corrolaryFclose} and Proposition~\ref{thm:deltt-GP}.
\end{proof}

\section{Identifiability}
\label{sec:idnetify}

In this section, we prove Theorem~\ref{thm:identify}. In fact, we will prove
the following version, which by Theorem \ref{theoremsamedistribution} is
equivalent.

\begin{theorem} \label{forwardequiv}
Let $\cW_1=(W_1,S_1,I_1,\bOmega_1)$ and $\cW_2=(W_2,S_2,I_2,\bOmega_2)$ be graphexes,
where $\bOmega_i=(\Omega_i,\cF_i,\mu_i)$ are $\sigma$-finite spaces.
Suppose $\delGP(\cW_1,\cW_2)=0$. Then there exists a third graphex
$\cW=(W,S,I,\bOmega)$ over a $\sigma$-finite measure space
$\bOmega=(\Omega,\cF,\mu)$ and measure preserving maps $\phi_i\colon
\dsupp W_i \rightarrow \Omega$ such that ${\cW_i}|_{\phi^{-1}(\Omega)}=\cW^{\phi_i}$ (and $W_i,S_i=0$ everywhere else) for $i=1,2$.
\end{theorem}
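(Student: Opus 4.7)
The plan is to construct a canonical version $\widehat{\cW}$ of each graphex that depends only on the equivalence class of $\cW$ under $\delGP$, generalizing Janson's construction from \cite{JANSON13}. Once this is done, the theorem follows by taking $\cW = \widehat{\cW_1} \cong \widehat{\cW_2}$ and setting $\phi_i$ to be the canonical quotient map.

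First I would use Proposition~\ref{propequivgraphexes} to reduce to the case of $(C,D)$-bounded graphexes: $\delGP(\cW_1,\cW_2)=0$ is equivalent to $\deltt(\cW_1^{(D)},\cW_2^{(D)})=0$ for all $D$ with $\mu_i(\{D_{\cW_i}=D\})=0$, where $\cW_i^{(D)} := \cW_i|_{\{D_{\cW_i}\le D\}}$. It therefore suffices to construct, for each such $D$, a common graphex $\cW^{(D)}$ and measure-preserving maps $\phi_i^{(D)}$ realizing the truncations as pullbacks, and then glue across $D$ using the natural inclusions $\{D_{\cW_i}\le D\}\subseteq\{D_{\cW_i}\le D'\}$ for $D\le D'$ to obtain the global $\cW$. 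For a $(C,D)$-bounded graphex $\cW = (W,S,I,\bOmega)$, I would define $\widehat{\cW}$ via rooted moments. Let $\mathcal{T}$ be the countable family of finite connected rooted graphs $(F,v_0)$, and for $x \in \dsupp \cW$ set $\tau_{(F,v_0)}(x) := t_x(F,\cW)$ using the rooted density from Section~\ref{sec:counting}. The map $\Phi_\cW : \dsupp \cW \to \RR^{\mathcal{T}}$ defined by $x \mapsto (\tau_{(F,v_0)}(x))_{(F,v_0)\in \mathcal{T}}$ is measurable, and pushing forward $\mu$, $W$, and $S$ along $\Phi_\cW$ defines $\widehat{\cW}$ on the image. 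By construction $\cW|_{\dsupp \cW}$ is a pullback of $\widehat{\cW}$ under $\Phi_\cW$.

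The main obstacle is proving that $\deltt(\cW_1,\cW_2)=0$ with uniformly bounded marginals forces $\widehat{\cW_1}$ and $\widehat{\cW_2}$ to be isomorphic almost everywhere. Theorem~\ref{thm:D-bounded-convergence} gives $t(F,\cW_1) = t(F,\cW_2)$ for every finite $F$ without isolated vertices; following Janson's moment-method strategy, I would upgrade this to equality of the joint distributions of the vectors $(\tau_{(F,v_0)}(x))_{(F,v_0)\in\mathcal{T}}$ under $\mu_i|_{\dsupp \cW_i}$, using that these joint moments are themselves determined by unrooted subgraph densities of graphs with additional marked vertices. This yields a measure-preserving identification, modulo null sets, between $\Phi_{\cW_1}(\dsupp \cW_1)$ and $\Phi_{\cW_2}(\dsupp \cW_2)$ on which the push-forwards of $W$ and $S$ agree, so $\widehat{\cW_1} \cong \widehat{\cW_2}$ almost everywhere. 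The most delicate step will be ensuring that $\mathcal{T}$ separates enough structure to distinguish the graphon, star, and dust contributions: since $S(x)$ and $I$ enter through $D_\cW(x)$ and the auxiliary ``feature'' $\infty$ attached to leaves and dust endpoints, $\mathcal{T}$ must include stars rooted at centers as well as at leaves, together with suitable decorated configurations, and the dust density must be matched separately on the part of the canonical quotient with no graphon or star support before assembling the gluing across $D$.
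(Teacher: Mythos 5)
Your high-level plan --- build a canonical graphex \`a la Janson and show that $\delGP$-equivalence forces isomorphic canonicals --- matches the paper's, but the specific construction of $\widehat\cW$ is not well-defined, and this is a genuine gap. You propose to push $W$ forward along the map $\Phi_\cW(x)=(t_x(F,\cW))_{(F,v_0)\in\mathcal T}$ of rooted homomorphism densities, but $W$ does not factor through $\Phi_\cW\times\Phi_\cW$: two points $x,x'$ can have identical rooted densities while $W(x,\cdot)\neq W(x',\cdot)$. For instance, let $W$ be the step graphon of the $4$-cycle $C_4$ (four equal-measure steps with $W=1$ on $C_4$-adjacent pairs and $0$ otherwise). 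By vertex-transitivity, $\Phi_\cW$ is constant, yet $W$ is nonconstant --- taking $x$ in step $1$, there are $y,y'$ with $\Phi_\cW(y)=\Phi_\cW(y')$ but $W(x,y)\neq W(x,y')$ --- so the pushforward of $W$ along $\Phi_\cW$ is simply not defined. The paper avoids this by mapping $x\mapsto\psi_\cW(x)=(W(x,\cdot),S(x))\in L^1(\Omega,\cF,\mu)\times\RR$, which records the entire kernel section, not just its moments; this does make $W$ a pullback, at the cost of the measurability and separability work in Lemmas~\ref{lemmamapseparable}--\ref{lemmacontainedinFW} and Proposition~\ref{prop:measurepres}.

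The second gap is in the "upgrade" step: even with the correct canonical construction, passing from $t(F,\cW_1)=t(F,\cW_2)$ for all $F$ (or from equality of rooted-density distributions) to a measure-preserving identification of the canonicals is precisely the hard content of the theorem, and you have not supplied it. The paper does this through Theorem~\ref{coupling}, proved by extracting weak limits of approximate couplings on the compactified level sets $\Omega_{i,>1/n}$ --- crucially using the Borel structure and Janson's step-function argument to pass the cut norm through the weak limit --- and then Corollary~\ref{chain} to chain pullbacks; the canonical construction merely inherits invariance from that chain. The moment-method route you sketch identifies the laws of the rooted-density vectors, which (as the $C_4$ example shows) is strictly weaker information than needed. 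You would still require a coupling argument of the same flavor as Theorem~\ref{coupling}, at which point the rooted-density map is an unnecessary detour. Your truncation-and-gluing reduction to the $(C,D)$-bounded case is a workable idea but adds a compatibility-of-canonicals bookkeeping problem that the paper sidesteps by building $\widehat\cW$ for the full graphex directly, using Lemma~\ref{lem:sigma-finite} to keep $\mu_\cW$ $\sigma$-finite.
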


To prove the theorem, we will first prove the following theorem, which may be
of independent interest. We recall that a Borel measure space is a measure
space that is isomorphic to a Borel subset of a complete separable metric
space equipped with a Borel measure, where, as usual, two measure spaces
$\bOmega=(\Omega,\cF,\mu)$ and $\bOmega=(\Omega,\cF,\mu)$ are called
isomorphic if there exists a bijective map $\phi\colon\Omega\to\Omega'$ such
that both $\phi$ and its inverse are measure preserving.

\begin{theorem} \label{coupling}
Let $\cW_1=(W_1,S_1,I_1,\bOmega_1)$ and $\cW_2=(W_2,S_2,I_2,\bOmega_2)$ be
graphexes, where $\bOmega_i=(\Omega_i,\cF_i,\mu_i)$ are $\sigma$-finite Borel
spaces. Suppose further that $D_{\cW_i}>0$ everywhere for $i=1,2$, and
$\delGP(\cW_1,\cW_2)=0$. Then $\mu_1(\Omega_1)=\mu_2(\Omega_2)$, $I_1=I_2$,
and there exists a coupling of $\bOmega_1$ and $\bOmega_2$, that is, a
measure $\nu$ on $(\Omega_1 \times \Omega_2,\cF_1 \times \cF_2)$ with
marginals $\mu_1$ and $\mu_2$, such that if $\pi_i\colon \Omega_1 \times
\Omega_2 \rightarrow \Omega_i$ is the projection map, then
$W_1^{\pi_1}=W_2^{\pi_2}$ $\nu$-almost-everywhere, and
$S_1^{\pi_1}=S_2^{\pi_2}$ $\nu$-almost-everywhere.
\end{theorem}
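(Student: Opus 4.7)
The plan is to build the coupling $\nu$ as a limit of empirical couplings derived from sampling the common graphex process. Since $\delGP(\cW_1,\cW_2)=0$, Theorem~\ref{theoremsamedistribution} tells us the unlabelled processes $G_T(\cW_1)$ and $G_T(\cW_2)$ agree in distribution for every $T$; the goal is to lift this unlabelled equality to a labelled one and pass to a limit.

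\textbf{Stage 1 (Scalar invariants).} The dust density $I_i$ appears in $G_T(\cW_i)$ as the Poisson intensity of isolated edges, which by Proposition~\ref{prop:t(F,W)} has expected count $I_iT^2$; since these numbers must agree for all $T$, $I_1=I_2$. For $\mu_1(\Omega_1)=\mu_2(\Omega_2)$, note that the assumption $D_{\cW_i}>0$ everywhere implies that the expected number of non-dust, non-leaf vertices in $G_T(\cW_i)$ coming from feature points equals $\int_{\Omega_i}(1-e^{-TD_{\cW_i}(x)})\,d\mu_i(x)$, which is monotone in $T$ and increases to $\mu_i(\Omega_i)$ (finite or infinite). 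These expected counts are determined by the unlabelled process, so the limits coincide.

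\textbf{Stage 2 (Labelled coupling).} Following the observation recalled after Definition~\ref{def:graphex-process} (cf.\ Remark~5.4 of \cite{JANSON16}), almost every edge of $G_\infty(\cW_i)$ can be classified a.s.\ from the graph alone as dust, star, or graphon-type, and the graphon-type vertices are precisely the Poisson points of the feature process (by $D_{\cW_i}>0$, none of them get ``lost'' as isolated vertices in the limit). Hence each graphon-type vertex of $G_T(\cW_i)$ carries a well-defined feature in $\Omega_i$. Since the two unlabelled processes agree in distribution, a regular conditional probability argument (which requires the Borel-space hypothesis) lets us build a joint realization on a single probability space in which the unlabelled graphs coincide; on this space every graphon-type vertex $v$ receives a pair of labels $(x_v,y_v)\in\Omega_1\times\Omega_2$.

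\textbf{Stage 3 (Empirical measure and compactness).} Define the random measure $\nu_T=T^{-1}\sum_v\delta_{(x_v,y_v)}$ over the graphon-type vertices in the coupled sample at time $T$. Its first-moment measure has $\Omega_i$-marginal $(1-e^{-TD_{\cW_i}(\cdot)})\mu_i$, which increases pointwise to $\mu_i$ by Stage~1. For each $D>0$, the restrictions of $\nu_T$ to $\{D_{\cW_1}\le D\}\times\{D_{\cW_2}\le D\}$ have uniformly bounded expected mass by Proposition~\ref{prop:local-finite}, so Prokhorov's theorem (applied in the vague topology on the Borel product) yields a subsequential weak limit $\nu$ on $\Omega_1\times\Omega_2$, whose marginals are $\mu_1$ and $\mu_2$.

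\textbf{Stage 4 (Identification).} It remains to show $W_1^{\pi_1}=W_2^{\pi_2}$ and $S_1^{\pi_1}=S_2^{\pi_2}$ $\nu$-a.e. Given two graphon-type vertices $v,v'$ with labels $(x_v,y_v),(x_{v'},y_{v'})$, the conditional probability that they are adjacent (given the features) is $W_1(x_v,x_{v'})$ from the $\cW_1$ viewpoint and $W_2(y_v,y_{v'})$ from the $\cW_2$ viewpoint; since these conditional probabilities refer to the same event in the joint coupling, $W_1(x_v,x_{v'})=W_2(y_v,y_{v'})$ $(\nu\otimes\nu)$-a.e., hence $W_1^{\pi_1}=W_2^{\pi_2}$ $\nu$-a.e. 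For the star part, the conditional expected number of degree-one, star-type neighbours of $v$ grows linearly in $T$ with slope simultaneously $S_1(x_v)$ and $S_2(y_v)$, giving $S_1^{\pi_1}=S_2^{\pi_2}$ $\nu$-a.e.

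The main obstacle is Stage~2: producing a jointly labelled coupling, not merely an unlabelled one. This requires (i) a measurable procedure for classifying edges and vertices by type, which is available because classification depends only on degree data; (ii) a regular conditional probability argument on the Borel spaces $\Omega_i$ to attach features in a consistent way; and (iii) ensuring that the limiting measure $\nu$ in Stage~3 is genuinely $\sigma$-finite with the correct marginals rather than defective, which is exactly where the hypothesis $D_{\cW_i}>0$ everywhere is indispensable, since it prevents mass from being lost to permanently-isolated Poisson points.
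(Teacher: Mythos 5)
Your proposal takes a genuinely different route from the paper. The paper works directly with the $\delGP=0$ hypothesis: from Proposition~\ref{propequivgraphexes} it extracts, for each $\eps>0$, an ``almost-coupling'' $\nu_\eps$ on $\Omega_1\times\Omega_2$ with small kernel-norm discrepancy, realizes the high-degree sets $\Omega_{i,>1/n}$ as compact metric spaces (using the Borel hypothesis), and extracts a weakly convergent subsequence of $\nu_\eps$ restricted to each such product; the identification $W_1^{\pi_1}=W_2^{\pi_2}$ is proved via cut-norm continuity and clopen step-function approximation (Janson's argument), never passing through the sampled process. You instead go through the graphex process: match the unlabelled samples, lift to a joint feature labelling, form an empirical coupling $\nu_T$, and identify. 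This is an appealing, more probabilistic route, and Stage~1 is fine, but it has a genuine gap at the identification step.

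The problem is in Stage~4, specifically the sentence beginning ``since these conditional probabilities refer to the same event.'' In your joint coupling, the graph $G$ is a separate random variable and $(x,y)$ is sampled conditionally on $G$. What you can legitimately assert is that $\EE[\mathbf 1_{\{vv'\in E\}}\mid (x_w)_w]=W_1(x_v,x_{v'})$ and $\EE[\mathbf 1_{\{vv'\in E\}}\mid (y_w)_w]=W_2(y_v,y_{v'})$. But the conditional probability $g=\PP[vv'\in E\mid (x_w)_w,(y_w)_w]$ under the joint law is some $[0,1]$-valued function whose conditional expectation given one feature vector alone gives $W_1$, and given the other alone gives $W_2$; nothing forces $g$ itself to equal either, and nothing forces $W_1(x_v,x_{v'})=W_2(y_v,y_{v'})$ from this data. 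A concentration argument like the one you invoke for $S$ would give you quantities that are a.s.\ functions of the graph alone — degrees yield $D_{\cW_1}(x_v)=D_{\cW_2}(y_v)$, and common-neighbour counts yield $(W_1\circ W_1)^{\pi_1}=(W_2\circ W_2)^{\pi_2}$ — but these are strictly weaker than $W_1^{\pi_1}=W_2^{\pi_2}$, and the passage from the self-composition to the graphon itself is precisely the hard part. The paper avoids this entirely by never trying to reconstruct $W$ from the sample: the cut-norm discrepancy bound on $W_1^{\pi_1}-W_2^{\pi_2}$ comes straight from the definition of $\delGP=0$ and survives the weak limit.

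Secondary issues: in Stage~3, $\{D_{\cW_i}\le D\}$ typically has infinite measure, so restricting $\nu_T$ there does not give uniformly bounded mass (Proposition~\ref{prop:local-finite} controls the complement, $\{D_{\cW_i}>D\}$, which is what the paper restricts to); and even after correcting the direction, bounded mass is not the same as tightness — one needs the finite-measure set to live in a compact metric space, which is where the explicit Borel normalization in the paper's proof is used before invoking weak compactness. You would also still need an argument that the pointwise identity on the coupled sample is inherited by the weak limit of the empirical measures $\nu_T$; that step is unaddressed.
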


Theorem~\ref{coupling} should be compared to Proposition 8 from \cite{BCCH16}
which states the analogous result for integrable Borel graphons that have cut
distance zero (without the assumption that $W_1$ and $W_2$ are non-negative),
using a different proof technique. Using still different proof techniques,
Janson proved a similar result (again without assuming non-negativity),
showing that after trivially extending two integrable Borel graphons with cut
distance zero they can be coupled so that the projections are equal almost
everywhere; see \cite{JANSON16}. We will prove Theorem~\ref{coupling} in
Section~\ref{sec:infmin}.

\begin{remark}\label{rem:coupling-signed-generalization}
Throughout this paper, we have considered graphexes where all three parts are
non-negative. While this makes sense when considering graphexes as generators
of a graphex process, from an analytical point of view, it is less natural.
Indeed, it is easy to define the kernel and weak kernel distance for
graphexes where the three parts take values in $\RR$. Taking, e.g., the
kernel distance $\d22$ defined in \eqref{d22-def}, all we need to do is
replace the $L^1$ norms in the third part by a signed ``edge density''
$\rho(\cW_i)=\int W \,d\mu\times d\mu +2 \int S \,d\mu + 2I$, and then use
the third root of $|\rho(\cW_1)-\rho(\cW_2)|$ instead of the third root of
$|\|\cW_1\|_1-\|\cW_2\||$. In particular in view of the just discussed
results from \cite{BCCH16} and \cite{JANSON16}, we conjecture that
Theorem~\ref{coupling} holds for signed graphexes as well, provided the
condition $D_{\cW_i}>0$ is replaced by the condition $D_{|\cW_i|}>0$, where
$|\cW_i|$ is obtained from $\cW_i$ by replacing all three components of
$\cW_i$ by their absolute values. We leave the proof of this conjecture as an
open problem.
\end{remark}

Once we have established Theorem~\ref{coupling}, we will then prove
Theorem~\ref{forwardequiv} by generalizing a construction which was developed
by Janson for the dense case in \cite{JANSON13}. To this end, we will assign
to each graphex $\cW$ a ``canonical version'' $\widehat{\cW}$ such that $\cW$
is a pullback of $\widehat{\cW}$ and show that if two graphexes are
equivalent, then their canonical versions are isomorphic up to measure zero
changes. This will be carried out in Section~\ref{sec:canonical}.

\begin{remark}\label{rem:forwardequivsigned-generalization}
Section~\ref{sec:canonical} does not use nonnegativity in any essential way,
and should easily generalizable to signed graphexes. This should give a
relatively straightforward proof of the analogue of
Theorem~\ref{forwardequiv} for graphons of cut distance zero, and also allow
for the more general setting of signed graphexes, once the above conjectured
generalization of Theorem~\ref{coupling} is established. Again we leave this
as an open problem.
\end{remark}

\subsection{Infimum is minimum} \label{sec:infmin}

In this subsection, we will prove Theorem~\ref{coupling}. The proof will be
based on a series of lemmas.

Let $\widetilde\cW_i=(\wW_i,\widetilde S_i,\widetilde
I_i,\widetilde\bOmega_i)$, for $i=1,2$, be trivial extensions of $\cW_i$ to
spaces of infinite measure, where $\widetilde\bOmega_i=(\widetilde
\Omega_i,\widetilde\cF_i,\widetilde\mu_i)$, and let $\eps>0$. By Proposition
\ref{propequivgraphexes} \eqref{graphexequivepsilonD}, there exist
$\widetilde\Omega_i^\varepsilon\subseteq\widetilde\Omega_i$ such that
$\widetilde\Omega_i \setminus \widetilde\Omega_i^\varepsilon$ has measure at
most $\varepsilon$, and a measure $\nu_\varepsilon$ on
$\widetilde\Omega_1^\varepsilon \times \widetilde\Omega_2^\varepsilon$ with
marginals $\widetilde\mu_1|_{\widetilde\Omega_1^\varepsilon}$ and
$\widetilde\mu_2|_{\widetilde\Omega_2^\varepsilon}$ such that for the
restricted graphexes $\widetilde\cW_{i,\varepsilon}$,
\[
\|\widetilde W_{1,\varepsilon}^{\pi_1}-\widetilde W_{2,\varepsilon}^{\pi_2}\|_{2 \rightarrow 2,\nu_\varepsilon} \le \varepsilon
\]
and
\[
\int_{\widetilde\Omega_1^\varepsilon \times \widetilde\Omega_2^\varepsilon} \left(D_{\widetilde\cW_{1,\varepsilon}}(x)-D_{\widetilde\cW_{2,\varepsilon}}(y)\right)^2\,d\nu_\eps(x,y) \le \varepsilon^{4}.
\]
With a slight abuse of notation, we extend $\nu_\varepsilon$ to
$\widetilde\Omega_1 \times \widetilde\Omega_2$ by zero. This means that in
fact
\[\|\widetilde W_{1}^{\pi_1}-\widetilde W_{2}^{\pi_2}\|_{2 \rightarrow 2, \nu_\varepsilon} \le \varepsilon
\]
and
\[\|D_{\widetilde \cW_1^{\pi_1,\nu_{\varepsilon}}} - D_{\widetilde \cW_2^{\pi_2,\nu_{\varepsilon}}}\|_2 \le \varepsilon^{{2}}.
\]

We first prove the following lemma.

\begin{lemma} \label{lemmadeg}
For any $c$,
\begin{enumerate}
\item $\lim_{\varepsilon\rightarrow
    0}\nu_\varepsilon(\Omega_{1,>c}\times\Omega_{2,>c})=\mu_i(\Omega_{i,>c})$
{for} $i=1,2$ (regardless of the choice of $\nu_\varepsilon$), {and}
\label{diffdegzero}
\item $\mu_1(\Omega_{1,>c})=\mu_2(\Omega_{2,>c})$. \label{degeq}
\end{enumerate}
\end{lemma}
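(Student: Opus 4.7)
The plan is to use the near-coupling property of $\nu_\varepsilon$ on the degree functions to sandwich $\nu_\varepsilon(\Omega_{1,>c}\times\Omega_{2,>c})$ between values of $\mu_1(\Omega_{1,>c'})$ for $c'$ near $c$, and then take $\varepsilon\to 0$. Starting from the bound $\|D_{\widetilde\cW_1^{\pi_1,\nu_\varepsilon}} - D_{\widetilde\cW_2^{\pi_2,\nu_\varepsilon}}\|_2 \leq \varepsilon^2$ established just above the lemma, Chebyshev's inequality will imply that the set
\[
A_\varepsilon = \{(x,y)\in\widetilde\Omega_1^\varepsilon \times \widetilde\Omega_2^\varepsilon : |D_{\widetilde\cW_1}(x) - D_{\widetilde\cW_2}(y)| \geq \varepsilon\}
\]
satisfies $\nu_\varepsilon(A_\varepsilon) \leq \varepsilon^2$. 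Since $\widetilde\cW_i$ is a trivial extension, $D_{\widetilde\cW_i}$ equals $D_{\cW_i}$ on $\Omega_i$ and vanishes outside, so the sets $\Omega_{i,>c}$ can be read off either the original or the extended degree function for $c > 0$.

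For the upper bound I will use that the $\pi_1$-marginal of $\nu_\varepsilon$ is $\widetilde\mu_1|_{\widetilde\Omega_1^\varepsilon} \leq \widetilde\mu_1$, which immediately gives $\nu_\varepsilon(\Omega_{1,>c}\times\Omega_{2,>c}) \leq \mu_1(\Omega_{1,>c})$. For the lower bound I will exploit the complement $B_\varepsilon$ of $A_\varepsilon$: on $B_\varepsilon$, if $D_{\widetilde\cW_1}(x) > c+\varepsilon$ then $D_{\widetilde\cW_2}(y) > c$, so $y \in \Omega_{2,>c}$. Combining this with $\nu_\varepsilon(B_\varepsilon^c) \leq \varepsilon^2$ and $\widetilde\mu_1(\widetilde\Omega_1\setminus\widetilde\Omega_1^\varepsilon) \leq \varepsilon$ will yield
\[
\nu_\varepsilon(\Omega_{1,>c}\times\Omega_{2,>c}) \geq \nu_\varepsilon\bigl((\Omega_{1,>c+\varepsilon}\times\widetilde\Omega_2) \cap B_\varepsilon\bigr) \geq \mu_1(\Omega_{1,>c+\varepsilon}) - \varepsilon - \varepsilon^2.
\]

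To finish part~(\ref{diffdegzero}), I will let $\varepsilon \downarrow 0$ and invoke continuity of measure from below: the sets $\Omega_{1,>c+\varepsilon}$ increase to $\Omega_{1,>c}$, so $\mu_1(\Omega_{1,>c+\varepsilon}) \to \mu_1(\Omega_{1,>c})$ (even in the case $c=0$, where the limit may be $+\infty$). Both sandwich bounds thus converge to $\mu_1(\Omega_{1,>c})$, regardless of the specific choice of $\nu_\varepsilon$. Running the symmetric argument with the roles of $1$ and $2$ interchanged gives the same limit equal to $\mu_2(\Omega_{2,>c})$, so part~(\ref{degeq}) follows immediately. I do not anticipate any serious obstacles; the whole argument is an elementary consequence of Chebyshev's inequality for the near-equal marginals, and the one-sided use of $c+\varepsilon$ (rather than a two-sided window) avoids any need to restrict to continuity points of the distribution function of $D_{\cW_i}$, since only continuity from below of $c\mapsto\mu_i(\Omega_{i,>c})$ is required.
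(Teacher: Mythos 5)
Your proof is correct and, while using the same basic ingredients as the paper (the $L^2$ bound on the degree difference, a Markov/Chebyshev estimate, and a threshold shift), it differs in a meaningful and in fact more robust way in the direction of the threshold shift. The paper lower-bounds $\nu_\varepsilon(\Omega_{1,>c}\times\Omega_{2,>c})$ by widening the $\Omega_2$-side to $\Omega_{2,>c-(\sqrt\varepsilon+\varepsilon)}$, and then must absorb the ``annulus'' $\Omega_{2,>c-(\sqrt\varepsilon+\varepsilon)}\setminus\Omega_{2,>c}$; that error term tends to $\mu_2(\{D_{\cW_2}=c\})$ as $\varepsilon\to 0$, which is zero only at continuity points of $c\mapsto\mu_2(\Omega_{2,>c})$, so the paper's argument as written implicitly needs $c$ to be a continuity point. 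You instead shrink the $\Omega_1$-side to $\Omega_{1,>c+\varepsilon}$ and use continuity of measure from below, which holds for \emph{every} $c$ (including $c=0$, possibly with infinite limit). Combined with the trivial upper bound $\nu_\varepsilon(\Omega_{1,>c}\times\Omega_{2,>c})\le\mu_1(\Omega_{1,>c})$ from the marginal inequality, your sandwich yields the limit unconditionally, and running the symmetric argument gives $\mu_1(\Omega_{1,>c})=\mu_2(\Omega_{2,>c})$ directly; this is a genuine simplification.

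One small numerical point: the Chebyshev bound you quote controls $D_{\widetilde\cW_{i,\varepsilon}}=D_{\widetilde\cW_i^{\pi_i,\nu_\varepsilon}}\circ\pi_i$, whereas your set $A_\varepsilon$ is phrased in terms of $D_{\widetilde\cW_i}$. Since $0\le D_{\widetilde\cW_i}-D_{\widetilde\cW_{i,\varepsilon}}\le\varepsilon$ on $\widetilde\Omega_i^\varepsilon$ (because $W_i\in[0,1]$ and $\widetilde\mu_i(\widetilde\Omega_i\setminus\widetilde\Omega_i^\varepsilon)\le\varepsilon$), the correct statement is $\nu_\varepsilon(\{|D_{\widetilde\cW_1}-D_{\widetilde\cW_2}|\ge 2\varepsilon\})\le\varepsilon^2$, so the threshold should be $2\varepsilon$ (and correspondingly $\Omega_{1,>c+2\varepsilon}$ in the lower bound). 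This only changes constants and has no effect on the conclusion, since $\mu_1(\Omega_{1,>c+2\varepsilon})\uparrow\mu_1(\Omega_{1,>c})$ all the same.
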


\begin{proof}
We first prove (\ref{diffdegzero}). By symmetry, it suffices to prove it for
$i=1$. Note that for any $x \in \widetilde\Omega_i^\eps$,
\[D_{\widetilde\cW_{i,\varepsilon}}(x) \le D_{\widetilde\cW_i}(x) \le D_{\widetilde\cW_{i,\varepsilon}}(x)+\varepsilon
\]
and that $\widetilde\Omega_{i,>c}=\Omega_{i,>c}$. Therefore,
\begin{align*}
\varepsilon \nu_\varepsilon\bigg(\Omega_{1,>c}\times(\widetilde\Omega_2\setminus&\Omega_{2,>c-(\sqrt{\varepsilon}+\varepsilon)})\bigg)\\
&\le \int_{\Omega_{1,>c}\times(\widetilde\Omega_2\setminus\Omega_{2,>c-(\sqrt{\varepsilon}+\varepsilon)})}
\left( D_{\cW_{1,\varepsilon}}(x)-D_{\cW_{2,\varepsilon}}(y) \right)^2\le \varepsilon^{{4}},
\end{align*}
which implies that
\[
\nu_\varepsilon(\Omega_{1,>c}\times(\widetilde\Omega_2\setminus\Omega_{2,>c-(\sqrt{\varepsilon}+\varepsilon)}))
<\varepsilon^{{3}}.\] Now, for any $\varepsilon$,
\begin{align*}
|\mu_1(&\Omega_{1,>c})-\nu_\varepsilon(\Omega_{1,>c}\times\Omega_{2,>c})|
\leq \eps+|\nu_\varepsilon(\Omega_{1,>c} \times \widetilde\Omega_2)-\nu_\varepsilon(\Omega_{1,>c} \times \Omega_{2,>c})|\\
&=\varepsilon+\nu_\varepsilon(\Omega_{1,>c} \times (\widetilde\Omega_2 \setminus \Omega_{2,>c}))\\
&
=\eps+ \nu_\varepsilon(\Omega_{1,>c}\times(\widetilde\Omega_2\setminus\Omega_{2,>c-(\sqrt{\varepsilon}
+\varepsilon)}))
+\nu_\varepsilon(\Omega_{1,>c} \times (\Omega_{2,>c-(\sqrt{\varepsilon}+\varepsilon)}\setminus\Omega_{2,>c}))\\
& <{\varepsilon}+\eps^3 + \mu_2(\Omega_{2,>c-(\sqrt{\varepsilon}+\varepsilon)}\setminus\Omega_{2,>c}).
\end{align*}
This last expression is finite, and tends to $0$ as $\varepsilon \rightarrow
0$, so
\[\lim_{\varepsilon\rightarrow 0}\nu_\varepsilon(\Omega_{1,>c}\times\Omega_{2,>c})=\mu_1(\Omega_{1,>c}).\]
This proves (\ref{diffdegzero}). From this, (\ref{degeq}) is obvious.
\end{proof}

\begin{lemma} \label{lemmarestricted}
For any $n \ge 1$, there exists a measure $\nu_n$ on $\Omega_{1,>1/n} \times
\Omega_{2,>1/n}$ such that the following hold:
\begin{enumerate}
\item $\nu_n$ is a coupling of $\Omega_{1,>1/n}$ and $\Omega_{2,>1/n}$,
    \label{restrictedcoupling}
\item ${\nu_{n+1}}|_{\Omega_{1,>1/n} \times \Omega_{2,>1/n}}=\nu_{n}$,
    \label{restrictedmeasuresequal}
\item $W_1^{\pi_1}$ and $W_2^{\pi_2}$ are equal when restricted to
    $(\Omega_{1,>1/n} \times \Omega_{2,>1/n})^2$, $\nu_n\times\nu_n$-almost
everywhere, and   \label{restrictedgraphonequal}
\item $D_{\cW_1^{\pi_1}}$ and $D_{\cW_2^{\pi_2}}$ are equal when restricted
    to $\Omega_{1,>1/n} \times \Omega_{2,>1/n}$, $\nu_n$-almost everywhere.
\label{restrictedstarequal}
\end{enumerate}
\end{lemma}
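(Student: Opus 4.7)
The plan is to construct each $\nu_n$ as a weak limit of the restrictions $\nu_\varepsilon^{(n)}:=\nu_\varepsilon|_{\Omega_{1,>1/n}\times\Omega_{2,>1/n}}$ along a single sequence $\varepsilon_k\to 0$ obtained by a diagonal extraction. Since $\mu_i(\Omega_{i,>1/n})<\infty$ by local finiteness (Proposition~\ref{prop:local-finite}), each $\nu_\varepsilon^{(n)}$ is a finite Borel measure on the Polish space $\Omega_{1,>1/n}\times\Omega_{2,>1/n}$. By Lemma~\ref{lemmadeg} its total mass converges to $\mu_1(\Omega_{1,>1/n})=\mu_2(\Omega_{2,>1/n})$ as $\varepsilon\to 0$, and a minor adaptation of the proof of~(\ref{diffdegzero}) shows that, for threshold values $c$ with $\mu_i(\{D_{\cW_i}=c\})=0$, the $i$-th marginal of $\nu_\varepsilon^{(n)}$ converges in total variation to $\mu_i|_{\Omega_{i,>c}}$. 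Replacing the sequence $1/n$ by a cofinal decreasing sequence of such continuity values (at most countably many values fail), the marginals become fixed finite Borel measures on a Polish space and hence the family $\{\nu_{\varepsilon}^{(n)}\}_\varepsilon$ is tight. Prokhorov's theorem together with a diagonal extraction then produces a single sequence $\varepsilon_k\to 0$ along which $\nu_{\varepsilon_k}^{(n)}$ converges weakly to some limit $\nu_n$ for each $n$.

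Property~(\ref{restrictedcoupling}) follows from the convergence of marginals, and~(\ref{restrictedmeasuresequal}) follows from the compatibility of the restrictions $\nu_{\varepsilon_k}^{(n+1)}|_{\Omega_{1,>1/n}\times\Omega_{2,>1/n}}=\nu_{\varepsilon_k}^{(n)}$ together with the Portmanteau theorem applied at continuity sets.

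The main obstacle is establishing~(\ref{restrictedgraphonequal}) and~(\ref{restrictedstarequal}). These must be deduced from the bounds
\[
\|\widetilde W_1^{\pi_1}-\widetilde W_2^{\pi_2}\|_{2\to 2,\nu_{\varepsilon_k}}\leq\varepsilon_k
\quad\text{and}\quad
\|D_{\widetilde\cW_{1,\varepsilon_k}}-D_{\widetilde\cW_{2,\varepsilon_k}}\|_{2,\nu_{\varepsilon_k}}\leq\varepsilon_k^2,
\]
which, since both norms are monotone under restriction of the measure, continue to hold for $\nu_{\varepsilon_k}^{(n)}$. The difficulty is that $W_i$, $S_i$, and $D_{\cW_i}$ are only measurable, so weak convergence of $\nu_{\varepsilon_k}^{(n)}$ does not automatically yield convergence of the relevant double integrals. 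I would address this by approximating these functions on $\Omega_{i,>1/n}$ by continuous ones via Lusin's theorem, truncating large values where necessary (the truncation error is controlled by local finiteness together with the uniform marginal bounds on the $\nu_{\varepsilon_k}^{(n)}$), and then passing to the weak limit. This yields $\|W_1^{\pi_1}-W_2^{\pi_2}\|_{2\to 2,\nu_n}=0$, which implies~(\ref{restrictedgraphonequal}) because a kernel with vanishing $\|\cdot\|_{2\to 2}$-norm is zero almost everywhere, and an analogous argument using the $L^2$ bound together with the pointwise convergence $D_{\widetilde\cW_{i,\varepsilon_k}}\to D_{\cW_i}$ on $\Omega_i$ yields~(\ref{restrictedstarequal}).
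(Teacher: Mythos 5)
Your proposal follows the same overall structure as the paper's proof — restrict the almost-couplings $\nu_\varepsilon$ to the finite-measure sets $\Omega_{1,>1/n}\times\Omega_{2,>1/n}$, extract a diagonal subsequence along which the restrictions converge weakly, and then transfer the kernel-norm bounds to the limit by approximation — but it implements the approximation step differently. The paper first invokes the Borel isomorphism theorem to replace each $\Omega_{i,>1/n}$ by a compact space (a finite set, the one-point compactification of $\NN$, or the Cantor cube), which makes the boundedness of the restricted measures automatic compactness (no Prokhorov needed) and, more importantly, provides a canonical dense family of \emph{clopen step functions} in $L^1$; the key estimate (property~(3)) is then carried out on the cut norm $\|\cdot\|_\square$ of clopen step approximants, where it reduces to a finite maximum of integrals of continuous (indicator) functions, which manifestly converge under weak convergence. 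You instead keep the general Polish setting, use the total-variation convergence of the marginals plus Ulam's theorem to get tightness and then Prokhorov for compactness, and use Lusin's theorem (on $\mu_i\times\mu_i$) to produce bounded continuous approximants of $W_i$. This works, and gives essentially the same outcome, but it trades the explicit clopen structure for extra bookkeeping.

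The place where your sketch glosses over the real work is the passage from $\|W_1^{\pi_1}-W_2^{\pi_2}\|_{2\to2,\nu_{\varepsilon_k}}\le\varepsilon_k$ to $\|W_1^{\pi_1}-W_2^{\pi_2}\|_{2\to2,\nu_n}=0$. You cannot ``pass to the weak limit'' in a supremum-defined norm directly. The clean route (which your ingredients do support) is: write $U=W_1^{\pi_1}-W_2^{\pi_2}$ and a Lusin/Tietze continuous approximant $\widehat U$ with $\|U-\widehat U\|_{1,\nu^2}$ small uniformly over $\nu\in\{\nu_n\}\cup\{\nu_{\varepsilon_k}^{(n)}\}_k$ (possible because all marginals are dominated by the $\mu_i$, and $U$, $\widehat U$ are bounded, so $L^1$-small gives $L^2$-small gives $\|\cdot\|_{2\to2}$-small); then for \emph{bounded continuous} test functions $f,g$ the bilinear form $f\circ\widehat U\circ g$ is a bounded continuous functional of $\nu^2$, so it converges under weak convergence, and one also needs $\|f\|_{2,\nu_{\varepsilon_k}}\to\|f\|_{2,\nu_n}$ (which holds because $f^2$ is bounded continuous); finally one appeals to the density of bounded continuous functions in $L^2(\nu_n)$ to recover the supremum. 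None of this is automatic, and it is precisely the step the paper replaces by the cleaner argument on clopen step functions, where $\|\cdot\|_\square$ of a step function is a finite maximum of weakly-continuous quantities. Similarly for~(4), a Markov/Chebyshev-style argument on the level sets $\{|D_{\cW_1^{\pi_1}}-D_{\cW_2^{\pi_2}}|\geq\varepsilon\}$ works but requires care since those sets are only Borel, not open or closed; the paper circumvents this using the topological structure it set up. So your route is sound as a strategy and is a genuine alternative to the Cantor-cube reduction, but as written the crucial approximation/limit interchange in~(3) and~(4) is asserted rather than argued, and that is exactly where the proof lives.
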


\begin{proof}
It is well known that any two Borel measurable spaces with the same
cardinality are isomorphic; see, e.g., Theorem 8.3.6 in \cite{Cohn-book}. As
a consequence, each Borel measure space $(\Omega,\cF,\mu)$ with
$\mu(\Omega)<\infty$ is either empty or isomorphic to a finite set (with the
discrete topology), the countable set $\{0\} \cup \{1/n:n \in \NN\}$ (with
the induced topology from $\mathbb{R}$), or the Cantor cube
$\cC=\{0,1\}^\infty$ (with the product topology), equipped with the Borel
$\sigma$-algebras generated by the topologies, and a measure that is a finite
Borel measure with full support.

We can therefore assume without loss of generality that for each $i$,
$\Omega_{i,>1}$ and each set $\Omega_{i,>1/(n+1)} \setminus \Omega_{i,>1/n}$
are of this form. This means we may without loss of generality assume the
following properties:

\begin{enumerate}
\item Each $\Omega_{i,>1/n}$, and thus each $\Omega_{i,>1/n}^2$, is compact.
\item For any $i_1,i_2=1,2$ and $n_1,n_2 \in \NN^+$, and any finite Borel
    measure $\nu$ on $\Omega_{i_1,>1/n_1} \times \Omega_{i_2,>1/n_2}$, the
set of all step functions on $\Omega_{i_1,>1/n_1} \times
 \Omega_{i_2,>1/n_2}$ corresponding to partitions of
$\Omega_{i_j,>1/n_j}$ into clopen sets for $j=1,2$ is dense in
$L^1(\Omega_{i_1,>1/n_1} \times \Omega_{i_2,>1/n_2})$.
\end{enumerate}

Now, take a sequence $\varepsilon_k \rightarrow 0$, and recall that we have
an almost coupling measure $\nu_{\varepsilon_k}$ on $\widetilde\Omega_1
\times \widetilde\Omega_2$ with
\[
\|\widetilde W_1^{\pi_1}-\widetilde W_2^{\pi_2}\|_{2 \rightarrow 2,\nu_{\varepsilon_k}} \le \varepsilon_k
\]
and
\[\|D_{\widetilde \cW_1^{\pi_1,\nu_{\varepsilon_k}}} - D_{\widetilde \cW_2^{\pi_2,\nu_{\varepsilon_k}}}\|_2 \le \varepsilon_k^{{2}}.
\]

We know that for each $n$, $\Omega_{1,>1/n} \times \Omega_{2,>1/n}$ is
compact, and for any $K>0$, the set of measures on it bounded by $K$ is
compact under the topology of weak convergence of measures. Since for any
$c$,
\[
\lim_{k\rightarrow \infty}\nu_{\varepsilon_k}(\Omega_{1,>c}\times\Omega_{2,>c})
=\mu_1(\Omega_{1,>c})=\mu_2(\Omega_{2,>c})<\infty,
\]
we can take a subsequence of $\nu_{\varepsilon_k}$ such that for each $n$,
the measure is convergent when restricted to $\Omega_{1,>1/n} \times
\Omega_{2,>1/n}$. Without loss of generality we assume that the original
sequence has this property. For each $n$, we then define $\nu_n$ as the limit
measure on $\Omega_{1,1/n} \times \Omega_{2,>1/n}$. Having defined $\nu_n$ we
now prove (1)--(4).

(\ref{restrictedcoupling}) We have seen in Lemma \ref{lemmadeg} that
\[\nu_n(\Omega_{1,>1/n}\times\Omega_{2,>1/n})=\mu_1(\Omega_{1,>1/n})=\mu_2(\Omega_{2,>1/n}).\]
For any clopen $F \subseteq \Omega_{1,>1/n}$, $F \times \Omega_{2,>1/n}$ is
clopen in $\Omega_{1,>1/n}\times\Omega_{2,>1/n}$. Therefore,
\[
\lim_{k \rightarrow \infty} \nu_{\varepsilon_k}(F \times \Omega_{2,>1/n}) =
\nu_n(F \times \Omega_{2,>1/n}) .\] On the other hand, by Lemma
\ref{lemmadeg}{,}
\[\nu_{\varepsilon_k}(F \times (\widetilde\Omega_2 \setminus \Omega_{2,>1/n})) \le \nu_{\varepsilon_k}(\Omega_{1,>1/n}
\times (\widetilde\Omega_2 \setminus \Omega_{2,>1/n})) \xrightarrow{k \rightarrow \infty} 0.\] We also have that
\[|\nu_{\varepsilon_k}(F \times \widetilde\Omega_2)-\mu_1(F)| \le \varepsilon_k \xrightarrow{k \rightarrow \infty} 0 .
\]
Therefore for any clopen set, $\mu_1(F) = \nu_n(F \times \Omega_{2,>1/n})$.
This implies that $\mu_1$ and the projection of $\nu_n$ onto
$\Omega_{1,>1/n}$ are the same, which proves (\ref{restrictedcoupling}).

(\ref{restrictedmeasuresequal}) Since $\Omega_{1,>1/n} \times
\Omega_{2,>1/n}$ is clopen in $\Omega_{1,>1/(n+1)} \times
\Omega_{2,>1/(n+1)}$,
\[\nu_{n+1}(\Omega_{1,>1/n} \times \Omega_{2,>1/n})=\lim_{k \rightarrow \infty} \nu_{\varepsilon_k}(\Omega_{1,>1/n}\times \Omega_{2,>1/n})=\nu_n(\Omega_{1,>1/n}\times \Omega_{2,>1/n})
.\] Furthermore, for any closed $F \subseteq \Omega_{1,>1/n} \times
\Omega_{2,>1/n}$, $F$ is also closed in $\Omega_{1,1/(n+1)} \times
\Omega_{2,1/(n+1)}$, which implies that
\[\limsup_{k \rightarrow \infty} \nu_{\varepsilon_k}(F) \le \nu_{n+1}(F).
\] This implies that $\nu_{\varepsilon_k}$ converges
{weakly} to ${\nu_{n+1}}|_{\Omega_{1,>1/n} \times \Omega_{2,>1/n}}$, but
since it also converges to $\nu_n$, the two must be equal.

(\ref{restrictedgraphonequal}) Let $W_{i,n}={\widetilde
W_i}|_{(\Omega_{i,>1/n})^2}$. Since
\[ \|\widetilde W_1^{\pi_1}-\widetilde W_2^{\pi_2}
\|_{2 \rightarrow 2,\nu_{\varepsilon_k}} \le \varepsilon_k
,\]
we have that in particular, for any $n$,
\[\|W_{1,n}^{\pi_1}-W_{2,n}^{\pi_2}\|_{2 \rightarrow 2,\nu_{\varepsilon_k}} \le
\varepsilon_k.
\]
This implies that
\[\|W_{1,n}^{\pi_1}-W_{2,n}^{\pi_2}\|_{\square,\nu_{\varepsilon_k}} \le
\varepsilon_k \mu_1(\Omega_{1,>1/n}).
\]

Since $\Omega_{i,>1/n}$ each have finite measure, we can use Janson's
argument in \cite{JANSON13}. We present the argument for completeness. Fix
$\varepsilon$. We can find step graphons $U_{1,n}$ and $U_{2,n}$ on
$(\Omega_{1,>1/n})^2$ and $(\Omega_{2,>1/n})^2$, with each part in the
partition being a clopen set, such that
\[\|U_{i,n}-W_{i,n}\|_1 \le {\varepsilon}.
\]
This means that for any coupling measure on $\Omega_{1,>1/n} \times \Omega_{2,>1/n}$,
\[
\|
U_{i,n}^{\pi_i}-W_{i,n}^{\pi_i}\|_1 \le \varepsilon.
\]
Now, $U_{1,n}^{\pi_1}-U_{2,n}^{\pi_2}$ is a step function on
$(\Omega_{1,>1/n} \times \Omega_{2,>1/n})^2$ with a partition into clopen
parts. This means that since the restrictions of $\nu_{\varepsilon_k}$ weakly
converge to $\nu_n${,}
\[
\|U_{1,n}^{\pi_1}-U_{2,n}^{\pi_2}\|_{\square,\nu_{\varepsilon_k}}
\xrightarrow{k \rightarrow \infty}
\|U_{1,n}^{\pi_1}-U_{2,n}^{\pi_2}\|_{\square,\nu_{n}} .\] Take $k$ large
enough so that $\varepsilon_k\mu_1(\Omega_{1,>1/n}) \le \varepsilon$ and
\[\bigg|\|U_{1,n}^{\pi_1}-U_{2,n}^{\pi_2}\|_{\square,\nu_{\varepsilon_k}} - \|U_{1,n}^{\pi_1}-U_{2,n}^{\pi_2}\|_{\square,\nu_{n}}\bigg| \le \varepsilon.\]
We then have
\[
\begin{split}
\|W_{1,n}^{\pi_1}&-W_{2,n}^{\pi_2}\|_{\square,\nu_n}
\\
&\le
\|W_{1,n}^{\pi_1}-U_{1,n}^{\pi_1}\|_{1,\nu_n}+\|U_{1,n}^{\pi_1}-U_{2,n}^{\pi_2}\|_{\square,\nu_{n}} +\|U_{2,n}^{\pi_2}-W_{2,n}^{\pi_2}\|_{1,\nu_n}\\
&\le 3\varepsilon + \|U_{1,n}^{\pi_1}-U_{2,n}^{\pi_2}\|_{\square,\nu_{\varepsilon_k}}\\
& \le 3\varepsilon +
\|W_{1,n}^{\pi_1}-U_{1,n}^{\pi_1}\|_{1,\nu_{\varepsilon_k}}
+\|W_{1,n}^{\pi_1}-W_{2,n}^{\pi_2}\|_{\square,\nu_{\varepsilon_k}}
+\|U_{2,n}^{\pi_2}-W_{2,n}^{\pi_2}\|_{1,\nu_{\varepsilon_k}}\\
& \le 6\varepsilon.
\end{split}
\]
Since this holds for any $\varepsilon$, this proves
(\ref{restrictedgraphonequal}).

(\ref{restrictedstarequal}) Fix $\eps>0$ and assume that $\eps_k\leq\eps$.
Since $D_{\widetilde\cW_{i,\varepsilon_k}}(x) \le D_{\cW_i}(x) \le
D_{\widetilde\cW_{i,\varepsilon_k}}(x)+\varepsilon_k$ for all $x \in
\Omega_i\cap\widetilde\Omega_i^\eps$,
\begin{align*}
&\nu_\eps\left(\left\{
x\in\Omega_{1,>1/n}\times\Omega_{2,>1/n} :
\left|D_{\cW_1^{\pi_1}}(x)-D_{\cW_2^{\pi_2}}(x)\right|\geq 2\eps
\right\}\right)
\\
&\qquad\leq
\nu_\eps\left(\left\{
x\in\Omega_{1,>1/n}\times\Omega_{2,>1/n} :
\left|D_{\widetilde \cW_1^{\pi_1,\nu_{\varepsilon_k}}}(x) - D_{\widetilde \cW_2^{\pi_2,\nu_{\varepsilon_k}}}(x)\right|\geq \eps
\right\}\right)\\
&\qquad\leq \eps^{-2}\|D_{\widetilde \cW_1^{\pi_1,\nu_{\varepsilon_k}}} - D_{\widetilde \cW_2^{\pi_2,\nu_{\varepsilon_k}}}\|_2^2 \le {\eps^{-2}}{\varepsilon_k^4}.
\end{align*}
Since for all $\eps>0$ the right side converges to $0$ as $k\to\infty$ this
shows that $\nu_n$ is supported on $\{x \in \Omega_{1,>1/n} \times
\Omega_{2,>1/n}: D_{\cW_1^{\pi_1}}(x)=D_{\cW_2^{\pi_2}}(x)\}$.
\end{proof}

\begin{proof}[Proof of Theorem~\ref{coupling}]
{After these preparations, we are ready to} define the measure $\mu$ on
$\Omega_1 \times \Omega_2$. Note that since before the extensions
$D_{\cW_i}>0$ almost everywhere, we have that
$\bigcup_{n}\Omega_{i,>1/n}=\Omega_i$. For $A \subseteq \Omega_1 \times
\Omega_2$, let
\[
\begin{split}
\nu(A)&=\lim_{n \rightarrow \infty} \nu_n(A \cap (\Omega_{1,>1/n} \times
\Omega_{2,>1/n}))\\
&=\sum_{n=1}^\infty \nu_n(A \cap ((\Omega_{1,>1/n} \times
\Omega_{2,>1/n}) \setminus (\Omega_{1,>1/(n-1)} \times \Omega_{2,>1/(n-1)}))).
\end{split}
\]
Here with a slight abuse of notation we think of $\Omega_{i,1/0}$ as the
empty set. To show that this a coupling, note that for any measurable set $X
\subseteq \Omega_1$,
\[
\begin{split}
\mu_1(X) &= \lim_{n \rightarrow \infty} \mu_1(X \cap \Omega_{1,>1/n})\\
&=\lim_{n \rightarrow \infty} \nu_n((X \cap \Omega_{1,>1/n}) \times \Omega_{2,>1/n})\\
&=
\lim_{n \rightarrow \infty} \nu_n((X \times \Omega_2) \cap (\Omega_{1,>1/n} \times \Omega_{2,>1/n}{))}
=\nu(X \times \Omega_2),
\end{split}
\]
where in the second step we used that for each $n$, the restriction of $\nu$
to $\Omega_{1,>1/n} \times \Omega_{2,>1/n}$ is equal to $\nu_n$, which is a
coupling. Clearly the analogous argument works for subsets
$Y\subseteq\Omega_2$.

Now, let
\[
N=\{(x,y)=((x_1,x_2),(y_1,y_2)) \in (\Omega_1 \times \Omega_2)^2:W_1^{\pi_1}(x,y) \neq W_2^{\pi_2}(x,y)\}
\]
and
\[
M=\{x=(x_1,x_2)\in (\Omega_1 \times \Omega_2):
D_{\cW_1^{\pi_1}}(x)\neq D_{\cW_2^{\pi_2}}(x)
\}
.
\]
Let
\[N_n=\{(x,y) \in (\Omega_{1,>1/n} \times \Omega_{2,>1/n})^2:W_1^{\pi_1}(x,y) \neq W_2^{\pi_2}(x,y)\}
\]
and
\[
M_n=\{x=(x_1,x_2)\in (\Omega_{1,>1/n} \times \Omega_{2,>1/n}):
D_{\cW_1^{\pi_1}}(x)\neq D_{\cW_2^{\pi_2}}(x)
\}
.
\]
Since ${\bigcup_n}\Omega_{i,>1/n}=\Omega_i$ and $\Omega_{i,>1/n}\subseteq
\Omega_{i,1/(n+1)}$, we have that $N=\bigcup_n N_n$ and $M=\bigcup_n M_n$. By
Lemma \ref{lemmarestricted}
(\ref{restrictedgraphonequal},\ref{restrictedstarequal}), $(\nu \times
\nu)(N_n)=(\nu_n \times \nu_n) (N_n) = 0$ and $\nu(M_n)=0$, which implies
that $(\nu \times \nu)(N)=0$ and $\nu(M)=0$, and hence
$W_1^{\pi_1}=W_2^{\pi_2}$ and $D_{\cW_1^{\pi_1}}= D_{\cW_2^{\pi_2}}$ almost
everywhere. Since $W_1^{\pi_1}=W_2^{\pi_2}$ almost everywhere implies that
$D_{W_1^{\pi_1}}= D_{W_2^{\pi_2}}$ almost everywhere, this in turn implies
that $S_1^{\pi_1}=S_2^{\pi_2}$ $\nu$-almost everywhere.

To prove that $I_1=I_2$, we again use Proposition~\ref{propequivgraphexes},
but instead of \eqref{graphexequivepsilonD} we this time use
\eqref{graphexequiveachDsame}. Fix $D>0$. Since $\deltt(\cW_{1,{\leq
D}},\cW_{2,{\leq D}})=0$, we in particular have that $\|\cW_{1,\leq
D}\|_1=\|\cW_{2,\leq D}\|_1$. But since $W_1^{\pi_1}=W_2^{\pi_2}$ and
$D_{\cW_1^{\pi_1}}= D_{\cW_2^{\pi_2}}$ almost everywhere,
\begin{align*}
\int_{(\Omega_1,\leq D)^2}W_1\,d\mu_1\times d\mu_1
&=\int_{(\Omega_1\times\Omega_2)^2}W_1^{\pi_1}1_{D_{\cW_1^{\pi_1}}\leq D}\,d\nu\times d\nu\\
&=\int_{(\Omega_1\times\Omega_2)^2}W_2^{\pi_2}1_{D_{\cW_2^{\pi_2}}\leq D}\,d\nu\times d\nu\\
&=\int_{(\Omega_2,\leq D)^2}W_2\,d\mu_2\times d\mu_1.
\end{align*}
In a similar way, $\int_{\Omega_1,\leq D}S_1=\int_{\Omega_2,\leq D}S_2$.
Therefore $\|\cW_{1,\leq D}\|_1=\|\cW_{2,\leq D}\|_1$ implies $I_1=I_2$.
\end{proof}

\begin{corollary} \label{chain}
Let $\cW=(W,S,I,\bOmega)$ and $\cW'=(W',S',I',\bOmega')$ be graphexes that
are equivalent, and suppose that $D_\cW,D_{\cW'}>0$ everywhere. Then there
exist a positive integer $n$ and a chain of graphexes
$\cW_i=(W_i,S_i,I_i,\bOmega_i)$ for $i=0,\dots,n$, with $D_{\cW_i}>0$
everywhere for each $i=0,\dots,n$, $\cW_0=\cW$, $\cW_n=\cW'$, and for each $i
\ge 1$, either $\cW_{i-1}=\cW_i^{\phi_i}$ almost everywhere for some measure
preserving map $\phi_i$ from $\bOmega_{i-1}$ to $\bOmega_i$, or
$\cW_{i}=\cW_{i-1}^{\phi_i}$ almost everywhere for some $\phi_i$ from
$\bOmega_i$ to $\bOmega_{i-1}$. In fact, we can take $n=4$.
\end{corollary}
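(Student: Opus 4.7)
The plan is to combine Theorem~\ref{coupling} with the canonical graphex construction from Section~\ref{sec:canonical}, using the latter to meet the Borel hypothesis required by the former. Specifically, Section~\ref{sec:canonical} will assign to each graphex $\cV$ a ``canonical'' Borel graphex $\widehat\cV$ together with a measure-preserving map $\phi_\cV\colon\dsupp\cV\to\dsupp\widehat\cV$ such that $\cV|_{\dsupp\cV}=\widehat\cV^{\phi_\cV}$ almost everywhere; by restricting $\widehat\cV$ to its own degree support we may further arrange that $D_{\widehat\cV}>0$ everywhere. Applied to $\cW$ and $\cW'$, this produces Borel graphexes $\widehat\cW,\widehat{\cW'}$ with everywhere-positive marginals together with measure-preserving maps $\psi_1\colon\bOmega\to\widehat\bOmega$ and $\psi_4\colon\bOmega'\to\widehat{\bOmega'}$ satisfying $\cW=\widehat\cW^{\psi_1}$ and $\cW'=\widehat{\cW'}^{\psi_4}$ almost everywhere (using the standing assumption $D_\cW,D_{\cW'}>0$ everywhere, so the degree supports are the full spaces).

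Next, since pullback along a measure-preserving map preserves the induced graphex process (Remark~\ref{rem:pullbacksamedistance}), the hypothesized equivalence $\delGP(\cW,\cW')=0$ transfers to $\delGP(\widehat\cW,\widehat{\cW'})=0$. Both canonical graphexes are over Borel $\sigma$-finite spaces with positive marginals everywhere, so Theorem~\ref{coupling} supplies a coupling measure $\nu$ on $\widehat\bOmega\times\widehat{\bOmega'}$ under which $\widehat\cW^{\pi_1}=\widehat{\cW'}^{\pi_2}$ $\nu$-almost everywhere, where $\pi_1,\pi_2$ are the coordinate projections (both measure-preserving onto their respective marginals). I then define $\cW_2$ to be this common pullback on $(\widehat\bOmega\times\widehat{\bOmega'},\nu)$, restricting if necessary to its degree support; the marginal of $\cW_2$ equals $D_{\widehat\cW}\circ\pi_1$ $\nu$-almost everywhere, and the everywhere-positivity of $D_{\widehat\cW}$ makes this restriction harmless.

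Setting $\cW_0=\cW$, $\cW_1=\widehat\cW$, $\cW_3=\widehat{\cW'}$, and $\cW_4=\cW'$ then yields the desired chain with $n=4$: the step $\cW_0\mapsto\cW_1$ realizes the first alternative via $\psi_1$; $\cW_1\mapsto\cW_2$ realizes the second alternative via $\pi_1$; $\cW_2\mapsto\cW_3$ realizes the first alternative via $\pi_2$; and $\cW_3\mapsto\cW_4$ realizes the second alternative via $\psi_4$. The principal obstacle is bookkeeping the \emph{pointwise} (not merely almost-everywhere) positivity $D_{\cW_i}>0$ across each stage: each pullback or restriction can a priori introduce a null set of bad points, and I must interleave passages to degree supports with pullback constructions so that consecutive graphexes in the chain still satisfy the required measure-preserving relation on the nose (after adjusting domains by measure-zero modifications, which does not affect ``almost everywhere'' equality). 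Once this bookkeeping is performed, the chain is genuine.
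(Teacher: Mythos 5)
Your proposal is correct and follows essentially the same route as the paper's proof: pass to the canonical Borel graphexes of Section~\ref{sec:canonical} (restricted to their degree supports) to get the outer two links of the chain, then apply Theorem~\ref{coupling} to the two Borel graphexes to manufacture the middle coupled graphex, yielding the same $n=4$ chain. The one cosmetic difference is that the paper invokes Theorem~\ref{theoremsamplesconverge} to pass the hypothesis $\delGP(\cW,\cW')=0$ over to $\delGP(\widehat\cW,\widehat{\cW'})=0$, whereas you cite Remark~\ref{rem:pullbacksamedistance} together with the triangle inequality, which is the cleaner justification.
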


\begin{proof}
By the construction in the next section (which itself does not use
Corollary~\ref{chain}), there exists $\cW_1=(W_1,S_1,I_1,\bOmega_1)$ such
that $\bOmega_1$ is Borel and a measure-preserving map $\phi_1$ from
$\bOmega_0$ to $\bOmega_1$ with $W=W_0=W_1^{\phi_1}$ almost everywhere. Then
$\Omega_0=\dsupp W_0=\phi^{-1}(\dsupp W_1)$, so by replacing $\bOmega_1$ by
its restriction to $\dsupp W_2$, we may assume that $D_{W_1}>0$ everywhere.
Similarly, there exists $(W_3,\bOmega_3)$ with $\bOmega_3$ Borel and a
measure preserving map $\phi_4$ from $\bOmega_4$ to $\bOmega_3$ with
$W'=W_4=W_3^{\phi_4}$ almost everywhere and $D_{W_3}>0$ everywhere. Now, we
can apply Theorem \ref{theoremsamplesconverge} to show that
$\delGP(\cW_1,\cW_3)=0$. We can then apply Theorem \ref{coupling} to
$(\cW_1,\bOmega_1)$ and $(\cW_3,\bOmega_3)$ to find a Borel space $\bOmega_2$
and measure preserving maps $\phi_2$ from $\bOmega_2 $ to $\bOmega_1$ and
$\phi_3$ from $\bOmega_2$ to $\bOmega_3$ such that
$W_3^{\phi_3}=W_1^{\phi_2}$ almost everywhere. We can then take $W_2$ to be,
say, $W_1^{\phi_2}$.
\end{proof}

\subsection{Canonical graphex} \label{sec:canonical}
In this section, we prove Theorem~\ref{forwardequiv}. We follow the approach
of Janson in \cite{JANSON13}, based on the construction of Lov\'asz and
Szegedy in \cite{LS10}.

Concretely, given a graphex $\cW=(W,S,I,\bOmega)$, where
$\bOmega=(\Omega,\cF,\mu)$, define a map $\psi_W\colon \Omega \rightarrow
L^1(\Omega,\cF,\mu)$ by $x \mapsto W(x,\cdot)$, and define a map
$\psi_\cW\colon \Omega \rightarrow L^1(\Omega,\cF,\mu) \times \RR$ by $x
\mapsto (W(x,\cdot),S(x))$, where we equip $L^1(\Omega,\cF,\mu)$ with the
standard Borel $\sigma$-algebra and $L^1(\Omega,\cF,\mu) \times \RR$ with the
standard product Borel $\sigma$-algebra. Note that in general, we only know
that $\psi_W(x)\in L^1(\Omega,\cF,\mu)$ for almost all $x\in\Omega$, but by
changing $W$ on a set of measure zero, we may assume that this holds for all
$x\in\Omega$; we will assume that throughout this section. We will see in
Lemma~\ref{lemmamapmeasurable} that $\psi_W$, and thus $\psi_\cW$, is
measurable. Defining $\mu_W$ and $\mu_\cW$ as the pushforward of $\mu$ under
$\psi_W$ and $\psi_\cW$ respectively, and $\Omega_W$ and $\Omega_\cW$ as the
corresponding supports, we then construct a graphex $\widehat \cW$ over
$\Omega_\cW$ (equipped with the Borel $\sigma$-algebra and the measure
$\mu_\cW$) such that $\cW$ is almost everywhere equal to a pullback of
$\widehat \cW$. Furthermore, we will show that if $\cW'$ is a.e.\ equal to a
pullback of $\cW$, then we can find a measure preserving bijection $\phi^*$
from $\Omega_\cW$ to $\Omega_\cW'$ such that $\widehat{\cW}=(\widehat
{\cW'})^{\phi^*}$ a.e. Combined with Corollary~\ref{chain}, this will
establish Theorem~\ref{forwardequiv}.

We first state some preliminary lemmas. Recall that if $(\Omega,\cF,\mu)$ is
a measure space, then $L^1(\Omega,\cF,\mu)$ is a Banach space where each
point is an equivalence class consisting of integrable measurable functions
from $\Omega$ to $\mathbb{R}$, where two functions are equivalent if they are
equal almost everywhere (or equivalently their $L^1$ distance is $0$). Note
that in general, the space $L^1(\Omega,\cF,\mu)$ is not separable, a fact
which will lead to technical complications when considering measurable
functions into $L^1(\Omega,\cF,\mu)$ (e.g., the sum of two such functions is
in general not measurable). As in \cite{JANSON13}, we will avoid these
difficulties by carefully constructing separable subspaces of
$L^1(\Omega,\cF,\mu)$ such that the functions of interest take values in
these subspaces. See Lemma~\ref{lemmamapseparable} below.

Throughout this section, we will frequently consider two measure spaces
$(\Omega,\cF,\mu)$ and $(\Omega',\cF',\mu')$, where $\Omega=\Omega'$,
$\cF'\subseteq \cF$, and $\mu'$ is the restriction of $\mu$ to $\cF'$. With a
slight abuse of notation, we will often denote the second space by
$(\Omega,\cF',\mu)$, rather than $(\Omega,\cF',\mu|_{\cF'})$.

As already noted, the space $L^1(\Omega,\cF,\mu)$ is in general not separable
(if $\Omega$ is a Borel space, it is, but we want to define this construction
for general $\Omega$). We will therefore need the following lemma.

\begin{lemma} \label{lemmamapseparable}
Suppose that $(\Omega,\cF,\mu)$ and $(\Omega',\cF',\mu')$ are $\sigma$-finite
measure spaces, $W\colon \Omega \times \Omega' \rightarrow {\RR}$ is
measurable, and for all $x\in\Omega$, the function $W(x,\cdot)$ is
integrable. Let $\psi_W\colon \Omega \rightarrow L^1(\Omega,\cF,\mu)$ be the
map $x \mapsto W(x,\cdot)$. Then we can find a separable closed subspace $B$
of $L^1(\Omega',\cF'{,\mu'})$ such that $\psi_W(x)\in B$ for all $x\in
\Omega$.
\end{lemma}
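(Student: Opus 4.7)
The plan is to find a countably generated sub-$\sigma$-algebra $\cF'_0 \subseteq \cF'$ with the property that (i) $W$ is measurable with respect to $\cF \otimes \cF'_0$, and (ii) $\mu'$ restricted to $\cF'_0$ is still $\sigma$-finite. Then the set $B$ of equivalence classes in $L^1(\Omega',\cF',\mu')$ that admit an $\cF'_0$-measurable representative will be the desired separable closed subspace.

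First, I would recall the standard fact that if $W$ is $(\cF \otimes \cF')$-measurable, then there is a countable collection $\{A_n \times B_n\}_{n \in \NN}$ of measurable rectangles such that $W$ is already measurable with respect to the $\sigma$-algebra they generate. Indeed, one approximates $W$ pointwise by a sequence of simple functions, each of which is a finite $\RR$-linear combination of indicators of sets in $\cF \otimes \cF'$; each such set lies in the $\sigma$-algebra generated by countably many rectangles, so only countably many rectangles are needed to cover the entire approximating sequence. Taking $\cF'_0$ to be the $\sigma$-algebra generated by the second factors $B_n$ together with a countable exhaustion $\Omega' = \bigcup_k \Omega'_k$ with $\mu'(\Omega'_k) < \infty$ (which we can choose from $\cF'$ by $\sigma$-finiteness), we obtain a countably generated sub-$\sigma$-algebra such that $W$ is $(\cF \otimes \cF'_0)$-measurable and $\mu'|_{\cF'_0}$ is $\sigma$-finite.

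Next, I would define $B \subseteq L^1(\Omega',\cF',\mu')$ as the closed subspace consisting of those equivalence classes possessing an $\cF'_0$-measurable representative; equivalently, $B$ is the image of the natural isometric embedding $L^1(\Omega',\cF'_0,\mu'|_{\cF'_0}) \hookrightarrow L^1(\Omega',\cF',\mu')$, which is an isometry because $\mu'$ and $\mu'|_{\cF'_0}$ agree on $\cF'_0$. Since $\cF'_0$ is countably generated and $\mu'|_{\cF'_0}$ is $\sigma$-finite, $L^1(\Omega',\cF'_0,\mu'|_{\cF'_0})$ is separable by a standard result (one can build a countable dense family from rational linear combinations of indicators of finite Boolean combinations of a countable generating set intersected with the exhausting sets $\Omega'_k$). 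Hence $B$ is a separable closed subspace of $L^1(\Omega',\cF',\mu')$.

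Finally, to check $\psi_W(x) \in B$ for every $x \in \Omega$, I would use the fact that sections of $(\cF \otimes \cF'_0)$-measurable functions in the second variable are $\cF'_0$-measurable: fixing $x$, the map $y \mapsto W(x,y)$ is obtained by composing $W$ with the $\cF'_0$-measurable injection $y \mapsto (x,y)$, so it is $\cF'_0$-measurable; by assumption it is also $\mu'$-integrable, so its equivalence class in $L^1(\Omega',\cF',\mu')$ lies in $B$. The only delicate point is the $\sigma$-finiteness bookkeeping in step one (making sure $\cF'_0$ can simultaneously carry a $\sigma$-finite exhaustion and the measurability of $W$), but this is handled by the explicit augmentation with the sets $\Omega'_k$ described above.
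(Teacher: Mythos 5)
Your proof is correct, and it takes a genuinely different route from the paper. The paper builds the separable subspace $B$ by brute-force approximation: it first treats the case of bounded $W$ on finite-measure spaces via a monotone class argument starting from step functions, then progressively relaxes finiteness of $\mu'(\Omega')$, boundedness of $W$, and finiteness of $\mu(\Omega)$ via exhaustions and truncations, building $B$ as a closure of a union of subspaces constructed along the way. You instead identify the structural reason the image must be separable: every $\cF\otimes\cF'$-measurable function is measurable with respect to $\cF\otimes\cF'_0$ for some countably generated sub-$\sigma$-algebra $\cF'_0\subseteq\cF'$, and once you augment $\cF'_0$ with a countable $\sigma$-finite exhaustion, the natural isometric copy of $L^1(\Omega',\cF'_0,\mu'|_{\cF'_0})$ inside $L^1(\Omega',\cF',\mu')$ is separable and contains every section $W(x,\cdot)$. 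This avoids the multi-stage truncation of $W$ entirely, since unboundedness plays no role in your argument, and it makes the role of $\sigma$-finiteness transparent (it is only needed to make the smaller $L^1$ space separable). The paper's argument is more elementary and self-contained, while yours is more conceptual and arguably cleaner; both yield the same conclusion. One minor phrasing point: when you say the injection $y\mapsto(x,y)$ is \emph{$\cF'_0$-measurable}, what you mean (and use) is that it is $(\cF'_0,\cF\otimes\cF'_0)$-measurable; that is the usual ``sections of product-measurable sets are measurable'' fact, and the rest goes through as you describe.
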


\begin{proof}
First, assume that $W$ is bounded and both $\mu(\Omega)<\infty$ and
$\mu(\Omega')<\infty$. The statement of the lemma then clearly holds for all
step functions, and by a monotone class argument it holds for all bounded
$W$.

Next, relax the condition that $\mu'(\Omega')<\infty$. Let $\Omega_1'
\subseteq \Omega_2' \subseteq \dots \subseteq \Omega_n' \subseteq \dots$ be a
sequence of measurable subsets of $\Omega'$ with finite measure and
$\bigcup_{n=1}^\infty \Omega_n'=\Omega'$. Then for every $n$ we can find a
separable closed subspace $B_n \subseteq
L^1(\Omega_n',{\cF'|_{\Omega_n'},\mu'|_{\Omega_n'})}$ such that for every $x
\in \Omega$, $W(x,\cdot)|_{\Omega_n'} \in B_n$. Let $\widetilde{B}_n$ consist
of those $f \in L^1(\Omega',\cF'{,\mu'})$ that have $f|_{\Omega_n'} \in B_n$
and $f|_{\Omega'-\Omega_n'} \equiv 0$. Clearly $\widetilde{B}_n$ is
isomorphic to $B_n$, and thus separable. Let $B$ be the closure of the space
generated by $\bigcup_n \widetilde{B}_n${;} this is separable. We claim that
for any $x \in \Omega$, the function $W(x,\cdot)$ is contained in $B$. It
suffices to show that for any $\varepsilon$, there is an $n\in\NN$ and a $g
\in \widetilde{B}_n$ such that $\|W(x,\cdot)-g\|_1 <\varepsilon$. Since
$W(x,\cdot)\in L^1(\Omega',\cF'{,\mu'})$, we can take $n$ large enough that
$\|W(x,\cdot)-W(x,\cdot)\chi(\Omega_n')\|_1<\varepsilon$. But then by the
definition of $\widetilde{B}_n$, we have $W(x,\cdot)\chi(\Omega_n') \in
\widetilde{B}_n$, so taking $g=W(x,\cdot)\chi(\Omega_n')$, we are done.

In a similar way, we can approximate an unbounded $W$ by the function
$W1_{|W|\leq n}$ to relax the condition that $W$ is bounded.

Finally, for general $\Omega$ and $\Omega'$, we can write $\Omega$ as the
disjoint union of finite measure sets
$\Omega_1,\Omega_2,\dots,\Omega_n,\dots$. We know that the image of each
$\Omega_n$ is contained in a separable closed subspace $B_n \subseteq
L^1(\Omega',\cF')$. Therefore, taking $B$ to be the closure of the subspace
generated by $\bigcup_n B_n$, the image of the map $\psi_W$ is contained in
$B$.
\end{proof}

\begin{lemma} \label{lemmamapmeasurable}
Suppose $(\Omega,\cF,\mu)$ and $(\Omega',\cF',\mu')$ are $\sigma$-finite
measure spaces, $W\colon \Omega \times \Omega' \rightarrow {\RR}$ is $\cF
\times \cF'$-measurable, and that for all $x\in\Omega$, the function
$W(x,\cdot)$ is integrable. Then the map $\psi_W\colon \Omega \rightarrow
L^1(\Omega',\cF'{,\mu'})$ with $x \mapsto W(x,\cdot)$ is measurable with
respect to the standard Borel $\sigma$-algebra on $L^1(\Omega,\cF,\mu)$. .
\end{lemma}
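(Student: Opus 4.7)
The plan is to combine Lemma~\ref{lemmamapseparable} with Tonelli's theorem. First, apply Lemma~\ref{lemmamapseparable} to obtain a separable closed subspace $B \subseteq L^1(\Omega',\cF',\mu')$ with $\psi_W(\Omega)\subseteq B$. Because $B$ is closed (and hence a Borel subset), the Borel $\sigma$-algebra on $B$ inherited from its norm topology coincides with the trace on $B$ of the Borel $\sigma$-algebra on $L^1(\Omega',\cF',\mu')$. Consequently, it is enough to show that $\psi_W$, regarded as a map from $\Omega$ into $B$, is Borel measurable.

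Second, since $B$ is separable, its Borel $\sigma$-algebra is generated by the closed balls $\{h\in B : \|h-g\|_1 \le r\}$, where $g$ ranges over a countable dense subset of $B$ and $r$ over the nonnegative rationals. Measurability of $\psi_W$ therefore reduces to showing that for each such $g$ (in fact for any $g\in L^1(\Omega',\cF',\mu')$), the real-valued function
\[
\eta_g(x) := \|\psi_W(x) - g\|_{1} = \int_{\Omega'} |W(x,y) - g(y)|\, d\mu'(y)
\]
is $\cF$-measurable.

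Third, fix a measurable representative of $g$, still denoted $g$. Then $(x,y)\mapsto g(y)$ is $\cF\otimes\cF'$-measurable as a composition of the coordinate projection with a measurable function, so $(x,y)\mapsto |W(x,y)-g(y)|$ is nonnegative and $\cF\otimes\cF'$-measurable. Since $\mu'$ is $\sigma$-finite, Tonelli's theorem yields that $\eta_g$ is $\cF$-measurable. Hence $\psi_W^{-1}(\{h\in B : \|h-g\|_1\le r\}) = \{x\in\Omega : \eta_g(x) \le r\}\in\cF$, and measurability of $\psi_W$ follows.

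The only genuine obstacle is the potential non-separability of $L^1(\Omega',\cF',\mu')$, which would otherwise obstruct testing measurability against distances to a countable dense set; this is precisely what Lemma~\ref{lemmamapseparable} is designed to circumvent. Once separability is in hand, Tonelli handles the rest routinely.
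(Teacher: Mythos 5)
Your proof is correct, and it takes a genuinely different and cleaner route than the paper's. The paper establishes measurability of the sets $B(f,W,c)=\{x:\|W(x,\cdot)-f\|_1\le c\}$ by a bootstrapping argument: first it reduces (via Lemma~\ref{lemmamapseparable}) to $f$ ranging over a countable set, then it verifies the claim for step functions, promotes it to all bounded $W$ by a monotone class argument, and finally handles unbounded $W$ by truncating via $W1_{|W|\le n}$ and intersecting. You instead observe that, once Lemma~\ref{lemmamapseparable} places the image of $\psi_W$ in a separable closed subspace $B$ (so that the Borel structure on $B$ is generated by balls around a countable dense set), the only thing to check is measurability of $x\mapsto\int_{\Omega'}|W(x,y)-g(y)|\,d\mu'(y)$, and this is immediate from Tonelli: the integrand is nonnegative and jointly measurable, and both spaces are $\sigma$-finite. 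This bypasses the monotone class argument entirely and also handles unbounded $W$ in one stroke, since Tonelli does not require any integrability or boundedness hypothesis — the hypothesis that $W(x,\cdot)\in L^1$ is only needed so that $\psi_W$ is well-defined, not for the measurability argument. Both approaches rely on Lemma~\ref{lemmamapseparable} in the same way, so there is no gap; yours is simply the more economical path.
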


\begin{proof}
Our goals is to show that $c\in [0,\infty)$ and any $f \in
L^1(\Omega',\cF'{,\mu'})$, the set
\[
B(f,W,c)=\{x \in \Omega:
\|W(x,\cdot)-f\|_1 \le c\}
\]
is measurable.

Assume that $x\in B(f,W,c)$, and let $F_W\subset L^1(\Omega',\cF',\mu')$ be a
countable set such that $W(x,\cdot)$ lies in the closure of $F_W$ for all
$x\in \Omega$ (the existence of such a set follows from Lemma
\ref{lemmamapseparable}). Given any $\varepsilon>0$, we can then find an
$\widehat f\in F_W$ such that $x\in B(\widehat f,W,\eps)$, and $\|f-\widehat
f\|_1\leq c+\eps$. If, on the other hand, $\|f-\widehat f\|_1\leq c+\eps$ and
$x\in B(\widehat f,W,\eps)$ then $x\in B( f,W,c+2\eps)$. Since $ B(
f,W,c)=\bigcap_i B( f,W,c+\eps_i)$ whenever $\eps_i\to 0$, this proves that
it is enough to prove measurability of $B(\widehat f,W,\eps_{i})$ for all
$\widehat f\in F_W$ and an arbitrary sequence $\eps_i\in (0,\infty)$ such
that $\eps_i\to 0$.

Using this observation, it is easy to see that if $W_1$ and $W_2$ obey the
conclusions of the lemma, then so does any linear combination. The lemma is
clearly also true for all step functions. A standard monotone class argument
then implies that the lemma holds for all bounded, measurable $W$.

If $W$ is unbounded, we use that by assumption, $\psi_W(x)\in
L^1(\Omega',\cF',\mu')$ for all for all $x\in\Omega$. Using this fact, one
easily shows that
\[
B(f,W,c)=\bigcap_{n{=}1}^\infty B(f,W(x,\cdot)1_{|W(x,\cdot)|\leq n},c).
\]
Since $B(f,W(x,\cdot)1_{|W(x,\cdot)|\leq n},c)$ is measurable, this proves
the statement for unbounded $W$,
\end{proof}

We will also use the following technical lemma, which is Lemma~G.1 in
\cite{JANSON13} (the proof also works for $\sigma$-finite measures):

\begin{lemma} \label{lemmaevaluationmap}
Let $(\Omega,\cF,\mu)$ be any $\sigma$-finite measure space, and $B
\subseteq L^1(\Omega,\cF{,\mu})$ a closed separable subspace. Then
there exists a measurable evaluation map
\[
\Phi\colon B \times \Omega \rightarrow \mathbb{R}
\]
such that for any $f \in B$, for almost every $x \in \Omega$,
$f(x)=\Phi(f,x)$ (note that $f(x)$ is only defined almost everywhere). In
particular, if $L^1(\Omega,\cF{,\mu})$ is separable, we can take
$B=L^1(\Omega,\cF,\mu)$.
\end{lemma}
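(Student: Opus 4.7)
The plan is to build $\Phi$ as an almost-everywhere limit of a sequence of measurable approximations extracted from a countable dense subset of $B$; separability of $B$ is exactly what makes the resulting selection Borel. First I would fix a countable dense set $\{f_n\}_{n\ge 1}\subseteq B$ and, for each $n$, choose an actual (pointwise defined) measurable function $g_n\colon\Omega\to\RR$ representing the $L^1$-equivalence class $f_n$. The map $(f,x)\mapsto g_n(x)$ is then trivially Borel-measurable on $B\times\Omega$ for each fixed $n$.

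Next, for each integer $k\ge 1$ and each $f\in B$, let $n(f,k)$ be the least integer $n$ such that $\|f-f_n\|_1\le 2^{-2k}$; such an $n$ exists by density. Since $f\mapsto\|f-f_j\|_1$ is continuous on $B$, each set $\{f\in B:\|f-f_j\|_1\le 2^{-2k}\}$ is Borel, so the argmin rule $f\mapsto n(f,k)$ is Borel measurable from $B$ to $\NN$. I then define
\[
\Phi_k(f,x)=g_{n(f,k)}(x)=\sum_{n=1}^\infty \mathbf{1}_{\{n(f,k)=n\}}\,g_n(x),
\]
which is measurable on $B\times\Omega$. For each fixed $f$, Markov's inequality gives
\[
\mu\bigl(\{x:|\Phi_k(f,x)-\tilde f(x)|>2^{-k}\}\bigr)\le 2^{k}\,\|f-f_{n(f,k)}\|_1\le 2^{-k},
\]
where $\tilde f$ is any pointwise representative of $f$. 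Summing over $k$ is finite, so Borel--Cantelli yields $\Phi_k(f,x)\to\tilde f(x)$ for $\mu$-almost every $x$. I would therefore define
\[
\Phi(f,x)=\lim_{k\to\infty}\Phi_k(f,x)
\]
on the measurable set where the limit exists in $\RR$, and $\Phi(f,x)=0$ elsewhere.

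Verification is then routine: $\Phi$ is measurable as the pointwise limit of measurable functions on a measurable set, and by the Borel--Cantelli argument above, $\Phi(f,\cdot)$ agrees with any representative of $f$ almost everywhere, which is exactly the required property. The one point requiring care — rather than a genuine obstacle — is the Borel measurability of the selection $f\mapsto n(f,k)$, and this is precisely where separability of $B$ is used; no analogous Borel selection is available on a non-separable $L^1$, which is why Lemma~\ref{lemmamapseparable} had to be established first. Note that the $\sigma$-finite hypothesis is enough because Markov's inequality only needs $\|f-f_{n(f,k)}\|_1<\infty$, not $\mu(\Omega)<\infty$, so no partition of $\Omega$ into finite-measure pieces is required.
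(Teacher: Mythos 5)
The paper does not reproduce a proof of this lemma, deferring to Lemma~G.1 of Janson~\cite{JANSON13}, so there is no in-paper argument to compare against. Your self-contained proof is correct and follows the standard dense-net/Borel--Cantelli construction, which is also the structure of Janson's proof: the argmin selection $f\mapsto n(f,k)$ is Borel because $f\mapsto\|f-f_j\|_1$ is continuous on $B$; for each fixed $f$ the Markov bound gives $\mu\{|\Phi_k(f,\cdot)-\tilde f|>2^{-k}\}\le 2^{-k}$, which is summable, so the first Borel--Cantelli lemma (valid for an arbitrary, not just finite, measure) yields $\Phi_k(f,\cdot)\to\tilde f$ almost everywhere; and $\Phi_k$ is jointly measurable because the sum $\sum_n \mathbf{1}_{\{n(f,k)=n\}}(f)\,g_n(x)$ has exactly one nonzero term at each $(f,x)$ with each summand a product of a Borel function of $f$ and a measurable function of $x$. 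Your final remark that $\sigma$-finiteness is not really used is also accurate for this argument; it is part of the hypothesis only because the surrounding theory needs it.
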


We need one more lemma.

\begin{lemma} \label{lemmacontainedinFW}
Suppose that $(\Omega,\cF,\mu)$ and $(\Omega',\cF',\mu')$ are $\sigma$-finite
measure spaces, $W\colon \Omega \times \Omega' \rightarrow {\RR}$ is
measurable, and for all $x\in \Omega$ and $x'\in\Omega'$, both $W(x,\cdot)$
and $W(\cdot,x')$ are integrable. Let $\psi_W$ be the map $\Omega \rightarrow
L^1(\Omega',\cF'{,\mu'})$ with $x \mapsto W(x,\cdot)$, and $\psi_W'$ be the
map $\Omega' \rightarrow L^1(\Omega,\cF{,\mu})$ {with} $y \mapsto
W(\cdot,y)$. Further, let $\cG$ be the Borel $\sigma$-algebra on
$L^1(\Omega,\cF,\mu)$, and let
\[\cF'_W={\psi'}_W^{-1}({\cG}).
\]
Then for almost every $x \in \Omega$,
$\psi_W(x)\in L^1(\Omega',\cF'_W{,\mu'})$.
\end{lemma}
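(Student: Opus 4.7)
The plan is to use the evaluation-map machinery of Lemmas~\ref{lemmamapseparable} and~\ref{lemmaevaluationmap} to produce a jointly measurable function that agrees with $W$ almost everywhere and whose sections in $y$ are automatically $\cF'_W$-measurable.

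First I would apply Lemma~\ref{lemmamapseparable}, with the roles of the two underlying spaces swapped, to obtain a separable closed subspace $B\subseteq L^1(\Omega,\cF,\mu)$ containing $\psi'_W(y)=W(\cdot,y)$ for every $y\in\Omega'$. By Lemma~\ref{lemmamapmeasurable} the map $\psi'_W\colon\Omega'\to B$ is $\cF'$-measurable, and tautologically it is $\cF'_W$-measurable as a map into $B$ equipped with its Borel $\sigma$-algebra, since $\cF'_W$ was defined as the $\sigma$-algebra generated by $\psi'_W$. Then Lemma~\ref{lemmaevaluationmap} furnishes a jointly measurable map $\Phi\colon B\times\Omega\to\RR$ with the property that $\Phi(f,\cdot)=f$ holds $\mu$-a.e.\ for each $f\in B$.

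Next I would set
\[
\widetilde W(x,y)=\Phi(\psi'_W(y),x),\qquad (x,y)\in\Omega\times\Omega'.
\]
For each fixed $x$, the map $f\mapsto\Phi(f,x)$ is Borel on $B$ (as a section of the jointly measurable $\Phi$), so composing with the $\cF'_W$-measurable $\psi'_W$ shows that $y\mapsto\widetilde W(x,y)$ is $\cF'_W$-measurable. On the other hand, for each fixed $y$ the defining property of $\Phi$ yields $\widetilde W(\cdot,y)=W(\cdot,y)$ $\mu$-almost everywhere. Both $W$ and $\widetilde W$ are jointly measurable on $\Omega\times\Omega'$, so a Fubini--Tonelli argument on a $\sigma$-finite exhaustion gives that the set $\{(x,y):\widetilde W(x,y)\neq W(x,y)\}$ has $\mu\times\mu'$-measure zero, and applying Fubini in the opposite order then shows that for $\mu$-a.e.\ $x$, $W(x,\cdot)=\widetilde W(x,\cdot)$ $\mu'$-almost everywhere.

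For each such $x$, the function $\widetilde W(x,\cdot)$ is $\cF'_W$-measurable and agrees $\mu'$-a.e.\ with the integrable $W(x,\cdot)$, so $\psi_W(x)=W(x,\cdot)$ belongs to $L^1(\Omega',\cF'_W,\mu')$, which is the desired conclusion. The main (though still routine) obstacle is bookkeeping: one must make sure $\psi'_W$ lands in a \emph{single} separable subspace $B$ so that Lemma~\ref{lemmaevaluationmap} applies uniformly in $y$, and one must be careful that the exceptional $\mu$-null set in the definition of $\Phi$ is $y$-independent, so that the Fubini step genuinely transfers ``a.e.\ in $x$ for each $y$'' into ``a.e.\ in $y$ for a.e.\ $x$''—which is exactly what joint measurability of $\Phi$ provides.
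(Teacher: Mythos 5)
Your proposal is correct and follows essentially the same route as the paper's proof: obtain a separable $B\subseteq L^1(\Omega,\cF,\mu)$ via Lemma~\ref{lemmamapseparable}, use the jointly measurable evaluation map $\Phi$ from Lemma~\ref{lemmaevaluationmap} to define $\widetilde W(x,y)=\Phi(\psi'_W(y),x)$, observe that $\widetilde W$ is $\cF\times\cF'_W$-measurable while agreeing with $W$ almost everywhere, and then transfer by Fubini to conclude that $\psi_W(x)=\widetilde W(x,\cdot)\in L^1(\Omega',\cF'_W,\mu')$ for a.e.\ $x$. You merely spell out the Fubini bookkeeping slightly more explicitly than the paper does.
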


\begin{proof}
By Lemma \ref{lemmamapseparable}, {the} image of $\psi'_W$ is contained in a
separable subspace $B \subseteq L^1(\Omega,\cF{,\mu})$. By Lemma
\ref{lemmaevaluationmap}, there exists a measurable map $\Phi\colon B \times
\Omega \rightarrow \mathbb{R}$ such that for any $f \in B$ and almost any $x
\in \Omega$, $f(x)=\Phi(f,x)$. Define $\widetilde{W}$ on $\Omega \times
\Omega'$ by
\[\widetilde{W}(x,y)=\Phi(\psi_{W}'(y),x).\]
Then for every $y$ and almost every $x$, $\widetilde{W}(x,y)=W(x,y)$, so
$\widetilde{W}=W$ almost everywhere on $\Omega \times \Omega'$. Thus, if we
define $\psi_{\widetilde{W}}$ analogously to $\psi_W$, then for almost every
$x\in\Omega$,
$\|\psi_{\widetilde{W}}(x)-\psi_W(x)\|_{L^1(\Omega',\cF',\mu')}=0$; that is,
for almost all $x$, $\psi_W(x)=\psi_{\widetilde W}(x)$ as elements of
$L^1(\Omega',\cF'{,\mu'})$. However, since $\Phi$ is $B \times
\Omega$-measurable, and $\psi_W'\colon \Omega' \rightarrow B$ is measurable,
$\widetilde{W}$ is $\cF \times \cF'_W$-measurable, so in particular
$\psi_{\widetilde W}(x)$ is $\cF'_W$-measurable for all $x$. Since
$\psi_W(x)=\psi_{\widetilde{W}}(x)\in L^1(\Omega',\cF'{,\mu'})$ for almost
every $x$, it follows that $\psi_W(x) \in L^1(\Omega',\cF'_W{,\mu'})$ for
almost every $x\in\Omega$.
\end{proof}

Let $(W,S,I,\bOmega)$ be a graphex over $\bOmega=(\Omega,\cF,\mu)$ such that
$W(x,\cdot)$ is integrable for all $x$. For each $x \in \Omega$, we have the
section $W_x \in L^1(\Omega,\cF{,\mu})$ defined by $W_x(y)=W(x,y)$, giving us
the map
\[
\psi_W\colon \Omega \rightarrow
L^1(\Omega,\cF{,\mu})
\]
defined by $x\mapsto W_x$. Let
\[
\psi_\cW \colon \Omega \rightarrow L^1(\Omega,\cF{,\mu}) \times \RR
\]
be defined by $ x\mapsto (W_x,S(x))$. By Lemma \ref{lemmamapmeasurable},
$\psi_W$, and thus $\psi_\cW$, is measurable. Let $\mu_W=\mu^{\psi_W}$ and
$\mu_\cW=\mu^{\psi_\cW}$, and let $\Omega_W\subseteq L^1(\Omega,\cF,\mu)$ and
$\Omega_\cW\subseteq L^1(\Omega,\cF,\mu) \times \RR$ be the supports of
$\mu_W$ and $\mu_\cW$, respectively, i.e.,
\[ \Omega_W = \{f \in L^1(\Omega,\cF,\mu): \mu_W(U)>0
\text{ for every open $U \subseteq L^1(\Omega,\cF{,\mu})$ with $f \in U$}\},
\] and
\begin{align*}
\Omega_\cW = \{(f,c) &\in L^1(\Omega,\cF{,\mu}) \times \RR: \\
&\text{$\mu_\cW(U)>0$ for every open $U \subseteq L^1(\Omega,\cF{,\mu}) \times \RR$ with
$(f,c) \in U$}\}.
\end{align*}
Alternatively, we can also define $\psi_\cW$ and $\mu_\cW$ as follows. Let
$(\wOmega, \wcF,\wmu)$ be defined as $\wOmega=\Omega \cup \{\Omega_\infty\}$,
where $\Omega_\infty$ is an atom with measure $1$. Then we can think of
$\psi_\cW(x)$ as a function in $L^1(\wOmega, \wcF,\wmu)$, with
$\psi_\cW(x)(y)=W(x,y)$ if $y\in \Omega$ and $\psi_\cW(x)(y)=S(x)$ if
$y=\Omega_\infty$, and $\|\psi_\cW(x)\|_1=D_\cW(x)$. This gives a bijection
$L^1(\wOmega, \wcF,\wmu) \equiv L^1(\Omega,\cF,\mu) \times \RR$ and
$\psi_\cW$ as a map from $\Omega$ to $L^1(\wOmega, \wcF,\wmu)$. Note that
$\mu_W$ is the projection of $\mu_\cW$, and thus $\Omega_W$ is the closure of
the projection of $\Omega_\cW$. Equipping $\Omega_W$ with the standard Borel
$\sigma$-algebra $\cG_W$, this gives us a measure space
$(\Omega_W,\cG_W,\mu_W)$, and similarly we obtain
$\Omega_\cW=(\Omega_\cW,\cG_\cW,\mu_\cW)$.

Let $\cG$ and $\widetilde \cG$ be the Borel $\sigma$-algebra on
$L^1(\Omega,\cF,\mu)$ and $L^1(\wOmega, \wcF,\wmu)$, respectively. Via the
maps $\psi_W$ and $\psi_\cW$ they induce two different $\sigma$-algebras on
$\Omega$, the $\sigma$-algebras
\[
\cF_W=\psi_W^{-1}(\cG)
\]
and
\[
\cF_\cW=\psi_\cW^{-1}(\widetilde\cG).
\]
We also define $\wcF_W=\cF_W \times \mathcal{B}$ and $\wcF_\cW=\cF_\cW \times
\mathcal{B}$. Note that $\cF_W \subseteq \cF_\cW$, with the example of a zero
graphon but a nonconstant $S$ function showing that strict inequality is
possible.

It is easy to see that if $\cW'$ is equal to $\cW$ almost everywhere, then
$\mu_\cW=\mu_{\cW'}$ and hence $\Omega_\cW=\Omega_{\cW'}$. Indeed, if
$\cW=\cW'$ a.e., then for almost all $x$, $\psi_\cW(x)=\psi_{\cW'}(x)$ when
viewed as vectors in $L^1$. This implies that there exists a set $N
\subseteq\Omega$ of measure zero such that for all $A\in \widetilde\cG$, the
symmetric difference of $\psi_\cW^{-1}(A)$ and $\psi^{-1}_{\cW'}(A)$ lies in
$N$, which shows that $\mu_\cW=\mu_{\cW'}$. Furthermore, under the same
change, $\cF_W,\cF_\cW$ and $\cF_{W'},\cF_{\cW'}$ only change on a set of
measure zero, implying that the Banach spaces $L^1(\Omega,\cF_W,\mu)$ and
$L^1(\Omega,\cF_\cW,\mu)$ remain unchanged.

Note that in general, $\mu_\cW$ is not $\sigma$-finite. Indeed, choosing $W$
to be the graphon $0$ over any space of infinite measure, and $I,S$ to be
$0$, we have that $\psi_\cW^{-1}(A)=\Omega$ for every $A$ containing the
origin $(0)\in L^1(\wOmega,\wcF,\wmu)$, so $\mu_\cW(A)=\mu(\Omega)=\infty$ if
$0\in A$, and $\mu_\cW(A)=0$ otherwise. So in particular $\Omega_\cW=\{(0)\}$
and $\mu_\cW(\Omega_\cW)=\infty$. This also means $\mu_W$ is not
$\sigma$-finite.

It turns out, however, that this problem can be avoided if we require that
the set where $D_\cW=0$ has measure zero; see Lemma~\ref{lem:sigma-finite}
below. Before stating the lemma, we prove the following:

\begin{proposition}
\label{prop:measurepres} The space $\Omega_\cW$ as defined above is a
complete, separable metric space, with the metric induced by
$L^1(\wOmega,\wcF,\wmu)$, and $\mu_\cW$ has full support in $\Omega_\cW$.
Furthermore, $\Omega_\cW \subseteq L^1(\wOmega,\wcF_W,\wmu) \subseteq
L^1(\wOmega,\wcF_\cW,\wmu)$. Finally, after modifying $W$ and $S$ on a set of
measure zero $\psi_\cW$ becomes an everywhere defined, measure{-}preserving
map from {$(\Omega,\cF,\mu)$} to $(\Omega_\cW,{\cG}_\cW,\mu_\cW)$, and we
furthermore have $\psi_\cW^{-1}(\cG_\cW)=\cF_\cW$.
\end{proposition}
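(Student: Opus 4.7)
The plan is to unpack the four assertions and handle them in sequence, exploiting the separability estimates already available. First I would dispose of the metric/separability/full-support claim. By Lemma~\ref{lemmamapseparable} applied to $W$ (viewed as a function on $\Omega\times\Omega$), there is a separable closed subspace $B\subseteq L^1(\Omega,\cF,\mu)$ such that $\psi_W(\Omega)\subseteq B$; consequently $\psi_\cW(\Omega)\subseteq B\times\RR$. Since $B\times\RR$ is a closed subspace of the Banach space $L^1(\wOmega,\wcF,\wmu)$ (using the identification noted in the text), the pushforward $\mu_\cW$ vanishes on its open complement, so $\Omega_\cW\subseteq B\times\RR$ (any point outside has a neighborhood disjoint from the image, hence of measure zero). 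Supports are closed by definition, so $\Omega_\cW$ is a closed subset of the complete separable metric space $B\times\RR$, hence itself complete and separable. That $\mu_\cW$ has full support on $\Omega_\cW$ is tautological.

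For the inclusion $\Omega_\cW\subseteq L^1(\wOmega,\wcF_W,\wmu)\subseteq L^1(\wOmega,\wcF_\cW,\wmu)$, the second containment is immediate from $\wcF_W\subseteq\wcF_\cW$, and by Lemma~\ref{lemmapullback}(1) both spaces are closed subspaces of $L^1(\wOmega,\wcF,\wmu)$. For the first, Lemma~\ref{lemmacontainedinFW} (applied to $W$) shows that $\psi_W(x)\in L^1(\Omega,\cF_W,\mu)$ for a.e.\ $x$, and adjoining the atom $\Omega_\infty$ with its real value $S(x)$ gives $\psi_\cW(x)\in L^1(\wOmega,\wcF_W,\wmu)$ for a.e.\ $x$. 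Thus $\mu_\cW$ concentrates on the closed subspace $L^1(\wOmega,\wcF_W,\wmu)$, forcing its support $\Omega_\cW$ to lie in it.

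For the final assertion, I would use separability of $B\times\RR$ (hence second-countability) to show that $\mu_\cW$ is genuinely concentrated on $\Omega_\cW$: the complement of $\Omega_\cW$ in $B\times\RR$ is open, hence a countable union of basic open sets of $\mu_\cW$-measure zero, so $\mu_\cW((L^1\times\RR)\setminus\Omega_\cW)=0$. Consequently $N:=\psi_\cW^{-1}((L^1\times\RR)\setminus\Omega_\cW)$ is $\mu$-null. Pick any $p_0=(f_0,c_0)\in\Omega_\cW$, choose a $\wcF_W$-measurable representative of $f_0$, and redefine $W$ on the symmetric null set $(N\times\Omega)\cup(\Omega\times N)$ and $S$ on $N$ so that $\psi_\cW(x)=p_0$ for every $x\in N$; for $x\notin N$ this modification changes $W(x,\cdot)$ only on $N$, so the equivalence class in $L^1$, and hence $\psi_\cW(x)$, is unchanged. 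Now $\psi_\cW(\Omega)\subseteq\Omega_\cW$ pointwise, which immediately gives measure-preservation: for $A'\in\wcG$ one has $\psi_\cW^{-1}(A'\cap\Omega_\cW)=\psi_\cW^{-1}(A')$ and $\mu_\cW(A'\cap\Omega_\cW)=\mu_\cW(A')$ (since $\mu_\cW$ sits on $\Omega_\cW$), and the same identity reads $\psi_\cW^{-1}(\cG_\cW)=\psi_\cW^{-1}(\wcG)=\cF_\cW$.

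The main obstacle I anticipate is the pointwise cleanup in the last paragraph: one must respect the symmetry of $W$ while modifying on the two-variable symmetric null set, and one must know that $\mu_\cW$ really is carried by its support even though $\mu_\cW$ need not be $\sigma$-finite. Both points are handled by first confining the entire picture to the separable subspace $B\times\RR$ produced by Lemma~\ref{lemmamapseparable}, so that second-countability is available; everything else is bookkeeping with pushforwards and with the closed-subspace structure provided by Lemma~\ref{lemmapullback}(1).
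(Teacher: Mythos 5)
Your proof follows the paper's approach closely: Lemma~\ref{lemmamapseparable} to confine the picture to a separable closed subspace $B\times\RR$, Lemma~\ref{lemmacontainedinFW} to get $\Omega_\cW\subseteq L^1(\wOmega,\wcF_W,\wmu)$, second countability to show that $\mu_\cW$ concentrates on $\Omega_\cW$, and a pointwise modification of $W$ and $S$ on the null preimage $N$ to make $\psi_\cW$ everywhere defined with image in $\Omega_\cW$.

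One point you should not gloss over: the assertion ``that $\mu_\cW$ has full support on $\Omega_\cW$ is tautological'' is not quite right, and in fact this is precisely where the second-countability argument is needed. The definition of $\Omega_\cW$ says that every \emph{ambient} open neighborhood of a point of $\Omega_\cW$ has positive $\mu_\cW$-measure; full support on $\Omega_\cW$ requires that every nonempty \emph{relatively} open $U=V\cap\Omega_\cW$ has $\mu_\cW(U)>0$. Passing from $\mu_\cW(V)>0$ to $\mu_\cW(V\cap\Omega_\cW)>0$ requires knowing $\mu_\cW\bigl((B\times\RR)\setminus\Omega_\cW\bigr)=0$, which is exactly the ``countable union of basic open sets of measure zero'' observation you invoke two paragraphs later. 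So you should move that second-countability step forward and use it both for the full-support claim and for establishing that $N$ is $\mu$-null; once that reordering is made, the argument is complete and matches the paper's.
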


\begin{proof}
By Lemma~\ref{lemmamapseparable}, the image of $\psi_W$ is contained in a
closed separable subspace $B$ of $L^1(\Omega,\cF{,\mu})$. This means that the
image of $\psi_\cW$ is contained in $\wB=B \times \RR$, which is a closed
separable subspace of $L^1(\wOmega,\wcF,\wmu)$. We will show that in fact
\[
\Omega_W = \{f \in B: \mu_W(U)>0 \text{ for {every} open $U \subseteq B$ with $f \in U$}\}.
\]
and
\[
\Omega_\cW = \{(f,c) \in \wB: \mu_\cW(U)>0 \text{ for {every} open
$U \subseteq \widetilde B$
with $(f,c) \in U$}\}.
\]
Indeed, since $\psi_W^{-1}(L^1(\Omega,\cF{,\mu}) \setminus B)=\emptyset$, we
have $\mu_W(L^1(\Omega,\cF{,\mu}) \setminus B)=0$, which in turn implies that
support of $\mu_W$ is contained in $B$. Let, for a moment, the above defined
set be $\Omega_W'$. First, if $f \in \Omega_W'$, then for any open set $U
\subseteq L^1(\Omega,\cF{,\mu})$ with $f \in U$, we have $\mu_W(U) \ge
\mu_W(U \cap B)>0$. Conversely, if $f \in \Omega_W$, we know we must have $f
\in B$, and for any open $U \subseteq B$ with $f\in U$, we can find an open
$V \subseteq L^1(\Omega,\cF{,\mu})$ with $U= V \cap B$, so in particular
$f\in V$. Then $\mu_W(V) >0$, and since $\mu_W(L^1(\Omega,\cF{,\mu})
\setminus B)=0$, we have $\mu_W(U)=\mu_W(V)>0$, showing that $f\in
\Omega_W'$. A similar, argument shows the second claim, noting that
$\Omega_\cW \subseteq \widetilde B=B \times \RR$.

Now take the union $V$ of all open sets $U \subseteq \wB$ with
$\mu_\cW(U)=0$. Then $\Omega_\cW=\wB \setminus V$. Since $\wB$ is separable
and thus second countable, we can find a countable collection
$U_1,U_2,\dots,U_n,\dots$ with $\mu_\cW(U_n)=0$ and $\bigcup_n U_n=V$. This
means that $\mu_\cW(V)=0$, so $\mu(\psi_\cW^{-1}(V))=0$, and thus almost
every point in $\Omega$ is mapped to $\Omega_\cW$. Since $V$ is open,
$\Omega_\cW$ is closed in $\wB$, so it is a closed subset of a separable
Banach space{;} thus it is a complete, separable metric space.

To see that $\mu_\cW$ has full support, consider an open subset
$U\subseteq\Omega_\cW$ with $\mu_\cW(U)=0$. Since $\Omega_\cW$ is closed in
$\wB$ and $\mu_\cW(\wB \setminus \Omega_\cW)=0$, we can find an open subset
$\tilde U\subseteq \wB$ such that $U=\tilde U\cap \Omega_\cW$ and
$\mu_\cW(\tilde U)=\mu_\cW(U)=0$, so in particular $\tilde U\subseteq
V=\widetilde B\setminus \Omega_\cW$. But this implies $U=\tilde U\cap
\Omega_{\cW}=\emptyset$, as required.

Next, we use Lemma~\ref{lemmacontainedinFW} to infer that $\psi_W(x)\in
L^1(\Omega,\cF_W,\mu)$ for almost all $x\in\Omega$, which in turn implies
that $\psi_\cW(x)\in L^1(\widetilde\Omega,\widetilde\cF_W,\widetilde\mu)$ for
almost all $x\in\Omega$. As a consequence, the open set
$U=L^1(\wOmega,\wcF,\wmu)\setminus L^1(\wOmega,\wcF_W,\wmu)$ has measure
zero:
\[
\mu_\cW(U)=\mu(\psi_\cW^{-1}(U)) =
\mu(\{x\in\Omega : \psi_\cW(x)\notin L^1(\wOmega,\wcF_W,\wmu)\}) =0.
\]
Since $\mu_\cW$ has full support on $\Omega_\cW$, the open set
$U\cap\Omega_\cW\subseteq\Omega_\cW$ is empty, showing that
$\Omega_\cW\subseteq L^1(\wOmega,\wcF_W,\wmu)$.

Let $N=\psi_\cW^{-1}(V)$, where as above $V=\widetilde B\setminus\Omega_\cW$.
We have seen that $\mu(N)=0$. Furthermore, for $A\in {\cG}_\cW$, $A \subseteq
\Omega_\cW$ and hence $\psi_\cW^{-1}(A) \subseteq \Omega\setminus N$.
Finally, $\mu(\psi_{{\cW}}^{-1}(A))=\mu_{{\cW}}(A)$ by the definition of
$\mu_\cW$. Fix some $f \in \Omega_\cW$. On $N \times (\Omega \setminus N)$,
change $W(x,y)$ to $f(y)$, and change it on $(\Omega \setminus N) \times N$
to make it symmetric. Finally, change it to $0$ on $N \times N$, and on $N$,
change $S$ to $f(\Omega_\infty)$. Clearly it is still the case that $W$ and
$S$ are measurable and $W(x,\cdot)$ integrable for every $x$. We have changed
$W$ and $S$ on a set of measure zero, so $\Omega_\cW$ and $\mu_\cW$ did not
change, and we now have that $\psi_\cW$ is an everywhere defined,
measure-preserving map from $(\Omega,\cF,\mu)$ to
$(\Omega_\cW,{\cG}_\cW,\mu_\cW)$.

To complete the proof, we need to show that $\psi_\cW^{-1}(\cG_\cW)=\cF_\cW$.
To this end, we note that for each open $A \subseteq
L^1(\wOmega,\wcF_W,\wmu)$, $\psi_\cW^{-1}(A)=\psi_\cW^{-1}(A\cap\Omega_\cW)$,
which shows that $\psi_\cW^{-1}(\widetilde\cG)=\psi_\cW^{-1}(\cG_\cW)$. This
proves the last claim.
\end{proof}

\begin{lemma}\label{lem:sigma-finite}
Let $\Omega_\cW$ and $\mu_\cW$ be as defined above, and let ${\cG}_\cW$ be
the Borel $\sigma$-algebra over $\Omega_\cW$. If $\cW$ is locally finite,
$W(x,\cdot)$ is integrable for all $x\in \Omega$, and $\mu(\Omega\setminus
\dsupp \cW)<\infty$, then $(\Omega_\cW,{\cG}_\cW,\mu_\cW)$ is
$\sigma$-finite.
\end{lemma}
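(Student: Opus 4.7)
The plan is to exhibit an explicit countable cover of $\Omega_\cW$ by sets of finite $\mu_\cW$-measure. The key observation is that under the measure-preserving map $\psi_\cW\colon \Omega\to\Omega_\cW$ constructed in Proposition~\ref{prop:measurepres}, the $L^1(\wOmega,\wcF,\wmu)$-norm of the image is precisely the marginal:
\[
\|\psi_\cW(x)\|_{L^1(\wOmega,\wcF,\wmu)} = \|W_x\|_{L^1(\Omega,\cF,\mu)} + |S(x)| = D_\cW(x),
\]
using $\wmu(\{\Omega_\infty\})=1$ and the fact that $W,S\ge 0$. This identity is what lets us convert statements about level sets of $D_\cW$ on $\Omega$ (controlled by local finiteness) into statements about open balls around the origin in $L^1$.

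First I would introduce, for each $n\ge 1$, the open subset
\[
U_n = \set{\xi\in L^1(\wOmega,\wcF,\wmu) : \|\xi\|_1 > 1/n},
\]
so that $U_n\cap\Omega_\cW\in\cG_\cW$. By the identity above, $\psi_\cW^{-1}(U_n)=\set{x\in\Omega : D_\cW(x)>1/n}$, and since $\psi_\cW$ is measure-preserving,
\[
\mu_\cW(U_n\cap\Omega_\cW) = \mu(\set{D_\cW>1/n}).
\]
By Proposition~\ref{prop:local-finite}, the local finiteness of $\cW$ implies that $\mu(\set{D_\cW>1/n})<\infty$ for every $n$, so each $U_n\cap\Omega_\cW$ has finite $\mu_\cW$-measure.

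Next I would handle the complement $\Omega_\cW\setminus\bigcup_n U_n$, which consists at most of the single point $\xi_0:=(0,0)\in L^1(\wOmega,\wcF,\wmu)$ corresponding to the zero function. Since $\Omega_\cW$ is a metric space, the singleton $\{\xi_0\}$ is closed, hence Borel. Its preimage is
\[
\psi_\cW^{-1}(\{\xi_0\}) = \set{x\in\Omega : W_x=0\text{ a.e.\ and }S(x)=0} = \set{x\in\Omega : D_\cW(x)=0} = \Omega\setminus\dsupp\cW
\]
(the first equality uses non-negativity of $W$ and $S$). By hypothesis $\mu(\Omega\setminus\dsupp\cW)<\infty$, so $\mu_\cW(\{\xi_0\}\cap\Omega_\cW)<\infty$.

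Combining the two steps, $\Omega_\cW\subseteq \{\xi_0\}\cup\bigcup_{n\ge 1}U_n$, which is a countable union of $\cG_\cW$-measurable sets each of finite $\mu_\cW$-measure. This proves $\sigma$-finiteness. There is no real obstacle here: the content is entirely in recognizing that the norm in the codomain of $\psi_\cW$ computes $D_\cW$, so that the hypotheses (local finiteness plus finiteness of $\mu$ off the degree support) translate directly into the needed finite-measure cover. In particular, the hypothesis $\mu(\Omega\setminus\dsupp\cW)<\infty$ is exactly what is needed to tame the potential atom at the origin, which was the source of the $\sigma$-finiteness failure in the example $W=S=I=0$ over a space of infinite measure.
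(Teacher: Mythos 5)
Your proof is correct and is essentially the same as the paper's. The paper likewise modifies $W$ on a null set via Proposition~\ref{prop:measurepres} to make $\psi_\cW$ everywhere-defined and measure-preserving, covers $\Omega_\cW$ by the singleton $\{(0,0)\}$ (whose $\mu_\cW$-measure is $\mu(\Omega\setminus\dsupp\cW)<\infty$) together with the level sets $\{(f,c)\in\Omega_\cW: \|f\|_1+c\ge 1/n\}$, and appeals to Proposition~\ref{prop:local-finite} to bound the measure of each level set; your use of strict inequalities (open sets $U_n$) versus the paper's non-strict ones is immaterial.
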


\begin{remark}
Note that the condition $\mu(\Omega\setminus \dsupp \cW)<\infty$ is
necessary, since otherwise the point $\{(0,0)\}$ becomes an atom with
infinite measure.
\end{remark}

\begin{proof}
By Proposition~\ref{prop:measurepres}, we can change $W$ on a set of measure
zero such that it still satisfies the conditions of the lemma, and $\psi_\cW$
becomes a measure-preserving map from $(\Omega,\cF,\mu)$ to
$(\Omega_\cW,{\cG}_\cW,\mu_\cW)$. Let $\Omega_0\subset L^1(\Omega,\cF,\mu)
\times \RR$ be the subspace consisting of just the origin, i.e.,
$\Omega_0=\{(0,0)\}$. Then
\[\mu_\cW(\Omega_0)=\mu(\psi_\cW^{-1}(\Omega_0))
=\mu(\{x\in\Omega : \|\psi_\cW(x)\|_1=0 \})
=\mu(\Omega\setminus\dsupp \cW)
<\infty.
\]
Next define $\Omega_n=\{(f,c)\in\Omega_\cW : \|f\|_1+c\geq 1/n\}$ for $n \ge
1$. Then
\[
\mu_\cW(\Omega_n)
=\mu(\{x\in\Omega : \|\psi_\cW(x)\|_1+S(x) \geq 1/n \})= \mu(\{x \in \Omega: D_\cW(x) \ge 1/n\})
<\infty.
\]
Here the last inequality follows from Proposition \ref{prop:local-finite}.
Since $\Omega_\cW=\Omega_0\cup\Omega_1\cup\dots$, this proves that $\mu_\cW$
is $\sigma$-finite.
\end{proof}

Next, we would like to show that we can define a graphex $\widehat{\cW}$ on
$\Omega_\cW $ such that its pullback is equal almost everywhere to $\cW$.
Using Proposition~\ref{prop:measurepres}, we can without loss of generality
assume that $\psi_\cW$ is a measure-preserving map from $(\Omega,\cF,\mu)$ to
$(\Omega_\cW,{\cG}_\cW,\mu_\cW)$. By Lemma \ref{lemmapullback}, this implies
that the map $\psi_\cW^*\colon L^1(\Omega_\cW,{\cG}_\cW,\mu_\cW) \rightarrow
L^1(\Omega,{\cF}_\cW,\mu)$ with $f\mapsto f^{\psi_\cW}$ and
${\cF}_\cW=\psi_\cW^{-1}(\cG_\cW)$ is an isometric isomorphism. Note that
this in particular implies that $\psi_\cW^*$ and $(\psi_\cW^*)^{-1}$ are
continuous, and hence measurable.

Now, since $\Omega_\cW$ is a separable metric space,
$L^1(\Omega_\cW,{\cG}_\cW,\mu_\cW)$ is separable, so there exists an
evaluation map
\[
\Phi\colon L^1(\Omega_\cW,{\cG}_\cW,\mu_\cW) \times \Omega_\cW \rightarrow \mathbb{R}
\]
such that for every $\alpha \in L^1(\Omega_\cW,{\cG}_\cW,\mu_\cW)$ and almost
every $g \in \Omega_\cW$, $\alpha(g)=\Phi(\alpha,g)$. Note that by
definition, we also have that for every fixed $\alpha$ and almost every $y
\in \Omega$,
$\psi_\cW^*(\alpha)(y)=\alpha(\psi_\cW(y))=\Phi(\alpha,\psi_\cW(y))$.

By Proposition~\ref{prop:measurepres}, $\Omega_\cW \subseteq L^1(\wOmega,
\wcF_\cW, \wmu)=L^1(\Omega,\cF_\cW,\mu) \times \RR$, which means that
$(\psi_\cW^*)^{-1}(f|_{\Omega})$ is well defined for all $f\in \Omega_\cW$.
We therefore may define
\[
\widehat{W}_{0}(f,g)=\Phi\left((\psi_\cW^*)^{-1}(f|_{\Omega}),g\right)
\]
and
\[
\widehat W(f,g)=\frac 12\left(\widehat{W}_{0}(f,g)+\widehat{W}_{0}(g,f)\right).
\]
Since $(\psi_W^*)^{-1}$ is measurable, $\widehat W_0$ and hence $\widehat W$
is measurable.

Suppose $\psi_\cW(x)\in \Omega_\cW$. Then, noting that for all $x$,
$\psi_\cW(x)|_{\Omega}=\psi_W(x)$, we have for almost all $y$,
\begin{align*}
\widehat W_0(\psi_\cW(x),\psi_\cW(y))&=\Phi(\left(\psi_\cW^*)^{-1}(\psi_\cW(x)|_{\Omega}),\psi_\cW(y)\right)\\
&=
\psi_\cW^*\Bigl((\psi_\cW^*)^{-1}(\psi_\cW(x)|_{\Omega})\Bigr)(y)
=\psi_W(x)(y)=W(x,y),
\end{align*}
where the third and the fourth terms are only defined for almost all $y$.
Thus $\widehat{W}_0^{\psi_\cW}$ and hence $\widehat{W}^{\psi_\cW}$ is equal
to $W$ almost everywhere on $\Omega \times \Omega$. We also define
$\widehat{S}(f)=f(\Omega_\infty)$, which gives us that for $x \in \Omega$,
\[\widehat{S}(\psi_\cW(x))=\psi_\cW(x)(\Omega_\infty)=S(x)
,\] implying that $S^{\psi_\cW}=S$. Finally, we take $\widehat{I}=I$, giving
us a graphex $\widehat{\cW}=(\widehat{W},\widehat{S},\widehat{I},\Omega_\cW)$
such that $\widehat{\cW}^{\psi_\cW}=\cW$ almost everywhere.

Note that this implies in particular that $\widehat \cW$ inherits the local
finiteness property from $\cW$, so $\widehat \cW$ is a bona fide graphex over
the $\sigma$-finite Borel space $(\Omega_W,\cG_W,\mu_W)$.

Note that the requirement that $\widehat{W}^{\psi_\cW}=W$ and
$\widehat{S}^{\psi_\cW}=S$ almost everywhere uniquely determines
$\widehat{\cW}$ up to changes on a set of measure zero. Indeed, if
$\widehat{W}'$ is another graphon with
$\widehat{W}'^{\psi_\cW}=\widehat{W}^{\psi_\cW}$ $(\mu\times\mu)$-almost
everywhere, then by the definition of pullbacks and the definition of
$\mu_\cW$, the equality $\widehat{W}'=\widehat{W}$ must hold
$(\mu_\cW\times\mu_\cW)$-almost everywhere. Similarly, if $\widehat{S}'$ is
another function with $\widehat{S}'^{\psi_\cW}=\widehat{S}^{\psi_\cW}$
$\mu$-almost everywhere, then $\widehat{S}'=\widehat{S}$ $\mu_\cW$-almost
everywhere. Also by definition we must have $\widehat{I}'=\widehat{I}$.

On the other hand, suppose we have two graphexes $\cW_1$ and $\cW_2$ on the
same space $\Omega$ with $W_1=W_2$ almost everywhere, $S_1=S_2$ almost
everywhere, and $I_1=I_2$. We have seen that $\Omega_{\cW_1}=\Omega_{\cW_2}$
and $\mu_{\cW_1}=\mu_{\cW_2}$. Since their pullbacks are equal almost
everywhere, we must have $\widehat{W}_1=\widehat{W}_2$ almost everywhere for
any choices of $\widehat{W}_1$ and $\widehat{W}_2$, and
$\widehat{S}_1=\widehat{S}_2$ and $\widehat{I}_1=\widehat{I}_2$ by
definition.

Finally, if the graphex $\cW$ only has the property that $W(x,\cdot)$ is
integrable for almost every $x$, we can still define $\Omega_\cW$ and
$\mu_\cW$ in the same way, and find a $\widehat{W}$ such that the pullback is
defined almost everywhere on $\Omega \times \Omega$ and equal to $W$ almost
everywhere. Again, it is easy to see that we obtain the same $\Omega_\cW$ and
$\mu_\cW$ if we first modify $W$ on a set of measure zero to make
$W(x,\cdot)$ integrable for every $x$, and any choice of $\widehat{W}$ will
be equal almost everywhere. Therefore, this construction gives a graphex
$\widehat{\cW}$ on $(\Omega_\cW,\cG_\cW,\mu_\cW)$ for any graphex $\cW$.

Next, we show the following:

\begin{lemma}
For $i=1,2$, let $\cW_i=(W_i,S_i,I_i,\bOmega_i)$ be graphexes with
$\bOmega_i=(\Omega_1,\cF_i,\mu_i)$ and $\mu_i(\Omega_i\setminus\dsupp
W_i)=0$. Suppose that there exists a measure-preserving map $\phi\colon
\Omega_1 \rightarrow \Omega_2$ such that $\cW_1=\cW_2^\phi$ almost
everywhere. Extend $\phi$ to $\widetilde{\phi}\colon\wOmega_1 \rightarrow
\wOmega_2$ by $\widetilde{\phi}(\Omega_{1,\infty})=\Omega_{2,\infty}$. Then
the map $\widetilde \phi^*\colon L^1(\widetilde{\bOmega}_2) \rightarrow
L^1(\widetilde{\bOmega}_1)$ defined by $f\mapsto f\circ{\widetilde\phi}$
restricts to a map $\Omega_{\cW_2} \rightarrow \Omega_{\cW_1}$, which is an
isometric measure-preserving bijection between
$(\Omega_{\cW_2},\cG_{\cW_2},\mu_{\cW_2})$ and
$(\Omega_{\cW_1},\cG_{\cW_1},\mu_{\cW_1})$, and
$\widehat{\cW}_2=\widehat{\cW}_1^{\phi^*}$ almost everywhere, for any choices
of $\widehat{\cW}_1$ and $\widehat{\cW}_2$.
\end{lemma}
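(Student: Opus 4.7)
Proof plan: The key preliminary identity to establish is that for almost every $x \in \Omega_1$,
\[
\psi_{\cW_1}(x) = \widetilde\phi^*\bigl(\psi_{\cW_2}(\phi(x))\bigr),
\]
where we view $\psi_{\cW_i}$ as maps into $L^1(\widetilde{\bOmega}_i)$ as in the discussion preceding the lemma. This follows directly from $\cW_1 = \cW_2^\phi$ a.e.\ together with the computations $\psi_{\cW_2}(\phi(x))(\widetilde\phi(y)) = W_2(\phi(x),\phi(y)) = W_1(x,y)$ for a.e.\ $(x,y) \in \Omega_1 \times \Omega_1$, and $\psi_{\cW_2}(\phi(x))(\widetilde\phi(\Omega_{1,\infty})) = S_2(\phi(x)) = S_1(x)$ for a.e.\ $x$. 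By Lemma~\ref{lemmapullback}, $\widetilde\phi^*$ is an isometric linear embedding $L^1(\widetilde{\bOmega}_2) \hookrightarrow L^1(\widetilde{\bOmega}_1)$, so in particular continuous, injective, and with closed image. Pushing $\mu_1$ forward along both sides of the key identity then yields $\mu_{\cW_1} = (\widetilde\phi^*)_* \mu_{\cW_2}$ on $L^1(\widetilde{\bOmega}_1)$.

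Next I would show that $\widetilde\phi^*$ restricts to a bijection $\Omega_{\cW_2} \to \Omega_{\cW_1}$. For any $f \in \Omega_{\cW_2}$ and any open $U \ni \widetilde\phi^*(f)$, continuity makes $(\widetilde\phi^*)^{-1}(U)$ an open neighborhood of $f$, so $\mu_{\cW_1}(U) = \mu_{\cW_2}((\widetilde\phi^*)^{-1}(U)) > 0$, placing $\widetilde\phi^*(f)$ in $\Omega_{\cW_1}$. Conversely, the image $\widetilde\phi^*(L^1(\widetilde{\bOmega}_2))$ is closed and contains the image of $\psi_{\cW_1}$ off a $\mu_1$-null set, so it contains the support $\Omega_{\cW_1}$. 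Given $g \in \Omega_{\cW_1}$, its unique preimage $f \in L^1(\widetilde{\bOmega}_2)$ lies in $\Omega_{\cW_2}$: for any open $V \ni f$, choose (using that an isometric embedding is a homeomorphism onto its image) an open $U \subseteq L^1(\widetilde{\bOmega}_1)$ with $U \cap \widetilde\phi^*(L^1(\widetilde{\bOmega}_2)) = \widetilde\phi^*(V)$; injectivity gives $\mu_{\cW_2}(V) = \mu_{\cW_1}(\widetilde\phi^*(V)) = \mu_{\cW_1}(U) > 0$. Since $\widetilde\phi^*$ is an isometry of complete separable metric spaces pushing $\mu_{\cW_2}$ to $\mu_{\cW_1}$, it is automatically a measure-preserving Borel isomorphism.

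For the final claim $\widehat\cW_2 = \widehat\cW_1^{\phi^*}$ a.e., the plan is to invoke the uniqueness part of the canonical-graphex construction established earlier: it suffices to verify $(\widehat\cW_1^{\phi^*})^{\psi_{\cW_2}} = \cW_2$ a.e. From the key identity and $\widehat\cW_1^{\psi_{\cW_1}} = \cW_1 = \cW_2^\phi$ a.e., one computes
\[
\bigl((\widehat\cW_1^{\phi^*})^{\psi_{\cW_2}}\bigr)^{\phi}
= \widehat\cW_1^{\phi^* \circ \psi_{\cW_2} \circ \phi}
= \widehat\cW_1^{\psi_{\cW_1}}
= \cW_2^\phi
\quad \mbox{a.e. on } \Omega_1.
\]
Because $\phi$ is measure-preserving, $h \circ \phi = h' \circ \phi$ $\mu_1$-a.e.\ implies $h = h'$ $\mu_2$-a.e.\ (and analogously for $\phi \times \phi$), which upgrades this to $(\widehat\cW_1^{\phi^*})^{\psi_{\cW_2}} = \cW_2$ a.e.\ on $\Omega_2$. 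The star parts are handled identically via the extra coordinate $\Omega_\infty$, and the dust parts agree because pullbacks preserve $I$ and $I_1 = I_2$. Uniqueness of the canonical lift then gives $\widehat\cW_1^{\phi^*} = \widehat\cW_2$ a.e.

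The hard part will be the surjectivity of $\widetilde\phi^* : \Omega_{\cW_2} \to \Omega_{\cW_1}$: it requires combining the support characterization of $\Omega_{\cW_1}$ with the closedness of an isometric image, and carefully extending open neighborhoods from the possibly proper subspace $\widetilde\phi^*(L^1(\widetilde{\bOmega}_2))$ to ambient opens in $L^1(\widetilde{\bOmega}_1)$. A secondary subtlety is passing from "the pullback of an identity holds a.e." to "the identity itself holds a.e." across the non-injective map $\phi$, which is resolved by the standard fact that a measure-preserving map is essentially surjective onto sets of full measure.
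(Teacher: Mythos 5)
Your proposal is correct and takes essentially the same approach as the paper's proof: you derive the same key intertwining identity $\psi_{\cW_1} = \widetilde\phi^* \circ \psi_{\cW_2} \circ \phi$ (a.e.), push forward to get $\mu_{\cW_1} = (\widetilde\phi^*)_*\mu_{\cW_2}$, use closedness of isometric images together with full support of $\mu_{\cW_1}$ on $\Omega_{\cW_1}$ to establish the bijection between supports, and finish via the same pullback computation plus the uniqueness clause of the canonical construction. The only cosmetic difference is that you argue surjectivity by first showing $\widetilde\phi^*(L^1(\widetilde\bOmega_2)) \supseteq \Omega_{\cW_1}$ and then checking each preimage lands in $\Omega_{\cW_2}$, whereas the paper directly shows $\phi^*(\Omega_{\cW_2})$ is a closed, co-null, hence co-empty subset of $\Omega_{\cW_1}$ — the two routes use the same ingredients.
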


\begin{proof}
By the remarks before the lemma, we may assume that $\cW_1=\cW_2^\phi$
everywhere, not just almost everywhere, and $W_1(x,\cdot)$ and
$W_2(x',\cdot)$ are always integrable. Since $\phi$ and thus
$\widetilde{\phi}$ is measure preserving, $\widetilde{\phi}^*$ is isometric
and injective from $L^1(\wOmega_2,\wcF_2,\wmu_2)$ to
$L^1(\wOmega_1,\widetilde\phi^{-1}(\wcF_2),\wmu_2)$ by
Lemma~\ref{lemmapullback}. If $x \in \Omega_1$, then for almost every $y \in
\Omega_1$ (note that the first two terms below are only defined for almost
every $y$),
\[
(\phi^* \circ \psi_{W_2} \circ \phi)(x)(y)={(}\psi_{W_2} \circ \phi)(x)(\phi(y))=W_2(\phi(x),\phi(y))=W_1(x,y).
\]
Therefore $\phi^* \circ \psi_{W_2} \circ \phi=\psi_{W_1}$ a.e.
Furthermore,
\[(\id \circ S_2 \circ \phi)(x)=S_2(\phi(x))=S_1(x),
\]
which implies that $(\id \circ S_2 \circ \phi)=S_1$. Since
$\widetilde{\phi}^*=\phi^* \times \id$ and $\psi_{\cW_i}=\psi_{W_i} \times
S$, this implies that $\widetilde{\phi}^* \circ \psi_{\cW_2} \circ
\phi=\psi_{\cW_1}$ almost everywhere. Now let $A \subseteq L^1(\wOmega_1)$ be
Borel measurable. Then
\[
\begin{split}
\mu_{\cW_2}&((\widetilde{\phi}^*)^{-1}(A))=\mu_2(\psi_{\cW_2}^{-1}((\widetilde{\phi}^*)^{-1}(A)))\\
&=\mu_1(\phi^{-1}(\psi_{\cW_2}^{-1}((\widetilde{\phi}^*)^{-1}(A))))
=\mu_1(\psi_{\cW_1}^{-1}(A))
=\mu_{\cW_1}(A).
\end{split}
\]
So $\phi^* \times \id\colon L^1(\bOmega_2) \times \RR \rightarrow
L^1(\bOmega_1) \times \RR$ is a measure preserving isometry. Since it is an
isometry, in particular, it is continuous. Thus, $(\phi^* \times
\id)^{-1}(L^1(\bOmega_1) \times \RR \setminus \Omega_{\cW_1})$ is an open set
with measure zero, so it is disjoint from $\Omega_{\cW_2}$. This implies that
$\phi^*$ restricts to a measure-preserving injection $\Omega_{\cW_2}
\rightarrow\Omega_{\cW_1}$. Since $\Omega_{\cW_2}$ is complete, $(\phi^*
\times \id)(\Omega_{\cW_2})$ is a complete subset of $\Omega_{\cW_1}$, which
is itself complete. Therefore $\phi^*(\Omega_{\cW_2})$ is a closed subset of
$\Omega_{\cW_1}$. However, we also have that
\[
\mu_{\cW_1}(\Omega_{\cW_1} \setminus \phi^*(\Omega_{\cW_2}))=
\mu_{\cW_2}((\phi^*)^{-1}(\Omega_{\cW_1} \setminus \phi^*(\Omega_{\cW_2})))=
\mu_{\cW_2}((\phi^*)^{-1}(\Omega_{\cW_1}) \setminus \Omega_{\cW_2})=0.
\]
But $\Omega_{\cW_1} \setminus \phi^*(\Omega_{\cW_2})$ is an open subset of
$\Omega_{\cW_1}$ of measure $0$, which means it must be the empty set because
$\mu_{\cW_1}$ has full support in $\Omega_{\cW_1}$. Therefore, $\phi^*\colon
\Omega_{\cW_2} \rightarrow \Omega_{\cW_1}$ is a measure-preserving isometry
of metric measure spaces.

Now, we want to show that $\widehat{W}_1^{\widetilde{\phi}^*}=\widehat{W}_2$.
We have that almost everywhere on $\Omega_1 \times \Omega_1$,
\[
((\widehat{W}_1^{\widetilde\phi^*})^{\psi_{\cW_2}})^{\phi}=\widehat{W}_1^{\widetilde\phi^* \circ \psi_{\cW_2} \circ \phi}=\widehat{W}_1^{\psi_{\cW_1}}=W_1=W_2^\phi.
\]
Therefore $(\widehat{W}_1^{\phi^*})^{\psi_{\cW_2}}=W_2$ almost everywhere,
but then $\widehat{W}_1^{\phi^*}=\widehat{W}_2$ almost everywhere. By
definition, we also have $\widehat{S}_1^{\phi^*}=\widehat{S}_2$, and
$\widehat{I}_1=I_1=I_2=\widehat{I}_2$.
\end{proof}

Now, suppose $\cW_1$ and $\cW_2$ are equivalent. Then their restrictions to
their respective degree supports are also equivalent. By Corollary
\ref{chain}, there exists a chain of pullbacks that link $\cW_1$ and $\cW_2$.
We have seen that if a graphex is a pullback of another, then the
construction above yields an isomorphism between the corresponding graphexes,
up to almost everywhere changes. This clearly extends to chains of pullbacks;
thus, we may find an isomorphism between $\Omega_{\cW_1}$ and
$\Omega_{\cW_2}$ so that $\widehat{\cW}_1$ and $\widehat{\cW}_2$ are equal
almost everywhere. We can extend the map $\psi_{\cW_i}\colon \Omega_i
\rightarrow \Omega_{\cW_i}$, which is defined almost everywhere, to be
defined everywhere, by mapping the rest of the points in $\Omega_i$ to an
arbitrary point.

\section{Uniform integrability and uniform tail regularity}
\label{sec:UIandUTR}

\subsection{Uniform integrability}

The goal of this subsection is to prove
Theorem~\ref{thm:UI-norm-convergence}. Before doing this we establish that
several alternative definitions of uniform integrability are equivalent to
Definition~\ref{def:UI}.

\begin{theorem} \label{thmunifintegequiv}
Given a set of integrable graphexes $\mathcal{S}$, the following are
equivalent.
\begin{enumerate}
\item $\cS$ is uniformly integrable. \label{conditionunifintegD}
 \item The graphexes in $\cS$ have uniformly bounded $\|\cdot\|_1$-norms,
     and for every $\eps>0$, there exists a $D$ such that for all $\cW
\in\mathcal{S}$, $\|\cW\|_1 - \|\cW_{\le D}\|_1<\varepsilon$.
\label{conditionunifintegW}
\item For any $T>0$, the random variables $E(G_T(\cW))$ with $\cW \in \cS$
    are uniformly integrable. \label{conditionunifintegallT}
\item There exists $T>0$ such that the random variables $E(G_T(\cW))$ with
    $\cW \in \cS$ are uniformly integrable. \label{conditionunifintegsomeT}
\end{enumerate}
\end{theorem}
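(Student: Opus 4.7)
The plan is to prove the cyclic implications $(1)\Leftrightarrow(2)$, $(1)\Rightarrow(3)\Rightarrow(4)\Rightarrow(1)$, with $(3)\Rightarrow(4)$ being immediate. The equivalence $(1)\Leftrightarrow(2)$ reduces to a direct computation. Writing $\Omega_{>D}=\{D_\cW>D\}$, Fubini and the symmetry of $W$ yield
\[\|\cW\|_1-\|\cW_{\le D}\|_1=2\int_{\Omega_{>D}}D_\cW\,d\mu-\int_{\Omega_{>D}\times\Omega_{>D}}W\,d\mu\,d\mu,\]
and since the second integral is nonnegative and at most $\int_{\Omega_{>D}}D_W\le\|D_\cW 1_{\Omega_{>D}}\|_1$, this gives the sandwich
\[\|D_\cW 1_{\Omega_{>D}}\|_1\le\|\cW\|_1-\|\cW_{\le D}\|_1\le 2\|D_\cW 1_{\Omega_{>D}}\|_1,\]
which together with the uniform $\|\cdot\|_1$-bound built into both statements establishes the equivalence.

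For $(1)\Rightarrow(3)$, fix $T$ and $\eps>0$ and use (1) to pick $D$ with $\|D_\cW 1_{\{D_\cW>D\}}\|_1<\eps/(6T^2)$ uniformly. Couple $G_T(\cW)$ and $G_T(\cW_{\le D})$ by sampling the full Poisson process for $\cW$ and then discarding points with feature in $\{D_\cW>D\}$ along with their stars (keeping the same dust). Writing $E=E(G_T(\cW))=E_1+E_2$ with $E_1=E(G_T(\cW_{\le D}))$ and $E_2$ counting edges incident to a discarded point, an expected-degree computation analogous to the proof of Proposition~\ref{prop:t(F,W)} gives $\EE[E_2]\le T^2\|D_\cW 1_{\{D_\cW>D\}}\|_1<\eps/6$. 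Since $\cW_{\le D}$ is $(C,D)$-bounded with $C$ the uniform bound on $\|\cW\|_1$, Lemma~\ref{lem:edgebound} yields a uniform bound on $\EE[E_1^2]$, and the splitting
\[\EE[E\,1_{E>K}]\le\EE[E_1 1_{E_1>K/2}]+\sqrt{\EE[E_1^2]\,\PP(E_2>K/2)}+\EE[E_2],\]
combined with Markov's inequality for $\PP(E_2>K/2)$, shows each term tends to $0$ uniformly as $K\to\infty$, proving uniform integrability.

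The main obstacle is $(4)\Rightarrow(1)$. Assume (4) at $T_0$. Uniform integrability forces $\sup_\cW\EE[E(G_{T_0}(\cW))]<\infty$; since $\EE[E(G_{T_0}(\cW))]=T_0^2\|\cW\|_1/2$, we obtain $\|\cW\|_1\le C$ and hence $\|D_\cW\|_1\le 2C$ uniformly. Suppose the tail condition of (1) fails: there exist $\eps_0>0$, $D_n\to\infty$, and $\cW_n\in\cS$ with $\|D_{\cW_n}1_{A_n}\|_1\ge\eps_0$ where $A_n=\{D_{\cW_n}>D_n\}$. By Markov's inequality $\mu_n(A_n)\le 2C/D_n$, so letting $\mathcal{E}_n$ be the event that the Poisson process samples at least one point with feature in $A_n$ during $[0,T_0]$, we have $\PP(\mathcal{E}_n)\le T_0\mu_n(A_n)\to 0$. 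On the other hand, the total degree of sampled points with feature in $A_n$ has expectation
\[\EE\Bigl[\sum_{(t,x)\in P,\,x\in A_n}d(t,x)\Bigr]=\int_0^{T_0}\int_{A_n}T_0 D_{\cW_n}(x)\,d\mu_n(x)\,dt=T_0^2\|D_{\cW_n}1_{A_n}\|_1\ge T_0^2\eps_0,\]
and since each edge of $G_{T_0}(\cW_n)$ contributes to this sum at most twice, the number $Y_n$ of edges incident to a high-feature sampled point satisfies $\EE[Y_n]\ge T_0^2\eps_0/2$. Because $Y_n$ vanishes off $\mathcal{E}_n$ and $Y_n\le E(G_{T_0}(\cW_n))$, we obtain $\EE[E(G_{T_0}(\cW_n))1_{\mathcal{E}_n}]\ge T_0^2\eps_0/2$, contradicting the equivalent form of uniform integrability ``for all $\eps'>0$ there is $\delta>0$ with $\PP(A)<\delta\Rightarrow\EE[E\,1_A]<\eps'$''. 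The delicate point is choosing $\mathcal{E}_n$ so that its probability vanishes (via $\mu_n(A_n)\to 0$ forced by the $L^1$-bound on $D_{\cW_n}$) while simultaneously detecting the excess tail mass (via each high-feature sampled point having expected degree at least $T_0 D_n$).
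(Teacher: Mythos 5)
Your proposal is correct, and the overall skeleton (\ref{conditionunifintegD}) through (\ref{conditionunifintegsomeT}) matches the paper's, but two steps take a genuinely different route that is worth noting. First, you prove (\ref{conditionunifintegD})$\Leftrightarrow$(\ref{conditionunifintegW}) directly via the two-sided sandwich
$\|D_\cW 1_{\{D_\cW>D\}}\|_1\le\|\cW\|_1-\|\cW_{\le D}\|_1\le 2\|D_\cW 1_{\{D_\cW>D\}}\|_1$,
whereas the paper proves only the forward implication and closes the cycle through the other conditions; your version is more economical. Second, and more substantively, your proof of (\ref{conditionunifintegsomeT})$\Rightarrow$(\ref{conditionunifintegD}) uses the rare-event characterization of uniform integrability (small-probability events must carry small conditional mass): you exhibit events $\mathcal{E}_n$ with $\PP(\mathcal{E}_n)\to 0$ but $\EE[E(G_{T_0}(\cW_n))1_{\mathcal{E}_n}]\ge T_0^2\eps_0/2$. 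The paper instead conditions on the number of neighbors of a sampled high-degree vertex, uses that this is Poisson with mean at least $TD/2$, and derives a lower bound on $\EE[F_{T,D}1_{F_{T,D}>TD/4}]$, letting $TD/4\to\infty$ to contradict the tail definition of uniform integrability directly. Both work; your version avoids the explicit conditional-Poisson computation and is arguably cleaner, while the paper's stays closer to the elementary tail definition without invoking the equivalent absolute-continuity form. For (\ref{conditionunifintegD})$\Rightarrow$(\ref{conditionunifintegallT}) your coupling-and-Cauchy--Schwarz bookkeeping is a mild repackaging of the paper's splitting of $E_T$ into $E_{T,D}$ and the remainder; one small slip is that the displayed split should really carry a $2\EE[E_2]$ rather than $\EE[E_2]$ (since $E_2$ appears both through $E1_{E_2>K/2}$ and as the additive tail), but this only changes constants and does not affect the conclusion.
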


\begin{proof}
Throughout this proof, let $\Omega_{>D}$, $\Omega_{\le D}$, $\cW_{>D}$,
$\cW_{\le D}$, etc, be defined as before. Let us first show
$(\ref{conditionunifintegD}) \Rightarrow (\ref{conditionunifintegW})$. We
have that
\begin{align*}
\|\cW_1\|_1 - \|\cW_{\le D}\|_1 &= 2\int_{\Omega_{>D}} S(x) \,d\mu(x)+2\int_{\Omega_{>D} \times \Omega_{\le D}}W(x,y) \,d\mu(x) \,d\mu(y)\\
& \qquad \qquad \phantom{} + \int_{\Omega_{>D} \times \Omega_{>D}} W(x,y) \,d\mu(x) \,d\mu(y)\\
&=2\int_{\Omega_{>D}}D_{\cW}(x) \,d\mu(x)-\int_{\Omega_{>D} \times \Omega_{>D}} W(x,y) \,d\mu(x) \,d\mu(y)\\
& \le 2\int_\Omega D_\cW1_{D_\cW>D} \,d\mu
.
\end{align*}
Therefore, taking $D$ for $\varepsilon/2$ in uniform integrability gives a
good $D$ for $\varepsilon$ in (\ref{conditionunifintegW}).

To show that $(\ref{conditionunifintegW}) \Rightarrow
(\ref{conditionunifintegallT})$, we first show that if a set of graphexes has
uniformly bounded marginals, then the set of random variables is uniformly
integrable. Let $E_T$ be the random variable for a fixed $\cW \in \cS$ that
gives the number of edges of $G_T(\cW)$. Recall that by Lemma
\ref{lem:edgebound}, $E_T$ has expectation $T^2\|\cW\|_1/2$ and variance
$T^2\|\cW\|_1/2+T^3\|D_\cW\|_2^2$. Let $C$ be a bound on $\|\cW\|_1$ for $\cW
\in \cS$. We then have that for any $K>T^2\|\cW\|_1/2$,
\[
\PP[E_T>K] \le
\frac{T^2\|\cW\|_1/2+T^3\|D_\cW\|_2^2}{(K-T^2\|\cW\|_1/2)^2} \le
\frac{T^2C/2+T^3CD}{(K-T^2C/2)^2}
.
\]
If $K\ge T^2C$, then this gives
\[\PP[E_T>K] \le \frac{T^2C/2+T^3CD}{(K-T^2C/2)^2} \le \frac{T^2C/2+T^3CD}{(K/2)^2} = \frac{2T^2C+4T^3CD}{K^2}
.
\]
Therefore, for $K_0 \ge T^2C$,
\begin{align*}
\EE[E_T1_{E_T> K_0}] &=\sum_{K=K_0+1}^\infty \PP[E_T \ge K]\\
& \le \sum_{K=K_0+1}^\infty \frac{2T^2C+4T^3CD}{K^2}
\le \frac{2T^2C+4T^3CD}{K_0}
.
\end{align*}
Suppose now that instead of uniformly bounded marginals, we have only
(\ref{conditionunifintegW}). For $D>0$, let $E_{T,D}$ be the number of edges
that either have both endpoints labeled with a vertex in $\Omega_{\le D}$,
one endpoint is labeled with a vertex in $\Omega_{\le D}$ and the edge is
generated as a star from that vertex, or the edge is a dust edge. We then
have that for all $D>0$,
\begin{align*}
\EE[E_T1_{E_T>2K_0}] &=
\EE[E_{T}1_{E_T>2K_0,E_{T,D}>K_0}]+ \EE[E_{T}1_{E_T>2K_0,E_{T,D}\le K_0}]
\\
&\le \EE[E_T-E_{T,D}]+ \EE[E_{T,D}1_{E_{T,D}>K_0}]\\
& \qquad \quad \phantom{}+ \EE[E_{T,D}1_{E_{T,D}\le K_0,E_T-E_{T,D}>K_0}]
\\
&\le \EE[E_T-E_{T,D}]+ \frac{2T^2C+4T^3CD}{K_0}+ K_0 \PP[E_{T}-E_{T,D}>K_0]
\\
&\le 2\EE[E_T-E_{T,D}]+ \frac{2T^2C+4T^3CD}{K_0}
,
\end{align*}
provided $K_0 \ge T^2C$. Condition (\ref{conditionunifintegW}) now implies
that for any $\eps>0$, there exists a $D$ such that
$\EE[E_{T,D}-E_T]<\varepsilon$. Given such a $D$, we choose $K_0$ in such a
way that the last term in the above bound is at most $\eps$, implying that
for each $\eps>0$ we can find a $K_0$ such that $\EE[E_T1_{E_T>2K_0}]\leq
3\eps$. This proves that the set of random variables $E_T$ are indeed
uniformly integrable.

It is clear that $(\ref{conditionunifintegallT})$ implies
$(\ref{conditionunifintegsomeT})$. Suppose now that
$(\ref{conditionunifintegsomeT})$ holds. Since the expectation of $E_T$ is
$T^2\|\cW\|_1/2$, $\|\cW\|_1$ must be uniformly bounded for $\cW \in \cS$.
Let $C$ be an upper bound. Suppose that (\ref{conditionunifintegD}) is false.
Then there exists a fixed $\varepsilon>0$, such that for any $D$, there
exists a graphex $\cW \in \cS$ such that
\[\int_{\Omega_{>D}} D_\cW 1_{D_\cW>D} \,d\mu \ge \varepsilon
.\] Since $\EE[D_\cW]\leq\|\cW\|_1 \le C$, we have that $\mu(\Omega_{>D}) \le
C/D$. By taking $D$ large enough, we can assume that $C/D \le D/2$. Let
$F_{T,D}$ be the number of edges in $G_T(\cW)$ that have exactly one endpoint
in $\Omega_{>D}$. Then
\[\EE[F_{T,D}] \ge \int_{\Omega_{>D}} T^2 \left(D_\cW(x)-C/D\right) \,d\mu(x) \ge T^2 \int_{\Omega_{>D}} \left(D_\cW(x)/2\right) \,d\mu(x) \ge T^2 \varepsilon/2
.\] If there are no points sampled in $\Omega_{>D}$, then $F_{T,D}=0$.
Conditioned on there being at least one point sampled in $\Omega_{>D}$, the
number of neighbors of a point whose feature is $x \in \Omega_{>D}$ is a
Poisson random variable with mean equal to $TD_{\cW}(x)/2 \ge TD/2$.
Therefore,
\[\EE[F_{T,D} | F_{T,D}>0] \ge TD/2
.\] We also have that
\[\EE[F_{T,D}1_{F_{T,D} \le TD/4}|F_{T,D}>0] \le TD/4
.\] Therefore,
\[\EE[F_{T,D}1_{F_{T,D}>TD/4}|F_{T,D}>0] \ge\frac 12 \EE[F_{T,D} | F_{T,D}>0]
.\] We then have
\begin{align*}
\EE[F_{T,D}1_{F_{T,D}>TD/4}]
&=\EE[F_{T,D}1_{F_{T,D}>TD/4}|F_{T,D}>0]\PP[F_{T,D}>0]
\\
&\ge \frac 12\EE[F_{T,D} | F_{T,D}>0]\PP[F_{T,D}>0]
= \frac 12\EE[F_{T,D}] \ge T^2\varepsilon/4
.
\end{align*}
Since $D$ can be arbitrary (above some $D_0$), this contradicts Condition
(\ref{conditionunifintegsomeT}).
\end{proof}

Theorem~\ref{thm:UI-norm-convergence} is an easy corollary of
Theorem~\ref{thmunifintegequiv}.

\begin{proof}[Proof of Theorem~\ref{thm:UI-norm-convergence}]
By Theorem~\ref{thm:delGP-GP}, $\cW_n$ is GP-convergent to $\cW$. Fix a
subsequence $n_i$ such that $
\liminf_{n\to\infty}\|\cW_n\|_1=\lim_{i\to\infty} \|\cW_{n_i}\|_1$, and fix
$T>0$. Let $e_i$ be the number of edges in $G_T(\cW_{n_i})$, and let $e$ be
the number of edges in $G_T(\cW)$. Following the proof of Corollary 3.10 in
\cite{BCCV17}, for $\lambda>0$ define $f_\lambda\colon\RR_+\to\RR_+$ by
$f_\lambda(x)=x 1_{x\leq \lambda}$. Then $\EE[f_\lambda(e_i)]\leq
\EE[e_i]=T^2\|\cW_{n_i}\|_1$. Since $e_i\to e$ in distribution,
$\EE[f_\lambda(e)]=\lim_{i\to\infty}\EE[f_\lambda(e_i)]\leq
T^2\lim_{i\to\infty}\|\cW_i\|_1=\liminf_{n\to\infty}\|\cW_n\|_1$. The
monotone convergence theorem then gives that
$T^2\|\cW\|_1=\EE[e]=\lim_{\lambda\to\infty}\EE[f_\lambda(e)]\leq
T^2\liminf_{n\to\infty}\|\cW_n\|_1$, proving the first part of the theorem.

To prove the second part, assume first that $\cW_n$ is uniformly integrable,
and fix $\varepsilon>0$. By Theorem \ref{thmunifintegequiv} (2), for every
$\varepsilon>0$, there exists a $D$ such that each $\cW_n$ has
\[
\left|\|\cW_n\|_1 - \|\cW_{n,\le D}\|_1\right| \le \varepsilon.
\]
Since $\cW$ is integrable, after possibly increasing $D$, we can also assume that
\[
\left|\|\cW\|_1 - \|\cW_{\le D}\|_1\right| \le \varepsilon.
\]
Increasing $D$ further, we may also assume that $\mu(\{D_{\cW}=D\})=0$. By
Proposition \ref{propgeneralconvergenceequiv}, $\deltt(\cW_{n, \le
D},\cW_{\le D}) \rightarrow 0$, which implies in particular that $\|\cW_{n,
\le D}\|_1 \rightarrow \|\cW_{\le D}\|_1$. Therefore, we can take $n_0$ so
that if $n \ge n_0$, then
\[
\left|\|\cW_{\le D}\|_1 - \|\cW_{n,\le D}\|_1\right| \le \varepsilon.
\]
These three inequalities imply that if $n \ge n_0$, then
\[
\left|\|\cW_n\|_1 - \|\cW\|_1\right| \le 3\varepsilon.
\]
Since $\varepsilon$ was arbitrary, this completes the proof of the first
direction.

For the other direction, fix $\varepsilon$. Since $\cW$ is integrable, there
exists $D>0$ such that
\[
\|\cW_{\le D}\|_1 \ge \|\cW\|_1 - \varepsilon/2.
\]
By increasing $D$, we can assume that $\mu(\{D_{\cW}=D\})=0$. By Proposition
\ref{propgeneralconvergenceequiv},
\[\|\cW_{n,\le D}\|_1 \to \|\cW_{\le D}\|_1.
\]
We then have that
\begin{align*}
\limsup_{n \to \infty}
\Bigl(\|\cW_n\|_1-\|\cW_{n,\le D}\|_1 \Bigr)
&= \limsup_{n \to \infty}
\Bigl( (\|\cW_n\|_1 - \|\cW\|_1)
+(\|\cW\|_1-\|\cW_{\le D}\|_1)\\
& \qquad \qquad \qquad \phantom{}
+(\|\cW_{\le D}\|_1-\|\cW_{n,\le D}\|_1) \Bigr)\le \varepsilon/2
.
\end{align*}
Therefore, there exists an $n_0$ such that if $n > n_0$, then
\[
\|\cW_n\|_1 - \|\cW_{n,\le D}\|_1 <\varepsilon.
\]
Since $\cW_1,\cW_2,\dots,\cW_{n_0}$ is a finite set of graphexes, each of
which is integrable, we can increase $D$ so that the above inequality holds
for each $n$, which by Theorem \ref{thmunifintegequiv} means that they are
uniformly integrable.
\end{proof}

\subsection{Uniform tail regularity}

The goal of this subsection is to prove
Theorem~\ref{thmcutmetricvsweakkernel}. Before doing this, we show that
uniform tail regularity implies uniform integrability.

\begin{lemma}\label{lem:tail-reg-implies-UI} Suppose that a set
of graphexes consisting only of graphons is uniformly tail regular. Then the
set is uniformly integrable.
\end{lemma}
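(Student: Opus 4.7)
The plan is to check the two defining conditions of uniform integrability (Definition~\ref{def:UI}) directly from uniform tail regularity, using only the symmetry and the pointwise bound $0\leq W\leq 1$. Let $\cS$ be the given set of graphon-only graphexes, so each $\cW\in\cS$ has the form $(W,0,0,\bOmega)$ with $D_\cW=D_W$ and $\|\cW\|_1=\|W\|_1$.

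First I would establish uniform boundedness of $\|\cW\|_1$. Applying uniform tail regularity with $\varepsilon=1$ produces $M<\infty$ such that for every $W\in\cS$ there exists $\Omega_0\subseteq\Omega$ with $\mu(\Omega_0)\leq M$ and $\|W\|_1\leq \|W|_{\Omega_0}\|_1+1$. Since $W\leq 1$, the right-hand side is at most $\mu(\Omega_0)^2+1\leq M^2+1$, giving a uniform bound $C:=M^2+1$ on $\|\cW\|_1$.

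Next I would verify the decay of the marginal tails. Fix $\varepsilon>0$ and, by uniform tail regularity applied with $\varepsilon/2$, choose $M$ so that for each $W\in\cS$ there is $\Omega_0\subseteq\Omega$ with $\mu(\Omega_0)\leq M$ and $\|W\|_1-\|W|_{\Omega_0}\|_1\leq \varepsilon/2$. The key observation is that by symmetry and nonnegativity,
\[
\int_{\Omega\setminus\Omega_0}D_W\,d\mu
=\int_{(\Omega\setminus\Omega_0)\times\Omega}W
\leq \int_{\Omega\times\Omega\setminus\Omega_0\times\Omega_0}W
=\|W\|_1-\|W|_{\Omega_0}\|_1\leq \varepsilon/2.
\]
Now set $A_D=\{D_W>D\}$; Markov's inequality gives $\mu(A_D)\leq \|W\|_1/D\leq C/D$. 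Splitting the integrand and using $W\leq 1$,
\[
\int_{A_D}D_W\,d\mu
=\int_{A_D}\int_{\Omega_0}W\,d\mu\,d\mu+\int_{A_D}\int_{\Omega\setminus\Omega_0}W\,d\mu\,d\mu
\leq \mu(A_D)\mu(\Omega_0)+\int_{\Omega\setminus\Omega_0}D_W\,d\mu
\leq \frac{CM}{D}+\frac{\varepsilon}{2}.
\]
Choosing $D\geq 2CM/\varepsilon$ (independently of $W\in\cS$) makes the right-hand side at most $\varepsilon$, which is precisely the second clause of Definition~\ref{def:UI}.

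This argument is essentially a two-line calculation once one notices the symmetry step used to pass from a tail bound on $W$ itself to a tail bound on the marginal $D_W$ integrated over $\Omega\setminus\Omega_0$, so there is no real obstacle; the only care needed is to invoke $\|W\|_\infty\leq 1$ (which fails in the signed setting and explains why the statement is restricted to ordinary graphons) and the symmetry of $W$ in the right order.
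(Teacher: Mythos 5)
Your proof is correct and relies on the same central observation as the paper's: by symmetry of $W$, the tail-regularity bound $\|W\|_1 - \|W|_{\Omega_0}\|_1 \le \varepsilon'$ immediately gives $\int_{\Omega\setminus\Omega_0} D_W\,d\mu \le \varepsilon'$, and this is what makes the whole argument go. Where you differ is in controlling the high-degree set inside $\Omega_0$. You use Markov's inequality $\mu(\{D_W > D\}) \le \|W\|_1/D \le C/D$, which forces you to take $D$ of order $CM/\varepsilon$ (so $D$ grows as $\varepsilon\to 0$ both through $M=M(\varepsilon)$ and through the explicit $1/\varepsilon$ factor). The paper instead fixes $D = 2M$ with $M = M(\varepsilon/3)$ and argues directly that any $x \in \Omega_0$ with $D_W(x) > 2M$ must receive at least $M$ of its degree from $\Omega\setminus\Omega_0$ (since $\int_{\Omega_0} W(x,\cdot)\,d\mu \le \mu(\Omega_0) \le M$), so that $M\,\mu(A) \le \int_{\Omega\setminus\Omega_0} D_W\,d\mu \le \varepsilon/3$. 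That gives a slightly sharper choice of threshold and avoids introducing the global constant $C$ into the final bound. Both arguments are valid; yours is a touch more mechanical, the paper's a touch more clever, and both correctly isolate that the only place $\|W\|_\infty \le 1$ is genuinely needed is the bound $\int_{A\times\Omega_0} W \le \mu(A)\mu(\Omega_0)$.
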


\begin{proof}
Fix $\varepsilon>0$. By the definition of tail regularity, we can find an
$M<\infty$ such that for each graphon $W$ in the set there exists a subset
$\Omega_0$ of measure at most $M$ such that
$\|W\|_1-\|W|_{\Omega_0}\|_1\leq\varepsilon/3$. Note that clearly
$\|W\|_1\leq M^2+\varepsilon/3$, so in particular the set of graphons has
uniformly bounded $L^1$ norm. Let
\[A=\{x \in \Omega_0,D_W(x)>2M\}
.\] Note that for any $x\in A$,
\[\int_{\Omega \setminus \Omega_0} W(x,y)\,d\mu(y) \ge D_W(x)-\int_{\Omega_0}W(x,y)\,d\mu(y) \ge D_W(x)-M \ge M
,\] which implies that
\[
\mu(A) \le \frac 1M\int_{A\times\Omega\setminus\Omega_0}W \,d\mu^2\leq\frac 1M\int_{\Omega\setminus\Omega_0}D_W \,d\mu
\leq \frac{\varepsilon}{3M}.
\]
We then have
\begin{align*}
\int_\Omega D_{W}1_{D_{W}>2M} \,d\mu
&\leq\int_{\Omega\setminus\Omega_0} D_{W} \,d\mu +\int_A D_{W} \,d\mu\\
&= \int_{\Omega \setminus \Omega_0} D_W \,d\mu + \int_{A \times (\Omega \setminus \Omega_0)} W(x,y) \,d\mu(x) \,d\mu(y)\\
& \qquad \qquad \qquad \phantom{} + \int_{A \times \Omega_0} W(x,y) \,d\mu(x) \,d\mu(y)\\
&\le 2\int_{\Omega \setminus \Omega_0} D_W \,d\mu + \int_{A \times \Omega_0} W(x,y) \,d\mu(x) \,d\mu(y)\\
&\le 2\varepsilon/3 + \mu(A)\mu(\Omega_0)\le \varepsilon
.\qedhere\end{align*}
\end{proof}

Next, we show the following lemma. As before, $\Omega_{>\delta}$ is the set
$\{x \in \Omega: D_\cW(x) > \delta\}$.

\begin{lemma} \label{lemuniftailregequiv}
Given a set of graphons $\cS$, the following are equivalent:
\begin{enumerate}
\item The set of graphons is uniformly tail regular.
\item The set of graphons has a uniform bound on their $\|\cdot\|_1$-norm,
    and for every $\varepsilon$, there exists a $\delta$ such that $
 \|W\|_1 - \|W|_{\Omega_{> \delta}}\|_1 \le \varepsilon. $
\item The set of graphons has a uniform bound on their $\|\cdot\|_1$-norm,
    and for every $\varepsilon$, there exists a $\delta$ such that $
 \|D_W1_{D_W\leq \delta}\|_1\leq \varepsilon $.

\end{enumerate}
\end{lemma}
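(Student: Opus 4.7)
The plan is to prove the three-way equivalence by establishing (2) $\Leftrightarrow$ (3) first, then (2) $\Rightarrow$ (1), and finally (1) $\Rightarrow$ (2). All three implications are short calculations once the right decomposition of $\Omega$ is identified, so I do not anticipate a genuine obstacle; the content is really just bookkeeping around the identity $\|W\|_1 = \int D_W\,d\mu$ combined with Fubini.

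For (2) $\Leftrightarrow$ (3), the strategy is to split $\Omega\times\Omega$ into the four regions determined by $\Omega_{>\delta}$ and $\Omega_{\le\delta}$. Using Fubini, $\|D_W 1_{D_W\le\delta}\|_1 = \int_{\Omega_{\le\delta}\times\Omega}W = \int_{\Omega_{\le\delta}\times\Omega_{>\delta}}W + \int_{\Omega_{\le\delta}\times\Omega_{\le\delta}}W$, while $\|W\|_1 - \|W|_{\Omega_{>\delta}}\|_1 = 2\int_{\Omega_{>\delta}\times\Omega_{\le\delta}}W + \int_{\Omega_{\le\delta}\times\Omega_{\le\delta}}W$. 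Symmetry of $W$ and the observation that the cross term is bounded by $\|D_W1_{D_W\le\delta}\|_1$ then yield
\[
\|D_W 1_{D_W\le\delta}\|_1 \;\le\; \|W\|_1 - \|W|_{\Omega_{>\delta}}\|_1 \;\le\; 2\|D_W 1_{D_W\le\delta}\|_1,
\]
from which the equivalence of (2) and (3) is immediate (the uniform $L^1$ bound transfers trivially).

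For (2) $\Rightarrow$ (1), I will use Markov's inequality: if $\|W\|_1 \le C$ uniformly, then $\mu(\Omega_{>\delta}) \le C/\delta$, so given $\varepsilon$ one chooses $\delta$ from (2) and sets $M=C/\delta$ and $\Omega_0 = \Omega_{>\delta}$.

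The remaining implication (1) $\Rightarrow$ (2) is where the small amount of work occurs. First I note that the uniform $L^1$ bound follows from applying (1) with any fixed $\varepsilon_0$ (say $\varepsilon_0=1$): since $W\le 1$, one has $\|W|_{\Omega_0}\|_1 \le \mu(\Omega_0)^2 \le M^2$, hence $\|W\|_1 \le M^2 + 1$. For the degree condition, given $\varepsilon>0$, I apply (1) with $\varepsilon/2$ to obtain $M$ and, for each $W$, a set $\Omega_0$ with $\mu(\Omega_0)\le M$ and $\|W\|_1 - \|W|_{\Omega_0}\|_1 \le \varepsilon/2$. Decomposing $\Omega_0 = (\Omega_0\cap\Omega_{>\delta})\cup(\Omega_0\cap\Omega_{\le\delta})$ and bounding the contribution of the low-degree piece by
\[
\|W|_{\Omega_0}\|_1 \;\le\; \|W|_{\Omega_{>\delta}}\|_1 + 2\int_{\Omega_0\cap\Omega_{\le\delta}} D_W\,d\mu \;\le\; \|W|_{\Omega_{>\delta}}\|_1 + 2M\delta,
\]
I conclude $\|W\|_1 - \|W|_{\Omega_{>\delta}}\|_1 \le \varepsilon/2 + 2M\delta$. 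Choosing $\delta = \varepsilon/(4M)$ gives (2). Since $M$ depends only on $\varepsilon$ and not on $W$, the resulting $\delta$ is uniform over $\cS$, as required.
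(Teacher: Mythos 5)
Your proposal is correct and follows essentially the same approach as the paper: Markov's inequality for $(2)\Rightarrow(1)$, splitting $\Omega_0$ along $\Omega_{>\delta}$ and bounding the low-degree contribution by $2M\delta$ for $(1)\Rightarrow(2)$, and the Fubini decomposition giving $A+B\le 2A+B\le 2(A+B)$ for $(2)\Leftrightarrow(3)$. The only (cosmetic) difference is that you compare $\|W|_{\Omega_0}\|_1$ directly to $\|W|_{\Omega_{>\delta}}\|_1$ whereas the paper compares it to $\|W|_{\Omega_0\cap\Omega_{>\delta}}\|_1$; both routes produce the same estimate $\varepsilon/2 + 2M\delta$.
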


\begin{corollary} \label{coruniftailregpullback}
Given a set of graphons $\cS$, suppose that we replace each graphon with a
pullback. Let $\cS'$ be the new set. Then $\cS'$ is uniformly tail regular
if and only if $\cS$ is.
\end{corollary}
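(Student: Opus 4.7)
The plan is to deduce the corollary directly from Lemma~\ref{lemuniftailregequiv}, by observing that the characterization (3) given there is manifestly invariant under pullbacks by measure-preserving maps. The point is that the quantities appearing in (3) --- the $L^1$ norm $\|W\|_1$ and the tail integral $\|D_W 1_{D_W \le \delta}\|_1$ --- are both functions that ``live on the base'' of a pullback in a way that is preserved by measure-preserving maps.

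More precisely, suppose $W=(W,\bOmega)$ is a graphon and $W^\phi$ is its pullback along a measure-preserving map $\phi\colon \bOmega' \to \bOmega$. First, I would recall (e.g., from the discussion around Definition~\ref{def:graphex} and the treatment of pullbacks in Section~\ref{sec:prelim}) that pullbacks by measure-preserving maps preserve the $L^1$ norm, so $\|W^\phi\|_1 = \|W\|_1$, which takes care of the uniform boundedness of $\|\cdot\|_1$-norms required in (3). Second, for almost every $x' \in \Omega'$ one has $D_{W^\phi}(x') = D_W(\phi(x'))$ (this is essentially the computation used throughout the paper, e.g., in the proof of Lemma~\ref{lemmapullbacksame}). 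Consequently, $D_{W^\phi} = D_W \circ \phi$ almost everywhere, and since $\phi$ is measure-preserving,
\[
\|D_{W^\phi} 1_{D_{W^\phi} \le \delta}\|_1
= \int_{\Omega'} D_W(\phi(x')) 1_{D_W(\phi(x')) \le \delta} \, d\mu'(x')
= \int_{\Omega} D_W(x) 1_{D_W(x) \le \delta} \, d\mu(x)
= \|D_W 1_{D_W \le \delta}\|_1.
\]

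Both equivalent conditions in (3) of Lemma~\ref{lemuniftailregequiv} therefore match term-by-term between $W$ and $W^\phi$. Hence $\cS$ satisfies (3) if and only if $\cS'$ does, and by the lemma this is equivalent to uniform tail regularity on either side. The only real content is the invariance computation above; there is no genuine obstacle, since we are not asked to produce the set $\Omega_0$ realizing uniform tail regularity directly on $\bOmega'$ (which would be slightly awkward, as the ``obvious'' choice $\phi^{-1}(\Omega_0)$ need not have finite measure) --- formulation (3) bypasses this issue entirely, which is precisely what makes it the convenient characterization to use.
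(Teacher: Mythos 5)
Your proof is correct and follows essentially the same approach as the paper: the paper's proof is the one-line observation that condition (2) of Lemma~\ref{lemuniftailregequiv} is invariant under pullbacks, whereas you carry out the equivalent computation with condition (3). The identity $D_{W^\phi}=D_W\circ\phi$ a.e.\ makes either criterion manifestly pullback-invariant, so the two arguments are interchangeable; yours simply spells out the measure-preservation calculation that the paper leaves implicit.
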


\begin{proof}
Property (2) in Lemma \ref{lemuniftailregequiv} is unaffected by taking
pullbacks.
\end{proof}

\begin{proof}[Proof of Lemma~\ref{lemuniftailregequiv}]
We first show that (1) implies (2). Fix $\varepsilon>0$. Take $M$ for
$\varepsilon/2$ as in the definition of uniform tail regularity, let
$\delta=\varepsilon/4M$.  Fix an arbitrary graphon $W \in \cS$, and let
$\Omega_0$ be a set of measure $M$ such that
\[\|W\|_1 - \|W|_{\Omega_0}\|_1 \le \varepsilon/2
.\] Note that $W$ has $L^1$ norm at most $M^2+\varepsilon/2$, which
proves that the graphs have a uniform bound on their $\|\cdot\|_1$-norm. Now,
we have
\[\|W\|_1 - \|W|_{\Omega_{> \delta}}\|_1 \le \|W\|_1 - \|W|_{\Omega_0}\|_1+\|W|_{\Omega_0}\|_1 - \|W|_{\Omega_0 \cap \Omega_{> \delta}}\|_1 \le \varepsilon/2 + 2\delta M \le \varepsilon
.\]
This shows that (1) implies (2).

The fact that (2) implies (1) follows from the observation that
\[
\mu(\Omega_{>\delta})=\int d\mu(x)1_{D_W(x)> \delta}\leq \int d\mu(x) \frac{D_W(x)}\delta
=\frac 1\delta\|W\|_1.
\]
Finally, (2) and (3) are equivalent by the fact that
\[
\int_{\Omega\times\Omega\setminus\Omega_{>\delta}}W
\leq \int_{\Omega\times\Omega} W - \int_{\Omega_{>\delta}\times\Omega_{>\delta}} W
\leq 2 \int_{\Omega\times\Omega\setminus\Omega_{>\delta}}W. \qedhere
\]
\end{proof}

To prove Theorem \ref{thmcutmetricvsweakkernel} we establish three more
lemmas.

\begin{lemma} \label{lemmacutmetricrestrictedtoD}
Suppose that a sequence of integrable graphons $W_n$ converge to a graphon
$W$ in the cut metric. Then for any $D>0$ such that $\mu(D_W=D)=0$, the
graphons $W_{n,\le D}$ converge to $W_{\le D}$ in the cut metric.
\end{lemma}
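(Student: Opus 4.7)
The plan is to use couplings that witness cut-metric convergence, translate this into $L^1$-control of marginals, and then restrict to the relevant sublevel sets. By the definition of $\delta_\square$ for integrable graphons over $\sigma$-finite spaces (analogous to $\deltt$: extend trivially to a common total measure, then take the infimum over couplings of the cut norm of the difference of pullbacks), I would extract for each $n$ a coupling $\nu_n$ on an extended space $\widetilde\Omega_n$ such that the pullbacks $U_n$ of $W_n$ and $V_n$ of $W$ satisfy $\|U_n - V_n\|_\square \to 0$.

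Next, I would translate cut-norm smallness of $U_n - V_n$ into $L^1$-smallness of the marginal difference. Writing $A^\pm_n = \{\pm(D_{U_n} - D_{V_n}) \ge 0\}$ as increasing unions of finite-measure sets and using the integrability of $U_n$ and $V_n$ with dominated convergence, one obtains
\[
\|D_{U_n} - D_{V_n}\|_1 = \int_{A^+_n \times \widetilde\Omega_n}(U_n - V_n)\,d\nu_n^2 - \int_{A^-_n \times \widetilde\Omega_n}(U_n - V_n)\,d\nu_n^2 \le 2\|U_n - V_n\|_\square,
\]
so $\|D_{U_n} - D_{V_n}\|_1 \to 0$. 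Using $\mu(\{D_W = D\})=0$, I would then show that $\widetilde A_n = \{D_{U_n} \le D\}$ and $\widetilde A'_n = \{D_{V_n} \le D\}$ satisfy $\nu_n(\widetilde A_n \symmdiff \widetilde A'_n) \to 0$: for any $\varepsilon > 0$,
\[
\widetilde A_n \symmdiff \widetilde A'_n \subseteq \{|D_{U_n} - D_{V_n}| > \varepsilon\} \cup \{D - \varepsilon < D_{V_n} \le D + \varepsilon\},
\]
whose $\nu_n$-measure vanishes as first $n\to\infty$ (by Markov's inequality applied to the previous step) and then $\varepsilon \to 0$ (the second set has $\nu_n$-measure $\mu(\{D - \varepsilon < D_W \le D + \varepsilon\}) \to \mu(\{D_W = D\}) = 0$).

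Finally, the coupling restricted to $\widetilde A_n \cap \widetilde A'_n$ yields a coupling between the restrictions $W_n|_{B_n}$ and $W|_{B'_n}$, where $B_n \subseteq \{D_{W_n} \le D\}$ and $B'_n \subseteq \{D_W \le D\}$ are the projections, with $\mu_n(\{D_{W_n} \le D\} \setminus B_n) + \mu(\{D_W \le D\} \setminus B'_n) \to 0$. The cut norm of $U_n - V_n$ restricted to this subset is still at most $\|U_n - V_n\|_\square$, so $\delta_\square(W_n|_{B_n}, W|_{B'_n}) \to 0$. Since $W_{n,\le D}$ has marginals bounded by $D$, extending $W_n|_{B_n}$ by zero to $\{D_{W_n} \le D\}$ yields a graphon differing from $W_{n,\le D}$ by at most $2D\mu_n(\{D_{W_n} \le D\} \setminus B_n) \to 0$ in $L^1$, hence in cut norm; symmetrically for $W_{\le D}$ and $W|_{B'_n}$. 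The triangle inequality for $\delta_\square$ then gives $\delta_\square(W_{n,\le D}, W_{\le D}) \to 0$.

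The main obstacle is the bookkeeping for couplings, restrictions, and trivial extensions on $\sigma$-finite spaces with possibly infinite total measure, and in particular making sure that the cut norm on the restricted subspace still controls the cut distance between the projected restrictions. These technical points are closely parallel to arguments already carried out for the kernel and weak kernel metrics in Section~\ref{sec:prelim} (especially Lemma~\ref{lemmaremovedsetdistancenotworse}), and should transfer with only notational changes.
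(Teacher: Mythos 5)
Your proposal follows essentially the same route as the paper's proof: extract couplings witnessing cut-metric convergence, derive $\|D_{U_n}-D_{V_n}\|_1 \le 2\|U_n-V_n\|_\square \to 0$, bound the measure of the symmetric difference of sublevel sets by splitting into the large-marginal-difference event (controlled by Markov) and the near-threshold event (controlled by $\mu(\{D_W=D\})=0$), zero out the graphons off the intersection, and then use the $D$-bounded marginals to convert the measure gap into an $L^1$ (hence cut-norm) gap, finishing with the triangle inequality. The only cosmetic difference is that the paper keeps $\wW_{n,\le D}'$ and $\wW_{\le D}'$ defined on the full coupled space rather than literally restricting the coupling measure (which avoids having to re-verify the marginal conditions of a coupling after restriction), whereas you briefly speak of "restricting the coupling"; but you flag this bookkeeping point yourself and point to the right tools, so the argument is sound.
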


\begin{proof}
Let $\wmu_n$ be a coupling of trivial extensions of $W_n$ and $W$, and let
$\wOmega_n$ be the product space on which the coupling is defined. Let
$\wW_n$ and $\wW$ be the pullbacks of the trivial extensions to $\wOmega_n$,
and suppose that
\[\|\wW_n-\wW\|_\square < \varepsilon.\]
Defining $A=\{D_{\wW_n}-D_{\wW}>0\}$ and $B=\{D_{\wW}-D_{\wW_n}>0\}$, we then
have that
\begin{align*}
\wmu_n(\{|D_{\wW_n}-D_{\wW}|>\sqrt{\varepsilon}\})
&\leq \frac 1{\sqrt\eps}\|D_{\wW_n}-D_{\wW}\|_1\\
&= \frac
1{\sqrt\eps}\Bigl(\int_{A\times \wOmega_n}({\wW_n}-{\wW})+\int_{B\times
\wOmega_n}({\wW}-{\wW_n})\Bigr)
 <2\sqrt{\varepsilon} .
\end{align*}
As a consequence,
\begin{align*}
\wmu_n(\{D_{\wW_n}>D,&\ D_{\wW} \le D\})
\\
&\le
\wmu_n(\{|D_{\wW_n}-D_{\wW}|>\sqrt{\varepsilon}\})
 +\wmu_n(\{D-\sqrt{\varepsilon}<D_{\wW}\le D\})
\\
&<2\sqrt{\varepsilon}+\mu(\{D-\sqrt{\varepsilon}<D_W\le D\}).
\end{align*}
For any $\delta>0$, we can take $\varepsilon$ small enough so that this is at
most $\delta$. Similarly, we have
\begin{align*}
\wmu_n(\{D_{\wW_n}\le D,&\ D_{\wW} > D\})
\\
&\le \wmu_n(\{|D_{\wW_n}-D_{\wW}|>\sqrt{\varepsilon}\})
+\wmu_n(\{D<D_{\wW}\le D+\sqrt{\varepsilon}\})
\\
& < 2\sqrt{\varepsilon}+\mu(\{D<D_W\le D+\sqrt{\varepsilon}\}),
\end{align*}
which is also at most $\delta$ if $\varepsilon$ is small enough.

Next we trivially extend $W_{n,\le D}$ and $W_{\le D}$ first to the spaces
$W_n$ and $W$ are defined on, and then to the spaces used in the coupling
$\wmu_n$. Let $\wW_{n,\le D}$ and $\wW_{\le D}$ be the pullbacks, let
$\wW_{n,\le D}'$ be equal to $\wW_{n,\le D}$ on $\{D_{\wW} \le D \}^2$ and
$0$ otherwise, and let $\wW_{\le D}'$ be equal to $\wW_{\le D}$ on
$\{D_{\wW_n} \le D \}^2$ and $0$ otherwise. Then $\wW_{n,\le D}$ and
$\wW_{n,\le D}'$ differ only on $\{D_{\wW_n}\le D,D_{\wW} > D\}
\times\{D_{\wW_n} \le D\}$ and its transpose. Indeed, if $D_{\wW_n} > D$ in
either coordinate, then both graphons are zero, and if $D_{\wW} \le D$ in
both coordinates, then by the definition they are the same. Since $\wW_{n,\le
D}$ has maximum degree $D$, this implies that
\[
\|\wW_{n,\le D}-\wW_{n,\le D}'\|_\square \le \|\wW_{n,\le D}-\wW_{n,\le D}'\|_1 \le 2\wmu_n(\{D_{\wW_n}\le D,D_{\wW} > D\})D \le 2\delta D.
\]
Analogously,
\[
\|\wW_{\le D}-\wW_{\le D}'\|_\square \le 2\delta D.
\]
Note that $\wW_{n,\le D}'$ and $\wW_{\le D}'$ are equal to $\wW_n$ and $\wW$,
respectively, on $\{D_{{\wW_n}} \le D,D_{{\wW}}\le D\}^2$, and zero
everywhere else, which implies that $\wW_{n,\le D}'-\wW_{\le D}'$ is the
restriction of $\wW_n-\wW$ to $\{D_{\cW_n} \le D,D_{\cW}\le D\}^2$. This
implies that
\[
\|\wW_{n,\le D}'-\wW_{\le D}'\|_\square
\le \|\wW_n-\wW\|_\square < \varepsilon,
\]
which in turn implies that
\begin{align*}
\|\wW_{n,\le D}-\wW_{\le D}\|_\square
&\le \|\wW_{n,\le D}-\wW_{n,\le D}'\|_\square
+\|\wW_{n,\le D}'-\wW_{\le D}'\|_\square\\
& \qquad
\phantom{}+\|\wW_{\le D}'-\wW_{\le D}\|_\square\\
&\le 4\delta D + \varepsilon.
\end{align*}
Taking $\varepsilon$ small enough, this can be made arbitrarily small, which
completes the proof.
\end{proof}

We also have the following:

\begin{lemma} \label{lemmaboundedcutmetricimplieskernelmetric}
Suppose that a sequence of integrable graphons $W_n$ have uniformly bounded
marginals, and converge to a (necessarily integrable) graphon $W$ in the cut
metric. Then $W$ has the same bound on its marginals, and $\deltt(\cW_n,\cW)
\to 0$.
\end{lemma}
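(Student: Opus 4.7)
The plan is to work with trivial extensions and a coupling realizing cut metric convergence on a common $\sigma$-finite space, then bound each of the three ingredients of $\d22(\widetilde\cW_n,\widetilde\cW)$ polynomially in $\|\widetilde W_n-\widetilde W\|_\square$ using the uniform boundedness of marginals, $L^\infty$, and $L^1$ norms.

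By the definition of the cut metric (the analogue of Definition~\ref{def:del22} with $\|\cdot\|_{2\to 2}$ replaced by $\|\cdot\|_\square$), cut metric convergence of $W_n$ to $W$ yields trivial extensions and couplings along which the pullbacks $\widetilde W_n,\widetilde W$ on a common space $(\widetilde\Omega,\widetilde\cF,\widetilde\mu)$ of infinite measure satisfy $\|\widetilde W_n-\widetilde W\|_\square\to 0$, while each $\widetilde W_n$ still has marginals bounded by $D$. For the bound $\|D_W\|_\infty\le D$, fix a finite-measure set $S$ and an exhaustion $\widetilde\Omega_k\uparrow\widetilde\Omega$ by finite-measure sets, and bound
\[
\int_{S\times\widetilde\Omega_k}\widetilde W \le \int_{S\times\widetilde\Omega_k}\widetilde W_n + \|\widetilde W_n-\widetilde W\|_\square \le D\mu(S)+\|\widetilde W_n-\widetilde W\|_\square;
\]
letting $n\to\infty$ then $k\to\infty$ (using integrability of $\widetilde W$) gives $\int_S D_{\widetilde W}\le D\mu(S)$ for every finite-measure $S$, hence $D_W\le D$ almost everywhere. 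The same exhaustion argument applied to $\int\widetilde W_n-\int\widetilde W$ gives $|\|W_n\|_1-\|W\|_1|\le\|\widetilde W_n-\widetilde W\|_\square\to 0$, so the sequence $\{\cW_n\}\cup\{\cW\}$ is uniformly $(C,D)$-bounded for some $C$ and the edge-density term in $\d22$ is immediately controlled.

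For the marginal term $f=D_{\widetilde\cW_n}-D_{\widetilde\cW}$, the same exhaustion applied with a finite-measure $T$ instead of $\widetilde\Omega$ gives $|\int_S f|\le\|\widetilde W_n-\widetilde W\|_\square$, and combining with $|\int_S f|\le 2D\mu(S)$ and optimizing over $\mu(S)$ yields $\|f\|_\jbl\le\sqrt{2D\|\widetilde W_n-\widetilde W\|_\square}$; the bound $\|f\|_2^2\le\|f\|_\jbl\sqrt{2\|f\|_1\|f\|_\infty}$ of Lemma~\ref{lem:sets2equiv}, together with $\|f\|_\infty\le 2D$ and $\|f\|_1\le 2C$, upgrades this to $\|f\|_2^2\to 0$. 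Finally, the inequality $\|U\|_\jbl\le\sqrt{\|U\|_\square\|U\|_\infty}$ recorded after Definition~\ref{def:jumble-norm} gives $\|\widetilde W_n-\widetilde W\|_\jbl\le\sqrt{\|\widetilde W_n-\widetilde W\|_\square}$, and Lemma~\ref{lemmasquare2to2equiv} then converts this into
\[
\|\widetilde W_n-\widetilde W\|_{2\to 2}^4 \le 8\cdot(2D)^{3/2}\cdot(2C)^{3/4}\cdot\sqrt{\|\widetilde W_n-\widetilde W\|_\square}\to 0.
\]
The main technical obstacle lies in this last step of upgrading cut-norm control to kernel-norm control in the sparse setting, but Lemma~\ref{lemmasquare2to2equiv} does exactly that (at the cost of a suboptimal exponent $1/8$), and all three bounds together yield $\deltt(\cW_n,\cW)\le\d22(\widetilde\cW_n,\widetilde\cW)\to 0$.
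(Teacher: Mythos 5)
Your proof is essentially correct, and for the central estimate it follows the same route as the paper: you combine Lemma~\ref{lemmasquare2to2equiv} with $\|U\|_\jbl\le\sqrt{\|U\|_\square\|U\|_\infty}$ to upgrade cut-norm smallness to $\|\cdot\|_{2\to2}$-smallness, using the uniform $\|\cdot\|_\infty$, $L^1$, and marginal bounds to control the other factors; the paper carries out the identical calculation. Where you diverge is in the preliminary step $\|D_W\|_\infty\le D$: the paper argues by contradiction via Lemma~\ref{lemmacutmetricrestrictedtoD}, whereas you give a direct exhaustion/monotone-convergence argument. Your route is more elementary and self-contained (it does not import the machinery of restricting to $\{D_W\le D'\}$), which is a small but genuine advantage.

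Two small corrections to your write-up are worth flagging. First, the order of limits in the exhaustion step is reversed: since the coupling space and coupling measure must be chosen separately for each $n$ (there is in general no single space that works for all $n$), the inner integral $\int_{S\times\widetilde\Omega_k}\widetilde W$ depends on $n$, so you cannot take $n\to\infty$ for fixed $k$. Instead, for each fixed $n$ first let $k\to\infty$ (monotone convergence; integrability of $\widetilde W$ is not needed because $\widetilde W\ge 0$), transfer the resulting bound on $\int_{\widetilde S}D_{\widetilde W}$ back to $\int_S D_W$ via the measure-preserving projection, and then let $n\to\infty$. Second, your detour through $\|f\|_\jbl$ for the marginal term is more roundabout than necessary and yields the weaker exponent $\|\widetilde W_n-\widetilde W\|_\square^{1/4}$; as the paper does, one can directly bound $\|D_{W_n}-D_W\|_1\le 2\|\widetilde W_n-\widetilde W\|_\square$ and then $\|D_{W_n}-D_W\|_2\le\sqrt{\|D_{W_n}-D_W\|_1\|D_{W_n}-D_W\|_\infty}$ to get the cleaner exponent $1/2$. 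Neither point affects the conclusion.
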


\begin{proof}
Suppose that for each $n$, $D_{W_n} \le D$ almost everywhere, but $D_W>D$ on
a set of positive measure. Then there exists $D' \ge D$ such that
$\mu(\{D_W>D'\})>0$ and $\mu(\{D_W=D'\})=0$. By Lemma
\ref{lemmacutmetricrestrictedtoD}, $W_{n,\le D'}$ converges to $W_{\le D'}$
in the cut metric, but the cut distance of $W_{n,\le D'}$ from $W_n$ is $0$,
since $D_{W_n} \le D$ almost everywhere. Therefore, the cut distance of
$W_{\le D'}$ and $W$ is $0$, which is a contradiction.

Now, we have the following. Since $\|W_n\|_1 \rightarrow \|W\|_1$, there
exists a uniform bound $C$ on $\|W_n\|_1$ and $\|W\|_1$, which implies that
$\|W_n-W\|_1 \le 2C$. Furthermore, for any $x$, $|D_{|W_n-W|}(x)| \le
D_{W_n}(x)+D_W(x) \le 2D$. By Lemma \ref{lemmasquare2to2equiv}, and recalling
that $\|U\|_{\jbl} \le \sqrt{\|U\|_\square\|U\|_\infty}$, we have
\begin{align*}
\|W_n - W\|_{2\to2} &\le \left(8\|W_n-W\|_{\jbl}\|W_n-W\|_\infty^{3/4}\|D_{|W_n-W|}\|_\infty^{3/2}\|W_n-W\|_1^{3/4}\right)^{1/4}\\
&\le
\left(8\|W_n-W\|_{\square}^{1/2}\|W_n-W\|_\infty^{5/4}\|D_{|W_n-W|}\|_\infty^{3/2}\|W_n-W\|_1^{3/4}\right)^{1/4}\\
&\le \left(100D^{3/2}C^{3/4}\|W_n-W\|_{\square}^{1/2}\right)^{1/4} \to 0
.
\end{align*}
Furthermore, note that
\[\|D_{W_n}-D_W\|_1 \le 2 \sup_S\left\{\left|\int_S D_{W_n}-D_W \right|\right\} \le 2\|W_n-W\|_\square
.\] Therefore,
\begin{align*}
\|D_{\cW_n}-D_{\cW}\|_2 &= \|D_{W_n}-D_W\|_2\\
& \le
\sqrt{\|D_{W_n}-D_W\|_1\|D_{W_n}-D_W\|_\infty}
 \le \sqrt{4D\|W_n-W\|_\square}\to 0 .
\end{align*}
Finally,
\[\|\cW_n\|_1 = \|W_n\|_1 \to \|W\|_1=\|\cW\|_1
. \qedhere\]
\end{proof}

The last lemma we need to prove Theorem \ref{thmcutmetricvsweakkernel} is the
following.

\begin{lemma} \label{lemmadelgp0cut0}
Suppose that $\delGP(\cW,\cW')=0$ for two integrable graphexes, and suppose
that $\cW=(W,0,0,\bOmega)$. Then $\cW'=(W',0,0,\bOmega')$ and
$\delta_\square(W,W')=0$.
\end{lemma}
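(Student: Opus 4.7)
The plan is to apply the identifiability theorem (Theorem~\ref{forwardequiv}). Since $\delGP(\cW,\cW')=0$, that theorem produces a third graphex $\cW''=(W'',S'',I'',\bOmega'')$ over $\bOmega''=(\Omega'',\cF'',\mu'')$ together with measure-preserving maps $\phi\colon\dsupp\cW\to\Omega''$ and $\phi'\colon\dsupp\cW'\to\Omega''$ such that $\cW|_{\dsupp\cW}=\cW''^{\phi}$ and $\cW'|_{\dsupp\cW'}=\cW''^{\phi'}$ almost everywhere, and (by the parenthetical clause in that theorem) $W,S$ vanish outside $\dsupp\cW$ and $W',S'$ vanish outside $\dsupp\cW'$.

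The first step is to show that $\cW''$ is itself pure, i.e., $S''=0$ a.e.\ and $I''=0$. Since $S\equiv 0$, the first pullback identity gives $S''\circ\phi=0$ a.e.\ on $\dsupp\cW$, and because $\phi$ is measure preserving,
\[
\mu''(\{S''\neq 0\})=\mu|_{\dsupp\cW}\bigl(\{x : S''(\phi(x))\neq 0\}\bigr)=0.
\]
Likewise $I''=I=0$, so $\cW''=(W'',0,0,\bOmega'')$. Substituting this into $\cW'|_{\dsupp\cW'}=\cW''^{\phi'}$ yields $S'=0$ a.e.\ on $\dsupp\cW'$ and $I'=0$; combined with the parenthetical giving $S'=W'=0$ off $\dsupp\cW'$, we conclude $\cW'=(W',0,0,\bOmega')$ as claimed.

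For the cut-metric bound, I would show $\delta_\square(W,W'')=0$ and (by the symmetric argument applied to $\phi'$) $\delta_\square(W',W'')=0$, whence the triangle inequality for $\delta_\square$ (established in \cite{BCCH16}) yields $\delta_\square(W,W')=0$. To prove $\delta_\square(W,W'')=0$, trivially extend $\bOmega$ and $\bOmega''$ by appending (separate) $\sigma$-finite spaces of infinite total measure on which the extended graphons vanish, and use the coupling that restricts to the graph of $\phi$ on $\dsupp\cW\times\Omega''$ and couples the remainder---namely $(\Omega\setminus\dsupp\cW)$ together with the first appendage, against the appendage on the second side---in any measure-preserving way. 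Under this coupling the two pullbacks agree almost everywhere: on graph-of-$\phi$ pairs by $W(x,x')=W''(\phi(x),\phi(x'))$, and on any other pair both pullbacks vanish, the $W''$-pullback because one $\Omega''$-coordinate lies in the appendage, and the $W$-pullback because one $\Omega$-coordinate lies off $\dsupp\cW$ and $W$ is supported on $(\dsupp\cW)^2$ (a consequence of purity of $\cW$ together with nonnegativity). Hence the cut norm of the difference is zero.

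The main technical obstacle is the bookkeeping for trivial extensions and couplings needed to promote the one-sided measure-preserving map $\phi$ to a genuine coupling between graphons on spaces of possibly different total measure, in the sense of the cut metric on graphons over $\sigma$-finite measure spaces of \cite{BCCH16}. This parallels the corresponding manipulations for the kernel metric in Section~\ref{sec:prelim} (notably Lemma~\ref{lem:extension} and the proof of Theorem~\ref{thm:deltt-metric}), and is essentially routine once one observes that purity of $\cW$ forces $W$ to be supported on $(\dsupp\cW)^2$, so that the coupling built from the graph of $\phi$ produces pullbacks that agree pointwise on the graph and trivially off it.
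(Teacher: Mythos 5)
Your proof is correct, but it takes a genuinely different route from the paper's. The paper argues probabilistically and briefly: by Theorem~\ref{theoremsamedistribution}, $\delGP(\cW,\cW')=0$ forces $\xi(G(\cW))$ and $\xi(G(\cW'))$ to be equal in law; then, invoking the observation (Remark~5.4 in \cite{JANSON16}, already recalled in Section~\ref{sec:defs}) that dust, star, and graphon edges are distinguishable inside $\cG_\infty$ by the degrees of their endpoints, the absence of star and dust edges in $\cG_\infty(\cW)$ forces $S'=0$ a.e.\ and $I'=0$; finally the cut-distance conclusion is read off from the identification theorem for pure graphons, Theorem~27 of \cite{BCCH16}. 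You instead deduce everything analytically from the paper's own Theorem~\ref{forwardequiv}: purity of the common pullback $\cW''$, and hence of $\cW'$, falls out of the pullback identities and measure preservation, and $\delta_\square(W,W')=0$ follows from explicit couplings plus the triangle inequality for $\delta_\square$. Your route is more self-contained --- it avoids both the probabilistic edge-type dichotomy and the external appeal to the pure-graphon identification theorem --- at the cost of redoing some coupling bookkeeping that \cite{BCCH16} would otherwise supply. One small simplification for your last step: there is no need to extend to infinite measure. Since $\phi$ is measure preserving, $\mu''(\Omega'')=\mu(\dsupp\cW)\le\mu(\Omega)$, so extending $\bOmega''$ trivially to total measure $\mu(\Omega)$ and coupling the graph of $\phi$ on $\dsupp\cW\times\Omega''$ against an arbitrary coupling of $\Omega\setminus\dsupp\cW$ with the appendage already produces a coupling under which the pullbacks agree a.e.; this is exactly the setting of the BCCH16 cut distance, and it yields $\delta_\square(W,W'')=0$ directly without any discussion of whether further extension could shrink the distance.
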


\begin{proof}
Since $\delGP(\cW,\cW')=0$, $\xi(G(\cW))$ and $\xi(G(\cW'))$ have the same
distribution. But, as already observed in Remark 5.4 in \cite{JANSON16},
almost surely, the dust part of $\cW'$ generates edges which are isolated,
the star part generates edges with one vertex of degree one and a second
vertex of infinite degree, and the graphon part generates edges with two
endpoints of infinite degree. Since $\xi(G(\cW))$ has no star or dust edges,
$\xi(G(\cW'))$ doesn't have these either, showing that
$\cW'=(W',0,0,\bOmega')$. Finally, since the graphon process generated by $W$
and $W'$ have the same distribution, $\delta_\square(W,W')=0$ by Theorem 27
in \cite{BCCH16}.
\end{proof}

We are now ready to prove Theorem \ref{thmcutmetricvsweakkernel}.

\begin{proof}[Proof of Theorem \ref{thmcutmetricvsweakkernel}]
First, we show that if a sequence converges in the cut metric, then it
converges in $\delGP$. We show property (2) from Proposition
\ref{propgeneralconvergenceequiv}. Since the graphons converge in cut metric,
we must have in particular that $\|\cW_n\|_1=\|W_n\|_1 \to
\|W\|_1=\|\cW\|_1$, which implies that the set $\{\|\cW_n\|\}_{n}$ is
uniformly bounded; therefore, the sequence is tight by Corollary
\ref{cor:C-bounded-tight} (1). By Lemma \ref{lemmacutmetricrestrictedtoD},
for any $D>0$ with $\mu(\{D_W=D\})=0$, $W_{n,\le D}$ converges to $W_{\le D}$
in cut metric, and by Lemma \ref{lemmaboundedcutmetricimplieskernelmetric},
they must also converge in $\deltt$, which completes the proof that cut
metric convergence implies weak kernel convergence. Since we know that any
cut metric convergent sequence is uniformly tail regular, this completes that
proof that (1) implies (2). It is clear that (2) is stronger than (3), so it
remains to show that (3) implies (1).

To this end, we first note that uniform tail regularity implies uniformly
bounded $L^1$ norms, which by Theorem~\ref{thm:UI-norm-convergence} implies
that $\cW$ is integrable. Suppose that $\cW$ does not consist of only a
graphon part, or that $W_n$ does not converge to it in the cut metric. Since
the sequence $W_n$ is uniformly tail regular, we may choose a subsequence
that converges to an integrable graphon $W'$, such that either
$\delta_\square(W',W) \ne 0$, or $\cW$ is not just a graphon. In either case,
letting $\cW'=(W',0,0,\bOmega')$, we have by Lemma \ref{lemmadelgp0cut0} that
$\delGP(\cW,\cW') \ne 0$. However, since $\delta_\square(W_n,W') \to 0$, we
must have that $\delGP(\cW_n,\cW') \to 0$, which implies that
$\delGP(\cW',\cW)=0$, which is a contradiction. This completes the proof that
(3) implies (1), and thus we have proven the theorem.
\end{proof}

\begin{proof}[Proof of Theorem \ref{thmwhenconvergetographon}]
First, assume that $\cW=(W,0,0,\bOmega)$, and let us prove (2). Assume first
that the sequence has uniformly bounded marginals. In this case,
$\deltt(\cW_n,\cW) \to 0$. Take a sequence of couplings of trivial extensions
of $\cW_n$ and $\cW$ which show that their kernel distance goes to zero, let
$\bOmega_n'$ be the space for each $n$, and $\cW_n'$ and $\cW^n$ the pulled
back graphexes, and let $W_n'$ and $W^n$ be their graphon parts. By Corollary
\ref{coruniftailregpullback}, it is enough to prove uniform tail regularity
for $W_n'$. Given $\eps>0$, let $\delta>0$ be such that
\[
\|W\|_1-\|W|_{M_\delta}\|_1\leq\eps,
\]
where $M_\delta$ is the set
\[
M_\delta=\{x \in \Omega: D_{W}(x) \ge \delta\}.
\]
Let $M_\delta^n$ be the pullback to $\Omega_n'$. We then have that
\[
\|W_n'-W^n\|_{2\to2} \to 0.
\]
Since $M_\delta^n$ has finite measure, this implies that
\[
\int_{M_\delta^n \times M_\delta^n} W_n' \to \int_{M_\delta \times M_\delta}W.
\]
Since $\|\cW_n'\|_1= \|\cW_n\|_1 \to \|\cW\|_1=\|W\|_1$, this implies that
\[
\limsup_{n\to\infty} \Bigl(\|W_n'\|_1 - \int_{M_\delta^n \times M_\delta^n} W_n'\Bigr)
\leq\limsup_{n\to\infty} \Bigl(\|\cW_n'\|_1 - \int_{M_\delta^n \times M_\delta^n} W_n'\Bigr) \le \varepsilon.
\]
This can be made arbitrarily small by taking $\delta$ small enough, which
proves uniform tail regularity under the assumption of uniformly bounded
marginals. On the other hand,
\begin{align*}
2\limsup_{n\to\infty} \Bigl(\|S_n\|_1+I_n\Bigr)&=\limsup_{n\to\infty} \Bigl(\|\cW_n\|_1-\|W_n'\|_1\Bigr)\\
&\leq \|W\|_1-\lim_{n\to\infty}\int_{M_\delta^n \times M_\delta^n} W_n'\leq\eps,
\end{align*}
which implies that $\|S_n\|_1 \to 0$ and $I_n \to 0$, completing the proof of
(2) under the assumption of uniformly bounded marginals.

If instead of uniformly bounded marginals, we have uniform integrability,
then the claims follow from the fact that for each $D>0$, $\deltt(\cW_{n,\le
D},\cW_{\le D}) \to 0$, and we can take $D$ large enough so that each
$\int_{\Omega_{n,>D}}S_{n}$ is less than $\varepsilon$ and
\[
\int_{\Omega \times \Omega_{>D}} W_n < \varepsilon ,\] and $I_n$ is unaffected
by the restriction.

Conversely, assume (2). Let, for each $n$, $\cW_n'=(W_n,0,0,\bOmega)$ (so we
are replacing $S_n$ and $I_n$ with $0$). Since $\|S_n\|_1 \to 0$ and $I_n \to
0$,
\[\delGP(\cW_n,\cW_n') \to 0
\]
Indeed, clearly $|\|\cW_n\|_1 - \|\cW_n'\|_1| \to 0$, and for any $D>0$, we
have that $\int_{\Omega_{\le D}} S_n^2 \le D\|S_n\|_1 \to 0$. Taking $D$
large enough that $\Omega_{>D}$ has small measure, we can show that
$\delGP(\cW_n,\cW_n')$ is arbitrarily small for large enough $n$. Now, the
statement follows from Theorem \ref{thmcutmetricvsweakkernel}; specifically,
we have shown that (3) holds for the sequence $\cW_n'$, which by (2) implies
that the limit is a pure graphon.

The equivalence of (2) and (3) follows from Theorem
\ref{thmcutmetricvsweakkernel} applied to the sequence $\cW_n'$.
\end{proof}

\section*{Acknowledgements}

L\'aszl\'o Mikl\'os Lov\'asz thanks Microsoft Research New England for an
internship in the summer of 2016, when most of the research part of this work
was done. L\'aszl\'o Mikl\'os Lov\'asz was also supported by NSF Postdoctoral Fellowship Award DMS 1705204 for part of this work. All of us thank Svante Janson and Nina Holden for various
discussions about the work presented here.

\appendix

\section{Local finiteness}\label{app:local-finite}
In this appendix, we prove Proposition~\ref{prop:local-finite}.

Throughout this appendix, $\bOmega=(\Omega,\cF,\mu)$ will be a
$\sigma$-finite measure space, $S\colon\Omega\to\RR_+$ will be measurable,
$W\colon\Omega\times\Omega\to [0,1]$ will be a symmetric, measurable
function, $\eta=\sum_{i}\delta_{x_i}$ will be a Poisson point process on
$\Omega$ with intensity $\mu$, and
\[
\eta(S)=\sum_i S(x_i)
\qquad\text{and}\qquad
\eta^2(W)=\sum_{i\neq j} W(x_i,x_j).
\]
We start with the following lemma, which is the analogue of Lemma A.3.6 from
\cite{Kal05} for general measure spaces. We use $\EE$ to denote expectations
with respect to the Poisson point process and $W\circ W$ to denote the
function $(x,y)\mapsto\int W(x,z)W(z,y)\,d\mu(z)$.

\begin{lemma}\label{lem:PP-moments}
Let $\psi(x)=1-e^{-x}$. Then the following hold, with both side of the
various identities being possibly infinite:
\begin{enumerate}
\item $\EE[\eta(S)]=\|S\|_1$ and $\EE[\eta^2(W)]=\|W\|_1$,
    \label{EofEtaS+W}
\item $\EE[\psi(\eta(S))]=\psi(\|\psi(S)\|_1)$, and \label{EofpsiEtaS}
\item $\EE[(\eta^2(W))^2]=\|W\|_1^2+4\|W\circ W\|_2+2\|W^2\|_1$.
    \label{EofEtaW2}
\end{enumerate}
\end{lemma}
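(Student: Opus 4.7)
The plan is to proceed by truncation and monotone limit. Using $\sigma$-finiteness of $\mu$, write $\Omega=\bigcup_n\Omega_n$ as an increasing union of measurable sets of finite measure, set $S_n=S\cdot 1_{\Omega_n}$ and $W_n=W\cdot 1_{\Omega_n\times\Omega_n}$, and let $\eta_n$ denote the restriction of $\eta$ to $\Omega_n$, which is a Poisson process on $\Omega_n$ with finite intensity $\mu|_{\Omega_n}$. Then $\eta_n(S)=\eta(S_n)\uparrow\eta(S)$ and $\eta_n^2(W)=\eta^2(W_n)\uparrow\eta^2(W)$ almost surely. For each fixed $n$, everything reduces to elementary calculations exploiting the standard fact that conditionally on the total number of points $N\sim\Poisson(\mu(\Omega_n))$, the points are i.i.d.\ with law $\mu|_{\Omega_n}/\mu(\Omega_n)$.

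For (\ref{EofEtaS+W}): conditioning on $N$ gives $\EE[\eta_n(S_n)\mid N]=N\|S_n\|_1/\mu(\Omega_n)$, and averaging over $N$ yields $\|S_n\|_1$; similarly $\EE[\eta_n^2(W_n)\mid N]=N(N-1)\|W_n\|_1/\mu(\Omega_n)^2$, and the identity $\EE[N(N-1)]=\mu(\Omega_n)^2$ for Poisson random variables gives $\|W_n\|_1$. Monotone convergence as $n\to\infty$ then passes to the limit. For (\ref{EofpsiEtaS}): independence of the i.i.d.\ points under conditioning on $N$ yields the Laplace functional
\[
\EE\bigl[e^{-\eta_n(S_n)}\bigr]=\exp\!\Bigl(-\int_{\Omega_n}\psi(S_n)\,d\mu\Bigr)=\exp(-\|\psi(S_n)\|_1),
\]
so $\EE[\psi(\eta_n(S_n))]=\psi(\|\psi(S_n)\|_1)$. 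Since $\psi\leq 1$, bounded convergence combined with the monotone convergence $\|\psi(S_n)\|_1\uparrow\|\psi(S)\|_1$ completes the passage to the limit.

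For (\ref{EofEtaW2}) we expand
\[
(\eta_n^2(W_n))^2=\sum_{\substack{i\neq j\\ k\neq\ell}}W_n(x_i,x_j)\,W_n(x_k,x_\ell)
\]
and partition the index tuples by the coincidence pattern of $\{i,j\}$ and $\{k,\ell\}$: (I)~all four indices distinct, (II)~exactly one coincidence (four sub-cases $i=k$, $i=\ell$, $j=k$, $j=\ell$), and (III)~$\{i,j\}=\{k,\ell\}$ as sets (two orderings). The factorial moment formula $\EE\bigl[\sum_{\text{distinct}}f(x_{i_1},\dots,x_{i_r})\bigr]=\int f\,d\mu^r$, which follows from conditioning on $N$, together with the symmetry of $W$, produces the contributions $\|W_n\|_1^2$ from (I), $4\int D_{W_n}^2\,d\mu$ from (II) (each sub-case reducing to $\int W_n(x,y)W_n(x,z)\,d\mu^3=\|W_n\circ W_n\|_1$), and $2\|W_n^2\|_1$ from (III). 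Summing and applying monotone convergence yields the claimed identity.

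The main obstacle is simply the bookkeeping in the case analysis for (\ref{EofEtaW2}); the limiting arguments are routine since the relevant quantities in (\ref{EofEtaS+W}) and (\ref{EofEtaW2}) are nonnegative and monotone under truncation, while (\ref{EofpsiEtaS}) involves only bounded quantities to which dominated convergence applies.
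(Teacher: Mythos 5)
Your proof is correct and follows essentially the same route as the paper: reduce to finite intensity via $\sigma$-finiteness, condition on the total number of Poisson points $N$, exploit the i.i.d.\ structure of the conditioned points to compute conditional expectations, average over $N$, and pass to the limit by monotone (or bounded) convergence. One small observation: your computation of the case-(II) contribution in part~(3) yields $4\int D_{W}^2\,d\mu = 4\|W\circ W\|_1$, which agrees with the paper's own proof and with its later application, confirming that the $\|W\circ W\|_2$ appearing in the lemma statement is a typo for $\|W\circ W\|_1$.
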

\begin{proof}
We first assume that $m=\mu(\Omega)$ is finite and $S$ is bounded. Then
$\eta$ can be generated by first choosing $N$ as a Poisson random variable
with rate $m$ and then choosing $x_1,\dots,x_N$ i.i.d.\ according to the
distribution $\frac 1m\mu$. Conditioned on $N$, the expectations of $\eta(S)$
and $\eta^2(W)$ are $\frac Nm\|S\|_1$ and $\frac{N(N-1)}{m^2}\|W\|_1$,
respectively, and the expectation of $\psi(\eta(S))$ is
\begin{align*}
\EE[\psi(\eta(S))\mid N]&= 1 - \EE[e^{-\sum_{i=1}^N S(x_i)}]\\
&=1-\prod_{i=1}^N \frac 1m\int_{\Omega} d\mu(x_i)e^{-S(x_i)}
=1-\Bigl(\frac 1m\int_{\Omega} d\mu(x) e^{-S(x)}\Bigr)^N.
\end{align*}
Therefore,
\[\EE[\eta(S)]=\sum_{N=0}^\infty e^{-m}\frac{m^N}{N!}\frac{N}{m}\|S\|_1=\sum_{N=1}^\infty
e^{-m}\frac{m^{N-1}}{(N-1)!}\|S\|_1=\|S\|_1
.\] Also,
\[\EE[\eta^2(W)]=\sum_{N=0}^\infty e^{-m}\frac{m^N}{N!}\frac{N(N-1)}{m^2}\|W\|_1=\sum_{N=,}^\infty
e^{-m}\frac{m^{N-2}}{(N-2)!}\|W\|_1=\|W\|_1
.\] Finally,
\begin{align*}
\EE[\psi(\eta(S))]&=\sum_{N=0}^\infty e^{-m}\frac{m^N}{N!}\left(1-\Bigl(\frac 1m\int_{\Omega} d\mu(x) e^{-S(x)}\Bigr)^N\right)\\
&=1-\sum_{N=0}^\infty e^{-m}\frac{m^N}{N!}\Bigl(\frac 1m\int_{\Omega} d\mu(x) e^{-S(x)}\Bigr)^N\\
&=1-\exp\left(\int_\Omega e^{-S(x)}\,d\mu(x)-m\right)
=1-\exp\left(\int_\Omega-\psi(S(x))\,d\mu(x)\right)\\
&=1-e^{\|\psi(S)\|_1}=\psi(\|\psi(S)\|_1)
.\end{align*}
To calculate the expectation of
\[
(\eta^2(W))^2=\sum_{i\neq j}\sum_{k\neq \ell}\EE[W(x_i,x_j)W(x_k,x_\ell)]
\]
we distinguish whether $\{i,j\}$ and $\{k,\ell\}$ intersect in $0$, $1$, or
$2$ elements, leading to the expression
\begin{align*}
\EE[(\eta^2(W))^2\mid N]
&= \frac{N(N-1)(N-2)(N-3)}{m^4}\|W\|_1^2\\
&\quad\phantom{}+\frac{4N(N-1)(N-2)}{m^3}\|W\circ W\|_1
+\frac{2N(N-1)}{m^2}\|W^2\|_1.
\end{align*}
Taking the expectation over $N$ gives the expression in the lemma similarly.
This completes the proof for spaces of finite measure and bounded functions
$S$. The general case follows by the monotone convergence theorem.
\end{proof}

Using Lemma~\ref{lem:PP-moments}, we now prove the following proposition,
which is the analogue of the relevant parts for us of Theorem A3.5 from
\cite{Kal05} for general $\sigma$-finite measure spaces.

\begin{proposition}\label{prop:as-finite}
Let $S\colon\Omega\to\RR_+$ be measurable, and let
$W\colon\Omega\times\Omega\to [0,1]$ be symmetric and measurable. Then the
following hold:
\begin{enumerate}
\item $\eta(S) <\infty$ a.s.\ if and only if $\|\min\{S,1\}\|_1<\infty$,
    and
\item $\eta^2(W)<\infty$ a.s.\ if and only if there exists a finite $D>0$
    such that the following three conditions hold:
\begin{enumerate}
\item $D_W<\infty$ almost surely,
\item $\mu(\{x\in\Omega: D_W(x)>D\})<\infty$, and
\item $\|W|_{\{x\in\Omega: D_W(x)\leq D\}}\|_1<\infty$.
\end{enumerate}
\end{enumerate}
\end{proposition}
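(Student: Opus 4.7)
The approach is to combine Lemma~\ref{lem:PP-moments}, the Mecke--Slivnyak formula for Poisson point processes, and a splitting of $\Omega$ at suitable level sets of the marginal $D_W$. The sufficiency directions reduce to first-moment estimates, while the necessity directions use the exact identity $\EE[\psi(\eta(S))]=\psi(\|\psi(S)\|_1)$ from Lemma~\ref{lem:PP-moments} to promote blow-ups of expectations into almost-sure blow-ups.

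For part~(1), the forward direction follows by decomposing $S=S\mathbf{1}_{\{S\leq 1\}}+S\mathbf{1}_{\{S>1\}}$: since $\mu(\{S>1\})\leq \|\min\{S,1\}\|_1<\infty$, $\eta$ has only finitely many points in $\{S>1\}$ a.s., each contributing a finite value of $S$; on the complement the first-moment bound $\EE[\eta(S\mathbf{1}_{\{S\leq 1\}})]=\|S\mathbf{1}_{\{S\leq 1\}}\|_1<\infty$ gives a.s.\ finiteness. For the converse, if $\|\min\{S,1\}\|_1=\infty$, then the pointwise comparison $\psi(s)\geq (1-1/e)\min\{s,1\}$ yields $\|\psi(S)\|_1=\infty$, so $\EE[\psi(\eta(S))]=\psi(\infty)=1$, and since $\psi(\eta(S))\leq 1$ this forces $\psi(\eta(S))=1$ a.s., i.e., $\eta(S)=\infty$ a.s.

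For the sufficiency in part~(2), I fix $D$ satisfying (a), (b), (c), set $A=\{D_W>D\}$ and $B=\{D_W\leq D\}$, and split
\[
\eta^2(W)=\eta^2(W|_{A\times A})+2\,\eta|_A\otimes\eta|_B(W)+\eta^2(W|_{B\times B}).
\]
By (b), $\eta(A)<\infty$ a.s., so the first term is a finite sum of values in $[0,1]$. By (c), the third has finite expectation $\|W|_{B\times B}\|_1\leq \|W|_B\|_1<\infty$. For the cross term, I condition on the a.s.\ finite list $y_1,\dots,y_N$ of points of $\eta|_A$; each satisfies $D_W(y_j)<\infty$ by (a), so each $\eta|_B(W(\cdot,y_j))$ has finite expectation and is therefore a.s.\ finite, and the full cross term is a finite sum of such.

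The main obstacle is the necessity of (a), (b), (c). Necessity of (a) is easy: if $\mu(\{D_W=\infty\})>0$, then with positive probability some Poisson point $x_i$ lies in $\{D_W=\infty\}$, and applying part~(1) to $S=W(x_i,\cdot)$ (noting $\|\min\{W(x_i,\cdot),1\}\|_1=D_W(x_i)=\infty$ since $W\leq 1$) forces $\sum_{j\neq i}W(x_i,x_j)=\infty$ a.s., contradicting a.s.\ finiteness of $\eta^2(W)$. The harder step is to show that, under~(a), if no $D$ satisfies both (b) and (c), then $\eta^2(W)=\infty$ with positive probability. The plan is to write $\eta^2(W)=\sum_i g_i$ with $g_i=\sum_{j\neq i}W(x_i,x_j)$, observe that a.s.\ finiteness of this sum requires both $|\{i:g_i>1\}|<\infty$ and $\sum_i\min\{g_i,1\}<\infty$, and use Mecke together with the identity $\EE[\psi(\eta(W(x,\cdot)))]=\psi(\|\psi(W(x,\cdot))\|_1)$ to compute the expectations of these counts in terms of $\mu(\{D_W>D\})$ and $\|W|_{\{D_W\leq D\}}\|_1$. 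The technical challenge is promoting these first-moment blow-ups to a.s.\ blow-ups: the events $\{g_i>1\}$ are correlated through the shared background process, so one needs to exploit $\sigma$-finiteness to partition $\Omega$ into disjoint pieces of finite measure and, on each piece, replace $g_i$ by an ``external'' degree that only depends on $\eta$ restricted to the complement of that piece. The independence of the Poisson restrictions to disjoint pieces then yields the required a.s.\ divergence.
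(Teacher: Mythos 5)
Your part (1) is correct, and for the converse direction it is essentially the paper's argument (expressed without explicitly invoking Kolmogorov's zero--one law, but logically equivalent); for the forward direction you use the elementary split $S = S\mathbf{1}_{\{S\le 1\}} + S\mathbf{1}_{\{S>1\}}$ instead of the identity $\EE[\psi(\eta(S))]=\psi(\|\psi(S)\|_1)$, which is slightly more hands-on but fine. Your sufficiency direction in part (2), and your argument for necessity of (a), are also correct; the paper proves necessity of (a) by working with a finite-measure subset $N\subseteq\{D_W=\infty\}$ and the independence of the restrictions to $N$ and $N^c$, whereas you invoke Mecke/Palm to condition on a single Poisson point landing in $\{D_W=\infty\}$, which is a legitimate and somewhat slicker variant.

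The genuine gap is in the necessity of (b) and (c). What you give there is a plan, not a proof, and as sketched it does not work. Your claim is that one can partition $\Omega$ into disjoint finite-measure pieces, replace each $g_i$ by an ``external'' degree depending only on $\eta$ restricted to the complement of the piece containing $x_i$, and then invoke independence across pieces. But the external degrees of points lying in \emph{different} pieces all depend on the shared restriction of $\eta$ to all the remaining pieces, so they are not independent of one another; disjointness of the domains where the $x_i$ live does not give independence of the random variables attached to them. You would have to pair disjoint ``source'' and ``target'' regions in a much more careful way, and even then it is unclear this isolates $\mu(\{D_W>D\})$ and $\|W|_{\{D_W\le D\}}\|_1$ separately. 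The paper handles the two conditions by genuinely distinct devices: for (b) it uses a two-colouring of the Poisson process into independent red and blue copies with intensity $\mu/2$, conditions on infinitely many red points $x_n$ in $\{D_W>D\}$, sets $S'(y)=\sum_n W(x_n,y)$, and reduces to part (1) for the blue process by showing $\|\min\{S',1\}\|_{1,\mu/2}=\infty$ (the nontrivial step being the argument that removes the set $\{S'>D'\}$ of small measure); for (c), once (b) is established, it uses the second-moment identity $\EE[(\eta^2(U))^2]=\|U\|_1^2+4\|U\circ U\|_1+2\|U^2\|_1$ from Lemma~\ref{lem:PP-moments}(3) together with the bound $\|U_n\circ U_n\|_1\le D\|U_n\|_1$ for $U_n=W|_{\Lambda_n}$ with bounded marginals, and a Paley--Zygmund argument to drive $\PP(\eta^2(W|_{\Omega_{\le D}})>\lambda)\to 1$. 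Neither the two-colouring nor the second-moment step appears in your sketch, and your first-moment/Mecke computation, while it does detect that $\int\psi(\|\psi(W(x,\cdot))\|_1)\,d\mu(x)$ diverges, by itself only gives divergence in expectation, which is exactly what you acknowledged you still need to upgrade.
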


\begin{proof}
Since $\frac 12\min\{1,x\}\leq \psi(x)\leq \min\{1,x\}$, the condition
$\|\min\{S,1\}\|_1<\infty$ in (1) is equivalent to the statement that
$\|\psi(S)\|_1<\infty$, which is equivalent to the statement that
$\psi(\|\psi(S)\|_1)<1$. By Lemma~\ref{lem:PP-moments} \eqref{EofpsiEtaS},
this is equivalent to saying that $\EE[\psi(\eta(S))]<1$, which holds if and
only if $\eta(S)<\infty$ with positive probability. By Kolmogorov's zero-one
law, we either have $\eta(S)<\infty$ almost surely, or $\eta(S)=\infty$
almost surely; therefore we have obtained that $\|\min\{S,1\}\|_1<\infty$ if
and only if $\eta(S)<\infty$ almost surely.

To prove the second statement, assume first that the conditions (a)--(c)
hold. Condition (a) then implies that a.s., no Poisson point falls into the
set $\{D_W=\infty\}$, which means we may replace $\Omega$ by a space such
that $D_W(x)<\infty$ for all $x\in \Omega$. Let $\Omega_{>D}=\{x\in\Omega:
D_W(x)>D\}$ and $\Omega_{\leq D}=\Omega\setminus\Omega_{>D}$. Since
$\Omega_{>D}$ has finite measure by assumption (b), we have that a.s., only
finitely many Poisson points fall into this set, which in particular implies
that the contribution of the points $x_i,x_j\in \Omega_{>D}$ to $\eta^2(W)$
is a.s.\ finite. Next let us consider the contributions to $\eta^2(W)$ from
pairs of points $x_i,x_j$ such that one lies in $\Omega_{>D}$ and the other
one lies in $\Omega_{\leq D}$. Observing that the Poisson process in
$\Omega_{>D}$ and $\Omega_{\leq D}$ are independent, and that a.s., there are
only finitely many points in $\Omega_{>D}$, it will clearly be enough to show
that for all $x\in \Omega_{>D}$, a.s.\ with respect to the Poisson process in
$\Omega_{\leq D}$,
\[
\sum_{j: x_j\in \Omega_{\leq D}} W(x,x_j)<\infty.
\]
But by Lemma~\ref{lem:PP-moments} \eqref{EofEtaS+W} applied to the function
$S'\colon\Omega_{\leq D} \rightarrow \RR_+$ defined by $S'(y)=W(x,y)$, the
expectation of this quantity is equal to
\[
\int_{\Omega_{\leq D}}S'(y) \,d\mu(y)=\int_{\Omega_{\leq D}}W(x,y)\,d\mu(y).
\]
This is bounded by $D_W(x)$ and hence finite, which proves that the sum is
a.s.\ finite. We are thus left with estimating $\eta^2(W|_{\Omega_{\leq
D}})$. Again by Lemma~\ref{lem:PP-moments} \eqref{EofEtaS+W}, we have that
$\EE[\eta^2(W|_{\Omega_{\leq D}})]=\|W|_{\Omega_{\leq D}}\|_1$ which is
finite by assumption (c), showing that $\eta^2(W|_{\Omega_{\leq D}})$ is
a.s.\ finite.

Conversely, let us assume that a.s., $\eta^2(W)<\infty$. First we will prove
that this implies $\mu(\{D_W=\infty\})=0$. Assume for a contradiction that
this is not the case. Since $\mu$ is $\sigma$-finite, we can find a
measurable set $N\subseteq\Omega$ such that $D_W(x)=\infty$ for all $x\in N$
and $0<\mu(N)<\infty$. Consider the contribution to $\eta^2(W)$ by all
Poisson points $(x_i,x_j)$ such that $x_i\in N$ and $x_j\in
N^c=\Omega\setminus N$. Since the Poisson processes on $N$ and $N^c$ are
independent, the finiteness of $\eta^2(W)$ implies that for almost all $x\in
N$, the sum $\sum_{j:x_j\in N^c }W(x,x_j)$ is a.s.\ finite. Applying
statement (1) of the current proposition to $W(x,\cdot)$ (and recalling that
$W$ is bounded by $1$), we conclude that for almost all $x\in N$,
$\int_{N^c}W(x,y)\,d\mu(y)<\infty$, which implies that for almost all $x\in
N$, $\int_NW(x,y)\,d\mu(y)=D_W(x)-\int_{N^c}W(x,y)\,d\mu(y)=\infty$. This is
a contradiction since $\mu(N)<\infty$ and $W\leq 1$.

We next prove (b) (for any value of $D$). Suppose for a contradiction that
$\mu(\{x \in \Omega:D_W(x)>D\})=\infty$. We then claim that almost surely,
$\eta^2(W)=\infty$. After obtaining the Poisson process, color each point
randomly red or blue, with equal probability, independently. We can then
obtain the red and blue points equivalently by taking two independent Poisson
processes, both with intensity $\mu/2$. We claim that almost surely, the sum
of $W(x,y)$ just over red-blue pairs is already $\infty$. We know that almost
surely, there are an infinite number of red points $x_i$ with $D_W(x_i)>D$.
Let $x_n$ be such a sequence, and given $y \in \Omega$, let
$S'(y)=\sum_{n=1}^\infty W(x_n,y)$. Then the sum of $W$ over red-blue edges
is equal to $\eta(S')$ for the Poisson process with intensity $\mu/2$.
Therefore, it suffices to prove that $\|\min\{S',1\}\|_{1,\mu/2}=\infty$.
First, note that if either $\mu(\{y \in \Omega:S'(y)=\infty\})>0$ or $\mu(\{y
\in \Omega:S'(y)>1\})=\infty$, then it clearly holds. Otherwise, we have that
as $D' \to \infty$, $\mu(\{y \in \Omega:S'(y)>D'\}) \to 0$; therefore, there
exists some $D'$ (without loss of generality, we may assume $D'\geq1$) such
that $\mu(\{y \in \Omega:S'(y)>D'\})<D/2$. Let $\Omega'$ be the complement of
$\{y \in \Omega:S'(y)>D'\}$. We then have that for each $x_n$,
\begin{align*}
\int_{\Omega'} W(x_n,y) \frac{d\mu(y)}{2}
&= \int_{\Omega} W(x_n,y) \frac{d\mu(y)}{2}-\int_{\Omega \setminus\Omega'} W(x_n,y) \frac{d\mu(y)}{2}\\
&\geq\frac 12 D_W(x_n) - \frac 12\mu(\Omega \setminus\Omega')
\ge \frac D2 - \frac D4.
\end{align*}
We also have that
\begin{align*}
\int_{\Omega'} S'(y)\frac{d\mu(y)}{2} &=\int_{\Omega'} \sum_{n=1}^\infty W(x_n,y) \frac{d\mu(y)}{2} \\
&= \sum_{n=1}^\infty\int_{\Omega'} W(x_n,y) \frac{d\mu(y)}{2}
 \ge \sum_{n=1}^{\infty} D/4= \infty.
\end{align*}
Therefore,
\begin{align*}
\int_\Omega \min\{S'(y),1\} \frac{d\mu(y)}{2}
&\ge \int_{\Omega'} \min\{S'(y),1\} \frac{d\mu(y)}{2}\\
&\ge
\frac 1{D'} \int_{\Omega'} \min\{S'(y),D'\} \frac{d\mu(y)}{2}
=\frac 1{D'} \int_{\Omega'} S'(y) \frac{d\mu(y)}{2}=\infty
.
\end{align*}
This contradiction completes the proof.

We are left with proving (c) (we will again prove it for any value of $D$).
Assume the opposite, and let $\Lambda_n\subseteq\Lambda$ be an increasing
sequence such that $\mu(\Lambda_n)<\infty$ and
$\bigcup_n\Lambda_n=\Omega_{\leq D}$. Let $U_n=W|_{\Lambda_n}$. Then
$\|U_n\|_1<\infty$, $\|U_n\|_1\uparrow \|W|_{\Omega_{\leq D}}\|_1=\infty$,
and $\|D_{U_n}\|_\infty\leq D$, implying in particular that $\|U_n\circ
U_n\|_1=\|D_{U_n}\|_2^2\leq D\|D_{U_n}\|_1=D\|U_n\|_1$. Given an arbitrary
constant $\lambda$, we claim that
\begin{equation}\label{Paley-Z}
\PP\Bigl(\eta^2(W|_{\Omega_{\leq D}})>\lambda\Bigr)
\geq \frac{(\|U_n\|_1-\lambda)^2}{\|U_n\|_1^2+(4D+2)\|U_n\|_1},
\end{equation}
provided $n$ is large enough to ensure that $\|U_n\|_1>\lambda$. Indeed,
writing
\[
\EE[\eta^2(U_n)]=\EE[\eta^2(U_n)1_{\eta^2(U_n)\leq\lambda}]
+\EE[\eta^2(U_n)1_{\eta^2(U_n)>\lambda}],
\]
we can bound the first term by $\lambda$ and the second by
$\sqrt{\EE[(\eta^2(U_{n}))^2]\PP[\eta^2(U_n)>\lambda]}$, using Cauchy's
inequality. We therefore obtain that
\[\EE[\eta^2(U_n)]\le \lambda
+\sqrt{\EE[(\eta^2(U_n))^2]\PP[\eta^2(U_n)>\lambda]}
.\]
Rearranging, we obtain the bound
\[
\PP\Bigl(\eta^2(U_n)>\lambda\Bigr)\geq \frac{(\EE[\eta^2(U_n)]-\lambda)^2}{\EE[(\eta^2(U_{n}))^2]}
=\frac{(\|U_n\|_1-\lambda)^2}{\|U_n\|_1^2+4\|U_n\circ U_n\|_1+\|U_n^2\|_1},
\]
where we used Lemma~\ref{lem:PP-moments} \eqref{EofEtaS+W}
and \eqref{EofEtaW2} in the last step. Observing
that
\[
\Pr(\eta^2(W|_{\Omega_{\leq D}})>\lambda)\geq \Pr(\eta^2(U_n)>\lambda)
\]
and bounding $4\|U_n\circ U_n\|_1+\|U_n^2\|_1$ by $(4D+2)\|U_n\|_1$, we
obtain \eqref{Paley-Z}. Since the right side of \eqref{Paley-Z} goes to $1$
as $n\to\infty$, we get that with probability one, $\eta^2(W|_{\Omega_{\leq
D}})>\lambda$ for all $\lambda$, which contradicts the assumption that
$\eta^2(W|_{\Omega_{\leq D}})<\infty$ a.s.
\end{proof}

\begin{proof}[Proof of Proposition~\ref{prop:local-finite}]
We first prove the equivalence of (A) -- (E). Clearly $(B) \Rightarrow
(C)\Rightarrow (A)$ and $(D) \Rightarrow (E)$. It is also not hard to see
that $(E) \Rightarrow (A)$. Indeed, note first that for any $D$, the
condition on $S$ is equivalent to the condition that $\min\{S,D\}$ is
integrable (which implies that $\mu(\{S>D\})<\infty$.) Set $\Omega'=\{D_W\leq
D\}\cap \{S\leq D\}$. Then (E) implies that
\begin{align*}
\|\cW|_{\Omega'}\|_1&\leq 2 I + \|W_{\{D_W\leq D\}}\|_1
+2\|S 1_{S\leq D}\|_1\\
& \le 2 I + \|W_{\{D_W\leq D\}}\|_1
+2\|\min\{S,D\}\|_1<\infty
\end{align*}
and $\mu(\Omega\setminus\Omega')\leq \mu(\{D_W>D\})+\mu(\{S>D\})<\infty$,
proving (A). So it will be enough to show $(A) \Rightarrow (B)$ and $(A)
\Rightarrow (D)$.

Suppose that (A) holds, and let $\Omega'$ be a set such that $\mu(\Omega
\setminus \Omega')<\infty$, $\cW' = \cW|_{\Omega'}$, and $\|\cW'\|_1 = C <
\infty$. Let $D>0$. First, assume that $D>D_0=\mu(\Omega \setminus \Omega')$.
Then
\[
\{x \in \Omega: D_\cW(x) > D\} \subseteq (\Omega \setminus \Omega') \cup \{x \in \Omega', D_{\cW'}(x) > D-D_0 \}.
\]
Since $\|D_{\cW'}\|_1 \le \|\cW'\|_1=C$, this set has measure at most
\[
D_0+ \frac{C}{D-D_0}.
\]
Now, let $\cW''=\cW|_{\{x:D_\cW(x) \le D\}}$. Then
\[
\|\cW''\|_1 \le \| \cW'\|_1
+ 2 \int_{\{ x \in \Omega \setminus \Omega': D_\cW(x) \le D\}} D_\cW(x)
\le \|\cW'\|_1 + 2DD_0.
\]
We have thus proven that (B) holds for all $D$ larger than some $D_0$, and
more generally for any $D$ for which there exists an $\Omega' \subseteq
\Omega$ with $\mu(\Omega \setminus \Omega')<D$ and $\|\cW|_{\Omega'}\|_1
<\infty$.

Note that if $\cW|_{\{x:D_\cW(x) \le D\}}$ is integrable for $D>D_0$, then it
must remain integrable if we decrease $D$, since that is just a restriction
to a subset. Therefore, this implies that $\cW|_{\{x:D_\cW(x) \le D\}}$ is
integrable for all $D$. Since $D_\cW<\infty$ almost everywhere, we further
have that $\mu(\{x \in \Omega: D_\cW(x) \ge \lambda\})$ tends to $0$ as
$\lambda$ tends to $\infty$ (since we at least know that it is finite for
large enough $\lambda$). Fixing $D>0$, we can therefore take $D'$ large
enough so that $\mu(\{x \in \Omega: D_\cW(x) \ge D'\})<D$. Taking
$\Omega':=\Omega \setminus \{x \in \Omega: D_\cW(x) \ge D'\}$, we get a set
$\Omega'$ such that $\mu(\Omega \setminus \Omega')<D$ and
$\|\cW|_{\Omega'}\|_1 <\infty$ proving that (B) holds for all $D>0$.

On the other hand if (A) holds for some $\Omega'$, then
$\|W|_{\Omega'}\|_1<\infty$ and $\|S 1_{\Omega'}\|_1<\infty$. Proceeding
exactly as above we conclude that for all $D$, $\mu(\{D_W>D\})<\infty$ and
$\|W|_{\{D_W\leq D\}}\|_1<\infty$, as well as $\mu(\{S>D\})<\infty$ and
$\|S1_{\{S\leq D\}}\|_1<\infty$. Since
\[\|\min\{S,D\}\|_1 = D\mu(\{S>D\}) + \|S1_{\{S\leq D\}}\|_1<\infty
,\] the latter condition is equivalent to $\|\min\{S,D\}\|_1<\infty$, as
required.

We are left with proving that the local finiteness conditions in
Definition~\ref{def:graphex} are necessary and sufficient for the almost sure
finiteness of $G_T(\cW)$ for all $T<\infty$. It is easy to check that the
local finiteness conditions are not affected if we multiply the underlying
measure by $T$ and $S$ by $T$. We therefore assume that $T=1$. Let
$\eta=\sum_{i}\delta_{x_i}$ be a Poisson process of intensity $\mu$ on
$\Omega$, let $Y_i$ be Poisson random variable with mean $S(x_i)$, and let
$Y_{ij}$ be Bernoulli with mean $W(x_i,x_j)$, all of them independent of each
other. We will have to show that the local finiteness conditions on $\cW$ are
equivalent to the a.s.\ finiteness of the sums
\[
e_S=\sum_i Y_i
\qquad\text{and}\qquad
e_W=\sum_{i > j} Y_{ij}.
\]
We next use the fact that a sum of independent, non-negative random variables
$\sum_k Z_k$ is a.s.\ finite if and only if
$\sum_i\EE[\min\{Z_i,1\}]<\infty$. In the case of $e_W$, $Y_{i,j}$ is
bounded, and therefore we immediately have that $e_W$ is a.s.\ finite if and
only if $\eta^2(W)$ is a.s.\ finite. Proposition \ref{prop:as-finite} (b)
then proves this case. In the case of $e_S$, setting $S'=\min\{S,1\}$,
applying Proposition \ref{prop:as-finite} to $S'$, and noting that $S'$ is
bounded, we have that $\sum_i S'(x_i)$ is almost surely finite if and only if
$\|S'\|_1<\infty$. This is exactly the condition on $S$.
\end{proof}

\section{Sampling with loops} \label{sec:samplingwithloops}
In this section, we discuss how to handle samples with loops. The sampling
process is adjusted as follows. We follow the same process as for
$\cG_T(\cW)$ and $\cG_\infty(\cW)$; however, for each vertex labeled as
$(t,x)$, with probability $W(x,x)$, we add a loop to the vertex. Deleting
isolated vertices as before, and then removing the feature labels from the
vertices, we obtain a family $(\widetilde{\cG}_T(\cW))_{T\geq 0}$ of labelled
graphs with loops, as well as the infinite graph
$\widetilde{\cG}_\infty(\cW)=\bigcup_{T\geq 0}\widetilde{\cG}_T(\cW)$.

Note that a vertex that was previously isolated may not be isolated anymore
if it receives a loop, so a vertex may have been deleted from $\cG_T(\cW)$
but not from $\widetilde{\cG}_T(\cW)$. We add a further condition for local
finiteness: \[\int_\Omega W(x,x) \,d\mu(x) < \infty.\] Note that if $\cW$ is
atomless, then the values $W(x,x)$ do not have an effect on $\cG_T$ and
$\cG_\infty$, and the diagonal constitutes a zero measure set in $\Omega
\times \Omega$.

As stated, Theorem \ref{thm:identify} is false for sampling with loops. Since
the diagonal may be a zero measure set, almost everywhere equal pullbacks do
not imply having the same looped samples. We could further add the condition
that $W(x,x)$ is equal to the pullback almost everywhere, but the theorem
would still be false. This is demonstrated by the following example. Let
$\bOmega_1=\bOmega_2=[0,1]$. Take $W_1$ to be constant $1/2$ on $[0,1] \times
[0,1]$, and let $W_2$ be constant $1/2$ off the diagonal, $0$ if $x<1/2$, $1$
otherwise. Let $\cW_i=(W_i,0,0,\bOmega_i)$. Then we claim that
$\widetilde{\cG}_T(\cW_1)$ and $\widetilde{\cG}_T(\cW_2)$ have the same
distribution. Indeed, both are equivalent to taking $\Poisson(T)$ vertices,
adding a loop to each vertex with probability $1/2$, independently, and also
taking an edge between each pair of vertices with probability $1/2$,
independently over different pairs.

It turns out that in general, allowing diagonal values strictly between $0$
and $1$ is not necessary, because we could extend the feature space to
determine whether each vertex has loops. For graphexes where the diagonal is
$0$ or $1$, we can then conclude an analogous theorem from
Theorem~\ref{thm:identify}.

We first show the following:

\begin{proposition}
For any graphex $\cW=(W,S,I,\bOmega)$, there exists a graphex
$\widetilde{\cW}=(\widetilde{W},\widetilde{S},\widetilde{I},\widetilde{\bOmega})$
on an atomless space $\widetilde\bOmega$ such that on the diagonal,
$\widetilde W$ is $\{0,1\}$ valued and such that
$\widetilde{\cG}_\infty(\widetilde{\cW})$ and
$\widetilde{\cG}_T(\widetilde{\cW})$ are equivalent to
$\widetilde{\cG}_\infty(\cW)$ and $\widetilde{\cG}_T(\cW)$, respectively.
\end{proposition}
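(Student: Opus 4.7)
The plan is to adjoin an auxiliary $[0,1]$-coordinate with Lebesgue measure and use it as the randomness that decides, at each sampled point, whether a loop is present. First I would set $\widetilde\bOmega=(\Omega\times[0,1],\cF\otimes\mathcal{B}([0,1]),\mu\times\lambda)$, where $\lambda$ is the Lebesgue measure. This space is automatically atomless, because $(\mu\times\lambda)(\{(x_0,u_0)\})=\mu(\{x_0\})\cdot\lambda(\{u_0\})=0$ for every $(x_0,u_0)$, independent of whether $\bOmega$ had atoms. Then I would set $\widetilde I=I$, $\widetilde S(x,u)=S(x)$, and
\[
\widetilde W\bigl((x,u),(y,v)\bigr)=\begin{cases}\mathbf{1}\{u<W(x,x)\}&\text{if }(x,u)=(y,v),\\ W(x,y)&\text{otherwise,}\end{cases}
\]
so that $\widetilde W$ is $[0,1]$-valued, symmetric, $\{0,1\}$-valued on the diagonal by construction, and equal to the pullback of $W$ off the diagonal.

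Next I would verify the technical conditions. Measurability of $\widetilde W$ follows because the diagonal of $\widetilde\Omega\times\widetilde\Omega$ is measurable (as the intersection of the preimages, under coordinate projections, of the diagonals of $\Omega\times\Omega$ and of $[0,1]\times[0,1]$), and because $(x,u)\mapsto\mathbf{1}\{u<W(x,x)\}$ is measurable (since $x\mapsto W(x,x)$ is the composition of the measurable map $x\mapsto(x,x)$ with the measurable function $W$). The local finiteness conditions of Definition~\ref{def:graphex} transfer directly from $\cW$ to $\widetilde\cW$: the diagonal is a $(\widetilde\mu\times\widetilde\mu)$-null set because $\widetilde\mu$ is atomless, so $\int \widetilde W((x,u),\cdot)\,d\widetilde\mu=D_W(x)$ for almost every $(x,u)$, and if $\Omega'\subseteq\Omega$ witnesses condition~(2) for $\cW$, then $\Omega'\times[0,1]$ witnesses it for $\widetilde\cW$.

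The core of the proof is matching the looped sample distributions. Here I would use the standard fact that a rate-$T\widetilde\mu$ Poisson process on $\widetilde\Omega$ can be generated by taking a rate-$T\mu$ Poisson process on $\Omega$ and attaching i.i.d.\ uniform $[0,1]$-labels $u_i$ to each Poisson point $x_i$. Then I would verify the four ingredients of the looped sampling separately: star intensities agree because $\widetilde S(x,u)=S(x)$; dust intensities agree because $\widetilde I=I$; for any two distinct Poisson points $((x_i,u_i),(x_j,u_j))$ we are off the diagonal in $\widetilde\Omega\times\widetilde\Omega$, so the edge probability $\widetilde W((x_i,u_i),(x_j,u_j))=W(x_i,x_j)$ coincides with the original; and a loop at a Poisson point $(x_i,u_i)$ appears with probability $\mathbf{1}\{u_i<W(x_i,x_i)\}$, which, averaged over the uniform $u_i$, yields exactly $W(x_i,x_i)$. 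The main obstacle is the case where $\bOmega$ has atoms and two Poisson points share a feature $x\in\Omega$: since their auxiliary labels $u_i$ are almost surely pairwise distinct, the corresponding points in $\widetilde\Omega$ remain distinct, so the edge probability is computed off the diagonal and equals $\widetilde W((x,u_i),(x,u_j))=W(x,x)$, as desired. This atomic case, together with the need to handle the pointwise (rather than almost-everywhere) definition of $\widetilde W$ carefully on and near the diagonal, is where care is required; everything else reduces to a routine reading of the definitions.
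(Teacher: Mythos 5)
Your construction — adjoining a uniform $[0,1]$ coordinate, setting $\widetilde W$ to pull back $W$ off the diagonal and to a threshold indicator $\mathbf 1\{u<W(x,x)\}$ on the diagonal, and identifying the Poisson process on $\Omega\times[0,1]$ with an $\Omega$-process carrying i.i.d.\ uniform marks — is exactly the paper's argument, including the observation that atomlessness makes repeated points in $\widetilde\Omega$ a null event. The proof is correct; the extra verifications of measurability and local finiteness are welcome detail but not a different route.
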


\begin{proof}
Let $\widetilde{\bOmega}=\bOmega \times [0,1]$, and let $\pi_1,\pi_2$ be the
projection maps. Note that we can obtain a Poisson process on
$\widetilde{\bOmega} \times \RR_+$ by taking a Poisson process on $\bOmega
\times \RR$, and independently labeling each point with a uniform random real
number from $[0,1]$, which becomes the second coordinate. Clearly
$\widetilde{\bOmega}$ is atomless, so the diagonal values only affect the
generation of the loops. Define $\widetilde{I}=I$, $\widetilde{S}=S \circ
\pi_1$, $\widetilde{W}(x,y)=W(\pi_1(x),\pi_1(y))$ if $x \ne y$, and
\[\widetilde{W}(x,x)= \begin{cases}
1, & \mbox{if } \pi_2(x) \le W(\pi_1(x),\pi_1(x)) \\
0, & \mbox{otherwise}.
\end{cases}
.\] Then the sampling of edges between vertices is not affected by the second
coordinate of a vertex. Note that the probability that there exist two
vertices corresponding to the same point in $\widetilde{\bOmega}$ is zero,
since $\widetilde{\bOmega}$ is atomless. For the loops, since we can obtain
the vertices by first taking the Poisson process on $\bOmega \times \RR$ and
then randomly labeling each vertex with a $[0,1]$ real number, we can see
that for a point $y \in \Omega$, if it ends up as a point, there is a
$W(y,y)$ probability that the point $x$ corresponding to it has
$\widetilde{W}(x,x)=1$, and $1-W(y,y)$ that $\widetilde{W}(x,x)=0$, and this
is independent over different points. Therefore, the distribution of loops is
the same.
\end{proof}

Using this proposition, sampling loops according to the diagonal is
equivalent to the following theory. The objects are graphexes with special
subsets $\cW=(W,S,I,\bOmega,A)$ where $W$, $S$, $I$, and $\bOmega$ are as
before, and the \emph{special set} $A \subseteq \Omega$ is a measurable
subset with finite measure. We sample $\widetilde{\cG}_\infty(\cW)$ in the
same way as $\cG_\infty(\cW)$, except that we add a loop to each vertex with
a feature label in $A$. We then take the non-isolated vertices with time
label at most $T$ for $\widetilde{\cG}_T(\cW)$. We can extend the definition
of measure-preserving map by requiring that points in the special set be
mapped to points in the special set, and points not in the special set be
mapped to points not in the special set. We also define $\dsupp$ as earlier,
except it contain all points in $A$ (even if otherwise they would not be
included).

\begin{theorem}
Let $\cW_1$ and $\cW_2$ be graphexes with special subsets as above. Then
$\widetilde{\cG}_T(\cW_1)$ and $\widetilde{\cG}_T(\cW_2)$ have the same
distribution for all $T \in \RR_+$ if and only if there exists a third
graphex with special subset $\cW$ such that $\cW_1$ and $\cW_2$ are pullbacks
of $\cW$.
\end{theorem}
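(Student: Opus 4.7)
The forward direction is immediate: a pullback in the extended sense (preserving the special subset) gives the same distribution for the looped samples, because the Poisson point process on the pulled-back space projects onto a Poisson process on the base space, and by construction loops are added according to whether a Poisson point lies in $A$. I will focus on the reverse direction.

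My plan is to reduce to Theorem~\ref{thm:identify} by augmenting each graphex-with-special-subset into an ordinary graphex that encodes $A$ in its graphon structure. Given $\cW=(W,S,I,\bOmega,A)$ over $\bOmega=(\Omega,\cF,\mu)$, I would set $\cW^\flat=(W^\flat,S^\flat,I^\flat,\bOmega^\flat)$ with $\bOmega^\flat=\bOmega\sqcup\{*\}$, where $*$ is an atom of measure $1$, $W^\flat$ agrees with $W$ on $\Omega\times\Omega$, $W^\flat(x,*)=\beta\cdot 1_A(x)$ for a fixed $\beta\in(0,1)$, $W^\flat(*,*)=0$, and $S^\flat,I^\flat$ extend $S,I$ by zero at $*$. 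Since $\mu(A)<\infty$, local finiteness of $\cW^\flat$ follows from that of $\cW$, and a direct calculation gives $D_{\cW^\flat}(x)=D_\cW(x)+\beta\cdot 1_A(x)$, so that $\dsupp\cW^\flat=(\dsupp\cW)\cup\{*\}$ (with $\dsupp\cW$ in the extended sense of containing $A$). The crucial bridge is a sampling correspondence: given the unlabeled looped graph $\widetilde{\cG}_T(\cW)$, one can produce a sample of $\cG_T(\cW^\flat)$ by (i) identifying the looped core vertices, which is intrinsic to the unlabeled graph; (ii) removing their loops; (iii) sampling a fresh $\Poisson(T)$ family of $*$-copies; (iv) independently joining each $*$-copy to each formerly-looped vertex with probability $\beta$; and (v) discarding resulting isolated vertices. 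This measurable procedure yields exactly the distribution of $\cG_T(\cW^\flat)$, so the equality of the looped samples of $\cW_1,\cW_2$ at all $T$ implies the equality of $\cG_T(\cW_1^\flat)$ and $\cG_T(\cW_2^\flat)$ at all $T$, and Theorem~\ref{thm:identify} produces $\cW_3^\flat$ and measure-preserving maps $\phi_i\colon\dsupp\cW_i^\flat\to\Omega_3^\flat$ with $\cW_i^\flat|_{\dsupp\cW_i^\flat}=(\cW_3^\flat)^{\phi_i}$ almost everywhere.

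It remains to extract a graphex with special subset from $\cW_3^\flat$ together with compatible pullback maps. Here I would replace $\cW_3^\flat$ by its canonical version via the construction of Section~\ref{sec:canonical}, so that $\phi_i$ may be taken to be the canonical map $\psi_{\cW_i^\flat}$. The image of the atom $*_i$ in feature space has a very specific structural signature, namely $S^\flat=0$ and $W^\flat$-values lying in $\{0,\beta\}$, which forces $\phi_1(*_1)$ and $\phi_2(*_2)$ to coincide at a single distinguished point $\hat *$ of $\Omega_3^\flat$. Defining $A_3=\{y\in\Omega_3^\flat\setminus\{\hat *\}:W_3^\flat(y,\hat *)=\beta\}$, restricting $\cW_3^\flat$ to $\Omega_3^\flat\setminus\{\hat *\}$ to obtain a graphex $\cW_3$, and equipping it with special subset $A_3$ then exhibits both $\cW_i|_{\dsupp\cW_i}$ as pullbacks of $\cW_3$ in the extended sense, with $A_i=\phi_i^{-1}(A_3)$ almost everywhere. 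The main obstacle I expect is step (iii) above, i.e., verifying that the canonical construction truly identifies $\phi_1(*_1)$ with $\phi_2(*_2)$: this requires confirming that the $*$-atom's canonical image is genuinely unique in $\Omega_3^\flat$, which should follow from the rigidity of the canonical construction together with the specific structural distinctness of the $*$-section, though care is needed in edge cases where $\mu(A)=0$ (so that $*$ becomes irrelevant) or where ordinary $\Omega$-points happen to have the same canonical feature as $*$ (which a careful choice of $\beta$ can preclude after possibly first rescaling).
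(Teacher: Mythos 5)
Your overall strategy is the same as the paper's: encode the special subset $A$ into the graphon part of an augmented graphex, pass through Theorem~\ref{thm:identify}, and recover a graphex-with-special-subset from the resulting third graphex. Your sampling correspondence (steps (i)--(v)) is also of the same flavor as the one the paper uses and appears correct. However, where the paper's construction is clean, yours has a real gap precisely at the step you yourself flag.

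The paper augments $\cW_i$ by a measure-one set $B_i$ with $W=1$ on $B_i\times B_i$, $W=1$ from $B_i$ to $A_i$, $W=0$ from $B_i$ to $\Omega_i\setminus A_i$, \emph{and simultaneously halves} $W_i$, so that outside $B_i$ the graphon takes values in $[0,1/2]$ while involving $B_i$ it takes values in $\{0,1\}$. This quantitative separation is what makes the augmented set structurally identifiable in the third graphex, and the clique on $B_i$ gives a second, independent handle. Your construction, as written, adds an atom $*$ of measure one with $W^\flat(*,y)=\beta\cdot 1_A(y)$ and $W^\flat(*,*)=0$, but does \emph{not} shrink the original $W$. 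Consequently an ordinary point $x\in\Omega\setminus A$ with $S(x)=0$ and $W(x,\cdot)=\beta\cdot 1_A(\cdot)$ a.e.\ has exactly the same canonical feature as $*$, and this configuration genuinely occurs for positive-measure sets of $x$ for certain graphexes (e.g.\ $W=\beta(1_{A\times B}+1_{B\times A})$ for any set $B$). No choice of a single fixed $\beta\in(0,1)$ rules this out, because you have no control on the range of $W$: the collision is possible for \emph{every} $\beta$. Your parenthetical ``after possibly first rescaling'' is pointing at the correct fix, but you never state it, and without it the identification of $\hat *$ fails. What you actually need is to replace $\cW_i$ by $c\cW_i$ for some $c<1$ (noting that equivalence of the looped processes is preserved under scaling) and then choose $\beta\in(c,1)$, so that $\|cW\|_\infty\le c<\beta$; then for $\mu(A)>0$ no ordinary point can reproduce the feature $(\beta\cdot 1_A,0)$, and the case $\mu(A)=0$ reduces trivially to Theorem~\ref{thm:identify} since the loops have no effect. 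This is exactly the role played by the paper's factor of $1/2$ (with $\beta=1$). With that ingredient added and stated explicitly, your argument closes; as written, the step ``the image of the atom $*_i$ ... forces $\phi_1(*_1)$ and $\phi_2(*_2)$ to coincide'' is not justified.
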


\begin{proof}
It is clearly enough to prove the only if direction. Suppose therefore that
$\cW_1$ and $\cW_2$ have the same distribution. Then for any $0<c<1$,
$c\cW_1$ and $c \cW_2$ have the same distributions (i.e., $W,S,I$ are all
multiplied by $c$, and the special set stays the same). Then let
$\widetilde{\cW_i}$ be obtained by taking $ \cW_i/2$, adding a set $B_i$ of
measure $1$ to $\Omega_i$, and extending $W_i$ to be $1$ on $B_i \times B_i$,
$1$ between $B_i$ and $A_i$, and $0$ between $B_i$ and $\Omega_i \setminus
A_i$. Then we can obtain $G_T(\widetilde{\cW_i})$ from
$\widetilde{G}_T(\cW_i)$ by the following process. We first keep each edge
that is not a loop with probability $1/2$, and delete it otherwise,
independently. We keep all the loops. Then we take $\Poisson(T)$ new
vertices, put an edge between every pair, and put an edge between each new
vertex and each vertex that had a loop (and delete loops). It is clear that
in this way, the distributions $G_T(\widetilde{\cW}_1)$ and
$G_T(\widetilde{\cW}_2)$ are the same for every $T$. Therefore, there exists
a graphex
$\widetilde{\cW}=(\widetilde{W},\widetilde{S},\widetilde{I},\widetilde{\bOmega})$
such that $\widetilde{\cW}_1$ and $\widetilde{\cW}_2$ are both pullbacks of
$\widetilde{\cW}$. It is clear that $\widetilde{\cW}$ must have a set of
measure $1$, call it $B$, which has $\widetilde{W}(x,y)=1$ if $x,y \in B$,
and $\widetilde{W}(x,y)$ is either $0$ or $1$ if $x \in B, y \notin B$, and
only depends on $y$, and $\widetilde{W}(x,y) \le 1/2$ if $x,y \notin B$, and
$B$ must pullback to exactly $B_1$ and $B_2$. If we let $A$ be the set of
points $x$ with $\widetilde{W}(x,y)=1$ for any and all $y \in B$, then $A$
must pullback to $A_1$ and $A_2$. If we therefore let $\cW$ have underlying
set $\widetilde{\Omega} \setminus B$, and be equal to $2\widetilde{\cW}$
restricted to this set, and special set $A$, then $\cW$ pulls back to both
$\cW_1$ and $\cW_2$.
\end{proof}

\providecommand{\bysame}{\leavevmode\hbox to3em{\hrulefill}\thinspace}
\providecommand{\MR}{\relax\ifhmode\unskip\space\fi MR }
\providecommand{\MRhref}[2]{\href{http://www.ams.org/mathscinet-getitem?mr=#1}{#2}}
\providecommand{\href}[2]{#2}

\end{document}